\documentclass[reqno,11pt]{preprint}
\usepackage[marginparwidth=1in]{geometry}
\usepackage[full]{textcomp}
\usepackage[osf]{newtxtext}
\usepackage{cabin}
\usepackage{enumerate}
\usepackage{enumitem}
\usepackage{bbm}
\usepackage{csquotes}
\usepackage{longtable}

\usepackage{circuitikz}

\usepackage[hypcap=true]{caption}
\usepackage[hypcap=true,list=true]{subcaption}
\usepackage{hyperref}

\usepackage[varqu,varl]{inconsolata}
\usepackage[scr=boondoxupr]{mathalfa}

\usepackage{breakurl}

\usepackage{booktabs}

\usepackage{nicefrac}

\usepackage{tikz}
\usepackage{tikz-cd}
\tikzcdset{arrow style=tikz,
	diagrams={->}
} 

\usetikzlibrary{patterns}

\usepackage{amsmath}

\usepackage{cleveref}
\usepackage{mhequ} 
\usepackage{mhsymb} 
\usepackage{mhenvs} 
\usepackage{microtype}
\usepackage{amssymb} 
\usepackage{amsfonts}

\usepackage{algorithm}
\usepackage{algpseudocode}

\usepackage{wasysym}
\usepackage{centernot}

\usepackage{soul}

\usepackage{float}
\usepackage{subfiles}
\usetikzlibrary{plotmarks}
\usetikzlibrary{decorations.pathreplacing}
\usetikzlibrary{decorations.pathmorphing}
\usetikzlibrary{shapes.misc}
\usetikzlibrary{shapes.symbols}
\usetikzlibrary{shapes.geometric}
\usetikzlibrary{decorations}
\usetikzlibrary{decorations.markings}
\usetikzlibrary{arrows.meta}
\usetikzlibrary{calc}
\usetikzlibrary{snakes} 
\usetikzlibrary{intersections} 

\makeatletter
\newcommand{\globalcolor}[1]{%
	\color{#1}\global\let\default@color\current@color
}
\makeatother

\usetikzlibrary{calc}
\usetikzlibrary{decorations}
\usetikzlibrary{positioning}
\usetikzlibrary{shapes}
\usetikzlibrary{external}
\usetikzlibrary{decorations.markings}

\usepackage{commath}

\newtheorem{question}{Question}

\newcommand{\Cdot}{\boldsymbol{\cdot}}

\newcommand{\T}{\mathbb{T}}

\def\minus{\scalebox{0.7}[0.8]{\raisebox{-0.7pt}{-}}}
\newcommand{\simhor}{\sim_{\mathrm{hor}}}

\newcommand{\hor}{\text{-}\mathrm{hor}}
\newcommand{\ax}{\minus\mathrm{ax}}
\newcommand{\gr}{\minus\mathrm{gr}}
\newcommand{\tri}{\minus\mathrm{tr}}

\newcommand{\ver}{\mathrm{ver}}

\newcommand{\bsg}{{\boldsymbol{g}}}
\newcommand{\Sym}{\operatorname{Sym}}
\newcommand{\Axial}{\fA^{\mathrm{ax}}}
\newcommand{\Caxial}{\fA^{\mathrm{ax},0}}
\newcommand{\Span}{\operatorname{span}}
\newcommand{\lc}{{\scalebox{1.2}{$c$}\hspace{0.1ex}}}
\newcommand{\coul}{\mathrm{coul}}

\newcommand{\hov}{\mathrm{hov}}
\newcommand{\out}{\mathrm{out}}

\newcommand{\ot}{\leftarrow}
\newcommand{\Cas}{\mathrm{Cas}}
\newcommand{\dist}{\operatorname{dist}}
\newcommand{\floor}[1]{\left\lfloor #1 \right\rfloor}

\newcommand{\Dir}{{\scalebox{0.6}{$\sf{D}$}}}
\newcommand{\Free}{%
  {\raisebox{-0.3pt}{\scalebox{0.6}{$\bR^2$}}}%
}

\newcommand{\ext}{\textnormal{ext}}

\newcommand{\all}{\textnormal{all}}
\newcommand{\diag}{\operatorname{diag}}

\renewcommand{\dim}{\mathrm{dim}\,}

\newcommand{\patch}{{\mathrm{patch}}}
\newcommand{\plush}{\scalebox{0.65}{+}\scalebox{0.5}{\ensuremath{(1,0)}}}

\newcommand{\minush}{\minus\scalebox{0.5}{\ensuremath{(1,0)}}}
\newcommand{\plusv}{\scalebox{0.65}{+}\scalebox{0.5}{\ensuremath{(0,1)}}}
\newcommand{\plushv}{\scalebox{0.65}{+}\scalebox{0.5}{\ensuremath{(1,1)}}}

\newcommand{\Lambdaex}{\Lambda^{\!+1}}

\newcommand{\rlog}{\rmr_{\mathrm{log}}}

\newcommand{\1}{\mathbf{1}}
\newcommand{\bone}{\blue{\1_G}}

\newcommand{\diff}{\mathrm{d}}

\newcommand{\sol}{\mathrm{sol}}
\newcommand{\Int}{\mathrm{int}}
\newcommand{\alg}{\mathrm{alg}}
\newcommand{\lin}{\mathrm{lin}}

\newcommand{\mcF}{\mathcal{F}}

\newcommand{\mcQ}{\mathcal{Q}}

\newcommand{\mfg}{\mathfrak{g}}
\newcommand{\mfK}{\mathfrak{K}}

\newcommand{\mfG}{\mathfrak{G}}

\newcommand{\fg}{\mathfrak{g}}

\newcommand{\s}{\mathfrak{s}}

\newcommand{\restr}{\mathord{|}}

\renewcommand{\R}{\mathbb{R}}
\renewcommand{\C}{\mathbb{C}}
\renewcommand{\N}{\mathbb{N}}

\renewcommand{\T}{\mathbb{T}}
\renewcommand{\Z}{\mathbb{Z}}
\renewcommand{\E}{\mathbb{E}}
\renewcommand{\P}{\mathbb{P}}

\newcommand{\da}{\downarrow}
\newcommand{\ua}{\uparrow}

\newcommand{\h}{\mathbf{h}}

\newcommand{\YM}{\mathrm{YM}}

\newcommand{\hol}{\textnormal{hol}}
\newcommand{\Area}{\mathrm{Area}}

\def\bfOmega{{\boldsymbol{\Omega}}}

\newcommand{\Hol}[1]{#1\textnormal{-H{\"o}l}}

\DeclareFontFamily{U}{matha}{\hyphenchar\font45}
\DeclareFontShape{U}{matha}{m}{n}{
	<5> <6> <7> <8> <9> <10> gen * matha
	<10.95> matha10 <12> <14.4> <17.28> <20.74> <24.88> matha12
}{}
\DeclareSymbolFont{matha}{U}{matha}{m}{n}
\DeclareFontSubstitution{U}{matha}{m}{n}
\DeclareFontFamily{U}{mathx}{\hyphenchar\font45}
\DeclareFontShape{U}{mathx}{m}{n}{
	<5> <6> <7> <8> <9> <10>
	<10.95> <12> <14.4> <17.28> <20.74> <24.88>
	mathx10
}{}
\DeclareSymbolFont{mathx}{U}{mathx}{m}{n}
\DeclareFontSubstitution{U}{mathx}{m}{n}
\DeclareMathDelimiter{\vvvert}{0}{matha}{"7E}{mathx}{"17}

\def\triple#1{{\vvvert #1 \vvvert}}

\newcommand{\id}{\operatorname{id}}

\def\nfrac#1#2{\nicefrac{#1}{#2}}
\newcommand{\Ad}{\mathrm{Ad}}

\definecolor{pagebackground}{rgb}{1,1,1}
\colorlet{symbols}{blue!70!black!70}
\colorlet{connection}{red!30!black}

\pagecolor{pagebackground}

\colorlet{darkblue}{blue!90!black}
\colorlet{darkred}{red!90!black}
\colorlet{darkgreen}{green!50!black}
\colorlet{darkbrown}{brown!80!black}

\newcommand{\blue}[1]{\textcolor{darkblue}{#1}}

\renewcommand{\emptyset}{\varnothing}

\makeatletter
\pgfdeclareshape{crosscircle}
{
	\inheritsavedanchors[from=circle] 
	\inheritanchorborder[from=circle]
	\inheritanchor[from=circle]{north}
	\inheritanchor[from=circle]{north west}
	\inheritanchor[from=circle]{north east}
	\inheritanchor[from=circle]{center}
	\inheritanchor[from=circle]{west}
	\inheritanchor[from=circle]{east}
	\inheritanchor[from=circle]{mid}
	\inheritanchor[from=circle]{mid west}
	\inheritanchor[from=circle]{mid east}
	\inheritanchor[from=circle]{base}
	\inheritanchor[from=circle]{base west}
	\inheritanchor[from=circle]{base east}
	\inheritanchor[from=circle]{south}
	\inheritanchor[from=circle]{south west}
	\inheritanchor[from=circle]{south east}
	\inheritbackgroundpath[from=circle]
	\foregroundpath{
		\centerpoint%
		\pgf@xc=\pgf@x%
		\pgf@yc=\pgf@y%
		\pgfutil@tempdima=\radius%
		\pgfmathsetlength{\pgf@xb}{\pgfkeysvalueof{/pgf/outer xsep}}%
		\pgfmathsetlength{\pgf@yb}{\pgfkeysvalueof{/pgf/outer ysep}}%
		\ifdim\pgf@xb<\pgf@yb%
		\advance\pgfutil@tempdima by-\pgf@yb%
		\else%
		\advance\pgfutil@tempdima by-\pgf@xb%
		\fi%
		\pgfpathmoveto{\pgfpointadd{\pgfqpoint{\pgf@xc}{\pgf@yc}}{\pgfqpoint{-0.707107\pgfutil@tempdima}{-0.707107\pgfutil@tempdima}}}
		\pgfpathlineto{\pgfpointadd{\pgfqpoint{\pgf@xc}{\pgf@yc}}{\pgfqpoint{0.707107\pgfutil@tempdima}{0.707107\pgfutil@tempdima}}}
		\pgfpathmoveto{\pgfpointadd{\pgfqpoint{\pgf@xc}{\pgf@yc}}{\pgfqpoint{-0.707107\pgfutil@tempdima}{0.707107\pgfutil@tempdima}}}
		\pgfpathlineto{\pgfpointadd{\pgfqpoint{\pgf@xc}{\pgf@yc}}{\pgfqpoint{0.707107\pgfutil@tempdima}{-0.707107\pgfutil@tempdima}}}
	}
}
\makeatother

\def\decorate#1#2{
	\ifnum#2>0
	\foreach \count in {1,...,#2}{
		let
		\p1 = (sourcenode.center),
		\p2 = (sourcenode.east),
		\n1 = {\x2-\x1},
		\n2 = {1mm},
		\n3 = {(1.3+0.6*(\count-1))*\n1},
		\n4 = {0.7*\n1}
		in 
		node[rectangle,fill=symbols,rotate=30,inner sep=0pt,minimum width=0.2*\n2,minimum height=\n2] at ($(sourcenode.center) + (\n3,\n4)$) {}
	}
	\fi
	\ifnum#1>0
	\foreach \count in {1,...,#1}{
		let
		\p1 = (sourcenode.center),
		\p2 = (sourcenode.east),
		\n1 = {\x2-\x1},
		\n2 = {1mm},
		\n3 = {(1.3+0.6*(\count-1))*\n1},
		\n4 = {0.7*\n1}
		in 
		node[rectangle,fill=symbols,rotate=-30,inner sep=0pt,minimum width=0.2*\n2,minimum height=\n2] at ($(sourcenode.center) + (-\n3,\n4)$) {}
	}
	\fi
}

\tikzset{
	dectriangle/.style 2 args={
		triangle,
		alias=sourcenode,
		append after command={\decorate{#1}{#2}}
	},
	dectriangle/.default={0}{0},
}

\tikzset{
	cross/.style={path picture={ 
			\draw[black]
			(path picture bounding box.south east) -- (path picture bounding box.north west) (path picture bounding box.south west) -- (path picture bounding box.north east);
	}},
	tinydot/.style={circle,fill=black,inner sep=0pt, minimum size=1mm, scale=0.6},
	dot/.style={circle,fill=symbols,inner sep=0pt, minimum size=1mm},
	tinydiamond/.style={draw=red, diamond, fill=symbols!0, inner sep=0pt, minimum size=1mm,scale=0.6},
	diamonds/.style={diamond, fill=symbols, inner sep=0pt, minimum size=1mm},
	diamondfill/.style={diamond, draw=red, fill=symbols!00, inner sep=0pt, minimum size=1mm,scale=1},
	stars/.style={star, line width=0.3pt, star point height=0.1cm, minimum size=1mm, fill=lbluenode, draw=symbols, scale=0.15},
	stars2/.style={star, line width=0.3pt, star point height=0.5cm, minimum size=1mm, fill=lbluenode, draw=symbols, scale=0.08},
	root/.style={circle,fill=green!50!black,inner sep=0pt, minimum size=1.2mm},
	dot/.style={circle,fill=black,inner sep=0pt, minimum size=0.8mm},
	dotred/.style={circle,fill=pageforeground!50!pagebackground,inner sep=0pt, minimum size=2mm},
	var/.style={circle,fill=pageforeground!10!pagebackground,draw=pageforeground,inner sep=0pt, minimum size=3mm},
	kernel/.style={semithick,shorten >=2pt,shorten <=2pt},
	kernels/.style={snake=zigzag,shorten >=2pt,shorten <=2pt,segment amplitude=1pt,segment length=4pt,line before snake=2pt,line after snake=5pt,},
	rho/.style={densely dashed,semithick,shorten >=2pt,shorten <=2pt},
	testfcn/.style={dotted,semithick,shorten >=2pt,shorten <=2pt},
	renorm/.style={shape=circle,fill=pagebackground,inner sep=1pt},
	labl/.style={shape=rectangle,fill=pagebackground,inner sep=1pt},
	xic/.style={very thin,circle,draw=symbols,fill=symbols,inner sep=0pt,minimum size=1.2mm},
	g/.style={very thin,rectangle,draw=symbols,fill=symbols!10!pagebackground,inner sep=0pt,minimum width=2.5mm,minimum height=1.2mm},
	bxi/.style={very thin,circle,draw=black,fill=symbols!10!pagebackground,inner sep=0pt,minimum size=1.2mm},
	bxicross/.style={very thin,circle,draw=black,fill=black!10!pagebackground,cross,inner sep=0pt,minimum size=1.2mm},
	xi/.style={very thin,circle,draw=symbols,fill=symbols!10!pagebackground,inner sep=0pt,minimum size=1.2mm},
	xi1/.style={very thin,rectangle,fill=symbols!10!pagebackground,draw=symbols,inner sep=0pt,minimum size=1.2mm},
	bxi1/.style={very thin,rectangle,fill=black!10!pagebackground,draw=black,inner sep=0pt,minimum size=1.2mm},
	bxi1cross/.style={very thin,rectangle,fill=black!10!pagebackground,draw=black,cross,inner sep=0pt,minimum size=1.2mm},
	taunode/.style={very thin,star,fill=symbols!10!pagebackground,draw=symbols,inner sep=0pt,minimum size=1.3mm},
	taunodefill/.style={very thin,star,fill=symbols!75!pagebackground,draw=symbols,inner sep=0pt,minimum size=1.3mm},
	xi1cross/.style={very thin,rectangle, cross, fill=symbols!10!pagebackground,draw=symbols,inner sep=0pt,minimum size=1.2mm},
	gxi/.style={very thin,circle,draw=lightgray,fill=lightgray!10!pagebackground,inner sep=0pt,minimum size=1.2mm},
	xies/.style={very thin,rectangle,fill=green!50!black!25,draw=symbols,inner sep=0pt,minimum size=1.1mm},
	xiesf/.style={very thin,rectangle,fill=blue!70!black,draw=symbols,inner sep=0pt,minimum size=1.1mm},
	xix/.style={very thin,crosscircle,fill=symbols!10!pagebackground,draw=symbols,inner sep=0pt,minimum size=1.2mm},
	X/.style={very thin,cross,rectangle,fill=pagebackground,draw=symbols,inner sep=0pt,minimum size=1.2mm},
	xib/.style={thin,circle,fill=symbols!10!pagebackground,draw=symbols,inner sep=0pt,minimum size=1.6mm},
	xie/.style={thin,circle,fill=green!50!black,draw=symbols,inner sep=0pt,minimum size=1.6mm},
	xid/.style={thin,circle,fill=symbols,draw=symbols,inner sep=0pt,minimum size=1.6mm},
	xibx/.style={thin,crosscircle,fill=symbols!10!pagebackground,draw=symbols,inner sep=0pt,minimum size=1.6mm},
	kernels2/.style={very thick,draw=connection,segment length=12pt},
	gkernels2/.style={very thick,draw=lightgray,segment length=12pt},
	keps/.style={thin,draw=symbols,->},
	kepspr/.style={thick,draw=connection,->},
	krho/.style={thin,draw=symbols,superdense,->},
	krhopr/.style={thick,draw=connection,superdense,->},
	triangle/.style = { regular polygon, regular polygon sides=3},
	point/.style={thin,circle,draw=black,fill=black,inner sep=0pt,minimum size=0.4mm},
	not/.style={thin,circle,draw=connection,fill=connection,inner sep=0pt,minimum size=0.5mm},
	gnot/.style={thin,circle,draw=lightgray,fill=lightgray,inner sep=0pt,minimum size=0.5mm},
	diff/.style = {very thin,draw=symbols,triangle,fill=red!50!black,inner sep=0pt,minimum size=1.6mm},
	diff1/.style = {very thin,dectriangle={1}{0},fill=red!50!black,draw=symbols,inner sep=0pt,minimum size=1.6mm},
	diff2/.style = {very thin,dectriangle={1}{1},fill=red!50!black,draw=symbols,inner sep=0pt,minimum size=1.6mm},
	diffmini/.style = {very thin,rectangle,fill=black,draw=black,inner sep=0pt,minimum size=0.75mm},
	kernelsmod/.style={very thick,draw=connection,segment length=12pt},
	rec/.style = {very thin,rectangle,fill=black,draw=black,inner sep=0pt,minimum size=2mm},
	cerc/.style={very thin,circle,draw=black,fill=symbols,inner sep=0pt,minimum size=2mm},
	trup/.style={very thin,regular polygon,regular polygon sides=3,shape border rotate=180,draw=black,fill=red,inner sep=0pt,minimum size=3mm},
	stars/.style={very thin,star,star points=6,star point ratio=0.5, draw=black,fill=red,inner sep=0pt,minimum size=0.7mm},
	>=stealth,
	kernelsgen/.style={draw=symbols,thick,densely dashed, segment length=4pt}, 
	kernelsgen2/.style={draw=symbols, snake=zigzag, segment length=4pt, segment amplitude=0.4mm}, 
	kernelsgen3/.style={draw=symbols, thick, snake=zigzag, segment length=4pt, segment amplitude=0.4mm}, 
	kernels1/.style={draw=symbols, segment length=12pt},
	kernels11/.style={very thick, draw=symbols,segment length=12pt},
	kernels2/.style={draw=symbols,snake=zigzag, segment aspect=0, segment length=1pt, segment amplitude=0.2mm},
	kernels22a/.style={draw=symbols, snake=zigzag, semithick, raise snake=0.3pt, segment aspect=0pt, segment length=1pt, segment amplitude=0.2mm},
	kernels22b/.style={draw=symbols, snake=zigzag, semithick, raise snake=-0.3pt, segment aspect=0pt, segment length=1pt, segment amplitude=0.2mm},
	kernels12r/.style={draw=symbols, snake=snake, very thick, segment aspect=0pt, segment length=3pt, segment amplitude=0.2mm},
	kernels12l/.style={draw=symbols, snake=snake, very thick, segment aspect=0pt, segment length=3pt, segment amplitude=-0.2mm},
	kernels21/.style={snake=snake, very thick, segment aspect=0pt, segment length=3pt, segment amplitude=0.2mm},
	bkernel/.style={draw=black,thick,densely dotted, segment length=4pt}, 
	bkernels1/.style={draw=black, segment length=12pt},
	bkernels11/.style={very thick, draw=black,segment length=12pt},
	bkernels2/.style={draw=black,snake=zigzag, segment aspect=0, segment length=1pt, segment amplitude=0.2mm},
	bkernels22a/.style={draw=black, snake=zigzag, semithick, raise snake=0.3pt, segment aspect=0pt, segment length=1pt, segment amplitude=0.2mm},
	bkernels22b/.style={draw=black, snake=zigzag, semithick, raise snake=-0.3pt, segment aspect=0pt, segment length=1pt, segment amplitude=0.2mm},
	bkernels12r/.style={draw=black, snake=snake, very thick, segment aspect=0pt, segment length=3pt, segment amplitude=0.2mm},
	bkernels12l/.style={draw=black, snake=snake, very thick, segment aspect=0pt, segment length=3pt, segment amplitude=-0.2mm},
	bkernels21/.style={draw=black, snake=snake, very thick, segment aspect=0pt, segment length=3pt, segment amplitude=0.2mm},
}

\makeatletter
\def\DeclareSymbol#1#2#3{%
	\expandafter\gdef\csname MH@symb@#1\endcsname{\tikzsetnextfilename{symbol#1}%
		\tikz[baseline=#2,scale=0.15,draw=symbols,line join=round]{#3}}%
	\expandafter\gdef\csname MH@symb@#1s\endcsname{\scalebox{0.75}{\tikzsetnextfilename{symbol#1}%
			\tikz[baseline=#2,scale=0.15,draw=symbols,line join=round]{#3}}}%
	\expandafter\gdef\csname MH@symb@#1ss\endcsname{\scalebox{0.65}{\tikzsetnextfilename{symbol#1}%
			\tikz[baseline=#2,scale=0.15,draw=symbols,line join=round]{#3}}}%
}
\def\<#1>{\ifthenelse{\boolean{mmode}}{\mathchoice{\csname MH@symb@#1\endcsname}{\csname MH@symb@#1\endcsname}{\csname MH@symb@#1s\endcsname}{\csname MH@symb@#1ss\endcsname}}{\csname MH@symb@#1\endcsname}}
\makeatother

\DeclareSymbol{Qn}{1.4}{\draw[color=black, semithick] (0,0) node[] {} -- (0.5,2)  node[] {} -- (5,2) node[] {} -- (4.5,0) node[label={[xshift=0.1cm,yshift=-0.2cm]above left:{\tiny $n$}}] {} -- (0,0);}

\DeclareSymbol{Qnp}{1.4}{
	\draw[color=black, semithick] (0,0) node[] {} -- (0.5,2)  node[] {} -- (5,2) node[] {} -- (4.5,0) node[label={[xshift=0.3cm,yshift=-0.25cm]above left:{\tiny $n+1$}}] {} -- (0,0);
}

\makeatletter 
\newcommand*{\bigcdot}{}
\DeclareRobustCommand*{\bigcdot}{%
	\mathbin{\mathpalette\bigcdot@{}}%
}
\newcommand*{\bigcdot@scalefactor}{.5}
\newcommand*{\bigcdot@widthfactor}{1.15}
\newcommand*{\bigcdot@}[2]{%
	\sbox0{$#1\vcenter{}$}
	\sbox2{$#1\cdot\m@th$}%
	\hbox to \bigcdot@widthfactor\wd2{%
		\hfil
		\raise\ht0\hbox{%
			\scalebox{\bigcdot@scalefactor}{%
				\lower\ht0\hbox{$#1\bullet\m@th$}%
			}%
		}%
		\hfil
	}%
}
\makeatother

\def\dash{\leavevmode\unskip\kern0.18em--\penalty\exhyphenpenalty\kern0.18em}
\def\slash{\leavevmode\unskip\kern0.15em/\penalty\exhyphenpenalty\kern0.15em}

\begin{document}
	
	\title{A PDE approach to the 2D Yang--Mills measure}
	\author{Ilya Chevyrev$^1$,  Tom Klose$^2$, Abdulwahab Mohamed$^3$}
 
	\institute{SISSA, Trieste, Italy \and
		University of Oxford, Oxford, United Kingdom \and Max Planck Institute for Mathematics in Sciences, Leipzig, Germany\\
		\email{ichevyrev@gmail.com, tom.klose@outlook.com, \\abdulwahab.mohamed@mis.mpg.de}}
	
	\maketitle
	
	\begin{abstract}
	We introduce the concept of rough additive functions, which extends rough paths theory to line integrals of distributional 1-forms. In the context of gauge theory, we use this notion to define controlled gauge transformations and holonomies via RDEs. One of our main results is a rough version of Uhlenbeck compactness for distributional connections based on rough additive functions. The main ingredient is a singular elliptic PDE to obtain a Coulomb gauge. We solve this PDE using regularity structures and a method inspired by the implicit function theorem. Surprisingly, the model needed to solve the equation is determined entirely from rough additive functions, which is a simpler and geometrically more natural object. We apply our results to the Yang--Mills measure on the unit square, showing that it has a gauge-fixed representation with optimal regularity. 
	Although we focus on the unit square in this article, we expect our results to apply to more general surfaces.
	\\[.4em]
\noindent {\small \textit{Keywords:} 2D Yang--Mills measure, rough path theory, regularity structures, Uhlenbeck compactness, geometric analysis, gauge fixing}\\
\noindent {\small\textit{2020 MSC classification:} 60L20, 60L30, 81T13, 35J75.}
	\end{abstract}

	\medskip 
	
	\setcounter{tocdepth}{2}       
	\tableofcontents

     \section{Introduction}
    Let~$G$ be a compact, connected Lie group,~$\mfg$ its associated Lie algebra, and~$\Lambda$ a $d$-manifold (possibly with boundary; we take later $d=2$ and $\Lambda = [0,1]^2$).
	We assume that~$G$ is a subgroup of the unitary group~$\rmU(N)$ for some~$N \geq 1$.

	Formally, the \emph{Yang--Mills measure} on~$\Lambda$ is the probability measure given by
	\begin{equation}
		\mu_{\YM}(\dif A) \propto \exp\del[1]{- \norm[0]{F^A}_{L^2(\Lambda;\mfg)}^2} \dif A
		\label{intro:ym_measure}
	\end{equation}
	where~$A$ is a~$\mfg$-valued $1$-form on~$\Lambda$, also called a \emph{connection form}, 
		\begin{equation}
			F^A = \diff A + [A \wedge A]
			\label{intro:ym_curvature}
		\end{equation}
	the associated~$\mfg$-valued $2$-form called the \emph{curvature} of~$A$, and~$\dif A$ is the \enquote{Lebesgue measure} on~$\CA := \Omega^1(\Lambda;\mfg)$.
	The $L^2$ norm in \eqref{intro:ym_measure} is defined with respect to an $\Ad$-invariant inner product~$\scal{\Cdot,\Cdot}_{\mfg}$ on~$\mfg$, which we fix henceforth.

	In coordinates, we may write $A = \sum_{i=1}^d A_i\dif x_i$ and
	\begin{equation*}
			F^A_{ij}(x) = \partial_i A_j(x) - \partial_j A_i(x) + [A_i(x),A_j(x)], \quad i,j \in \{1,\ldots, d\}.
		\end{equation*}
	Since~$\CA$ is infinite-dimensional, it is not clear how to give precise mathematical sense to~$\mu_\YM$, and in dimensions $d=3,4$, making sense of $\mu_\YM$ is a long-standing open problem, even for compact $\Lambda$ such as the torus $\T^d=\R^d/\Z^d$.
	
	Ignoring this issue for the moment, Yang-Mills theory exhibits a symmetry encoded by~$G$.
	The \emph{gauge group} $\mfG := C^\infty(\Lambda;G)$
	carries an action on $\CA$, given for $g\in \mfG$ and $A\in\CA$ by
		\begin{equation}
			A \mapsto A^g \equiv (g \Cdot A) := gA g^{-1} - (\diff g) g^{-1}
			\label{eq:trafo_conn}
		\end{equation} 
	which also induces an action on the curvature, namely~$F^{A^g}(x) = g(x) F^A(x) g(x)^{-1}$. Since the inner product on $\mfg$ is $\Ad$-invariant, it follows that
		\begin{equation}
			\norm[0]{F^{A^g}}_{L^2(\Lambda;\mfg)} = \norm[0]{F^{A}}_{L^2(\Lambda;\mfg)}\;, \quad g \in \mfG.\; 
			\label{eq:inv_curv_L2}
		\end{equation}
	In particular, this shows that the action of~$\mfG$ leaves the exponent in~\eqref{intro:ym_measure} invariant while simultaneously \enquote{translating} the \enquote{Lebesgue measure}~$\dif A$. Consequently, the measure~$\mu_\YM$ needs to be interpreted on the \emph{gauge orbit space} $\CO:= \CA \slash \mfG$ for it to be invariant under \emph{gauge transformations}~$g \in \mfG$.

	\paragraph{The two-dimensional case.} \label{sec:two-dim}
	In the case of $2$-dimensional space-time~$\Lambda$, one can indeed give meaning to $\mu_{\YM}$ by leveraging gauge symmetry.
	To see why two dimensions is special,
	consider the simple case $\Lambda = [0,1]^2$.
	Then, for any $B\in\CA$, there exists a
	gauge transformation~$g \in \mfG$ (unique up to the choice of $g(0)$)
	such that $A:=B^g$ is in the \emph{complete axial gauge}, i.e.~$A_2 \equiv 0$ and~$A_1(\Cdot,0) \equiv 0$.
	We can therefore identify the gauge orbit space~$\CO$ with the subspace of $1$-forms $A\in \CA$ in the complete axial gauge.
	For such $A$, one has~$[A \wedge A] \equiv 0$ and then
		\begin{equation*}
			F^A = \diff A = -\partial_2 A_1\,\diff x^1\wedge \diff x^2 =: F^A_{12}\, \diff x^1 \wedge \diff x^2\;.
		\end{equation*}
	In the complete axial gauge, the measure~$\mu_\YM$ becomes 
		\begin{equation}
			\mu_\YM(\diff A) \propto \exp\del[2]{- \int_\Lambda \abs[0]{\partial_2 A_1(x)}_{\mfg}^2 \dif x} \dif A
			\label{eq:ym_axial}
		\end{equation}
	with no Jacobian factor for the transformation of~$\diff A$.
	Upon identifying~$F \equiv F^A$ with~$F_{12}^A = -\partial_2 A_1$, a linear change of variables in~\eqref{eq:ym_axial} leads to
		\begin{equation*}
			\mu_\YM(\diff F) \propto \exp\del[2]{- \int_\Lambda \abs[0]{F(x)}_{\mfg}^2 \dif x} \dif F
			\;,
		\end{equation*}	  
	which is the expression for the measure of a~$\mfg$-valued white noise~$\xi$.
	So, modulo gauge equivalence, one can view $\mu_\YM$ on $[0,1]^2$ as the law of the random distributional $1$-form $A$ determined by
	\begin{equation}\label{eq:A_def}
	A_1 := \partial_2^{-1}\xi\;,\quad A_1(\Cdot,0)=0\;,\quad A_2=0\;.
	\end{equation}

While such a simple construction is not possible for other $2$-manifolds, a significant trace of this exact solvability remains and manifests as `invariance under subdivision' for discretisations of the manifold, first observed in \cite{Migdal75}.
This has enabled precise constructions of $\mu_{\YM}$ in a gauge invariant manner, see \cite{GKS89, Driver89,Sengupta92,Sengupta97,Levy03, Levy06, Levy20} and \cite{Witten91} for related results.
These works primarily study $\mu_{\YM}$ through gauge-invariant observables, such as Wilson loops.

The question we are concerned with in this article is the \emph{small scale regularity} of $\mu_{\YM}$:

\begin{question}\label{ques:regularity}
Suppose $A$ is a random $1$-form such that its gauge orbit $[A] = \{A^g\,:\,g\in\mfG\}$ is distributed according to $\mu_{\YM}$.
What is the possible regularity of $A$?
\end{question}

This question has received attention recently, especially due to recent progress in stochastic quantisation, where the regularity of quantum fields is a central question.

\paragraph{Prior work.}
Of course, one answer is to take $A$ as in \eqref{eq:A_def}.
For this choice, measuring regularity in H\"older--Besov spaces $C^\beta$ (see \ref{sec:notation} for notations),
one can show that~$A_1 = -\partial_2^{-1} \xi \in C^{-\nicefrac{1}{2}-\kappa}$ almost surely, for any $\kappa>0$.
However, from perturbation theory, one expects that the best regularity of $A$ is the same as that of the Gaussian free field, which is $C^{-\kappa}$ for any $\kappa>0$.
The issue with the axial gauge is that we take one component $A_2=0$ to be smooth, and this forces $A_1$ to behave in a rough way,
and it is expected that a better choice of gauge should spread regularity more evenly across $A_1$ and $A_2$.

Indeed, it was first shown in \cite{Chevyrev19} that the YM measure $\mu_{\YM}$ on $\T^2$ for simply connected $G$ admits a representative $A$ such that $A\in C^{-\kappa}$ almost surely, and the same proof applies to $[0,1]^2$ with no restrictions on $G$.
More strongly, \cite{Chevyrev19} introduced spaces of distributional $1$-forms, strictly smaller than $C^{-\kappa}$, with growth and continuity of line integrals along axis-parallel lines, and showed that these spaces support representatives of $\mu_{\YM}$.
For Abelian $G$ and with coupling to a Higgs field, a sharpened version of this result was obtained in \cite{CC24}.

Refinements of these spaces (in which one drops the axis-parallel line constraint) were studied in \cite{CCHS2d}, see also the survey \cite{Chevyrev22YM}.
We denote these spaces by $\Omega^1_\alpha$ and review their properties in Section~\ref{sec:AF} below.
Some of their key features are that Wilson loops are continuous functions on $\Omega^1_\alpha$, they carry a natural action of the gauge group $C^{1-\kappa}(\T^2,G)$ for which the quotient orbit space is Polish,
and the YM Langevin dynamic induced a Markov process on this orbit space.

Further progress was made in \cite{CS26}, in which it is shown that, for the 2D torus $\T^2$, the invariant measure of the aforementioned Markov process is precisely $\mu_{\YM}$.
A consequence of this result is that $\mu_{\YM}$ admits a representative of the form $\Psi+B$ where $\Psi=(\Psi_1,\Psi_2)$ is a couple of $\mfg$-valued independent Gaussian free fields, and $B$ is a random $1$-form such that $\E\|B\|_{C^{1-\kappa}}^p<\infty$ for every $p$.
That is, $\mu_{\YM}$ can be seen as an almost Lipschitz perturbation of the GFF, which has a clear bearing on Question \ref{ques:regularity}.

Note that the above works apply only to the torus, and it is far from trivial to extend them to other surfaces.
More recently, \cite{Dang_Nohra_26} initiated a different approach to Question \ref{ques:regularity} by introducing a new `Morse gauge', allowing them to construct a representative $A$ of $\mu_{\YM}$ on general surfaces.
See also \cite{Dang_Nohra_26_semiclassical} for applications of this gauge to semiclassical limits and \cite{Sengupta92, Sengupta97} for related results on surfaces via conditioned white noise measures.
However, the regularity of the representative $A$ is essentially the same as that of the axial gauge representative on $[0,1]^2$
so is expected to be suboptimal.

\paragraph{Summary of contributions.}
In this article, we provide a new approach to Question~\ref{ques:regularity} to obtain essentially optimal regularity.
We henceforth work over the unit square $\Lambda=[0,1]^2$.
A natural approach is to impose the Coulomb gauge
\[
    \diff^* A^g := \sum_{i=1}^2 \partial_i A^g_i =0\;.
\]
Indeed, by the identity
$
    F^{A^g}=\Ad_g F^A
$,
one would have
\begin{equation}\label{eq:YM_Coulomb}
    \diff A^g = \Ad_g F^A - [A^g\wedge A^g],
    \qquad
    \diff^*A^g=0.
\end{equation}
One then starts with $A$ as the representative of the YM measure in the axial gauge, for which $F^A=\xi$ is white noise, and tries to find $g$ that solves \eqref{eq:YM_Coulomb}. Power counting suggests that the first term on the right-hand side has regularity $C^{-1-\kappa}$, so, at a formal level, elliptic regularity for $\diff\oplus \diff^*$ suggests that $A^g$ should have regularity $C^{-\kappa}$.
Of course, this argument is only heuristic: neither the product $\Ad_gF^A$ nor the quadratic term $[A^g\wedge A^g]$ is classically meaningful at the expected regularities.

It is a priori clear (at least for smooth $A$) that Coulomb gauge representatives exist since every minimiser of the functional $\|A^g\|_{L^2}$ over $g\in\mfG$ must satisfy $\diff^*A^g=0$.
What is far from clear is whether one of these representatives would have good regularity.
Our approach to solve for $A^g$ in the Coulomb gauge and determine its regularity is based on rough path theory and regularity structures.
Our main contributions can be summarised as follows (see Section \ref{sec:main_results} for details):

\begin{itemize}
	\item \textbf{Rough additive functions.} 
	We introduce a (non-linear) space of rough additive functions $\bfOmega_{\alpha\ax}^1$ for $\alpha\in (\frac 13,\frac 12)$, which is a generalisation of the space $(\Omega^1_\beta,|\cdot|_{\beta})$ from Section~\ref{sec:AF}. The space $\bfOmega_{\alpha\ax}^1$ is designed to encode rough path lifts of line integrals of $1$-forms.

	\item \textbf{Rough Uhlenbeck compactness.}
	For every $\bfA\in \bfOmega_{\alpha\ax}^1$, we define a space of controlled gauge transformations $\fG^{2\alpha}_A$ and show that there exists $g\in \fG^{2\alpha}_A$ such that $A^g$ has good regularity and continuity properties,
	see Theorem \ref{thm:Rough_Uhlenbeck}.
	This result is entirely deterministic and we achieve it by analysing a Coulomb gauge on sufficiently small squares and `patching' the result together.
	This can be seen as a rough version of Uhlenbeck compactness \cite{Uhlenbeck82,Wehrheim04}, a fundamental result in gauge theory,
	which we prove with the help of regularity structures 
\cite{Hairer14}.

	\item \textbf{Gauge-fixed YM measure.}
	By building a canonical rough additive function $\bfA$ from the YM measure, we apply rough Uhlenbeck compactness to obtain a representative $A^g$ of $\mu_{\YM}$ with good regularity,
	see Theorem \ref{thm:RUC_YM}.
	We moreover show a stability result: smooth approximations of the YM curvature in the axial gauge yield smooth approximations of $A^g$.
\end{itemize}

Along the way in our proof of rough Uhlenbeck compactness, we develop a novel way to solve geometric (group-valued) singular PDEs in the context of regularity structures by analysing the tangent equation (\eqref{eq:intro_PDE_g} and Section \ref{sec:solution_theory}).
Our method is inspired by the implicit function theorem and bears similarity to (but is different from) the recent work \cite{BOS_25_Phi4}.

The only place where we use that we are working on $[0,1]^2$ is in the construction of the rough additive function $\bfA$ from the YM measure.
In analogy with how the main analytic step in classical Uhlenbeck compactness is to find a Coulomb gauge on the unit ball \cite{Uhlenbeck82},
we anticipate that our rough Uhlenbeck compactness for $[0,1]^2$ could be applied to general surfaces without change.
Therefore the main missing ingredient to extend our probabilistic results to general surfaces is to construct (locally) rough additive functions from the YM measure on these surfaces.
It would be interesting if ideas from \cite{Dang_Nohra_26} could be applied to this problem.

\paragraph{Comparison with prior works.}
Our work should be compared to \cite{Chevyrev19}.
Both works employ the Coulomb gauge condition $\diff^* A=0$ for small scale regularity and leverage solvability of the YM measure in axial-type gauges to get probabilistic bounds.
In terms of answers to Question \ref{ques:regularity}, both establish the existence of a representative $A$ of $\mu_{\YM}$ with $A\in \Omega^1_\beta$ for some $\beta<1$,
but the space $\Omega^1_\beta$ in \cite{Chevyrev19} is restricted to axis-parallel lines, while for us this space is the `isotropic' version from \cite{CCHS2d} that allows integration against any smooth curves (see also \cite{ChandraSingh25_rough} for a related notion of distributional $k$-forms in arbitrary dimensions).

The main advantage of our work is that it is much more transparent and generalisable.
It works directly in the continuum (but could be implemented on the lattice),
provides explicit continuity statements for $\bfA\mapsto A^g$ (see Theorem \ref{thm:Rough_Uhlenbeck})
which are not available in \cite{Chevyrev19},
and is closer in spirit to the work of Uhlenbeck \cite{Uhlenbeck82}.
The approach in \cite{Chevyrev19} strongly relies on (square) lattice approximations and is, in many ways, more technical. The basic idea in \cite{Chevyrev19}  is to proceed by induction on scales, zooming in and applying an approximate Coulomb gauge iteratively.
In contrast, we zoom in only once to get smallesness of a scale parameter, and then solve for $A^g$ directly via regularity structures.
As described above, we believe this makes our method easier to generalise to other manifolds.
Moreover, we believe our approach is more suitable for the study of large-$N$ limits (see below), especially in light of recent progress in noncommutative regularity structures \cite{CHP25}.

Comparing to \cite{CS26}, our approach is again more direct and does not rely on the Langevin dynamics, which is a more involved construction.
Moreover, the results of \cite{CS26} still use the lattice, in particular a refinement of \cite{Chevyrev19}, and would be challenging to extend to other domains/surfaces.
To illustrate the difference, we obtain much sharper bounds on the growth of moments of $A^g$ compared to \cite{CS26}, see Theorem \ref{thm:RUC_YM} and Remark \ref{rem:compare_CS}.
In contrast to \cite{CS26}, we do not prove that $\mu_{\YM}$ is a perturbation of a Gaussian free field, however we do still obtain a decomposition into objects of different regularities.

\paragraph{Further related works.}
The question of continuum (ultraviolet) limits and regularity of the YM measure has a long history.
Classical works include \cite{Gross83, Driver87}, where a construction of the continuum limit is given for Abelian gauge groups on $\R^3$ and $\R^4$,
and~\cite{Balaban85IV,Balaban89,MRS93} for progress towards a continuum construction on $\T^3$ and $\T^4$.

There has been a lot of recent interest in probabilistic aspects of quantum YM theory.
In addition to the works already mentioned, we highlight the derivation of Makeenko--Migdal (or Dyson--Schwinger) equations in the continuum \cite{Levy17_master, Dahlqvist16_free_energeis, DriverHallTodd_17_MM_eq, DriverGabrielHallKemp_17_MM_eq,Driver19_MM_eq}
and on the lattice \cite{Chatterjee19,Jafarov16_SUN,SSZ_24_master_loop},
large-$N$ limits and connections to free probability \cite{Hall18_LargeN, BasuGanguly18_planar, DahlqvistNorris20_master_sphere,
DahlqvistLemoine23_largeN_I,
DahlqvistLemoine25_largeN_II,
LemoineMaida25_dual,
Lemoine25_almost_flat},
random surface representations \cite{Cao_Park_Sheffield_25,PPSU_26,Lemoine26_dual},
and infrared behaviour of lattice gauge theory for continuous groups
\cite{Chatterjee21_confinement,Garban_Sepulveda_23,SZZ23,SZZ26},
and discrete groups \cite{Chatterjee_2020_Ising_LGT,Cao20,FLV20,FLV21,Forsstrom24}.
This list is far from complete and we refer to references therein for further works.

There has also been much progress on the  stochastic quantisation approach to Yang--Mills \cite{CCHS2d, CCHS3d, BringmannCao24_Higgs, BringmannCao26, CS26, ChevyrevShen26_Renorm}.
Although the problem we address is distinct, our work is related to this programme because, like these works, we develop and employ the theory singular SPDE.

\paragraph{Open problems.}
The next natural step following this work is to obtain a complete regularity theory for the YM measure on any compact surface.
As we mentioned, we expect Theorem \ref{thm:Rough_Uhlenbeck} to provide the core analytic ingredient, and the main remaining difficulty lies in the construction of rough additive functions defined on local charts that satisfy suitable compatibility conditions on overlaps.
We anticipate that ideas from \cite{Sengupta92,Dang_Nohra_26} could be useful for this problem.

Another open problem which requires substantially new ideas is when the base manifold is $3$-dimensional.
First, little is known about the 3D YM measure in the continuum. It is far from clear if the curvature in the axial gauge is well-behaved (we expect it is not)
and no gauge-invariant quantities are known to be controlled in the continuum limit.
It is therefore an open question how to formulate an Uhlenbeck compactness theorem applicable to the 3D YM measure.

A closely related problem is that of a state space for the 3D YM measure. Candidate state spaces were constructed in~\cite{CCHS3d, Sourav_state} (see also~\cite{Sourav_flow}), but there is a lack of canonical gauge group associated with these spaces.
We expect our notion of rough additive functions and `controlled gauge transformations'
to have a bearing on this problem because the 3D GFF is in $C^{-\frac 1 2-\kappa}$, $\kappa>0$, which is the same regularity as the axial gauge representative of the 2D Yang-Mills measure.
The key difficult in 3D is that, unlike in 2D, line integrals are not well-defined,
and a possible approach to the problem is to combine rough path theory with gauge-covariant regularisation, e.g. the YM heat flow used in 
\cite{CG13, CCHS3d, Sourav_state}.

Finally, there is considerable interest in understanding the behaviour of $\mu_{\YM}$ with structure group $G=U(N)$, and other classical groups, as $N\to\infty$.
In particular, there is recent progress in constructing the limiting `master field' on arbitrary compact surfaces \cite{DahlqvistLemoine23_largeN_I, DahlqvistLemoine25_largeN_II}.
A natural question that emerges is whether one can construct a gauge-fixed representative of the master field with natural regularity.
It would be interesting if the methods of this article, combined with \cite{CHP25}, could be applied to this problem.


%
\subsection{Main results, strategy, and outline}
\label{sec:main_results}

We now describe our main results and strategy in detail.
Rather than solving for $A^g$ in \eqref{eq:YM_Coulomb} directly, we solve for the gauge transformation $g$ via $\diff^*A^g=0$. 
Since $A$ is in the complete axial gauge, $A=A_1\,\diff x^1$, the condition $\diff^*A^g=0$ becomes equivalent to the Lie-group valued equation
\begin{align}\label{eq:intro_PDE_g}
\sum_{i=1}^2\partial_i ((\partial_i g) g^{-1})-\partial_1(gA_1g^{-1})=0,
\end{align}
with suitable boundary conditions.
Below, we write $\alpha^-$ to denote a number $\alpha-\kappa$ for $\kappa>0$ arbitrarily small.

We treat~\eqref{eq:intro_PDE_g} as an elliptic SPDE for $g$ with a random $A_1$ as an `input noise'.
For $A_1$ sampled from the YM measure in the axial gauge, this equation is singular:
since $A_1\in C^{-\nfrac{1}{2}^-}$, one expects $g\in C^{\nfrac{1}{2}^-}$ by Schauder estimates, and consequently the products $gA_1g^{-1}$ and $(\partial_i g) g^{-1}$ are singular.
However, one can check that \eqref{eq:intro_PDE_g} is subcritical for $A_1\in C^{-\nfrac{1}{2}^-}$ (it becomes critical for $A_1\in C^{-1}$),
which places the equation in the class of singular elliptic equations for which one expects regularity structures.
Part of this solution theory is to enhance $A_1$ with further elements, e.g.~products of analytically ill-defined distributions.
We next describe the steps we take to solve \eqref{eq:intro_PDE_g} and how we obtain the desired regularity of $A^g$.

\paragraph{Rough additive functions.}
One of the insights of this paper is that the enhancement needed for the singular SPDE~\eqref{eq:intro_PDE_g} is encoded in a geometrically natural object that we refer to as a \emph{rough additive function (RAF)},
introduced and studied in Section \ref{sec:RAF}.
On first sight, this object is simpler and contains less data than what one expects from a naive solution theory of \eqref{eq:intro_PDE_g}.

A RAF is a pair of functions $\bfA=(A,\bA)$, each mapping lines to the Lie algebra $\mfg$, in a way that respects Chen identity and satisfies the analytic bounds
\[
|A(\ell)|\lesssim |\ell|^\alpha\triple{\bfA}_{\alpha\ax}, \qquad |\bA(\ell)|\lesssim |\ell|^{2\alpha}\triple{\bfA}_{\alpha\ax}
\]
for some $\alpha\in (\frac 13,\frac 12)$, together with suitable continuity in $\ell$, where $\triple{\bfA}_{\alpha\ax}$ is a \enquote{norm}\footnote{The space of RAFs is not a vector space, but can be seen to be a subset of a Banach space.} for the space of RAFs.
One should think of $A$ as a line integral and $\bA$ as the iterated line integral.
Note $2\alpha$ appearing for the bound of $\bA$, reminiscent of the second level lift in rough path theory.

We denote the space of RAFs by $\bfOmega_{\alpha\ax}^1$ with $\alpha\in (\frac 13,\frac 12)$, where \enquote{$\mathrm{ax}$} is emphasises the axial gauge. (Though the spaces make sense in more generality than just the axial gauge.)
The space is similar in spirit to $\Omega^1_\beta$ from Section~\ref{sec:AF},
but with the key difference that we encode rough path enhancements.

An important fact is that the first component projection $(A,\bA)\mapsto A$ is a continuous map $\bfOmega_{\alpha\ax}^1 \to \Omega^1_{\alpha} \hookrightarrow \Omega^1 C^{\alpha-1}$ (see~\eqref{eq:omega_beta_embedding}),
so $\bfOmega_{\alpha\ax}^1$ is indeed a strengthening of both the space $\Omega^1_{\alpha}$ from \cite{Chevyrev19,CCHS2d} and of classical H\"older--Besov spaces.

The space $\bfOmega_{\alpha\ax}^1$ is also natural from a stochastic point of view: if $A$ is the YM measure representative in the axial gauge, then $A(\ell)$ are essentially $\mfg$-valued Brownian motions, of which the Stratonovich lift that we consider here is a central object in rough path theory \cite{FH20}.
One can therefore see the role of $\bfOmega_{\alpha\ax}^1$ as replacing the stochastic analytic arguments in \cite{GKS89, Sengupta92} by rough (pathwise) arguments with the usual advantages of rough path theory, such as continuity and stability of the solution map.

One of the main features of the space $\bfOmega_{\alpha\ax}^1$ is that it comes with a natural generalisation of gauge transformations. We introduce \emph{controlled gauge transformations}, denoted by $\fG^{\beta}_A$, which depend on the underlying RAF $\bfA=(A,\bA)\in\bfOmega_{\alpha\ax}^1$, reminiscent of Gubinelli's controlled rough paths \cite{MaxControl} (see Section~\ref{sec:gauge_transformation} for further details).

\paragraph{Rough Uhlenbeck compactness.}
The above ingredients allow us to formulate our main result:

\begin{theorem}[Rough Uhlenbeck compactness]\label{thm:Rough_Uhlenbeck}
Let $\alpha\in (\frac {4}{9},\frac 12)$ and $\beta\in (0,6\alpha-2)$.\footnote{These parameters seem arbitrary, but they arise from several places in the proof: the requirements $\beta<6\alpha-2$ and $\beta>\frac 23$ arise from  Lemma~\ref{lem:integration_af_Young} and  Theorem~\ref{thm:patching} respectively, which in turn impose $\alpha>\frac{4}{9}$ for the sets of $\beta$ to be non-empty.} 
There exist $C>0$, depending only on $\alpha,\beta$ and $G$,
with the following property: for any two rough additive functions $\bfA=(A,\bA),\bar\bfA=(\bar A,\bar \bA)\in \bfOmega^1_{\alpha\ax}$
in the axial gauge, there exist controlled gauge transformation $g\in  \fG^{3\alpha/2 }_A$,
$\bar g\in  \fG^{3\alpha/2 }_{\bar A}$  such that $A^g,\bar A^{\bar g}\in\Omega^1_{\beta}$, and 
\begin{align}\label{eq:RUC_bound}
|A^g|_{\beta}
&\leq C(1+\triple{\bfA}_{\alpha\ax})^{{\frac {2+\beta-\alpha}{\alpha}}}\triple{\bfA}_{\alpha\ax}\;,\\
|A^g-\bar A^{\bar g}|_{\beta}
&\leq C(1+\triple{\bfA}_{\alpha\ax}+\triple{\bar\bfA}_{\alpha\ax})^{{\frac {2+\beta-\alpha}{\alpha}}}\triple{\bfA;\bar\bfA}_{\alpha\ax}\;.
\end{align}
%
\end{theorem}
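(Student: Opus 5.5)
The plan is the classical Uhlenbeck strategy: first solve for a Coulomb gauge on squares small enough that the data is small, then glue the local gauges together. The steps, in order, are as follows.

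\textbf{Step 1: scaling to small data.} Fix a scale $\lambda\in(0,1]$ and consider the squares of side $\lambda$ in a dyadic-type grid of $[0,1]^2$. The pullback of $\bfA$ to such a square, rescaled to unit size, is again a rough additive function in the axial gauge. Its norm is bounded by $\lambda^{\alpha}\triple{\bfA}_{\alpha\ax}$, using the bounds $|A(\ell)|\lesssim|\ell|^\alpha$ and $|\bA(\ell)|\lesssim |\ell|^{2\alpha}$; the second level scales even better. We choose
\[
\lambda\simeq (1+\triple{\bfA}_{\alpha\ax})^{-1/\alpha},
\]
so that every rescaled RAF has norm below a threshold $\eps_0$, where $\eps_0$ depends only on $\alpha,\beta,G$.

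\textbf{Step 2: local Coulomb gauge for small data.} On the unit square we solve \eqref{eq:intro_PDE_g} with suitable (Neumann-type) boundary conditions. We write $g=\exp(u)$, or equivalently work with the tangent equation, and use the regularity structure whose model is built solely from $\bfA$; this is the main insight of Section~\ref{sec:RAF}. I would set up the abstract fixed point problem as in Section~\ref{sec:solution_theory}. The model norm is controlled by $\triple{\bfA}_{\alpha\ax}\le\eps_0$, so the fixed point map is a contraction, in the spirit of the implicit function theorem around the trivial solution $g=1$ for $A=0$. This yields $g\in\fG^{3\alpha/2}_A$ depending Lipschitz-continuously on $\bfA$.

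Next we must show that $A^g=gAg^{-1}-(\diff g)g^{-1}$ lies in $\Omega^1_\beta$ with
\[
|A^g|_\beta\lesssim\triple{\bfA}_{\alpha\ax}
\quad\text{and}\quad
|A^g-\bar A^{\bar g}|_\beta\lesssim\triple{\bfA;\bar\bfA}_{\alpha\ax}.
\]
The line integrals of $gAg^{-1}$ along arbitrary curves are defined by rough integration of the controlled integrand against $\bfA$. This is where Lemma~\ref{lem:integration_af_Young} enters, and it imposes $\beta<6\alpha-2$. The Coulomb condition $\diff^*A^g=0$, together with elliptic regularity for $\diff\oplus\diff^*$, then upgrades $A^g$ beyond the $C^{\alpha-1}$ regularity of $A$.

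\textbf{Step 3: patching.} I would apply Theorem~\ref{thm:patching}, which requires $\beta>\frac23$, to glue the local gauges. On overlaps of adjacent squares, two local Coulomb representatives differ by a transition function $h=g_1g_2^{-1}$. Since $A^{g_1}=(A^{g_2})^{h}$ and both sides lie in $\Omega^1_\beta$, $h$ is more regular, roughly $C^{1+\beta-1}$ along lines. One then corrects the local gauges successively along the grid, building a global $g$ that is still controlled, with $A^g\in\Omega^1_\beta$.

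\textbf{Step 4: counting and undoing the rescaling.} Undoing the rescaling costs a factor $\lambda^{-(\beta-\alpha)}$ per square in the $|\cdot|_\beta$ norm, and the patching iterates over roughly $\lambda^{-2}$ squares. The combined cost is $\lambda^{-(2+\beta-\alpha)}=(1+\triple{\bfA}_{\alpha\ax})^{(2+\beta-\alpha)/\alpha}$, which matches \eqref{eq:RUC_bound}. For the difference bound, we run the same construction for $\bfA$ and $\bar\bfA$ with a common $\lambda$ chosen using $\triple{\bfA}+\triple{\bar\bfA}$. The local Lipschitz estimates and the Lipschitz continuity of the patching then yield the second inequality.

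\textbf{Main obstacle.} I expect Step 2 to be the hardest, in particular closing the fixed point for a group-valued singular elliptic equation with boundary, with a model generated only by $(A,\bA)$. Subcritical products such as $\partial_i g\, g^{-1}$ and $gA_1g^{-1}$ require the RAF to determine all relevant tree symbols. I would first verify, by power counting, that every negative-degree symbol is either $A$ itself or reconstructible from $\bA$ via Chen's identity. Before relying on this, I would also double-check the exponent counting in Step 4, since it may not be exactly as sketched.
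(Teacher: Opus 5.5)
\textbf{Verdict: genuine gap.} Your global architecture matches the paper's route through Theorem~\ref{thm:RUC_bounded_sets}:
\begin{itemize}
\item rescale by $\lambda\simeq(1+\triple{\bfA}_{\alpha\ax})^{-1/\alpha}$, so that each rescaled piece has norm $\le\lambda^\alpha\triple{\bfA}_{\alpha\ax}$;
\item solve a local Coulomb problem with the model built from $\bfA$;
\item glue the local pieces and rescale back.
\end{itemize}
The gap is in how you obtain $A^g\in\Omega^1_\beta$ at all.

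\textbf{The $\Omega^1_\beta$ regularity of $A^g$.} Your stated mechanism does not deliver it.
\begin{itemize}
\item Rough integration of $\Ad_g$ against $\bfA$ only \emph{defines} $A^g(\ell)$. At best it yields growth bounds, and it says nothing about the triangle part of $|\Cdot|_\beta$. Moreover, $gAg^{-1}$ and $(\diff g)g^{-1}$ taken separately are no more regular than $A$.
\item Lemma~\ref{lem:integration_af_Young} cannot be applied with $\zeta=A$. It is a Young-type statement and requires $\zeta\in\Omega^1_{\beta'}$ with $\beta'>\frac12$.
\item Elliptic regularity for $\diff\oplus\diff^*$ is not available, because $\Ad_gF^A$ and $[A^g\wedge A^g]$ are ill-defined. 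Even heuristically it would give only H\"older--Besov regularity $C^{0^-}$. By \eqref{eq:omega_beta_embedding} that does not imply membership in $\Omega^1_\beta$; one would need $C^{\beta/2}$.
\end{itemize}

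\textbf{What the paper does instead} is an exact cancellation combined with a new line-integral estimate.
\begin{itemize}
\item The low-order coefficients of the solution are identified as $\blue g=g^\1\blue\1+\sigma g^\1\blue{\cI_1A_1}+\ldots$ (Lemmas~\ref{lem:expression_g_1}, \ref{lem:expression_g_square}, \ref{lem:expression_g_2}).
\item Using $\rmD_1\blue{\cI_1A_1}=\blue{A_1}-\blue{\cI_{22}A_1}$, the $A_1$ terms and the $[\cI_1A_1,A_1]$ terms in $\sigma gA_1g^{-1}-(\partial_1 g)g^{-1}$ cancel exactly. This gives Lemma~\ref{lem:B_expresssion}: $A^g=\sigma g\zeta g^{-1}-Q$, with $Q\in C^{3\alpha-1}$ away from $\partial\Lambda$ and $\zeta=\pm\diff^*K*F^A$.
\item The decisive input is Proposition~\ref{prop:regularity_d*KFA}: $\zeta\in\Omega^1_{2\alpha}$, even though $\zeta$ is only in $C^{2\alpha-1}$. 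This rests on the axial-gauge fact $\partial_2A_1=-F^A\in C^{2\alpha-2}$, which comes from the horizontal norm $|A|_{\alpha\hor}$ (Lemma~\ref{lem:regularity_curvature}).
\item That same fact justifies the non-standard homogeneity $|\rmD_2A_1|=-1-2\kappa$ and makes $g\zeta g^{-1}$ a Young product. The same proposition also turns the expansion of $\blue g$ into controlledness with respect to $A(\ell^{x;y})$.
\item Only then is Lemma~\ref{lem:integration_af_Young} applied, with $Y=g$ and $\beta'=2\alpha$. The condition $3\alpha-\frac\beta2>1$ is exactly $\beta<6\alpha-2$.
\end{itemize}
For the same reason the model is not determined by Chen's identity alone. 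The bounds on $\rmD_2A_1$ and $\rmD_2(A_1\cI_1A_1)$ use $|A|_{\alpha\hor}$ and $\|\bA\|_{2\alpha\hor}$.

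\textbf{The counting in Step~4.} The per-square count is not an argument. The norm $|\Cdot|_\beta$ is a supremum over short segments and small triangles, not a sum over squares, and the gluing must not accumulate errors over the squares.
\begin{itemize}
\item What you need is Theorem~\ref{thm:patching}. Its constants depend only on $\beta$ and $G$, uniformly in the number of rectangles. This holds because each local gauge is modified only by interpolating transition functions with its immediate neighbours in a logarithmic chart (Lemma~\ref{lem:interpolating_gauge_transformation}), first in one direction and then in the other.
\item A truly successive correction along the grid would compose about $\lambda^{-1}$ near-identity factors. That product can leave the chart around $1_G$ in which interpolation is possible.
\item The required smallness is $|g-1_G|_\infty\lesssim\lambda^\alpha\triple{\bfA}_{\alpha\ax}$ for each local gauge. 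In the paper this comes from the Dirichlet normalisation $g|_{\partial\Lambda}=1_G$ together with Lipschitz dependence on $\bfA$ at $\bfA=0$.
\item The only loss is rescaling the glued form from a domain of size $\lambda^{-1}$ back to $\Lambda$, which costs $\lambda^{-1-\beta/2}$ (Lemma~\ref{lem:scaling_raf_af_combined}). This gives the exponent $\frac{2+\beta}{2\alpha}-1$. Your $\lambda^{-(2+\beta-\alpha)}$ overestimates it, so the target inequality still holds, but not for the reason you give.
\end{itemize}

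\textbf{Smaller points.}
\begin{itemize}
\item The Dirichlet solution theory controls $g$ only on $\Lambda'\Subset\Lambda^\circ$. This is why the paper covers $\Lambda$ by half-overlapping squares that protrude from $\Lambda$, after first extending $\bfA$ via Lemma~\ref{lem:domain_extension_RAF}. A Neumann-type problem on a grid of $[0,1]^2$ would need its own boundary theory for the singular equation.
\item Theorem~\ref{thm:patching} requires an exponent greater than $\frac23$. For $\beta\le\frac23$ you must patch in $\Omega^1_\gamma$ with $\gamma\in(\frac23,6\alpha-2)$ and then use $|\Cdot|_\beta\le|\Cdot|_\gamma$.
\item The $\fG^{3\alpha/2}$ regularity of the global gauge comes from the $C^\gamma$ patching gauges with $\gamma=\frac{3\alpha}2$, which is admissible precisely because $\alpha>\frac49$. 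The local step by itself gives $\fG^{2\alpha}$ on $\Lambda'$.
\end{itemize}
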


A more precise statement is given in Theorem \ref{thm:RUC_bounded_sets}, which shows that in fact one can choose the map $\bfA\mapsto (g,A^g)$ to be Lipschitz continuous on bounded sets.
Although we do not do it here, one can readily show from Theorem \ref{thm:RUC_bounded_sets} that the map $\bfA\mapsto A^g$ can be chosen measurable.

Before describing the proof and applications of Theorem~\ref{thm:Rough_Uhlenbeck}, we remark on its connection to classical Uhlenbeck compactness \cite{Uhlenbeck82,Wehrheim04}.
Suppose the connection~$B$ on~$[0,1]^d \subset \R^d$ has bounded~$W^{1,p}$ norm for~$p > d/2$. It is then easy to see by Sobolev embedding that~$F^B = \diff B + [B\wedge B]$ has bounded~$L^p$ norm.

	Uhlenbeck's theorem concerns the converse direction: if~$\norm[0]{F^B}_{L^p} < \infty$, can one bound $\norm[0]{B}_{W^{1,p}}$ in terms of $\|F^B\|_{L^p}$?
	This is of course not true even for a pure gauge $B=0^g=-(\dif g)g^{-1}$, since then $F^B=0$ but~$\norm[0]{B}_{W^{1,p}}$ can be arbitrarily large if $g$ is rough.
	The correct question is thus whether there exists a gauge transformation~$g$ such that~$\norm[0]{B^g}_{W^{1,p}}$ is bounded in terms of $\norm[0]{F^B}_{L^p}$.
	Since
	\[
	\norm[0]{F^{B^g}}_{L^p} =\norm[0]{\Ad_g F^{B}}_{L^p} = \norm[0]{F^B}_{L^p}\;,
	\]
	the question in abstract terms is asking to bound the regularity of $B$, modulo gauge transformations, in terms of a gauge-invariant quantity.

As described in \eqref{eq:YM_Coulomb}, Uhlenbeck's approach is to fix the Coulomb gauge by $\diff^* B=0$.
Then, if $\|B\|_{W^{1,p}}$ is small, one indeed has $\|B\|_{W^{1,p}} \lesssim \|F^B\|_{L^p}$ by elliptic regularity.
Then with a continuity argument, one can show that, under only a smallness assumption on $\|F^B\|_{L^p}$ (and not $B$), one can find a gauge transformation $g$ such that $B^g$ is in the Coulomb gauge and $\|B^g\|_{W^{1,p}} \lesssim \|F^B\|_{L^p}$.
Patching together these local estimates leads to a global estimate on $\|B^g\|_{W^{1,p}}$ in terms of $\|F^B\|_{L^p}$, which is Uhlenbeck's result.

Theorem \ref{thm:Rough_Uhlenbeck} can be seen as a rough version of this result:
for any (say smooth) connection $B$,
let $A$ be its (essentially unique) complete axial gauge and $\bfA$ the canonical RAF.
Then Theorem \ref{thm:Rough_Uhlenbeck} implies the existence of $g\in\mfG$ such that $\|B^g\|_{C^{\beta-1}}\lesssim |B^g|_{\beta}$ is bounded in terms of the gauge-invariant quantity $\triple{\bfA}_{\alpha\ax}$.
That is, we replace the $L^p$ norm of $F^B$ by $\triple{\bfA}_{\alpha\ax}$, which requires much less regularity on the underlying fields.

We highlight that $\triple{\bfA}_{\alpha\ax}$ can be derived from a geometrically natural quantity, namely the lasso field of Gross \cite{Gross85}.
Indeed, $\triple{\bfA}_{\alpha\ax}$ is given by H\"older norms of (iterated) surface integrals of the curvature $F^A$.
For any connection $B$ with axial gauge representative $A$, the curvature of $F^A$ is a lasso field of $B$ in the sense of \cite{Gross85}.
The key feature of lassos is that, while smeared averages curvature like $\scal{F^B,\psi}$ for $\psi\in C^\infty_c$ are not gauge covariant (thus general Besov norms of $F^B$ are not gauge invariant), smeared averages of lasso fields \emph{are} gauge covariant.
This makes the lasso field (equivalently the curvature of the axial gauge representative) a natural object to work with in low regularity regimes where $L^p$ norms are not controlled.

\paragraph{Proof outline.}
Since Theorem~\ref{thm:Rough_Uhlenbeck} is our main result, we highlight some the key ideas in its proof.
There are three main steps: the solution theory for the singular PDE \eqref{eq:intro_PDE_g} in Section \ref{sec:solution_theory}, model bounds based on rough additive functions in Section \ref{s:model_bounds}, and patching in Section \ref{sec:patching}.

\paragraph{Solution theory.}
Let $A=(A_1,0)$ be in the complete axial gauge,
As explained after \eqref{eq:YM_Coulomb}, we wish to find $g$ such that $\diff^*A^g=0$, which is equivalent to \eqref{eq:intro_PDE_g}.
Just like in~\cite{Uhlenbeck82}, it is important for us to have smallness of $A$ (more specifically of $\triple{\bfA}_{\alpha\ax}$) to solve the elliptic problem.
We achieve this by zooming to a small square and rescaling $\bfA$ back to the unit square, which has the effect of shrinking the norm $\triple{\bfA}_{\alpha\ax}$.
It is technically convenient to then replace $A_1$ in 
\eqref{eq:intro_PDE_g} by $\sigma A_1$ for $\sigma>0$ small $A_1$ of order $1$.
We choose Dirichlet boundary condition $g|_{\partial\Lambda}=1_G$ (this is an arbitrary but convenient choice),
so we are after solutions to
\begin{equation}\label{eq:intro_Psi}
\Psi(\sigma,g):=\sum_{i=1}^2\partial_i ((\partial_i g)g^{-1})-\sigma \partial_1(gA_1g^{-1})=0\;, \qquad g|_{\partial\Lambda}=1_G
\;.
\end{equation}
Clearly~$\Psi(0,1_G)=0$ and, for smooth~$A_1$, the derivative~$D_2\Psi(0,1_G)(u)$ is equal to $\Delta u$ on $C^\infty(\Lambda,\mfg)$, so one could solve \eqref{eq:intro_Psi} for small~$\sigma$ by the implicit function theorem (IFT).

In Section \ref{sec:solution_theory},
we adapt this IFT argument to a rough setting, in which $A_1\in C^{-\nfrac{1}{2}^-}$ is distributional.
It turns out technically simpler to  introduce an integrated version of $\Psi$ by writing $\Phi = G^\Dir*\Psi$, where $G^\Dir\label{symb:Green_Dir}$ is the Dirichlet Green's function on $\Lambda$, and lift $\Phi$ to the level of modelled distributions over a suitable regularity structure, which we introduce in Sections \ref{sec:reg_struct}-\ref{sec:modelled_distributions}.

Our regularity structure is somewhat non-standard.
One of the reasons is that it is very useful to use that $\partial_2A_1\in C^{-1^-}$, and not just $C^{-\nfrac{3}{2}^-}$, for $A_1$ arising from a RAF (think of $\partial_2A_1=\xi$ as white noise).
Thus $\partial_2$ has a `better than expected' deregularising effect on some terms in the regularity structure, and we encode this with non-standard homogeneities and abstract derivation operators.
Our construction is completely explicit and does not rely on black-box algebraic results such as \cite{Hairer14, BHZ19, BCCH21, Chevyrev_22_Alg}.
A simplifying feature of our problem, compared to many works in singular SPDE, is that, as is often the case in rough path theory, we do not need to consider `negative renormalisation' and thus work primarily with canonical models.

In Section \ref{sec:existence_stability}, we lift the map $\Phi$ to modelled distributions and show that $D_2\Phi(\sigma,g)$ is invertible for small $\sigma$.
Our main result in Section \ref{sec:solution_theory} is Theorem \ref{thm:existence_solution}, which states that there exists a solution $g$ to $\Phi(\sigma,g)=0$ and that $g$ is locally Lipschitz continuous in the model.
We achieve this by studying the ODE
\begin{equation}\label{eq:intro_ODE}
	\partial_\sigma g_\sigma
	=
	-\bigl(\partial_2\Phi(\sigma,g_\sigma)\bigr)^{-1}
	\partial_1\Phi(\sigma,g_\sigma)\;,
	\qquad
	g_0=1_G\;,
\end{equation}
which is equivalent to $\partial_\sigma \Phi(\sigma,g_\sigma)=0$.
This is a more explicit version of the IFT argument sketched above,
with the main advantage that the ODE formulation makes it easy to show continuity in the model.
This approach appears different from standard fixed point arguments in singular SPDE, but is reminiscent of \cite{BOS_25_Phi4}; see also \cite{FK22_Laplace} for applications of the IFT in regularity structures.

In Section \ref{sec:properties_solution},
we derive the crucial properties of the solution $g$ needed in the rest of the paper.
In particular, we identify the coefficients of the solution $g$ in its modelled distribution expansion and show, with a careful cancellation, that $A^g$ is a modelled distribution that reconstructs to a function of regularity $0^-$ (vs. $-1/2^-$ for $A_1$).

\paragraph{Construction of singular objects.}\label{sec:proof_idea_construction_model}
A key insight in the proof of Theorem~\ref{thm:Rough_Uhlenbeck} is that any rough additive function $\bfA=(A,\bA)$ determines canonically a model $\sfZ=(\Pi,\Gamma)$ necessary for the preceeding `solution theory' step. This is shown in Section \ref{s:model_bounds}.
In a stochastic setting, this factorises the measurable map taking the `noise' to the model, $A_1\longrightarrow \sfZ$, as
\begin{equation}\label{eq:A_to_Z}
    A_1
    \longrightarrow
    \bfA=(A,\bA)
    \longrightarrow
    \mathsf Z\;,
\end{equation}
where the first map is measurable and the last map is locally Lipschitz continuous.
This factorisation simplifies the construction of models and shows they arise from a geometrically simple object (the RAF) built from iterated line/area integrals.

To explain the ideas behind the construction of the model $\sfZ$, consider two elements of the structure space, $\blue{\CI_1 A_1}$ and $\blue{A_1\CI_1A_1}$,
where, in these expressions, $\blue{A_1}$ is the abstract placeholder for the first component of $A\in\Omega^1(\Lambda,\mfg)$,
and $\CI_1$ represents integration against $K_1 := \partial_1 K$ for a truncation $K$ of the Green's function of the Laplacian $\Delta$ on $\R^2$.
These symbols have (standard) homogeneities $\nfrac12^-$ and $0^-$, respectively.

Now let $\ell^{x;y}\label{symb:ell_xy}$ be the line segment connecting $x$ to $y$.
In the usual notation for a model $\sfZ= (\Pi,\Gamma)$ in regularity structures,
one then has (see Proposition~\ref{prop:regularity_d*KFA})
\[
\Pi_x \blue{\cI_1A_1}(y)=K_1*A_1(y)-K_1*A_1(x)=A(\ell^{x;y})+Q^A(\ell^{x;y}),
\]
where $Q^A\in\Omega^1_{1^-}$ with $|Q^A|_{1^-}\lesssim |A|_{\nfrac12^-\ax}$.
The point is that any model arising from a RAF realises $\blue{\cI_1A_1}$ as the line integral of $A$ plus the smoother error $Q^A$.
It follows that
\[
\Pi_x \blue{A_1\cI_1A_1}(y)=\partial_{y_1}\bA(\ell^{(x_1,y_2);y})+\partial_{y_1}(\text{Young integrals})\;,
\]
which relates the definition of the model to derivatives of (iterated) line integrals. 

\paragraph{Patching and finalising the proof.}
The output of the previous two steps are $1$-forms $B_n = A^{g_n}$, each in the Coulomb gauge with good regularity, defined on sufficiently small squares $U_n$ which cover $\Lambda$.
The final step combines these local $1$-forms into a global $1$-form via patching. This follows a classical strategy of \cite{Uhlenbeck82,Wehrheim04}, but requires care as we work with distributional connection forms.
In \ref{sec:patching} we state the core patching theorem, which works with $1$-forms in the space $\Omega^1_\beta$.
This step can be read independently of the rest of the paper and we believe it is of independent interest.

In Section \ref{ch:RUC} we combine the (up to now independent) Sections \ref{sec:solution_theory}, \ref{s:model_bounds}, and \ref{sec:patching} to prove Theorem~\ref{thm:Rough_Uhlenbeck}.

\paragraph{Gauge-fixed YM measure.}
While Theorem \ref{thm:Rough_Uhlenbeck} is a deterministic statement, it is inspired by its probabilistic applications.
It turns out one can construct canonical random rough additive functions $\bfA$ from the axial gauge representative $A$ of the YM measure on $[0,1]^2$; indeed we a prove a version of Kolmogorov continuity theorem for RAFs (Theorem \ref{thm:Kolmogorov}), which allows us to
lift $A$ to a random element of $\bfOmega^1_{\alpha\ax}$ with good moment bounds.
(This can be seen as an extension of the rough path Kolmogorov continuity theorem \cite[Thm~3.1]{FH20} to a spatial setting.)
In combination with Theorem \ref{thm:Rough_Uhlenbeck} (or more precisely Theorem \ref{thm:RUC_bounded_sets}),
this allows us to prove our main result on the YM measure:

\begin{theorem}[Gauge-fixed YM measure]\label{thm:RUC_YM}
	Let $\beta \in (0,1)$, $\delta>0$.
	There exists a representative $B\in \Omega_\beta^1$ of the YM measure on $\Lambda=[0,1]^2$ (see Definition \ref{def:YM_measure_precise}) such that, for any $p\geq1$,
    \begin{equation}\label{eq:B_moments}
    \E[|B|^p_\beta]^{1/p}\lesssim_{\beta,\delta} p^{\frac{2+\beta}{2}+\delta}\;.
	\end{equation}
	Furthermore, fix a non-negative radial function $\rho \in C_c^\infty(\R^2)$ with support on $B(0,1)$ and $\int_{\R^2} \rho=1$. For $\eps \in (0,1]$, denote
$
\rho^\eps(x) = \eps^{-2}\rho(\eps^{-1}x) $.
Let $\xi$ be a $\mfg$-valued white noise on $\R^2$ and $\xi^\eps = \xi*\rho^\eps$.
Let $A^\eps_1 \in C^\infty(\R^2,\mfg)$ such that $A_1(\Cdot,0)=0$ and $\partial_2A^\eps_1=\xi^\eps$.
Denote $A^\eps = (A_1^\eps,0)\in \Omega^1 C^\infty$.
Then one can define $B$ above on the same probability space as $\xi$ and there exist random $g^\eps\in C^\infty(\Lambda;G)$ such that $B^\eps := (A^\varepsilon)^{g^\eps}$ converges to $B$ as $\eps\downarrow0$ in $\Omega_\beta^1(\Lambda)$ a.s. and in $L^p$ for any $p\geq 1$
	and there exists $\kappa>0$ such that
	\begin{equation}\label{eq:B_eps_moments}
	\E\Big[\Big|\sup_{0\leq \bar\eps<\eps\leq 1}\frac{ |B^\eps - B^{\bar\eps}|_{\beta}}{|\eps-
	\bar\eps|^\kappa}\Big|^p\Big]^{1/p} \lesssim_{\beta,\delta} p^{\frac{2+\beta}{2}+\delta}\;.
	\end{equation}
\end{theorem}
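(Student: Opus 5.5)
Given $\beta\in(0,1)$ and $\delta>0$, we first fix $\alpha\in(\frac49,\frac12)$ close enough to $\frac12$ that $\beta<6\alpha-2$ and that the exponent produced below is at most $\frac{2+\beta}{2}+\delta$. We then lift the axial-gauge representative to a random RAF. Set $\xi^0:=\xi$ and let $A^0=(A_1,0)$ with $A_1(\Cdot,0)=0$, $\partial_2A_1=\xi$; more generally let $A^\eps$ be the smoothed version. For smooth $\eps>0$ the canonical RAF $\bfA^\eps=(A^\eps,\bA^\eps)$ is given by line and iterated line integrals. For the limit, the line integrals $A^0(\ell)$ are Gaussian and the iterated integrals $\bA^0(\ell)$ are in the second Wiener chaos, with Stratonovich-type covariance bounds
\[
\E|A^\eps(\ell)-A^{\bar\eps}(\ell)|^2\lesssim |\eps-\bar\eps|^{2\kappa}|\ell|^{1-2\kappa'}\;,
\qquad
\E|\bA^{\eps}(\ell)|^2\lesssim|\ell|^2\;,
\]
together with analogous bounds for increments in $\ell$ and in $\eps$. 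Hypercontractivity upgrades these to $L^p$ bounds with constants $p^{1/2}$ and $p$ for the first and second chaos respectively. The Kolmogorov theorem for RAFs (Theorem~\ref{thm:Kolmogorov}) then produces $\bfA=\bfA^0\in\bfOmega^1_{\alpha\ax}$. It gives $\E\triple{\bfA}_{\alpha\ax}^p{}^{1/p}\lesssim p^{1/2}$, once the second level is measured at the square-root scale, i.e.\ by taking $\triple{\cdot}$ homogeneous so that $\bA$ contributes $|\bA|^{1/2}$. It also gives $\E[\sup_{\eps,\bar\eps}\triple{\bfA^\eps;\bfA^{\bar\eps}}^p/|\eps-\bar\eps|^{\kappa p}]^{1/p}\lesssim p^{1/2}$, where we apply Kolmogorov jointly in $(\eps,\ell)$ at the cost of shrinking $\kappa$. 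This also shows that $\bfA^\eps\to\bfA$ a.s.

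Next we apply Theorem~\ref{thm:RUC_bounded_sets}. It provides a map $\bfA\mapsto(g,A^g)$ that is Lipschitz on bounded sets and can be chosen measurable, with the bounds of Theorem~\ref{thm:Rough_Uhlenbeck}. We define $B:=A^g$ and $B^\eps:=(A^\eps)^{g^\eps}$, where $g^\eps$ is the image of $\bfA^\eps$. For smooth input the controlled gauge transformation $g^\eps$ should be genuinely smooth; this needs a check that the solution of \eqref{eq:intro_PDE_g} and the patching preserve smoothness, which is elliptic regularity. Since $\bfA^\eps\to\bfA$ and the map is continuous, $B^\eps\to B$ in $\Omega^1_\beta$ a.s. To see that $B$ represents $\mu_\YM$, we note that $B^\eps$ is gauge equivalent to $A^\eps$. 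Since gauge-invariant observables such as Wilson loops are continuous on $\Omega^1_\beta$, the law of $[B]$ agrees with Definition~\ref{def:YM_measure_precise}; alternatively this follows directly from $g$ being a controlled gauge transformation of $A$.

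For the moments, \eqref{eq:RUC_bound} gives $|B|_\beta\lesssim(1+\triple{\bfA})^{(2+\beta)/\alpha}$. Then $\E[|B|^p_\beta]^{1/p}\lesssim 1+\E[\triple{\bfA}^{p(2+\beta)/\alpha}]^{1/p}\lesssim p^{\frac{2+\beta}{2\alpha}}$, which is $\le p^{\frac{2+\beta}{2}+\delta}$ for $\alpha$ close to $\frac12$. Similarly, the difference bound combined with Hölder's inequality controls \eqref{eq:B_eps_moments}. We bound the factor $(1+\sup_\eps\triple{\bfA^\eps})^{(2+\beta-\alpha)/\alpha}$, which is of order $p^{(2+\beta-\alpha)/(2\alpha)}$, times the Hölder-in-$\eps$ RAF seminorm, of order $p^{1/2}$. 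Their product is again $p^{(2+\beta)/(2\alpha)}$, and $L^p$ convergence follows.

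The main obstacle is the first step. We must verify the covariance estimates for the iterated integrals $\bA^\eps$ along arbitrary line segments, including continuity in $\ell$ and uniformity in $\eps$, at the level required by the definition of $\bfOmega^1_{\alpha\ax}$. We must also track the Gaussian-chaos constants sharply enough, i.e.\ the $p^{1/2}$ growth after square-root scaling, to obtain exactly the exponent $\frac{2+\beta}{2}$.
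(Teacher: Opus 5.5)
Your architecture matches the paper's: chaos moment bounds, Theorem~\ref{thm:Kolmogorov}, Kolmogorov in $\eps$, then rough Uhlenbeck. The step you flag as the main obstacle is where Lemmas~\ref{lem:moment_bound_first_level} and~\ref{lem:moment_bound_second_level} do the work. Note that radiality of $\rho$ is what makes the zeroth-chaos (Casimir) term of $\bA^\eps$ converge to the Stratonovich correction. However, two later steps fail as written.

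\textbf{The exponent.} From \eqref{eq:RUC_bound} you get $|B|_\beta\lesssim(1+\triple{\bfA}_{\alpha\ax})^{(2+\beta)/\alpha}$, hence $\E[|B|_\beta^p]^{1/p}\lesssim p^{\frac{2+\beta}{2\alpha}}$, as you compute. But $\frac{2+\beta}{2\alpha}\to 2+\beta$ as $\alpha\uparrow\frac12$, so this is roughly the square of $p^{\frac{2+\beta}{2}}$. The claimed inequality $p^{\frac{2+\beta}{2\alpha}}\le p^{\frac{2+\beta}{2}+\delta}$ is therefore false, and the same error recurs in your treatment of \eqref{eq:B_eps_moments}. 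Theorem~\ref{thm:Rough_Uhlenbeck} as stated is too weak for this moment growth. You need the sharper scaling of Theorem~\ref{thm:RUC_bounded_sets}: on the ball of radius $R$ one has $|A^{\bsg_R(\bfA)}|_\beta\le CR^{\frac{2+\beta}{2\alpha}-1}\triple{\bfA}_{\alpha\ax}$. This comes from the zoom scale $\sigma\sim R^{-1/\alpha}$ together with the rescaling cost $\sigma^{-(1+\beta/2-\alpha)}$. With $R\le 1+Y$ this gives $|B|_\beta\lesssim(1+Y)^{\frac{2+\beta}{2\alpha}}$, hence $\E[|B|_\beta^p]^{1/p}\lesssim p^{\frac{2+\beta}{4\alpha}}$, which does tend to $p^{\frac{2+\beta}{2}}$. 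For the $\eps$-differences, the prefactor $(1+Y)^{\frac{2+\beta}{2\alpha}-1}$ contributes $p^{\frac12(\frac{2+\beta}{2\alpha}-1)}$ in $L^{2p}$. Multiplied by the $\sqrt p$ from the H\"older-in-$\eps$ RAF bound, this gives $p^{\frac{2+\beta}{4\alpha}}$.

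\textbf{The choice of $g^\eps$.} Neither theorem provides a single map that is continuous on all of $\bfOmega^1_{\alpha\ax}$. Theorem~\ref{thm:Rough_Uhlenbeck} only asserts, for each pair, the existence of some $g,\bar g$. Theorem~\ref{thm:RUC_bounded_sets} gives a different map $\bsg_R$ for each radius, since the patching scale depends on $R$, and \eqref{eq:cont_in_A} compares $\bsg_R(\bfA)$ with $\bsg_R(\bar\bfA)$ only for the \emph{same} $R$. If you choose the radius separately for each $\eps$, the supremum over all pairs $\bar\eps<\eps$ in \eqref{eq:B_eps_moments} is uncontrolled whenever the radii differ. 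Conversely, no deterministic $R$ works, because $\sup_\eps\triple{\bfA^\eps}_{\alpha\ax}$ is unbounded. The missing idea is to choose the radius randomly but uniformly over the whole family. Let $Y:=\sup_{\eps\in[0,1]}\triple{\bfA^\eps}_{\alpha\ax}$, which satisfies $\E[Y^p]^{1/p}\lesssim\sqrt p$ by your H\"older-in-$\eps$ bound. On the event $\{n-1<Y\le n\}$, set $g^\eps:=\bsg_n(\bfA^\eps)$ simultaneously for all $\eps\in[0,1]$. This gives measurability, the pairwise bound with the single constant $Cn^{\frac{2+\beta}{2\alpha}-1}$, and the a.s.\ convergence $B^\eps\to B$ at once.
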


The proof of Theorem \ref{thm:RUC_YM} is given in Section \ref{sec:proof_RUC_YM}.
The final part of Theorem \ref{thm:RUC_YM} should be seen as a Wong--Zakai-type theorem for the YM measure in (local) Coulomb gauges and is a consequence of the continuity of the map $\bfA\mapsto A^g$ on bounded sets implied by Theorem \ref{thm:RUC_bounded_sets}.

\begin{remark}\label{rem:compare_CS}
A refinement of \cite{Chevyrev19} derived in \cite{CS26} (see the proof of Theorem 9.1 therein)
shows that there exists a gauge-fixed representative $B$ of the YM measure with $|B|_\beta$ stochastically bounded above by $e^{C|\log(2+|X|)|^2}$, where $X$ is a standard Gaussian random variable.
The bound \eqref{eq:B_moments} considerably sharpens this and shows that $|B|_\beta$ is bounded stochastically by $C(1+|X|^{3/2})$.
\end{remark}


\subsection{Notation and preliminaries}
\label{sec:notation}
We end the introduction with  notational conventions used throughout the paper.

We let $X\lesssim Y$ denote $X\leq CY$ for some constant $C>0$ not depending on $X,Y$.
When we wish to emphasise dependence of $C$ on parameters $\alpha,\beta,\ldots$, we write $\lesssim_{\alpha,\beta,\ldots}$.
We equip $\R^d$ with the Euclidean norm $|\Cdot|$.

\paragraph{Metric spaces.}
Let $(X,d_X)$ be a metric space. We denote by $B(x,r) = \{y\in X\colon d_X(x,y) < r\}$ the open ball of center $x$ and radius $r$.
For a set $U\subset X$ we denote by $U^\circ\label{symb:interior}$ the interior of $U$
and by $\overline U$ the closure of $U$.
For any $\varepsilon>0$, we let\label{symb:fattening}
\begin{equation}\label{eq:ex_def}
U^{+\varepsilon}=\bigcup_{y\in U}\{x\in X\colon d_X(x,y)\leq \varepsilon\}
\end{equation}
denote the $\varepsilon$-fattening of $U$.
For sets $U,V\subset \R^d$ we write $U\Subset V\label{symb:Subset}$ if $\overline{U}\subset V$.  

\paragraph{Matrices and Lie groups.}
Let $\rmM_\bC(N)$ denote the vector space of $N\times N$ complex matrices endowed $\rmM_\bC(N)$ with the Frobenius norm.
Let $\rmU(N)\subset \rmM_\bC(N)$ be the unitary group and $G\subset \rmU(N)\label{symb:G_Lie_group}$ a compact connected Lie group.
We denote by $\fg\subset \rmM_\bC(N)\label{symb:Lie_algebra}$ the Lie algebra of $G$.
The identity element of $G$ is denoted by $1_G\label{symb:identity_G}$
(this is just the identity matrix in $\rmM_\bC(N)$).
We use the notation $O_G\subset G$ (resp. \  $O_\fg\subset \fg$)
for the open sets around $1_G\in O_G$ (resp.\ $0\in O_\fg$) such that the Lie group exponential map $\exp:\fg\to G$ 
restricted on $O_G$ is a diffeomorphism between $O_G$ and $O_\fg$ with inverse $\log:O_G\to O_\fg$.

\paragraph{Function spaces.}
We denote henceforth $\Lambda\label{symb:Lambda} = [0,1]^2\subset \R^2$.

Let $V,W$ be vector spaces. We denote by $L(V;W)\label{symb:LVW}$ the set of linear operators $V\to W$. We set $L(V):=L(V;V)$.

For $\beta\in[0,1]$ and a function $f\colon E\to F$ between metric spaces $(E,d_E)$ and $(F,d_F)$, we denote by $|f|_{\Hol\beta}$ the $\beta$-H\"older seminorm of $f$:
\[
|f|_{\Hol\beta} = \sup_{x,y\in E}\frac{d_F(f(x),f(y))}{d_E(x,y)^\beta}\;.
\]

We denote by $\floor{\beta}$ the floor of $\beta\in\R$.
For $\beta > 0$, let $C^\beta(\R^d)\label{symb:Cbeta}$ be the Banach space of $\beta$-H\"older functions equipped with the norm
\[
|\phi|_{C^\beta} = \max_{|k|\leq\floor{\beta}}|\partial_k\phi|_{\infty} + \max_{|k| = \floor{\beta
}} |\partial_k \phi|_{\Hol{(\beta-\floor\beta)}}
\]
where $|\Cdot|_\infty$ is the uniform norm and the $\max$ runs over multi-indices $k\in\N^d$ and we denote $|k| = \sum_{i=1}^d|k_i|$.
For instance, $C^1$ means Lipschitz.
For a set $U\subset \R^d$, we treat by $C^\beta(U)$ by restriction:
\[
C^\beta(U) = \{\zeta|_U \,:\,\zeta \in C^\beta(\bR^d)\}\;,
\quad
|\xi|_{C^\beta(U)} = \inf\{|\zeta|_{C^\beta(\bR^d)} \,:\,\zeta|_U = \xi\}\;.
\]
We sometimes write $|\Cdot|_{C^\beta}$ for $|\Cdot|_{C^\beta(U)}$ when the domain $U$ is clear from the context.

For $\beta\leq 0$, we let $C^\beta(\R^d)$ be the Banach space of distributions $\xi\in \cD'(\R^d)$ such that
\[
|\xi|_{C^\beta} := \sup_{x\in\R^d}\sup_{\lambda\in(0,1]}\sup_{\phi\in \CB^r} \lambda^{-\beta}|\scal{\xi,\phi_x^\lambda}|<\infty\;,
\]
where $r=\floor{-\beta+1}$, $\CB^r = \{\phi\in C^r(\R^d)\colon |\phi|_{C^r}\leq 1\,,\, \supp(\phi) \subset B(0,1)\}$,
\[
\phi_x^\lambda(y) = \lambda^{-d}\phi(\lambda^{-1}(y-x))\;,
\]
and we denote the pairing of functions and distributions by $\scal{\xi,\phi} =\xi(\phi) = \int_{\R^d}\xi(y)\phi(y)\diff y$.

We denote by $\lc^\beta$ the closure of smooth functions in $C^\beta$, i.e.\ the little H\"older spaces.
For function space $X$, we denote by $\Omega^1 X$, the space of $1$-forms $A=A_i\diff x^i$ for which $A_i\in X$.
Recall the following Young product theorem (see, e.g.~\cite[Thm.~14.1]{CZ20_reconstruction}).

\begin{lemma}\label{lem:Young_product}
For $1\geq \alpha \geq \beta$ with $\alpha+\beta>0$, there exists a continuous bilinear map
\[
C^\alpha(\R^d)\times C^\beta(\R^d)\ni (f,g)\mapsto f g\in C^{\beta}(\R^d)\;,
\]
which extends classical multiplication $C^\infty\times C^\infty\to C^\infty$.
Moreover, for $r = \floor{-\beta+1}\vee 1$,
\begin{equation}\label{eq:Young_bound}
\sup_{x\in\R^d}\sup_{\lambda\in(0,1]}\sup_{\phi\in \CB^r} \frac{|\scal{fg - f(x)g,\phi_x^\lambda}|}{\lambda^{\alpha+\beta}} \lesssim_{\alpha,\beta} |f|_{C^\alpha} |g|_{C^\beta}\;.
\end{equation}
\end{lemma}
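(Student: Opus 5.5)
The plan is to reduce to a local estimate by expanding the distribution $fg - f(x)g$ around the base point and using a Littlewood--Paley or wavelet decomposition of $g$. Alternatively, one can use the reconstruction theorem directly. Concretely, consider the germ $F_x := f(x) g$. The claimed bound \eqref{eq:Young_bound} then states that $fg$ is the reconstruction of this germ, with the optimal local estimate $\lambda^{\alpha+\beta}$.

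\textbf{Step 1: coherence of the germ.} For $x,y$ with $|x-y|\lesssim\lambda$, compute
\[
\scal{F_x - F_y,\phi_y^\lambda} = (f(x)-f(y))\scal{g,\phi^\lambda_y}.
\]
Since $f\in C^\alpha$ with $\alpha\le 1$, we have $|f(x)-f(y)|\le |f|_{C^\alpha}|x-y|^\alpha$, and by definition of $C^\beta$ we have $|\scal{g,\phi^\lambda_y}|\le |g|_{C^\beta}\lambda^\beta$ for $\beta\le 0$. In the case $\beta>0$, $g$ is a bounded function and the product is classical; the bound holds with $\lambda^{\alpha+\beta}$ replaced trivially, using that $f(y)-f(x)$ and $g$ are H\"older, after subtracting $g(x)$ terms. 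Hence the germ is $(\alpha+\beta)$-coherent with homogeneity $\beta$, uniformly in the test function class $\CB^r$.

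\textbf{Step 2: reconstruction.} Since $\alpha+\beta>0$, the reconstruction theorem for coherent germs (the Caravenna--Zambotti version cited as \cite{CZ20_reconstruction}) yields a unique distribution $\CR F$ such that $|\scal{\CR F - F_x,\phi^\lambda_x}|\lesssim \lambda^{\alpha+\beta}|f|_{C^\alpha}|g|_{C^\beta}$. Define $fg:=\CR F$. Bilinearity follows from uniqueness, since the germ depends bilinearly on $(f,g)$. For smooth $f,g$, the classical product satisfies the same local bound by Taylor expansion of $f$, so uniqueness identifies the two. This gives both the extension property and \eqref{eq:Young_bound}.

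\textbf{Step 3: $fg\in C^\beta$.} Write $\scal{fg,\phi^\lambda_x} = f(x)\scal{g,\phi^\lambda_x} + O(\lambda^{\alpha+\beta})$. Both terms are bounded by $\lambda^\beta|f|_{C^\alpha}|g|_{C^\beta}$, using $\lambda\le1$ and $\alpha\ge0$. Continuity follows from the bilinear bound.

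The only delicate point is ensuring that the reconstruction theorem applies with test functions in $\CB^r$ for $r=\floor{-\beta+1}\vee1$ rather than smooth ones. I would handle this by the standard equivalence of test-function classes in the definition of negative H\"older spaces.
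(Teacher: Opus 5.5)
For $\beta\le 0$ your argument is correct and takes the same route as the source the paper relies on. The paper gives no proof of its own; it cites \cite[Thm.~14.1]{CZ20_reconstruction}, where the product is obtained by reconstructing the germ $F_x=f(x)g$. Two small remarks on that part:
\begin{itemize}
\item Your identity gives $|\scal{F_x-F_y,\phi^\lambda_y}|\le|f|_{C^\alpha}|g|_{C^\beta}\,\lambda^\beta(|x-y|+\lambda)^{\alpha}$ for \emph{all} $x,y$. This is the form the coherence condition requires, not only $|x-y|\lesssim\lambda$.
\item The test-function issue is automatic. Since $\alpha\le1$ and $\alpha+\beta>0$ force $\beta>-1$, we have $r=1>-\beta$, which is exactly what the reconstruction bound needs to hold uniformly over $\CB^r$.
\end{itemize}

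Your aside on $\beta>0$ is wrong, and no argument can repair it, because for $\beta>0$ the bound \eqref{eq:Young_bound} is false. Take
\begin{itemize}
\item $g\equiv1$, so $|g|_{C^\beta}=1$;
\item $f(y)=\chi(y)|y_1|^\alpha$, with $\chi\in C^\infty_c$ equal to $1$ near the origin;
\item $x=0$, and $\phi\ge0$ with $\phi\not\equiv0$, normalised so that $\phi\in\CB^1$.
\end{itemize}
For small $\lambda$ one gets $\scal{fg-f(0)g,\phi^\lambda_0}=\lambda^\alpha\int|z_1|^\alpha\phi(z)\dif z$, so the quotient in \eqref{eq:Young_bound} is of order $\lambda^{-\beta}\to\infty$.

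``Subtracting $g(x)$ terms'' amounts to estimating $(f-f(x))(g-g(x))$. That product is indeed $O(\lambda^{\alpha+\beta})$, but it is not the quantity in \eqref{eq:Young_bound}. For $fg-f(x)g$ with $\beta>0$, the germ $F_x=f(x)g$ is only $\alpha$-coherent with homogeneity $0$, and the example shows $O(\lambda^\alpha)$ is sharp.

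So the lemma has to be read with $\beta\le 0$. This is the setting of the cited theorem (a H\"older function times a distribution of negative regularity), and it is the only case the paper uses: Lemma~\ref{lem:B_expresssion}, with $\zeta_i\in C^{-2\kappa}$. With that restriction your proof is complete.
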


\section{Rough additive functions}\label{sec:RAF}
We want to define a state space for the axial gauge of the 2D YM measure on $\Lambda$. 
It should be strong enough to consist of connection $1$-forms for which a suitable notion of holonomy and gauge transformation exists. 
A natural approach that has been initiated in \cite{Chevyrev19} and extended in \cite{CCHS2d} is to enforce elements of the state space to have well-defined line integrals. 
These works, however, do not cover the \emph{axial gauge} representative of the YM measure as its line integrals are of worse regularity than covered in \cite{CCHS2d}. When it comes to line integration, the latter work corresponds to the Young regime while it is known since the works~\cite{GKS89} and~\cite{Driver89} that the integrals of the YM measure in the axial gauge behave like Brownian motions. 
In this section, we will therefore take inspiration from rough paths theory to generalise the state space defined in~\cite{CCHS2d}.

\subsection{Preliminaries on additive functions}\label{sec:AF}
We start by recalling some basic definitions from \cite{CCHS2d}. We fix a finite-dimensional Banach space $E$. We denote by $\cX\label{symb:cX}$ the set of line segments $\ell=(x,v)$ where $x\in \Lambda$ is the starting point and $v\in\R^2$ is the direction vector satisfying $|v|\leq \frac 14$ and $x+v\in \Lambda$. The length of $\ell$ is denoted by $|\ell|=|v|$. 
We say two lines $\ell=(x,v),\bar\ell=(\bar x,\bar v)\in\cX$ are \emph{joinable} if $\bar x=x+v$, $v=cw$ and $\bar v=\bar cw$ for some $|w|\leq 1$ and $c,\bar c\in [-\frac 1 4,\frac 14 ]$ satisfying $|c+\bar c|\leq \frac 1 4$. In such case, we write $\ell\sqcup\bar\ell\label{symb:line_concatenation}=(x,v+\bar v)$.

\begin{definition}[Additive functions]\label{def:additive_function}
A function $A:\cX\to E$ is called \emph{additive} if for all joinable lines $\ell,\bar\ell\in\cX$, we have
\[
A(\ell\sqcup\bar\ell) = A(\ell) + A(\bar\ell).
\]
\end{definition}

Recall the space $\Omega^1 C^\infty(\Lambda;E)$  consisting of smooth $E$-valued $1$-forms which we subsequently simply write as $\Omega^1C^\infty$. Let $A\in\Omega^1 C^\infty$ and $\ell\in \cX$, and consider the line integral of $A$ along $\ell$ which we denote by 
\[
A(\ell):=\int_\ell A.
\]  
Note the obvious abuse of notation that $A$ is both the integrated $1$-form and the $1$-form itself. This abuse of notation is harmless since one may recover the original $1$-form from the integrated $1$-form via the fundamental theorem of calculus. Crucially, the line integral of a $1$-form defines an additive function. For any $\beta\in (0,1]$, we denote by $\Omega_{\beta\gr}^1\label{symb:Omega_gr}$ the completion of $\Omega^1 C^\infty$ under the norm 
\begin{align}\label{eq:gr_norm}
|A|_{\beta\gr}:=\sup_{\ell\in\cX,|\ell|\neq 0}\frac{|A(\ell)|}{|\ell|^\beta}.
\end{align}

The same definition applies to $1$-forms defined on a rectangle $U\subset \R^2$ with the norm $|\Cdot|_{\beta\gr;U}$ defined similarly with line segments in $U$ of length at most $\frac 14$. The space is then denoted by $\Omega^1_{\beta\gr}(U)$.\footnote{When the domain is unspecified, we mean $U=\Lambda$.}

It is useful to be able to control differences of line integrals as well, for example, $A(\ell)$ should somehow vary continuously with respect to a suitable topology on lines. To that end, given a smooth $1$-form $A\in\Omega^1 C^\infty$ and a triangle $P$, we denote by $A(\partial P)$ the line integral of $A$ along $\partial P$ and by $|P|$ the area enclosed by the triangle. We now recall the following seminorm from \cite{CCHS2d}, namely:  
\begin{align}\label{eq:def_beta_tri}
|A|_{\beta\tri}:=\sup_{|P|>0}\frac{|A(\partial P)|}{|P|^{\beta/2}},
\end{align}
where the supremum is taken over all triangles in $\Lambda$ with diameter at most $\frac 1 4$. 
Let us define\label{symb:additive_norms} \[
|A|_\beta:=|A|_{\beta\gr}+|A|_{\beta\tri},
\]
and denote by $\Omega^1_\beta\label{symb:Omega_beta}$ the closure of $\Omega^1 C^\infty$ with respect to the norm $|\Cdot|_\beta$. It turns out that any $A$ in $\Omega^1_\beta$ possesses continuity with respect to the lines \cite[Thm.~3.11]{CCHS2d}.
Similarly as before, we can use this definition for any rectangle $U\subset \R^2$ in which case we consider triangles of diameter at most $\frac 14$, and we write the (semi)norms as $|\Cdot|_{\beta\tri;U}$ and $|\Cdot|_{\beta;U}:=|\Cdot|_{\beta\gr;U}+|\Cdot|_{\beta\tri;U}$ and the corresponding space as $\Omega^1_{\beta}(U)$.

The following embedding from \cite[Cor.\ 3.23]{Chevyrev19} is important for us: for any $\beta\in (0,1)$,
\begin{align}\label{eq:omega_beta_embedding}
    \Omega^1\lc^{\beta/2}(U)\hookrightarrow \Omega^1_\beta(U) \hookrightarrow \Omega^1_{\beta\gr}(U)\hookrightarrow \Omega^1\lc^{\beta-1}(U). 
\end{align}

\subsection{Recap of rough paths theory}\label{sec:rough_paths}

We briefly recall the rough path facts used throughout the paper. We refer to
\cite{FH20} for a detailed treatment.
Let $\alpha\in(\frac13,\frac12)$ and let $E$ be a finite-dimensional normed
vector space. For a path $X:[0,1]\to E$ we define $X_{s,t}:=X_t-X_s$ for $s,t\in [0,1]$. We denote by $\scC^\alpha_{\rmg}\label{symb:scC}([0,1];E)$ the space of
weakly geometric $\alpha$-Hölder rough paths. More precisely, an rough path
$\bfX=(X,\bX)\in \scC^\alpha_{\rmg}([0,1];E)$ consists of maps
\[
    X:[0,1]\to E,
    \qquad
    \bX:[0,1]^2_{\leq}\to E\otimes E,
\]
such that
\begin{equation}\label{eq:Hol_X_def}
    |X|_{\Hol\alpha}=
    \sup_{s<t}\frac{|X_{s,t}|}{|t-s|^\alpha}
    <\infty,
    \qquad
	 |\bX|_{C^{2\alpha}_2} :=
    \sup_{s<t}\frac{|\bX_{s,t}|}{|t-s|^{2\alpha}}
    <\infty,
\end{equation}
and  for all $s\leq u\leq t$, the second level $\bX$ satisfies
\[
    \bX_{s,t}
    =
    \bX_{s,u}+\bX_{u,t}+X_{s,u}\otimes X_{u,t},
\]
and the geometricity condition
\[
    \Sym\bX_{s,t}
    =
    \frac12 X_{s,t}\otimes X_{s,t}.
\]

Let $\bfX=(X,\bX)\in\scC^\alpha_{\rmg}([0,1];E)$ and let $F$ be another
finite-dimensional normed vector space. A path $Y:[0,1]\to F$ is said to be
controlled by $X$ if there exists a path
$Y':[0,1]\to L(E;F)$ such that
\begin{align}\label{eq:controlled_RP}
    Y_{s,t}
    =
    Y'_sX_{s,t}
    +
    R^Y_{s,t},
\end{align}
where $Y'$ is $\alpha$-Hölder and
\begin{align}\label{eq:remainder_RP}
  |R^Y|_{C^{2\alpha}_2} :=  \sup_{s<t}\frac{|R^Y_{s,t}|}{|t-s|^{2\alpha}}
    <\infty.
\end{align}
We denote the corresponding space by $\cD^{2\alpha}_X(F)\label{symb:controlled_rough_paths}$.

For $(Y,Y')\in\cD^{2\alpha}_X(L(E;F))$, the rough integral
$\int Y\,\dif\bfX$ is defined as the limit of compensated Riemann sums
\[
    \sum_{[u,v]\in\cP}
    \Bigl(
        Y_u X_{u,v}
        +
        Y'_u \bX_{u,v}
    \Bigr),
\]
as the mesh of the partition $\cP$ tends to zero. We use without further
comment the standard continuity estimates for this integral.

We shall also use the following standard consequences of controlled rough
path calculus. First, if $(Y,Y')\in\cD^{2\alpha}_X(F)$ and
$\Phi:F\to H$ is smooth, then $\Phi(Y)$ is again controlled by $X$. Second,
if $(Z,Z')\in\cD^{2\alpha}_X(H)$, then one can define integrals of the form
\[
    \int Y\dif Z
\]
by viewing both $Y$ and $Z$ as controlled by the same rough path $\bfX$.
Equivalently, the integral is obtained as the limit of the compensated sums
\[
    \sum_{[u,v]\in\cP}
    \Bigl(
        Y_u Z_{u,v}
        +
        Y'_u Z'_u \bX_{u,v}
    \Bigr),
\]
with the natural interpretation of the products in the relevant finite
dimensional spaces. In particular, products, inverses on Lie groups, adjoint
actions, and rough product rules used below are all understood in this
standard controlled rough path sense.

\subsection{Definition of rough additive functions}
Inspired from rough paths theory, we enhance Definition~\ref{def:additive_function} as follows: 
\begin{definition}[Enhanced additive functions]\label{def:enhanced_AF}
We define an \emph{enhanced additive function} to be a pair
$\bfA=(A,\bA)$ consisting of an additive function $A:\cX\to E$ and a map
$\bA:\cX\to E\otimes E$ satisfying the following:
\begin{itemize}
\item Chen's identity: for any joinable lines $\ell,\bar\ell\in\cX$, we have
\[
\bA(\ell\sqcup\bar\ell)
=
\bA(\ell)+\bA(\bar\ell)+A(\ell)\otimes A(\bar\ell);
\]
\item Weak geometricity condition: for any $\ell\in\cX$,
\[
\Sym\bA(\ell)
=
\frac12 A(\ell)\otimes A(\ell).
\]
\end{itemize}
We denote by $\fA$ the collection of enhanced additive functions.
\end{definition}
Let $A\in\Omega^1 C^\infty(\Lambda;E)$, and define $\ell_A:[0,1]\to E$ to be the path
\begin{equation}\label{eq:ell_A}
  \ell_A(t):=A((x,tv)).  
\end{equation}
Note that $\ell_A\in C^1$ and as such we can define 
\begin{align}\label{eq:def_bA_one_arg}
\bA(\ell):=\int^1_0\int^r_0 \dif\ell_A(r') \otimes\dif\ell_A(r)=\int^1_0\ell_A(r)\otimes\dif\ell_A(r).
\end{align}
In particular, any $A\in \Omega^1 C^\infty(\Lambda;E)$ induces a canonical enhanced additive function. 

Similar as for additive function, we define a norm for enhanced additive functions. Let $\alpha \in (\frac 1 3 ,1]$ and define given an enhanced additive function $\bfA=(A,\bA)$,  the following quantity:
\begin{equation} \label{e:RAF_gr_norm_2}
    \|\bA\|_{2\alpha\gr}:=\sup_{\ell\in\cX,|\ell|\neq 0}\frac{|\bA(\ell)|}{|\ell|^{2\alpha}}. 
\end{equation}
This leads us to define a metric on $\fA$ by 
\begin{align}\label{eq:RAF_gr_metric}
    \triple{\bfA;\bar\bfA}_{\alpha\gr}:=|A-\bar A|_{\alpha\gr}+\|\bA-\bar\bA\|_{2\alpha\gr}^{1/2},\qquad \bfA,\bar\bfA\in\fA,
\end{align}
where we recall the norm $|\Cdot|_{\alpha\gr}$ from \eqref{eq:gr_norm}.
We also set $\triple{\bfA}_{\alpha\gr}:=\triple{\bfA;\mathbf{0}}_{\alpha\gr}$.

\begin{definition}[Rough additive functions]\label{d:RAF}
Let $\alpha\in(\frac13,1]$. We define
\[
    \fA_{\alpha\gr}
    :=
    \{\bfA\in\fA:\triple{\bfA}_{\alpha\gr}<\infty\},
\]
equipped with the metric $\triple{\Cdot;\Cdot}_{\alpha\gr}$. Elements of
$\fA_{\alpha\gr}$ are called \emph{rough additive functions}.
Moreover, we denote by $\bfOmega^1_{\alpha\gr}$ the closure in
$\fA_{\alpha\gr}$ of the canonically enhanced smooth $1$-forms.
\end{definition}

The name \enquote{rough additive functions} is inspired from rough path theory, as we have extended the notion of additive functions from \cite[Def.\ 3.1]{CCHS2d} to the rougher regime.
\begin{remark}\label{rem:inclusions_RAF}
	For $\alpha\in (\frac 12,1] $, we have the embedding $A\in\Omega^1_{\alpha\gr}\hookrightarrow \bfOmega_{\alpha\gr}^1$ by Young integration.
	Furthermore, for $\alpha\in (\frac 13,1]$, we have a canonical projection~$\bfOmega^1_{\alpha\gr}\to\Omega^1_{\alpha\gr}$,
	and recalling the embedding $\Omega^1_{\alpha\gr}\hookrightarrow \Omega^1 C^{\alpha-1}$ from \eqref{eq:omega_beta_embedding}, we obtain a canonical projection $\bfOmega^1_{\alpha\gr}\to \Omega^1 C^{\alpha-1}$. 
\end{remark} 
For a line segment $\ell=(x,v)\in\cX$ and an enhanced additive function
$\bfA=(A,\bA)\in\fA$, we can associate a first level path $\ell_A$ and  second level enhancement is defined by
\[
    \ell_\bA(s,t)
    :=
    \bA(x+sv,(t-s)v),
\]
for $0\leq s\leq t\leq 1$. It is not difficult to check that  $\ell_\bA$ satisfies the classical Chen's relation and as such the following lemma should not come as a surprise:
\begin{lemma} \label{l:raf_to_rp}
    Let $\bfA$~$\in\fA_{\alpha\gr}$, then $\ell_\bfA:=(\ell_A,\ell_\bA)\in\scC^\alpha_\rmg$ and 
    $$|\ell_A|_{C^\alpha}\leq |\ell|^\alpha |A|_{\alpha\gr}, \ \ \ |\ell_\bA|_{C^{2\alpha}_2}\leq |\ell|^{2\alpha}\|\bA\|_{2\alpha\gr}.$$
    Furthermore, the map $\bfA\mapsto\ell_\bfA$ is Lipschitz continuous. 
    \end{lemma}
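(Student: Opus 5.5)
The plan is to check directly that $\ell_\bfA=(\ell_A,\ell_\bA)$ satisfies the three defining properties of a weakly geometric rough path: Chen's relation, geometricity, and the H\"older bounds. The key tool is that subsegments of $\ell$ are again elements of $\cX$, and consecutive subsegments are joinable. Fix $\ell=(x,v)\in\cX$. For $0\le s\le t\le 1$ write $\ell_{s,t}:=(x+sv,(t-s)v)$. Since $x+sv$ and $x+tv$ lie on the segment $\ell\subset\Lambda$ (convexity of $\Lambda$) and $|(t-s)v|\le|v|\le\frac14$, we have $\ell_{s,t}\in\cX$ and $|\ell_{s,t}|=(t-s)|\ell|$.

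For $s\le u\le t$, the segments $\ell_{s,u}$ and $\ell_{u,t}$ are joinable, with $w=v/|v|$ (or trivially if $v=0$) and $c=(u-s)|v|$, $\bar c=(t-u)|v|$, so that $|c+\bar c|\le\frac14$. Their concatenation is $\ell_{s,t}$.

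\textbf{First level.} Since $\ell_A(t)=A(\ell_{0,t})$, additivity of $A$ gives
\[
\ell_A(t)-\ell_A(s)=A(\ell_{s,t}).
\]
Hence
\[
|(\ell_A)_{s,t}|\le |A|_{\alpha\gr}\,|\ell|^\alpha |t-s|^\alpha,
\]
which is the claimed bound on $|\ell_A|_{C^\alpha}$, read as the H\"older seminorm of \eqref{eq:Hol_X_def}.

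\textbf{Second level.} By definition $\ell_\bA(s,t)=\bA(\ell_{s,t})$. Chen's identity in Definition~\ref{def:enhanced_AF}, applied to the joinable pair $\ell_{s,u},\ell_{u,t}$, becomes
\[
\ell_\bA(s,t)=\ell_\bA(s,u)+\ell_\bA(u,t)+(\ell_A)_{s,u}\otimes(\ell_A)_{u,t},
\]
where we used the first-level identity $(\ell_A)_{s,u}=A(\ell_{s,u})$. Weak geometricity of $\bfA$ at $\ell_{s,t}$ gives
\[
\Sym\ell_\bA(s,t)=\tfrac12 (\ell_A)_{s,t}^{\otimes2}.
\]
Finally, $|\ell_\bA(s,t)|\le\|\bA\|_{2\alpha\gr}\,|\ell|^{2\alpha}|t-s|^{2\alpha}$. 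Together with the first-level bound, this shows $\ell_\bfA\in\scC^\alpha_\rmg$.

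\textbf{Lipschitz continuity.} Both constructions are linear in the data, so the same computation applied to $A-\bar A$ and $\bA-\bar\bA$ gives
\[
|\ell_A-\ell_{\bar A}|_{C^\alpha}\le|\ell|^\alpha|A-\bar A|_{\alpha\gr},\qquad |\ell_\bA-\ell_{\bar\bA}|_{C^{2\alpha}_2}\le|\ell|^{2\alpha}\|\bA-\bar\bA\|_{2\alpha\gr}.
\]
This is Lipschitz continuity with respect to the inhomogeneous rough path metric, compared against $|A-\bar A|_{\alpha\gr}+\|\bA-\bar\bA\|_{2\alpha\gr}$.

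The only subtle point is the precise sense of ``Lipschitz''. The metric $\triple{\cdot;\cdot}_{\alpha\gr}$ involves $\|\bA-\bar\bA\|^{1/2}$, and with that metric on the source one would only get local H\"older-$\frac12$ behaviour, unless one uses the homogeneous metric $|\ell_\bA-\ell_{\bar\bA}|^{1/2}$ on the target as well. I would state the result in the matching form (homogeneous on both sides, or inhomogeneous on both sides), where the bounds above give Lipschitz constant $\max(|\ell|^\alpha,1)\le1$. Everything else is bookkeeping.
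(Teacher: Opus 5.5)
Your proposal is correct and is exactly the routine verification the paper has in mind (the paper gives no proof beyond noting that $\ell_\bA$ inherits Chen's relation from $\bfA$ via the joinable subsegments $\ell_{s,u},\ell_{u,t}$, with the bounds coming from $|\ell_{s,t}|=(t-s)|\ell|$). One correction to your closing remark: with the homogeneous metric $\triple{\Cdot;\Cdot}_{\alpha\gr}$ on the source and the inhomogeneous rough path metric on the target you still get local Lipschitz continuity, since $\|\bA-\bar\bA\|_{2\alpha\gr}\le\triple{\bfA;\bar\bfA}_{\alpha\gr}^2\le(\triple{\bfA}_{\alpha\gr}+\triple{\bar\bfA}_{\alpha\gr})\triple{\bfA;\bar\bfA}_{\alpha\gr}$. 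The loss to H\"older-$\frac12$ occurs only in the opposite pairing, with the inhomogeneous metric on the source and the homogeneous one on the target.
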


\subsection{Controlled gauge transformations}\label{sec:gauge_transformation}
Recall  the definition of gauge transformation for a smooth connection form $A\in \Omega^1 C^\infty(\Lambda;\fg)$: for any $g\in C^\infty(\Lambda;G)$ we define
\[
A^g:=\Ad_g A-\diff gg^{-1},
\]
where $\Ad_g\in L(\fg)$ is the adjoint action $\Ad_g X=gXg^{-1}\label{symb:Adg}$.  Of course, we can integrate $A^g$, but more importantly, we can write the integral of $A^g$ along $\ell=(x,v)\in\cX$ as 
	\begin{align}\label{eq:line_int_gauge_transformation_smooth}
	\int_\ell A^g=\int_0^1 \Ad_{\ell_g(t)}\dif\ell_A(t)-\int_0^1 \dif \ell_g(t)\ell_{g^{-1}}(t),
	\end{align}
	where we have used the notation $\ell_f:[0,1]\to F$ for a given function $f:\Lambda\to F$ and a set $F$ to mean the path $\ell_f(t):=f(x+tv)$.
    
    For $\bfA\in\bfOmega_{\alpha\gr}^1$, we would like to now make sense of $\bfA^g$. Since $\ell_\bfA$ is a rough path, a suitable controlled rough path structure for $\ell_g$ (as in~\eqref{eq:controlled_RP}) is necessary to make sense of the first integral on the right-hand side of \eqref{eq:line_int_gauge_transformation_smooth}.

Inspired by those notions, in particular the norm for remainders in controlled rough paths~\eqref{eq:remainder_RP}, we define the space  $C^{2\alpha}_2\label{symb:C2alpha2}(\Lambda;E)$ to consist of all functions $R:\Lambda\times \Lambda\to E$ such that
\begin{align}\label{eq:def_C_2_spatial}
\|R\|_{C^{2\alpha}_2}:=\sup_{x\neq y\in\Lambda}\frac{|R(x,y)|}{|x-y|^{2\alpha}}<\infty.
\end{align}
With these notations, we are ready to introduce the notion of controlled gauge transformations: 
\begin{definition}[Controlled gauge transformations]\label{def:gauge_transformation}
Let $\bfA=(A,\bA)\in\fA_{\alpha\gr}$. A \emph{controlled gauge transformation}
over $\bfA$ is a pair $(g,g')$ with
\[
    g\in C^\alpha(\Lambda;G),
    \qquad
    g'\in C^\alpha(\Lambda;L(\rmM_\C(N))),
\]
such that there exists $R^g\in C^{2\alpha}_2(\Lambda;\rmM_
\C(N))$ satisfying
\[
    g(y)-g(x)
    =
    g'(x)A(\ell^{x;y})
    +
    R^g(x,y)
\]
whenever $\ell^{x;y}\in\cX$ denotes the line segment from $x$ to $y$.
We denote the collection of controlled gauge transformations over $\bfA$ by
$\fG_A^{2\alpha}$. Finally, we use both notations $g\in\fG^{2\alpha}_A$ and $(g,g')\in\fG^{2\alpha}_A$ interchangeably.
\end{definition}

\begin{remark}
The definition of $\fG^{2\alpha}_A$ depends only on the first level $A$ of
$\bfA=(A,\bA)$. Nevertheless, we state it for $\bfA$, since we only consider
controlled gauge transformations for rough additive functions.
\end{remark}

\begin{remark}
Let $(g,g')\in\fG_A^{2\alpha}$ and let $\ell=(x,v)\in\cX$. We recall the notations
$
    \ell_g(t):=g(x+tv),
$
and
$
    \ell_{g'}(t):=g'(x+tv).
$
Then $(\ell_g,\ell_{g'})$ is a controlled rough path over
$\ell_\bfA=(\ell_A,\ell_\bA)$. Indeed,
\[
    \ell_g(t)-\ell_g(s)
    =
    \ell_{g'}(s)\ell_A(s,t)
    +
    R^g(x+sv,x+tv).
\]
Thus all rough integrals involving $g$ below are ordinary one-dimensional
controlled rough path integrals along line segments.
\end{remark}
	We compare $(g,g')\in \fG_A^{2\alpha}$ with $(\bar g,\bar g')\in \fG_{\bar A}^{2\alpha}$, which may not live in the same space, via the quantities
	\[
		\| (g,g');(\bar g,\bar g')\|_{2\alpha}:=|g'-\bar g'|_{\Hol\alpha}+\|R^g-R^{\bar g}\|_{C^{2\alpha}_2},
	\]
	and 
    \label{symb:controlled_gauge_metric}\begin{align}\label{eq:def_metric_controlled_RF}
	\triple{(g,g');(\bar g,\bar g')}_{2\alpha}:=|g-\bar g|_\infty+|g'-\bar g'|_\infty+ \| (g,g');(\bar g,\bar g')\|_{2\alpha}.
	\end{align}
	Technically speaking, one would have to specify the underlying $A$ and $\bar A$ in these norms, but we avoid doing so, as the role of $A$ and $\bar A$ is always clear from the context.

\begin{proposition}[Gauge transformation of rough additive functions]
\label{prop:controlled_gauge_matrix}
Let $\bfA=(A,\bA)\in\fA_{\alpha\gr}$ and let
$(g,g')\in\fG^{2\alpha}_A$.  Then 
        \begin{align}\label{e:gauge_trafo_connection:2}
            A^g(\ell):=\int^1_0 \Ad_{\ell_g(t)}\dif\ell_{\bfA}(t)-\dif\ell_g(t)\ell_{g^{-1}}(t), \quad \ell \in \cX,
        \end{align}
        is well-defined, and can be enhanced with $\bA^g$ to gauge transformed rough additive function $\bfA^g:=(A^g,\bA^g)\label{symb:gauge_transformed_RAF}\in \fA_{\alpha\gr}$. Furthermore, the map $(\bfA,g)\mapsto \bfA^g$ is locally Lipschitz continuous, i.e. for any $L>0$ there exists $C_L>0$ such that
    $$\triple{\bfA^g;\bar \bfA^{\bar g}}_{\alpha\gr}\leq C_L (\triple{\bfA;\bar \bfA}_{\alpha\gr}+\triple{(g,g');(\bar g,\bar g')}_{2\alpha}),$$
	for all $\triple{\bfA}_{\alpha\gr},\triple{\bar\bfA}_{\alpha\gr}\leq L$, $g\in \fG^{\alpha\gr}_{A}$ and $\bar g\in\fG^{2\alpha}_{\bar A}$ satisfying $\triple{g,g'}_{2\alpha},\triple{\bar g,\bar g'}_{2\alpha}\leq L$.
\end{proposition}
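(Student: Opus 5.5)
The plan is to reduce everything to one-dimensional controlled rough path calculus along each segment, using Lemma~\ref{l:raf_to_rp} and the remark after Definition~\ref{def:gauge_transformation}. Fix $\ell=(x,v)\in\cX$. By Lemma~\ref{l:raf_to_rp}, $\ell_\bfA$ is a weakly geometric rough path with $|\ell_A|_{C^\alpha}\le|\ell|^\alpha|A|_{\alpha\gr}$ and $|\ell_\bA|_{C^{2\alpha}_2}\le|\ell|^{2\alpha}\|\bA\|_{2\alpha\gr}$. Moreover $(\ell_g,\ell_{g'})$ is controlled by $\ell_\bfA$, with remainder bounded by $|\ell|^{2\alpha}\|R^g\|_{C^{2\alpha}_2}$ and Gubinelli derivative having H\"older constant at most $|\ell|^\alpha|g'|_{\Hol\alpha}$.

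Since inversion and $\Ad$ are smooth on $G\subset\rmU(N)$, the paths $\ell_{g^{-1}}$ and $\Ad_{\ell_g}$ are again controlled, with derivatives computed by the chain rule. Hence both integrals in \eqref{e:gauge_trafo_connection:2} are well defined as controlled rough integrals:
\begin{itemize}[label={}]
\item $\int\Ad_{\ell_g}\dif\ell_\bfA$ is an integral against the rough path;
\item $\int\dif\ell_g\,\ell_{g^{-1}}$ is an integral of one controlled path against another, as recalled in Section~\ref{sec:rough_paths}.
\end{itemize}

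Next I define the second level. I take $\bA^g(\ell)$ to be the iterated integral $\int_0^1 Z_{0,t}\otimes\dif Z_t$, where $Z_t:=A^g((x,tv))$. The key observation is that $Z$ is itself controlled by $\ell_\bfA$, with derivative $Z'_t=\Ad_{\ell_g(t)}-(\ell_{g'}(t)\,\cdot\,)\ell_{g^{-1}}(t)$ and a $2\alpha$-remainder, by the standard estimate for rough integrals. The iterated integral of a controlled path is then defined using the term $Z'_u\otimes Z'_u\,\bX_{u,v}$.

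The algebraic properties follow from the one-dimensional theory.
\begin{itemize}[label={}]
\item Additivity of $A^g$ and Chen's identity for $\bA^g$ follow from additivity of rough integrals over adjacent intervals. This requires checking that for joinable $\ell,\bar\ell$ the restriction of $\ell\sqcup\bar\ell$ to a subinterval, after reparametrisation, gives $\ell_\bfA$ (respectively $\bar\ell_\bfA$). This holds by Chen's identity and additivity of $\bfA$ itself, since $\ell_\bA(s,t)=\bA(x+sv,(t-s)v)$.
\item Weak geometricity is inherited: the lift of a controlled path over a weakly geometric rough path is weakly geometric, because $\Sym$ of the compensated sum reduces to $\frac12 Z_{u,v}\otimes Z_{u,v}$ up to $o(|v-u|)$ terms.
\end{itemize}

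The quantitative bounds then follow by scaling. The standard rough integral estimate gives $|Z_{0,1}|\lesssim |\ell|^\alpha C(L)$ and $|\bA^g(\ell)|\lesssim|\ell|^{2\alpha}C(L)$, because every ingredient carries the $|\ell|$-powers above. Note that $|\ell|\le\frac14$, so the lower order terms also inherit the factor. Hence $\bfA^g\in\fA_{\alpha\gr}$.

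For local Lipschitz continuity I use the stability estimates for rough integrals, which control the difference of two integrals in terms of the following quantities, each carrying the correct $|\ell|$-power:
\begin{itemize}[label={}]
\item $\rho_\alpha(\ell_\bfA,\ell_{\bar\bfA})$;
\item $|Y_0-\bar Y_0|$ and $|Y'_0-\bar Y'_0|$;
\item $|Y'-\bar Y'|_\alpha$;
\item $\|R^Y-R^{\bar Y}\|_{2\alpha}$.
\end{itemize}
These are controlled by $\triple{\bfA;\bar\bfA}_{\alpha\gr}+\triple{(g,g');(\bar g,\bar g')}_{2\alpha}$. The smooth maps $\Ad$ and inversion are locally Lipschitz on controlled paths, which gives the difference estimate for $Z-\bar Z$.

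\textbf{Main obstacle.} The square root in the metric needs care, since $\|\bA-\bar\bA\|^{1/2}$ is not Lipschitz-compatible with linear differences. I would estimate $\|\bA^g-\bar\bA^{\bar g}\|_{2\alpha\gr}$ by splitting it into two parts:
\begin{itemize}[label={}]
\item terms linear in level-one differences, multiplied by bounded quantities;
\item terms involving $\bA-\bar\bA$.
\end{itemize}
I then use $\|\bA-\bar\bA\|\le\triple{\bfA;\bar\bfA}^2$ together with $\sqrt{ab}\lesssim$ the bound given by $L$. Alternatively, as is standard for rough path metrics, I would use the inhomogeneous metric $|A-\bar A|+\|\bA-\bar\bA\|$ internally and convert at the end. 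This step, together with uniformity of the constants in $\ell$, is where I expect the most bookkeeping.
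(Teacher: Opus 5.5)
Your route is the paper's: you work segment by segment with one-dimensional controlled rough path calculus and take $\bA^g$ to be the canonical iterated integral. The paper packages this as Lemma~\ref{lem:rp_gauge_transform} and leaves additivity, Chen's identity and the bounds as an exercise.

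\textbf{The Lipschitz step.} The step you flag as the main obstacle fails, and it cannot be repaired: the Lipschitz estimate is false for the square-root distance $\triple{\Cdot;\Cdot}_{\alpha\gr}$ on the output side. Take $a\in\fg\setminus\{0\}$, a smooth $A=f\,a\,\dif x^1$ with canonical lift $\bfA=\bar\bfA$, and compare $g_\epsilon=\exp(\epsilon\phi\,a)$ (with $\phi$ smooth and real-valued) against $\bar g=1_G$, both with derivative $L_g$. All data are smooth and take values in the abelian line $\R a$. Hence $A^{g_\epsilon}=A-\epsilon\,\dif\phi\,a$ and $\bA^{g_\epsilon}(\ell)=\frac12A^{g_\epsilon}(\ell)^{\otimes 2}$. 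Writing $A(\ell)=\lambda(\ell)a$ and $\Phi(\ell)$ for the increment of $\phi$ along $\ell$,
\[
\bA^{g_\epsilon}(\ell)-\bA(\ell)=\bigl(\tfrac{\epsilon^2}{2}\Phi(\ell)^2-\epsilon\,\Phi(\ell)\lambda(\ell)\bigr)\,a\otimes a\;.
\]
Fix one $\ell$ with $\Phi(\ell)\lambda(\ell)\neq0$. Then $\triple{\bfA^{g_\epsilon};\bfA}_{\alpha\gr}\geq\|\bA^{g_\epsilon}-\bA\|_{2\alpha\gr}^{1/2}\gtrsim\epsilon^{1/2}$, whereas $\triple{\bfA;\bfA}_{\alpha\gr}+\triple{(g_\epsilon,L_{g_\epsilon});(1_G,L_{1_G})}_{2\alpha}\lesssim\epsilon$. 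The cross terms $\int(Z-\bar Z)\otimes\dif Z$ are genuinely linear in the first-level and gauge differences. So neither a $\sqrt{ab}$ bound nor a conversion at the end can make $\|\bA^g-\bar\bA^{\bar g}\|_{2\alpha\gr}^{1/2}$ linear. What your stability estimates do give, using $\|\bA-\bar\bA\|_{2\alpha\gr}\leq 2L\|\bA-\bar\bA\|_{2\alpha\gr}^{1/2}$ on the input side, is
\[
|A^g-\bar A^{\bar g}|_{\alpha\gr}+\|\bA^g-\bar\bA^{\bar g}\|_{2\alpha\gr}\leq C_L\bigl(\triple{\bfA;\bar\bfA}_{\alpha\gr}+\triple{(g,g');(\bar g,\bar g')}_{2\alpha}\bigr)\;.
\]
This is local Lipschitz continuity for the inhomogeneous distance, and only local $\frac12$-H\"older continuity for $\triple{\Cdot;\Cdot}_{\alpha\gr}$. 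That is the statement you can actually prove.

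\textbf{Lie-algebra-valuedness.} You never show that $A^g(\ell)\in\fg$ and $\bA^g(\ell)\in\fg\otimes\fg$. This is part of $\bfA^g$ being a gauge-transformed rough additive function, and it is used later, e.g.\ for $G$-valued holonomies. It is not automatic: the Riemann terms of $\int\dif\ell_g\,\ell_{g^{-1}}$ are $\ell_g(t)\ell_g(s)^{-1}-1_G$, which in general do not lie in $\fg$. The paper gets $\fg$-valuedness and weak geometricity together in Lemma~\ref{lem:rp_gauge_transform}. It approximates $\ell_\bfA$ and $\ell_g$ (via logarithmic charts $\log(g_0^{-1}g)$) by smooth $\fg$- resp.\ $G$-valued data and passes to the limit.

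Your direct compensated-sum argument for weak geometricity is correct and avoids approximation, and the same idea settles $\fg$-valuedness. Write $\ell_\bfA=(X,\bX)$ with $\bX=\sum_{i,j}\bX^{ij}e_i\otimes e_j$ for a basis $(e_i)$ of $\fg$. Take $g'=L_g$; the case of any $g'$ mapping $\fg$ into $T_gG$ is identical.
\begin{itemize}
\item The compensator of $\int\Ad_{\ell_g}\dif\ell_\bfA$ on $[s,t]$ is $\sum_{i,j}\bX^{ij}_{s,t}\Ad_{\ell_g(s)}[e_i,e_j]\in\fg$.
\item For $\int\dif\ell_g\,\ell_{g^{-1}}$, the symmetric part of the compensator is $-\frac12\ell_g(s)X_{s,t}^2\ell_g(s)^{-1}$. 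This cancels the quadratic term of $\log(\ell_g(t)\ell_g(s)^{-1})$ up to $O(|t-s|^{3\alpha})$, and the antisymmetric part is a sum of commutators.
\end{itemize}
Hence every compensated term lies in $\fg$ up to $O(|t-s|^{3\alpha})$.

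A minor point: for joinable segments with $c\bar c<0$, $\ell$ is not a restriction of $\ell\sqcup\bar\ell$. So additivity and Chen's identity also need the reversal identities.
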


\begin{proof}
For each $\ell\in\cX$, the pair $\ell_\bfA$ is a weakly geometric
$\fg$-valued rough path, and by the definition of controlled gauge
transformations, $(\ell_g,\ell_{g'})$ is controlled by $\ell_\bfA$.
Lemma~\ref{lem:rp_gauge_transform} below, therefore gives a weakly geometric
$\fg$-valued rough path $\ell_{\bfA^g}$. This defines $A^g(\ell)$ and
$\bA^g(\ell)$. From here, one can readily verify the additivity, Chen's identity and the bounds which we leave as an exercise to the reader. 
\end{proof}
    As we have just noted, the proof boils down to a clean statement in terms of rough paths theory: 
\begin{lemma}[Gauge transformation of rough paths]\label{lem:rp_gauge_transform}
Let $\alpha\in(\frac13,\frac12)$ and let
$\bfX=(X,\bX)$ be a weakly geometric $\fg$-valued $\alpha$-Hölder rough path.
Let $(g,g')$ be a controlled rough path over $\bfX$ with values in
$\rmM_\C(N)$, and assume that $g_t\in G$ for every $t$. Then the expression
\[
    X^g_{s,t}
    :=
    \int_s^t \Ad_{g_r}\dif\bfX_r
    -
    \int_s^t \dif g_r g_r^{-1}
\]
is well-defined as an $\fg$-valued increment. Moreover, there is a canonical
second level
\[
    \bX^g_{s,t}
    :=
    \int_s^t X^g_{s,r}\otimes \dif X^g_r
\]
such that $\bfX^g=(X^g,\bX^g)$ is a weakly geometric $\fg$-valued
$\alpha$-Hölder rough path. The construction is continuous in the standard
rough path and controlled rough path topologies.
\end{lemma}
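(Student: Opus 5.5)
The plan is to show that $X^g$ is itself a controlled rough path over $\bfX$ with a $2\alpha$-Hölder remainder. Its canonical second level then comes from the controlled rough integral, and weak geometricity is inherited by a smooth-approximation argument.

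\emph{Step 1: the first level.} Since $g$ is controlled with values in $G\subset\rmM_\C(N)$ and $h\mapsto h^{-1}$ is smooth on $G$, the path $g^{-1}$ is controlled with Gubinelli derivative $(g^{-1})'_r=-g_r^{-1}g'_r(\cdot)g_r^{-1}$. By the product rule for controlled paths, the integrand $Y_r:=\Ad_{g_r}\in L(\fg;\rmM_\C(N))$ is controlled with
\[
Y'_r(x)=g'_r(x)\,\Cdot\,g_r^{-1}+g_r\,\Cdot\,(g^{-1})'_r(x).
\]
Hence $\int_s^t\Ad_{g_r}\dif\bfX_r$ is a well-defined rough integral. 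Similarly, $\int_s^t\dif g_r\,g_r^{-1}$ is defined as an integral of two paths controlled by the same $\bfX$, as described in Section~\ref{sec:rough_paths}. Additivity $X^g_{s,t}=X^g_{s,u}+X^g_{u,t}$ is inherited from the integrals. The standard estimate for the rough integral gives
\[
X^g_{s,t}=\bigl(\Ad_{g_s}-g'_s\,g_s^{-1}\bigr)X_{s,t}+O(|t-s|^{2\alpha}),
\]
so $X^g$ is $\alpha$-Hölder and controlled by $X$ with derivative $Z'_s:=\Ad_{g_s}-g'_s(\Cdot)g_s^{-1}$. The increments are $\fg$-valued: approximating $(\bfX,g)$ by smooth pairs (see Step 3), $X^g$ is the classical gauge transform, which is $\fg$-valued, and $\fg$ is closed.

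\emph{Step 2: the second level.} Since $X^g$ is controlled by $\bfX$, I define
\[
\bX^g_{s,t}:=\int_s^t X^g_{s,r}\otimes\dif X^g_r
\]
as a rough integral of one controlled path against another. Concretely, this is the limit of the compensated sums
\[
\sum X^g_{s,u}\otimes X^g_{u,v}+(Z'_u\otimes Z'_u)\bX_{u,v}.
\]
Equivalently, $\bX^g_{s,t}=(Z'_s\otimes Z'_s)\bX_{s,t}+O(|t-s|^{3\alpha})$. This yields the bound $|\bX^g_{s,t}|\lesssim|t-s|^{2\alpha}$, and Chen's identity follows from additivity of the integral.

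\emph{Step 3: weak geometricity and continuity.} This is the main point. I expect it to follow by checking that the symmetric part of the leading term matches:
\[
\Sym\bigl[(Z'\otimes Z')\bX\bigr]=(Z'\otimes Z')\Sym\bX=\tfrac12\,Z'X\otimes Z'X.
\]
Passing to the telescoping sums then gives $\Sym\bX^g_{s,t}=\frac12 X^g_{s,t}\otimes X^g_{s,t}$.

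A cleaner route uses that weakly geometric rough paths are limits, in a slightly weaker Hölder topology $\alpha'<\alpha$, of canonical lifts of smooth paths. I would first approximate $\bfX$ by smooth lifts $\bfX^n$ and construct approximating controlled $g^n$, taking $g^n$ to solve $\dif g^n=g'\dif X^n$-type RDEs, or simply mollifying and projecting to $G$. I then apply the classical identity, use local Lipschitz continuity of rough integrals in $(\bfX,Y,Y')$, and interpolate back to $\alpha$. The same stability estimates for rough integration (with constants depending on the norms of $\bfX$ and $(g,g')$) give the claimed continuity.

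The main obstacle is producing approximations $g^n$ that are genuinely controlled by $\bfX^n$ with uniform bounds and take values in $G$. For this reason I would first try the direct algebraic symmetrisation argument, which avoids approximation altogether.
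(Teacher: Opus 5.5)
Your Steps 1--2 are fine. Your direct symmetrisation argument for weak geometricity is also correct, and more elementary than the paper's route. Since $Z'_uX_{u,v}=X^g_{u,v}+O(|v-u|^{2\alpha})$, the symmetric part of the germ $X^g_{s,u}\otimes X^g_{u,v}+(Z'_u\otimes Z'_u)\bX_{u,v}$ equals $\frac12X^g_{s,v}\otimes X^g_{s,v}-\frac12X^g_{s,u}\otimes X^g_{s,u}+O(|v-u|^{3\alpha})$. This telescopes because $3\alpha>1$; you should write out that error term.

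The gap is elsewhere. The lemma asserts a \emph{$\fg$-valued} rough path, i.e.\ $X^g_{s,t}\in\fg$ and $\bX^g_{s,t}\in\fg\otimes\fg$, and nothing in your argument proves this. Step 1 defers it to the smooth approximation of Step 3. But Step 3 calls the construction of $G$-valued approximants $g^n$ (controlled by $\bfX^n$ with uniform bounds, converging to $(g,g')$) the main obstacle, and then sets it aside. The symmetrisation only concerns $\Sym\bX^g$, so it does not ``avoid approximation'' for $\fg$-valuedness. This is not a formality: the compensated germs are not $\fg$-valued, since already $g_{u,v}g_u^{-1}=g_vg_u^{-1}-1_G\notin\fg$ in general. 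The approximants you sketch ($\dif g^n=g'\dif X^n$, or mollify and project) are not shown to stay in $G$, be uniformly controlled, or converge to $(g,g')$.

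The missing idea is the paper's reduction to a logarithmic chart. Subdivide $[0,1]$ so that on each piece $g$ takes values in some $g_0O_G$, and set $h_t=\log(g_0^{-1}g_t)$. Then $h$ is a controlled path in the \emph{linear} space $\fg$, so $(\bfX,h)$ can be approximated by smooth $\fg$-valued pairs $(\bfX^\eps,h^\eps)$ using the linear theory \cite{FH20}. The paths $g^\eps=g_0\exp(h^\eps)$ are then automatically smooth and $G$-valued. For these, the classical gauge transform is $\fg$-valued with a geometric canonical lift. Continuity of rough integration passes both properties to the limit, and Chen's identity glues the pieces.

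Alternatively, staying approximation-free, show that the compensated germ of $X^g$ agrees up to $O(|v-u|^{3\alpha})$ with a $\fg$-valued germ. Replace $g_{u,v}g_u^{-1}$ by $\log(g_vg_u^{-1})+\frac12(g_{u,v}g_u^{-1})^2$. Using $\Sym\bX=\frac12X\otimes X$, the quadratic term cancels the symmetric part of the compensation terms. What remains are $\bX_{u,v}$-weighted Lie brackets of the $\Ad_{g_u}e_j$ and $B_i:=g'_u(e_i)g_u^{-1}$. For the second level you similarly need $Z'_u(\fg)\subset\fg$.

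Either way one uses $g'_t(x)\in\fg\,g_t$. This holds for $g'=L_g$ as used later in the paper, and is implicit when the paper projects $h'$ onto $\fg$. It cannot be dropped. Take $\fg=\mathfrak{su}(2)$ with $[e_1,e_2]=e_3$, $X\equiv0$, and the weakly geometric $\bX_{s,t}=(t-s)(e_1\otimes e_2-e_2\otimes e_1)$. Take $g\equiv1_G$, $g'(e_1)=\mathrm{i}e_1$, $g'(e_2)=g'(e_3)=0$, and compute the Gubinelli derivatives of $\Ad_g$ and $g^{-1}$ by the chain rule. Then $X^g_{s,t}=(t-s)\,\mathrm{i}e_3\notin\fg$.
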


\begin{proof}
The two integrals are well-defined by the usual calculus of controlled rough
paths, working in the ambient matrix algebra $\rmM_\C(N)$. Indeed,
$\Ad_g$ and $g^{-1}$ are controlled by $\bfX$ as soon as $g$ is, since they
are obtained from $g$ by smooth maps on $G\subset\rmM_\C(N)$. So both rough integrals make sense by integration of controlled rough path against a (controlled) rough path.   

It remains to see that the resulting rough path is $\fg$-valued and weakly
geometric. Since $g$ is continuous and $[0,1]$ is compact, we may subdivide
$[0,1]$ into finitely many intervals such that, on each subinterval,
$g$ takes values in a single logarithmic chart $g_0O_G$. Define on each such subinterval
\[
    h_t:=\log(g_0^{-1}g_t).
\]
Since $g$ is controlled by $\bfX$ in the ambient matrix space and $\log$ is
smooth on the chosen chart, the usual composition theorem for controlled
rough paths shows that $h$ is controlled by $\bfX$ as an
$\rmM_\C(N)$-valued path. Moreover, $h$ actually takes values in the linear
subspace $\fg\subset\rmM_\C(N)$ and by projection we can view it as a genuine $\fg$-valued controlled rough path.  By the approximation result for
controlled rough paths, see \cite[Ex.~4.8]{FH20}, we may approximate
$\bfX$ by smooth $\fg$-valued rough paths $\bfX^\varepsilon$ and $h$ by
smooth $\fg$-valued controlled paths $h^\varepsilon$ over
$\bfX^\varepsilon$, in their corresponding (rough path) topologies with any weaker Hölder exponent. Setting
\[
    g^\varepsilon_t:=g_0\exp(h^\varepsilon_t)
\]
gives smooth $G$-valued paths converging to $g$ in the corresponding
controlled rough path topology. 

For the smooth approximations, set
\[
    X^{\varepsilon,g^\varepsilon}_{s,t}
    :=
    \int_s^t \Ad_{g_r^\varepsilon}\dif X_r^\varepsilon
    -
    \int_s^t \dif g_r^\varepsilon (g_r^\varepsilon)^{-1},
\]
and let $\bX^{\varepsilon,g^\varepsilon}$ be its canonical iterated integral.
By continuity of rough integration,
$
    (X^{\varepsilon,g^\varepsilon},\bX^{\varepsilon,g^\varepsilon})
$
converges to
$
    (X^g,\bX^g)
$
in any rough path topology with weaker H\"older exponent.
But since $(X^{\eps,g^\eps},\bX^{\eps,g^\eps}) $ is $\fg$-valued and geometric, we see that $\bfX^g$ is $\fg$-valued and weakly geometric. For now at least on the chosen subinterval, but concatenating over the finitely many subintervals together with Chen's identity finishes the proof.

\end{proof}

\begin{remark}\label{rem:left_action} Let $\bfA\in\bfOmega_{\alpha\gr}^1$.
Given $(g,g')\in\fG^{2\alpha}_A$ and $h\in C^\beta(\Lambda;G)$ with $\frac 1 2<\beta\leq 2\alpha$ such that $\beta+\alpha >1$, then $(hg,L_h g')\in \fG^\beta_A$, where
\[
L_g \colon \rmM_\C(N) \to \rmM_\C(N)\;,
\qquad L_h(y)=hy
\label{symb:left_multiplication}
\]
denotes left multiplication by $h$. Moreover, it is not difficult to show that $(\bfA^g)^h=\bfA^{hg}$, where to make sense of the left hand side, we use that $(h,0)\in \fG^\beta_{\bar A}$ for any $\bar\bfA\in \bfOmega_{\alpha\gr}^1$ .   
\end{remark}

\subsection{Parallel transport}
Let $\bfA\in\bfOmega^1_{\alpha\gr}$ and~$\ell \in \cX$. We consider the \emph{linear} rough differential equation~(RDE)
\begin{equation}\label{eq:holonomy_RDE}
\dif y(t)=y(t)\dif\ell_\bfA(t)\;, \qquad y(0)=1_G\;,
\end{equation}
which has a unique solution in the space of controlled rough paths.
Furthermore, we have continuity of the solution with respect to $\bfA$. In particular, we can readily get the following result: 
\begin{proposition}\label{prop:holonomy_bounds}
	Let $\bfA\in\fA_{\alpha\ax}^1$ and $\ell\in\cX$. Then there exists a unique solution $(y,y')\in\cD^{2\alpha}_{\ell_A}$ to \eqref{eq:holonomy_RDE} such that $y(t)\in G$ for all $t\in [0,1]$. 
	\end{proposition}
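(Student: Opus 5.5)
We will obtain the solution from standard linear RDE theory in the ambient matrix algebra $\rmM_\C(N)$, and then show separately that it takes values in $G$.

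\emph{Existence and uniqueness.} By Lemma~\ref{l:raf_to_rp}, $\ell_\bfA=(\ell_A,\ell_\bA)$ is a weakly geometric $\fg$-valued $\alpha$-H\"older rough path, with $\alpha\in(\frac13,\frac12)$. The equation \eqref{eq:holonomy_RDE} is a linear RDE $\dif y = y\,\dif \ell_\bfA$ whose vector field $y\mapsto(X\mapsto yX)$ is linear, hence smooth, though unbounded. We therefore invoke the standard existence and uniqueness theory for linear RDEs (see \cite{FH20}). Alternatively, and more elementarily, we run the usual Picard fixed point in $\cD^{2\alpha}_{\ell_A}(\rmM_\C(N))$ with Gubinelli derivative $y'=y$ on short intervals $[s,s+h]$. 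The choice of $h$ depends only on $|\ell_A|_{C^\alpha}$ and $|\ell_\bA|_{C^{2\alpha}_2}$, and not on the size of $y(s)$, because the equation is linear. Iterating over $[0,1]$ in finitely many steps gives a global solution, unique in the class of controlled paths. The linearity also supplies an a priori bound: $|y|_\infty\le C\exp(C\triple{\bfA}_{\alpha\gr}^{1/\alpha}|\ell|)$. A Lipschitz estimate in $\bfA$ likewise follows from the standard continuity of the Itô--Lyons map.

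\emph{Values in $G$.} This is the main point, and we handle it by approximation, as in the proof of Lemma~\ref{lem:rp_gauge_transform}. Since $\bfA\in\bfOmega^1_{\alpha\gr}$, or more generally because $\ell_\bfA$ is weakly geometric, there are smooth $\fg$-valued paths $X^\eps$ whose canonical lifts $\bfX^\eps$ converge to $\ell_\bfA$ in the $\alpha'$-H\"older rough path topology for every $\alpha'<\alpha$ \cite{FH20}. Take $\alpha'>\frac13$. For smooth $X^\eps$, the solution $y^\eps$ of the ODE $\dot y^\eps=y^\eps\dot X^\eps$, $y^\eps(0)=1_G$, stays in $G$, since $\dot X^\eps(t)\in\fg$ means the right-hand side is tangent to $G$ at every point of $G$. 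By continuity of the Itô--Lyons map for linear RDEs in the $\alpha'$-topology, $y^\eps\to y$ uniformly, where the limit solves the RDE driven by $\ell_\bfA$ and coincides with the solution constructed above, by uniqueness. Finally, $G$ is closed in $\rmM_\C(N)$, so $y(t)\in G$ for all $t$.

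The one step needing some care is the unbounded linear vector field, both for global existence and for continuity of the solution map. This is resolved by the linear a priori bound and the interval-patching argument. Everything else is a routine combination of Lemma~\ref{l:raf_to_rp} with standard RDE theory.
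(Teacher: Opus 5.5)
Your proof is correct and follows essentially the same route as the paper: existence and uniqueness come from standard (linear) RDE theory \cite{FH20}, and $G$-valuedness follows by approximating the weakly geometric rough path $\ell_\bfA$ by smooth $\fg$-valued paths, whose ODE solutions stay in $G$, and then passing to the limit using continuity of the It\^o--Lyons map and closedness of $G\subset\rmM_\C(N)$. Your extra treatment of the unbounded linear vector field (step sizes independent of $y(s)$ by linearity, the exponential a priori bound, and localisation for continuity of the solution map) makes explicit what the paper leaves to the citation.
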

\begin{proof}
The existence, uniqueness and bounds are well-known results in rough path theory \cite{FH20}. 
It remains only to record why the solution is $G$-valued. This is a
one-dimensional statement. Since $\ell_\bfA$ is weakly geometric, it can be
approximated in any weaker Hölder rough path topology by smooth
$\fg$-valued rough paths. The corresponding smooth ODE solutions remain in
$G$, and the conclusion follows by continuity of the Itô--Lyons map and
closedness of $G\subset\rmM_\C(N)$.
\end{proof}
Now we define the notion of holonomy for rough additive functions:
\begin{definition}[Holonomy]\label{def:holonomy}
	For~$\bfA\in\bfOmega^1_{\alpha\gr}$ and~$\ell \in \cX$, we define the \emph{holonomy} of~$\bfA$ around~$\ell$ by $\hol(\bfA,\ell) := y(1)$ where~$y$ denotes the unique solution to the RDE~\eqref{eq:holonomy_RDE}. 
\end{definition}
For joinable line segments $\ell,\bar\ell\in\cX$, the uniqueness of the RDE
solution and Chen's identity imply
\[
    \hol(\bfA,\ell\sqcup\bar\ell)
    =
    \hol(\bfA,\ell)\hol(\bfA,\bar\ell).
\]
In particular,
\[
    \hol(\bfA,\ell)^{-1}
    =
    \hol(\bfA,\ell^\leftarrow),
\]
where $\ell^\leftarrow$ denotes the reversed line segment. We also define
$\hol(\bfA,\gamma)$ for a piecewise affine curve $\gamma$ by taking the
ordered product of the holonomies along its line-segment pieces.
We can also recover the familiar holonomy transformation formula under gauge transformation:
\begin{lemma}\label{lem:holonomy_gauge_tr}
Let $\bfA\in\fA^1_{\alpha\gr}$ and $g\in\fG^{2\alpha}_A$. Then for any $\ell\in\cX$ one has
\begin{align}\label{eq:holonomy_gauge_transform}
	\hol(\bfA^g,\ell)=g(\ell_0)\hol(\bfA,\ell)g(\ell_1)^{-1}.
\end{align}
where~$\ell_0$ denotes the starting and~$\ell_1$ the end point of the line~$\ell$.
\end{lemma}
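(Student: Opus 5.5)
We reduce everything to a one-dimensional statement about controlled rough paths along the fixed segment $\ell=(x,v)$. Write $X=\ell_A$ with rough path lift $\ell_\bfA$, and let $y$ be the solution of \eqref{eq:holonomy_RDE}. By Lemma~\ref{lem:rp_gauge_transform}, $\ell_{\bfA^g}=\bfX^g$ is the weakly geometric rough path with increments $X^g_{s,t}=\int_s^t\Ad_{g_r}\dif\bfX_r-\int_s^t\dif g_rg_r^{-1}$, where $g_r=\ell_g(r)$. So $\hol(\bfA^g,\ell)=z(1)$, where $z$ solves $\dif z=z\,\dif\bfX^g$ with $z(0)=1_G$. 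Our candidate is
\[
w(t):=g_0\,y(t)\,g_t^{-1}.
\]
This is controlled by $\bfX$, being a smooth function of the controlled paths $y$ and $g$. The claim \eqref{eq:holonomy_gauge_transform} is that $w=z$; evaluating at $t=1$ then gives the formula.

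The first route is a direct identification. A formal product rule gives
\[
\dif w=g_0\,\dif y\,g^{-1}+g_0\,y\,\dif(g^{-1})=w\bigl(\Ad_g\dif X-\dif g\,g^{-1}\bigr)=w\,\dif X^g,
\]
using $\dif(g^{-1})=-g^{-1}\dif g\,g^{-1}$. To make this rigorous, one checks that $w$ is a controlled-path solution of the RDE driven by $\bfX^g$, and then uses uniqueness of linear RDE solutions (Proposition~\ref{prop:holonomy_bounds}). The difficulty is that $w$ is naturally controlled by $\bfX$, not by $\bfX^g$. One would therefore need to express rough integrals against $\bfX^g$ through $\bfX$. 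This requires carefully tracking second-order (bracket) terms in the product rule and in $\bX^g$, which is the tedious part.

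To avoid that bookkeeping, I would instead argue by smooth approximation, reusing the construction in the proof of Lemma~\ref{lem:rp_gauge_transform}.
\begin{itemize}
\item Subdivide $[0,1]$ into finitely many intervals on each of which $g$ stays in a logarithmic chart $g_0O_G$.
\item On each interval, take smooth $\fg$-valued rough paths $\bfX^\eps\to\bfX$ and smooth $G$-valued paths $g^\eps\to g$ in the controlled topology with a slightly smaller Hölder exponent.
\item For smooth data the identity is classical: $y^\eps$ solves the ODE $\dot y^\eps=y^\eps\dot X^\eps$, and differentiating $g^\eps_s y^\eps(s,t)(g^\eps_t)^{-1}$ in $t$ shows that it solves the ODE driven by $X^{\eps,g^\eps}$, so it equals the parallel transport of $X^{\eps,g^\eps}$.
\item Pass to the limit. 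The rough path $\bfX^{\eps,g^\eps}\to\bfX^g$ by Lemma~\ref{lem:rp_gauge_transform}. Continuity of the Itô--Lyons map gives $y^\eps\to y$ and $z^\eps\to z$, and $g^\eps\to g$ uniformly. Hence on each subinterval $[s,t]$ the parallel transports satisfy $z(s,t)=g_s\,y(s,t)\,g_t^{-1}$.
\item Multiply over the subintervals. The intermediate factors $g_{t_i}^{-1}g_{t_i}$ cancel, and multiplicativity of the solution flows (Chen's identity / uniqueness) gives the global identity at $t=1$.
\end{itemize}

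The main obstacle is the approximation step: one must ensure that $g^\eps$ can be chosen $G$-valued and jointly convergent with $\bfX^\eps$ in the controlled topology. This is exactly what the chart and $\exp$ construction in the proof of Lemma~\ref{lem:rp_gauge_transform} provides. One must also confirm that the Itô--Lyons continuity holds at the weaker Hölder exponent, which is standard because $\alpha>\frac13$ leaves room to lower the exponent while staying above $\frac13$.
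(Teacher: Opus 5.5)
Your proposal is correct and follows essentially the same route as the paper. Like the paper, you reduce to the segment, approximate the driving rough path and $g$ in logarithmic charts as in the proof of Lemma~\ref{lem:rp_gauge_transform}, invoke the classical smooth identity, and pass to the limit using continuity of rough integration and of the It\^o--Lyons map. Your explicit treatment of the subintervals via multiplicativity of the flows, so that the chartwise approximations never need to glue, is a detail the paper leaves implicit, and you handle it correctly.
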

\begin{proof}
This is again a one-dimensional statement along the line segment $\ell$.
For smooth paths it is the classical gauge transformation formula for
parallel transport. For weakly geometric rough paths, approximate the
driving rough path and the controlled gauge transformation locally in
logarithmic coordinates as in the proof of
Lemma~\ref{lem:rp_gauge_transform}. The formula then follows by continuity
of the Itô--Lyons map and of rough integration.
\end{proof}
\subsection{State space of rough axial gauges}\label{sec:RAF_ax}
The space $\fA_{\alpha\gr}$ (as well as $\bfOmega_{\alpha\gr}^1$), which we have previously defined, does not implement continuity of $\ell\mapsto\bfA(\ell)$: It only measures its growth as the length~$\abs[0]{\ell}$ of~$\ell$ varies. 
We have previously seen that the norm $|\Cdot|_\beta$ defined in Section~\ref{sec:AF} is incorporating such continuity through $|\Cdot|_{\beta\tri}$ defined in \eqref{eq:def_beta_tri}. 
We want to define a similar notion of continuity for elements in $\fA_{\alpha\gr}$  that is adapted to connection $1$-forms in \emph{axial gauges} (cf. \cite{Driver89,GKS89}).
\begin{definition}[Axial gauge]\label{def:axial_gauge}
We say a $1$-form $A\in\Omega^1 C^\infty$ is in the \emph{axial gauge}, if $A_2\equiv 0$. The space of all such $1$-forms is denoted by $\Omega^1_{\infty\ax}$ . Furthermore, we say $A\in\Omega^1 C^\infty$ is in the \emph{complete} axial gauge if $A$ is in the axial gauge and moreover $A_1(\Cdot,0)\equiv 0$, i.e.~$A_1$ vanishes on the~$x_1$-axis.
\end{definition}
The line integral of $1$-forms in the axial gauge vanishes on vertical line segments, which motivates the following definition: 
\begin{definition}[Non-vertical line segments]\label{def:non_vert_lines}
For a line segment $\ell=(x,v)\in\cX$, we define its horizontal projection
$\rmP_{\cX_{\rmh\ax}}(\ell)\label{symb:horizontal_projection}$ to be the line segment~$((x^1,0),(v^1,0))$.
We write
\[
    \ell\simhor\bar\ell
    \quad\text{if}\quad
    \rmP_{\cX_{\rmh\ax}}(\ell)
    =
    \rmP_{\cX_{\rmh\ax}}(\bar\ell).
\]
Given $\ell\simhor\bar\ell$, we define $\Area(\ell,\bar\ell)\label{symb:Area}$ to be the
Euclidean area enclosed between the two line segments. Finally, we denote by $\cX_{\rmh}$ the set of non-vertical line segments, i.e. those
$\ell=(x,v)$ with $v^1\neq 0$, and by $\cX_{\rmh\ax}\subset \cX_{\rmh}$ the subset of line segments lying on the $x_1$-axis.
\end{definition}

The concepts introduced in the previous definition are illustrated in Figure~\ref{fig:non_vert_lines}.
\begin{figure}[H]
	\centering
	\begin{minipage}[t]{.45\textwidth}
		\centering
		\tikzset{>={Latex[width=3pt,length=3pt]}}
		\begin{tikzpicture}[scale=3] 
			%
			%
			\draw (0,0) rectangle (1,1);
			%
			\draw[-{Latex[length=1.5mm]}, thick] (0,0) -- (1.1,0); 
			\draw[-{Latex[length=1.5mm]}, thick] (0,0) -- (0,1.1); 
			%
			\coordinate (A) at (0.2,0);       
			\coordinate (B) at (0.8,0);       
			\coordinate (C) at (0.2,0.5);     
			\coordinate (D) at (0.8,0.75);    
			\coordinate (E) at (0.9,0.2);    
			\coordinate (F) at (0.2,0.3);
			\coordinate (G) at (0.8,0.2);
			%
			\fill[blue!70, opacity = 0.2] (A) -- (C) -- (D) -- (B) -- cycle;
			%
			%
			\draw[densely dashed, semithick] (A) -- (C);
			\draw[densely dashed, semithick] (B) -- (D);
			%
			\draw[-{Latex[length=1.5mm]}, thick] (C) -- (D) node[midway, above left, scale=0.7] {$\ell$};
			\draw[-{Latex[length=1.5mm]}, thick] (F) -- (G) node[midway, above left, scale=0.7] {$\bar{\ell}$};
			\draw[-{Latex[length=1.5mm]}, thick, red] (A) -- (B) node[midway, below, scale=0.7] {$\mathring{\ell}$};
			\draw[-{Latex[length=1.5mm]}, thick, black] (0.9,0.2) -- (0.9,0.85) node[midway, right, scale=0.7] {$\tilde{\ell}$};
			%
			\foreach \pt in {A,B,C,D,E,F,G}
			\fill[black] (\pt) circle (0.3pt);
		\end{tikzpicture}
		\subcaption{Two line segments~$\ell \simhor \bar{\ell} \in \cX_{\rmh}$ and their horizontol projection~$\mathring{\ell} := \rmP_{\cX_{\rmh\ax}}(\ell)$ (in red). A vertical line segment~$\tilde{\ell} \notin \cX_{\rmh}$. The shaded area is the polygon~$U_\ell$ defined in~\eqref{e:U_ell} below. \label{subfig:U_ell}}
	\end{minipage}
	\hspace{2em}
	\begin{minipage}[t]{.45\textwidth}
		\centering
			\tikzset{>={Latex[width=3pt,length=3pt]}}
		\begin{tikzpicture}[scale=3] 
			%
			%
			\draw[-{Latex[length=1.5mm]}, thick] (0,0) -- (1.1,0); 
			\draw[-{Latex[length=1.5mm]}, thick] (0,0) -- (0,1.1); 
			%
			\coordinate (A) at (0.2,0);       
			\coordinate (A') at (0.1,0);       
			\coordinate (B) at (0.8,0);       
			\coordinate (B') at (0.9,0);   
			\coordinate (C) at (0.2,0.3);     
			\coordinate (C') at (0.2,0.455);     
			\coordinate (D) at (0.9,0.75);    
			\coordinate (E) at (0.1,0.5);
			\coordinate (F) at (0.8,0.2);
			\coordinate (F') at (0.8,0.7);
			%
			\draw[densely dashed, semithick] (A) -- (C');
			\draw[densely dashed, semithick] (A') -- (E);
			\draw[densely dashed, semithick] (B') -- (D);
			\draw[densely dashed, semithick] (B) -- (F');
			\path[name path=up1] (C) -- (0.2,1);
			\path[name path=up2] (B) -- (0.8,1);
			\path[name path=ell] (C) -- (D);
			\path[name path=ellbar] (E) -- (F);
			\path[name intersections={of=up1 and ellbar, by=I1}];
			\path[name intersections={of=up2 and ell, by=I2}];
			\path[name intersections={of=ell and ellbar, by=Imid}];
			%
			%
			\draw[thick,
			blue,
			double=blue!10,          
			double distance=0.5pt,     
			line cap=round,           
			opacity=0.3
			] (I1) -- (F);
			\draw[thick,
			red,
			double=red!10,          
			double distance=0.5pt,     
			line cap=round,           
			opacity=0.3
			] (E) -- (I1);
			%
			%
			\draw[thick,
			blue,
			double=blue!10,          
			double distance=0.5pt,     
			line cap=round,           
			opacity=0.3
			] (C) -- (I2);
			\draw[thick,
			green,
			double=red!10,          
			double distance=0.5pt,     
			line cap=round,           
			opacity=0.3
			] (I2) -- (D);
			%
			%
			%
			%
			\draw[-{Latex[length=1.5mm]}, thick] (C) -- (D) node[midway, above left, scale=0.7] {$\ell$};
			\draw[-{Latex[length=1.5mm]}, thick] (E) -- (F) node[midway, below left, scale=0.7] {$\bar{\ell}$};
			%
			%
			%
			\fill[blue!70, opacity = 0.2] (I2) -- (Imid) -- (F) -- cycle;
			%
			%
			\fill[blue!70, opacity = 0.2] (I1) -- (C) -- (Imid) -- cycle;
			%
			%
			\draw (0,0) rectangle (1,1);
			\draw[-{Latex[length=1.5mm]}, thick, blue!70] (B) -- (B') node[midway, above left, scale=0.7] {};
			\draw[-{Latex[length=1.5mm]}, thick, blue!70] (A) -- (A') node[midway, above left, scale=0.7] {};
			\draw[-{Latex[length=1.5mm]}, thick, black] (A) -- (B) node[midway, below, scale=0.7] {\phantom{$\mathring{\ell}$}};
			\foreach \pt in {A,A',B,B',C,D,E,F,I1,I2}
			\fill[black] (\pt) circle (0.3pt);
		\end{tikzpicture}
		\subcaption{Two line segments~$\ell$ and~$\bar{\ell}$ with their hori\-zontal overlaps (shaded in blue) as well as~$\bar{\ell}_a$ (red) and~$\ell_b$ (green), cf.~Section~\ref{sec:Kolmogorov}.  Note that we have~$\abs[0]{\bar{\ell}_b} = \abs[0]{\ell_a} = 0$. The value of~$\rho(\ell, \bar{\ell})$ is given by the shaded area plus the length of the two light blue line segments on the horizontal axis.}
		\label{subfig:horizontal_overlap}
	\end{minipage}%
	\captionsetup{format=plain}
	\caption{Graphic representation of line segments}
	\label{fig:non_vert_lines}
\end{figure}

\begin{definition}[Enhanced additive functions in the axial gauge]\label{def:enhanced_AF_axial}
We denote by $\Axial\subset \fA$ the collection of enhanced additive functions in the \emph{axial gauge} with the defining property that $\bfA=(A,\bA)$ vanishes on $\cX\setminus \cX_\rmh$, i.e.
\[
A(\ell)=0, \qquad \bA(\ell)=0, \qquad \text{ for all } \ell\in\cX\setminus\cX_\rmh. 
\]
Moreover, we say $\bfA\in\Caxial$ is in the \emph{complete axial gauge} if it also vanishes on $\cX_{\rmh\ax}$.  
\end{definition}

\begin{remark}
Any $A\in\Omega^1_{\infty\ax}$ (in fact Lipschitz suffices) induces an element in $\Axial$ (similarly when in addition $A$ is in the complete axial gauge, it induces an element in $\Caxial$). 
\end{remark}
We now let $\alpha\in (\frac 1 3,\frac 12)$ and  introduce for $\bfA=(A,\bA)\in\Axial$ the quantities
\label{symb:RAF_hor_norms}\begin{align}\label{eq:RAF_hor_norms}
\begin{split}
|A|_{\alpha\hor}
&:=\sup_{\ell\simhor\bar\ell}\frac{|A(\ell)-A(\bar\ell)|}{\Area(\ell,\bar\ell)^\alpha}\;,
\\
\|\mathbb A\|_{2\alpha\hor}
&:=\sup_{\ell\simhor\bar\ell} \frac{|\bA(\ell)-\bA(\bar\ell)|}{(|\ell|\vee |\bar\ell|)^\alpha\Area(\ell,\bar\ell)^{\alpha}}
\;.
\end{split}
\end{align}
Denote $|\Cdot|_{\alpha\ax}:=|\Cdot|_{\alpha\gr}+|\Cdot|_{\alpha\hor}$ and similarly $\|\Cdot\|_{2\alpha\ax}:=\|\Cdot\|_{2\alpha\gr}+\|\Cdot\|_{2\alpha\hor}\label{symb:RAF_axial_norms}$ where $|\Cdot|_{\alpha\gr}$ and $\|\Cdot\|_{\alpha\gr}$ are defined in~\eqref{eq:gr_norm} and~\eqref{e:RAF_gr_norm_2} respectively.
Similarly as  \eqref{eq:RAF_gr_metric}, we define on $\Axial$ the metric  
\begin{equation}\label{eq:alpha_ax_norm}
\triple{\bfA;\bar\bfA}_{\alpha\ax}:=|A-\bar A|_{\alpha\ax}+\|\bA-\bar\bA\|_{\alpha\ax}^{1/2},\qquad \bfA,\bar\bfA\in\Axial.
\end{equation}
We also set $\triple{\bfA}_{\alpha\ax}:=\triple{\bfA;\bf0}_{\alpha\ax}$.

\begin{definition}[Rough additive functions in axial gauge]\label{def:RAF_ax}
Let $\alpha\in(\frac13,1]$. We define
\[
    \fA_{\alpha\ax}
    :=
    \{\bfA\in\Axial:\triple{\bfA}_{\alpha\ax}<\infty\},
\]
equipped with the metric $\triple{\Cdot;\Cdot}_{\alpha\ax}$.
Elements of
$\fA_{\alpha\ax}$ are called \emph{rough additive functions in the axial gauge}.
Moreover, we denote by $\bfOmega^1_{\alpha\ax}$ the closure in
$\fA_{\alpha\ax}$ of the canonically enhanced smooth $1$-forms in the axial gauge.
\end{definition}

The metric space $(\fA_{\alpha\ax},\triple{\Cdot;\Cdot}_{\alpha\ax})$ is complete due to completeness of the space of rough paths.
	We made a deliberate choice in writing  $\bfOmega_{\alpha\ax}^1$ with the subscript \enquote{ax}, short for axial, to emphasise that this space is purely for the axial gauge representatives. Indeed, it is not difficult to see that $\bfOmega_{\alpha\ax}^1\subset \bfOmega_{\alpha\gr}^1$ and together with the canonical projection $\bfOmega_{\alpha\ax}^1 \ni \bfA \mapsto A \in \Omega^1 C^{\alpha-1}$ from \rem{rem:inclusions_RAF}, one obtains that $A_2\equiv 0$. 

   Since for elements in $\fA_{\alpha\ax}$ we have additional control between line segments, we can improve~Lemma~\ref{l:raf_to_rp} to the following statement whose proof is straightforward:

    \begin{lemma} \label{l:raf_axial_to_rp}
    Let $\bfA$~$\in\fA_{\alpha\ax}$ and $\ell\simhor\bar\ell$, then $\ell_\bfA,\bar\ell_\bfA\in\scC^\alpha_\rmg$ and 
    $$|\ell_A-\ell_{\bar A}|_{C^\alpha}\leq \Area(\ell,\bar\ell)^\alpha |A|_{\alpha\hor}, \ \ \ |\ell_\bA-\bar\ell_\bA|_{C^{2\alpha}_2}\leq (|\ell|\vee |\bar\ell|)^{\alpha}\Area(\ell,\bar\ell)^\alpha\|\bA\|_{2\alpha\hor},$$
    and the usual corresponding Lipschitz estimates in $\bfA$ hold.     
    \end{lemma}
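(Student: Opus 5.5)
The statement is essentially a reparametrisation of the definitions, so the plan is to unwind the notation and compare the defining quantities directly.

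\textbf{Step 1: the paths are rough paths.} Fix $\ell=(x,v)$ and $\bar\ell=(\bar x,\bar v)$ with $\ell\simhor\bar\ell$. Since $\fA_{\alpha\ax}\subset\fA_{\alpha\gr}$, Lemma~\ref{l:raf_to_rp} already gives $\ell_\bfA,\bar\ell_\bfA\in\scC^\alpha_\rmg$. Only the difference bounds remain, where $\ell_{\bar A}$ is read as $\bar\ell_A$.

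\textbf{Step 2: the first level.} For $0\le s<t\le 1$, additivity gives
\[
\ell_A(s,t)=A\bigl((x+sv,(t-s)v)\bigr),
\]
and similarly for $\bar\ell$. Write $\ell_{[s,t]}$ and $\bar\ell_{[s,t]}$ for these sub-segments. They have the same parameter interval, and the horizontal projection is affine in the parameter, so $\ell_{[s,t]}\simhor\bar\ell_{[s,t]}$. The region between $\ell_{[s,t]}$ and $\bar\ell_{[s,t]}$ is the part of the trapezoid or bowtie between $\ell$ and $\bar\ell$ lying over the horizontal strip corresponding to $[s,t]$. The vertical gap between the two segments is affine in the horizontal coordinate, so its absolute value $h$ is convex. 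Hence
\[
\Area(\ell_{[s,t]},\bar\ell_{[s,t]})=|v^1|\int_s^t h,
\]
and this is at most $\Area(\ell,\bar\ell)$ because $h\ge0$. We want the Hölder factor $|t-s|^\alpha$. I would bound $\int_s^t h\le |t-s|\max(h(0),h(1))$ and compare $\Area(\ell,\bar\ell)$ with $|v^1|\max(h(0),h(1))$. This comparison holds in the trapezoid case, with constant $\tfrac12$. In the crossing (bowtie) case the area is $\gtrsim |v^1|\max h$ as well, since $\int_0^1|a+bt|\,\dif t\ge\tfrac14\max(|a|,|a+b|)$. Therefore
\[
\Area(\ell_{[s,t]},\bar\ell_{[s,t]})\lesssim |t-s|\,\Area(\ell,\bar\ell).
\]
Dividing by $|t-s|^\alpha$ and using $|A|_{\alpha\hor}$ gives the first bound, up to a harmless constant $4^\alpha$. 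The definition's constant may need to be absorbed; I would state the bound with $\lesssim$ if needed.

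\textbf{Step 3: the second level.} Here
\[
\ell_\bA(s,t)-\bar\ell_\bA(s,t)=\bA(\ell_{[s,t]})-\bA(\bar\ell_{[s,t]}).
\]
By the definition of $\|\bA\|_{2\alpha\hor}$, this is bounded by
\[
(|\ell_{[s,t]}|\vee|\bar\ell_{[s,t]}|)^\alpha\,\Area_{[s,t]}^\alpha\,\|\bA\|_{2\alpha\hor}.
\]
The length factor equals $|t-s|^\alpha(|\ell|\vee|\bar\ell|)^\alpha$, and the area factor is bounded by $|t-s|^\alpha\Area(\ell,\bar\ell)^\alpha$ from Step 2. This yields the $C^{2\alpha}_2$ estimate.

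\textbf{Step 4: Lipschitz estimates.} The same computations applied to $\bfA-\bar\bfA$, using the seminorms of the difference, give the Lipschitz estimates.

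The only non-trivial point is the area comparison in Step 2, especially when the segments cross. I expect the convexity and linearity of the gap function to handle it, possibly at the cost of a constant.
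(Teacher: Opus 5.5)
Your argument is correct and is the definition-unwinding the paper intends; the paper gives no proof beyond calling it straightforward. Additivity and $\ell_\bA(s,t)=\bA(x+sv,(t-s)v)$ reduce both estimates to the sub-segment pair $\ell_{[s,t]}\simhor\bar\ell_{[s,t]}$, and the only real input is your comparison $\Area(\ell_{[s,t]},\bar\ell_{[s,t]})\le 4|t-s|\,\Area(\ell,\bar\ell)$.

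Your hedge about the constant is in fact necessary. Whenever $\ell$ and $\bar\ell$ are not parallel, sub-segments near the wider end satisfy $\Area(\ell_{[s,t]},\bar\ell_{[s,t]})>|t-s|\,\Area(\ell,\bar\ell)$. Taking the curvature to be the indicator of such a sub-trapezoid then gives a counterexample to constant $1$. So the estimate must be read with $\lesssim$, which is how the paper uses it anyway.
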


    \begin{remark}[Arbitrary axial-parallel rectangle]\label{rem:RAF_arbitrary_axial_domain}
The definition of axial gauge extends to any rectangle $U$ whose sides are parallel to the standard Euclidean axis, i.e.~any translation and dilation of $\Lambda$. In particular, we can extend the metrics defined above to $U$ (where all line segments are at most $\frac 1 4$) and we emphasise the domain with subscript, e.g.~$\triple{\Cdot;\Cdot}_{\alpha\ax;U}$. We denote the corresponding space by $\fA_{\alpha\ax}(U)$ or $\bfOmega^1_{\alpha\ax}(U)$ when it is the closure of smooth $1$-forms in the axial gauge. 
    \end{remark}

    \subsection{Domain extension of rough additive functions} 
  For two reasons that appear later we need to be able to extend a rough additive function in the axial gauge defined on $\Lambda$ to a rough additive function on $\R^2$ with support on $\Lambdaex\label{symb:Lambdaex}$. 

\begin{lemma}[Domain extension of rough additive functions]
\label{lem:domain_extension_RAF}
Let $\alpha\in(\frac13,\frac12)$ and let $A\in\Omega^1 C^1(\Lambda;\fg)$ be in the axial gauge with canonical lift $\bfA=(A,\bA)\in\bfOmega_{\alpha\ax}^1$. Then there exists $A^\ext\in\Omega^1 C^1(\R^2;\fg)$ such that $A^\ext=A$ on $\Lambda$ and $A^\ext$ vanishes outside $\Lambda^{+1}$,  denoting by
$\bfA^{\ext}=(A^{\ext},\bA^{\ext})$ its canonical lift,  one has
\begin{equation}\label{eq:domain_extension_RAF_bound}
    \triple{\bfA^{\ext}}_{\alpha\ax;\R^2}
    \lesssim_\alpha
    \triple{\bfA}_{\alpha\ax;\Lambda}.
\end{equation}
Given another $\bar A\in\Omega^1 C^1(\Lambda;\fg)$ with extension $\bar\bfA^\ext$ one has the estimate
\begin{equation}\label{eq:domain_extension_RAF_continuity}
    \triple{\bfA^{\ext};\bar\bfA^{\ext}}_{\alpha\ax;\R^2}
    \lesssim_\alpha 
    \triple{\bfA;\bar\bfA}_{\alpha\ax;\Lambda}.
\end{equation}

\end{lemma}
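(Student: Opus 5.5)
We construct the extension explicitly by reflection in the horizontal direction and a cutoff in both directions. Since $A=(A_1,0)$ is in the axial gauge, only $A_1$ needs extending. First extend vertically: set $A_1^{(1)}(x_1,x_2):=A_1(x_1,-x_2)$ for $x_2\in[-1,0]$ and $A_1^{(1)}(x_1,x_2):=A_1(x_1,2-x_2)$ for $x_2\in[1,2]$ (even reflection). Then do the same horizontally: $A_1^{(2)}(x_1,x_2):=-A_1^{(1)}(-x_1,x_2)$ for $x_1\in[-1,0]$, and similarly across $x_1=1$. The sign is chosen so that the reflection acts as pullback of the $1$-form $A_1\,\diff x^1$ under $x_1\mapsto -x_1$. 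Finally, multiply by a smooth cutoff $\chi(x_1)\psi(x_2)$ with $\chi=\psi=1$ on $[0,1]$ and support in $[-1,2]$, so that $A^\ext$ vanishes outside $\Lambda^{+1}$. To keep $C^1$, we replace plain reflection by a standard higher-order reflection $\sum_j c_j A_1(x_1,-\lambda_j x_2)$, the Seeley/Hestenes extension, with positive $\lambda_j$. Each term is the pullback of $A$ under an affine map $\phi_j$ preserving vertical lines and mapping horizontal direction to horizontal direction. The canonical lift of $A^\ext$ is defined by \eqref{eq:def_bA_one_arg}, and everything is linear and explicit, so \eqref{eq:domain_extension_RAF_continuity} should follow from the same argument applied to differences. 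The rough-path nonlinearity of the second level will need separate care.

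The key step is to bound the norms line by line. For a line $\ell\in\cX$ in $\R^2$, split it at the finitely many reflection axes $x_i\in\{0,1\}$ into at most five pieces, each contained in one reflected copy of $\Lambda$. On each copy, $A^\ext$ equals $\chi\psi\sum_j c_j\phi_j^*A$. For a single pullback by an affine map $\phi_j$, the lift transforms as $\bfA\circ\phi_j$: lines map to lines with comparable length, and $\simhor$-pairs map to $\simhor$-pairs with comparable area, because $\phi_j$ is a product map of the form $(x_1,x_2)\mapsto(\pm x_1+a,-\lambda x_2+b)$. So each single term is controlled by $\triple{\bfA}_{\alpha\ax}$ directly. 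A sum of terms, and multiplication by the cutoff, is handled at the rough-path level. Along a fixed line, $\ell_{A^\ext}$ is a linear combination of rough paths $\ell_{\phi_j^*A}$ multiplied by a smooth weight. The cross iterated integrals $\int \ell_{\phi_j^*A}\otimes\diff\ell_{\phi_k^*A}$ are \emph{not} determined by the individual lifts. This is the main obstacle.

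To avoid cross terms, we plan to choose the extension as a single reflection per region when possible. The second level then equals $\bA\circ\phi$ plus Young-type corrections from the smooth cutoff $\chi\psi$, using $\alpha+1>1$ and Lemma~\ref{l:raf_axial_to_rp}. A $C^1$ reflection needs more than one term, so as a fallback we would argue as follows. The cross integrals arise from the smooth function $A_1$, and we would express them through integration by parts plus the Chen identity across pieces. Alternatively, and more robustly, we would note that the bound only needs $\lesssim\triple{\bfA}$ for the canonical lift, and approximate $\int \ell_{\phi_jA}\,\diff\ell_{\phi_kA}$ by comparing with the lift of $\phi^*A$ along a nearby line, using the $\hor$-continuity.

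The gluing across pieces then uses Chen's identity: $\bA^\ext(\ell)$ is the sum of the piece values plus products of first levels. These are bounded by $|\ell|^{2\alpha}$ since each piece has length at most $|\ell|$. The $\hor$-seminorms follow similarly. When $\ell\simhor\bar\ell$, we split both lines at the same $x_1$-values, since the horizontal projections coincide. The area between corresponding pieces is subadditive, and the Chen cross terms are handled by writing $a\otimes b-\bar a\otimes\bar b=(a-\bar a)\otimes b+\bar a\otimes(b-\bar b)$ together with $|\ell|^\alpha\Area^\alpha$ bounds.
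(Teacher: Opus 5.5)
\textbf{The gap.} The step that fails is the multi-term (Seeley/Hestenes) reflection you fall back on to make $A^{\ext}$ classically $C^1$. On a horizontal segment at height $-h$ the extended path is $\sum_j c_jX^{\lambda_jh}$, where $X^y$ is the path of $A$ along the parallel segment at height $y$ in $\Lambda$. So $\bA^{\ext}$ contains the mixed integrals $\int X^{\lambda_jh}\otimes\diff X^{\lambda_kh}$ for $j\neq k$. These are not part of the data $\bfA$, and your fallback cannot bound them. Writing $X^{\lambda_kh}=X^{\lambda_jh}+D$, the horizontal seminorm in \eqref{eq:RAF_hor_norms} gives only $|D|_{\Hol\alpha}\lesssim\Area^\alpha$. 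Then $\int X\otimes\diff D$, with both factors merely $\alpha$-H\"older and $2\alpha<1$, is not a Young integral.

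In fact \eqref{eq:domain_extension_RAF_bound} fails for such an extension when $\dim\fg\geq2$. Take independent $e_1,e_2\in\fg$ and smooth $f,g$ with disjoint interval supports, $f(\lambda_1h)=g(\lambda_2h)=1$, vanishing at the other heights $\lambda_jh$. Set $A_1(x_1,y)=N^{1-\alpha}\bigl(-f(y)\sin(Nx_1)e_1+g(y)\cos(Nx_1)e_2\bigr)$. Along any segment the path moves in the $e_1$ and $e_2$ directions on disjoint consecutive time intervals, so its L\'evy area is a product of two first-level increments. One then checks $\triple{\bfA}_{\alpha\ax}\lesssim1$ uniformly in $N$. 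Yet $c_1X^{\lambda_1h}+c_2X^{\lambda_2h}$ winds around an ellipse with semi-axes $|c_1|N^{-\alpha}$ and $|c_2|N^{-\alpha}$, roughly $N/2\pi$ times per unit length. Its L\'evy area on a horizontal segment of length $\frac14$ is therefore of order $|c_1c_2|N^{1-2\alpha}\to\infty$. The cutoff does not help, since $\psi(-h)\neq0$ for small $h$. So neither the comparison with a nearby line nor integration by parts can rescue this route.

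\textbf{The fix, and the paper's route.} In this paper $C^1$ means Lipschitz (Section~\ref{sec:notation}). Your ``single copy per region'' plan with Lipschitz cutoffs therefore already suffices, and no cross terms ever arise; this is the paper's proof.

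Vertically, the paper extends the boundary traces instead of reflecting: $A^{\ver}_1=\chi^{\ua}(x_2)A_1(x_1,1)$ above $\Lambda$, similarly below, with linear cutoffs. Every segment $\ell$ in the strip is then driven by the same boundary rough path along its horizontal projection, weighted by an affine $h^\ell$. For $\ell\simhor\bar\ell$ the only discrepancy is $|h^\ell-h^{\bar\ell}|_{C^1}\lesssim\Area(\ell,\bar\ell)/|v_1|$, which is paired with rough-path norms of order $|v_1|^\alpha$. Horizontally, the paper uses the even reflection $\chi_1(x_1)A^{\ver}_1(\rho_1(x_1),x_2)$ of the coefficient.

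The weighted rough integrals are handled by the four-point estimate of Lemma~\ref{lem:four_point_weighted_rp_transform}, which is the precise form of your ``Young-type corrections''. The same lemma also handles the second-level nonlinearity in \eqref{eq:domain_extension_RAF_continuity} that you flag. Your even reflection in $x_2$ would work equally well in place of the trace extension.

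Two smaller corrections:
\begin{itemize}
\item Your horizontal pullback gives the coefficient $-A_1(-x_1,x_2)$, which jumps by $2A_1(0,x_2)$ across $x_1=0$. Use the even reflection instead, i.e.\ minus the pullback. This is harmless, since $(X,\bX)\mapsto(-X,\bX)$ preserves all the norms.
\item Cutoffs supported in $[-1,2]$ give support $[-1,2]^2\not\subset\Lambda^{+1}$, because the corners stick out. Shrink them, e.g.\ to $[-\frac12,\frac32]$.
\end{itemize}
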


The proof of this lemma is basically a consequence of the following result from rough path theory:

\begin{lemma}[Weighted rough path transform]
\label{lem:four_point_weighted_rp_transform}
Let $(E,|\Cdot|)$ be a  finite-dimensional normed vector space, $\alpha\in(\frac13,\frac12)$ and let
\[
    \bfX^a=(X^a,\mathbb X^a),
    \qquad
    \bar\bfX^a=(\bar X^a,\bar{\mathbb X}^a),
    \qquad a\in\{0,1\},
\]
be weakly geometric $E$-valued $\alpha$-Hölder rough paths. Let
$h^0,h^1\in C^1([0,1];\R)$. For $a\in\{0,1\}$ define
\[
    Y^a_t:=\int_0^t h^a_r\dif X^a_r,
    \qquad
    \bar Y^a_t:=\int_0^t h^a_r\dif \bar X^a_r.
\]
Then $Y^a$ (resp.~$\bar Y^a$) are controlled rough path with respect to $X^a$ (resp.~$\bar X^a$) and have a canonical second level enhancement by denoted by $\bY^a$ (resp.~$\bar\bY^a$) (see~\cite[Rem.~4.12]{FH20}). 
Set
\[
    \Delta X^a:=X^a-\bar X^a,
    \qquad
    \Delta\mathbb X^a:=\mathbb X^a-\bar{\mathbb X}^a,
\]
and
\[
    \Delta X^{0,1}:=\Delta X^1-\Delta X^0,
    \qquad
    \Delta\mathbb X^{0,1}:=\Delta\mathbb X^1-\Delta\mathbb X^0.
\]
Then
\begin{equation}\label{eq:four_point_weighted_first_level}
\begin{split}
    &
   |(Y^1-\bar Y^1)-(Y^0-\bar Y^0)|_{\Hol\alpha}
\lesssim_\alpha
    |h^1|_{C^1}
    |\Delta X^{0,1}|_{\Hol\alpha}
    +
    |h^1-h^0|_{C^1}
    |\Delta X^0|_{\Hol\alpha},
\end{split}
\end{equation}
and, recalling the definition of the $C^{2\alpha}_2$-norm from~\eqref{eq:Hol_X_def}, we have
\begin{align}\label{eq:four_point_weighted_second_level}
\begin{split}
|(\bY^1-\bar\bY^1)-(\bY^0-\bar \bY^0)|_{C^{2\alpha}_2}
&\lesssim_\alpha
|h^1|_{C^1}^2
|\Delta\bX^{0,1}|_{C^{2\alpha}_2}
+
(|h^1|_{C^1}+|h^0|_{C^1})
|h^1-h^0|_{C^1}
|\Delta\bX^0|_{C^{2\alpha}_2}.
\end{split}
\end{align}
\end{lemma}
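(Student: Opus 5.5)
The key observation is that both levels of $Y^a$ are \emph{linear} functionals of the driving data once $h^a$ is fixed: $Y^a$ is linear in $X^a$, and $\bY^a$ is linear in $\bX^a$ (not merely quadratic in $X^a$). Granting this, the four-point estimates reduce to the algebraic splitting
\[
\int h^1\,\dif \Delta X^1-\int h^0\,\dif\Delta X^0=\int h^1\,\dif \Delta X^{0,1}+\int (h^1-h^0)\,\dif \Delta X^0,
\]
together with its second-level analogue. So I would first establish two linear representation formulas with explicit bounds, and then apply this telescoping.

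\textbf{First level.} Since $h^a\in C^1$ and $\alpha+1>1$, $\int h^a\,\dif X^a$ is a Young integral. It coincides with the rough integral, because $h^a$ has vanishing Gubinelli derivative. Integration by parts gives
\[
Y_{s,t}=h_t X_{s,t}-\int_s^t X_{s,r}\,h'_r\,\dif r,
\]
which is linear in $X$ and yields $|Y_{s,t}|\le 2|h|_{C^1}|X|_{\Hol\alpha}|t-s|^\alpha$. Applying this bound to the two terms of the splitting above (with $h=h^1$, $X=\Delta X^{0,1}$, and with $h=h^1-h^0$, $X=\Delta X^0$) gives \eqref{eq:four_point_weighted_first_level}.

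\textbf{Second level.} The canonical lift is the limit of the compensated sums
\[
\bY_{s,t}=\lim\sum_{[u,v]}\bigl(Y_{s,u}\otimes h_uX_{u,v}+h_u^2\bX_{u,v}\bigr),
\]
which look quadratic in $X$. The remedy is Chen's identity. For $u<u'\le r<r'$ it gives
\[
X_{u,u'}\otimes X_{r,r'}=\bX_{u,r'}-\bX_{u,r}-\bX_{u',r'}+\bX_{u',r},
\]
and for $u'=r$ this is just Chen's relation for $\bX_{u,r'}$. Thus every product $X\otimes X$ over disjoint ordered intervals is a rectangular increment of the two-parameter function $(u,r)\mapsto\bX_{u,r}$ on the simplex. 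Consequently the sums above are discrete double sums of $F(u,r):=h_uh_r$ against this two-parameter increment, plus diagonal terms $h_u^2\bX_{u,v}$. Both parts are linear in $\bX$.

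I would then perform a discrete two-dimensional summation by parts on the simplex $\{s\le u\le r\le t\}$. This moves all increments onto $F$ and leaves only values $\bX_{a,b}$ with $s\le a\le b\le t$. Schematically,
\[
\bY_{s,t}=F(t,t)\bX_{s,t}+\int(\partial F)\,\bX+\iint(\partial_u\partial_rF)\,\bX,
\]
where the middle term collects one-dimensional integrals of $\partial_uF$ and $\partial_rF$ against $\bX$, and the last term is a double integral. The diagonal terms in the sum combine correctly with the boundary terms of the summation by parts. In the limit of vanishing mesh, each term is bounded by
\[
\bigl(|F|_\infty+|\partial F|_\infty+|\partial_u\partial_rF|_\infty\bigr)\,|\bX|_{C^{2\alpha}_2}\,|t-s|^{2\alpha}\lesssim |h|_{C^1}^2\,|\bX|_{C^{2\alpha}_2}\,|t-s|^{2\alpha}.
\]
Only first derivatives of each factor of $F$ appear, since $\partial_u\partial_r F=h'_uh'_r$.

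Because the resulting formula is bilinear in $(F,\bX)$, the same telescoping applies:
\[
F^1\Delta\bX^1-F^0\Delta\bX^0=F^1\Delta\bX^{0,1}+(F^1-F^0)\Delta\bX^0 .
\]
Combined with
\[
|F^1-F^0|_{C^{1,1}}\lesssim(|h^1|_{C^1}+|h^0|_{C^1})\,|h^1-h^0|_{C^1},
\]
this yields \eqref{eq:four_point_weighted_second_level}.

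\textbf{Main obstacle.} The hardest step is rigorously establishing the linear representation of $\bY$ in terms of $\bX$. This requires tracking the diagonal and boundary terms in the discrete summation by parts, and passing to the limit in the Riemann sums. I would first verify the formula for smooth $X$ with canonical $\bX$, where it is classical two-dimensional integration by parts on the simplex. I would then check directly at the level of compensated sums that each expression depends only on $\bX$ and converges, using the $C^{2\alpha}_2$ bound with $2\alpha>0$ and the fact that $F$ is $C^1$ in each variable. This avoids any approximation argument for differences of rough paths.
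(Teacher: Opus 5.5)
Your proof is correct, and it runs on the same mechanism as the paper's. Chen's identity turns products of increments over ordered disjoint intervals into rectangular increments of $(u,r)\mapsto\mathbb X_{u,r}$. A two-dimensional integration by parts then moves all derivatives onto $h_uh_r$, and the four-point bound follows by telescoping in $h$ exactly as at the first level.

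The difference is where the integration by parts is done. The paper does it locally. It sews the germs $\Xi_{s,t}=Y_s\otimes Y_{s,t}+h_s^2\mathbb X_{s,t}$ and, via smooth approximation, identifies the coboundary of the four-point combination as $\delta\Theta_{s,u,t}=\mathscr L^h_{s,u,t}F^{0,1}$, an expression living on the single rectangle $[s,u]\times[u,t]$. It bounds this by $|t-s|^{1+2\alpha}$ and concludes with the sewing lemma plus a separate bound on the leading term $h_s^2\Delta\mathbb X_{s,t}$.

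You do it globally on the simplex and obtain a closed formula:
\[
\mathbb Y_{s,t}=h_sh_t\,\mathbb X_{s,t}-h_s\int_s^t h'_r\,\mathbb X_{s,r}\,\mathrm{d}r+h_t\int_s^t h'_u\,\mathbb X_{u,t}\,\mathrm{d}u-\iint_{s<u<r<t}h'_uh'_r\,\mathbb X_{u,r}\,\mathrm{d}u\,\mathrm{d}r .
\]
In this arrangement the leading term is $F(s,t)\mathbb X_{s,t}$ rather than $F(t,t)\mathbb X_{s,t}$, which is immaterial for the bound. Your route needs no sewing for differences and gives more, for example continuity of $\mathbb X\mapsto\mathbb Y$ in the uniform topology on the simplex. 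The paper's route needs only a local identity and fits the standard germ/sewing framework.

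One caution on the limit passage. Writing the compensated sums as discrete double sums means replacing $Y_{s,u}$ by its Riemann sum on the same partition. The naive error bound for that replacement is of order $\mathrm{mesh}^{2\alpha-1}$, which does not vanish. You can avoid this in either of two ways.
\begin{itemize}
\item Insert the exact first-level formula $Y_{s,u}=h_uX_{s,u}-\int_s^uX_{s,r}h'_r\,\mathrm{d}r$. Chen's identity then makes the compensators $h_u^2\mathbb X_{u,v}$ cancel exactly, leaving only one-dimensional Abel summation against the continuous function $\mathbb X$.
\item Approximate by smooth rough paths, as the paper does. The identity concerns one rough path at a time, so no approximation of differences is needed.
\end{itemize}
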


\begin{proof}
The bound regarding the first level~\eqref{eq:four_point_weighted_first_level} is immediate from Young integration by simply writing
\[
    (Y^1-\bar Y^1)-(Y^0-\bar Y^0)=
    \int_0^{\Cdot} h^1_r\dif \Delta X_r^{0,1}
    +
    \int_0^{\Cdot} (h^1_r-h^0_r)\dif \Delta X^0_r.
\]
For the second level, we start by introducing for $h\in C^1([0,1];\R)$
\[
    Y^{a,h}_t:=\int_0^t h_r\dif X^a_r,
    \qquad
    \bar Y^{a,h}_t:=\int_0^t h_r\dif \bar X^a_r,
\]
and defining the germs
\begin{align}\label{eq:Xi_ah}
    \Xi^{a,h}_{s,t}
    :=
    Y^{a,h}_s\otimes Y^{a,h}_{s,t}
    +
    h_s^2\bX^a_{s,t},
    \qquad
    \bar\Xi^{a,h}_{s,t}
    :=
    \bar Y^{a,h}_s\otimes \bar Y^{a,h}_{s,t}
    +
    h_s^2\bar\bX^a_{s,t}.
\end{align}
These are the germs whose sewing gives the integrals of $Y^{a,h}$ (resp~$\bar Y^{a,h})$ against itself, see~\cite[Rem.~4.12]{FH20}. 

We are interested in bounding $(\bY^1-\bar\bY^1)-(\bY^0-\bar \bY^0)$, which from the perspective of germs requires to study $(\Xi^{1,h^1}-\bar \Xi^{1,h^1})-(\Xi^{0,h^0}-\bar\Xi^{0,h^0})$. We write 
\begin{align*}
(\Xi^{1,h^1}-\bar \Xi^{1,h^1})-(\Xi^{0,h^0}-\bar\Xi^{0,h^0})&=(\Xi^{1,h^1}-\bar \Xi^{1,h^1})-(\Xi^{0,h^1}-\bar\Xi^{0,h^1}) \\
&\qquad +(\Xi^{0,h^1}-\bar \Xi^{0,h^1})-(\Xi^{0,h^0}-\bar\Xi^{0,h^0}).
\end{align*}
We set
\[
\Theta:=(\Xi^{1,h^1}-\bar \Xi^{1,h^1})-(\Xi^{0,h^1}-\bar\Xi^{0,h^1}),\qquad \Psi:=(\Xi^{0,h^1}-\bar \Xi^{0,h^1})-(\Xi^{0,h^0}-\bar\Xi^{0,h^0}). 
\]
We treat these terms separately. 

Before doing that, we need to introduce the following notation. 
For a smooth $F:[0,1]^2_{\leq}\to E\otimes E$ with $F_{s,s}=0$ for all $s\in [0,1]$, we define the operator
\[
\begin{split}
    \mathscr L^h_{s,u,t}F
    &:=
    \int_s^u\int_u^t
    (h_rh_q-h_s^2)
    \partial_r\partial_qF_{r,q}
    \dif q\dif r
    +
    (h_s^2-h_u^2)F_{u,t}.
\end{split}
\]
With a step of integrating by parts, one can readily verify the identity
\begin{align}\label{eq:Lh_IBP}
\begin{split}
    \mathscr L^h_{s,u,t}F
    &=
    h_u(h_t-h_u)F_{u,t}
    -
    h_s(h_t-h_s)F_{s,t}
    +
    h_s(h_u-h_s)F_{s,u}
    \\
    &\qquad
    -
    \int_s^u \dot h_rh_tF_{r,t}\dif r
    +
    \int_s^u \dot h_rh_uF_{r,u}\dif r
    \\
    &\qquad
    -
    \int_u^t h_u\dot h_qF_{u,q}\dif q
    +
    \int_u^t h_s\dot h_qF_{s,q}\dif q
    \\
    &\qquad
    +
    \int_s^u\int_u^t
    \dot h_r\dot h_qF_{r,q}
    \dif q\dif r .
\end{split}
\end{align}
To treat $\Theta$, we just write $h^1=h$, and define 
\[
F^{0,1}_{r,q}:=\Delta\bX^{0,1}_{r,q}
    =
    \bX^1_{r,q}-\bar\bX^1_{r,q}
    -
    \bX^0_{r,q}+\bar\bX^0_{r,q}.
\]
We use the convention
\[
    \delta Z_{s,u,t}:=Z_{s,t}-Z_{s,u}-Z_{u,t} ,\qquad Z:[0,1]^2_{\leq }\to E\otimes E,
\]
together with~\eqref{eq:Xi_ah}, to conclude
\begin{align}\label{eq:Theta}
\begin{split}
    \delta\Theta_{s,u,t}
    &=
    -\Big(
        Y^{1,h}_{s,u}\otimes Y^{1,h}_{u,t}
        -
        \bar Y^{1,h}_{s,u}\otimes \bar Y^{1,h}_{u,t}
        -
        Y^{0,h}_{s,u}\otimes Y^{0,h}_{u,t}
        +
        \bar Y^{0,h}_{s,u}\otimes \bar Y^{0,h}_{u,t}
    \Big)
    \\
    &\qquad
    +
    h_s^2
    \Big(
        F^{0,1}_{s,t}-F^{0,1}_{s,u}-F^{0,1}_{u,t}
    \Big)
    +
    (h_s^2-h_u^2)F^{0,1}_{u,t}.
\end{split}
\end{align}
We can approximate our rough paths by smooth paths in a worse regularity space, so for identities, we can assume our paths are smooth.
In particular, assuming all the paths $\bfX^a,\bar\bfX^a$ are smooth, we can write 
\[
    Y^{a,h}_{p,q}=\int_p^q h_r\dot X^a_r\dif r ,
    \qquad
    \bar Y^{a,h}_{p,q}=\int_p^q h_r\dot{\bar X}^a_r \dif r.
\]
Hence the first line of~\eqref{eq:Theta} is equal to
\[
\begin{split}
    -\int_s^u\int_u^t h_rh_q
    \Big(
        \dot X^1_r\otimes\dot X^1_q
        -
        \dot{\bar X}^1_r\otimes\dot{\bar X}^1_q
        -
        \dot X^0_r\otimes\dot X^0_q
        +
        \dot{\bar X}^0_r\otimes\dot{\bar X}^0_q
    \Big)
    \dif q\dif r.
\end{split}
\]
On the other hand, Chen's identity gives
\[
\begin{split}
    F^{0,1}_{s,t}-F^{0,1}_{s,u}-F^{0,1}_{u,t}
    &=
    X^1_{s,u}\otimes X^1_{u,t}
    -
    \bar X^1_{s,u}\otimes \bar X^1_{u,t}
    -
    X^0_{s,u}\otimes X^0_{u,t}
    +
    \bar X^0_{s,u}\otimes \bar X^0_{u,t}
    \\
    &=
    \int_s^u\int_u^t
    \Big(
        \dot X^1_r\otimes\dot X^1_q
        -
        \dot{\bar X}^1_r\otimes\dot{\bar X}^1_q
        -
        \dot X^0_r\otimes\dot X^0_q
        +
        \dot{\bar X}^0_r\otimes\dot{\bar X}^0_q
    \Big)
    \dif q\dif r .
\end{split}
\]
Combining these two, together with the fact that 
\[
\begin{split}
    \partial_r\partial_q F^{0,1}_{r,q}
    &=
    -\Big(
        \dot X^1_r\otimes\dot X^1_q
        -
        \dot{\bar X}^1_r\otimes\dot{\bar X}^1_q
        -
        \dot X^0_r\otimes\dot X^0_q
        +
        \dot{\bar X}^0_r\otimes\dot{\bar X}^0_q
    \Big),
\end{split}
\]
we conclude that
\begin{equation}\label{eq:delta_Theta_Lh}
    \delta\Theta_{s,u,t}
    =
    \int_s^u\int_u^t
    (h_rh_q-h_s^2)
    \partial_r\partial_q F^{0,1}_{r,q}
    \dif q\dif r
    +
    (h_s^2-h_u^2)F^{0,1}_{u,t}
    =
    \mathscr L^h_{s,u,t}F^{0,1}.
\end{equation}
But then by~\eqref{eq:Lh_IBP} (which holds then even if the rough paths are not smooth) we immediately see that 
\begin{equation}\label{eq:delta_Theta_bound_compact}
    |\delta\Theta_{s,u,t}|
    \lesssim_\alpha
    |h^1|_{C^1}^2
    |\Delta\bX^{0,1}|_{C^{2\alpha}_2}
    |t-s|^{1+2\alpha}.
\end{equation}

Now, doing similar computations with $\Psi$ yields
\[
\delta\Psi_{s,u,t}=(\mathscr L^{h^1}_{s,u,t}-\mathscr L^{h^0}_{s,u,t})F^{0},
\]
with $F^0:=\Delta \bX^0$. In particular, it is again obvious from~\eqref{eq:Lh_IBP} that 
\begin{equation}\label{eq:delta_Psi_bound_compact}
\begin{split}
    |\delta\Psi_{s,u,t}|
    &\lesssim_\alpha
    (|h^1|_{C^1}+|h^0|_{C^1})
    |h^1-h^0|_{C^1}
    |\Delta\bX^0|_{C^{2\alpha}_2}
    |t-s|^{1+2\alpha}.
\end{split}
\end{equation}

Combining \eqref{eq:delta_Theta_bound_compact} and
\eqref{eq:delta_Psi_bound_compact}, we get
\[
\begin{split}
|\delta ((\Xi^1-\bar\Xi^1)-(\Xi^0-\bar\Xi^0))_{s,u,t}|
&\lesssim_\alpha
|h^1|_{C^1}^2
|\Delta\bX^{0,1}|_{C^{2\alpha}_2}
|t-s|^{1+2\alpha}
\\
&\qquad
+
(|h^1|_{C^1}+|h^0|_{C^1})
|h^1-h^0|_{C^1}
|\Delta\bX^0|_{C^{2\alpha}_2}
|t-s|^{1+2\alpha}.
\end{split}
\]
Using that 
\[
\begin{split}
\left|
    (h^1_s)^2\Delta\bX^1_{s,t}
    -
    (h^0_s)^2\Delta\bX^0_{s,t}
\right|
&\lesssim
|h^1|_{C^1}^2
|\Delta\bX^{0,1}|_{C^{2\alpha}_2}
|t-s|^{2\alpha}
\\
&\qquad
+
(|h^1|_{C^1}+|h^0|_{C^1})
|h^1-h^0|_{C^1}
|\Delta\bX^0|_{C^{2\alpha}_2}
|t-s|^{2\alpha},
\end{split}
\]
the sewing lemma then implies~\eqref{eq:four_point_weighted_second_level}. 
\end{proof}

\begin{proof}[of Lemma~\ref{lem:domain_extension_RAF}]
Let $\tilde\Lambda:=[-\frac 1 2 ,\frac 3 2 ]^2$ and note that $\tilde\Lambda\subset\Lambdaex$. We construct the extension in two steps. First we extend vertically from
$\Lambda$ to the vertical strip
\[
    V:=[0,1]\times[-\tfrac 1 2,\tfrac 3 2],
\]
and then horizontally from $V$ to $\tilde \Lambda=[-\tfrac 1 2,\frac 3 2]^2$.
Let
\[
    \chi^\ua(r):=3-2r,\qquad r\in[1,\tfrac 3 2],
    \qquad
    \chi^\da(r):=1+2r,\qquad r\in[-\tfrac 12,0].
\]
Define $A^\ver=A^\ver_1\diff x^1$ on $V$ by
\[
    A^\ver_1(x_1,x_2)
    :=
    \begin{cases}
        \chi^\da(x_2) A_1(x_1,0), & x_2\in[-\tfrac 12,0],\\
        A_1(x_1,x_2), & x_2\in[0,1],\\
        \chi^\ua(x_2) A_1(x_1,1), & x_2\in[1,\tfrac 3 2].
    \end{cases}
\]
Thus $A^\ver=A$ on $\Lambda$, and $A^\ver_1$ vanishes on the horizontal sides
$x_2=-\frac 1 2$ and $x_2=\frac 3 2$. Since $A_1$ is $C^1$ on $\Lambda$, the function
$A^\ver_1$ is Lipschitz on $V$.

Let $\bfA^\ver=(A^\ver,\bA^\ver)$ be the canonical lift of $A^\ver$. We first
claim that
\begin{equation}\label{eq:vertical_extension_RAF_bound}
    \triple{\bfA^\ver}_{\alpha\ax;V}
    \lesssim
    \triple{\bfA}_{\alpha\ax;\Lambda},
\end{equation}
and the corresponding Lipschitz estimate. 
Note that it is enough to prove the required estimates for pairs of lines $\ell\simhor\bar\ell$ for which both are in one square, either $\Lambda$, the lower or upper square. The reason is that any other case can be reduced by a cutting/adding lines at the boundary of the squares together with additivity and Chen's identity. 
The estimate inside $\Lambda$ is immediate. So, it remains to discuss the upper
and lower squares. We only treat the upper square $V^\ua:=[0,1]\times[1,\frac 3 2]$, because the lower square requires just the same argument.

Let $\ell\simhor\bar\ell$ be line segments contained in
$V^\ua$. Write $\ell=(x,v)$ and $\bar\ell=(\bar x,\bar v)$. The case
$|v_1|=0$ is trivial in the axial gauge, so we assume $|v_1|>0$. Since the
two line segments are horizontally equivalent, their projections onto the upper side of $\Lambda$ coincide, which is the line segment  $\hat \ell:=((x_1,1),(v_1,0))$. 
We let $=\hat\ell_{\bfA}=(\hat\ell_A,\hat\ell_\bA)=:\bfX=(X,\bX)$ be the rough path sitting on the boundary.  By the growth bounds~Lemma~\ref{l:raf_to_rp}, we have
\begin{equation}\label{eq:boundary`_RP_bound}
    |X|_{\Hol\alpha}+|\mathbb X|_{C^{2\alpha}_2}^{1/2}
    \lesssim
    \triple{\bfA}_{\alpha\ax;\Lambda}|v_1|^\alpha .
\end{equation}
Define
\[
h^{\ell}:=\chi^\ua (x_2+t v_2)=3-2x_2-2tv_2,\qquad h^{\bar\ell}:=\chi^\ua (\bar x_2+t \bar v_2)=3-2\bar x_2-2t\bar v_2,
\]
and note that $|h^\ell|_{C^1}+|h^{\bar\ell}|_{C^1}\lesssim 1$ and 
\[
|h^\ell-h^{\bar\ell}|_{C^1}\lesssim \int_0^1 |h^\ell_t-h^{\bar\ell}_t|\dif t\lesssim  \frac{\Area(\ell,\bar\ell)}{|v_1|}
    \lesssim
    \Area(\ell,\bar\ell)^\alpha |v_1|^{-\alpha}
\]
where in the last inequality we have used that $\ell$ and $\bar\ell$
contained in the fixed square $V^\ua$ implies
$\Area(\ell,\bar\ell)\lesssim |v_1|$. 

Set 
\[
Y^\ell_t=\int_0^t h^\ell_r \dif X_r,\qquad Y^{\bar\ell}_t=\int_0^t h^{\bar\ell}_r\dif X_r,
\]
so that $A^\ver(\ell)=Y^\ell_1$ and $A^\ver(\bar\ell)=Y^{\bar\ell}_1$. Moreover, we are in the context of Lemma~\ref{lem:four_point_weighted_rp_transform}. In particular, using $|v_1|\leq |\ell|\vee|\bar\ell|$ and~\eqref{eq:four_point_weighted_first_level} implies 
\[
|A^\ver(\ell)|\lesssim \triple{\bfA}_{\alpha\gr;\Lambda}|v_1|^\alpha\lesssim \triple{\bfA}_{\alpha\gr;\Lambda}|\ell|^\alpha, 
\]
and 
\[
|A^\ver(\ell)-A^\ver(\bar\ell)|\lesssim \triple{\bfA}_{\alpha\ax;\Lambda}  \Area(\ell,\bar\ell)^\alpha |v_1|^{-\alpha}|v_1|^\alpha \lesssim \triple{\bfA}_{\alpha\ax;\Lambda} \Area(\ell,\bar\ell)^\alpha .
\]
Denoting by $\bY^\ell$ and $\bY^{\bar\ell}$ the canonical iterated integrals of $Y^\ell$ and $Y^{\bar\ell}$ respectively, by~\eqref{eq:four_point_weighted_first_level}, we get
\[
\begin{split}
|\bA^\ver(\ell)|
&=
|\bY^\ell_{0,1}|
\lesssim
|h^\ell|_{C^1}^2
|\bX|_{C^{2\alpha}_2}
\lesssim
\triple{\bfA}_{\alpha\ax;\Lambda}^2
|v_1|^{2\alpha}
\lesssim
\triple{\bfA}_{\alpha\ax;\Lambda}^2
|\ell|^{2\alpha},
\end{split}
\]
as well as
\[
\begin{split}
|\bA^\ver(\ell)-\bA^\ver(\bar\ell)|
&=
|\bY^\ell_{0,1}-\bY^{\bar\ell}_{0,1}|
\\
&\lesssim
\bigl(|h^\ell|_{C^1}+|h^{\bar\ell}|_{C^1}\bigr)
|h^\ell-h^{\bar\ell}|_{C^1}
|\bX|_{C^{2\alpha}_2}
\\
&\lesssim
\triple{\bfA}_{\alpha\ax;\Lambda}^2
\Area(\ell,\bar\ell)^\alpha
|v_1|^{-\alpha}|v_1|^{2\alpha}
\\
&\lesssim
\triple{\bfA}_{\alpha\ax;\Lambda}^2
\bigl(|\ell|\vee|\bar\ell|\bigr)^\alpha
\Area(\ell,\bar\ell)^\alpha,
\end{split}
\]
where we used $|v_1|\leq |\ell|\vee|\bar\ell|$ in the last step. This finishes the proof of~\eqref{eq:vertical_extension_RAF_bound}. The corresponding Lipschitz estimate follows by similar application of Lemma~\ref{lem:four_point_weighted_rp_transform}.

We now extend horizontally. Let
\[
    \rho_1(r)
    :=
    \begin{cases}
        -r, & r\in[-\tfrac 12,0],\\
        r, & r\in[0,1],\\
        2-r, & r\in[1,\tfrac 3  2],
    \end{cases}
    \qquad
    \chi_1(r)
    :=
    \begin{cases}
        1+2r, & r\in[-\tfrac 12,0],\\
        1, & r\in[0,1],\\
        3-2r, & r\in[1,\tfrac 3 2].
    \end{cases}
\]
For $x\in\tilde\Lambda$, define
\[
    A^{\ext}_1(x_1,x_2)
    :=
    \chi_1(x_1)A^\ver_1(\rho_1(x_1),x_2),
\]
and set $A^{\ext}_1=0$ on $\R^2\setminus\tilde\Lambda$. Then $A^{\ext}=A$ on
$\Lambda$, $A^{\ext}_1$ vanishes outside $\Lambdaex$, and
$A^{\ext}_1$ is Lipschitz. Indeed, the vertical extension vanishes on the
horizontal sides of $V$, while the horizontal cutoff vanishes on the vertical
sides of $\tilde\Lambda$.

It remains to check that the horizontal extension preserves the axial RAF bounds. We explain the argument on the right side
\[
    S^\rightarrow:=[1,\tfrac 32]\times[-\tfrac 1 2,\tfrac 3 2],
\]
as the same argument applies to the left side. 
Let  $\ell=(x,v)$ be a line segment in
$S^\rightarrow$, and set
\[
    x^\rho:=(2-x_1,x_2),
    \qquad
    v^\rho:=(-v_1,v_2).
\]
Set
\[
    h_t:=3-2x_1-2tv_1,
    \qquad
    X^\ell_{s,t}:=
    -A^\ver((x^\rho+sv^\rho,(t-s)v^\rho)),
\]
and define the second level by
\[
    \mathbb X^\ell_{s,t}:=
    \bA^\ver((x^\rho+sv^\rho,(t-s)v^\rho)).
\]
It is not difficult to then verify that $\bfX^\ell:=(X^\ell,\bX^\ell)$ is $\alpha$-H\"older weakly geometric rough path. Moreover, given another $\bar\ell\sim\ell$ in $S^\to$, we have the following bounds by Lemma~\ref{l:raf_axial_to_rp} 
\begin{equation}\label{eq:RP_bound_horizontal}
   \begin{split}
   |X^\ell|_{\Hol\alpha}+|\mathbb X^\ell|_{C^{2\alpha}_2}^{1/2}
    &\lesssim
    \triple{\bfA}_{\alpha\ax;\Lambda}|\ell|^\alpha \\
     |X^\ell-X^{\bar\ell}|_{\Hol\alpha}
    &\lesssim
    \triple{\bfA}_{\alpha\ax;\Lambda}\Area(\ell,\bar\ell)^\alpha \\
    |\mathbb X^\ell-\bX^{\bar\ell}|_{C^{2\alpha}_2}
    &\lesssim \triple{\bfA}_{\alpha\ax;\Lambda}(|\ell|\vee|\bar\ell|)^\alpha \Area(\ell,\bar\ell)^\alpha. 
\end{split}
\end{equation}

Note that $|h|_{C^1}\lesssim 1$. In particular, we have gathered all ingredients to apply Lemma~\ref{lem:four_point_weighted_rp_transform}, and since the details are similar to the vertical extension step, we leave the remaining of those details for the reader to finish the proof.  
\end{proof}

   \subsection{Kolmogorov continuity theorem}\label{sec:Kolmogorov}
In this section, we prove a Kolmogorov continuity theorem for rough additive functions in the complete axial gauge, as we have the complete axial gauge representative of the YM measure in mind. 
To that end, let us now fix a probability space $(\Sigma,\cF,\mathbb P)$ and a finite-dimensional normed vector space $(E,|\Cdot|)$. With these notations, we can now write the Kolmogorov continuity theorem as follows:

    \begin{theorem}[Kolmogorov continuity]\label{thm:Kolmogorov} Let  $\bfA=(A,\bA)$ be a random field with values in~$\Caxial$.  Assume that for some $\alpha\in (\frac 1 3,\frac 1 2]$, $p>{16}/\alpha$ and $L>0$
\begin{equation}\label{eq:Kolmogorov_moment_bounds}
\begin{split}
\E[|A(\ell)-A(\bar\ell)|^p]&\leq L\Area(\ell,\bar\ell)^{\alpha p}\\
\E[|\bA(\ell)-\bA(\bar\ell)|^{p/2}]&\leq L(|\ell|\vee|\bar\ell|)^{\alpha p/2}\Area(\ell,\bar\ell)^{\alpha p/2},
\end{split}
\end{equation}
for all $\ell\simhor\bar\ell$. 

    Then for any $0<\beta<\alpha-16/p$ there exists a modification $\bfA^\sharp=( A^\sharp,\bA^\sharp)\in\fA_{\beta\ax}$ of $(A,\bA)$ such that 
    \begin{align}\label{eq:Kolmogorov_result_bound}
    \E[\triple{\bfA^\sharp}^p_{\beta\ax}]\leq  C^pL
   (1-2^{-\frac 12(\alpha-16/p)})^{-p}
    ( 1-2^{-\frac p4(\alpha-\frac{16}{p}-\beta)}
    )^{-1},
    \end{align}
    for some universal constant $C>0$. Moreover, if a second random field $\bar\bfA=(\bar A,\bar\bA)\in\Caxial$ satisfies the above moment bounds with the same constant $L$, and the differences 
    \[
   \Delta A:= A-\bar A, \qquad \Delta\bA:=\bA-\bar \bA, 
    \]
    satisfy the same two bounds but with constant $\varepsilon L$, then the two modifications $\bfA^\sharp$ and $\bar\bfA^\sharp$ of $\bfA$ and $\bar\bfA$ respectively, satisfy the stability estimate
   \begin{align}\label{eq:Kolmogorov_result_bound_stability}
   \E[\triple{\bfA^\sharp;\bar\bfA^\sharp}^p_{\beta\ax}]\leq  C^p \eps L
   (1-2^{-\frac 12(\alpha-16/p)})^{-p}
    ( 1-2^{-\frac p4(\alpha-\frac{16}{p}-\beta)}
    )^{-1}.
    \end{align}
    \end{theorem}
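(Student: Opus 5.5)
The plan is a dyadic chaining argument in the spirit of the rough path Kolmogorov theorem \cite[Thm~3.1]{FH20}, adapted to the four-parameter family of non-vertical segments. The idea is to parametrise a segment $\ell=(x,v)\in\cX_\rmh$ by its horizontal projection $\rmP_{\cX_{\rmh\ax}}(\ell)=[a,b]\times\{0\}$ and by the heights $y_a,y_b$ of its endpoints above $a$ and $b$. Since $\bfA\in\Caxial$ vanishes on vertical and on axis segments, the pairs $\ell\simhor\bar\ell$ differ only in $(y_a,y_b)$, and $\Area(\ell,\bar\ell)\asymp |b-a|(|y_a-\bar y_a|+|y_b-\bar y_b|)$ when the segments do not cross. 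When they do cross, one splits at the crossing point as in Figure~\ref{fig:non_vert_lines}.

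\textbf{Step 1: dyadic skeleton.} Introduce dyadic segments $D_n$ with endpoints on the grid $2^{-n}\Z^2\cap\Lambda$ and horizontal length $k2^{-n}$. The moment bounds \eqref{eq:Kolmogorov_moment_bounds} control neighbouring pairs in $D_n$, namely pairs differing by $2^{-n}$ in one height. For such a pair the area is at most $2^{-n}|b-a|$, and the number of such pairs at level $n$ is $O(2^{4n})$. A union bound with Markov's inequality then gives a random variable
\[
K_n:=\sup_{\ell\simhor\bar\ell\text{ neighbours in }D_n}\frac{|A(\ell)-A(\bar\ell)|}{\Area(\ell,\bar\ell)^{\alpha-16/p}}
\]
together with its second-level analogue, satisfying $\E K_n^p\lesssim L\,2^{4n}2^{-n(\dots)}$. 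Here the loss $16/p$ comes from the $2^{4n}$ count; $16$ rather than $4$ absorbs the $p/2$ moments of $\bA$ and the area/length mixing. This step yields a summable geometric series in $n$, and that series produces the constants $(1-2^{-\frac12(\alpha-16/p)})^{-p}$ and $(1-2^{-\frac p4(\alpha-16/p-\beta)})^{-1}$ in \eqref{eq:Kolmogorov_result_bound}.

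\textbf{Step 2: the growth norm.} For the growth bound $|A(\ell)|\lesssim|\ell|^\beta$, use the complete axial gauge. The projected segment $\mathring\ell$ satisfies $A(\mathring\ell)=0=\bA(\mathring\ell)$ and $\ell\simhor\mathring\ell$. The growth bound therefore follows from the $\hor$ bound provided $\Area(\ell,\mathring\ell)\lesssim|\ell|\cdot 1$. This is lossy, so it is better to chain in the horizontal direction as in the one-dimensional case. Split $\ell$ dyadically and use Chen's identity together with additivity to write $\bA(\ell)$ as a sum over dyadic pieces plus cross terms. Each piece is compared to its projection, giving area at most $|\text{piece}|^2$ and hence the bound $|\text{piece}|^{2\alpha}$. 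This is exactly the classical rough path Kolmogorov chaining.

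\textbf{Step 3: extension and the main difficulty.} For general $\ell\simhor\bar\ell$, approximate both by dyadic segments and chain. Write $\ell$ as a concatenation of dyadic segments of geometrically decreasing length via additivity and Chen. Then move $\bar\ell$'s heights towards $\ell$'s in dyadic steps. At each step the variation of $A$ is controlled by $K_n$ times the area increment. The variation of $\bA$ is controlled by the second-level $K_n$ plus cross terms $A(\cdot)\otimes A(\cdot)$ coming from Chen, and these cross terms are bounded by products of first-level estimates, one factor giving the length and the other the area. Define $\bfA^\sharp$ as the continuous extension from dyadic segments; it is a modification because the moment bounds imply $L^p$-continuity in $\ell$. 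The main obstacle is the second-level chaining. Chen's identity along concatenations mixes the two segments, and one must verify that every cross term has the form $(|\ell|\vee|\bar\ell|)^\beta\Area^\beta$ rather than just $\Area^{2\beta}$, including the case of crossing segments, which must be split at the intersection. The stability estimate \eqref{eq:Kolmogorov_result_bound_stability} follows by running the same argument for the differences $\Delta A,\Delta\bA$. The second-level differences involve $A\otimes A-\bar A\otimes\bar A=\Delta A\otimes A+\bar A\otimes\Delta A$, which gives a factor $\eps^{1/2}$ for each first-level difference; with the square root convention in $\triple{\Cdot}_{\beta\ax}$ this yields the stated $\eps L$.
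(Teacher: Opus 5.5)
There is a genuine gap, in the skeleton of Step~1 that Step~3 relies on. You only compare pairs in $D_n$ differing by $2^{-n}$ in one height, so you chain at a single isotropic scale. The quantity you must control, however, is anisotropic: $\Area(\ell,\bar\ell)\asymp[\ell]\,h$ for non-crossing pairs, with $[\ell]$ the horizontal length and $h$ the height gap.

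Take two horizontal dyadic segments $\ell\simhor\bar\ell$ of length $2^{-n_0}$ at heights $\frac12$ and $\frac12+h$, with $2^{-n_0}\ll h\ll 1$. Such pairs enter the supremum defining $|A|_{\beta\hor}$. They are also what the end pieces of your horizontal dyadic decomposition look like whenever $\ell,\bar\ell$ have different heights at an endpoint.

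These segments lie in $D_n$ only for $n\geq n_0$. Any chain of your neighbour moves therefore needs at least $h2^{n}$ steps, each of area $\asymp 2^{-n_0-n}$. Summing $\Area^\gamma$ over the steps (with $\gamma<\frac12$ your working exponent) gives at best $h\,2^{n_0(1-2\gamma)}$ instead of $(h2^{-n_0})^\gamma$, a loss of $(h2^{n_0})^{1-\gamma}$. Routing through the projections instead loses $h^{-\gamma}$. So your $K_n$ cannot close the $\hor$ bounds, and the second-level chaining through Chen, which you flag as the main obstacle, is still unresolved on top of this.

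The missing idea is the paper's metric $\rho$ on all non-vertical segments. It is the area between the horizontally overlapping parts plus the horizontal length of the outer parts, and $[\ell]+[\bar\ell]$ for opposite orientations. One first extends the two hypotheses from $\simhor$ pairs to arbitrary pairs, with $\rho$ in place of $\Area$, using additivity, Chen's identity and the vanishing on the $x_1$-axis. Then one runs an ordinary Kolmogorov chaining over \emph{all} pairs $a,b\in D_M$ with $\rho(a,b)\le C_02^{-M}$, respectively $(|a|\vee|b|)\rho(a,b)\le C_02^{-M}$ for $\bA$.

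Closeness in $\rho$ automatically allows large height jumps over short segments, as well as moves of the horizontal endpoints. Both levels are handled by plain telescoping, so Chen's identity never enters the chaining. The crude count $|D_M^2|\lesssim 2^{8M}$, combined with having only $p/2$ moments of $\bA$ (the requirement $(\alpha-\gamma)p/2>8$), is exactly where the loss $16/p$ comes from.

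Two smaller points. First, in Step~2 the comparison with the projection is not lossy. Since $\Area(\ell,\mathring\ell)\le[\ell]\le|\ell|$, it gives $|A(\ell)|\le|A|_{\beta\hor}|\ell|^\beta$ and $|\bA(\ell)|\le\|\bA\|_{2\beta\hor}(|\ell|\vee|\mathring\ell|)^\beta\Area(\ell,\mathring\ell)^\beta\le\|\bA\|_{2\beta\hor}|\ell|^{2\beta}$. These are exactly the growth bounds, and this is how the paper concludes. Your replacement rests on the false claim that a piece and its projection enclose area at most $|\text{piece}|^2$; a short piece at height of order one encloses area of order $|\text{piece}|$.

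Second, in the stability part the square-root convention does not turn $\eps^{1/2}$ into $\eps$. One has $\E[(\|\Delta\bA\|_{2\beta\ax}^{1/2})^p]=\E\|\Delta\bA\|_{2\beta\ax}^{p/2}$, while H\"older only bounds $\E|\Delta A(\ell_1)\otimes A(\ell_2)|^{p/2}$ by a multiple of $\eps^{1/2}L$. Your cross terms therefore give $\eps^{1/2}L$, not $\eps L$. The paper simply reruns its chaining on $(\Delta A,\Delta\bA)$ with $L$ replaced by $\eps L$. Since $(\Delta A,\Delta\bA)$ does not satisfy Chen's identity, its extension to $\rho$ meets the same cross terms, so this point needs care in either approach.
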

A Kolmogorov continuity theorem was previously proved for the additive function space $\Omega_\beta^1$ in \cite[Lem.~4.11]{CCHS2d} and \cite[Lem.~3.12]{CCHS3d}. Our proof is slightly different as we are dealing with the rough case and, on top of that, the line segments that we consider in our norms are different. 
The proof combines ideas that are fairly standard in proving Kolmogorov continuity theorems. 
Since the remainder of this section consists only of the proof, the reader uninterested in the technicalities is welcome to skip ahead to Section~\ref{sec:solution_theory}.

    To prepare for the proof, we introduce several concepts. There is a natural orientation of lines in $\cX_h$ (the set of non-vertical lines), 
namely, each $\ell\in\cX_h$ points either \emph{left} or \emph{right}.
Fix two lines with the \emph{same} orientation,
$\ell=(x,v)$ and $\bar\ell=(\bar x,\bar v)$. 
For~$\hat\ell\in\{\ell,\bar\ell\}$, let $\hat\ell_{\hov(\ell,\bar\ell)}$ be the longest line segment on~$\hat{\ell}$ such that $\ell_{\hov(\ell,\bar\ell)}\simhor \bar\ell_{\hov(\ell,\bar\ell)}$. We chose the letters $\hov$ for \emph{\enquote{horizontal} overlap}. It is convenient to extend $\hat\ell_{\hov(\ell,\bar\ell)}$ for line segments with different orientations to the line segment $(0,0)$ (a line with zero length!). Define $[\ell]:=|\rmP_{\cX_{\rmh\ax}}(\ell)|$ to be the \emph{horizontal length}. Note that any two line segments $\ell,\bar\ell$ in the same orientation can be uniquely decomposed into 
\begin{equation} \label{e:decomp_line}
	\ell=\ell_a \sqcup \ell_{\hov(\ell,\bar\ell)}\sqcup \ell_b, \qquad \bar\ell=\bar\ell_a \sqcup \bar\ell_{\hov(\ell,\bar\ell)}\sqcup \bar\ell_b,
\end{equation}
where it is possible that some of these line segments have zero length. Set
\[
\out(\ell,\bar\ell):=[\ell_a]+[\ell_b]+[\bar\ell_a]+[\bar\ell_b].
\]
The letter $\out$ is chosen as it is the horizontal combined length of the \emph{outer} line segments that do not \enquote{horizontally} overlap.   We now define 
    \begin{align}
        \begin{split}\rho(\ell,\bar\ell):=\begin{cases}\Area(\ell_{\hov(\ell,\bar\ell)},\bar\ell_{\hov(\ell,\bar\ell)})+\out(\ell,\bar\ell) &  \text{ if } \ell,\bar\ell \text{ have same orientation},\\
        [\ell]+[\bar\ell] & \text{ otherwise}.
        \end{cases}
        \end{split}
    \end{align}
    It is not difficult to check that $\rho$ is a metric on $\cX_\rmh$; see Figure~\ref{subfig:horizontal_overlap} for an illustration of the previous quantities.
  
   \begin{proof}[of Theorem~\ref{thm:Kolmogorov}] First let $\beta<\alpha-16/p$ and take $\gamma:=(\beta+\alpha-16/p)/2$ so that $(\alpha-16/p)/2<\gamma<\alpha-16/p$. 
   Using the fact that $A$ and $\bA$ vanish on the horizontal axis,
   additivity and Chen's identity, one can obtain for all
   $\ell,\bar\ell\in\cX_{\rmh}$ the inequality
    \begin{align}\label{eq:Kolmogorov_rho_bounds}
    \begin{split}
        \E[|A(\ell)-A(\bar\ell)|^p]
        &\leq C^pL\rho(\ell,\bar\ell)^{p\alpha},\\
        \E[|\bA(\ell)-\bA(\bar\ell)|^{p/2}]
        &\leq
        C^p L
        (|\ell|\vee|\bar\ell|)^{\alpha p/2}
        \rho(\ell,\bar\ell)^{\alpha p/2},
        \end{split}
    \end{align}
    for some $C>0$.

    Let $D_N$ denote the set of line segments in $\cX_{\rmh}$ with starting
    and ending points having dyadic coordinates of scale $2^{-N}$. Let
    $D=\cup_{N\geq 0}D_N$ and note that $D$ is dense in $(\cX_{\rmh},\rho)$.
    \paragraph{The first order term.}
    Given distinct $\ell,\bar\ell\in D$, we want to bound
    $A(\ell)-A(\bar\ell)$. To that end, we use the following dyadic
    construction. Given $\ell\in D$ and a scale $N$, we choose
    \[
    \ell_N,\ell_{N+1},\ldots,\ell_{N+m}=\ell,
    \qquad
    \ell_M\in D_M,
    \]
    by replacing the endpoints of $\ell$ at scale $2^{-M}$ by neighbouring
    dyadic points, ordered according to the horizontal orientation of $\ell$.
    The approximating endpoints can be chosen so that the segment remains
    non-vertical. Moving endpoints by $O(2^{-M})$ changes the outer horizontal
    part and the area between overlapping parts by $O(2^{-M})$.
    Thus,
    \[
        \rho(\ell_M,\ell_{M+1})\leq C_0 2^{-M},
    \]
    for some constant $C_0>0$. Moreover, if $\ell,\bar\ell\in D$ and
    \[
        2^{-N}<\rho(\ell,\bar\ell)\leq 2^{-N+1},
    \]
    then we can choose the approximating sequence such that
    \[
        \rho(\ell_N,\bar\ell_N)\leq C_0 2^{-N},
    \]
    where we have possibly increased the value of $C_0>0$. 

    With all these notations, we write
    \[
    \begin{split}
        A(\ell)-A(\bar\ell)
        &=
        A(\ell_N)-A(\bar\ell_N)
        +
        \sum_{i\geq0}
        \bigl(A(\ell_{N+i+1})-A(\ell_{N+i})\bigr)  \\
        &\quad-
        \sum_{i\geq0}
        \bigl(A(\bar\ell_{N+i+1})-A(\bar\ell_{N+i})\bigr),
    \end{split}
    \]
    where the chains are kept constant after reaching their terminal values.
    Each term at scale $N+i$ is bounded by
    $K_A2^{-\gamma(N+i)}$, up to a universal constant, where
    \[
        K_A
        :=
        \sup_{M\geq0}
        \sup_{\substack{a,b\in D_M\\ \rho(a,b)\leq C_0 2^{-M}}}
        \frac{|A(a)-A(b)|}{2^{-\gamma M}}.
    \]
    Therefore, using
    \[
    \sum_{i\geq 0}2^{-\gamma(N+i)}
    =2^{-\gamma N}(1-2^{-\gamma})^{-1}\lesssim 2^{-\gamma N} (1-2^{-\frac 1 2 (\alpha-16/p)})^{-1},
    \]
    we get
    \[
        |A(\ell)-A(\bar\ell)|
        \lesssim
        (1-2^{-\frac 1 2 (\alpha-16/p)})^{-1}K_A2^{-\gamma N}
        \lesssim
        (1-2^{-\frac 1 2 (\alpha-16/p)})^{-1} K_A\rho(\ell,\bar\ell)^\gamma.
    \]
    By \eqref{eq:Kolmogorov_rho_bounds},
    \begin{align*}
    \begin{split}
        \E[K_A^p]
        &\leq
        \sum_{M\geq0}
        \sum_{\substack{a,b\in D_M\\ \rho(a,b)\leq C_0 2^{-M}}}
        2^{\gamma pM}\E[|A(a)-A(b)|^p] \\
        &\lesssim
        C^pL\sum_{M\geq0}
        2^{8M}2^{-(\alpha-\gamma)pM}
        \lesssim
        C^p L,
    \end{split}
    \end{align*}
    where we have used the bound $|D_M^2|\lesssim2^{8M}$ and the fact that
    series converges and is uniformly bounded because of
    $\gamma< \alpha-16/p$. 

    \paragraph{The second order term.}
    Let us now consider the second order process. First we define
    \[
        K_{\bA}
        :=
        \sup_{M\geq0}
        \sup_{\substack{a,b\in D_M\\ (|a|\vee |b|)\rho(a,b)\leq C_0 2^{-M}}}
        \frac{|\bA(a)-\bA(b)|}{2^{-\gamma M}}.
    \]
    Given distinct $\ell,\bar\ell\in D$, choose $N$ such that
    \[
        2^{-N}
        <
        (|\ell|\vee|\bar\ell|)\rho(\ell,\bar\ell)
        \leq
        2^{-N+1}.
    \]
    We can do the same construction as for the first order term, and, increasing $C_0$ if necessary, we get a sequence of line segments such that 
    \[
        (|\ell_N|\vee|\bar\ell_N|)\rho(\ell_N,\bar\ell_N)
        \leq C_0 2^{-N},
    \]
    and, for all relevant $M\geq N$,
    \[
    \begin{split}
        (|\ell_M|\vee|\ell_{M+1}|)
        \rho(\ell_M,\ell_{M+1})
        &\leq C_0 2^{-M},\\
        (|\bar\ell_M|\vee|\bar\ell_{M+1}|)
        \rho(\bar\ell_M,\bar\ell_{M+1})
        &\leq C_0 2^{-M}.
    \end{split}
    \]
    Now note that
    \[
    |\bA(\ell_N)-\bA(\bar\ell_N)|\leq K_\bA 2^{-\gamma N}.
    \]
    Hence
    \[
    \begin{split}
        \bA(\ell)-\bA(\bar\ell)
        &=
        \bA(\ell_N)-\bA(\bar\ell_N)
        +
        \sum_{i\geq0}
        \bigl(\bA(\ell_{N+i+1})-\bA(\ell_{N+i})\bigr)\\
        &\quad-
        \sum_{i\geq0}
        \bigl(\bA(\bar\ell_{N+i+1})-\bA(\bar\ell_{N+i})\bigr),
    \end{split}
    \]
    where, again, the chains are kept constant after reaching their terminal
    values. Each summand is bounded by $K_{\bA}2^{-\gamma(N+i)}$, up to a
    universal constant. Thus
    \[
        |\bA(\ell)-\bA(\bar\ell)|
        \lesssim
      (1-2^{-\frac 1 2 (\alpha-16/p)})^{-1}K_{\bA}2^{-\gamma N}
        \lesssim
        (1-2^{-\frac 1 2 (\alpha-16/p)})^{-1}
        K_{\bA}
        \bigl((|\ell|\vee|\bar\ell|)
        \rho(\ell,\bar\ell)\bigr)^\gamma.
    \]
    Using \eqref{eq:Kolmogorov_rho_bounds} and $|D_M^2|\lesssim2^{8M}$,
    \[
    \begin{split}
        \E[K_{\bA}^{p/2}]
        &\leq
        \sum_{M\geq0}
        \sum_{\substack{a,b\in D_M\\ (|a|\vee |b|)\rho(a,b)\leq C_0 2^{-M}}}
        2^{\gamma pM/2}
        \E[|\bA(a)-\bA(b)|^{p/2}]\\
        &\lesssim
        C^pL
        \sum_{M\geq0}
        2^{8M}2^{-(\alpha-\gamma)pM/2}\lesssim C^pL (1-2^{\frac p 4 (\beta-\alpha+16/p)})^{-1},
    \end{split}
    \]
    where we have used that $(\alpha+\beta-16/p)/2=\gamma< \alpha-16/p$ for the series to converge.
    \paragraph{Construction of the modification.}
    Let $\Sigma_0$ be the full-probability event on which the above dyadic
    seminorms are finite and the algebraic identities hold for all dyadic line
    segments. This event has probability one since $D$ is countable.

    For $\ell\in\cX_{\rmh}$, define $A^\sharp(\ell)$ as the limit of
    $A(\ell_n)$ along any sequence $\ell_n\in D$ with
    $\rho(\ell_n,\ell)\to0$, and set $A^\sharp(\ell)=0$ for vertical line
    segments. The first-level estimate on $D$ shows that the limit exists and
    is independent of the approximating sequence. Moreover,
    \[
        \E[|A(\ell_n)-A(\ell)|^p]
        \lesssim
        C^pL\rho(\ell_n,\ell)^{\alpha p}
        \to0,
    \]
    so $A^\sharp(\ell)=A(\ell)$ almost surely for every fixed $\ell$.

    Similarly, for $\ell\in\cX_{\rmh}$, define $\bA^\sharp(\ell)$ as the
    limit of $\bA(\ell_n)$ along any sequence $\ell_n\in D$ with
    $\rho(\ell_n,\ell)\to0$, and set $\bA^\sharp(\ell)=0$ for vertical line
    segments. The limit exists and is independent of the approximating
    sequence, because
    \[
        |\bA(\ell_n)-\bA(\ell_m)|
        \lesssim
        (1-2^{-\frac 1 2 (\alpha-16/p)})^{-1}
        K_{\bA}
        \bigl((|\ell_n|\vee|\ell_m|)
        \rho(\ell_n,\ell_m)\bigr)^\gamma,
    \]
    and all lengths are uniformly bounded. For every fixed $\ell\in\cX_{\rmh}$,
    the same estimate together with \eqref{eq:Kolmogorov_rho_bounds} gives
    \[
        \bA^\sharp(\ell)=\bA(\ell)
        \qquad\text{almost surely}.
    \]
    Thus $\bfA^\sharp=(A^\sharp,\bA^\sharp)$ is a modification of $\bfA$.

    The algebraic identities pass to the modification by approximation.
    Indeed, they hold on $D$ on the event $\Sigma_0$; approximating all
    involved endpoints simultaneously, preserving the relevant concatenations
    and horizontal-comparability relations, and then passing to the limit using
    the Hölder estimates gives additivity, Chen's identity and the axial
    vanishing conditions for $\bfA^\sharp$.

    It remains to identify the obtained bounds with the $\gamma$-axial norm. If
    $\ell\simhor\bar\ell$, then $
        \rho(\ell,\bar\ell)=\Area(\ell,\bar\ell)$. From here we see
    \[
        |A^\sharp(\ell)-A^\sharp(\bar\ell)|
        \lesssim
        (1-2^{-\frac 1 2 (\alpha-16/p)})^{-1}
        K_A\Area(\ell,\bar\ell)^\gamma,
    \]
    and
    \[
        |\bA^\sharp(\ell)-\bA^\sharp(\bar\ell)|
        \lesssim
        (1-2^{-\frac 1 2 (\alpha-16/p)})^{-1}
        K_{\bA}
        (|\ell|\vee|\bar\ell|)^\gamma
        \Area(\ell,\bar\ell)^\gamma.
    \]

    The growth bounds follow by comparison with the horizontal projection
    $\hat\ell:=\rmP_{\cX_{\rmh\ax}}(\ell)$. Since $\hat\ell$ lies on the
    horizontal axis and we are in the complete axial gauge, we have
    \[
        A^\sharp(\hat\ell)=0,
        \qquad
        \bA^\sharp(\hat\ell)=0.
    \]
    Therefore
    \[
        |A^\sharp(\ell)|
        \lesssim
        (1-2^{-\frac 1 2 (\alpha-16/p)})^{-1}
        K_A\rho(\ell,\hat\ell)^\gamma
        \lesssim
        (1-2^{-\frac 1 2 (\alpha-16/p)})^{-1}
        K_A|\ell|^\gamma,
    \]
    and
    \[
        |\bA^\sharp(\ell)|
        \lesssim
       (1-2^{-\frac 1 2 (\alpha-16/p)})^{-1}
        K_{\bA}
        (|\ell|\vee|\hat\ell|)^\gamma
        \rho(\ell,\hat\ell)^\gamma
        \lesssim
        (1-2^{-\frac 1 2 (\alpha-16/p)})^{-1}
        K_{\bA}|\ell|^{2\gamma}.
    \]
    In particular, since $\beta<\gamma $
    \[
      \triple{\bfA^\sharp}_{\beta\ax} \lesssim    \triple{\bfA^\sharp}_{\gamma\ax}
        \lesssim
       (1-2^{-\frac 1 2 (\alpha-16/p)})^{-1}K_A
        +
        (1-2^{-\frac 1 2 (\alpha-16/p)})^{-1/2}K_{\bA}^{1/2}.
    \]
    Taking $p$-th moments gives
 \[
    \E[\triple{\bfA^\sharp}_{\beta\ax}^p]
\leq 
    C^pL
   (1-2^{-\frac 12(\alpha-16/p)})^{-p}
    ( 1-2^{-\frac p4(\alpha-\frac{16}{p}-\beta)}
    )^{-1}.
\]
    This gives the bound stated in the
    theorem.

    \paragraph{Stability.}
    Apply the same argument to
    \[
        \Delta A:=A-\bar A,
        \qquad
        \Delta\bA:=\bA-\bar\bA.
    \]
    The assumed difference estimates replace $L$ by $\varepsilon L$
    throughout. Since the modifications are constructed by the same dyadic
    limiting procedure, the resulting modification of the difference is
    \[
        (A^\sharp-\bar A^\sharp,\,
        \bA^\sharp-\bar\bA^\sharp).
    \]
    In this case we get the bound~\eqref{eq:Kolmogorov_result_bound_stability} from where we conclude the proof.

\end{proof}

\section{Solution theory for local Coulomb gauge}
\label{sec:solution_theory}

Recall from Section~\ref{sec:main_results} that, to find a Coulomb gauge of $\sigma A=(\sigma A_1,0)$ with $\sigma>0$ small, it suffices solve the PDE \eqref{eq:intro_Psi} for $g\colon\Lambda\to G$, which we rewrite in `mild formulation' as
\begin{align}\label{eq:PDE_g_mild}
\Phi(\sigma,g) = G^{\Dir}*\partial_i((\partial_i g)g^{-1})-\sigma G^{\Dir}*\partial_1(gA_1g^{-1})=0, \qquad g|_{\partial\Lambda}=1_G,
\end{align}
where $G^{\Dir}$ is the Dirichlet Green's function on $\Lambda$ (the choice $g|_{\partial\Lambda}=1_G$ is somewhat arbitrary but convenient).
For $A_1\in C^{-\nfrac12^-}$ as in our case, this is a singular but subcritical PDE, and thus falls under the methodology of regularity structures.

However, neither the setup of regularity structures nor the solution theory for \eqref{eq:PDE_g_mild} that we employ is standard in the singular SPDE literature.
In particular, we do not employ fixed points theorems.
The basic issue in applying fixed point arguments for singular elliptic equations, like in e.g. \cite{Labbe19_Anderson}, is that, if we were to rewrite \eqref{eq:PDE_g_mild} in the more semi-linear form $\Delta g =\ldots$ to which, say, Banach's fixed point theorem could apply,
the right-hand side would hide the natural geometric structure of the equation, leading to a more involved regularity structure and making it even non-obvious that solutions are $G$-valued.

As described after Theorem \ref{thm:Rough_Uhlenbeck}, our method is inspired by the IFT and involves the tangent equation
\begin{equation}\label{eq:tangent_eq}
D_2\Phi(\sigma,g)[vg] = G^{\Dir}*\partial_i(\partial_i v + [v,(\partial_i g)g^{-1}])-\sigma G^{\Dir}*\partial_1[v,gA_1g]\;,
\end{equation}
where the first term is the identity on $v\colon\Lambda\to\mfg$ that vanish on the boundary.
We show that $D_2\Phi(\sigma,g)$ is invertible on a space of modelled distributions under suitable smallness assumptions
and use it to solve \eqref{eq:PDE_g_mild} through an ODE in $\sigma$ (see \eqref{eq:intro_ODE}).
In Sections \ref{sec:reg_struct}, \ref{sec:models}, and \ref{sec:modelled_distributions}, we introduce our regularity structures, space of models, and modelled distributions.
In Sections \ref{sec:existence_stability} and \ref{sec:properties_solution} we prove existence, stability, and important algebraic properties of solutions $g$.

\subsection{Set up of the regularity structure}
\label{sec:reg_struct}

Our first step is to construct the regularity structure underlying the equation \eqref{eq:PDE_g_mild}.
    Although we take inspiration from vector-valued regularity structures as in \cite[Sec.~5]{CCHS2d},
    we perform our construction essentially from scratch. We do this for two reasons: (a) to be self-contained, and (b) our derivations and homogeneities are non-standard (see Definitions \ref{def:homogeneity} and \ref{def:derivatives}).

\subsubsection{The set of all symbols and associated vector spaces}\label{sec:S_all}

We first introduce the set of all formal symbols $S_\all$ generated from $A_1$, the polynomial symbols, and the derivation and integration operators.
We will later restrict this set when defining our concrete regularity structure.
The set $S_\all$, defined in \eqref{eq:S_all_def} below, is generated by the symbols $A_1, \1,\rmX_1,\rmX_2$, and the unary operators
    \begin{equation*}
    \scL\label{symb:unary_operators} =\{\rmD_1,\rmD_2,\cI\}\;.
    \end{equation*}
	Here $\rmD_i$ and $\cI$ should be thought of as abstract versions of derivation in spatial direction $i$ and integration against (a truncation of) the Dirichlet Green function $G^\Dir$, respectively.
	We impose the following relations:
	
    \begin{notation}[Unary operators and products]
       \label{not:unary_ops_symbols}
       \leavevmode
\begin{itemize}
\item {\bf Commutativity of $\scL$.} For  any $L,L'\in\scL$ we impose $LL'=L'L$ 
\item {\bf Commutativity of products.} For any symbols  $\tau_1,\tau_2$ we impose $\tau_1\tau_2=\tau_2\tau_1$ and $\tau_1\1 = \tau_1$.

\item {\bf Polynomials.} We define $\rmX^k\label{symb:polynomial_symbols}=\rmX_1^{k_1}\rmX_{2}^{k_2}$ for multi-index $k=(k_1,k_2)\in\bN^2$ and $\rmX^0:=\1$.       With $\rme_1=(1,0)$, $\rme_2=(0,1)$, we impose
      \[
        \rmD_i\rmX^k = k_i\,\rmX^{k-\rme_i}\;, \qquad i\in\{1,2\}\;.
      \]
\item {\bf Integration.}\ For any $i,j\in\{1,2\}$ we introduce the notations $\cI_i=\rmD_i\cI=\cI\rmD_i$ , $\cI_{ij}=\rmD_i\cI_j$.
\end{itemize}
\end{notation}

\noindent We have all the required notations to build the set of all symbols recursively starting from 
\[
S_{0}:=\{A_1,\1,\rmX_1,\rmX_2\}\;, 
\]
and for $i\geq 0$, given $S_i$, we define
\[
S_{i+1}:=S_{i}\cup \{L\tau\colon \tau\in  S_i, L\in \scL\}\cup \{\tau_1\tau_2\colon \tau_1,\tau_2\in S_i\}\;.
\]
We then define the set of all symbols as
\label{symb:S_all}\begin{equation}\label{eq:S_all_def}
S_\all:=\bigcup_{i\geq 0}S_i\;. 
\end{equation}

\subsubsection{Construction of subspaces associated to symbols}\label{sec:construction_subspace}

 We now associate to a symbol $\tau$ a subspace $T[\tau]$. Let us fix a basis $(e_i)_{i=1}^{\dim\fg}$ of $\fg$ as well as the dual basis for $\fg^*$ denoted by $(e^i)_{i=1}^{\dim\fg}$, so that $e^i(e_j)=\delta_{ij}$. 
    To the symbol $A_1$ we can associate a set of basis elements, namely
    \[
    \cB[A_1]=\{\blue{A_1^{e^i}}\colon i\in\{1,\ldots,\dim\fg\}\}\;.
    \]
    We then define $T[A_1] = \Span{\cB[A_1]}$. 
    For any $u=\sum_{i=1}^{\dim\fg}\lambda_i e^i\in\fg^*$ with $\lambda_i\in\R$ we then define
    \[
    \blue{A_1^u}=\sum_{i=1}^{\dim\fg}\lambda_i \blue{A_1^{e^i}}\in T[A_1]\;.
    \]
    Of course $\mfg^*\ni u\mapsto \blue{A^u} \in T[A_1]$ is a canonical isomorphism.

  We now define a set of basis elements $\cB[\tau]\label{symb:basis_symbol_space}$ for any given $\tau\in S_\all$ through an inductive procedure. 
    Firstly, we give a definition for products; we define the set $\cB[\tau_1 \tau_2]$ to be
    \[
    \cB[\tau_1\tau_2] =\{\blue{\tau_1\tau_2}
	\,:\,
	\blue{\tau_1}\in \cB[\tau_1]\,,\,\blue{\tau_2}\in \cB[\tau_2]\}\;,
    \]
where we identify $\blue{\tau_1\tau_2}=\blue{\tau_2\tau_1}$. Note that the latter imposed commutativity is consistent with the commutativity on the level of the symbols, which, by definition, yields $\cB[\tau_1\tau_2]=\cB[\tau_2\tau_1]$.
For $i=0,1,2$ and $\cI_i\tau\in S_\all$
    \[
    \cB[\cI_i\tau]=\{\blue{\cI_i\tau}\,:\, \blue\tau\in \cB[\tau]\}\;,
    \]
    where, similarly to before, we denote $\blue{\cI_0}=\blue{\cI}$.
    We also set for $i=1,2$
    \[
    \cB[\rmD_i\tau]=\{\blue{\rmD_i\tau}\,:\, \blue\tau\in\cB[\tau]\}\;.
    \]
    As in \notat{not:unary_ops_symbols},
    we impose commutativity of derivations and abstract integration.
\begin{example}\label{ex:commutativity_der_int_consequence}
    By commutativity of derivation and integration for symbols, we have $\cI_2\cI_{12}A_1=\cI_1\cI_{22}A_1$ which implies $\cB[\cI_2\cI_{12}A_1]=\cB[\cI_1\cI_{22}A_1]$.
    This is consistent with $\blue{\cI_2\cI_{12}A_1^{e^i}}=\blue{\cI_1\cI_{22}A_1^{e^i}}$.
    \end{example}
    We define the free vector space
    \begin{align}
        T[\tau] =\Span{\cB[\tau]}\;,
    \end{align}
    and for polynomials
    \[
    T[\rmX^k]=\Span{\blue{\rmX^k}}\;. 
    \]
We then define the vector space generated by all these vector spaces, namely
\[
T_\all\label{symb:T_all} =\bigoplus_{\tau\in S_\all}T[\tau]\;. 
\]
It is also useful to introduce the following notation: For a given set of symbols $S\subset S_\all$, we define
\[
T[S] = \bigoplus_{\tau\in S}T[\tau]\;. 
\]

\subsubsection{Homogeneity}\label{sec:homogeneity}

Fix $\alpha\in (\frac 1 3,\frac 1 2)$ throughout this section. 
Our starting point is a rough additive function $A\in\bfOmega_{\alpha\ax}^1$.
In this case $A_1\in C^{\alpha-1}$, which gives the homogeneity of our first symbol, namely 
  \[
      |A_1| =\alpha-1=-\frac 1 2-\kappa\;,
  \] 
  where we have introduced $\kappa\in (0,\frac16)$.
We introduce the following notations: 
\[
|\cI|=2\;, \quad |\rmD_i|=-1\;, \quad i\in\{1,2\}\;. 
\]
With these notations, we define the notion of homogeneity as follows.
An important remark is that we treat and $\rmD_1 A_1$ and $\rmD_2 A_1$ in a different manner (see Remark~\ref{rem:D2}). 

\begin{definition}[Homogeneity of symbols]\label{def:homogeneity}
Define the map
$|\Cdot|\colon S_{\all}\to\R$
by 
\[
  |A_1| =  -\tfrac12-\kappa\;,\qquad  |\rmD_2A_1| =  -1-2\kappa\;, \qquad
  |\1| = 0\;,\qquad
  |\rmX_i| = 1 \quad(i=1,2)\;,
\]
and recursively by
\begin{enumerate}[label=(\roman*)]
\item \emph{products}\,:  $|\tau_1\tau_2| = |\tau_1| + |\tau_2|$;

\item \emph{derivative in first direction and integration}:  $|L\tau| = |L|+|\tau|$ for $L\in\{\rmD_1,\cI\}$;
\item \label{def:homogeneity:iii} \emph{derivative in second direction}\,:  
      \begin{align*}
        |\rmD_2(\tau_1\tau_2)|
        &= \min\{|\rmD_2\tau_1|+|\tau_2|,
                         |\tau_1|+|\rmD_2\tau_2|\}\\
              |\rmD_2L\tau|& = |L| + |\rmD_2\tau|
        \quad\text{for } L\in \scL = \{\rmD_1,\rmD_2,\cI\}\;.
      \end{align*}
\end{enumerate}
We call $|\tau|\label{symb:homogeneity}$ the homogeneity of $\tau$.
This definition extends to the homogenous subspaces $T[\tau]$ of $T_\all$, namely for  $\blue\tau\in T[\tau]$ we set $|\blue\tau| = |\tau|$.
\end{definition}

\begin{remark}\label{rem:D2}
The fact that $A\in\bfOmega_{\alpha\ax}^1$ implies that $\partial_2A_1\in C^{2\alpha-2}$,
which explains the choice $|\rmD_2A_1| = -1-2\kappa$ instead of the `usual' choice $|\rmD_2A_1| = |A_1|-1=-\nfrac{3}{2}-\kappa$.
This may appear surprising, but stems from the fact that we are working in the axial gauge.
\end{remark}
\begin{remark}
Since operators in $\scL$ commute, we have $ L_1\cdots L_N  \tau= L_{\pi(1)}\cdots L_{\pi(N)}\tau$ for any $\tau\in S_\all$, permutation $\pi$ on $\{1,...,N\}$ and $L_1,...,L_N\in\scL$.
The above definition of homogeneity is consistent with this commutativity in the sense that
\[
|L_1\cdots L_N  \tau|=|L_{\pi(1)}\cdots L_{\pi(N)}\tau|\;.
\]
\end{remark}
\begin{remark}
The homogeneity for $\rmD_2(\tau_1\tau_2)$ can be understood as follows: if Leibniz rule were to hold then we would have 
$\rmD_2(\tau_1\tau_2)
 =( \rmD_2\tau_1)\tau_2 + \tau_1(\rmD_2\tau_2)$, and as such the homogeneity for $\rmD_2(\tau_1\tau_2)$ is then determined by the minimum of 
$|\rmD_2\tau_1|+|\tau_2|$ and $|\tau_1|+|\rmD_2\tau_2|$. 
\end{remark}
\subsubsection{The solution regularity structure}

We next define the sets of symbols $\cU$, $\cF, \cF_1$, $\cF_2$, and $\cF_\rmD\label{symb:solution_symbol_collections}$, which will be used in our solution theory of \eqref{eq:PDE_g_mild}, in Table \ref{tab:U_F_sets_homogeneities}.

\begin{table}[ht]
\centering
\renewcommand{\arraystretch}{1.15}

\begin{minipage}[t]{0.48\textwidth}\vspace{0pt}
\centering
\begin{tabular}{l|ll}
\toprule
Set & Symbol & Homogeneity  \\
\midrule
$\cU$  & $\1$ & $0$\\
  & $\rmX_i$ \; ($i=1,2$) & $1$\\
  & $\cI_1A_1$ & $\frac12-\kappa$\\
  & $\cI_1(A_1\cI_1A_1)$ & $1-2\kappa$\\
  & $(\cI_1A_1)^2$ & $1-2\kappa$\\
  & $\cI_1\cI_{22}A_1$ & $1-2\kappa$\\
\bottomrule
\end{tabular}
\end{minipage}
\hfill
\begin{minipage}[t]{0.48\textwidth}\vspace{0pt}
\centering
\begin{tabular}{l|ll}
\toprule
Set & Symbol ($i=1,2$) & Homogeneity  \\
\midrule
$\cF$  & $\cI_{i2}(A_1\cI_1A_1)$  & $\frac12-3\kappa$\\
  &  $(\cI_1A_1)\,(\cI_{i2}A_1)$  & $\frac12-3\kappa$ \\
\midrule
$\cF_1$ & $A_1$ & $-\frac12-\kappa$\\
  & $A_1\cI_1A_1$ & $-2\kappa$\\
  & $\cI_{22}A_1$ & $-2\kappa$\\
\midrule
$\cF_2$ & $\cI_{12}A_1$ & $-2\kappa$\\
\midrule
$\cF_\rmD$ & $\rmD_2 A_1$ & $-1-2\kappa$\\
& $\rmD_2(A_1 \cI_1 A_1)$ & $-\frac12-3\kappa$\\
\bottomrule
\end{tabular}
\end{minipage}

\caption{Symbols in $\cU$ and in $\cF,\cF_1,\cF_2,\cF_\rmD$ with homogeneities (recall $\frac12-\kappa=\alpha$).}
\label{tab:U_F_sets_homogeneities}
\end{table}

To explain further, the set $\cF_i$ will define the domain of the abstract integration map $\cI_i$ for $i=1,2$, and $\cU$ is the set of symbols obtained by applying integration maps to $\cF_i$ and then taking products.
See also \eqref{eq:U_sol_def} for a further decomposition of $\cU$ into subsets that will play distinct roles in the solution theory.

The set $\mcF$ arises from applying $\rmD_1$ or $\rmD_2$ on the solution $g$, but then these terms are removed by truncation due to the fact that $\rmD_2$ has a non-standard deregularisation effect, see Definition \ref{def:homogeneity}. We keep these terms in the regularity structure since this helps in showing that the truncated modelled distributions have the correct regularity.
Similarly $\cF_\rmD$ is not used in the solution theory, but is useful defining models (Definition \ref{def:model}), in bounding the differentiation operators on modelled distributions (Proposition \ref{prop:derivatives}),
and in bounding models using rough additive functions in Section \ref{s:model_bounds}.

\begin{remark}
To generate the above sets of symbols, it is tempting to set up a rule akin to \cite{BHZ19} arising from the system of equations~\eqref{eq:PDE_g_mild}-\eqref{eq:tangent_eq}.
However, we found that the most straightforward rule generates too many symbols (see Remark \ref{rem:U_sol_reason} for a related issue) and requires introducing exceptions. 
Since the sets are small anyway, we find it simpler to give them directly.
\end{remark}

We now have set up all the notations needed to define the solution symbols $S_\sol$ and the solution model space $T_\sol$, namely 
\[
S_\sol\label{symb:solution_symbol_space}=\cF\cup\cF_1\cup \cF_2\cup \cF_\rmD \cup\cU\;,\qquad T_\sol=\bigoplus_{\tau\in S_\sol}T[\tau]\;. 
\]
For $\gamma\in\R$, we define the natural projection maps\label{symb:Q_gamma}
\[
\rmQ_{\gamma}
\colon T_\sol\to \bigoplus_{\substack{\tau\in S_\sol\\ |\tau|=\gamma}}T[\tau]\;,
\qquad
\rmQ_{<\gamma}
\colon T_\sol\to \bigoplus_{\substack{\tau\in S_\sol\\ |\tau|<\gamma}}T[\tau]
\;.
\]
Then $T_\sol$ is a finite dimensional vector space and we fix $|\Cdot|_{T_\sol}$ to be any norm on $T_\sol$.
We define the corresponding norm at level $\gamma$ by 
\begin{equation}\label{eq:T_sol_norms}
|\Cdot|_{T_\sol;\gamma} =|\rmQ_{\gamma}\Cdot|_{T_\sol}
\;.
\end{equation}

\paragraph{Structure group.}
We now define the structure group $G_\sol\label{symb:G_sol}$ associated to $T_\sol$, which is a collection of linear maps $\Gamma\in L(T_\sol)$ that are lower triangular in the sense that, for any $\blue\tau\in T[\sigma]$ with $\sigma\in S_\sol$,
\[
\Gamma\blue\tau -\blue\tau \in \bigoplus_{\substack{\bar\tau\in S_\sol\\ |\bar\tau|<|\sigma|}} T[\bar\tau] 
\;.
\]
The definition of $\Gamma\in G_\sol$ is given on basis elements according to Table \ref{tab:structure_group}, combined with the identity $\Gamma\blue{\tau_1\tau_2}=\Gamma\blue{\tau_1}\Gamma\blue{\tau_2}$
for products $\blue{\tau_1\tau_2}\in T_\sol$ (one can readily check that $\Gamma\blue{\tau_1}\Gamma\blue{\tau_2}$ is indeed in $T_\sol$ in this case).
In Table \ref{tab:structure_group}, $\bfh_{\CJ_1\tau}\label{symb:structure_group_coefficients}$ for $\blue\tau\in T[\CF_1]$, and $\bfh_{\rmX_i}$
and $\bfh_{\cJ_{i2}(A_1^u\cI_1A_1^v)}$ for $i=1,2$ are real coefficients parametrising the group element $\Gamma$.
For example, we have 
\[
\Gamma\blue{\cI_1 A_1^u}=\blue{\cI_1 A_1^u}+\bfh_{\cJ_{1} A_1^u}\blue{\1}, \quad u\in (e_i)_{i=1}^{\dim\fg},
\]
for a coefficient $\bfh_{\cJ_1 A_1^u} \in \R$.
One can readily verify that $G_\sol$ is indeed a subgroup of $L(T_\sol)$.
    \begin{table}[ht]
    \centering
       \begin{tabular}{l|l}
\hline
$\blue{\tau}$ \;\; $(i=1,2)$ & $\Gamma \blue{\tau}$ \\
\hline \\[-1.5ex]
$\blue{\1}$ & $\blue{\1}$ \\[1.5ex]
$\blue{\rmX_i}$ & $\blue{\rmX_i} + \bfh_{\rmX_i}\blue{\1}$ \\[1.5ex]
$\blue{A_1^u}$ & $\blue{A_1^u}$ \\[1.5ex]
$\blue{\cI_1 A_1^u}$ & $\blue{\cI_1 A_1^u}+\bfh_{\cJ_{1} A_1^u}\blue{\1}$ \\[1.5ex]
$\blue{\cI_{i2} A_1^u}$  & $\blue{\cI_{i2} A_1^u}$ \\[1.5ex]
$\blue{\cI_1\cI_{22} A_1^u}$ & $\blue{\cI_1\cI_{22} A_1^u}+\bfh_{\cJ_1\cI_{22}A_1^u}\blue\1$ \\[1.5ex]
$\blue{\cI_1(A_1^u\cI_1A^v)}$  & $\blue{\cI_1(A_1^u\cI_1A^v)}+\bfh_{\cJ_{1} A_1^v}\blue{\cI_1 A_1^u}+\bfh_{\cJ_1(A_1^u\cI_1 A_1^v)}\blue\1$ \\[1.5ex]
$\blue{\cI_{i2}(A_1^u \cI_1 A_1^v)}$  & $\blue{\cI_{i2}(A_1^u \cI_1 A_1^v)}+\bfh_{\cJ_{1} A_1^v}\blue{\cI_{i2} A_1^u}
+ \bfh_{\cJ_{i2}(A_1^u \cI_1 A_1^v)}\blue\1$ \\[1.5ex]
$\blue{\rmD_2 A_1^u}$ & $\blue{\rmD_2 A_1^u}$ \\[1.5ex]
$\blue{\rmD_2(A_1^u \cI_1 A_1^v)}$ & $\blue{\rmD_2(A_1^u \cI_1 A_1^v)}
+ \bfh_{\cJ_{1}A_1^v}\blue{\rmD_2A_1^u}$ \\[1.5ex]
\hline
\end{tabular}
\caption{Action of $\Gamma$ on basis elements where $u,v\in (e_i)_{i=1}^{\dim\fg}$}
\label{tab:structure_group}
\end{table}

We conclude that $\cT_\sol:=(T_\sol,G_\sol)\label{symb:solution_regular_structure}$ is a genuine regularity structure according to \cite[Sec.~2]{Hairer14}. 
 
We further define the following subsets of symbols of $\cU$:
\label{symb:solution_sectors}\begin{equation}\label{eq:U_sol_def}
\begin{split}
\cU_\lin
&=
\{\1 \, , \, \rmX_1\,,\, \rmX_2 \,,\,  \cI_1A_1 \,,\, \cI_1(A_1\cI_1A_1) \}
\;,
\\
\cU_\alg &= \cU_\lin \cup \{(\cI_1A_1)^2 \}
\;,
\\
\cU_\Int &= \cU_\lin \cup \{\cI_1\cI_{22}A_1 \} \;.
\end{split}
\end{equation}
The set $\cU_\alg$ will be the set of symbols where the modelled distribution $\blue g$ that we solve for take values in;
we will later show it generates an algebra under a suitable product, see Proposition \ref{prop:stability_U_sol}.
The set $\cU_\lin$ will play a role in defining the tangent solution space
and $\cU_\Int$ will define the target space of the abstract integration maps $\cI_i$.

We do not include $\cI_1\cI_{22}A_1$ in $\cU_\alg$ because this term will be cancelled in our formulation of the PDE~\eqref{eq:PDE_g_mild} at the level of modelled distribution; moreover, it is important that we do not include this term as doing so
would lead to a proliferation of symbols, see Remark \ref{rem:U_sol_reason}.

Recall that a graded subspace of $T_\sol$ is called a \emph{sector} of $\cT_\sol$ if it is invariant under the action of the structure group $G_\sol$.

\begin{lemma}
$T[\cU]$, $T[\cU_\lin]$, $T[\cU_\alg]$, $T[\cU_\Int]$, $T[\cF_i]$ for $i=1,2$ and $T[\cF_\rmD]$ are sectors of $\cT_\sol$.
\end{lemma}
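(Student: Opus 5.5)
The plan is a direct inspection of Table~\ref{tab:structure_group}, combined with the multiplicativity $\Gamma\blue{\tau_1\tau_2}=\Gamma\blue{\tau_1}\Gamma\blue{\tau_2}$. Each listed space is of the form $T[S]$ for a set of symbols $S$, so it is automatically graded. It therefore suffices to check the following: for every basis element $\blue\tau\in\cB[\tau]$ with $\tau\in S$ and every $\Gamma\in G_\sol$, each term of $\Gamma\blue\tau$ lies in $T[S]$. By linearity this gives $\Gamma T[S]\subset T[S]$, which is invariance.

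I would go through the sets one at a time, reading off the images from the table.

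\emph{$\cF_1$.} Here $\Gamma\blue{A_1^u}=\blue{A_1^u}$ and $\Gamma\blue{\cI_{22}A_1^u}=\blue{\cI_{22}A_1^u}$. For the product, $\Gamma\blue{A_1^u\cI_1A_1^v}=\blue{A_1^u}(\blue{\cI_1A_1^v}+\bfh_{\cJ_1A_1^v}\blue\1)=\blue{A_1^u\cI_1A_1^v}+\bfh_{\cJ_1A_1^v}\blue{A_1^u}$. All terms lie in $T[\cF_1]$.

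\emph{$\cF_2$.} The element $\blue{\cI_{12}A_1^u}$ is fixed by $\Gamma$.

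\emph{$\cF_\rmD$.} This is read off directly: $\blue{\rmD_2A_1^u}$ is fixed, and $\Gamma\blue{\rmD_2(A_1^u\cI_1A_1^v)}$ produces only $\blue{\rmD_2A_1^u}$ as a lower-order term.

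\emph{$\cU_\lin$.} The images of $\blue\1$, $\blue{\rmX_i}$ and $\blue{\cI_1A_1^u}$ only involve $\blue\1$ and the element itself. The image of $\blue{\cI_1(A_1^u\cI_1A_1^v)}$ involves only itself, $\blue{\cI_1A_1^u}$ and $\blue\1$, all of which belong to $\cU_\lin$.

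\emph{$\cU_\Int$.} This additionally requires that $\Gamma\blue{\cI_1\cI_{22}A_1^u}$ produce only itself plus a multiple of $\blue\1$, which is exactly what the table gives.

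\emph{$\cU_\alg$.} I would expand $\Gamma\blue{(\cI_1A_1^u)(\cI_1A_1^v)}=(\blue{\cI_1A_1^u}+\bfh_{\cJ_1A_1^u}\blue\1)(\blue{\cI_1A_1^v}+\bfh_{\cJ_1A_1^v}\blue\1)$. Using $\tau\1=\tau$, this gives the product, terms $\blue{\cI_1A_1^\cdot}$, and a multiple of $\blue\1$, all in $T[\cU_\alg]$.

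\emph{$\cU$.} This is $\cU_\alg\cup\cU_\Int$, and a sum of two invariant subspaces is invariant.

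The only mildly delicate point is bookkeeping for products. One must make sure that products like $\blue{\cI_1A_1^u}\cdot\blue\1$ are identified with basis elements of the stated symbols, via the convention $\tau\1=\tau$ from Notation~\ref{not:unary_ops_symbols}, rather than producing new symbols outside $S_\sol$. I would also note that the sets $\cF$ are not claimed to be sectors, so the $\cI_{i2}(A_1\cI_1A_1)$ rows need not be checked.

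A further point is the choice of index: the table writes $\bfh_{\cJ_1A_1^u}$ with an index depending on the basis element. I would use this indexing consistently, so that linearity in $u\in\fg^*$ extends the basis computations to all of $T[\tau]$.

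No step is a genuine obstacle; the argument is a finite case check.
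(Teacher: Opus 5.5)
Your proposal is correct and is the same argument as the paper, whose proof consists of the single remark that the claim follows by inspecting Table~\ref{tab:structure_group}. Your case-by-case check makes that inspection explicit: you use multiplicativity of $\Gamma$ for $A_1\cI_1A_1$ and $(\cI_1A_1)^2$, and you obtain $T[\cU]$ as $T[\cU_\alg]+T[\cU_\Int]$.
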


\begin{proof}
This follows by inspecting the definition of the structure group in Table~\ref{tab:structure_group}. 
\end{proof}

\subsubsection{Multiplication, derivation and integration maps}

\begin{definition}[Products]\label{def:star_prod}\label{symb:star_product} Let $\tau_1,\tau_2\in S_\sol$.  Let $\blue{\tau_1}\in T[\tau_1],\blue{\tau_2}\in T[\tau_2]$. If $\tau_1\tau_2\in S_\sol$, then we define $\blue{\tau_1}\star\blue{\tau_2}=\blue{\tau_1\tau_2}\in T[\tau_1\tau_2]$. Otherwise, we set $\blue{\tau_1}\star\blue{\tau_2}=\blue 0$.
\end{definition}

It is immediate to check that the product $\star$ is commutative and associative.

\begin{definition}[Derivatives]
\label{def:derivatives}\label{symb:abstract_derivatives}
We define linear `derivation' maps $\rmD_i\colon T[\cU_\alg]\to T_\sol$ for $i=1,2$ as follows.
We define for $\blue{\cI_1\tau}\in T[\cI_1\tau]$, where $\tau\in \{A_1,A_1\cI_1A_1\}$,
\begin{equation}\label{eq:D1D2_def}
\rmD_1 \blue{\cI_1\tau }=\blue{\tau}-\blue{\cI_{22}\tau}
\in T[\cF_1]\;,
\qquad
\rmD_2\blue{\cI_1\tau}=\blue{\cI_{12}\tau} \in T[\cF_2]
\;. 
\end{equation}
On the polynomials we define $\rmD_i$ in the usual way,
i.e. $\rmD_i \blue{\rmX_j} = \delta_{ij}\blue\1$ and $\rmD_i \blue{\1} = 0$ for $i,j=1,2$.
Finally, on $T[(\cI_1A_1)^2]$, we impose Leibniz rule.
\end{definition}

We remark that the first identity in \eqref{eq:D1D2_def} is a reflection on the level of symbols that
$\cI$ represents integration against the Green's function of $\Delta = \partial_1^2+\partial_2^2$.
The asymmetry in directions $\partial_1$ and $\partial_2$ is related to the asymmetry of the axial gauge in these directions.

\begin{example}\label{ex:der_square}
Let $\blue{A_1^u},\blue{A_1^v}\in T[A_1] $.
Then by Leibniz rule and \eqref{eq:D1D2_def}
\begin{align*}
\rmD_1 ((\blue{\cI_1 A_1^u})(\blue{\cI_1 A_1^v}))
&=(\blue{A_1^u}-\blue{\cI_{22}A_1^u})(\blue{\cI_1 A_1^v})+(\blue{\cI_1 A_1^u})(\blue{A_1^v}-\blue{\cI_{22}A_1^v})
\\
&\in T[A_1\cI_1A_1]\oplus T[\cI_1 A_1\cI_{22}A_1]. 
\end{align*}
\end{example}

\begin{lemma}\label{lem:D_Gamma_commute}
Let $i=1,2$ and $\Gamma\in G_\sol$ given by Table \ref{tab:structure_group}.
Then
$\Gamma \rmD_i\blue{\tau}=\rmD_i\Gamma\blue{\tau}$ for all $\blue\tau\in T[\cU_\alg\setminus\{\cI_1(A_1\cI_1A_1)\}]$.
Moreover, for $\blue \tau = \blue{\cI_1 (A_1^u \cI_1 A_1^v)} \in T[\cI_1(A_1\cI_1 A_1)]$, we have
\begin{equation}\label{eq:D_Gamma_trunc}
	\Gamma \rmD_i\blue{\tau} - \rmD_i\Gamma\blue{\tau} 
	=
	\begin{cases}
		\bfh_{\cJ_{12}(A_1^u \cI_1 A_1^v)} \blue\1 & \quad \text{if} \quad i = 2, \\
		- \bfh_{\cJ_{22}(A_1^u \cI_1 A_1^v)} \blue\1 & \quad \text{if} \quad i = 1. 	
	\end{cases}
\;
\end{equation}
\end{lemma}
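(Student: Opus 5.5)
The proof is a direct verification on basis elements, using three ingredients: Table~\ref{tab:structure_group}, the multiplicativity $\Gamma\blue{\tau_1\tau_2}=\Gamma\blue{\tau_1}\Gamma\blue{\tau_2}$, and Definition~\ref{def:derivatives}. Both sides are linear, so it suffices to check basis elements of $T[\tau]$ for $\tau\in\cU_\alg=\{\1,\rmX_1,\rmX_2,\cI_1A_1,\cI_1(A_1\cI_1A_1),(\cI_1A_1)^2\}$. I will go through these in order of increasing complexity.

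\emph{Polynomials and $\cI_1 A_1$.} For the polynomials the check is immediate. We have $\rmD_i\blue\1=0$. Also $\rmD_i\Gamma\blue{\rmX_j}=\rmD_i(\blue{\rmX_j}+\bfh_{\rmX_j}\blue\1)=\delta_{ij}\blue\1=\Gamma\rmD_i\blue{\rmX_j}$.

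For $\blue{\cI_1A_1^u}$ we have $\Gamma\blue{\cI_1A_1^u}=\blue{\cI_1A_1^u}+\bfh_{\cJ_1A_1^u}\blue\1$. Since $\rmD_i\blue\1=0$, this gives $\rmD_i\Gamma\blue{\cI_1A_1^u}=\rmD_i\blue{\cI_1A_1^u}$. On the other side, $\rmD_1\blue{\cI_1A_1^u}=\blue{A_1^u}-\blue{\cI_{22}A_1^u}$ and $\rmD_2\blue{\cI_1A_1^u}=\blue{\cI_{12}A_1^u}$, and the table shows that $\Gamma$ fixes $\blue{A_1^u}$, $\blue{\cI_{22}A_1^u}$ and $\blue{\cI_{12}A_1^u}$. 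So the two sides agree.

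\emph{The square $(\cI_1A_1)^2$.} Write $a=\blue{\cI_1A_1^u}$ and $b=\blue{\cI_1A_1^v}$. By multiplicativity and the Leibniz rule,
\[
\rmD_i\Gamma(ab)=\rmD_i\bigl((\Gamma a)(\Gamma b)\bigr)=(\rmD_i\Gamma a)(\Gamma b)+(\Gamma a)(\rmD_i\Gamma b).
\]
By the previous step $\rmD_i\Gamma a=\Gamma\rmD_i a$, and similarly for $b$. Using multiplicativity again, the right-hand side equals $\Gamma\bigl((\rmD_ia)b+a(\rmD_ib)\bigr)=\Gamma\rmD_i(ab)$.

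The point requiring care is that all products involved actually lie in $S_\sol$, so that neither $\star$ nor $\Gamma$ silently truncates a term. The products that occur are $A_1\cI_1A_1$, $(\cI_1A_1)(\cI_{22}A_1)$ and $(\cI_1A_1)(\cI_{12}A_1)$, which belong to $\cF_1$ and $\cF$ respectively. The products with $\blue\1$ are also harmless. For example, $\Gamma\blue{A_1^u\cI_1A_1^v}=\blue{A_1^u\cI_1A_1^v}+\bfh_{\cJ_1A_1^v}\blue{A_1^u}$, which matches the corresponding term of the expansion. Checking this consistency is the only mildly delicate point.

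\emph{The element $\blue\tau=\blue{\cI_1(A_1^u\cI_1A_1^v)}$.} By the table,
\[
\Gamma\blue\tau=\blue\tau+\bfh_{\cJ_1A_1^v}\blue{\cI_1A_1^u}+\bfh_{\cJ_1(A_1^u\cI_1A_1^v)}\blue\1.
\]
Applying $\rmD_1$ gives
\[
\rmD_1\Gamma\blue\tau=\blue{A_1^u\cI_1A_1^v}-\blue{\cI_{22}(A_1^u\cI_1A_1^v)}+\bfh_{\cJ_1A_1^v}\bigl(\blue{A_1^u}-\blue{\cI_{22}A_1^u}\bigr).
\]
For the other side, $\Gamma\rmD_1\blue\tau=\Gamma\blue{A_1^u\cI_1A_1^v}-\Gamma\blue{\cI_{22}(A_1^u\cI_1A_1^v)}$. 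The first term equals $\blue{A_1^u\cI_1A_1^v}+\bfh_{\cJ_1A_1^v}\blue{A_1^u}$. By the table row for $\cI_{i2}$ with $i=2$, the second equals $\blue{\cI_{22}(\cdots)}+\bfh_{\cJ_1A_1^v}\blue{\cI_{22}A_1^u}+\bfh_{\cJ_{22}(A_1^u\cI_1A_1^v)}\blue\1$. Subtracting, everything cancels except
\[
\Gamma\rmD_1\blue\tau-\rmD_1\Gamma\blue\tau=-\bfh_{\cJ_{22}(A_1^u\cI_1A_1^v)}\blue\1.
\]

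The computation for $\rmD_2$ is identical. We have $\rmD_2\Gamma\blue\tau=\blue{\cI_{12}(\cdots)}+\bfh_{\cJ_1A_1^v}\blue{\cI_{12}A_1^u}$, while $\Gamma\rmD_2\blue\tau$ picks up the additional term $\bfh_{\cJ_{12}(A_1^u\cI_1A_1^v)}\blue\1$. Hence
\[
\Gamma\rmD_2\blue\tau-\rmD_2\Gamma\blue\tau=\bfh_{\cJ_{12}(A_1^u\cI_1A_1^v)}\blue\1,
\]
which is exactly \eqref{eq:D_Gamma_trunc}.
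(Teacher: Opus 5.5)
Your proof is correct and is essentially the paper's argument: a basis-by-basis check over $\cU_\alg$, with the square handled via the Leibniz rule and multiplicativity of $\Gamma$, and a direct computation on $\blue{\cI_1(A_1^u\cI_1A_1^v)}$, where only the $\bfh_{\cJ_{i2}(A_1^u\cI_1A_1^v)}\blue\1$ entries of Table~\ref{tab:structure_group} survive. You also write out the $i=2$ cases, which the paper dismisses as ``similar and simpler'', and you explicitly check that every intermediate product lies in $S_\sol$, which the paper leaves implicit.
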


\begin{proof}
The claim is trivial to verify for~$\tau \in \cbr[0]{\1 \, , \, \rmX_1\,,\, \rmX_2}$. 
For the remaining symbols, namely $\tau \in \cbr[0]{\cI_1 A_1, (\cI_1 A_1)^2, \cI_1(A_1 \cI_1 A_1)}$, we check the respective claims only for~$i=1$; the argument for $i=2$ is similar and simpler.
For $\blue{\cI_1 A_1^u} \in T[\cI_1 A_1]$, we have
\[
\Gamma \rmD_1\blue{\cI_1 A_1^u}=\Gamma (\blue{A_1^u}-\blue{\cI_{22}A_1^u})= \blue{A_1^u} -\blue{\cI_{22}A_1^u} 
\;,
\]
while 
\[
\rmD_1 \Gamma \blue{\cI_1A_1^u}=\rmD_1 (\blue{\cI_1 A_1^u}+\bfh_{\cJ_{1} A_1^u}\blue{\1})
=
\rmD_1 \blue{\cI_1A_1^u}=
\blue{A_1^u}-\blue{\cI_{22}A_1^u}
\;,
\]
as required.
It then follows from the Leibniz rule for $\rmD_1$ and multiplicativity of $\Gamma$ that $\rmD_1$ and $\Gamma$ commute on $T[(\cI_1 A_1)^2]$.

Finally, for $\blue{\cI_1(A_1^u\cI_1 A_1^v)} \in T[\cI_1(A_1\cI_1 A_1)]$, on the one hand we have
\begin{equs}
	\thinspace 
	\Gamma  \rmD_1 
		\blue{\cI_1(A_1^u\cI_1 A_1^v)} 
	&= 
	\Gamma 
	\sbr[1]{
		\blue{A_1^u\cI_1 A_1^v} - \blue{\cI_{22}(A_1^u\cI_1 A_1^v)} } \\
	&=
		\Gamma \blue{A_1^u\cI_1 A_1^v} 
		- \blue{\cI_{22}(A_1^u\cI_1 A_1^v)}
		- \bfh_{\cJ_1 A_1^v} \blue{\cI_{22} A_1^u} 
		- \bfh_{\cJ_{22}(A_1^u\cI_1 A_1^v)}\blue\1 
\end{equs}
while, on the other hand, 
\begin{equs}
	\rmD_1\Gamma \blue{\cI_1(A_1^u\cI_1 A_1^v)}
	& =
	\rmD_1 \sbr[1]{
		\blue{\cI_1(A_1^u\cI_1 A_1^v)} + \bfh_{\cJ_{1}A_1^v} \blue{\cI_1 A_1^u} + \bfh_{\cJ_1(A_1^u\cI_1 A_1^v)}\blue \1} \\
	& =
		\blue{A_1^u\cI_1 A_1^v} - \blue{\cI_{22}(A_1^u\cI_1 A_1^v)} 
		+ \bfh_{\cJ_{1}A_1^v} \del[1]{
				\blue{A_1^u} - \blue{\cI_{22} A_1^u} 
		} \\
	& =
		\Gamma \blue{A_1^u\cI_1 A_1^v} - \blue{\cI_{22}(A_1^u\cI_1 A_1^v)} 
		- \bfh_{\cJ_{1}A_1^v} \blue{\cI_{22} A_1^u} \,. 
\end{equs}
The claim in~\eqref{eq:D_Gamma_trunc} follows.
\end{proof}

\begin{definition}[Integration]
\label{def:integration}
We define `integration' maps $\cI_i\colon T[\cF_i]\to T[\cU_\Int]$ for $i=1,2$ by
$\cI_i\colon \blue{\tau}\mapsto \blue{\cI_i\tau}$ for a basis element $\blue\tau\in T[\cF_i]$ and extended by linearity.
Recall from Example \ref{ex:commutativity_der_int_consequence} that $\blue{\cI_2\cI_{12}A_1^u} = \blue{\cI_1\cI_{22}A_1^u}$, so $\cI_2$ is well-defined according to Table \ref{tab:U_F_sets_homogeneities}.

We similarly define $\cI_i\colon T[\cF_\rmD] \to T[\cF\cup\cF_1\cup\cF_2]$ by $\cI_i\colon \blue{\tau}\mapsto \blue{\cI_i\tau}$ for a basis element $\blue\tau\in T[\cF_\rmD]$ and extended by linearity.
Since $\blue{\cI_i\rmD_2\tau} = \blue{\cI_{i2} \tau}$,
this operator is well defined.
\end{definition}

To solve our equation, we need to establish that suitable combinations of multiplication, derivation and integration are stable in suitable sectors of $\cT_\sol$.
To that end, we recall from \cite[Def.~4.6]{Hairer14} that a pair of sectors $(V,W)$ is called $\gamma$-regular if $\Gamma(\blue\tau\star\blue{\bar\tau})=\Gamma\blue\tau\star\Gamma\blue{\bar\tau}$ for every $\Gamma\in G_\sol$ and $\blue\tau\in V,\blue{\bar\tau}\in W$ such that $|\blue\tau|+|\blue{\bar\tau}|<\gamma$.
If $V=W$, we simply say that $V$ is $\gamma$-regular.

\begin{proposition}\label{prop:stability_U_sol}
\begin{enumerate}[label=(\roman*)]
\item\label{pt:T_U_sol} $T[\cU_{\alg}]$ is $(\frac32-3\kappa)$-regular. Moreover, $T[\cU_{\alg}]$
is closed under $\star$: for all $\blue\tau,\blue{\bar\tau}\in T[\cU_{\alg}]$ one has
$\blue\tau\star\blue{\bar\tau}\in T[\cU_{\alg}]$.
\item\label{pt:T_U_sol_A1} The pair $(T[A_1],T[\cU_{\alg}])$ is $(\frac12-3\kappa)$-regular. Moreover, if
$\blue\tau\in T[\cU_{\alg}]$ and $\blue{A_1^u}\in T[A_1]$, then 
\begin{equation}\label{eq:A_1_stable}
\rmQ_{<\frac12-3\kappa}(\blue{\tau}\star\blue{A_1^u})\in T[\cF_1]
\;, 
\qquad
\cI_1\rmQ_{<\frac12-3\kappa}(\blue{\tau }\star\blue{A_1^u})\in T[\cU_\lin]
\;.
\end{equation}
\item\label{pt:DT_U_sol} For each $i\in\{1,2\}$, the pair $(T[\cU_{\alg}],\rmD_iT[\cU_{\alg}])$ is $(\frac12-3\kappa)$-regular. Moreover, if  $\blue\tau,\blue{\bar\tau}\in T[\cU_{\alg}]$, then
\begin{equation}\label{eq:D_1_stable}
\rmQ_{<\frac12-3\kappa}(\blue\tau\star\rmD_i\blue{\bar\tau})\in T[\cF_i\cup\{\1\}]
\;,
\qquad
\sum_{i=1}^2\cI_i\rmQ_{<\frac12-3\kappa}( \blue{\tau}\star\rmD_i \blue{\bar\tau})\in T[\cU_\lin]
\;. 
\end{equation}
\end{enumerate}
\end{proposition}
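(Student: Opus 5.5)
The proof is a finite check on basis elements, organised by homogeneity. Table~\ref{tab:U_F_sets_homogeneities} and Definition~\ref{def:homogeneity} give the homogeneities: $\cU_\alg$ consists of $\1$ (0), $\rmX_i$ (1), $\cI_1A_1$ ($\frac12-\kappa$), $\cI_1(A_1\cI_1A_1)$ and $(\cI_1A_1)^2$ ($1-2\kappa$). Only symbols of total homogeneity below the stated threshold matter. For each such product I will check two things: that the product symbol lies in $S_\sol$, so that $\star$ is nonzero, and that multiplicativity $\Gamma(\blue\tau\star\blue{\bar\tau})=\Gamma\blue\tau\star\Gamma\blue{\bar\tau}$ holds. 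The second can only fail when some cross term $\Gamma\blue\tau\star\Gamma\blue{\bar\tau}$ is a symbol outside $S_\sol$ that $\star$ sets to $0$, while the same term survives in $\Gamma$ of the product as given by Table~\ref{tab:structure_group}, or conversely. Since $\Gamma$ is lower triangular, every term of $\Gamma\blue\tau\star\Gamma\blue{\bar\tau}$ has homogeneity at most $|\tau|+|\bar\tau|$. So below the threshold I only need to make sure that all the relevant lower products are themselves in $S_\sol$.

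For (i), the pairs with $|\tau|+|\bar\tau|<\frac32-3\kappa$ are: products with $\1$, which are trivial; $\rmX_i\cdot\cI_1A_1$, of homogeneity $\frac32-\kappa$, which is excluded; and $\cI_1A_1\cdot\cI_1A_1$, of homogeneity $1-2\kappa$, which gives $(\cI_1A_1)^2\in\cU_\alg$. For the last one, $\Gamma$ acts multiplicatively on $T[(\cI_1A_1)^2]$ by definition, and the expansion $(\cI_1A_1^u+\bfh\1)(\cI_1A_1^v+\bfh'\1)$ only produces $(\cI_1A_1)^2$, $\cI_1A_1$ and $\1$, all of which lie in $S_\sol$. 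Closedness under $\star$ holds because every product outside $\cU_\alg$ is set to $0$.

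For (ii), the threshold $\frac12-3\kappa$ together with $|A_1|=-\frac12-\kappa$ forces $|\tau|<1-2\kappa$, so $\tau\in\{\1,\cI_1A_1\}$. These give $A_1\in\cF_1$ and $A_1\cI_1A_1\in\cF_1$. For the latter, $\Gamma(A_1\cI_1A_1)=A_1\cI_1A_1+\bfh A_1$, which is multiplicative. The projection $\rmQ_{<\frac12-3\kappa}$ discards the products $A_1\rmX_i$, $A_1(\cI_1A_1)^2$, $A_1\cI_1(A_1\cI_1A_1)$, which have homogeneity $\ge\frac12-3\kappa$ and are in any case $0$ under $\star$. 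Applying $\cI_1$ then lands in $\{\cI_1A_1,\cI_1(A_1\cI_1A_1)\}\subset\cU_\lin$.

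Part (iii) is the step I expect to be the main obstacle, since here the sets $\cF_2$ and $\cF_\rmD$ and the cancellation of $\cI_{22}$ terms enter. By Definition~\ref{def:derivatives} and Example~\ref{ex:der_square}, $\rmD_i\bar\tau$ lies in $\{\1\}\cup\cF_i$ plus cross terms such as $\cI_{22}A_1\cdot\cI_1A_1$ and $\cI_{12}A_1\cdot\cI_1A_1$. The latter have homogeneity $\frac12-3\kappa$ and are therefore removed by $\rmQ_{<\frac12-3\kappa}$. Multiplying by $\tau\in\cU_\alg$ with total homogeneity below $\frac12-3\kappa$ leaves only two possibilities: $\tau=\1$, or $\tau=\cI_1A_1$ against $\rmD_1\cI_1A_1\ni A_1$, which gives $A_1\cI_1A_1\in\cF_1$. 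Regularity is then checked exactly as in (ii), again using Lemma~\ref{lem:D_Gamma_commute} for the commutation of $\Gamma$ with $\rmD_i$. For the second statement in \eqref{eq:D_1_stable}, I will sum over $i$. The terms $\cI_1$ of $-\cI_{22}A_1$ and $\cI_2$ of $\cI_{12}A_1$ cancel, because $\cI_1\cI_{22}A_1=\cI_2\cI_{12}A_1$ by Example~\ref{ex:commutativity_der_int_consequence}, and the analogous cancellation happens for $\cI_1(A_1\cI_1A_1)$. The remaining terms are $\cI_1A_1$, $\cI_1(A_1\cI_1A_1)$ and polynomials, all in $T[\cU_\lin]$. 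The constant term $\1\in\rmD_i\rmX_i$ has to be included in $\cI_i$ as producing polynomial symbols. This is the point where I will check carefully that the definition of $\cI_i$ on $\1$ is consistent with $\cU_\lin$; if it is not, $\1$ has to be handled separately.
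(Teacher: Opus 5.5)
Your proposal is correct and follows essentially the same route as the paper: a finite homogeneity check on basis elements, where the only nontrivial point is the cancellation $\cI_1\cI_{22}A_1=\cI_2\cI_{12}A_1$ between the $i=1$ and $i=2$ terms when $\tau=\1$ and $\bar\tau\in T[\cI_1A_1]$. Two small corrections. First, for $\bar\tau=\cI_1(A_1\cI_1A_1)$ no cancellation is needed, and $\cI_i$ is not even defined on the relevant terms: $\cI_{22}(A_1\cI_1A_1)$ and $\cI_{12}(A_1\cI_1A_1)$ have homogeneity exactly $\frac12-3\kappa$, so $\rmQ_{<\frac12-3\kappa}$ removes them before $\cI_i$ is applied. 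Second, your worry about the constant term is settled by the convention $\cI_i\1=0$ (fixed in Proposition~\ref{prop:integration}, as in Hairer), so no polynomial terms arise.
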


\begin{remark}\label{rem:U_sol_reason}
Without the sum over $i$ in \eqref{eq:D_1_stable}, the term $\cI_i\rmQ_{<\frac12-3\kappa}( \blue{\tau}\star\rmD_i \blue{\bar\tau})$ is only in $T[\cU_\Int]$.
Moreover, by definition, $\rmD_1$ is not well-defined on $T[\cU_{\Int}]$ because we are missing the symbol $\cI_{22}\cI_{22}A_1$ in the regularity structure.
This reveals the reason we exclude $\cI_1\cI_{22}A_1$ from $\cU_\alg$:
its inclusion would force us to include $\cI_{22}\cI_{22}A_1$,
and therefore $\cI_1\cI_{22}\cI_{22}A_1$,
and so on, leading to a proliferation of symbols.
\end{remark}

\begin{proof}
To prove \ref{pt:T_U_sol}, let $\blue\tau, \blue{\bar\tau}\in T[\cU_\alg]$ such that $|\blue\tau|+|\blue{\bar\tau}|<\frac32-3\kappa$.
Then it is easy to verify that $\blue{\tau\bar\tau} \in T[\cU_\alg]$ and in particular that $\blue{\tau}\star\blue{\bar\tau}=\blue{\tau\bar\tau}$.
Then $\Gamma(\blue{\tau}\star\blue{\bar\tau})=\Gamma\blue\tau\star\Gamma\blue{\bar\tau}$ by definition of $\Gamma\in G_\sol$, hence $(T[\cU_\alg],T[\cU_\alg])$ is $(\frac32-3\kappa)$-regular.
The fact that $T[\cU_\alg]$ is stable under multiplication now also follows from the fact that, if $|\blue\tau|+|\blue{\bar\tau}|\geq \frac32-3\kappa$, then $\blue{\tau}\star\blue{\bar\tau}=0$ by definition of $\star$ since in this case $\blue{\tau\bar\tau} \notin T_\sol$.

To prove \ref{pt:T_U_sol_A1}, the claim on $(\frac12-3\kappa)$-regularity follows in a similar manner as for \ref{pt:T_U_sol}. To prove \eqref{eq:A_1_stable}, remark that
\[
\rmQ_{<\frac12-3\kappa}(\blue{\tau}\star\blue{A_1^u})\in T[\{A_1,A_1\cI_1A_1\}]
\;,
\]
which, after applying $\cI_1$, clearly lands us back in $T[\cU_\lin]$.

Finally, for \ref{pt:DT_U_sol}, the claim on $(\frac12-3\kappa)$-regularity is similar to before.
For \eqref{eq:D_1_stable}, consider first $i=1$.
Using the definition of derivative,
we obtain
\[
\rmQ_{<\frac12-3\kappa}(\blue{\tau}\star\rmD_1\blue{\bar\tau})\in T[\cF_1 \cup \{\1\}]
\;.
\]
Therefore, by Definition \ref{def:integration} of the integration map,
\[
\cI_1 \rmQ_{<\frac12-3\kappa}(\blue{\tau}\star\rmD_1\blue{\bar\tau})\in T[\cU_\Int]
\;.
\]
A similar---in fact, even easier---argument applies to $i=2$. For now, we argued that $\cI_i\rmQ_{<\frac12-3\kappa}(\blue{\tau}\star\rmD_i\blue{\bar\tau})\in T[\cU_\Int]$. The only difference between $\cU_\Int$ and $\cU_\lin$ is the symbol $\cI_1\cI_{22}A_1$. 
This term can only arise when we take $\blue{\tau}=\blue{\1}$ and $\blue{\bar\tau}\in T[\cI_1A_1]$, in which case $\blue{\bar\tau}=\blue{\cI_1\tau'}$ for $\blue{\tau'}\in T[A_1]$ and 
\[
\blue{\tau}\star\rmD_1\blue{\bar\tau}=\blue{\tau'}-\blue{\cI_{22}\tau'}
\;, 
\]
so that 
\[
\cI_1 \rmQ_{<\frac12-3\kappa}(\blue{\tau}\star\rmD_1\blue{\bar\tau})= \blue{\cI_1\tau'} -\blue{\cI_1\cI_{22}\tau'}
\;,
\]
and the second term is the one  inside $T[\cI_1\cI_{22}A_1]$. 
On the other hand, we have 
\[
\blue{\tau}\star\rmD_2\blue{\bar\tau}=\blue{\cI_{12}\tau'}
\;,
\]
which implies
\[
\cI_2 \rmQ_{<\frac12-3\kappa}(\blue{\tau}\star\rmD_2\blue{\bar\tau})=\blue{\cI_1\cI_{22}\tau'}
\;.
\]
Therefore, when we sum over $i=1,2$, the term $\blue{\cI_1\cI_{22}\tau'}$ vanishes and we conclude that
\[
\sum_{i=1}^2\cI_i\rmQ_{<\frac12-3\kappa}( \blue{\tau}\star \rmD_i \blue{\bar\tau})\in T[\cU_\lin]
\;.
\]
\end{proof}

\subsection{Models} \label{sec:models}
We recall the following definition from regularity structures \cite[Def~2.17]{Hairer14}.

\begin{definition}\label{def:model}
A \emph{model} $\sfZ = (\Pi,\Gamma)\label{symb:model}$ for the regularity structure $\cT_\sol$ consists of the following elements: 
\begin{itemize}
     \item A map $\Gamma:\R^2\times\R^2\to G_\sol$ such that $\Gamma_{xy}\Gamma_{yz}=\Gamma_{xz}$ for all $x,y,z\in \R^2$.
     \item A collection of continuous linear maps $\Pi_x:T_\sol\to \cD'(\R^2)$ such that $\Pi_x\Gamma_{xy}=\Pi_y$ for all $x,y\in \R^2$.
     \end{itemize}
       Furthermore, recalling the notation \eqref{eq:T_sol_norms},
	   we require that for every $\gamma>0$ and compact set $\fK\subset \R^2$, there exist $C_1,C_2>0$ such that
     \begin{align}\label{eq:model_bounds}
     |(\Pi_x\blue\tau)(\varphi_x^\lambda)|\leq C_1 |\blue\tau|_{T_\sol}\lambda^\ell\;,
	 \qquad
	 |\Gamma_{xy}\blue\tau|_{T_\sol;m}\leq C_2 |\blue\tau|_{T_\sol} |x-y|^{\ell-m}\;,
     \end{align}
       for all $x,y\in\fK$, $\lambda\in (0,1]$, $\ell<\gamma$, $m<\ell$, $\blue \tau\in T_\sol$ with $|\blue\tau|=\ell$,
	   and $\varphi\in \CB^2$ (recall the notation of Section \ref{sec:notation}).
We furthermore require that $\sfZ$ satisfies the following additional properties:

\begin{enumerate}[label=(\alph*)]
\item \label{pt:derivatives} \textbf{Derivatives.} For $\blue{\rmD_2\tau} \in T[\cF_\rmD]$ and $x\in\R^2$
\begin{equation}\label{eq:D_2_model_def}
\Pi_x \blue{\rmD_2\tau} = \partial_2 \Pi_x \blue\tau\;.	
\end{equation}

	\item \textbf{Integration.} \label{item:integration}
	We fix a truncation $K\label{symb:K}\colon \R^2\setminus\{0\}\to \R$ of the Green's function $G^\Free(x) = \frac{1}{2\pi}\log|x|\label{symb:Green}$ 
	of $\Delta$ on $\R^2$ as in \cite[Rem.~5.6]{Hairer14}
	such that $K=G^\Free$ on $[-4,4]^2$ and
	$K$ is compactly supported in $[-5,5]^2$
	and smooth outside the origin.\footnote{We take $K=G^\Free$ on a sufficiently large box, namely $[-4,4]^2$, as this ensures that canonical models from Definition \ref{def:canonical_model} suitably commute with derivatives, see Lemma \ref{lem:canonical}.}
Introduce the shorthand
\[
K_i = \partial_i K\;,\qquad K_{ij} = \partial_{i}\partial_j K
\;.
\]
Then $\sfZ$ realises $K_i$ for $\cI_i$ where $i=1,2$.
Explicitly, for every $x\in\R^2$, $i=1,2$ and every $\blue\tau\in T[\cF_i]$,
\begin{equation}\label{eq:model_realisation}
\Pi_x \blue{\cI_i\tau}=K_i\ast \Pi_x\blue{\tau} - (K_i\ast \Pi_x\blue{\tau})(x)\;,
\end{equation}
for $\blue\tau \in T[A_1]$
\begin{equation}
\label{eq:model_realisation_2}
\Pi_x \blue{\cI_{i}\rmD_2\tau}= K_i\ast \Pi_x\blue{\rmD_2\tau}
\;,
\end{equation}
and for $\blue\tau \in T[A_1\cI_1A_1]$
\begin{equation}
\label{eq:model_realisation_3}
\Pi_x \blue{\cI_{i}\rmD_2 \tau}=K_i\ast \Pi_x\blue{\rmD_2 \tau} - (K_i\ast \Pi_x\blue{\rmD_2 \tau})(x)\;.
\end{equation}
The last terms in \eqref{eq:model_realisation} and \eqref{eq:model_realisation_3} should be understood as $\sum_{|k|<|\cI_{i}\tau|} \Pi_x\frac{\blue{\rmX^k}}{k!}(\partial_k K_i\ast \Pi_x\blue{\tau})(x)$ appearing in \cite[Def.~5.9]{Hairer14}, but since $|\cI_{i}\tau| \in (0,1)$, the sum is over the single multi-index $k=0$.
The same remark applies to \eqref{eq:model_realisation_3} since $|\cI_{i2}\tau| \in (0,1)$.

\item\label{pt:polys} \textbf{Polynomials.} For all $x,y\in\R^2$, $\Pi_x\blue{\rmX_i}(\cdot)=(\cdot-x)_i$ for $i=1,2$ and $\Pi_x\blue{\1}(\cdot)=1$,
and correspondingly $\Gamma_{xy}\blue{\rmX_i}=\blue{\rmX_i}+(x_i-y_i)\blue{\1}$ and $\Gamma_{xy}\blue{\1}=\blue{\1}$ for $i=1,2$.
\end{enumerate}

We let\label{symb:model_norms} $\|\Pi\|_{\gamma;\mfK}$ and $\|\Gamma\|_{\gamma;\mfK}$
denote the the smallest possible choices for $C_1$ and $C_2$ respectively such that \eqref{eq:model_bounds} holds.
We then set $\triple{\sfZ}_{\gamma;\fK}=\|\Pi\|_{\gamma,\fK}+\|\Gamma\|_{\gamma,\fK}$.
If $\mfK= \Lambda^{\!+10}$, then we drop $\mfK$ from the notation and simply write $\triple{\Cdot}_{\gamma} =\triple{\Cdot}_{\gamma;\Lambda^{\!+10}}$. (We take $+10$ because $K$ is supported on $[-5,5]^2$.)

To compare two models $\sfZ=(\Pi,\Gamma)$ and $\bar \sfZ=(\bar\Pi,\bar\Gamma)$,
we denote by $\|\Pi-\bar\Pi\|_{\gamma;
\mfK}$ and $\|\Gamma-\bar\Gamma\|_{\gamma;
\mfK}$ the optimal choice of $C_1$ and $C_2$ respectively such that \eqref{eq:model_bounds} holds once we replace $\Pi$ by $\Pi-\bar\Pi$ and $\Gamma$ by $\Gamma-\bar\Gamma$.
We then set $\triple{\sfZ;\bar\sfZ}_{\gamma;\fK} = \|\Pi-\bar\Pi\|_{\gamma;\mfK} + \|\Gamma-\bar\Gamma\|_{\gamma; \mfK}$.
\end{definition}

\begin{definition}[Canonical models] \label{def:canonical_model}
Suppose $A_1 \in C^1(\R^2;\mfg)$ with support in $\Lambdaex$ (recall \eqref{eq:ex_def}).
We define the \emph{canonical model} $\sfZ = (\Pi,\Gamma)$ built from $A_1$ by defining $\Pi_x$ and $\Gamma_{xy}$ for all $x,y\in\R^2$ as follows:
\begin{itemize}
\item $\Pi_x\blue{A_1^{u}}= \langle A_1(\cdot),u\rangle$,
\item for $\blue\tau\in T[\cF_i]\cup T[\cF_\rmD]$, $i=1,2$,
setting $\Pi_x\blue{\cI_i\tau}$ according to \eqref{eq:model_realisation}-\eqref{eq:model_realisation_3} 
and $\Gamma_{xy}\blue{\cI_i\tau}$ by taking $\bfh_{\cJ_1\tau}(x,y)$ for $\blue\tau\in T[\CF_1]$ in Table \ref{tab:structure_group} as
\begin{equation}\label{eq:h_canonical}
\bfh_{\cJ_1\tau}(x,y)
=
(K_1*\Pi_y\blue\tau)(x) - (K_1*\Pi_y\blue\tau)(y)\;,
\end{equation}
and taking
\begin{equation}\label{eq:h_canonical_2}
\bfh_{\cJ_{i2}(A_1^u \cI_1 A_1^v)}(x,y)
=
(K_{i2}*\Pi_x\blue{A_1^u \cI_1 A_1^v})(x) - (K_{i2}*\Pi_y\blue{A_1^u \cI_1 A_1^v})(y)
\;,
\end{equation}
\item for $\rmD_2\sigma\in T[\cF_\rmD]$,
setting $\Pi_x\blue{\rmD_2\sigma} = \partial_2\Pi_x\blue{\sigma}$,

\item defining products as $\Pi_x(\blue{\tau_1}\blue{\tau_2})=(\Pi_x\blue{\tau_1})(\Pi_x\blue{\tau_2})$ whenever $\blue{\tau_1}\blue{\tau_2}$ is in $T_\sol$,
\item setting $\Pi_x,\Gamma_{xy}$ on polynomials as in point \ref{pt:polys} of the previous definition.
\end{itemize}
\end{definition}

Inspecting Table \ref{tab:U_F_sets_homogeneities}, the above definition completely specifies $\Pi_x$ and $\Gamma_{xy}$ by recursion.
The assumption that $A_1\in C^1$ ensures that $\Pi_x$ and $\bfh_{\sigma}(x,y)$ in the above definition are well-defined (see the proof of the next lemma).
\begin{lemma}
\label{lem:canonical}
The canonical model $\sfZ$ is a model in the sense of Definition \ref{def:model}.
Moreover, for all
$\blue\tau \in T[\cU_\alg\setminus\{\cI_1(A_1\cI_1A_1)\}]$, $i=1,2$, and $x\in\Lambda$,
\begin{equation}\label{eq:model_D_commute}
\Pi_x \rmD_i \blue\tau = \partial_i \Pi_x \blue \tau\;,
\end{equation}
and for $\blue\tau = \blue{\cI_{1}\sigma} \in T[\cI_1(A_1\cI_1A_1)]$,
\begin{equation}\label{eq:model_D_commute_2}
\Pi_x \rmD_1\blue{\tau} =\partial_1 \Pi_x\blue{\tau} + (K_{22}*\Pi_x\blue \sigma)(x)
\;,
\qquad
\Pi_x \rmD_2 \blue\tau = \partial_2 \Pi_x \blue \tau - K_{12}*\Pi_x\blue\sigma(x)\;.
\end{equation}
\end{lemma}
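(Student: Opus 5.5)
The proof splits into three parts, in this order: well-definedness and finiteness of the canonical model, the algebraic identities $\Pi_x\Gamma_{xy}=\Pi_y$ and $\Gamma_{xy}\Gamma_{yz}=\Gamma_{xz}$, and the commutation relations \eqref{eq:model_D_commute}--\eqref{eq:model_D_commute_2}.

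\textbf{Well-definedness.} Since $A_1\in C^1$ has compact support in $\Lambdaex$, every $\Pi_x\blue\tau$ is defined recursively along Table~\ref{tab:U_F_sets_homogeneities}, and each step stays classical.
\begin{itemize}
\item $K_i*f$ and $K_{ij}*f$ are $C^{1^-}$, respectively $C^{0^-}$ (in fact $C^{\theta}$ for $\theta<1$), for $f\in C^1_c$ by Schauder/Calder\'on--Zygmund estimates.
\item Hence $A_1\cI_1A_1$ is $C^{\theta}$, and $\partial_2$ of it is a locally integrable function or a distribution of order one.
\item Convolving the latter with $K_i$ gives a continuous function, so the constants subtracted in \eqref{eq:model_realisation}--\eqref{eq:model_realisation_3} and \eqref{eq:h_canonical}--\eqref{eq:h_canonical_2} make sense.
\end{itemize}
The analytic bounds \eqref{eq:model_bounds} then hold with some finite constant, because all objects are smooth enough: for $\blue\tau$ of homogeneity $\ell$, $\Pi_x\blue\tau$ vanishes at $x$ to order at least $\ell$ whenever $\ell>0$, and is bounded or locally integrable when $\ell\le0$. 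I will not optimise the constants, since the lemma only asserts that $\sfZ$ is a model.

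\textbf{Algebraic identities.} I will check $\Pi_x\Gamma_{xy}=\Pi_y$ on basis elements, going through Table~\ref{tab:structure_group}.
\begin{itemize}
\item On $\blue{A_1}$, $\blue{\cI_{i2}A_1}$ and $\blue{\rmD_2A_1}$ the map $\Gamma$ acts trivially and $\Pi_x$ does not depend on $x$ there, because \eqref{eq:model_realisation_2} involves no recentering.
\item For $\blue{\cI_1 A_1^u}$, the definition \eqref{eq:h_canonical} exactly compensates the difference of recentering constants.
\item Products follow from multiplicativity of $\Gamma$ and of $\Pi_x$.
\item For $\blue{\cI_1(A_1^u\cI_1A_1^v)}$, I write $\Pi_x\blue{A_1^u\cI_1A_1^v}=\Pi_y\blue{A_1^u\cI_1A_1^v}+\bfh_{\cJ_1A_1^v}(x,y)\,\Pi_y\blue{A_1^u}$ and convolve with $K_1$; this produces the $\bfh_{\cJ_1A_1^v}\blue{\cI_1A_1^u}$ term and a constant matching $\bfh_{\cJ_1(\cdots)}$. (Here I read \eqref{eq:h_canonical} with $\Pi_y$ and $\Pi_x$ placed so that this works.)
\item The $\cI_{i2}$ and $\rmD_2$ entries are handled the same way, using \eqref{eq:h_canonical_2}.
\end{itemize}
Chen's relation $\Gamma_{xy}\Gamma_{yz}=\Gamma_{xz}$ reduces to the telescoping identities $\bfh(x,y)+\bfh(y,z)=\bfh(x,z)$, with the cross terms coming from $\bfh_{\cJ_1A_1^v}$.

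\textbf{Derivative identities.} This is the only substantive point. For $\blue\tau=\blue{\cI_1A_1^u}$ the claimed identity reads
\[
\partial_1 (K_1*A_1)=A_1-K_{22}*A_1\;.
\]
It holds on $\Lambda$ because $K_{11}+K_{22}=\Delta K=\delta_0+(\Delta K-\delta_0)$, and the error $\Delta K-\delta_0$ is supported outside $[-4,4]^2$. Evaluated at $y\in\Lambda$, the error term integrates $A_1$ against a kernel vanishing near $y-z$ for $z\in\Lambdaex$, since $|y-z|\le 3$. This is precisely why $K=G^\Free$ on $[-4,4]^2$. The only loose end is whether \eqref{eq:model_D_commute} must hold at points $y$ outside $\Lambda$; I expect the intended statement is at points of $\Lambda^{+1}$, or that one tracks the smooth error.

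The remaining cases are then short:
\begin{itemize}
\item $\rmD_2\blue{\cI_1A_1^u}=\blue{\cI_{12}A_1^u}$ is immediate.
\item Polynomials are immediate.
\item $(\cI_1A_1)^2$ follows by the Leibniz rule.
\item For $\blue{\cI_1\sigma}$ with $\sigma=A_1\cI_1A_1$, the same Laplacian identity gives $\partial_1\Pi_x\blue{\cI_1\sigma}=\Pi_x\blue\sigma-K_{22}*\Pi_x\blue\sigma$. On the other hand, $\Pi_x\rmD_1\blue{\cI_1\sigma}=\Pi_x\blue\sigma-\Pi_x\blue{\cI_{22}\sigma}$, and $\Pi_x\blue{\cI_{22}\sigma}$ is recentred by subtracting $(K_{22}*\Pi_x\blue\sigma)(x)$, which yields \eqref{eq:model_D_commute_2}. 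The $\rmD_2$ case is analogous, with the recentering of $\cI_{12}\sigma$.
\end{itemize}

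The main obstacle is bookkeeping of supports, so that the Laplacian identity applies on the relevant domain given the $[-4,4]^2$ choice; I would handle it first.
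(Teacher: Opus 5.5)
Your proposal is correct and follows the same route as the paper: analytic bounds from $A_1\in C^1$; the identities $\Pi_x\Gamma_{xy}=\Pi_y$ and $\Gamma_{xy}\Gamma_{yz}=\Gamma_{xz}$ via additivity and the Chen-type relation for the coefficients $\bfh$; and the derivative identities from $\Delta K=\delta_0+Z$, with $Z$ vanishing on $[-4,4]^2$, combined with $\mathrm{supp}\,\Pi_x\blue\sigma\subset\Lambdaex$. The paper likewise only proves \eqref{eq:model_D_commute}--\eqref{eq:model_D_commute_2} on $[-2,2]^2\supset\Lambdaex$, which is all that is used later, so your ``loose end'' is resolved exactly as you suggest. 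Two small corrections: the relation is $\Pi_y\blue{A_1^u\cI_1A_1^v}=\Pi_x\blue{A_1^u\cI_1A_1^v}+\bfh_{\cJ_1A_1^v}(x,y)A_1^u$ (you have the sign reversed), and with it \eqref{eq:h_canonical} works exactly as written; and for non-positive homogeneities the $\lambda^{\ell}$ bound needs $\Pi_x\blue\tau\in L^\infty$ (which does hold here), not mere local integrability.
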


The fact that $\sfZ$ is a model does not use that $A_1$ in Definition \ref{def:canonical_model} has support in $\Lambdaex$, but the identities \eqref{eq:model_D_commute}-\eqref{eq:model_D_commute_2} do use this support condition.

Some aspects of Lemma \ref{lem:canonical}
follow from the general construction in \cite[Sec.~8.2]{Hairer14},
we give a hands-on proof because our regularity structure and the derivatives $\rmD_i$ are not standard
and because we need to use the specifics of $K$ (i.e. that $K=G^\Free$ on $[-4,4]^2$) to verify \eqref{eq:model_D_commute}.

\begin{proof}
We first consider the analytic bounds \eqref{eq:model_bounds}.
Since $A_1\in C^1$ and $K_i$ is $1$-regularising, one has $K_i*A_1\in C^{2-\theta}$ and $K_{ij}*A_1\in C^{1-\theta}$ for all $\theta>0$.
It follows that $K_i*\Pi_y\blue\tau \in C^{2-\theta}$ for all $\blue\tau\in T[\cF_i]$ and $\theta>0$,
from which we obtain that $\bfh_{\cJ_1\tau}$ in \eqref{eq:h_canonical} satisfies $|\bfh_{\cJ_1\tau}(x,y)|\lesssim|x-y|$.
Using that $\Pi_y=\Pi_x\Gamma_{xy}$, which we verify below,
one has
\[
\Pi_y\blue{A_1^u \cI_1 A_1^v}(z) = \Pi_x\blue{A_1^u \cI_1 A_1^v}(z) + \bfh_{\cJ_{1}A_1^v}(x,y)A_1(z)
\]
from which it follows that the map $x\mapsto \Pi_x\blue{A_1^u \cI_1 A_1^v}$ is in $C^1(\R^2,C^1)$ (recall $C^1$ means Lipschitz).
Since convolution with $K_{ij}$ is a bounded operator from $C^1$ to $C^{1-\theta}$ for any $\theta>0$, it follows that $|K_{i2}*(\Pi_x - \Pi_y)\blue{A_1^u \cI_1 A_1^v}|_{C^1} \lesssim |x-y|^{1-\theta}$,
from which we readily obtain
$
|\bfh_{\cJ_{i2}(A_1^u \cI_1 A_1^v)}(x,y)| \lesssim |x-y|^{1-\theta} 
$
as desired.
This concludes the bounds for $\Gamma$.

The bounds for $\Pi$ follow in a similar and simpler way after we note that $\Pi_x\blue\tau \in L^\infty$ for all $\blue\tau \in T[\cF_1\cup \cF_2 \cup \cF_\rmD]$ because $A_1\in C^1$.

We now turn to the algebraic properties of $\Gamma$ and $\Pi$.
The cocycle property $\Gamma_{xy}\Gamma_{yz} = \Gamma_{xz}$
is clear for all symbols with one instance of $A_1$ because $\Pi_x\blue\tau = \Pi_y\blue\tau$ for all $\blue\tau\in T[\cF_1]$ with one $A_1$ which implies the additivity property
\begin{equation}\label{eq:h_additive}
\bfh_{\cJ_1\tau}(x,z)
=
\bfh_{\cJ_1\tau}(x,y)
+
\bfh_{\cJ_1\tau}(y,z)\;.
\end{equation}
The cocycle property also then follows for the `pure products' $A_1\cI_1A_1$, $(\cI_1A_1)^2$ and $(\cI_1A_1)(\cI_{i2}A_1)$ by multiplicativity of $\Gamma$.
On all these terms, one also has $\Pi_x\Gamma_{xy} = \Pi_y$ from similar considerations.

It remains to consider
$\blue{\cI_1(A_1^u\cI_1A_1^v)}$ and $\blue{\cI_{i2}(A_1^u \cI_1 A_1^v)}$.
For $\blue{\cI_1(A_1^u\cI_1A_1^v)}$,
the cocycle property for which is equivalent to the additivity property \eqref{eq:h_additive} of $\bfh_{\cJ_1A_1^u}(x,y)$ and to the Chen-type identity
\begin{align*}
\bfh_{\cJ_1(A_1^u\cI_1A_1^v)}(x,z)
&=
\bfh_{\cJ_1(A_1^u\cI_1A_1^v)}(x,y)+\bfh_{\cJ_1(A_1^u\cI_1A_1^v)}(y,z)
+\bfh_{\cJ_1A_1^u}(x,y)\bfh_{\cJ_1A_1^v}(y,z)
\;,
\end{align*}
which can be verified directly by using $\Pi_x\Gamma_{xy}=\Pi_{y}$ for the symbols with one instance of $A_1$.
The identity $\Pi_x\Gamma_{xy} = \Pi_y$ on this term follows similarly.

For $\blue{\cI_{i2}(A_1^u \cI_1 A_1^v)}$,
we first verify $\Pi_x\Gamma_{xy} = \Pi_y$.
Using the definition of $\Gamma_{xy}$ on this term, we have
\begin{equation}\label{eq:h_computation}
\begin{split}
\Pi_x\Gamma_{xy}\blue{\cI_{i2}(A_1^u \cI_1 A_1^v)}
&=
\Pi_x\bigl(\cI_{i2}\Gamma_{xy}\blue{A_1^u \cI_1 A_1^v} + \bfh_{\cJ_{i2}(A_1^u \cI_1 A_1^v)}(x,y)\blue{\1}\bigr)\\
&=
K_{i2} * \Pi_x\Gamma_{xy}\blue{A_1^u \cI_1 A_1^v}
-
(K_{i2} *\Pi_x \blue{A_1^u \cI_1 A_1^v})(x)
+ \bfh_{\cJ_{i2}(A_1^u \cI_1 A_1^v)}(x,y)\\
&=
K_{i2} * \Pi_y\blue{A_1^u \cI_1 A_1^v}
-
(K_{i2} *\Pi_y \blue{A_1^u \cI_1 A_1^v})(y)
= \Pi_y\blue{\cI_{i2}(A_1^u \cI_1 A_1^v)}
\end{split}
\end{equation}
where in the second equality we used that $\Gamma_{xy}\blue{A_1^u \cI_1 A_1^v} = \blue{A_1^u \cI_1 A_1^v} + \bfh_{\cJ_1A_1^v}\blue{A_1^u}$
followed by \eqref{eq:model_realisation_2}-\eqref{eq:model_realisation_3},
in the third equality we used $\Pi_x\Gamma_{xy} = \Pi_y$ on the symbols of the form $A_1\cI_1A_1$
and the definition of $\bfh_{\cJ_{i2}(A_1^u \cI_1 A_1^v)}(x,y)$ in \eqref{eq:h_canonical_2},
and in the final equality we used again \eqref{eq:model_realisation_3}.
The cocycle property $\Gamma_{xy}\Gamma_{yz} = \Gamma_{xz}$ on this term follows directly from Table \ref{tab:structure_group}, which in particular implies
$\Gamma_{xy}\blue{\cI_{i2}A_1^u} =  \blue{\cI_{i2}A_1^u}$, and from the additivity properties
\eqref{eq:h_additive} and
\begin{align*}
\bfh_{\cJ_{i2}(A_1^u \cI_1A_1^v)}(x,z)
&=
\bfh_{\cJ_{i2}(A_1^u \cI_1A_1^v)}(x,y)+\bfh_{\cJ_{i2}(A_1^u \cI_1A_1^v)}(y,z)
\;,
\end{align*}
the latter of which follows immediately from the definition \eqref{eq:h_canonical_2}.

It remains to verify \eqref{eq:model_D_commute}-\eqref{eq:model_D_commute_2}.
The claim for $\blue\tau$ in the polynomial sector is clear. 
The claims \eqref{eq:model_D_commute}-\eqref{eq:model_D_commute_2} for $\rmD_2$ follow directly from Definition \ref{def:derivatives}, the Leibniz rule, the multiplicativity of $\Pi_x$, and from \eqref{eq:model_realisation_2}-\eqref{eq:model_realisation_3}.

For $\rmD_1$, it suffices to consider $\blue\tau = \blue{\cI_1\sigma}$ with $\sigma \in T[\{A_1,A_1\cI_1A_1\}]$
because for $\blue{\tau} \in T[(\CI_1A_1)^2]$, the claim
\eqref{eq:model_D_commute} would then follow from \eqref{eq:model_D_commute} applied to $\blue{\cI_1\sigma} \in T[\cI_1 A_1]$, the Leibniz rule for $\rmD_1$, and multiplicativity of $\Pi_x$.
To this end, consider first $\blue\sigma \in T[A_1\cI_1A_1]$
for which we prove \eqref{eq:model_D_commute_2}.
Using Definition \ref{def:derivatives} and \eqref{eq:model_realisation_3},
\[
\Pi_x \rmD_1\blue{\cI_1\sigma} = \Pi_x(\blue\sigma - \blue{\cI_{22}\sigma})
=
\Pi_x\blue\sigma - K_{22} * \Pi_x\blue\sigma
+ (K_{22}*\Pi_x\blue\sigma)(x)\;.
\]
By the facts that $A_1$ has support on $\Lambdaex$,
and $\Pi_x$ is multiplicative,
note that $\Pi_x \sigma$ in supported on $\Lambdaex\subset [-2,2]^2$.
Since $K$ agrees with $G^\Free$ on $[-4,4]^2$, we have on $[-2,2]^2\supset\Lambdaex$
\[
K_{22} * \Pi_x\blue\sigma = \partial_{2}^2 G^\Free * \Pi_x\blue\sigma
=
\Pi_x\sigma - \partial_1^2 G^\Free * \Pi_x\blue\sigma
=
\Pi_x\sigma - K_{11} * \Pi_x\blue\sigma
\;,
\]
where in the second equality we use that $G^\Free$ is the Green's function of $\Delta$.
In conclusion,
\[
\Pi_x \rmD_1\blue{\cI_1\sigma}
=  K_{11} * \Pi_x\blue\sigma
+ (K_{22}*\Pi_x\blue\sigma)(x)
= \partial_1\Pi_x\blue{\cI_1\sigma} + (K_{22}*\Pi_x\blue\sigma)(x)
\]
where in the final equality we used \eqref{eq:model_realisation}.
The proof of \eqref{eq:model_D_commute} for $\rmD_1$ and $\blue\tau=\cI_1\blue\sigma \in T[\cI_1A_1]$
proceeds in the same way except we use \eqref{eq:model_realisation_2} instead of \eqref{eq:model_realisation_3} so that the term $(K_{22}*\Pi_x\sigma)(x)$ is absent.
\end{proof}

\begin{remark}\label{rem:bfh_explicit}
The computation \eqref{eq:h_computation} reveals that \eqref{eq:h_canonical_2} holds also for any model $\sfZ$ (not necessarily canonical).
\end{remark}


\subsection{Modelled distributions}
\label{sec:modelled_distributions}

Fix two models $\sfZ = (\Pi,\Gamma), \bar\sfZ = (\bar\Pi,\bar\Gamma)$ throughout this subsection. 
We define\label{symb:boundary_distance}
     \[
     |x|_{\partial\Lambda}=1\wedge \dist(x,\partial\Lambda)\;, \qquad |x,y|_{\partial\Lambda}=|x|_{\partial\Lambda}\wedge |y|_{\partial\Lambda}\;,
     \]
     \[
     \Lambda_{\partial\Lambda}=\{(x,y)\in (\Lambda^\circ)^2\, :\, x\neq y \ \text{ and } \ 2|x-y|\leq |x,y|_{\partial\Lambda}\}
	 \;.
     \]
     %
	 Fix $\gamma, \eta \in\R$.  
     For functions $\blue f,\blue{\bar f}\colon\Lambda^\circ \to T_{\sol}$
	 taking values in a sector with homogeneity less than $\gamma$, we set
     \begin{align*}
|\blue f-\blue{\bar f}|_{\gamma,\eta}
&=
\sup_{x\in \Lambda^\circ}\sup_{\ell<\gamma}\frac{|\blue f(x) - \blue{\bar f}(x)|_{T_\sol;\ell}}{|x|_{\partial\Lambda}^{(\eta-\ell)\wedge 0}}\;,
		\\
		\triple{\blue f;\blue{\bar f}}_{\gamma,\eta}
		&=
		\sup_{(x,y)\in \Lambda_{\partial\Lambda}}\sup_{\ell<\gamma}\frac{|\blue f(x)-\Gamma_{xy}\blue f(y) - \blue {\bar f}(x) + \bar \Gamma_{xy}\blue {\bar f}(y)|_{T_\sol;\ell}}{|x-y|^{\gamma-\ell}|x,y|_{\partial\Lambda}^{\eta-\gamma}}
	 \end{align*}
     and
     \label{symb:modelled_distribution_norms}\begin{equation}\label{eq:f_norm_def}
     \|\blue f;\blue{\bar f}\|_{\gamma,\eta}=|\blue f-\blue{\bar f}|_{\gamma,\eta}+ \triple{\blue f;\blue{\bar f}}_{\gamma,\eta}
	 \;.
     \end{equation}
	 These quantities of course depend on $\Gamma,\bar\Gamma$, but we suppress this from the notation.
	 We also denote $|\blue f|_{\gamma,\eta} = |\blue f-0|_{\gamma,\eta}$,
	 $\triple{\blue f}_{\gamma,\eta} = \triple{\blue f;0}_{\gamma,\eta}$
	 and $\|\blue f\|_{\gamma,\eta} = \|\blue f;0\|_{\gamma,\eta}$,
	 which now only depend on $\Gamma$.
The space of $\blue f$ for which $\|\blue f\|_{\gamma,\eta} <\infty$ is denoted $\scD^{\gamma,\eta}\label{symb:scD}(\Gamma)$.
We view $\scD^{\gamma,\eta}(\Gamma)$ as a space of singular modelled distributions in the sense of \cite[Sec.~6]{Hairer14} by extending $\blue f\in\scD^{\gamma,\eta}(\Gamma)$ to $\R^2\setminus\Lambda$ as $0$.

When $\Gamma$ is clear from the context, we simply write $\scD^{\gamma,\eta}$ for $\scD^{\gamma,\eta}(\Gamma)$.
If $f$ takes values in a sector $V$, we write $\scD^{\gamma,\eta}(V)$ or $\scD^{\gamma,\eta}(\Gamma;V)$ to emphasise the range of $f$.
For $\nu\leq 0$, we also write $\scD^{\gamma,\eta}_\nu$ for $\scD^{\gamma,\eta}(\Gamma;V)$ where $V$ is a sector with lowest homogeneity $\nu$.
In this case, we call $\nu$ the \emph{regularity} of the sector $V$.

Most of our modelled distributions will be matrix valued, i.e.~elements of $\scD^{\gamma,\eta}\otimes \rmM_\bC(N)$.
The norm on $\scD^{\gamma,\eta}$ extends to $\scD^{\gamma,\eta}\otimes \rmM_\bC(N)$ by choosing a norm on $\rmM_{\C}(N)$
and then a cross tensor norm $\scD^{\gamma,\eta}\otimes \rmM_\bC(N)$.
We denote the norm with the same notation as for the scalar case.
Moreover, all linear map $\scD^{\gamma,\eta}\to E$ for a vector space $E$ extend linearly to $\scD^{\gamma,\eta}\otimes \rmM_\bC(N) \to E\otimes\rmM_\C(N)$.

\begin{notation}\label{not:f}
Any function $\blue f\colon \Lambda^\circ\to T_\sol\otimes \rmM_\bC(N)$, by definition, can be written as
   \begin{align*}
    \blue f(x)=\sum_{\tau} \blue{\tau}\otimes f^\tau(x)\label{symb:modelled_coefficients}
    \end{align*}
	where the sum is over a basis set of $T_\sol$ and where
	$f^{\tau}\colon \Lambda^\circ \to \rmM_\bC(N)$.	
	For a function as above, we often write $\blue f(x) = \sum_\tau f^\tau(x)\blue{\tau}$.
	We use the same notation for $\blue f\colon \Lambda^\circ \to T_\sol$ in which case $f^\tau(x)\in\R$.
\end{notation}

We now define several important operations on $\scD^{\gamma,\eta}$, namely reconstruction, multiplication, differentiation, integration and inversion.

\paragraph{Reconstruction.}
Let $\gamma>0$ and $-1< \nu\wedge\eta\leq \gamma$.
Then there is a unique linear map $\cR\colon \scD^{\gamma,\eta}_\nu(\Gamma)\to C^{\nu\wedge\eta}(\R^2)$ such that
\begin{equation}\label{eq:CR_defining}
|(\cR \blue f-\Pi_x\blue{f}(x))(\varphi_x^\lambda)|\lesssim \lambda^\gamma \|\blue f\|_{\gamma,\eta},
\end{equation}
uniformly over $\lambda\in (0,1]$, $x\in\R^2$ and $\phi\in \CB^2$ (recall the notation of Section \ref{sec:notation}).
Here, to make sense of $f(x)$ for all $x\in\R^2$, we extend $f$ to $\R^2\setminus\Lambda^\circ$ by $0$.
We call $\cR\label{symb:cR}$ the \emph{reconstruction map} associated to $\sfZ$.

We recall from \cite[Rem~3.15]{Hairer14} that, if $\sfZ$ is a canonical model, then $\cR$ is given by point evaluation, i.e.
\begin{equation}\label{eq:reconst_canonical}
(\cR\blue f)(x)=(\Pi_x\blue f(x))(x)
\;.
\end{equation}
Furthermore, letting $\bar\cR$ denote the reconstruction map associated to $\bar\sfZ$, then by \cite[Prop.~6.9]{Hairer14}
\begin{equation}\label{eq:recon_cont}
\|\cR \blue{f} - \bar\cR \blue{\bar f}\|_{C^{\nu\wedge\eta}(\R^2)} \lesssim \|\blue{f};\blue{\bar f}\|_{\gamma,\eta} + \triple{\sfZ;\bar\sfZ}_{\gamma}
\end{equation}
locally uniformly over $\blue f, \blue{\bar f},\sfZ,\bar\sfZ$.

\paragraph{Multiplication.}
Recall the product $\star$ defined on $T_\sol$ in Definition \ref{def:star_prod}.
For $\gamma\in\R$, we also write $\star_\gamma\label{symb:star_gamma}$ to denote the truncated product on $T_\sol$ given by
\[
\blue \tau \star_\gamma \blue{\bar\tau} = \rmQ_{<\gamma}(\blue \tau \star \blue{\bar\tau})
\;.
\]

We equip $ T_\sol\otimes \rmM_\bC(N)$ with the product $\circ_\gamma\label{symb:circ_gamma}$ induced by the matrix multiplication on $\rmM_\bC(N)$ and the product $\star_\gamma$ on $T_\sol$,
i.e. for $\blue{f} = \sum_{\tau}\blue{\tau} \otimes f^\tau$ and $\blue {\bar f}=\sum_{\bar\tau} \blue{\bar\tau}\otimes \bar{f}^{\bar\tau}$, we set
\begin{equation}\label{eq:circ_prod_def}
\blue{f} \circ_\gamma \blue{\bar f}=  \sum_{\tau,\bar\tau}\blue{\tau}\star_\gamma\blue{\bar\tau}\otimes f^\tau \bar{f}^{\bar\tau}
\;.
\end{equation}
Note that $T_\sol \otimes \rmM_{\C}(N)$ equipped with $\circ_\gamma$
is an associative algebra.
However, while $\star$ and $\star_\gamma$ are commutative, $\circ_\gamma$ is not commutative due to the matrix multiplication on $\rmM_\bC(N)$.

Recall the following estimates from \cite[Prop.~6.12]{Hairer14}.

\begin{proposition}\label{prop:mult}
Consider $\gamma_i \in\R, \alpha_i\leq 0$, sectors $V_i$ of regularity $\alpha_i$,
and $\blue{f_i}\in \scD^{\gamma_i,\eta_i}_{\alpha_i}(\Gamma;V_i)\otimes \rmM_\bC(N)$ for $i=1,2$. Let $\gamma=(\gamma_1+\alpha_2)\wedge(\gamma_2+\alpha_1)$ and assume $(V_1,V_2)$ is $\gamma$-regular. 
Then $\blue{f_1}\circ_\gamma \blue{f_2} \in \scD^{\gamma,\eta}_\alpha\otimes \rmM_\bC(N)$, where $\alpha=\alpha_1+\alpha_2$ and $\eta=(\eta_1+\alpha_2)\wedge (\eta_2+\alpha_1)\wedge (\eta_1+\eta_2)$. Furthermore,
\[
\|\blue{f_1}\circ_\gamma \blue{f_2}\|_{\gamma,\eta}\lesssim \|\blue{f_1}\|_{\gamma_1,\eta_1}\|\blue{f_2}\|_{\gamma_2,\eta_2}
\;,
\]
locally uniformly in $\sfZ$. Furthermore, for another pair of modelled distributions $\blue{\bar f_i}\in \scD^{\gamma_i,\eta_i}_{\alpha_i}(\bar\Gamma;V_i)$,
\[
\|\blue{f_1}\circ_\gamma \blue{f_2};\blue{\bar f_1}\circ_\gamma \blue{\bar f_2}\|_{\gamma,\eta}\lesssim \|\blue{f_1};\blue{\bar f_1}\|_{\gamma_1,\eta_1}+\|\blue{f_2};\blue{\bar f_2}\|_{\gamma_2,\eta_2}+\|\Gamma-\bar\Gamma\|_{\gamma_1+\gamma_2}
\;,
\]
locally uniformly over all quantities.
\end{proposition}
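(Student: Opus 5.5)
The statement is the multiplication estimate for singular modelled distributions, Proposition~\ref{prop:mult}, which the paper recalls from \cite[Prop.~6.12]{Hairer14}. I would prove it by reducing to the scalar case and then running the standard two-step argument: first the pointwise bound $|\cdot|_{\gamma,\eta}$, then the translation bound $\triple{\cdot}_{\gamma,\eta}$.

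\textbf{Reduction to scalars.} The product $\circ_\gamma$ is the tensor product of $\star_\gamma$ on $T_\sol$ with matrix multiplication on $\rmM_\C(N)$, and $\Gamma_{xy}$ acts only on the first factor. So after expanding $f_i$ in a basis of $\rmM_\C(N)$, every estimate splits into finitely many scalar products $f_1^{(a)}\star_\gamma f_2^{(b)}$, with constants depending only on $N$ and the chosen cross norm. It therefore suffices to treat scalar $f_i$ with $\star_\gamma$.

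\textbf{Pointwise bound.} Fix $x\in\Lambda^\circ$ and a level $\ell<\gamma$. The component $\rmQ_\ell(f_1\star f_2)(x)$ is a sum of products $\rmQ_{\ell_1}f_1(x)\star\rmQ_{\ell_2}f_2(x)$ with $\ell_1+\ell_2=\ell$, $\ell_1\ge\alpha_1$ and $\ell_2\ge\alpha_2$. Each factor is bounded by $|x|_{\partial\Lambda}^{(\eta_i-\ell_i)\wedge0}$, so we must check
\[
(\eta_1-\ell_1)\wedge0+(\eta_2-\ell_2)\wedge0\ \ge\ (\eta-\ell)\wedge0 .
\]
This follows by checking the four sign cases against the three terms in the definition of $\eta$, using $|x|_{\partial\Lambda}\le1$.

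\textbf{Translation bound.} For $(x,y)\in\Lambda_{\partial\Lambda}$ and $|\blue\tau|+|\blue{\bar\tau}|<\gamma$, $\gamma$-regularity of $(V_1,V_2)$ gives $\Gamma_{xy}(\blue\tau\star\blue{\bar\tau})=\Gamma_{xy}\blue\tau\star\Gamma_{xy}\blue{\bar\tau}$. Hence
\[
f(x)-\Gamma_{xy}f(y)=\rmQ_{<\gamma}\big[(f_1(x)-\Gamma_{xy}f_1(y))\star f_2(x)+\Gamma_{xy}f_1(y)\star(f_2(x)-\Gamma_{xy}f_2(y))\big]+\text{(truncation terms)}.
\]
The truncation terms come from components of $\Gamma_{xy}f_1(y)\star\Gamma_{xy}f_2(y)$ of homogeneity at least $\gamma$ that $\Gamma_{xy}$ pushes down to lower levels. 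They are bounded with the model bound $|\Gamma_{xy}\tau|_m\lesssim|x-y|^{|\tau|-m}$. The first product is controlled by $|x-y|^{\gamma_1-\ell_1}|x,y|_{\partial\Lambda}^{\eta_1-\gamma_1}$ times $|x|_{\partial\Lambda}^{(\eta_2-\ell_2)\wedge0}$. To convert the excess, we use $|x-y|\le\frac12|x,y|_{\partial\Lambda}$ on $\Lambda_{\partial\Lambda}$, which trades powers of $|x-y|$ for powers of the boundary distance. This is exactly where the exponents $\gamma=(\gamma_1+\alpha_2)\wedge(\gamma_2+\alpha_1)$ and $\eta$ come from.

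\textbf{Stability estimate.} For two models, use the telescoping $ab-\bar a\bar b=(a-\bar a)b+\bar a(b-\bar b)$, together with $\Gamma_{xy}-\bar\Gamma_{xy}$ terms controlled by $\|\Gamma-\bar\Gamma\|$. This gives the difference bound, locally uniformly in all quantities.

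\textbf{Main obstacle.} The delicate step is the exponent bookkeeping near the boundary in the translation bound: the weights $|x,y|_{\partial\Lambda}$ must combine consistently across all $(\ell_1,\ell_2)$ splittings. Since the statement is exactly \cite[Prop.~6.12]{Hairer14}, I would follow that proof verbatim; the only new inputs are the regularity hypotheses verified in Proposition~\ref{prop:stability_U_sol} and the matrix tensoring above.
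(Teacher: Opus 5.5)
Your proposal is correct and is essentially the paper's route. The paper gives no argument of its own and simply imports \cite[Prop.~6.12]{Hairer14}; your reduction to scalar components (using that $\Gamma_{xy}$ acts only on the $T_\sol$ factor of $\circ_\gamma$), followed by Hairer's pointwise, translation and telescoping estimates, is exactly what that citation amounts to.

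One point to make explicit when you do the bookkeeping for the truncation terms, i.e.\ levels $\beta_i<\gamma_i$ of $f_i(y)$ with $\beta_1+\beta_2\ge\gamma$. If $\beta_i\le\eta_i$, the factor $|x-y|^{\beta_1+\beta_2-\gamma}$ is the only source of the weight $|x,y|_{\partial\Lambda}^{\eta-\gamma}$, so this step uses $\eta\le\gamma$. That condition holds in every application in the paper, but without it the statement as written can fail: take $f_1=f_2=x_1\blue{\1}+\blue{\rmX_1}$ with $\gamma_i=\frac32$ and $\eta_i=5$.
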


\paragraph{Differentiation.}
For $\blue f \in \scD^{\gamma,\eta}(T[\cU_\alg])$, we define the function
\label{symb:modelled_derivative}\begin{equation}\label{eq:rmD_gamma_def}
\cD_i^\gamma \blue f = \rmQ_{<\gamma-1} \rmD_i \blue f
+ (H_i\blue f)\blue\1
\colon
\R^2 \to T_\sol \;,
\end{equation}
where
\begin{equation}\label{eq:H_def}
(H_1\blue f)(x)
=
-\sum_{\blue\sigma}f^{\cI_1\sigma}(x) (K_{2} *\Pi_x \blue{\rmD_2\sigma})(x)
\;,
\qquad
(H_2\blue f)(x)
=
\sum_{\blue\sigma}f^{\cI_1\sigma}(x) (K_{1} *\Pi_x \blue{\rmD_2\sigma})(x)
\end{equation}
where the sum is over the basis set of $T[A_1\CI_1A_1]$, i.e. $\blue\sigma = \blue{A_1^{e^i}\cI_1A_1^{e^j}}$ for $e^i$ a basis of $\mfg^*$
and where we recall Notation \ref{not:f}.
Remark that, for a compact $\mfK\subset\R^2$ one has uniformly in $x\in\mfK$
\begin{equation}\label{eq:Ki2_exist}
(K_i*\Pi_x\blue{\rmD_2\sigma})(x)
\lesssim
\triple{\sfZ}_{\gamma;\mfK^{+10}}
\end{equation}
for any $\gamma>0$.
Indeed, $\blue{\rmD_2\sigma}\in T[\rmD_2(A_1\cI_1A_1)]$ has homogeneity
$-\frac12-3\kappa$, so by a multi-scale decomposition of $K_i$ as in \cite{Hairer14},
see Lemma~5.5 and Eq.~(5.11) therein, 
the singular part of $K_i$ is summable,
while the smooth large-scale part is
controlled by the model bounds on $\mfK^{+10}$, using a finite partition of
the support of $K_i(x-\cdot)$ and the identity $\Pi_x=\Pi_y\Gamma_{yx}$.

This rather complicated definition stems from the fact that $\rmD_2$ has a special deregularising effect from Definition \ref{def:homogeneity},
and moreover $\Pi_x$ and $\rmD_i$ in general (see e.g.~Lemma \ref{lem:canonical}).
The role of $H_i$ is similar to that of the correction in \cite[Sec.~4.5.1]{CS26} and \cite[Sec.~5.3]{EH17} and ensures that reconstruction and derivatives commute for canonical models.

\begin{proposition}\label{prop:derivatives}
Let $1<\gamma\leq \frac{3}{2}-3\kappa$, $\eta\leq \gamma$,
and $\blue f\in \scD^{\gamma,\eta}(T[\cU_\alg])$.
Then $\cD_i^\gamma \blue f\in \scD^{\gamma-1,\eta-1}$ for $i = 1,2$ and one has the bound
\begin{equation}\label{eq:Df_bound}
\|\cD_i^\gamma\blue f\|_{\gamma-1,\eta-1}\lesssim \|\blue f\|_{\gamma,\eta}
\end{equation}
locally uniformly in $\sfZ$.
Moreover, for $\blue{\bar f}\in \scD^{\gamma,\eta}(\bar\Gamma;T[\cU_\alg])$,
\begin{equation}\label{eq:Df_diff}
\|\cD_i^\gamma\blue f;\cD_i^\gamma\blue{\bar f}\|_{\gamma-1,\eta-1}\lesssim \|\blue f;\blue{\bar f}\|_{\gamma,\eta}+\triple{\sfZ; \bar\sfZ}_\gamma
\;,
\end{equation}
locally uniformly over all quantities.
Finally, if $\sfZ$ is canonical and $\eta>0$, then \begin{equation}\label{eq:CR_D_commute}
\cR\cD_i^\gamma\blue f=\partial_i\cR\blue f
\;.
\end{equation}
\end{proposition}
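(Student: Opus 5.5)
The plan is to treat $\rmQ_{<\gamma-1}\rmD_i\blue f$ and the correction $(H_i\blue f)\blue\1$ separately, and to reduce everything to the interplay between $\Gamma_{xy}$ and $\rmD_i$ recorded in Lemma~\ref{lem:D_Gamma_commute}. Writing $\blue f=\sum_\tau f^\tau\blue\tau$ over the basis of $T[\cU_\alg]$, the bound on $|\cD_i^\gamma\blue f|_{\gamma-1,\eta-1}$ is immediate. Indeed, $\rmD_i$ lowers homogeneity by one on all of $\cU_\alg$ except through $\rmD_2$ acting on terms containing $A_1$. Those terms have the non-standard homogeneity of Definition~\ref{def:homogeneity}, but they only improve the count, since $\rmD_2\blue{\cI_1A_1}=\blue{\cI_{12}A_1}$ has homogeneity $-2\kappa\ge|\cI_1A_1|-1$. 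The coefficient $H_i\blue f(x)$ is bounded by $|f^{\cI_1\sigma}(x)|$ times \eqref{eq:Ki2_exist}. Because $|\cI_1\sigma|=1-2\kappa<\gamma$, we have $|f^{\cI_1\sigma}(x)|\lesssim |x|_{\partial\Lambda}^{(\eta-1+2\kappa)\wedge0}$, which is at least as good as the weight $|x|_{\partial\Lambda}^{(\eta-1)\wedge 0}$ required at level $0$.

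The main step is the translation bound for $\cD_i^\gamma\blue f(x)-\Gamma_{xy}\cD_i^\gamma\blue f(y)$. I would write
\[
\rmD_i\blue f(x)-\Gamma_{xy}\rmD_i\blue f(y)=\rmD_i\bigl(\blue f(x)-\Gamma_{xy}\blue f(y)\bigr)+\bigl(\rmD_i\Gamma_{xy}-\Gamma_{xy}\rmD_i\bigr)\blue f(y).
\]
Projecting the first term onto level $\ell-1$ uses the level-$\ell$ component of $\blue f(x)-\Gamma_{xy}\blue f(y)$, which is controlled by $|x-y|^{\gamma-\ell}$. The non-standard $\rmD_2$ homogeneities again only help, since the exponent $(\gamma-1)-(\ell-1)$ matches. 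The truncation $\rmQ_{<\gamma-1}$ adds the usual terms $\rmQ_{\geq\gamma-1}\Gamma_{xy}\rmD_i\blue f(y)$, which are handled by the $\Gamma$ bounds as in \cite[Prop.~5.28]{Hairer14}. By Lemma~\ref{lem:D_Gamma_commute}, the commutator equals
\[
f^{\cI_1\sigma}(y)\bigl(\bfh_{\cJ_{12}\sigma}(x,y)\ \text{or}\ -\bfh_{\cJ_{22}\sigma}(x,y)\bigr)\blue\1.
\]
This is a pure level-$0$ term of size only $|x-y|^{1-\theta}$, which is not $|x-y|^{\gamma-1}$ small in general. It must therefore be cancelled by $H_i\blue f(x)-H_i\blue f(y)$ (note $\Gamma_{xy}\blue\1=\blue\1$).

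I expect this cancellation to be the main obstacle. Using Remark~\ref{rem:bfh_explicit}, $\bfh_{\cJ_{i2}\sigma}(x,y)=(K_{i2}*\Pi_x\blue\sigma)(x)-(K_{i2}*\Pi_y\blue\sigma)(y)$. Via $K_{i2}*\Pi\blue\sigma=K_i*\Pi\blue{\rmD_2\sigma}$, this is exactly $(K_i*\Pi_x\blue{\rmD_2\sigma})(x)-(K_i*\Pi_y\blue{\rmD_2\sigma})(y)$, which is the increment appearing in $H_i$. The sign conventions in \eqref{eq:H_def} are chosen to match \eqref{eq:D_Gamma_trunc}. What remains is the increment $(f^{\cI_1\sigma}(x)-f^{\cI_1\sigma}(y))(K_i*\Pi_x\blue{\rmD_2\sigma})(x)$. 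Since $f^{\cI_1\sigma}$ is the coefficient at level $1-2\kappa$, and $\Gamma$ maps no lower symbol into $\cI_1\sigma$, this increment is of order $|x-y|^{\gamma-1+2\kappa}\le|x-y|^{\gamma-1}$ with the right boundary weight. Combined with \eqref{eq:Ki2_exist}, this closes the bound \eqref{eq:Df_bound}.

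For the stability estimate \eqref{eq:Df_diff}, I would repeat the same decomposition for differences, using the model-difference analogue of \eqref{eq:Ki2_exist}, which is linear in $\Pi-\bar\Pi$. For \eqref{eq:CR_D_commute}, I would use \eqref{eq:reconst_canonical}:
\[
\cR\cD_i^\gamma\blue f(x)=\bigl(\Pi_x\rmQ_{<\gamma-1}\rmD_i\blue f(x)\bigr)(x)+H_i\blue f(x).
\]
The terms discarded by the truncation vanish at $x$ for a canonical model. By Lemma~\ref{lem:canonical} and \eqref{eq:model_D_commute_2}, $\Pi_x\rmD_i$ differs from $\partial_i\Pi_x$ only by $\pm(K_{j2}*\Pi_x\blue\sigma)(x)$ on $\cI_1\sigma$, and $H_i$ cancels this exactly. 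This gives $(\partial_i\Pi_x\blue f(x))(x)$. I would then check that this equals $\partial_i\cR\blue f(x)$ by differentiating $y\mapsto(\Pi_y\blue f(y))(y)$ and using the $\gamma>1$ bound, which forces the $y$-derivative hitting $\blue f$ and $\Pi_y$ to cancel. Since $\eta>0$, $\cR\blue f$ is a genuine function and no boundary distribution appears.
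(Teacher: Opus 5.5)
Your proof is correct and has the same skeleton as the paper's. You split off $(H_i\blue f)\blue\1$, commute $\Gamma_{xy}$ past $\rmQ_{<\gamma-1}$ and $\rmD_i$ using Lemma~\ref{lem:D_Gamma_commute}, and control $H_i$ via Remark~\ref{rem:bfh_explicit} and \eqref{eq:Ki2_exist}. For canonical models you let $H_i$ absorb the defect in \eqref{eq:model_D_commute_2}.

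The one real divergence is the level-$0$ commutator, and there your diagnosis is wrong: the commutator does not need to be cancelled. Since $|\cI_{j2}\sigma|=\frac12-3\kappa$, the model bounds give $|\bfh_{\cJ_{j2}\sigma}(x,y)|\lesssim\|\Gamma\|_\gamma|x-y|^{\frac12-3\kappa}\le\|\Gamma\|_\gamma|x-y|^{\gamma-1}$, because $\gamma-1\le\frac12-3\kappa$. This is one place where the hypothesis $\gamma\le\frac32-3\kappa$ is used, and your canonical-model rate $|x-y|^{1-\theta}$ is even better. The weight $|f^{\cI_1\sigma}(y)|\lesssim|y|_{\partial\Lambda}^{(\eta-1+2\kappa)\wedge 0}\le|y|_{\partial\Lambda}^{\eta-\gamma}$ is absorbed into $|x,y|_{\partial\Lambda}^{\eta-\gamma}$. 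The paper bounds the commutator exactly this way, and then estimates $H_i\blue f(x)-H_i\blue f(y)$ on its own with the same bound on $\bfh_{\cJ_{j2}\sigma}$.

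Your own write-up already depends on this estimate. The truncation term $\Gamma_{xy}\rmQ_{\geq\gamma-1}\rmD_i\blue f(y)$, which you dispatch with the usual $\Gamma$-bounds, has level-$0$ part equal to minus the commutator. It comes from $\Gamma_{xy}\blue{\cI_{22}\sigma}$, resp.\ $\Gamma_{xy}\blue{\cI_{12}\sigma}$. Consequently the level-$0$ component of $\rmQ_{<\gamma-1}\rmD_i\blue f(x)-\Gamma_{xy}\rmQ_{<\gamma-1}\rmD_i\blue f(y)$ is simply $(f^{\rmX_i}(x)-f^{\rmX_i}(y))\blue\1$.

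That said, the cancellation you propose is genuine, so your argument does close. With your convention $\rmD_i\Gamma_{xy}-\Gamma_{xy}\rmD_i$, the level-$0$ commutator is $+\sum_\sigma f^{\cI_1\sigma}(y)\bfh_{\cJ_{22}\sigma}(x,y)$ for $i=1$ and $-\sum_\sigma f^{\cI_1\sigma}(y)\bfh_{\cJ_{12}\sigma}(x,y)$ for $i=2$. The signs you quote are those of $\Gamma\rmD_i-\rmD_i\Gamma$ in \eqref{eq:D_Gamma_trunc}. With the correct signs, these terms are exactly removed by the $f^{\cI_1\sigma}(y)$-parts of $H_i\blue f(x)-H_i\blue f(y)$ from \eqref{eq:H_def}. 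Your route is therefore valid but roundabout. It replaces direct estimates of the commutator and of the $H_i$ increment with a sign-sensitive cancellation, while still needing the same bound on $\bfh_{\cJ_{j2}\sigma}$ for the truncation term.

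Two small remarks. In the $H_i$ step, the relevant fact is that no \emph{higher} symbol of $\cU_\alg$ is mapped by $\Gamma_{xy}$ onto $\cI_1\sigma$. Your justification of $(\partial_i\Pi_x\blue f(x))(x)=\partial_i\cR\blue f(x)$ via $\cR\blue f(y)-(\Pi_x\blue f(x))(y)=O(|x-y|^\gamma)$ with $\gamma>1$ is a welcome detail that the paper leaves implicit.
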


\begin{proof}
We start with the bound on $\|\cD_i^\gamma\blue f\|_{\gamma-1,\eta-1}$, more specifically the first summand in~\eqref{eq:rmD_gamma_def}.

We first look at the (non-)commutativity of $\Gamma_{xy}$ and $\rmQ_{<\gamma-1}$:
For $\ell<\gamma-1$ and $(x,y)\in\Lambda_{\partial\Lambda}$ we observe that 
\begin{equs}[][eq:commutativity_Gamma_Q]
|\Gamma_{xy}\rmQ_{<\gamma-1}\rmD_i \blue f (y)
-
\rmQ_{<\gamma-1} \Gamma_{xy}\rmD_i \blue f (y) |_{T_\sol;\ell}
&\lesssim \sum_{\beta\geq\gamma-1}
|x-y|^{\beta-\ell}\|\Gamma\|_{\ell}|\blue f|_{\gamma,\eta}|y|_{\d\Lambda}^{\eta-\beta-1}
\\
&\lesssim
|x-y|^{\gamma-1-\ell}|x,y|_{\d\Lambda}^{\eta-\gamma}
\|\Gamma\|_\gamma |\blue f|_{\gamma,\eta}
\;,
\end{equs}
where the sum is over the finite set of homogeneities $\beta\geq \gamma-1$ of $T_\sol$
and in the first bound we used that $|\rmD_i \blue f(y)|_{T_\sol;\ell} \lesssim |\blue f|_{\gamma,\eta}|y|_{\d\Lambda}^{(\eta-\ell-1)\wedge 0}$
since $\rmD_i$ lowers homogeneity by at most $1$
and that $\eta\leq\gamma$,
and in the second bound we used that $|x-y|\leq |x,y|_{\d\Lambda}\asymp|y|_{\d\Lambda}$.

Next, we commute~$\Gamma_{xy}$ and $\rmD_i$, up to a correction term, using Lemma~\ref{lem:D_Gamma_commute}:
\begin{equation}\label{eq:commutativity_Gamma_D}
\begin{split}
	& \abs[0]{\rmQ_{< \gamma-1}\del[1]{\rmD_i \blue{f}(x) - \Gamma_{xy} \rmD_i \blue{f}(y)}}_{T_\sol;\ell} \\
	\lesssim & 
	\abs[0]{\rmQ_{< \gamma-1}\rmD_i \blue f(x) - \rmQ_{<\gamma-1}\rmD_i \Gamma_{xy}\blue f (y)}_{T_\sol;\ell} 
	+
	|y|_{\d\Lambda}^{(\eta-1+2\kappa)\wedge 0} |\blue f|_{\gamma,\eta} \abs[0]{\bfh_{\cJ_{(3-i)2}(A_1^u \cI_1 A_1^v)}(x,y)} \1_{\ell=0} \\
	\lesssim & 
	\abs[0]{\rmQ_{< \gamma-1}\rmD_i \blue f(x) - \rmQ_{<\gamma-1}\rmD_i \Gamma_{xy}\blue f (y)}_{T_\sol;\ell} 
	+
	\|\Gamma\|_\gamma |\blue f|_{\gamma,\eta} |x-y|^{\gamma-1-\ell}
	|x,y|_{\d\Lambda}^{\eta-\gamma}
	\1_{\ell=0}
	%
	\end{split}
\end{equation}
where we used the estimate $|y|_{\d\Lambda}^{(\eta-1+2\kappa)\wedge 0} \leq |y|_{\d\Lambda}^{\eta-\gamma}$ since $\gamma>1$ and $|y|_{\d\Lambda}\asymp |x,y|_{\d\Lambda}$, and that
\begin{equ}[e:estimate_bfh_aux]
	|\bfh_{\cJ_{j2}(A_1\cI_1A_1)}(x,y)|
	\lesssim 
	\|\Gamma\|_\gamma|x-y|^{\frac{1}{2}-3\kappa}
	\lesssim 
	\|\Gamma\|_\gamma|x-y|^{\gamma-1}, \quad j = 1,2 \, ,
\end{equ}
since $|\cI_{j2}(A_1\cI_1A_1)| = \frac{1}{2}-3\kappa$ and $0 < \gamma - 1 \leq \frac{1}{2}-3\kappa$.

Finally, the combination of~\eqref{eq:commutativity_Gamma_Q} and~\eqref{eq:commutativity_Gamma_D} leads to the estimate
\begin{equs}{}
	& \abs[0]{\rmQ_{< \gamma-1}\rmD_i \blue{f}(x) - \Gamma_{xy} \rmQ_{< \gamma-1} \rmD_i \blue{f}(y)}_{T_\sol;\ell} \\
	\lesssim \ & 	
	\abs[0]{\rmQ_{< \gamma-1}\rmD_i \blue{f}(x) -  \rmQ_{< \gamma-1} \rmD_i \Gamma_{xy} \blue{f}(y)}_{T_\sol;\ell} 
	+
	|x-y|^{\gamma-1-\ell}|x,y|_{\d\Lambda}^{\eta-\gamma}
\|\Gamma\|_\gamma |\blue f|_{\gamma,\eta} \\
	= \ & 	
	\abs[0]{\rmD_i \blue{f}(x) - \rmD_i \Gamma_{xy} \blue{f}(y)}_{T_\sol;\ell} 
	+
	|x-y|^{\gamma-1-\ell}|x,y|_{\d\Lambda}^{\eta-\gamma}
\|\Gamma\|_\gamma |\blue f|_{\gamma,\eta} \\
	\lesssim \ & 	
	\triple{\blue f}_{\gamma,\eta}|x-y|^{\gamma-\ell-1}|x,y|_{\d\Lambda}^{\eta-\gamma}
	+
	|x-y|^{\gamma-1-\ell}|x,y|_{\d\Lambda}^{\eta-\gamma}
\|\Gamma\|_\gamma |\blue f|_{\gamma,\eta}
\end{equs}
where we additionally used that $\ell < \gamma - 1$ and that $\rmD_i$ lowers homogeneity by at most $1$.
We conclude that
\[
\triple{\rmQ_{<\gamma-1}\rmD_i\blue f}_{\gamma-1,\eta-1}
\lesssim
\triple{\blue f}_{\gamma,\eta} + \|\Gamma\| |\blue f|_{\gamma,\eta}\;.
\]
The necessary bounds on $|\rmQ_{<\gamma-1}\rmD_i\blue f|_{\gamma,\eta}$  are simpler.

In order to complete the proof of~\eqref{eq:Df_bound}, it remains to control the second summand in~\eqref{eq:rmD_gamma_def}, i.e. $(H_i\blue f)\blue\1$.
Note first that
\[
|H_i\blue f(x)| \lesssim \triple{\sfZ}_\gamma |\blue f|_{\gamma,\eta}|x|_{\d\Lambda}^{(\eta-1+2\kappa)\wedge 0}
\leq
\triple{\sfZ}_\gamma |\blue f|_{\gamma,\eta}|x|_{\d\Lambda}^{(\eta-1)\wedge 0}
\;,
\]
where we used that $|\cI_1(A_1\cI_1A_1)|=1-2\kappa$
and \eqref{eq:Ki2_exist}.
Moreover, recalling \eqref{eq:H_def},
for $j\neq i$,
\begin{align*}
|H_i\blue f(x)-H_i\blue f(y)|
&\lesssim
\sum_{\sigma}\{ |f^{\cI_1\sigma}(x)-f^{\cI_1\sigma}(y)|K_{j} *\Pi_x \blue{\rmD_2\sigma}(x)|
\\
\qquad &+
|f^{\cI_1\sigma}(x)|K_{j} *\Pi_x \blue{\rmD_2\sigma}(x)-K_{j} *\Pi_y \blue{\rmD_2\sigma}(y)|
\}
\;.
\end{align*}
The first term is bounded by a multiple of
\begin{align*}
\triple{\blue f}_{\gamma,\eta}|x-y|^{\gamma-1+2\kappa}|x,y|_{\d\Lambda}^{\eta-\gamma}\triple{\sfZ}_\gamma
\end{align*}
where we used  \eqref{eq:Ki2_exist},
while the second term is bounded by a multiple of
\begin{align*}
|\blue f|_{\gamma,\eta}|x|_{\d\Lambda}^{(\eta-1+2\kappa)\wedge 0}\|\Gamma\|_\gamma |x-y|^{\frac{1}{2}-3\kappa}
\lesssim
|\blue f|_{\gamma,\eta}|x|_{\d\Lambda}^{\eta-\gamma}\|\Gamma\|_\gamma |x-y|^{\gamma-1}
\;,
\end{align*}
where we used that $\bfh_{\cJ_{j2}(A_1^u\cI_1A_1^v)}(x,y) = (K_{j2} *\Pi_x \blue{A_1^u\cI_1A_1^v})(x) - (K_{j2} *\Pi_y \blue{A_1^u\cI_1A_1^v})(y)$ due to Remark \ref{rem:bfh_explicit} as well as the bound in~\eqref{e:estimate_bfh_aux},
and in the second bound we used that $1<\gamma\leq \frac{3}{2}-3\kappa$
to conclude that $|x|_{\d\Lambda}^{(\eta-1+2\kappa)\wedge 0}\leq |x|_{\d\Lambda}^{\eta-\gamma}$
and $|x-y|^{\frac{1}{2}-3\kappa}\leq |x-y|^{\gamma-1}$.
This completes the proof of \eqref{eq:Df_bound}.
The bound \eqref{eq:Df_diff}
follows in a similar way.

For the claim \eqref{eq:CR_D_commute}, suppose $\sfZ$ is canonical.
Since $\gamma>1$, note that for all $x\in\Lambda^\circ$, by \eqref{eq:reconst_canonical},
\begin{align*}
\cR\cD_i^\gamma\blue f(x) &= (\Pi_x \cD_i\blue f (x))(x)
= (\Pi_x \rmQ_{<\gamma-1}\rmD_i \blue f(x))(x) + (H_i\blue f)(x)\\
&= (\partial_i\Pi_x \blue f(x) - H_i \blue f)(x) + (H_i\blue f)(x)
=(\partial_i\Pi_x \blue f(x))(x) = (\partial_i\CR\blue f)(x)
\;,
\end{align*}
where in the third equality we used \eqref{eq:model_D_commute}-\eqref{eq:model_D_commute_2}
and the definition of $H_i$ in \eqref{eq:H_def}.
Then $\eta>0$ and $\CR \cD^\gamma_i f \in C^{\eta-1}(\R^2)$ imply that
$\cR\cD_i^\gamma\blue f = \partial_i\CR\blue f$ in the sense of distributions on $\R^2$.
\end{proof}

\paragraph{Integration.} Recall the Dirichlet Green's function $G^\Dir$ from Appendix \ref{app:integration_kernel}, in particular \eqref{eq:G^D_decomp}.

\begin{proposition}\label{prop:integration}
For $i=1,2$, $\gamma>0$ and $\eta\wedge\nu>-1$, there is an operator
\[
\cG^i_\gamma\label{symb:Dirichlet_integration_operator}\colon \scD^{\gamma,\eta}_\nu(\Gamma;T[\cF_i\cup\{\1\}])\to\scD^{\gamma+1,\eta\wedge\nu+1}_0(\Gamma;T[\cU_\Int])
\]
of the form
\[
(\cR\cG^i_\gamma\blue f )(x)= \cI_i \blue f(x) + \blue\cQ(x)
\;,
\]
where $\blue\cQ\colon \Lambda^\circ \to T[\{\1,\rmX_1,\rmX_2\}]$ is polynomial-valued and we set $\cI_i\blue\1 = 0$,
such that
\begin{equation}\label{eq:K_recon}
\cR\cG^i_\gamma\blue f(x) = G^\Dir*\partial_i \cR\blue f(x)
\end{equation}
for all $x\in\Lambda^\circ$.
For $\blue{\bar f}\in \scD^{\gamma,\eta}_\nu(\bar\Gamma;T[\cF_i\cup\{\1\}])$, one has
\[
\|\cG^i_\gamma \blue f;\cG^i_\gamma \blue{\bar f}\|_{\gamma+1,\eta\wedge\nu+1}
\lesssim
\|\blue f;\blue{\bar f}\|_{\gamma,\eta}
+
\triple{\sfZ;\bar\sfZ}_{\gamma},
\]
locally uniformly over $\sfZ,\bar\sfZ, \blue f,\blue{\bar f}$ in bounded sets.
\end{proposition}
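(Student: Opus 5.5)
We will follow Hairer's construction of integration against a kernel on a domain with boundary, [Sec.~5 and 6.4]{Hairer14}, adapted to the Dirichlet Green's function. Using the decomposition \eqref{eq:G^D_decomp} from Appendix~\ref{app:integration_kernel}, we write $G^\Dir = K + R^\Dir$ on $\Lambda\times\Lambda$. Here $K$ is the truncated free Green's function fixed in Definition~\ref{def:model}, and $R^\Dir(x,y)$ is smooth in the interior. It becomes singular only as $x$ or $y$ approaches $\partial\Lambda$, typically as reflected images of $K$ across the sides and corners of the square. Differentiating in the second variable and integrating by parts, $G^\Dir*\partial_i\zeta = -(\partial_{y_i}G^\Dir)*\zeta$. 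The boundary term vanishes because $G^\Dir(x,\cdot)=0$ on $\partial\Lambda$ and $\cR\blue f$ is extended by zero. Up to a sign convention, the leading part is therefore convolution with $K_i$.

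We then define
\[
\cG^i_\gamma\blue f := \cK_{i,\gamma}\blue f + \cR^{\Dir}_{i,\gamma}\blue f.
\]
The first term $\cK_{i,\gamma}$ is Hairer's abstract integration operator for the kernel $K_i$, which is $1$-regularising (as in \cite[Thm.~5.12]{Hairer14}). Explicitly,
\[
\cK_{i,\gamma}\blue f(x) = \cI_i\blue f(x) + \cJ_i(x)\blue f(x) + \cN_{i,\gamma}\blue f(x),
\]
and the last two terms take values in $T[\{\1,\rmX_1,\rmX_2\}]$. The model condition \eqref{eq:model_realisation} is exactly what makes $\cR\cK_{i,\gamma}\blue f = K_i*\cR\blue f$. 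The second term $\cR^{\Dir}_{i,\gamma}\blue f(x)$ is the polynomial lift (Taylor jet of order $<\gamma+1$) of the function $x\mapsto\int \partial_{y_i}R^\Dir(x,y)\,\cR\blue f(\dif y)$. This function is smooth in the interior, and its blow-up near $\partial\Lambda$ is controlled by $\eta\wedge\nu$, as in \cite[Lem.~7.3, Prop.~7.4]{Hairer14}. Together these give the form $\cI_i\blue f + \blue\cQ$ and the identity \eqref{eq:K_recon}. The values lie in $T[\cU_\Int]$ because $\cI_i$ maps $T[\cF_i]$ into $T[\cU_\Int]$ by Definition~\ref{def:integration}, and $\cI_i\blue\1=0$. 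The polynomial parts of degree $\geq 2$ are killed: since $\gamma+1$ may exceed $2$, we truncate at homogeneity $<\gamma+1$ inside $T_\sol$, which only contains $\1,\rmX_1,\rmX_2$. For this we assume (as in all uses) $\gamma<1$; otherwise one keeps only what $T_\sol$ supports.

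The Schauder-type bound for $\cK_{i,\gamma}$ is the standard one in \cite[Prop.~6.16]{Hairer14}. It yields $\scD^{\gamma,\eta}_\nu\to\scD^{\gamma+1,(\eta\wedge\nu)+1}$ under the non-integer conditions, which hold here since the homogeneities are not integers for small generic $\kappa$. Stability in $(\blue f,\sfZ)$ follows from \cite[Thm.~5.12, Prop.~6.16]{Hairer14}, using the reconstruction continuity \eqref{eq:recon_cont}.

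The main obstacle is the boundary kernel $R^\Dir$ on the square, especially at the corners, where the reflection structure is more complicated than for a half-space. Our first attempt is the method-of-images representation in \eqref{eq:G^D_decomp}. We write $R^\Dir$ as a finite sum of reflected copies of $K$ plus a smooth part, each reflected copy being singular only at distance $\asymp|x|_{\partial\Lambda}+|y|_{\partial\Lambda}$. We then verify the kernel bounds $|\partial^k_x\partial_{y_i}R^\Dir(x,y)|\lesssim (|x|_{\partial\Lambda}+|y|_{\partial\Lambda})^{-1-|k|}$. These are exactly the hypotheses of Hairer's boundary-kernel results [Sec.~6.4]{Hairer14}, which provide both the bound and the local Lipschitz dependence on the model.

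The condition $\eta\wedge\nu>-1$ ensures that $\cR\blue f\in C^{\eta\wedge\nu}$ pairs with these kernels. It also puts the output's boundary exponent $(\eta\wedge\nu)+1$ in the positive range needed to close the estimate.
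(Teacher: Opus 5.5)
Your proposal is correct and is essentially the paper's proof. The paper sets $\cG^i_\gamma\blue f=\cK^i_\gamma\blue f+\cZ^i_\gamma(\cR\blue f)$, using $\cK^i_\gamma$ from \cite[Prop.~6.16]{Hairer14} for the $K$-part, $\cZ^i_\gamma$ as the Taylor-jet lift of the convolution against $Z^\Dir+R^\Dir$ (your ``$R^\Dir$'' is the paper's $Z^\Dir+R^\Dir$), and \eqref{eq:recon_cont} for stability.

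The one thing to fix is the reference for the boundary-singular part. The reflected images $K(q(x)-y)$, $q\neq\id$, are not covered by the results of \cite{Hairer14} you cite; the paper invokes \cite[Lem.~4.16]{GH19} for them, whose hypotheses are kernel bounds of the type you propose to verify. The smooth remainder is handled separately through Lemma~\ref{lem:remainder_Green_regularity_Dir}.
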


\begin{proof}
Observe that $\partial_i\CR \blue f \in C^{\beta}(\R^2)$ with $\beta = \nu\wedge\eta-1>-2$ and with support on $\Lambda$, thus
$G^\Dir*\partial_i \cR\blue f\in C^{\beta+2}(\Lambda)$ by Lemma \ref{lem:Schauder_Dir} and is in particular well-defined pointwise.
We set
\[
\CG^i_\gamma \blue f= \CK^i_\gamma \blue f+ \CZ^i_\gamma(\CR\blue f)
\]
with $\CK^i_\gamma$ from Theorem \ref{thm:singular_kernel_composed_der} and $\CZ^i_\gamma$ from Theorem \ref{thm:integration_boundary_derivation}.
The claimed properties of $\CG^i_\gamma$ follow from these results, from the bounds on $\CR$ in \eqref{eq:recon_cont}, and from the decomposition \eqref{eq:GD_decomp} of $G^\Dir$.
\end{proof}

\paragraph{Inversion.}
Recall that, by Proposition \ref{prop:stability_U_sol}\ref{pt:T_U_sol}, $T[\cU_\alg]$ is closed under the $\star$ product defined on $T_\sol$
and that $T[\cU_\alg]$ is $\gamma$-regular for $\gamma \in [0,\frac32 - 3\kappa]$.
In particular, $T[\cU_\alg]\otimes \rmM_{\C}(N)$ is closed under $\circ_\gamma$ with identity element $1_G\blue\1$.
Remark that, if $g\in\rmM_{\C}(N)$ is invertible, then $g\blue\1$ is invertible under $\circ_\gamma$ with inverse $g^{-1}\blue\1$.
It follows that, writing $g= g^\1 \blue\1 + \blue{\tilde g}\in T[\cU_\alg]\otimes \rmM_{\C}(N)$
where $\blue{\tilde g}$ only has components of strictly positive homogeneity,
if $g^\1$ is invertible,
then $\blue g$ is invertible under $\circ_\gamma$ with inverse
\begin{equation}\label{eq:g_inv}
\blue{g}^{-1} := (g^\1)^{-1} - (g^\1)^{-1} \circ_\gamma\blue{\tilde g} \circ_\gamma (g^\1)^{-1} + (g^\1)^{-1}\circ_\gamma \blue{\tilde g}\circ_\gamma (g^\1)^{-1}\circ_\gamma \blue{\tilde g} \circ_\gamma (g^\1)^{-1} \;.
\end{equation}
The terms ($g^\1)^{-1}$ in \eqref{eq:g_inv} are understood as $(g^\1)^{-1} \blue\1$.
Remark that $\blue g \circ_\gamma \blue{g}^{-1} = \blue{g}^{-1} \circ_\gamma \blue g = 1_G \blue\1$
and that we see a finite sum in \eqref{eq:g_inv} instead of a series because $\blue{\tilde g}$ only has components of strictly positive homogeneity.

\begin{definition}\label{def:I}
Let $I\label{symb:I}$ denote the set of $g\in \rmM_\bC(N)$ such that, denoting by $\lambda$ the smallest singular value of $g$, one has $\lambda\geq \frac12$.
Let $\scI^{\gamma,\eta}\label{symb:scI}$ denote the set of $\blue g \in \scD^{\gamma,\eta}(T[\cU_\alg])\otimes \rmM_{\C}(N)$ such that $g^\1(x) \in I$ for all $x\in\Lambda$.
We write $\scI^{\gamma,\eta}(\Gamma)$ if we need to emphasise dependence on $\Gamma$.
\end{definition}

Our main interest in the set $I$ is that matrix inversion is a smooth map on $I$ with derivatives bounded of all orders and that $G\subset I$.
For our purposes, the choice of $\frac12$ in the above definition is somewhat arbitrary and can be replaced by any positive number smaller than $1$.
The following is now a direct consequence of \cite[Prop.~3.11]{WongZakai}.
\begin{proposition}\label{prop:inversion}
Let $\gamma \in [0, \frac32-3\kappa]$ and $\eta\in [0,\gamma]$.
The map $\blue g \mapsto \blue{g}^{-1}$ given by \eqref{eq:g_inv}
maps $\scI^{\gamma,\eta}(\Gamma)$ to $\scD^{\gamma,\eta}(\Gamma;T[\cU_\alg])\otimes \rmM_\C(N)$ and, for $\bar g \in \scI^{\gamma,\eta}(\bar\Gamma)$,
\[
\|\blue g^{-1} ; \blue{\bar g}^{-1}\|_{\gamma,\eta} \lesssim \|\blue g ; \blue{\bar g}\|_{\gamma,\eta} + \|\Gamma-\bar\Gamma\|_\gamma
\]
locally uniformly over all quantities.
\end{proposition}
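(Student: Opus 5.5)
The plan is to realise the inverse as a finite polynomial expression in $\blue g$ built from the products $\circ_\gamma$ and a smooth function of the constant coefficient, and then apply the multiplication estimates of Proposition~\ref{prop:mult} term by term. Write $\blue g = g^\1\blue\1 + \blue{\tilde g}$. The first step is to show that $x\mapsto (g^\1(x))^{-1}\blue\1$ is itself a modelled distribution in $\scD^{\gamma,\eta}(T[\cU_\alg])\otimes\rmM_\C(N)$. I would not treat it as a polynomial-sector object on its own, since $g^\1$ is only $C^\alpha$. Instead I would lift the smooth map $F(M)=M^{-1}$ on $I$ to the modelled distribution
\[
\hat F(\blue g)(x)=\sum_{k} \frac{1}{k!}D^kF(g^\1(x))\bigl[\blue{\tilde g}(x)^{\circ_\gamma k}\bigr],
\]
with the sum over $k<\gamma/(\text{lowest positive homogeneity})$, so $k\le 2$ because $\alpha>\frac13$ and $\gamma\le\frac32-3\kappa$. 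This is exactly the composition with smooth functions of \cite[Prop.~6.13]{Hairer14}, here in the form of \cite[Prop.~3.11]{WongZakai}. Those results need $T[\cU_\alg]$ to be $\gamma$-regular, function-like and closed under $\star$, which is Proposition~\ref{prop:stability_U_sol}\ref{pt:T_U_sol}. They also need $F$ to be smooth with bounded derivatives on a neighbourhood of the range of $g^\1$, which holds because of the singular value lower bound defining $I$.

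The second step is an algebraic check that $\hat F(\blue g)$ coincides with the right-hand side of \eqref{eq:g_inv}. Expanding $D^kF(M)[H^k]=(-1)^k k!\,M^{-1}H M^{-1}\cdots H M^{-1}$ and using associativity of $\circ_\gamma$, the terms match those of the finite Neumann series. The series truncates because $\blue{\tilde g}^{\circ_\gamma 3}=0$: any product of three positive-homogeneity symbols has homogeneity at least $3\alpha>\frac32-3\kappa\ge\gamma$. This identification shows that $\blue g^{-1}$ is well defined in the stated space.

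For the bounds, I would track the weight $\eta$ in the constant component. Since $\eta\ge0$, the component $g^\1$ satisfies $|g^\1(x)|\lesssim\|\blue g\|_{\gamma,\eta}$ with no boundary blow-up. The products $\circ_\gamma$ of $\scD^{\gamma,\eta}_0$ functions then remain in $\scD^{\gamma,\eta}_0$ by Proposition~\ref{prop:mult}, because with $\alpha_i=0$ and $\eta_i=\eta\ge0$ the resulting index is $\eta\wedge 2\eta=\eta$. The difference estimate comes either from the Lipschitz statement of \cite[Prop.~3.11]{WongZakai}, or directly from the telescoping identity
\[
\blue g^{-1}-\blue{\bar g}^{-1}=\blue g^{-1}\circ_\gamma(\blue{\bar g}-\blue g)\circ_\gamma\blue{\bar g}^{-1},
\]
combined with the two-model version of Proposition~\ref{prop:mult}.

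I expect the main obstacle to be the smooth-composition step near the boundary with a weight $\eta<\gamma$. There one must check that the Taylor remainder in the $\Gamma_{xy}$-increments of $F(g^\1)$ is controlled by $|x-y|^{\gamma-\ell}|x,y|_{\d\Lambda}^{\eta-\gamma}$. I would try to handle this by invoking the cited proposition verbatim, noting that its hypotheses (sector regularity $0$, $\eta\in[0,\gamma]$) match ours.
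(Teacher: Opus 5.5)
Your proposal is correct and is essentially the paper's argument: the paper simply notes that the statement is a direct consequence of the smooth-composition result \cite[Prop.~3.11]{WongZakai} applied to matrix inversion, which has bounded derivatives on $I$, and your $\hat F(\blue g)$ construction together with the algebraic identification of $\hat F(\blue g)$ with \eqref{eq:g_inv} spells this out. A few small slips do not affect the argument. First, $3\alpha=\frac32-3\kappa$ holds with equality rather than strict inequality, although the cubic term still vanishes under $\rmQ_{<\gamma}$. Second, $(g^\1)^{-1}\blue\1$ on its own is not in $\scD^{\gamma,\eta}$ for $\gamma>\alpha$, so the bounds must come from the cited Lipschitz statement rather than from Proposition~\ref{prop:mult} applied term by term. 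Finally, your telescoping alternative mixes the two models and is not a direct consequence of that proposition.
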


We also record a low order expansion of $\blue{g}^{-1}$ that will be useful later.
Denoting
\[
\blue g = g^\1 \blue\1  +
\sum_{i} g^{i} \blue{\cI_1A_1^{e^i}}
+
\blue\mcQ
\;,
\]
where $e^i$ is a basis of $\mfg^*$ and $\blue\mcQ$ takes values in $\bigoplus_{|\tau|\geq 1-2\kappa} T[\tau]\otimes \rmM_{\C}(N)$, then, by \eqref{eq:g_inv},
\begin{equation}\label{eq:g_inv_explicit}
	\blue{g}^{-1}
= (g^\1)^{-1} \blue\1
-
\sum_{i} (g^\1)^{-1} g^{i} (g^\1)^{-1} \blue{\cI_1A_1^{e^i}} + \blue{\bar\mcQ}
\;,
\end{equation}
where $\blue{\bar\mcQ}$ also takes values in $\bigoplus_{|\tau|\geq 1-2\kappa} T[\tau]\otimes \rmM_{\C}(N)$.

\subsection{Existence and stability of solutions}
\label{sec:existence_stability}

Fix a model $\sfZ = (\Pi,\Gamma)$ throughout this subsection.
Define the modelled distribution spaces\label{symb:solution_spaces}
\[
\scE_\lin = \scD^{\frac32-3\kappa,\frac12-\kappa}(T[\cU_\lin])\otimes \rmM_{\C}(N)
\;
\subset
\;
\scE_\alg = \scD^{\frac32-3\kappa,\frac12-\kappa}(T[\cU_\alg])\otimes \rmM_{\C}(N)
\;,
\]
which are Banach spaces equipped with the norm $\|\Cdot\|_{\frac32-3\kappa,\frac12-\kappa}$.
We will write $\sfZ$ in the superscript, e.g. $\scE^\sfZ_\lin$, whenever we wish to emphasise the dependence on the model.

\begin{remark}\label{rem:f1_Calpha}
Uniformly in $\blue f\in\scE_\alg$ and $\sfZ=(\Pi,\Gamma)$, one has
\begin{equation}\label{eq:f1_Calpha}
|f^\1|_{C^{\frac12-\kappa}(\Lambda^\circ)} \lesssim \triple{\blue f}_{\frac32-3\kappa,\frac12-\kappa} + \|\Gamma\|_{\frac32-3\kappa}|\blue f|_{\frac32-3\kappa,\frac12-\kappa}
\;.
\end{equation}
Indeed, recall that for $\blue f\in\scD^{\gamma,\eta}$, one has
\[
|\blue f(x)-\Gamma_{xy}\blue f(y)|_{T_\sol;\ell} \lesssim
\triple{\blue f}_{\gamma,\eta} \|x-y\|^{\gamma-\ell} d(x,\partial\Lambda)^{\eta-\gamma}
\]
uniformly in $y\in B(x,\frac12 d(x,\partial \Lambda))$.
In particular, for $\ell\leq \eta\leq \gamma$,
\[
|\blue f(x)-\Gamma_{xy}\blue f(y)|_{T_\sol;\ell} \lesssim
\triple{\blue f}_{\gamma,\eta} |x-y|^{\eta-\ell}
\;.
\]
Specialising to $\ell=0$, since $\frac12-\kappa$ is the lowest strictly positive homogeneity in $T_\sol$,
we obtain for $\frac12-\kappa\leq\eta\leq\gamma$ that
\[
|f^\1(x)-f^\1(y)| \lesssim (\triple{\blue f}_{\gamma,\eta} + \|\Gamma\|_{\gamma}|\blue f|_{\gamma,\eta})|x-y|^{\frac12-\kappa}
\]
uniformly in $y\in B_{x}(\frac12 d(x,\partial \Lambda))$ and $x\in\Lambda^\circ$.
We can now use a basic dyadic sum argument to show that $|f^\1|_{C^{\frac12-\kappa}(\Lambda^\circ)} \lesssim \triple{\blue f}_{\gamma,\eta} + \|\Gamma\|_{\gamma}|\blue f|_{\gamma,\eta}$,
of which \eqref{eq:f1_Calpha} is a special case.

In particular, $f^\1$ extends to $\partial\Lambda$ and we treat in the sequel $f^\1$ as a function on $\Lambda$.
\end{remark}

Define\label{symb:modelled_gauge_spaces}
\[
\scG =\{ \blue g\in \scE_\alg \,:\, g^\1\restr_{\partial\Lambda}=1_G \text{ and } g^\1(x) \in  G \text{ for all } x\in\Lambda \}\;,
\quad
\]
equipped with the norm $\|\Cdot\|_{\frac32-3\kappa,\frac12-\kappa;\Lambda}$.
Denote further
\[
\scE_{\alg,0} = \{\blue v \in \scE_\alg \,:\, \blue v^\1 \restr_{\partial\Lambda} = 0\}
\;,
\quad
\scE_{\lin,0} = \scE_{\alg,0} \cap \scE_\lin
\;,
\]
\[
\scU_{\alg,0} =\{\blue u\in \scE_{\alg,0} \,:\, u^\1(x) \in \mfg \text{ for all } x\in\Lambda \}
\;,
\quad
\scU_{\lin,0} = \scU_\alg \cap \scE_{\lin,0}
\;.
\]
We equip these spaces with $\|\Cdot\|_{\frac32-3\kappa,\frac12-\kappa;\Lambda}$.
The subscript $0$ in $\scU_{\alg,0}$ and $\scU_{\lin,0}$ emphasises the zero boundary condition at $\partial\Lambda$.

We will soon see that $\scG$ is a Banach Lie group
with Lie algebra $\scU_{\alg,0}$.
Note that $\scE_{\lin,0}$, $\scE_{\alg,0}$, $\scU_{\lin,0}$, and $\scU_{\alg,0}$ are Banach spaces.

A Banach algebra is a Banach space $(E,\|\cdot\|)$ with a continuous bilinear  map (the product) $E^2\ni(x,y)\to xy\in E$.
Recall the product $\circ_\gamma$ on $T_\sol\otimes \rmM_\C(N)$ from \eqref{eq:circ_prod_def}.
Denote
\[
\bone = \blue\1 \otimes 1_G \in T[\1]\otimes \rmM_\C(N)\;.\label{symb:identity_RS}
\]

\begin{lemma}\label{lem:G_Lie_group}
$\scE_\alg$ is a Banach algebra under the product $\circ_{\frac32-3\kappa}$
and $\scG\subset\scE_\alg$ is a smooth Banach Lie group with Lie algebra $\scU_{\alg,0}$ and unit $\bone$.
In particular, the tangent space to $\scG$ at $\blue g\in \scG$ is $\scU_{\alg,0}{\blue g}$.
\end{lemma}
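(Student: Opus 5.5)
We first show that $\scE_\alg$ is a Banach algebra under $\circ_\gamma$ with $\gamma=\frac32-3\kappa$, and then that $\scG$ is a smooth submanifold of $\scE_\alg$ that is closed under products and inverses, with the group operations smooth.

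\textbf{Banach algebra.} By Proposition~\ref{prop:stability_U_sol}\ref{pt:T_U_sol}, $T[\cU_\alg]$ is $\gamma$-regular and closed under $\star$. The sector has regularity $0$. We therefore apply Proposition~\ref{prop:mult} with $\gamma_1=\gamma_2=\gamma$, $\alpha_1=\alpha_2=0$ and $\eta_1=\eta_2=\frac12-\kappa$. This gives output parameters $\gamma$ and $\eta=(\frac12-\kappa)\wedge(1-2\kappa)=\frac12-\kappa$, together with the bound $\|\blue f\circ_\gamma\blue{\bar f}\|\lesssim\|\blue f\|\|\blue{\bar f}\|$. Associativity of $\circ_\gamma$ was already noted. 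Hence the product is a continuous bilinear map and $\scE_\alg$ is a Banach algebra. Moreover $(f\circ_\gamma\bar f)^\1=f^\1\bar f^\1$, so the set $\{\blue v:v^\1|_{\partial\Lambda}=0\}$ is a two-sided ideal. By Remark~\ref{rem:f1_Calpha}, the map $\blue f\mapsto f^\1|_{\partial\Lambda}$ is continuous, so this ideal is closed.

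\textbf{Group structure.} Let $\scI$ denote the open set of $\blue g$ whose $g^\1$ takes values in the interior of $I$. By Proposition~\ref{prop:inversion}, inversion is well defined and locally Lipschitz on $\scI$. Since \eqref{eq:g_inv} is a finite polynomial expression in $\blue{\tilde g}$ and $(g^\1)^{-1}$, and matrix inversion is smooth on $I$, inversion is in fact smooth; the same argument as in \cite[Prop.~3.11]{WongZakai} applies to every derivative. Thus $\scI$ is an open Banach Lie group (the units of the algebra with $g^\1\in I$). The set $\scG$ is a subgroup: $g^\1\in G$ and $g^\1|_{\partial\Lambda}=1_G$ are preserved under products and inverses, because the $\1$-component is multiplicative.

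\textbf{Submanifold charts.} This is the main obstacle. We aim to build a chart near $\bone$ through an exponential map $\exp_\circ(\blue u)=\sum_{n}\blue u^{\circ n}/n!$, defined on $\scU_{\alg,0}$. For a continuous product, this series converges absolutely in the Banach algebra and is smooth. Its $\1$-component is $\exp(u^\1)\in G$, which equals $1_G$ on the boundary, so $\exp_\circ$ maps $\scU_{\alg,0}$ into $\scG$. An inverse near $\bone$ is $\log_\circ$, given by the log series. We must show that $\log_\circ$ maps a neighbourhood of $\bone$ in $\scG$ into $\scU_{\alg,0}$. The $\1$-component is $\log g^\1\in\mfg$, which vanishes on $\partial\Lambda$, so this holds. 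The higher components of $\blue g$ are unconstrained, since $\scU_{\alg,0}$ constrains only the $\1$-component.

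Hence $\scG\cap U=\exp_\circ(\scU_{\alg,0}\cap V)$, and $\scU_{\alg,0}$ is a closed real subspace of $\scE_{\alg,0}$: the condition that $u^\1\in\mfg$ pointwise is closed by Remark~\ref{rem:f1_Calpha}. Translating these charts by left multiplication, which is a smooth diffeomorphism of the algebra, gives charts at every point. The tangent space at $\bone$ is $\scU_{\alg,0}$, and at $\blue g$ it is $\scU_{\alg,0}\blue g$, using right translation. The one remaining subtlety is convergence of the log series. It requires smallness of $\blue g-\bone$ in the algebra norm, which may need an equivalent norm with submultiplicative constant $1$. We obtain this by rescaling the norm, which is standard.
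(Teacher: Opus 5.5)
Your proposal is correct and follows essentially the paper's route: the Banach algebra property from Propositions~\ref{prop:stability_U_sol}\ref{pt:T_U_sol} and~\ref{prop:mult}, then power-series $\exp$/$\log$ giving a local diffeomorphism between a neighbourhood of $0$ in $\scU_{\alg,0}$ and a neighbourhood of $\bone$ in $\scG$, transported by translations. You check this more explicitly than the paper, using multiplicativity of the $\1$-component. One inessential slip: the set of $\blue g$ with $g^\1$ in the interior of $I$ is not closed under $\circ_{\frac32-3\kappa}$ (the smallest singular value of $g^\1 h^\1$ can drop below $\frac12$), so it is not a group. Instead, note that $\scG$ lies in the group of units of the unital Banach algebra $\scE_\alg$, where inversion is analytic by the Neumann series; this also spares you differentiating \eqref{eq:g_inv} term by term, whose factors $(g^\1)^{-1}\blue\1$ are not themselves elements of $\scE_\alg$.
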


\begin{proof}
The claim that $\scE_\alg$ is a Banach algebra follows from Propositions \ref{prop:stability_U_sol}\ref{pt:T_U_sol} and \ref{prop:mult}.

The proof of the second claim is similar to the classical fact that $C^\gamma(\Lambda,G)$ is a smooth Banach Lie group with Lie algebra $C^\gamma(\Lambda,\mfg)$.
Indeed, since inversion $\scG\ni \blue g \mapsto \blue g^{-1} \in \scG$ is continuous by Proposition \ref{prop:inversion},
it follows that $\scG$ is a closed (topological) group in $\scE_\alg$.
Note moreover that $\scU_{\alg,0}$ is a closed Lie subalgebra of $\scE_\alg$.
Since $\scE_\alg$ is a Banach algebra,
$\exp \colon \scE_\alg \to \scE_\alg$, defined by its power series, is smooth,
and in particular $\exp \colon \scU_{\alg,0}\to \scG$ is smooth.
Furthermore, there exists a neighbourhood $U$ of $0\in \scU_{\alg,0}$ and a neighbourhood $V$ of $\bone\in\scG$ such that $\exp \colon U \to V$ is a 
diffeomorphism with inverse $\log\colon V\to U$ (also given by its power series).
It follows from standard arguments that $\scG$ is a Banach Lie group with Lie algebra $\scU_{\alg,0}$.
\end{proof}

Recalling $\scI^{\gamma,\eta}$ from Definition \ref{def:I}, let us denote
\[
\scI_\alg = \scI^{\frac32-3\kappa,\frac12-\kappa} \cap \scE_\alg\;.
\]
For $\sigma\in[0,1]$ and $\blue g \in \scI_\alg$, consider the modelled distribution
\label{symb:Phi}\begin{equation}\label{eq:Phi_def}
\Phi(\sigma,\blue g)
=
\cG^{i}((\cD^{\frac32-3\kappa}_i\blue{g}) \blue{g}^{-1})-\sigma \cG^{1}(\blue{g} \blue{A_1} \blue{g}^{-1})
\;,
\end{equation}
where
there is implicit summation over $i=1,2$, we recall $\cD^\gamma_i$ from \eqref{eq:rmD_gamma_def}, we denote by $\cG^i$ the operator $\cG^{i}_{\frac12-3\kappa}$ from Proposition \ref{prop:integration}, and the products are understood as $\circ_{\frac12-3\kappa}$.
Moreover, $\blue{A_1}$ here is understood as the constant function
$
\blue{A_1} = \sum_{i} \blue{A_1^{e^i}}\otimes e_i \colon\Lambda^\circ\to T[\cF_1]\otimes \mfg
$,
where we recall the dual bases
$(e_i)_{i=1}^{\dim\fg}$ and $(e^i)_{i=1}^{\dim\fg}$ of $\mfg$ and $\mfg^*$ respectively from Section \ref{sec:construction_subspace}.

\begin{remark}
By the canonical isomorphisms $T[A_1]\simeq \fg^*$ and $\fg\otimes\fg^*\simeq L(\fg)$,
one can also define $\blue{A_1}$ in a basis-independent way as the identity map in $L(\fg)$, i.e. 
   $\blue{A_1} = \psi(\id_{\fg})$ where $\psi\colon L(\fg)\to T[A_1]\otimes \fg$ is the canonical isomorphism.
\end{remark}
In the sequel, when it causes no confusion, all instances of $\cG^i$ will mean $\cG^i_{\frac12-3\kappa}$ and all products of modelled distributions will mean $\circ_{\frac12-3\kappa}$.

\begin{proposition}\label{prop:Phi_well_defined}
For any $\blue g\in\scI_\alg$, both
$\sum_{i=1}^2\cG^{i}((\cD^{\frac32-3\kappa}_i\blue{g}) \blue{g}^{-1})$ and $\cG^{1}(\blue{g} \blue{A_1} \blue{g}^{-1})$
are in $\scE_{\lin,0}$.
In particular, $\Phi\colon [0,1]\times \scI_\alg \to \scE_{\lin,0}$.
Moreover, if $\sfZ$ is a canonical model and $\blue g\in\scG$, then
$\sum_{i=1}^2\cG^{i}((\cD^{\frac32-3\kappa}_i\blue{g}) \blue{g}^{-1})$ and $\cG^{1}(\blue{g} \blue{A_1} \blue{g}^{-1})$ are in $\scU_{\lin,0}$.
\end{proposition}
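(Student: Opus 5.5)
Throughout write $\gamma=\frac32-3\kappa$, $\eta=\frac12-\kappa$, and let $\blue g\in\scI_\alg$. The plan is to track, for each of the two terms, three things: which sector the argument of $\cG^i$ lies in, which $(\gamma,\eta)$ class it belongs to, and what boundary value the resulting modelled distribution has.

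\textbf{The first term.} By Proposition~\ref{prop:inversion}, $\blue g^{-1}\in\scD^{\gamma,\eta}(T[\cU_\alg])\otimes\rmM_\C(N)$. By Proposition~\ref{prop:derivatives}, $\cD^\gamma_i\blue g\in\scD^{\gamma-1,\eta-1}$. Its values lie in $\rmD_iT[\cU_\alg]\oplus T[\1]$, and its lowest homogeneity is at least $-\frac12-\kappa$.

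\textbf{Product of the first term.} Proposition~\ref{prop:stability_U_sol}\ref{pt:DT_U_sol} gives $(\frac12-3\kappa)$-regularity of the relevant pair. Proposition~\ref{prop:mult} then gives
\[
(\cD_i\blue g)\circ_{\frac12-3\kappa}\blue g^{-1}\in\scD^{\frac12-3\kappa,\,\eta-1}_{\nu}
\]
with $\nu\geq-\frac12-\kappa$. Here the exponent $\gamma'=(\gamma-1)\wedge(\gamma+\nu)=\frac12-3\kappa$ is positive, and $\eta'\ge(\eta-1)\wedge(\eta+\nu)\ge-\frac12-2\kappa>-1$. The truncation $\rmQ_{<\frac12-3\kappa}$ built into $\circ_{\frac12-3\kappa}$ places the values in $T[\cF_i\cup\{\1\}]$ by \eqref{eq:D_1_stable}. (Since $\blue g^{-1}$ multiplies on the right, one should check that \eqref{eq:D_1_stable} is insensitive to the order. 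It is, because $\star$ is commutative and the matrix order only affects the coefficients.)

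\textbf{Integration of the first term.} Proposition~\ref{prop:integration} now applies. It gives $\cG^i(\cdots)\in\scD^{\frac32-3\kappa,\,\eta'\wedge\nu+1}$, with $\eta'\wedge\nu+1\ge\frac12-2\kappa$. At this point there is a slight mismatch with the target exponent $\eta=\frac12-\kappa$. I would handle it by noting that on $\Lambda_{\partial\Lambda}$ the modelled distribution vanishes at $\partial\Lambda$, and I expect the relevant exponent is really $\eta\wedge\nu$ with $\nu=-\frac12-\kappa$, giving $\frac12-\kappa$ exactly. Otherwise I would adjust the bookkeeping via $\eta$ for $\cD\blue g$. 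The values lie a priori in $T[\cU_\Int]$. By the summation part of \eqref{eq:D_1_stable}, the $\cI_1\cI_{22}A_1$ components cancel in $\sum_i$, and the polynomial part $\blue\cQ$ lies in $T[\cU_\lin]$, so the sum lies in $T[\cU_\lin]$. For the boundary condition $v^\1|_{\partial\Lambda}=0$: the reconstruction equals $G^\Dir*\partial_i\cR(\cdot)$ by \eqref{eq:K_recon}, which vanishes on $\partial\Lambda$ by the Dirichlet property and continuity (Lemma~\ref{lem:Schauder_Dir}). Since the lowest nonzero homogeneity is positive, $v^\1$ coincides with the reconstruction (Remark~\ref{rem:f1_Calpha}), so $v^\1$ extends continuously by zero to $\partial\Lambda$.

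\textbf{The second term.} This is identical in structure, using Proposition~\ref{prop:stability_U_sol}\ref{pt:T_U_sol_A1}. The constant $\blue{A_1}$ lies in $\scD^{\infty,\infty}_{-\frac12-\kappa}$, and \eqref{eq:A_1_stable} gives values in $T[\cF_1]$, hence $\cI_1$ of it lies in $T[\cU_\lin]$. This term involves only $\cG^1$, so no cancellation is needed.

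\textbf{Canonical case.} For a canonical model and $\blue g\in\scG$ it remains to show the $\blue\1$-coefficient is $\mfg$-valued. By \eqref{eq:reconst_canonical}, $v^\1(x)=\cR v(x)$, and by \eqref{eq:K_recon} and \eqref{eq:CR_D_commute} this equals
\[
G^\Dir*\partial_i(\partial_i g\,g^{-1})(x)\quad\text{and}\quad G^\Dir*\partial_1(gA_1g^{-1})(x)
\]
respectively, where $g=\cR\blue g=g^\1$ is smooth enough and $G$-valued. Here one uses that reconstruction is multiplicative for canonical models, since the truncated terms vanish at the base point. Both integrands are $\mfg$-valued: $\partial_ig\,g^{-1}\in\mfg$ and $\Ad_gA_1\in\mfg$. $G^\Dir$ is real and scalar and $\mfg$ is a real subspace, so the results are $\mfg$-valued. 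For the canonical case one needs $\eta>0$ in \eqref{eq:CR_D_commute}, which holds.

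\textbf{Main obstacle.} I expect the hardest step to be getting the exact exponents right so that the output lies in $\scD^{\frac32-3\kappa,\frac12-\kappa}$ and not a slightly worse class, especially the $\eta$ bookkeeping through Proposition~\ref{prop:mult}.
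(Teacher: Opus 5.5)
Your route matches the paper's. You use the same sector and exponent bookkeeping through Propositions~\ref{prop:derivatives}, \ref{prop:inversion}, \ref{prop:stability_U_sol}, \ref{prop:mult} and \ref{prop:integration}, and the same cancellation of the $\cI_1\cI_{22}A_1$ components via the summed identity in \eqref{eq:D_1_stable}. The boundary condition comes, as in the paper, from $\cR\cG^i=G^\Dir*\partial_i\cR$ and Lemma~\ref{lem:Schauder_Dir}, and the canonical case from $\mfg$-valuedness of the reconstruction. However, the ``mismatch'' you flag in the integration step is an arithmetic slip, and the repair you suggest should be dropped.

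Take $\cD_i^{\frac32-3\kappa}\blue g\in\scD^{\frac12-3\kappa,-\frac12-\kappa}_{-\frac12-\kappa}$ and $\blue g^{-1}\in\scD^{\frac32-3\kappa,\frac12-\kappa}_0$. Proposition~\ref{prop:mult} then gives the second exponent of the product exactly as $(-\frac12-\kappa)\wedge(-2\kappa)\wedge(-2\kappa)=-\frac12-\kappa$. Your bound $(\eta-1)\wedge(\eta+\nu)\ge-\frac12-2\kappa$ loses a $\kappa$: in fact $\eta-1=-\frac12-\kappa$ and $\eta+\nu\ge-2\kappa$. For the second term the same computation gives $\rmQ_{<\frac12-3\kappa}(\blue g\blue{A_1}\blue g^{-1})\in\scD^{\frac12-3\kappa,-2\kappa}_{-\frac12-\kappa}$. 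In both cases Proposition~\ref{prop:integration} yields the exponent $\eta'\wedge\nu+1=\frac12-\kappa$ exactly, so the outputs lie in $\scD^{\frac32-3\kappa,\frac12-\kappa}$ with no further argument. The appeal to vanishing at $\partial\Lambda$ is not backed by any available result and is not needed.

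In the canonical case, ``$g^\1$ is smooth enough'' hides a step that needs an argument. A priori $g^\1\in C^{\frac12-\kappa}$ only, so $\partial_ig^\1\in C^{-\frac12-\kappa}$. Then $(\partial_i g^\1)(g^\1)^{-1}$ is not even a Young product, since the regularities sum to $-2\kappa<0$. So identifying $\cR((\cD_i^{\frac32-3\kappa}\blue g)\blue g^{-1})$ with the $\mfg$-valued function $(\partial_ig^\1)(g^\1)^{-1}$ requires $g^\1$ to be differentiable in $\Lambda^\circ$.

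The paper gets this from the canonical model. Since $K_1*A_1\in C^{2-\theta}$, the coefficients in \eqref{eq:h_canonical} are $C^{2-\theta}$. Because $\frac32-3\kappa>1$, the modelled-distribution bound then gives $g^\1$ a H\"older continuous first derivative away from $\partial\Lambda$. Only after that does $\partial_ig^\1(x)\,g^\1(x)^{-1}\in\mfg$ follow from $g^\1$ being $G$-valued.

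A minor point: Remark~\ref{rem:f1_Calpha} gives H\"older regularity of $v^\1$, not $v^\1=\cR v$. The latter is the standard fact that reconstruction returns the $\blue\1$-coefficient when all other components have strictly positive homogeneity.
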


As the proof will reveal, we do not need for the final part of Proposition \ref{prop:Phi_well_defined} that $\blue g^\1 \restr_{\partial\Lambda} =1_G$ for $\blue g\in \scG$
and any other fixed (sufficiently regular) boundary condition is possible.

\begin{proof}
In the proof, let $\scD^{\gamma,\eta}$ denote $\scD^{\gamma,\eta}\otimes\rmM_\C(N)$.
Fix $\sigma\in[0,1]$, $\blue g\in\scI$.
Then $\cD_i^{\frac32-3\kappa} \blue g \in \scD^{\frac12-3\kappa,-\frac12-\kappa}_{-\frac12-\kappa}(T[\cF_i\cup\{\1\}])$ by Proposition \ref{prop:derivatives}
and the homogeneities in Table \ref{tab:U_F_sets_homogeneities}.
Moreover $\blue g^{-1} \in \scD^{\frac32-3\kappa,\frac12-\kappa}_0(T[\cU_\alg])$ by Proposition \ref{prop:inversion}.
Therefore, by Propositions \ref{prop:stability_U_sol}\ref{pt:DT_U_sol} and \ref{prop:mult},
\begin{equation}\label{eq:D_g_inv}
	(\cD_i^{\frac32-3\kappa} \blue g) \blue g^{-1} \in \scD^{\frac12-3\kappa,-\frac12-\kappa}_{-\frac12-\kappa}(T[\cF_i\cup\{\1\}])
\;.
\end{equation}
Furthermore $\blue{A_1}\in\scD^{\infty,\infty}_{-\frac12-\kappa}$ and thus, by Propositions \ref{prop:stability_U_sol}\ref{pt:T_U_sol}-\ref{pt:T_U_sol_A1} and \ref{prop:mult},
\begin{equation}\label{eq:g_A1_g_inv}
\rmQ_{<\frac12-3\kappa}(\blue g \blue{A_1}\blue{g}^{-1} )\in \scD^{\frac12-3\kappa,-2\kappa}_{-\frac12-\kappa}(T[\cF_1])
\;.
\end{equation}
Since $\cG^{i}_{\frac12-3\kappa} \colon \scD^{\frac12-3\kappa,-\frac12-\kappa}_{-\frac12-\kappa}(T[\cF_i\cup\{\1\}]) \to \scD^{\frac32-3\kappa,\frac12-\kappa}_0(T[\cU_\Int])$ is a bounded operator by Proposition \ref{prop:integration},
it follows from Propositions \ref{prop:stability_U_sol}\ref{pt:DT_U_sol}, namely
the second identity in \eqref{eq:D_1_stable}, that both terms on the right-hand side of \eqref{eq:Phi_def} are in $\scE_\lin$.

To prove the first claim,
it only remains to show that $\cR\cG^{i}((\cD^{\frac32-3\kappa}_i\blue{g}) \blue{g}^{-1})\restr_{\partial\Lambda}=0$,
and similarly for the second term.
This in turn follows from $\cR\cG^{i} = G^\Dir*\partial_i\cR$ by Proposition \ref{prop:integration}, and from the fact that $G^\Dir *f$ vanishes on $\partial\Lambda$ for any $f\in C^{\beta}(\R^2)$ with support in $\Lambda$ and $\beta>-2$ by~Lemma~\ref{lem:Schauder_Dir}.

For the second claim, suppose $\sfZ$ is a canonical model and $\blue g\in\scG$.
Then $K_1*A_1\in C^{2-\theta}$ for all $\theta>0$, from which it readily follows that $\bfh_{\cJ_1 \tau}(x,y)$ defined in \eqref{eq:h_canonical} is $C^{2-\theta}$ in both variables for all $\blue\tau \in T[\{A_1,A_1\cI_1A_1\}]$.
Since $\frac32-3\kappa>1$, it follows that $g^\1\colon\Lambda^\circ\to G$ has a H\"older continuous first derivative away from the boundary.
The fact that $\sfZ$ is canonical implies, by \eqref{eq:recon_cont} and \eqref{eq:CR_D_commute}, that $\cR((\cD_i^{\frac32-3\kappa} \blue{g})\blue{g}^{-1}) = (\partial_i g^\1)(g^\1)^{-1}$, which is $\mfg$-valued.
Similarly, $\cR(\blue{g}\blue{A_1}\blue{g}^{-1})$ is $\fg$-valued.
Therefore $\Phi(\sigma,\blue g)$ is $\fg$-valued, i.e. $\Phi(\sigma,\blue g)^\1$ is $\fg$-valued. 
\end{proof}

\begin{lemma}\label{lem:D_2Phi}
The map $\Phi$ is Frechet differentiable in the second variable and,
for all $\blue g\in\scI_\alg$ and $\blue v\in\scE_\alg$,
\begin{equation}\label{eq:D_2Phi}
\begin{split}
D_2\Phi(\sigma,\blue g)[\blue v]
&=\CG^{i}((\cD_i^{\frac32-3\kappa} \blue{v})\blue{g}^{-1} - (\cD_i^{\frac32-3\kappa} \blue{g})\blue{g}^{-1}\blue v\blue{g}^{-1}) -\sigma \CG^{1}(\blue{v A_1}\blue{g}^{-1} - \blue{g A_1}\blue{g}^{-1}\blue v\blue{g}^{-1})
\\
&\in \scE_{\lin,0}
\end{split}
\end{equation}
and the map
\[
[0,1]\times\scI_\alg \times \scE_\alg
\ni
(\sigma,\blue g;\blue v)
\mapsto D_2\Phi(\sigma,\blue g)[\blue v]
\in
\scE_{\lin,0}
\]
is locally Lipschitz continuous.

Finally, if $Z$ is a canonical model and $\blue g\in\scG$, then $D_2\Phi(\sigma,\blue g) \colon \scU_{\alg,0} \blue g
\to \scU_{\lin,0}$.
\end{lemma}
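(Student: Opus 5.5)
The map $\Phi(\sigma,\cdot)$ is a composition of operations already controlled earlier: the derivative operators $\cD_i^{\frac32-3\kappa}$ (linear and bounded by Proposition~\ref{prop:derivatives}), the truncated product $\circ_{\frac12-3\kappa}$ (bounded bilinear by Propositions~\ref{prop:mult} and~\ref{prop:stability_U_sol}), inversion $\blue g\mapsto\blue g^{-1}$ on $\scI_\alg$, and the integration operators $\cG^i$ (linear and bounded by Proposition~\ref{prop:integration}). The plan is to first establish Fréchet differentiability of inversion. On $\scI_\alg$, formula~\eqref{eq:g_inv} writes $\blue g^{-1}$ as a polynomial in $(g^\1)^{-1}\blue\1$ and $\blue{\tilde g}$. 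Since matrix inversion is smooth on $I$ with bounded derivatives, the map $g^\1\mapsto (g^\1)^{-1}$ is smooth on the function-valued coefficient. Alternatively, and more cleanly, I would use that $\scE_\alg$ is a Banach algebra under $\circ_{\frac32-3\kappa}$ (Lemma~\ref{lem:G_Lie_group}) and that $\scI_\alg$ is an open subset of its invertible elements. Then the standard Neumann-series argument in a Banach algebra gives
\[
D(\blue g^{-1})[\blue v]=-\blue g^{-1}\blue v\blue g^{-1},
\]
with locally Lipschitz dependence on $\blue g$. This relies on Proposition~\ref{prop:inversion} for the local uniform bounds on $\blue g^{-1}$.

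Next I would apply the product rule. Since $\cD_i$ is linear and bounded, and the bilinear maps are continuous, the maps $\blue g\mapsto(\cD_i\blue g)\blue g^{-1}$ and $\blue g\mapsto \blue g\blue{A_1}\blue g^{-1}$ are Fréchet differentiable as maps into the appropriate spaces $\scD^{\frac12-3\kappa,-\frac12-\kappa}$, with exactly the derivatives appearing inside \eqref{eq:D_2Phi}. Here I must check that the relevant pairs of sectors are regular, which holds by Proposition~\ref{prop:stability_U_sol}\ref{pt:T_U_sol_A1}--\ref{pt:DT_U_sol}, and that the exponents match those in \eqref{eq:D_g_inv} and \eqref{eq:g_A1_g_inv}. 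Composing with the bounded linear maps $\cG^i$ yields \eqref{eq:D_2Phi}. Membership in $\scE_{\lin,0}$ then follows as in Proposition~\ref{prop:Phi_well_defined}. Each summand has the form $\cG^i$ of something in $T[\cF_i\cup\{\1\}]$ after truncation, and summing over $i$ cancels the $\cI_1\cI_{22}A_1$ component by the second identity of \eqref{eq:D_1_stable}. The boundary value vanishes because $\cR\cG^i=G^\Dir*\partial_i\cR$ by \eqref{eq:K_recon}, combined with Lemma~\ref{lem:Schauder_Dir}. Local Lipschitz continuity of $(\sigma,\blue g,\blue v)\mapsto D_2\Phi(\sigma,\blue g)[\blue v]$ is immediate from the explicit formula, since it is a polynomial in $\blue g^{-1}$, $\blue v$, $\cD_i\blue g$, $\cD_i\blue v$ and $\sigma$, and every ingredient is locally Lipschitz.

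For the final claim, let $\blue v=\blue u\blue g$ with $\blue u\in\scU_{\alg,0}$ and $\sfZ$ canonical. By Proposition~\ref{prop:derivatives} and \eqref{eq:reconst_canonical}, reconstruction is pointwise evaluation and commutes with $\cD_i$. The reconstruction of the integrand is therefore
\[
\partial_i(ug)g^{-1}-(\partial_i g)g^{-1}u=\partial_i u+[u,(\partial_i g)g^{-1}],
\qquad u=u^\1,\ g=g^\1,
\]
which is $\mfg$-valued since $(\partial_ig)g^{-1}\in\mfg$. Similarly, the $\sigma$-term reconstructs to $-\sigma[u,gA_1g^{-1}]$, which is also $\mfg$-valued. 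Hence the $\1$-coefficient of the output is $G^\Dir*\partial_i$ of a $\mfg$-valued function and so lies in $\mfg$. Combined with the zero boundary condition, this places the output in $\scU_{\lin,0}$.

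The main obstacle I expect is the differentiability of inversion in the modelled-distribution setting. In particular, I must verify that $\scI_\alg$ is open in $\scE_\alg$ and that the polynomial formula \eqref{eq:g_inv} agrees with the Banach-algebra inverse there. The agreement follows because \eqref{eq:g_inv} gives a two-sided inverse, so uniqueness of inverses applies. Openness, however, requires using \eqref{eq:f1_Calpha} to control $g^\1$ uniformly by the $\scE_\alg$ norm, so that small perturbations keep $g^\1(x)$ in a neighbourhood of $I$ where matrix inversion stays smooth.
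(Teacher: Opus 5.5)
Your proposal is correct and follows the paper's proof. Differentiability and local Lipschitz continuity come from the continuity of $\cD_i^{\frac32-3\kappa}$, the products, inversion and $\cG^i$; the formula comes from $D(\blue g^{-1})[\blue v]=-\blue g^{-1}\blue v\blue g^{-1}$; and membership in $\scE_{\lin,0}$ and the final claim follow by the same reconstruction argument as in Proposition~\ref{prop:Phi_well_defined}.

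Your Banach-algebra justification of the derivative of inversion is slightly more careful than the paper's, which simply differentiates $\blue g\blue g^{-1}=\bone$. However, $\scI_\alg$ is not open, because the condition $\lambda\geq\frac12$ defining $I$ is closed. This is harmless: you only need $\scI_\alg$ to lie in the open group of invertibles of the unital Banach algebra $(\scE_\alg,\circ_{\frac32-3\kappa})$, and that follows from Proposition~\ref{prop:inversion} together with uniqueness of two-sided inverses.
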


\begin{proof}
Frechet differentiability and the local Lipschitz property of the corresponding derivative is an immediate consequence of continuity results from Propositions \ref{prop:mult}, \ref{prop:integration}, \ref{prop:derivatives}, and \ref{prop:inversion}.
To obtain the expression \eqref{eq:D_2Phi} for the derivative,
we note that $D (\blue{g}^{-1})[v] = -(\blue{g}^{-1}) v(\blue{g}^{-1})$,
which follows from $D(\blue g \blue g^{-1})[\blue v] = v\blue g^{-1} + \blue{g}D(\blue{g}^{-1})[\blue v] =0$,
and $D(\cD_i^{\frac32-3\kappa} g)[v] = \cD_i^{\frac32-3\kappa} v$.

The final claim follows from the expression \eqref{eq:D_2Phi} and from the same argument as in the final part of the proof of Proposition \ref{prop:Phi_well_defined}.
\end{proof}

\begin{lemma}\label{lem:D2Phi_at_id}
For $\blue v\in \scE_{\lin,0}$, one has $D_2\Phi(0,\bone)(\blue v)=\blue v$.
\end{lemma}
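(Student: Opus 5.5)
Plan: specialise \eqref{eq:D_2Phi} to $\sigma=0$ and $\blue g=\bone$. Since $\bone^{-1}=\bone$ is the unit for $\circ_{\frac12-3\kappa}$, the $\sigma$-term vanishes, which leaves
\[
D_2\Phi(0,\bone)[\blue v]=\sum_{i=1}^2\cG^i\bigl(\cD_i^{\frac32-3\kappa}\blue v - (\cD_i^{\frac32-3\kappa}\bone)\blue v\bigr).
\]
Here $\cD_i^{\frac32-3\kappa}\bone=0$: indeed $\rmD_i\blue\1=0$, and $H_i\bone=0$ because $\bone$ has no $T[\cI_1(A_1\cI_1A_1)]$ component. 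So it remains to prove $\sum_i\cG^i(\cD_i^{\frac32-3\kappa}\blue v)=\blue v$ for every $\blue v\in\scE_{\lin,0}$.

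\emph{Step 1 (reconstructions agree).} By Proposition~\ref{prop:integration},
\[
\cR\sum_i\cG^i\cD_i\blue v=G^\Dir*\sum_i\partial_i\cR\cD_i\blue v .
\]
If I can show $\cR\cD_i\blue v=\partial_i\cR\blue v$ on $\Lambda$, for a general (not only canonical) model, then the right-hand side becomes $G^\Dir*\Delta\cR\blue v$. Since $\cR\blue v$ vanishes on $\partial\Lambda$ (because $v^\1|_{\partial\Lambda}=0$) and lies in $C^{\frac12-\kappa}$, this equals $\cR\blue v$ by the Dirichlet Green identity of Lemma~\ref{lem:Schauder_Dir}. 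The identity $\cR\cD_i=\partial_i\cR$ holds for canonical models by \eqref{eq:CR_D_commute}. For general models I would pass to the limit along canonical (smooth) models, using continuity of $\cR$, $\cD_i$ and $\cG^i$ in the model (\eqref{eq:recon_cont}, \eqref{eq:Df_diff}); otherwise I would argue directly from $\eqref{eq:D_2_model_def}$ and Remark~\ref{rem:bfh_explicit}, mimicking the proof of Proposition~\ref{prop:derivatives}. The limiting route is cleaner if models are assumed in the closure of canonical ones; if not, the direct route is needed, and this is where the $H_i$ correction does its work.

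\emph{Step 2 (components agree).} I would show that $\blue w:=\sum_i\cG^i\cD_i\blue v-\blue v$ is a modelled distribution in $\scD^{\frac32-3\kappa,\frac12-\kappa}(T[\cU_\Int])$ with $\cR\blue w=0$. Its non-polynomial components can be computed algebraically. By Proposition~\ref{prop:integration} these are $\sum_i\cI_i\rmQ_{<\frac12-3\kappa}\rmD_i\blue v$. Using Definition~\ref{def:derivatives}, for $\blue v=\blue{\cI_1\tau}$ we get $\cI_1(\blue\tau-\blue{\cI_{22}\tau})+\cI_2\blue{\cI_{12}\tau}=\blue{\cI_1\tau}$, by the commutation identity of Example~\ref{ex:commutativity_der_int_consequence}. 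The same argument handles $\tau=A_1\cI_1A_1$, where the truncation drops $\cI_{22}$ and $\cI_{12}$ of it. This matches $\blue v$ on the non-polynomial components. Hence $\blue w$ takes values in $T[\{\1,\rmX_1,\rmX_2\}]$, and a polynomial-valued modelled distribution of order $\gamma>1$ with zero reconstruction is zero (its $\1$ component is $\cR\blue w$ and its $\rmX_i$ components are derivatives of it).

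\emph{Main obstacle.} The hardest part is the identity $\cR\cD_i\blue v=\partial_i\cR\blue v$ near the boundary and for non-canonical models. The weight $\eta=\frac12-\kappa>0$ ensures that the distributions in question extend across $\partial\Lambda$ without boundary contributions, which is exactly the condition used at the end of the proof of Proposition~\ref{prop:derivatives}.
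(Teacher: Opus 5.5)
Your proposal is correct and follows the paper's proof essentially step by step. With $\sigma=0$ and $\blue g=\bone$ everything reduces to $\sum_i\cG^i\cD_i^{\frac32-3\kappa}\blue v$, and the paper then argues exactly as you do:
\begin{itemize}
\item its non-polynomial components agree with those of $\blue v$ because $\sum_i\cI_i\rmQ_{<\frac12-3\kappa}\rmD_i\blue\tau=\blue\tau$ for $\blue\tau\in T[\{\cI_1A_1,\cI_1(A_1\cI_1A_1)\}]$;
\item its reconstruction, hence its $\blue\1$-coefficient, is $G^\Dir*\partial_i^2\cR\blue v=v^\1$ by Propositions~\ref{prop:derivatives}, \ref{prop:integration} and Corollary~\ref{cor:GD_inverse};
\item its $\blue{\rmX_i}$-coefficients are then forced since $\frac32-3\kappa>1$.
\end{itemize}

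Your caveat that \eqref{eq:CR_D_commute} is only stated for canonical models is legitimate: the paper uses it here for a general model without comment. Of your two repairs, the direct one is the right one, but it needs an input you did not mention. Pass to the limit in the model-level identities behind \eqref{eq:model_D_commute}--\eqref{eq:model_D_commute_2}. These require $\Pi_x\blue{A_1}$ to be supported in $\Lambdaex$, so that $(\Delta K-\delta_0)*\Pi_x\blue\tau$, $\blue\tau\in T[\{A_1,A_1\cI_1A_1\}]$, vanishes near $\Lambda$; this support property is preserved under the limits of canonical models used in the paper. Then conclude $\cR\cD_i^{\frac32-3\kappa}\blue v=\partial_i\cR\blue v$ by uniqueness of reconstruction. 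This avoids the limiting route, which would require approximating a fixed $\blue v\in\scE^{\sfZ}_{\lin,0}$ by modelled distributions for canonical models, and that is not automatic.
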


\begin{proof}
First note that, by \eqref{eq:D_2Phi},
\begin{equation*}
D_2\Phi(0,\bone)(\blue v)=\cG^{i}\cD_i^{\frac32-3\kappa} \blue v
\;.
\end{equation*}
Since $\sum_{i=1,2}\cI_i \rmD_i \blue\tau = \blue\tau$ for all $\blue\tau\in T[\{\cI_1A_1,\cI_1(A_1\cI_1A_1)\}]$ due to Definition \ref{def:derivatives},
the non-polynomial terms of $\cG^{i}\cD_i^{\frac32-3\kappa} \blue v$ coincide with those of $\blue v$.

To see that the the polynomial terms of $\cG^{i}\cD_i^{\frac32-3\kappa} \blue v$ also coincide with those of $\blue v$, observe that
\[
\CR\cG^{i}\cD_i^{\frac32-3\kappa} \blue v
=
G^{\Dir}*\partial_i^2 \cR\blue v
=
\cR \blue v
= v^\1
\;,
\]
where we used Propositions \ref{prop:derivatives} and \ref{prop:integration} in the first equality
and Corollary \ref{cor:GD_inverse} and that $v^\1\restr_{\partial \Lambda}=0$ in the second equality.
This shows that the $\1$ coefficient of both sides coincide.
The fact that the $\rmX_i$ coefficients coincide now follows from the fact that non-polynomial terms coincide and from the definition of modelled distributions of regularity $\frac32-3\kappa > 1$.
\end{proof}

\begin{lemma}\label{lem:D_2Phi_inv}
Consider $L>0$. There exists $\sigma_0>0$ with the following properties.
Suppose $\triple{\sfZ}_{\frac32-3\kappa} < L$ and let $\sigma \in [0,\sigma_0]$ and $\blue g\in\scI_\alg$ with $\|\blue g-\bone\|_{\frac32-3\kappa,\frac12-\kappa}\leq \sigma_0$.

\begin{enumerate}[label=(\roman*)]
	\item\label{pt:D_2E}$D_2\Phi(\sigma,\blue g)$ admits an inverse
\[
D_2\Phi(\sigma,\blue g)^{-1} \colon \scE_{\lin,0} \to \scE_{\lin,0} \blue g
\]
which has operator norm bounded by $2$.

Moreover, if $\bar\sfZ$ is another model bounded by $L$, then
uniformly in $\sigma\in[0,\sigma_0]$ and $\blue g\in\scI_\alg^\sfZ$, $\blue{\bar g} \in \scI_\alg^{\bar\sfZ}$ with
\begin{equation}\label{eq:g_bar_g_small}
\|\blue g-\bone\|_{\frac32-3\kappa,\frac12-\kappa}\vee \|\blue{\bar g}-\bone\|_{\frac32-3\kappa,\frac12-\kappa}\leq \sigma_0\;,
\end{equation}
and $\blue v \in \scE_{\lin,0}^\sfZ,\blue{\bar v} \in \scE_{\lin,0}^{\bar\sfZ}$,
\begin{multline}\label{eq:D2Phi_inv_stability}
\| D_2\Phi^{\sfZ}(\sigma, \blue g)^{-1} (\blue v) ;
D_2\Phi^{\bar \sfZ}(\sigma,\blue{\bar g})^{-1} (\blue{\bar v})\|_{\frac32-3\kappa;\frac12-\kappa}
\\
\lesssim
\|\blue v;\blue{\bar v}\|_{\frac32-3\kappa;\frac12-\kappa} + (\|\blue v\|_{\frac32-3\kappa;\frac12-\kappa}+\|\blue{\bar v}\|_{\frac32-3\kappa,\frac12-\kappa}) (\triple{\sfZ;\bar \sfZ}_{\frac32-3\kappa} + \|\blue g;\blue{\bar g}\|_{\frac32-3\kappa;\frac12-\kappa} )\;.
\end{multline}

\item\label{pt:D_2U} If $\sfZ$ is a canonical model and $\blue g\in\scG$, then $D_2\Phi(\sigma,\blue g)^{-1} \colon \scU_{\lin,0} \to \scU_{\lin,0} \blue g$.
\end{enumerate}
\end{lemma}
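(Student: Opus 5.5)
The plan is to reduce the claim to a Neumann series argument around the identity, using Lemma~\ref{lem:D2Phi_at_id} as the base point. Since the inverse is supposed to land in $\scE_{\lin,0}\blue g$, I would not invert $D_2\Phi(\sigma,\blue g)$ directly. Instead I would consider the right-translated operator
\[
T_{\sigma,\blue g}\colon \scE_{\lin,0}\to\scE_{\lin,0}\;,\qquad T_{\sigma,\blue g}(\blue w):=D_2\Phi(\sigma,\blue g)[\blue w\blue g]\;.
\]
This is well defined for the following reasons. By Proposition~\ref{prop:stability_U_sol}\ref{pt:T_U_sol} and Proposition~\ref{prop:mult}, the product $\blue w\blue g$ lies in $\scE_\alg$. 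Its $\1$-component $w^\1g^\1$ vanishes on $\partial\Lambda$. Lemma~\ref{lem:D_2Phi} then puts $T_{\sigma,\blue g}(\blue w)$ in $\scE_{\lin,0}$. At $(\sigma,\blue g)=(0,\bone)$, Lemma~\ref{lem:D2Phi_at_id} gives $T_{0,\bone}=\id$.

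\textbf{Step 1: smallness of $T_{\sigma,\blue g}-\id$.} By the local Lipschitz continuity in Lemma~\ref{lem:D_2Phi}, applied on the bounded set where $\triple{\sfZ}_{\frac32-3\kappa}\le L$ and $\|\blue g-\bone\|\le 1$, and by bilinear continuity of $(\blue w,\blue g)\mapsto\blue w\blue g$, I expect
\[
\|T_{\sigma,\blue g}\blue w-\blue w\|\le C_L(\sigma+\|\blue g-\bone\|)\|\blue w\|\;.
\]
This should come from the expansion $T_{\sigma,\blue g}\blue w-\blue w=D_2\Phi(\sigma,\blue g)[\blue w\blue g]-D_2\Phi(0,\bone)[\blue w]$. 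Choosing $\sigma_0$ small depending on $L$ gives $\|T_{\sigma,\blue g}-\id\|\le\frac14$. The Neumann series then yields $T_{\sigma,\blue g}^{-1}$ with norm at most $\frac43$.

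\textbf{Step 2: the inverse of $D_2\Phi$.} I would set $D_2\Phi(\sigma,\blue g)^{-1}\blue v:=(T^{-1}_{\sigma,\blue g}\blue v)\blue g$. This is a two-sided inverse onto $\scE_{\lin,0}\blue g$, since right multiplication by $\blue g$ is a bijection there with inverse given by right multiplication by $\blue g^{-1}$ (Proposition~\ref{prop:inversion}). Its norm is at most $\frac43(1+C_L\sigma_0)\le2$ after shrinking $\sigma_0$.

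\textbf{Step 3: stability \eqref{eq:D2Phi_inv_stability}.} Here the two inverses live over different models, so resolvent identities are not directly available. Instead I would write $\blue x=T^{-1}\blue v$ and $\blue{\bar x}=\bar T^{-1}\blue{\bar v}$, which satisfy the fixed-point relations $\blue x=\blue v-(T-\id)\blue x$ and $\blue{\bar x}=\blue{\bar v}-(\bar T-\id)\blue{\bar x}$. The key estimate needed is the mixed-model bound
\[
\|(T-\id)\blue x;(\bar T-\id)\blue{\bar x}\|\le \tfrac14\|\blue x;\blue{\bar x}\|+C_L(\|\blue x\|+\|\blue{\bar x}\|)\bigl(\triple{\sfZ;\bar\sfZ}+\|\blue g;\blue{\bar g}\|\bigr)\;.
\]
It should follow from the two-model continuity statements in Propositions~\ref{prop:mult}, \ref{prop:derivatives}, \ref{prop:integration} and~\ref{prop:inversion}. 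The point is that every term of $T-\id$ carries a factor of $\sigma$, of $\blue g-\bone$, or of $\blue g^{-1}-\bone$, and each of these factors is small. Absorbing the $\frac14$ term on the left gives the bound for $\|\blue x;\blue{\bar x}\|$. A final multiplication by $\blue g$, respectively $\blue{\bar g}$, again controlled by Proposition~\ref{prop:mult}, yields \eqref{eq:D2Phi_inv_stability}. I expect this step to be the main obstacle: one has to check honestly that the Lipschitz constant of $T-\id$ in the mixed-model pseudometric is small, and not merely bounded, uniformly over the allowed $\sigma$ and $\blue g$.

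\textbf{Step 4: part \ref{pt:D_2U}.} For a canonical model and $\blue g\in\scG$, the last claim of Lemma~\ref{lem:D_2Phi}, together with $\scU_{\lin,0}\blue g\subset\scU_{\alg,0}\blue g$, shows that $T_{\sigma,\blue g}$ maps the closed subspace $\scU_{\lin,0}$ into itself. The Neumann series then preserves $\scU_{\lin,0}$, so $D_2\Phi(\sigma,\blue g)^{-1}$ maps $\scU_{\lin,0}$ into $\scU_{\lin,0}\blue g$.
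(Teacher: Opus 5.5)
Your proposal is correct and follows the paper's proof. Both invert the right-translated operator $D_2\Phi(\sigma,\blue g)(\,\cdot\,\blue g)$ on $\scE_{\lin,0}$ by a Neumann series around Lemma~\ref{lem:D2Phi_at_id}, obtain stability from a mixed-model contraction bound for $\Psi=\id-D_2\Phi(\sigma,\blue g)(\,\cdot\,\blue g)$, and deduce \ref{pt:D_2U} from invariance of the closed subspace $\scU_{\lin,0}$. The only deviation is in the stability step: you absorb the contraction term in the fixed-point identity $\blue x=\blue v+\Psi\blue x$, whereas the paper bounds each $\Psi^k$ by induction and sums the series. Your shortcut is legitimate, since the pseudo-distance is subadditive as in \eqref{eq:diff_sum} and $\|\blue x;\blue{\bar x}\|\le\|\blue x\|+\|\blue{\bar x}\|<\infty$, and it yields the same estimate.
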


\begin{proof}
For \ref{pt:D_2E}, by Lemma \ref{lem:D2Phi_at_id}, $D_2\Phi(0,\bone)\restr_{\scE_{\lin,0}} = \id \restr_{\scE_{\lin,0}}$.
It follows from the local Lipschitz continuity of $D_2\Phi$ in Lemma \ref{lem:D_2Phi}
that for $\sigma_0$ sufficiently small,
$D_2\Phi(\sigma,\blue g)(\;\cdot \;\blue g) \colon \scE_{\lin,0}\to \scE_{\lin,0}$ is invertible with operator norm bounded by $2$.

For the Lipschitz continuity bound \eqref{eq:D2Phi_inv_stability}, we use Neumann series expansion.
Fix $\sigma\in[0,\sigma_0]$, $\blue g \in \scI_\alg^\sfZ, \blue{\bar g} \in \scI_\alg^{\bar\sfZ}$ as in the statement.
Write
\[
\Psi = \id - D_2\Phi^\sfZ(\sigma,\blue g)(\;\cdot\; \blue g)
\colon \scE_{\lin,0}^\sfZ \to \scE_{\lin,0}^\sfZ\;,
\]
and similarly for $\bar \Psi$.
Then, for $\sigma_0$ sufficiently small (depending on $L$),
$\|\Psi\|\leq \frac12$, where we denote by $\|\Psi\|$ the operator norm of $\Psi\colon \scE_{\lin,0}^\sfZ \to \scE_{\lin,0}^\sfZ$, and for $\blue v\in \scE_{\lin,0}^\sfZ$,
\begin{equation}\label{eq:Phi_Neumann}
D_2\Phi^\sfZ(\sigma,\blue g) (\;\cdot\; \blue g)^{-1}(v) = \sum_{k\geq0} \Psi^k(\blue v)\;.
\end{equation}
Likewise for $\bar\Psi$.
Note that, by \eqref{eq:D_2Phi} and Lemma \ref{lem:D2Phi_at_id},
\[
\Psi(\blue v)
=
-\cG^i[\blue v, (\cD_i^{\frac32-3\kappa} \blue g) \blue g^{-1}] + \sigma \cG^1 [\blue v, \blue{g A_1}\blue{g}^{-1}]\;,
\]
where the Lie brackets are defined by $[a,b] = a\circ_{\frac12-3\kappa} b - b \circ_{\frac12-3\kappa} a$.
It follows from the continuity results of Propositions \ref{prop:mult}, \ref{prop:integration}, \ref{prop:derivatives}, and \ref{prop:inversion}
that there exists $C>0$ such that,
denoting $\Sigma = \triple{\sfZ;\bar \sfZ}_{\frac32-3\kappa} + \|\blue g;\blue{\bar g}\|_{\frac32-3\kappa,\frac12-\kappa}$,
\begin{equation}\label{eq:Psi_diff}
\|\Psi(\blue v); \bar\Psi(\blue{\bar v})\|_{\frac32-3\kappa,\frac12-\kappa}
\leq
C (\|\blue v\|_{\frac32-3\kappa,\frac12-\kappa}
+\|\blue{\bar v}\|_{\frac32-3\kappa,\frac12-\kappa})\Sigma
+ \frac12 \|\blue v;\blue{\bar v}\|_{\frac32-3\kappa,\frac12-\kappa}
\;.
\end{equation}
We now claim that, for $k\geq 1$,
\begin{equation}\label{eq:Psik_diff}
\|\Psi^k(\blue v);\bar\Psi^k(\blue{\bar v})\|_{\frac32-3\kappa;\frac12-\kappa} \leq 2^{-k} \|\blue v;\blue{\bar v}\|_{\frac32-3\kappa;\frac12-\kappa} +
2^{-k+2} C (\|\blue v\|_{\frac32-3\kappa;\frac12-\kappa}+\|\blue{\bar v}\|_{\frac32-3\kappa;\frac12-\kappa})\Sigma\;.
\end{equation}
Indeed, this is implied by \eqref{eq:Psi_diff} for $k=1$. Then, proceeding by induction, for $k\geq 2$,
\begin{equs}
\|\Psi^k(\blue v);\bar\Psi^k(\blue{\bar v})\|_{\frac32-3\kappa;\frac12-\kappa}
&\leq
C(\|\Psi^{k-1}(\blue v)\|_{\frac32-3\kappa;\frac12-\kappa}+
\|\bar\Psi^{k-1}(\blue{\bar v})\|_{\frac32-3\kappa;\frac12-\kappa})\Sigma
\\
&+
\frac12 \|\Psi^{k-1}(\blue v);\bar\Psi^{k-1}(\blue{\bar v})\|_{\frac32-3\kappa;\frac12-\kappa}
\\
&\leq
2^{-k+1} C (\|\blue v\|_{\frac32-3\kappa,\frac12-\kappa}
+
\|\blue{\bar v}\|_{\frac32-3\kappa,\frac12-\kappa}) \Sigma
\\
&+
2^{-k} \|\blue v;\blue{\bar v}\|_{\frac32-3\kappa,\frac12-\kappa} 
+
2^{-k+1} C (\|\blue v\|_{\frac32-3\kappa,\frac12-\kappa} + \|\blue{\bar v}\|_{\frac32-3\kappa,\frac12-\kappa}) \Sigma
\\
&\leq
2^{-k} \|\blue v;\blue{\bar v}\|_{\frac32-3\kappa,\frac12-\kappa}
+
2^{-k+2} C (\|\blue v\|_{\frac32-3\kappa,\frac12-\kappa}  + \|\blue{\bar v}\|_{\frac32-3\kappa,\frac12-\kappa} ) \Sigma
\;,
\end{equs}
where we used \eqref{eq:Psi_diff} in the first bound, and $\|\Psi\|\leq \frac12$ and the induction hypothesis in the second bound.
This completes the proof of \eqref{eq:Psik_diff}.

By the definition of the distance $\|\Cdot;\Cdot\|_{\frac32-3\kappa,\frac12-\kappa}$ in \eqref{eq:f_norm_def},
\begin{equation}\label{eq:diff_sum}
\|\blue v + \blue w;\blue{\bar v}+\blue{\bar w}\|_{\frac32-3\kappa,\frac12-\kappa}  \leq \|\blue v;\blue{\bar v}\|_{\frac32-3\kappa,\frac12-\kappa}  + \|\blue w;\blue{\bar w}\|_{\frac32-3\kappa,\frac12-\kappa} \;.
\end{equation}
It follows that
\begin{equs}{}
&\| D_2\Phi^\sfZ(\sigma,\blue g) (\;\cdot\; \blue g)^{-1} (\blue v) ;
D_2\Phi^{\bar\sfZ}(\sigma,\blue{\bar g}) (\;\cdot\; \blue{\bar g})^{-1} (\blue{\bar v})\|_{\frac32-3\kappa;\frac12-\kappa}
\\
&\qquad\leq
\sum_{k\geq0} \|\Psi^k(\blue v); \bar\Psi^k(\blue{\bar v})\|_{\frac32-3\kappa;\frac12-\kappa}
\\
&\qquad
\lesssim
\|\blue v;\blue{\bar v}\|_{\frac32-3\kappa;\frac12-\kappa}
+
(\|\blue v\|_{\frac32-3\kappa;\frac12-\kappa}+\|\blue{\bar v}\|_{\frac32-3\kappa;\frac12-\kappa}) (\triple{\sfZ;\bar \sfZ}_{\frac32-3\kappa} + \|\blue g;\blue{\bar g}\|_{\frac32-3\kappa;\frac12-\kappa} )\;,
\end{equs}
where we used \eqref{eq:Phi_Neumann} and \eqref{eq:diff_sum} in the first bound, and \eqref{eq:Psik_diff} in the second bound.
Since multiplication by $\blue g,\blue{\bar g}$ satisfying \eqref{eq:g_bar_g_small} is Lipschtiz continuous in the sense of Proposition \ref{prop:mult},
one obtains the bound \eqref{eq:D2Phi_inv_stability}.
This completes the proof of \ref{pt:D_2E}.

For \ref{pt:D_2U}, by the final part of Lemma \ref{lem:D_2Phi}, $D_2\Phi^Z(\sigma,\blue g)(\;\cdot\; \blue g) \colon \scU_{\sol,0} \to \scU_{\sol,0}$
is the identity for $\sigma=0$ and $\blue g = \bone$,
therefore for $\sigma_0$ sufficiently small, $D_2\Phi^Z(\sigma,\blue g)(\;\cdot\; \blue g)^{-1}$ maps $\scU_{\sol,0} \to \scU_{\sol,0}$,
i.e. $D_2\Phi^Z(\sigma,\blue g)^{-1}$ maps $\scU_{\sol,0} \to \scU_{\sol,0}\blue g$.
\end{proof}

\begin{theorem}
\label{thm:existence_solution}
Let $L>0$. There exists $\sigma_\sol = \sigma_\sol(L)\in (0,1]$ with the following property.
Assume $\triple{\sfZ}_{\frac32-3\kappa} < L$.
For all $\sigma\in[0,\sigma_\sol]$,
there exists $\blue{g}^\sigma\in \scI_\alg$ such that $\Phi(\sigma,\blue{g}^\sigma)=0$.
Moreover, one can choose $\blue{g}^\sigma$ such that $\blue{g}^0=\bone$ and, for any other model $\bar\sfZ$ with $\triple{\bar\sfZ}_{\frac32-3\kappa} < L$, denoting by
$\blue{\bar g}^{\sigma}\in\scI_\alg^{\bar \sfZ}$ the corresponding solution for $\bar\sfZ$,
\begin{equation}\label{eq:sol_Lip}
\|\blue{g}^\sigma ; \blue{\bar g}^{\bar\sigma}\|_{\frac32-3\kappa,\frac12-\kappa}
\lesssim \triple{\sfZ;\bar\sfZ}_{\frac32-3\kappa} + |\sigma-\bar\sigma|\;,
\end{equation}
where the proportionality constant depends only on $L$.
Finally, if $\sfZ$ is canonical, then $\blue{g}^\sigma \in \scG$ for all $\sigma\in[0,\sigma_\sol]$.
\end{theorem}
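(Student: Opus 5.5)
We construct $\blue g^\sigma$ by solving the ODE \eqref{eq:intro_ODE} in the Banach space $\scE_\alg$,
\begin{equation*}
\partial_\sigma \blue g^\sigma = -D_2\Phi(\sigma,\blue g^\sigma)^{-1}\bigl(\partial_\sigma\Phi(\sigma,\blue g^\sigma)\bigr)\;,\qquad \blue g^0=\bone\;,
\end{equation*}
where $\partial_\sigma\Phi(\sigma,\blue g) = -\cG^1(\blue g\blue{A_1}\blue g^{-1})$. By Proposition~\ref{prop:Phi_well_defined} this lies in $\scE_{\lin,0}$. Its norm is bounded by a constant $C_L$ for $\blue g$ in the ball $\|\blue g-\bone\|\leq\sigma_0$, where $\sigma_0$ is from Lemma~\ref{lem:D_2Phi_inv}; on this ball we also have $\blue g\in\scI_\alg$ after possibly shrinking $\sigma_0$, since $g^\1$ stays close to $1_G$ by Remark~\ref{rem:f1_Calpha}. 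Lemma~\ref{lem:D_2Phi_inv} provides the inverse $D_2\Phi(\sigma,\blue g)^{-1}\colon\scE_{\lin,0}\to\scE_{\lin,0}\blue g$ with norm at most $2$. The stability bound \eqref{eq:D2Phi_inv_stability} (with $\bar\sfZ=\sfZ$), together with the local Lipschitz property of $\blue g\mapsto \cG^1(\blue g\blue{A_1}\blue g^{-1})$ from Propositions~\ref{prop:mult}, \ref{prop:inversion} and \ref{prop:integration}, shows that the vector field $F(\sigma,\blue g)$ is Lipschitz in $\blue g$ and bounded by $2C_L$ on this ball, uniformly in $\sigma\in[0,\sigma_0]$. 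Picard--Lindel\"of in $\scE_\alg$ then gives a unique solution on $[0,\sigma_\sol]$ with $\sigma_\sol := \sigma_0\wedge \sigma_0/(2C_L)$, and by the a priori bound $\|\blue g^\sigma-\bone\|\leq 2C_L\sigma$ the solution stays in the ball.

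Next we show $\Phi(\sigma,\blue g^\sigma)=0$. We have $\Phi(0,\bone)=\cG^i(\cD_i\bone)=0$, since $\cD_i\bone=0$: $H_i\bone=0$ because $\bone$ has no $\cI_1(A_1\cI_1A_1)$ component. By the chain rule, using Fr\'echet differentiability in $\blue g$ (Lemma~\ref{lem:D_2Phi}) and the fact that $\Phi$ is affine in $\sigma$,
\[
\tfrac{\dif}{\dif\sigma}\Phi(\sigma,\blue g^\sigma) = \partial_\sigma\Phi(\sigma,\blue g^\sigma)+D_2\Phi(\sigma,\blue g^\sigma)[\partial_\sigma\blue g^\sigma]=0\;,
\]
so $\Phi(\sigma,\blue g^\sigma)=0$ for all $\sigma\in[0,\sigma_\sol]$.

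For stability \eqref{eq:sol_Lip}, note that $\blue g^\sigma$ and $\blue{\bar g}^{\bar\sigma}$ live in different spaces, so we use the distance $\|\Cdot;\Cdot\|$ and a Gr\"onwall argument on $\delta(\sigma):=\|\blue g^\sigma;\blue{\bar g}^\sigma\|$. Write each solution as an integral $\bone+\int_0^\sigma F$. Using \eqref{eq:diff_sum} for sums, extended to integrals by Riemann sums, we get $\delta(\sigma)\leq\int_0^\sigma\|F^\sfZ(s,\blue g^s);F^{\bar\sfZ}(s,\blue{\bar g}^s)\|\dif s$. By \eqref{eq:D2Phi_inv_stability} and the stability estimates of Propositions~\ref{prop:mult}, \ref{prop:inversion} and \ref{prop:integration}, the integrand is at most $C(\triple{\sfZ;\bar\sfZ}+\delta(s))$. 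Gr\"onwall then gives $\delta(\sigma)\lesssim\triple{\sfZ;\bar\sfZ}$. The $|\sigma-\bar\sigma|$ term follows from $\|\partial_\sigma\blue{\bar g}\|\leq 2C_L$. This step is the main obstacle: the distance is not a norm on a common space, so the integral inequality has to be justified through compatibility of $\Gamma$ versus $\bar\Gamma$ for each pointwise difference. We expect this to work because the distance is defined pointwise in terms of $\blue f(x)-\blue{\bar f}(x)$ and $\Gamma_{xy}\blue f(y)-\bar\Gamma_{xy}\blue{\bar f}(y)$, both of which are linear in the pair $(\blue f,\blue{\bar f})$.

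Finally, for a canonical model we show $\blue g^\sigma\in\scG$. By Lemma~\ref{lem:D_2Phi_inv}\ref{pt:D_2U} and Proposition~\ref{prop:Phi_well_defined}, whenever $\blue g\in\scG$ the vector field $F(\sigma,\blue g)$ lies in $\scU_{\lin,0}\blue g$, which is the tangent space to the Banach Lie group $\scG$ by Lemma~\ref{lem:G_Lie_group}. We therefore solve the same ODE on the manifold $\scG$; it is a closed submanifold, and locally we work in exponential coordinates. Uniqueness of the ambient solution then identifies the two solutions, so $\blue g^\sigma\in\scG$.
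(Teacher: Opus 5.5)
Your proposal is correct and follows the paper's proof essentially step for step: Picard--Lindel\"of for the ODE $\partial_\sigma \blue g^\sigma = D_2\Phi(\sigma,\blue g^\sigma)^{-1}[\cG^1(\blue g^\sigma\blue{A_1}(\blue g^\sigma)^{-1})]$, the chain rule together with $\Phi(0,\bone)=0$, an integral stability estimate via \eqref{eq:diff_sum} and \eqref{eq:D2Phi_inv_stability}, and, for canonical models, tangency of the vector field to $\scG$. On the last point, the field lies in $\scU_{\lin,0}\blue g$, which is contained in the tangent space $\scU_{\alg,0}\blue g$ rather than equal to it.

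The differences are cosmetic. You close the stability estimate with Gr\"onwall, whereas the paper absorbs the $\|\blue g^s;\blue{\bar g}^s\|$ term by taking $\sigma_\sol$ small. The concern you flag is harmless, because $(\blue f,\blue{\bar f})\mapsto\|\blue f;\blue{\bar f}\|_{\frac32-3\kappa,\frac12-\kappa}$ is a continuous seminorm on the product $\scE^\sfZ_\alg\times\scE^{\bar\sfZ}_\alg$. Hence $\|\int_0^\sigma F;\int_0^\sigma \bar F\|\le\int_0^\sigma\|F;\bar F\|$ holds for Bochner integrals, which is exactly what the paper uses implicitly.
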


\begin{remark}\label{rem:non_unique_g}
The reader should keep in mind that there is not a unique $\blue g^\sigma \in \scG$ such that $\Phi(\sigma,\blue g)=0$ for $\sigma>0$.
Even for $\sfZ$ as the canonical model for $A_1\equiv 0$,
by the the proof of Lemma \ref{lem:expression_g_2} below,
there is a family of solutions given by
$\blue g^\sigma = 1_G\1 + \sigma 1_G\blue{\cI_1A_1} + g_1\blue{\cI_1(A_1\cI_1A_1)} + g_2 \blue{(\cI_1 A_1)^2}$,
where $g_1 \in \rmM_\C(N)$ is arbitrary and $g_2 \in \rmM_\C(N)$ is determined by $g_1$ and $\sigma$.
This non-uniqueness essentially arises from our choice to apply Leibniz rule when defining $\rmD_1\blue{(\cI_1A_1)^2}$.
\end{remark}

\begin{proof}
We first claim that there exists a differentiable map
\[
[0,\sigma_0]\ni\sigma\mapsto \blue{g^\sigma} \in \scI_\alg
\]
for $\sigma_0$ sufficiently small such that
\begin{equation}\label{eq:g_ODE}
\begin{split}{}
&\partial_\sigma \blue{g}^\sigma
=
-D_2\Phi(\sigma,\blue{g}^\sigma)^{-1} [D_1\Phi(\sigma,\blue{g}^\sigma)]
=
D_2\Phi(\sigma,\blue{g}^{\sigma})^{-1} [\cG^{1}\blue{g}^{\sigma} \blue{A_1} (\blue{g}^{\sigma})^{-1}]
\;,
\\
&\blue{g}^0
=
\bone\;.
\end{split}
\end{equation}
Indeed, note that, by Proposition \ref{prop:Phi_well_defined},
$\cG^{1}\blue{g} \blue{A_1} \blue{g}^{-1} \in \scE_{\lin,0}$ for any $\blue g \in \scI_\alg$.
It follows from Lemma \ref{lem:D_2Phi_inv}\ref{pt:D_2E} and Propositions \ref{prop:mult} and \ref{prop:integration}
that
the map
\[
\scI_\alg \ni \blue g \mapsto D_2\Phi(\sigma,\blue{g})^{-1} [\cG^{1}\blue{g} \blue{A_1} \blue{g}^{-1}]
\in
\scE_{\lin,0}\blue g
\]
is Lipschitz in $\blue g\in\scI_\alg$ for which $\|\blue g-\bone\|_{\frac32-3\kappa,\frac12-\kappa}\leq \sigma_0$ with Lipschitz constant uniform in $\sigma\leq\sigma_0$.
Observe the inclusion $\{\blue g\in\scE_\alg\,:\, \|\blue g-\bone\|_{\frac32-3\kappa,\frac12-\kappa}\leq \sigma_0\} \subset\scI_\alg$ for $\sigma_0$ sufficiently small.
Local existence (and uniqueness) of solutions to the ODE \eqref{eq:g_ODE} with $\blue g^\sigma\in\scI_\alg$ now follows from the classical Picard--Lindel{\"o}f theorem.
Note that the interval of existence $[0,\sigma_\sol]$ and the Lipschitz constant of $\sigma \mapsto \blue g^\sigma\in\scI_\alg$ depend only on $L$.

Observe now that for any differentiable map $\sigma\mapsto \blue{g}^\sigma$,
by the chain rule,
\[
\partial_\sigma \Phi(\sigma,\blue{g}^\sigma) = D_1\Phi^Z(\sigma,\blue{g}^\sigma) + D_2\Phi^Z(\sigma,g^\sigma)[\partial_\sigma \blue{g}^\sigma]
\;.
\]
Therefore, since $\Phi(0,\bone)=0$,
if $\blue g^\sigma$ solves \eqref{eq:g_ODE}
then $\Phi(\sigma,\blue g^\sigma)=0$ for all $\sigma\in[0,\sigma_\sol]$.
This completes the proof of existence of $\blue g\in\scI_\alg$ for sufficiently small $\sigma$ such that $\Phi(\sigma,\blue g)=0$.

We now prove the bound \eqref{eq:sol_Lip}.
By Lipschtiz dependence of $\blue g^\sigma$ on $\sigma$, it suffices to consider $\sigma=\bar\sigma$.
Consider another model $\bar\sfZ$ with $\triple{\bar\sfZ}<L$ with corresponding solution $\blue{\bar g} \in\scI_\alg^{\bar\sfZ}$.
Then
\begin{equs}{}
&\sup_{\sigma\in [0,\sigma_\sol]}\|\blue{g}^\sigma; \blue{\bar g}^{\sigma}\|_{\frac32-3\kappa,\frac12-\kappa}
\leq  \int_0^{\sigma_\sol} \|\partial_s\blue{g}^s; \partial_s\blue{\bar g}^{s}\|_{\frac32-3\kappa,\frac12-\kappa} \dif s
\\
&=
\int_0^{\sigma_\sol} \|D_2\Phi^\sfZ(s,\blue{g}^s)^{-1} [D_1 \Phi^\sfZ(s,\blue{g}^s)];D_2\Phi^{\bar\sfZ}(s,\blue{\bar g}^s)^{-1} [D_1 \Phi^{\bar\sfZ}(s,\blue{\bar g}^s)]\|_{\frac32-3\kappa,\frac12-\kappa} \dif s
\\
&\lesssim
\int_0^{\sigma_\sol} \{
\|D_1 \Phi^\sfZ(s,\blue{g}^s); D_1 \Phi^{\bar \sfZ}(s,\blue{\bar g}^s)\|_{\frac32-3\kappa,\frac12-\kappa}
\\
&\qquad
+
(\|D_1 \Phi^\sfZ(s,\blue{g}^s)\|_{\frac32-3\kappa,\frac12-\kappa}+\|D_1 \Phi^{\bar \sfZ}(s,\blue{\bar g}^s)\|_{\frac32-3\kappa,\frac12-\kappa}) (\triple{\sfZ;\bar \sfZ}_{\frac32-3\kappa} + \|\blue{g}^s;\blue{\bar g}^s\|_{\frac32-3\kappa,\frac12-\kappa} ) \} \dif s
\end{equs}
where we used \eqref{eq:diff_sum} and $\blue g^0 = \blue{\bar g}^0$ in the first bound, that $\blue{g}^\sigma,\blue{\bar g}^\sigma$ solve \eqref{eq:g_ODE} in the equality,
and Lemma \ref{lem:D_2Phi_inv}\ref{pt:D_2E} in the second bound.
Using that $\|D_1 \Phi^\sfZ(s,\blue{g}); D_1 \Phi^{\bar \sfZ}(s,\blue{\bar g})\|_{\frac32-3\kappa,\frac12-\kappa} \lesssim \|\blue g;\blue{\bar g}\|_{\frac32-3\kappa,\frac12-\kappa} + \triple{\sfZ;\bar\sfZ}_{\frac32-3\kappa}$ locally uniformly in $\blue g,\blue{\bar g}$,
we obtain that, for $\sigma_\sol$ sufficiently small,
\[
\sup_{\sigma\in[0,\sigma_\sol]}\|\blue g^\sigma; \blue{\bar g}^{\sigma}\|_{\frac32-3\kappa,\frac12-\kappa} \lesssim \triple{\sfZ;\bar \sfZ}_{\frac32-3\kappa}
\;,
\]
which completes the proof of \eqref{eq:sol_Lip}.

Finally, suppose $\sfZ$ is canonical.
Then by Lemma \ref{lem:D_2Phi_inv}\ref{pt:D_2U} and Proposition \ref{prop:Phi_well_defined},
\[
D_2\Phi(\sigma,\blue{g})^{-1} [\cG^{1}\blue{g} \blue{A_1} \blue{g}^{-1}] \in \scU_{\lin,0}\blue g
\]
whenever $\blue g\in\scG$.
By Lemma \ref{lem:G_Lie_group}, $\scU_{\alg,0} \blue g\supset \scU_{\lin,0}\blue g$ is the tangent space to $\scG$ at $\blue g$.
Consequently, every solution to the ODE \eqref{eq:g_ODE} remains in $\scG$ for all $\sigma\in [0,\sigma_0]$ since the initial value $\bone$ is in $\scG$,
which proves the claim.

\end{proof}

\subsection{Properties of solution}
\label{sec:properties_solution}

For the rest of the section, fix $L>0$, a model $\sfZ$ with $\triple{\sfZ}_{\frac32-3\kappa}< L$, and $\sigma_\sol$ as in Theorem \ref{thm:existence_solution}.
For $\sigma \in [0,\sigma_\sol]$, let $\blue g = \blue g^\sigma\in\scI_\alg$ be as in Theorem \ref{thm:existence_solution} such that $\Phi(\sigma,\blue g)=\blue 0$.
We next derive important structural properties for the ceofficients of $\blue g$.
For $\tau\in S_\sol$ and $\blue f\colon \R^2\to T_\sol\otimes\rmM_\C(N)$, let us write $[f^\tau]$ for the term of $\blue f$ in $T[\tau]\otimes \rmM_{\C}(N)$.
As usual $f^\1$ will denote coefficient of $\1$ in the expansion of $\blue f$, i.e. $[f^\1] = \blue{\1}\otimes f^\1$.

\begin{lemma}\label{lem:expression_g_1}
$[g^{\cI_1A_1}]=\sigma g^\1\blue{\cI_1A_1}$.
In particular,
\begin{equation}\label{eq:g_g-1_expansion}
	\blue g = g^\1\blue{\1}+\sigma g^\1\blue{\cI_1A_1}
+ \blue \mcQ\;,
\qquad
\blue g^{-1}=(g^\1)^{-1}\blue{\1}-\sigma
\blue{\cI_1A_1} (g^{\1})^{-1}
+ \blue{\bar\mcQ}\;,
\end{equation}
where $\blue\mcQ,\blue{\bar\mcQ}$ take values in $\bigoplus_{|\tau|\geq 1-2\kappa} T[\tau]\otimes \rmM_{\C}(N)$.
\end{lemma}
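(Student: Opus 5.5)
We propose to read off the $T[\cI_1A_1]$-component of the identity $\Phi(\sigma,\blue g)=0$. Write the coefficient as $[g^{\cI_1A_1}] = \sum_i g^i\,\blue{\cI_1A_1^{e^i}}$ with $g^i\colon\Lambda^\circ\to\rmM_\C(N)$. We aim to show $\sum_i g^i\otimes e^i = \sigma g^\1\otimes \mathrm{id}$, i.e.\ $g^i = \sigma g^\1 e_i$. In the notation $\blue{A_1}=\sum_i \blue{A_1^{e^i}}\otimes e_i$, this is exactly the claim $[g^{\cI_1A_1}]=\sigma g^\1\blue{\cI_1A_1}$.

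\textbf{Step 1: the integrand.} By Proposition~\ref{prop:integration}, $\cG^i$ acts on non-polynomial components as $\cI_i$, up to polynomial terms. Hence the $T[\cI_1A_1]$-component of $\Phi(\sigma,\blue g)$ is $\cI_1$ applied to the $T[A_1]$-component of the integrand. The other candidate sources of $\cI_1A_1$ do not contribute, for two reasons. First, $\cI_2$ maps $T[\cF_2]$ only into $T[\cI_1\cI_{22}A_1]$. Second, by the argument of Proposition~\ref{prop:stability_U_sol}\ref{pt:DT_U_sol}, these $\cI_1\cI_{22}A_1$ contributions cancel in the sum over $i$.

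\textbf{Step 2: the $T[A_1]$-component of $(\cD_1\blue g)\blue g^{-1}$.} Using Definition~\ref{def:derivatives}, $\rmD_1\blue{\cI_1A_1^{e^i}} = \blue{A_1^{e^i}} - \blue{\cI_{22}A_1^{e^i}}$. The only way to produce a homogeneity $-\frac12-\kappa$ term is through $\rmD_1[g^{\cI_1A_1}]$. Applying $\rmD_1$ to the other symbols gives different symbols: $\rmD_1$ of $(\cI_1A_1)^2$ or of $\cI_1(A_1\cI_1A_1)$ lands in $T[A_1\cI_1A_1]$ or $T[\cI_{22}\cdot]$, and $\rmD_1$ of $\rmX$ lands in $T[\1]$. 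The $H_1$-correction is proportional to $\blue\1$. Multiplying by $\blue g^{-1}$ contributes its $\blue\1$-component $(g^\1)^{-1}$, since every other term of $\blue g^{-1}$ has positive homogeneity and so raises the symbol. The $T[A_1]$-component of the first term is therefore $\sum_i g^i (g^\1)^{-1}\blue{A_1^{e^i}}$.

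\textbf{Step 3: the $T[A_1]$-component of $\sigma\blue g\blue{A_1}\blue g^{-1}$.} By the same homogeneity reasoning, this component is $\sigma g^\1\blue{A_1}(g^\1)^{-1}$. In basis form it reads $\sigma\sum_i g^\1 e_i (g^\1)^{-1}\blue{A_1^{e^i}}$.

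\textbf{Step 4: conclusion.} Setting the $T[\cI_1A_1]$-component of $\Phi$ to zero gives
\[
\sum_i\bigl(g^i(g^\1)^{-1} - \sigma g^\1 e_i (g^\1)^{-1}\bigr)\blue{\cI_1A_1^{e^i}}=0 .
\]
Since the $\blue{\cI_1A_1^{e^i}}$ are linearly independent, $g^i=\sigma g^\1e_i$ for each $i$, which is the claim.

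\textbf{Expansion of $\blue g^{-1}$.} The expansion of $\blue g$ in \eqref{eq:g_g-1_expansion} is then immediate. For $\blue g^{-1}$, insert the result into \eqref{eq:g_inv_explicit}:
\[
-(g^\1)^{-1}\,\sigma g^\1 e_i\,(g^\1)^{-1}\blue{\cI_1A_1^{e^i}} = -\sigma\,\blue{\cI_1A_1}\,(g^\1)^{-1}.
\]

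\textbf{Main obstacle.} The hard part is Step 1. One must verify carefully that no other term in $\Phi$ lands in $T[\cI_1A_1]$ after applying $\cG^i$. The candidates to rule out are the polynomial corrections $\blue\cQ$ of Proposition~\ref{prop:integration}, which have different homogeneity, and any $\cI_1\cI_{22}A_1$ residue, which is handled by the cancellation in the sum over $i$. A further point is that the product $\circ_{\frac12-3\kappa}$ truncation still retains the homogeneity-$(-\frac12-\kappa)$ component; it does, since $-\frac12-\kappa<\frac12-3\kappa$.
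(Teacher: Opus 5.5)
Your proposal is correct and follows the paper's proof. Both arguments read off the $T[\cI_1A_1]$-component of $\Phi(\sigma,\blue g)=0$, which receives contributions only from $\rmD_1$ of the $\cI_1A_1$-term of $\blue g$ multiplied by $(g^\1)^{-1}$ and from $\sigma g^\1\blue{A_1}(g^\1)^{-1}$; the expansion of $\blue g^{-1}$ then follows from \eqref{eq:g_inv_explicit}. One point in Step~1 (and in your \enquote{main obstacle}) is superfluous: the cancellation of $\cI_1\cI_{22}A_1$ is not needed, because that symbol is a distinct basis element of homogeneity $1-2\kappa$ and cannot affect the $T[\cI_1A_1]$-component whether or not it cancels.
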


\begin{proof}
The terms in $T[\CI_1 A_1]\otimes \rmM_{\C}(N)$ in the expansion of $\Phi(\sigma,\blue g)$
come either from the first term in \eqref{eq:Phi_def} by taking $\CI_1A_1$ term from $\blue g$ and $\1$ term from $\blue{g}^{-1}$ (we recall for this that $\rmD_1\blue{\CI_1A_1} = \blue{A_1}-\blue{\CI_{22}A_1}$),
or from the second term in \eqref{eq:Phi_def}
by taking $g^\1\blue\1$ from $\blue g$ and $(g^\1)^{-1}\blue\1$ from $\blue{g}^{-1}$.
Therefore
\[
[g^{\cI_1A_1}](g^\1)^{-1}-\sigma g^\1\blue{\cI_1A_1} (g^\1)^{-1}=0
\]
from which the first claim follows.
The expression for $\blue g^{-1}$ is a direct consequence of \eqref{eq:g_inv_explicit}.
\end{proof}

\begin{lemma}\label{lem:expression_g_square}
$[g^{(\cI_1A_1)^2}] = \frac{\sigma^2}{2}g^\1 \blue{(\cI_1A_1)^2}$.
\end{lemma}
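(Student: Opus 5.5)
We follow the strategy of Lemma~\ref{lem:expression_g_1}: extract from the identity $\Phi(\sigma,\blue g)=0$ the component in $T[\cI_1(A_1\cI_1A_1)]\otimes\rmM_\C(N)$. That component depends linearly on the unknown coefficient $[g^{(\cI_1A_1)^2}]$. By Lemma~\ref{lem:expression_g_1} we already know $[g^{\cI_1A_1}]=\sigma g^\1\blue{\cI_1A_1}$. We write $\blue g = g^\1\blue\1+\sigma g^\1\blue{\cI_1A_1}+g_1\blue{\cI_1(A_1\cI_1A_1)}+g_2\blue{(\cI_1A_1)^2}+(\text{polynomial terms})$, with shorthand matrix/tensor coefficients $g_1,g_2$. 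The inverse $\blue g^{-1}$ is then given by \eqref{eq:g_inv}, truncated at homogeneity $<\frac12-3\kappa$ after multiplication.

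\textbf{Step 1.} Compute $\rmQ_{<\frac12-3\kappa}$ of $(\cD_i\blue g)\blue g^{-1}$ on the sector $T[A_1\cI_1A_1]$ (for $i=1$), since only this sector produces $\cI_1(A_1\cI_1A_1)$ after applying $\cI_1$.
\begin{itemize}
\item[(a)] Apply $\rmD_1$ to $\blue{(\cI_1A_1)^2}$ using Leibniz. By Example~\ref{ex:der_square}, this gives $2\,\blue{A_1\cI_1A_1}$ symmetrised, up to terms in $\cI_{22}$ which are truncated.
\item[(b)] Apply $\rmD_1$ to $\blue{\cI_1(A_1\cI_1A_1)}$, which gives $\blue{A_1\cI_1A_1}$.
\item[(c)] Form the cross product of $\rmD_1(\sigma g^\1\blue{\cI_1A_1})=\sigma g^\1\blue{A_1}+\dots$ with the $\blue{\cI_1A_1}$ term $-\sigma\blue{\cI_1A_1}(g^\1)^{-1}$ of $\blue g^{-1}$. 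This yields $-\sigma^2 g^\1\blue{A_1}\blue{\cI_1A_1}(g^\1)^{-1}$.
\end{itemize}
The $i=2$ term contributes nothing in this sector, since $\rmD_2$ sends $\cI_1\tau$ into $\cF_2$.

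\textbf{Step 2.} Compute the same sector in $\sigma\,\blue g\blue{A_1}\blue g^{-1}$. This gives $\sigma^2 g^\1\blue{\cI_1A_1}\,\blue{A_1}(g^\1)^{-1}-\sigma^2 g^\1\blue{A_1}\blue{\cI_1A_1}(g^\1)^{-1}$, where the order reflects the matrix-valued $\mfg$ factors.

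\textbf{Step 3.} The components of $\cG^1$ in $T[\cI_1(A_1\cI_1A_1)]$ are obtained simply by applying $\cI_1$, since $\cG^i$ differs from $\cI_i$ only by polynomials (Proposition~\ref{prop:integration}). Equating the total coefficient to zero gives one linear relation between $g_1$, $g_2$ and $\sigma^2 g^\1$. The relation reads schematically $g_1+(\text{Leibniz contribution of }g_2)+\sigma^2(\dots)=0$.

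\textbf{Step 4 (main obstacle).} By Remark~\ref{rem:non_unique_g}, $g_1$ is \emph{not} determined by $\Phi=0$ alone. So the coefficient $\frac{\sigma^2}{2}$ for $g_2$ cannot come from Step 3 by itself. It must come from how $\blue g^\sigma$ was constructed, namely the ODE \eqref{eq:g_ODE} with $\blue g^0=\bone$.

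We therefore differentiate in $\sigma$. The velocity $\partial_\sigma\blue g = D_2\Phi^{-1}[\cG^1\blue g\blue{A_1}\blue g^{-1}]$ lies in $\scE_{\lin,0}\blue g$. Since $T[\cU_\lin]$ contains no $(\cI_1A_1)^2$, the $(\cI_1A_1)^2$ component of $\partial_\sigma\blue g$ comes only from $w\,\blue g$ with $w\in\scE_{\lin,0}$, through the product of the $\blue{\cI_1A_1}$ parts of $w$ and $\blue g$. Step-1 style reasoning, as in Lemma~\ref{lem:expression_g_1}, identifies the $\cI_1A_1$ part of $w$ as $\blue{\cI_1A_1}\,1_G$ (times a coefficient). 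Hence
\[
\partial_\sigma[g^{(\cI_1A_1)^2}]=\sigma g^\1\blue{(\cI_1A_1)^2}+(\partial_\sigma g^\1\text{ terms}),
\]
and the $\partial_\sigma g^\1$ contribution must be tracked via $\partial_\sigma(g_2(g^\1)^{-1})$. Integrating from $0$ with $g_2(0)=0$ then gives $\frac{\sigma^2}{2}g^\1$.

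The delicate part is bookkeeping the noncommutative order between $g^\1$ and the $\mfg$-valued tensor legs. We would work with the normalised coefficient $g_2(g^\1)^{-1}$ to absorb this.
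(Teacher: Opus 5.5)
\textbf{Where the proposal stands.} Your Step 4 is the route the paper takes. You write $\partial_\sigma\blue g=\blue u\blue g$ with $\blue u\in\scE_{\lin,0}$, note that $\blue u$ has no $(\cI_1A_1)^2$ component, and solve the resulting linear ODE for $[g^{(\cI_1A_1)^2}]$ with zero initial datum. You are also right that $\Phi=0$ alone cannot give the lemma. Steps 1--3 can be dropped: the relation they produce between $g_1$ and $g_2$ is what the paper uses for Lemma~\ref{lem:expression_g_2}, not here.

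\textbf{The pivotal identification is wrong as stated.} The $\cI_1A_1$-component of $\blue u$ is not $\blue{\cI_1A_1}1_G$ times a coefficient. It is the conjugate $[u^{\cI_1A_1}]=g^\1\blue{\cI_1A_1}(g^\1)^{-1}=\sum_i\blue{\cI_1A_1^{e^i}}\otimes g^\1e_i(g^\1)^{-1}$. This conjugation is exactly what makes the source term correct: $g^\1\blue{\cI_1A_1}(g^\1)^{-1}\cdot\sigma g^\1\blue{\cI_1A_1}=\sigma g^\1\blue{(\cI_1A_1)^2}$. With $c\,\blue{\cI_1A_1}$ you would instead get $\sum_{i,j}\blue{\cI_1A_1^{e^i}}\star\blue{\cI_1A_1^{e^j}}\otimes c\sigma e_ig^\1e_j$, which is not of the claimed form. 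So your displayed equation is right, but it does not follow from what you wrote.

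\textbf{How to justify the correct identification.} Compare coefficients in $\partial_\sigma\blue g=\blue u\blue g$.
\begin{itemize}
\item The $\1$-coefficient gives $\partial_\sigma g^\1=u^\1g^\1$.
\item Differentiate $[g^{\cI_1A_1}]=\sigma g^\1\blue{\cI_1A_1}$, which holds for every $\sigma$ by Lemma~\ref{lem:expression_g_1}. This gives $(g^\1+\sigma\partial_\sigma g^\1)\blue{\cI_1A_1}$.
\item Match this with the $\cI_1A_1$-coefficient $u^\1[g^{\cI_1A_1}]+[u^{\cI_1A_1}]g^\1$ of $\blue u\blue g$; the $\sigma$-terms cancel and the formula follows.
\end{itemize}
Your ``Step-1 style'' idea also works if carried out: read off the $\cI_1A_1$-component of $D_2\Phi(\sigma,\blue g)[\blue u\blue g]=\cG^1(\blue g\blue{A_1}\blue g^{-1})$. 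The $\sigma$-dependent contributions cancel and you get the same conjugated expression, not $\blue{\cI_1A_1}1_G$.

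\textbf{The normalisation is on the wrong side.} The equation is
\[
\partial_\sigma[g^{(\cI_1A_1)^2}]=\sigma g^\1\blue{(\cI_1A_1)^2}+u^\1[g^{(\cI_1A_1)^2}],\qquad u^\1=\partial_\sigma g^\1(g^\1)^{-1}.
\]
Here $u^\1$ acts from the left, so it is $(g^\1)^{-1}[g^{(\cI_1A_1)^2}]$ whose $\sigma$-derivative is $\sigma\blue{(\cI_1A_1)^2}$. Integrating from $0$ gives the claim. With $[g^{(\cI_1A_1)^2}](g^\1)^{-1}$ a commutator with $u^\1$ remains and the equation does not close. Alternatively, as the paper does, check that $\frac{\sigma^2}{2}g^\1\blue{(\cI_1A_1)^2}$ solves this linear ODE with zero initial datum and invoke uniqueness.
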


\begin{proof}
Define
\[
\blue{u}_\sigma = D_2\Phi(\sigma,\blue{g}_\sigma)^{-1} [\cG^{1}\blue{g}_\sigma \blue{A_1} \blue{g}_\sigma^{-1}] \blue g^{-1} \in \scE_{\lin,0}\;,
\]
so that, from the proof of Theorem \ref{thm:existence_solution},
\begin{equation*}
\dot{\blue g}_\sigma = \blue{u}_\sigma \blue g_\sigma\;,
\end{equation*}
where $\dot{\blue g}_\sigma$ refers to the $\sigma$-derivative of $\blue g_\sigma$.
Comparing coefficients of $\1$, it follows that
\begin{equation}\label{eq:dot_g_1}
\dot{g}_\sigma^\1 = u_\sigma^\1 g_\sigma^\1
\end{equation}
while comparing coefficients of $\cI_1A_1$,
\[
(g^\1_\sigma + \sigma\dot{g}^{\1}_\sigma)\blue{\cI_1A_1}
=
\sigma u_\sigma^\1 g_\sigma^\1 \blue{\cI_1A_1} + [u_\sigma^{\cI_1A_1} ] g_\sigma^\1
\;.
\]
Substituting $\dot{g}_\sigma^\1=u_\sigma^\1 g_\sigma^\1$ due to \eqref{eq:dot_g_1} and cancelling $
\sigma u_\sigma^\1 g_\sigma^\1 \blue{\cI_1A_1}$ on both sides, we obtain
\[
[u_\sigma^{\cI_1A_1} ] = g^\1_\sigma \blue{\cI_1A_1} (g^\1_\sigma)^{-1}\;.
\]
We now see that the coefficient of $(\cI_1A_1)^2$ is given by, using that
$[u^{(\cI_1A_1)^2}_\sigma]=0$,
\[
[\dot g^{(\cI_1A_1)^2}_\sigma]
=
\sigma g^\1_\sigma \blue{\cI_1A_1} \blue{\cI_1A_1}
+
u^\1_\sigma [g^{(\cI_1A_1)^2}_\sigma]
\;.
\]
This is an ODE for $[g^{(\cI_1A_1)^2}_\sigma]$ with initial condition zero,
the unique solution of which, by \eqref{eq:dot_g_1}, is
$
[g^{(\cI_1A_1)^2}_\sigma] = \frac{\sigma^2}{2}g^\1_\sigma\blue{(\cI_1A_1)^2}$.
\end{proof}

\begin{lemma}\label{lem:expression_g_2}
$[g^{\cI_1A_1\cI_1A_1}]
=
\frac{\sigma^2}{2} g^{\1} \blue{\cI_1 [\cI_1A_1,A_1]}$,
where $\blue{[A_1,\cI_1A_1]}$ is the Lie bracket in $T[\cU_\alg]\otimes \rmM_\C(N)$ induced by the 
usual associative product $\circ_{\frac32-3\kappa}$.
\end{lemma}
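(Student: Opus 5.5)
The plan is to argue as in Lemma~\ref{lem:expression_g_1}: extract the component of $\Phi(\sigma,\blue g)=\blue 0$ lying in $T[\cI_1(A_1\cI_1A_1)]\otimes\rmM_\C(N)$ and solve the resulting linear identity for the unknown coefficient. The inputs are the expansions \eqref{eq:g_g-1_expansion} from Lemma~\ref{lem:expression_g_1} and Lemma~\ref{lem:expression_g_square}. Together they give
\[
\blue g=g^\1\blue\1+\sigma g^\1\blue{\cI_1A_1}+[g^{\cI_1(A_1\cI_1A_1)}]+\tfrac{\sigma^2}{2}g^\1\blue{(\cI_1A_1)^2}+(\text{polynomial and }\cI_1\cI_{22}\text{-free remainder})
\]
together with the matching expansion of $\blue g^{-1}$. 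The only further facts needed about $\cG^i$ are from Proposition~\ref{prop:integration}: its non-polynomial part is $\cI_i$ applied pointwise, and $\cI_i$ is injective on basis elements.

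First I isolate which products land in $T[A_1\cI_1A_1]\otimes\rmM_\C(N)$ before $\cI_1$ is applied. The $\cI_2$-branch can be discarded. $\rmD_2$ maps $\cI_1\tau$ into $\cI_{12}\tau$, so $\cI_2$ only produces symbols of the form $\cI_{12}$ or $\cI_1\cI_{22}A_1$. The symbol $\rmD_2\blue{\cI_1(A_1\cI_1A_1)}$ lies in $\cF$ and is removed by the truncation $\rmQ_{<\frac12-3\kappa}$. The $H_i$ corrections in \eqref{eq:rmD_gamma_def} are multiples of $\blue\1$, so they only affect polynomial terms. Likewise every $\blue{\cI_{22}\cdot}$ piece produced by $\rmD_1$ either lands in $T[\cI_{22}A_1]$ or is truncated. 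This leaves four contributions from the first term of \eqref{eq:Phi_def}:
\begin{itemize}
\item $\rmD_1[g^{\cI_1(A_1\cI_1A_1)}]\,(g^\1)^{-1}$, which is the unknown;
\item the Leibniz derivative $\rmD_1\bigl(\tfrac{\sigma^2}{2}g^\1\blue{(\cI_1A_1)^2}\bigr)(g^\1)^{-1}$, giving $\tfrac{\sigma^2}{2}g^\1(\blue{A_1}\blue{\cI_1A_1}+\blue{\cI_1A_1}\blue{A_1})(g^\1)^{-1}$;
\item the cross term $\sigma g^\1\blue{A_1}\cdot(-\sigma\blue{\cI_1A_1}(g^\1)^{-1})$;
\item nothing further, since all other pairings exceed the required homogeneity.
\end{itemize}
The second term $-\sigma\cG^1(\blue g\blue{A_1}\blue g^{-1})$ contributes $-\sigma\bigl(\sigma g^\1\blue{\cI_1A_1}\,\blue{A_1}(g^\1)^{-1}-\sigma g^\1\blue{A_1}\,\blue{\cI_1A_1}(g^\1)^{-1}\bigr)$.

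Next I add these contributions, keeping track of the order of the matrix factors in $\circ_{\frac32-3\kappa}$. This matters because $\blue{A_1}=\sum_i\blue{A_1^{e^i}}\otimes e_i$, so $\blue{A_1}\blue{\cI_1A_1}$ and $\blue{\cI_1A_1}\blue{A_1}$ differ by their $\mfg$-matrix ordering, even though $\star$ is commutative. I then multiply the resulting identity on the right by $g^\1$. The symmetric part coming from the square cancels against the symmetric parts of the remaining two terms, and what is left is an antisymmetric combination proportional to $\sigma^2 g^\1[\blue{\cI_1A_1},\blue{A_1}]$. Since $\rmD_1\blue{\cI_1\tau}=\blue\tau+(\cI_{22}\text{-terms})$ and $\cI_1$ is injective, this identifies $[g^{\cI_1(A_1\cI_1A_1)}]$ as $\tfrac{\sigma^2}{2}g^\1\blue{\cI_1[\cI_1A_1,A_1]}$.

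The main obstacle is this bookkeeping step: confirming that the numerical prefactor comes out as exactly $\tfrac12$ and the sign as $[\cI_1A_1,A_1]$ rather than its negative. I would check this first in the commutative/trivial case ($g^\1=1_G$, scalar coefficients), where the bracket must vanish; this confirms that the symmetric contributions cancel exactly.

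If the direct $\Phi=0$ bookkeeping becomes ambiguous, I would use the ODE route from the proof of Lemma~\ref{lem:expression_g_square} as a cross-check. There $\dot{\blue g}_\sigma=\blue u_\sigma\blue g_\sigma$ with $\blue u_\sigma\in\scE_{\lin,0}$, and the coefficients $u^\1_\sigma$ and $[u^{\cI_1A_1}_\sigma]=g^\1\blue{\cI_1A_1}(g^\1)^{-1}$ are already known. Matching $\cI_1(A_1\cI_1A_1)$-coefficients would then give a linear ODE of the form $\dot X=u^\1X+(\text{known})$ with $X_0=0$, which is solved by $\tfrac{\sigma^2}{2}g^\1\blue{\cI_1[\cI_1A_1,A_1]}$ provided the $\cI_1(A_1\cI_1A_1)$-coefficient of $\blue u_\sigma$ is fixed.
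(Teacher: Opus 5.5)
Your proposal is correct and follows the paper's proof. Both extract the $T[\cI_1(A_1\cI_1A_1)]\otimes\rmM_\C(N)$-component of $\Phi(\sigma,\blue g)=\blue 0$, and your three contributions from the first term plus the two from the second term are exactly the paper's five; the two $\pm\sigma^2 g^\1\blue{A_1}\blue{\cI_1A_1}(g^\1)^{-1}$ terms cancel, and the remainder rearranges to $\tfrac{\sigma^2}{2}g^\1\blue{\cI_1[\cI_1A_1,A_1]}$. The ODE cross-check is unnecessary, and it would in any case first require determining the $\cI_1(A_1\cI_1A_1)$-coefficient of $\blue u_\sigma$.
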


\begin{proof}
There are five contributions to the term in $T[\CI_1(A_1\CI_1A_1)]\otimes \rmM_{\C}(N)$ of $\Phi(\sigma,\blue g)$:
\begin{enumerate}
	\item From the first term in \eqref{eq:Phi_def} by taking $(\cI_1 A_1)^2$ term from $\blue g$ and $\1$ from $\blue g^{-1}$;
	we recall for this the Leibniz rule $\rmD_1\blue{(\cI_1 A_1)^2} = (\blue{A_1} - \blue{\CI_{22} A_1})\blue{\CI_1 A_1} + \blue{\CI_1A_1}(\blue{A_1} - \blue{\CI_{22} A_1})$,
	\item from the first term in \eqref{eq:Phi_def} by taking $\cI_1(A_1\cI_1A_1)$ term from $\blue g$ and $\1$ from $\blue g^{-1}$,
	\item from the first term in \eqref{eq:Phi_def} by taking $\CI_1A_1$ term from $\blue g$ and $\CI_1A_1$ from $\blue g^{-1}$,
	\item from the second term in \eqref{eq:Phi_def} by taking $\CI_1A_1$ term from $\blue g$ and $\1$ from $\blue g^{-1}$,
	\item from the second term in \eqref{eq:Phi_def} by taking $\1$ term from $\blue g$ and $\CI_1A_1$ from $\blue g^{-1}$.
\end{enumerate}
Combining these contributions and using Lemmas \ref{lem:expression_g_1} and \ref{lem:expression_g_square},
we obtain
\begin{align*}
\frac{\sigma^2}{2} g^{\1} \blue{\cI_1(A_1\cI_1A_1 + (\cI_1A_1) A_1)}& (g^\1)^{-1}+
[g^{\cI_1A_1\cI_1A_1}] (g^\1)^{-1}
-\sigma^2 g^\1\blue{\cI_1 (A_1\cI_1 A_1)}(g^\1)^{-1}
\\
&-\sigma^2 g^\1\blue{\cI_1(\cI_1 (A_1)A_1)}(g^\1)^{-1} 
+\sigma^2 g^{\1} \blue{\cI_1(A_1\cI_1A_1)}(g^\1)^{-1}=0.
\end{align*}
Remark that the 3rd and 5th terms cancel and we obtain thd conclusion by rearranging the above expression.
\end{proof}

Lemmas \ref{lem:expression_g_1}, \ref{lem:expression_g_square} and \ref{lem:expression_g_2}
imply that
\begin{equation}\label{eq:expression_g}
\blue g=g^\1\blue{\1}+\sigma g^\1\blue{\cI_1A_1}+\frac{\sigma^2}{2} g^\1 (\blue{\cI_1 [\cI_1A_1,A_1]} + \blue{(\cI_1A_1)^2})+g^{\rmX_i}\rmX_i
\;.
\end{equation}
The following lemma is the main property of $\blue g$ that we will use in the rest of the paper.
Let us write $(\sigma \blue{A})^{\blue g}_1 = \sigma \blue g\blue{A_1}{\blue g}^{-1} - (\cD_1^{\frac32-3\kappa}\blue g){\blue g}^{-1}$
and
$(\sigma \blue{A})^{\blue g}_2 = - (\cD_2^{\frac32-3\kappa}\blue g){\blue g}^{-1}$,
which are modelled distributions in $\scD^{\frac12-3\kappa,-\frac12-\kappa}_{-\frac12-\kappa}\otimes \rmM_\C(N)$ by \eqref{eq:D_g_inv}-\eqref{eq:g_A1_g_inv}.

\begin{lemma}\label{lem:B_expresssion}
For $i=1,2$, let $\blue{B_i} =\rmQ_{<\frac12-3\kappa}(\sigma \blue{A})^{\blue g}_i$
and denote $f_i = (\cD_i^{\frac32-3\kappa}g)^\1$.
Then $\blue B_i \in \scD^{\frac12-3\kappa,-\frac12-\kappa}_{-\frac12-\kappa}$ and
\and
\begin{equation}\label{eq:B_expression}
\cR \blue B_i = \sigma g^\1 \zeta_i (g^\1)^{-1} - f_i(g^\1)^{-1}
\end{equation}
where $\zeta_1=\cR\blue{\cI_{22}A_1}$, $\zeta_2=-\cR\blue{\cI_{12}A_1}$,
and $\sigma g^\1 \zeta_i (g^\1)^{-1}$ are well-defined as Young products (see Lemma \ref{lem:Young_product}).
For any $\Lambda'\Subset\Lambda^\circ$, $|f_i(g^\1)^{-1}|_{C^{\frac12-3\kappa}(\Lambda')} \lesssim \dist(\d\Lambda,\d\Lambda')^{-\kappa-\frac12}$.
\end{lemma}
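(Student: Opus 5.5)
The plan is to compute $\blue{B_i}$ explicitly from the expansion \eqref{eq:expression_g} and \eqref{eq:g_g-1_expansion}, and to observe a cancellation at homogeneity $-\frac12-\kappa$. The membership $\blue{B_i}\in\scD^{\frac12-3\kappa,-\frac12-\kappa}_{-\frac12-\kappa}$ is immediate from \eqref{eq:D_g_inv}--\eqref{eq:g_A1_g_inv} and Proposition~\ref{prop:mult}, since $\rmQ_{<\frac12-3\kappa}$ does nothing beyond the truncation already built into $\circ_{\frac12-3\kappa}$.

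\textbf{Step 1: the terms of $\blue{B_1}$.} By Definition~\ref{def:derivatives}, $\rmD_1$ applied to $\sigma g^\1\blue{\cI_1A_1}$ gives $\sigma g^\1(\blue{A_1}-\blue{\cI_{22}A_1})$. Multiplying by $(g^\1)^{-1}$ and subtracting this from $\sigma\blue g\blue{A_1}\blue g^{-1}$ cancels the $\blue{A_1}$ term of homogeneity $-\frac12-\kappa$, leaving $+\sigma g^\1\blue{\cI_{22}A_1}(g^\1)^{-1}$. At homogeneity $-2\kappa$ I then collect:
\begin{itemize}
\item the terms $A_1\cI_1A_1$ coming from $\sigma\blue g\blue{A_1}\blue g^{-1}$ (cross terms with the $\sigma\blue{\cI_1A_1}$ parts of $\blue g$ and of $\blue g^{-1}$);
\item the terms coming from $\rmD_1$ of the $\sigma^2/2$ part of $\blue g$, via $\rmD_1\blue{\cI_1\tau}=\blue\tau-\blue{\cI_{22}\tau}$ and the Leibniz rule on $(\cI_1A_1)^2$;
\item the terms coming from $\rmD_1(\sigma g^\1\blue{\cI_1A_1})$ multiplied against $-\sigma\blue{\cI_1A_1}(g^\1)^{-1}$.
\end{itemize}
The algebra of Lemma~\ref{lem:expression_g_2} should make every $A_1\cI_1A_1$ contribution cancel, since $\blue g$ was chosen precisely so that $\Phi=0$ at these levels. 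The $\cI_{22}(A_1\cI_1A_1)$ and $(\cI_1A_1)(\cI_{22}A_1)$ pieces lie in $\cF$, which has homogeneity $\frac12-3\kappa$, so the truncation removes them. What remains are the polynomial/$\1$ terms $\sigma g^\1\blue{\cI_{22}A_1}(g^\1)^{-1} - f_1(g^\1)^{-1}\blue\1$, where $f_1$ is the $\1$-coefficient of $\cD_1\blue g$; this coefficient absorbs $g^{\rmX_1}$ and the $H_1$ correction.

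\textbf{Step 1 for $\blue{B_2}$.} Here $\rmD_2\blue{\cI_1A_1}=\blue{\cI_{12}A_1}$, so the analogous computation gives $-\sigma g^\1\blue{\cI_{12}A_1}(g^\1)^{-1}-f_2(g^\1)^{-1}\blue\1$, with the $\rmD_2$ of the $\sigma^2$ terms lying in $\cF$ and hence truncated.

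\textbf{Step 2: reconstruction.} Apply $\cR$ using the reconstruction property \eqref{eq:CR_defining}. Because $\blue{B_i}(x)=\sigma g^\1(x)\blue{\zeta_i\text{-symbol}}(g^\1)^{-1}(x)+\text{(coefficient)}\blue\1$, and $\Pi_x\blue{\cI_{i2}A_1}=K_{i2}*\Pi\blue{A_1}$ carries no recentering (by \eqref{eq:model_realisation_2}), the local approximation is $\Pi_x\blue{B_i}(x)=\sigma g^\1(x)\zeta_i(g^\1(x))^{-1}-f_i(x)(g^\1(x))^{-1}$. The Young product $\sigma g^\1\zeta_i(g^\1)^{-1}$ is defined because $g^\1\in C^{\frac12-\kappa}$ by Remark~\ref{rem:f1_Calpha} and $\zeta_i\in C^{-2\kappa}$, so the sum of exponents is positive. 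The estimate \eqref{eq:Young_bound} shows that $\sigma g^\1\zeta_i(g^\1)^{-1}$ differs from its frozen version by $O(\lambda^{\frac12-3\kappa})$. Since $f_i(g^\1)^{-1}$ is continuous, the candidate distribution on the right-hand side of \eqref{eq:B_expression} satisfies \eqref{eq:CR_defining} with exponent $\frac12-3\kappa>0$. Uniqueness of reconstruction then yields \eqref{eq:B_expression}.

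\textbf{Step 3: Hölder bound on $f_i(g^\1)^{-1}$.} This bound comes from the modelled distribution bound on $\cD_i\blue g\in\scD^{\frac12-3\kappa,-\frac12-\kappa}$ (Proposition~\ref{prop:derivatives}), used in the same way as in Remark~\ref{rem:f1_Calpha}. The $\1$-component of $\cD_i\blue g(x)-\Gamma_{xy}\cD_i\blue g(y)$ is bounded by $|x-y|^{\frac12-3\kappa}|x,y|_{\partial\Lambda}^{-1+2\kappa}$. The $\Gamma_{xy}$ corrections to it are $\bfh_{\cJ_1A_1}(x,y)$ multiplied by bounded coefficients of $\cI_{i2}$ or $A_1$ type; these vanish for $A_1$, since $\Gamma\blue{A_1}=\blue{A_1}$ and the same holds for $\cI_{i2}A_1$. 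The $\cF$-level coefficients are truncated. Hence $f_i$ is locally $(\frac12-3\kappa)$-Hölder on $\Lambda'$, with the constant governed by the boundary weight, and multiplying by $(g^\1)^{-1}\in C^{\frac12-\kappa}$ preserves this. The claimed power $\dist^{-\kappa-\frac12}$ comes from $|x|_{\partial\Lambda}^{\eta-\gamma}$ with $\eta=-\frac12-\kappa$ and $\gamma=\frac12-3\kappa$, which gives a power $-1+2\kappa$. Reconciling this with the stated $-\frac12-\kappa$ needs care: I would use $\eta-\ell$ weights for the sup norm and interpolate with the $C^{\frac12-\kappa}$ control of $g^\1$ to recover the stated exponent.

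\textbf{Main obstacle.} I expect the hardest part to be the bookkeeping in Step 1, namely verifying the exact cancellation of all $A_1\cI_1A_1$ terms with the non-commutative matrix coefficients.
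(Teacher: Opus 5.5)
Your Steps 1--2 follow the paper's proof: expand $\blue g,\blue g^{-1}$ via \eqref{eq:expression_g}--\eqref{eq:g_g-1_expansion}, truncate to get $\blue{B_1}=\sigma g^\1\blue{\cI_{22}A_1}(g^\1)^{-1}-f_1(g^\1)^{-1}\blue\1$ and $\blue{B_2}=-\sigma g^\1\blue{\cI_{12}A_1}(g^\1)^{-1}-f_2(g^\1)^{-1}\blue\1$, then reconstruct using \eqref{eq:CR_defining} and \eqref{eq:Young_bound}.

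The cancellation you left as ``should'' does hold. The $\frac{\sigma^2}{2}$-part of $\blue g$ contributes $\frac{\sigma^2}{2}g^\1\bigl(\blue{[\cI_1A_1,A_1]}+\blue{A_1\cI_1A_1}+\blue{(\cI_1A_1)A_1}\bigr)=\sigma^2 g^\1\blue{(\cI_1A_1)A_1}$ to $\rmD_1\blue g$. Adding the cross term $-\sigma^2 g^\1\blue{A_1\cI_1A_1}(g^\1)^{-1}$, the $A_1\cI_1A_1$-part of $(\cD_1\blue g)\blue g^{-1}$ is $\sigma^2 g^\1\blue{[\cI_1A_1,A_1]}(g^\1)^{-1}$. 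This is exactly the corresponding part of $\sigma\blue g\blue{A_1}\blue g^{-1}$ in \eqref{eq:gA1g-1}.

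Your ``because $\Phi=0$'' reason can in fact replace this bookkeeping:
\begin{itemize}
\item By \eqref{eq:Phi_def} and linearity of $\cG^i$, the non-polynomial part of $\Phi(\sigma,\blue g)$ is $-\cI_1\blue{B_1}-\cI_2\blue{B_2}$.
\item $\cI_1$ maps $T[A_1]$ and $T[A_1\cI_1A_1]$ injectively onto $T[\cI_1A_1]$ and $T[\cI_1(A_1\cI_1A_1)]$.
\item Neither $\cI_2T[\cF_2]=T[\cI_1\cI_{22}A_1]$, nor $\cI_1T[\cI_{22}A_1]$, nor the polynomial parts reach these spaces.
\item Hence $\Phi=0$ forces the $T[A_1]$ and $T[A_1\cI_1A_1]$ components of $\blue{B_1}$ to vanish.
\end{itemize}
What remains are the single-source $\cI_{22}A_1$, $\cI_{12}A_1$ and $\1$ coefficients, which need only Lemma~\ref{lem:expression_g_1}.

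One correction in Step 2: continuity of $f_i(g^\1)^{-1}$ alone does not give the rate $\lambda^{\frac12-3\kappa}$. You need its local H\"older continuity on compacts of $\Lambda^\circ$, from Step 3.

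The step that would fail is the end of Step 3. Your computation is right. With $d=\dist(\partial\Lambda,\partial\Lambda')$, the bounds on $\cD_i\blue g\in\scD^{\frac12-3\kappa,-\frac12-\kappa}$ give $\sup_{\Lambda'}|f_i|\lesssim d^{-\frac12-\kappa}$ but only $|f_i(x)-f_i(y)|\lesssim |x-y|^{\frac12-3\kappa}d^{-1+2\kappa}$. Indeed, no symbol of $\cD_i\blue g$ below $\frac12-3\kappa$ produces a $\1$-component under $\Gamma_{xy}$.

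No interpolation with the $C^{\frac12-\kappa}$ bound on $g^\1$ can improve the seminorm to $d^{-\frac12-\kappa}$. The exponent $-1+2\kappa$ is the scaling-consistent one for a quantity of size $d^{-\frac12-\kappa}$ that varies on scale $d$, and that is what the $\1$-coefficient looks like near $\partial\Lambda$. In the linearisation it contains the gradient of the harmonic extension of the $C^{\frac12-\kappa}$ boundary trace of $-\sigma K_1*A_1$.

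The paper does not derive this exponent either. It only invokes $g^\1\in C^{\frac12-\kappa}$, Proposition~\ref{prop:derivatives} and ``the corresponding blow up''. Its only use is through Theorem~\ref{thm:Coulomb_gauge}, whose constant need only depend on $\dist(\partial\Lambda,\partial\Lambda')$, and which is applied with the fixed $\Lambda'=[\frac18,\frac78]^2$. So drop the interpolation and record the bound as $d^{-1+2\kappa}$.
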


\begin{proof}
The fact that $\blue B \in \scD^{\frac12-3\kappa,-\frac12-\kappa}_{-\frac12-\kappa}$ follows from \eqref{eq:D_g_inv} and \eqref{eq:g_A1_g_inv}.
The claim that $\sigma g^\1 \zeta_i (g^\1)^{-1}$ are well-defined as Young products follows from $g^\1\in C^{\frac12-\kappa}(\Lambda)$ by Remark \ref{rem:f1_Calpha} and $\zeta_i\in C^{-2\kappa}$
(to apply Lemma \ref{lem:Young_product} we extend $g^\1$ by $1_G$ outside $\Lambda$).

The bound on $f_i(g^\1)^{-1}$ in $C^{\frac12-3\kappa}$ away from the boundary
is due to $g^{\1}\in C^{\frac12-\kappa}(\Lambda)$ and $f_i\in C^{\frac12-3\kappa}(\Lambda')$ with the corresponding blow up at the boundary because of $\blue g\in\scD^{\frac32-3\kappa,\frac12-\kappa}_0$ and Proposition \ref{prop:derivatives}.
It remains to prove \eqref{eq:B_expression},
for which we compute the two components $\blue{B_1}$ and $\blue{B_2}$ separately.
For $\blue{B_1}$, by Lemma \ref{eq:g_g-1_expansion},
\begin{equation}\label{eq:gA1g-1}
\begin{split}
\rmQ_{<\frac12-3\kappa}(\sigma\blue{gA_1}\blue{g}^{-1})
&=\sigma \rmQ_{<\frac12-3\kappa} (g^\1\blue{\1}+\sigma g^\1\blue{\cI_1A_1})\blue{A_1}((g^\1)^{-1}\blue\1-\sigma\blue{\cI_1A_1}(g^\1)^{-1})
\\
&=
\sigma g^\1\blue{A_1}(g^\1)^{-1}
-
\sigma^2 g^\1 \blue{A_1\cI_1A_1}(g^\1)^{-1}+\sigma^2g\blue{\cI_1(A_1)A_1}g^{-1}
\\
&=
\sigma g^\1\blue{A_1}(g^\1)^{-1}
+
\sigma^2 g^\1 [\blue{\cI_1A_1},\blue{A_1}](g^\1)^{-1}\;.
\end{split}
\end{equation}
We also compute, by \eqref{eq:expression_g},
\begin{align*}
\cD_1^{\frac32-3\kappa} \blue{g}
&=
\rmQ_{<\frac12-3\kappa}
\Big(
\sigma g^\1 \cD_1\blue{\cI_1A_1}
+ \frac{\sigma^2}{2} g^\1 \cD_1 (\blue{\cI_1 [\cI_1A_1,A_1]}
+ \blue{(\cI_1A_1)^2})
+g^{\rmX_1} \blue\1
\Big)
\;.
\end{align*}
By definition of $\rmD_1$ from Definition \ref{def:derivatives},
\begin{align*}
\rmD_1\blue{\cI_1A_1}&=\blue{A_1}-\blue{\cI_{22}A_1}\\
\rmD_1\blue{\cI_1[\cI_1A_1,A_1]}&=\blue{[\cI_1A_1,A_1]}-\blue{\cI_{22}[\cI_1A_1,A_1]} \\
\rmD_1\blue{(\cI_1A_1)^2}
&= (\blue{A_1}-\blue{\cI_{22}A_1})\blue{\cI_1 A_1}+\blue{\cI_1 A_1}(\blue{A_1}-\blue{\cI_{22}A_1})
\;,
\end{align*}
where the last expression is taken directly from Example \ref{ex:der_square}.
Substituting these expressions into $(\cD_1^{\frac32-3\kappa}\blue{g})\blue{g}^{-1}$,
and noting that the projection $\rmQ_{<\frac12-3\kappa}$ makes the contributions with $\blue{\cI_{22}\tau}$ from the last two lines vanish, we see that 
\begin{equation}\label{eq:D1gg-1}
\begin{split}
&\rmQ_{<\frac12-3\kappa}\bigl((\cD_1^{\frac32-3\kappa}\blue g)\blue g^{-1}\bigr)\\
&=
\rmQ_{<\frac12-3\kappa}
\Big[\Big(
\sigma g^\1\bigl(\blue{A_1}-\blue{\cI_{22}A_1}\bigr)
+\frac{\sigma^2}{2} g^\1 (\blue{[\cI_1A_1,A_1]}+
\blue{A_1\cI_1A_1}+\blue{(\cI_1A_1)A_1} )
\\
&\qquad\qquad\qquad +f_1\blue\1
\Big)
\Big((g^\1)^{-1}\blue\1-\sigma \blue{\cI_1A_1}(g^\1)^{-1}
\Big)
\Big]
\\
&=
\sigma g^\1\blue{A_1}(g^\1)^{-1}
-\sigma g^\1\blue{\cI_{22}A_1}(g^\1)^{-1}
\\
&\qquad
-\sigma^2 g^\1\blue{A_1\cI_1A_1}(g^\1)^{-1}
+\sigma^2 g^\1 \blue{(\cI_1A_1)A_1}(g^\1)^{-1}
+f_1(g^\1)^{-1}\blue\1
\\
&=
\sigma\, g^\1\blue{A_1}(g^\1)^{-1}
-\sigma\, g^\1\blue{\cI_{22}A_1}(g^\1)^{-1}
+\sigma^2 g^\1
\blue{[\cI_1A_1,A_1]}(g^\1)^{-1}
+f_1(g^{\1})^{-1}\,\blue\1
\;.
\end{split}
\end{equation}
Consequently, combining \eqref{eq:gA1g-1} and \eqref{eq:D1gg-1},
\begin{equation}\label{eq:B1_expression}
\begin{split}
\blue{B_1}
&=
\rmQ_{<\frac12-3\kappa}\bigl((\sigma\blue{gA_1g^{-1}})-(\cD_1^{\frac32-3\kappa}\blue g)\blue g^{-1}\bigr)
\\
&=
\sigma g^\1\blue{\cI_{22}A_1}(g^\1)^{-1}
-\;f_1(g^{\1})^{-1}\,\blue\1
\;.
\end{split}
\end{equation}
For $\blue{B_2}$, we have from \eqref{eq:expression_g} and the fact that symbols in $\cF$ in Table \ref{tab:U_F_sets_homogeneities} have homogeneity $\frac12-3\kappa$
\[
\cD_2^{\frac32-3\kappa} \blue g
=
\sigma g^\1 \blue{\cI_{12}A_1}
+f_2\blue\1
\;.
\]
Therefore, since each term in $\cD_2^{\frac32-3\kappa} \blue g$ has homogeneity at least $-2\kappa$,
\begin{equation}\label{eq:B2_expression}
\blue B_2
= - \rmQ_{<\frac12-3\kappa}\bigl((\cD_2^{\frac32-3\kappa}\blue g)\blue g^{-1}\bigr)
=
-\sigma g^\1 \blue{\cI_{12}A_1}(g^\1)^{-1}
- f_2(g^\1)^{-1}\blue\1
\;.
\end{equation}
The expression \eqref{eq:B_expression} now follows from \eqref{eq:B1_expression}-\eqref{eq:B2_expression} and the fact that $\sigma g^\1 \zeta_i (g^\1)^{-1}$ are well-defined as Young products,
which implies by the defining property of $\cR$ in \eqref{eq:CR_defining} and of Young products in \eqref{eq:Young_bound} that $\cR(g^\1 \blue{\cI_{22}A_1}(g^\1)^{-1}) = \sigma g^\1 \zeta_1 (g^\1)^{-1}$
and $\cR(g^\1 \blue{\cI_{12}A_1}(g^\1)^{-1}) = \sigma g^\1 \zeta_2 (g^\1)^{-1}$.
\end{proof}

Finally, we record the following simple property of $g^\1$ if $\sfZ$ is canonical.

\begin{lemma}\label{lem:g_smooth}
Suppose $\sfZ$ is a canonical model built from a smooth $A_1\colon \R^2\to \mfg$ (see Definition \ref{def:canonical_model}).
Then $g^\1 = \CR\blue g \colon \Lambda^\circ \to G$ is smooth and $\diff^* (\sigma A)^{g^\1} \equiv 0$ on $\Lambda^\circ$.
Finally, $g^\1 = 1_G$ if $A_1=0$.
\end{lemma}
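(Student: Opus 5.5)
We will prove Lemma~\ref{lem:g_smooth} in three steps: first show that $g^\1=\CR\blue g$ is smooth and $G$-valued, then read off the Coulomb condition from $\Phi(\sigma,\blue g)=0$, and finally treat the case $A_1=0$ through the ODE \eqref{eq:g_ODE}.

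\textbf{Smoothness and $G$-valuedness.} Since $\sfZ$ is canonical, \eqref{eq:reconst_canonical} gives $\CR\blue g(x)=(\Pi_x\blue g(x))(x)=g^\1(x)$, because every non-constant symbol in $\cU_\alg$ is realised by $\Pi_x$ as a function vanishing at $x$. Theorem~\ref{thm:existence_solution} gives $\blue g\in\scG$, so $g^\1$ is $G$-valued. The proof of Proposition~\ref{prop:Phi_well_defined} already shows $g^\1\in C^{1+}$ in the interior. To upgrade this to smoothness we reconstruct the equation. Proposition~\ref{prop:integration} gives $\CR\cG^i=G^\Dir*\partial_i\CR$. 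Propositions~\ref{prop:derivatives} and~\ref{prop:mult} give $\CR((\cD_i\blue g)\blue g^{-1})=(\partial_i g^\1)(g^\1)^{-1}$, and the reconstruction of $\blue g\blue{A_1}\blue g^{-1}$ is $g^\1A_1(g^\1)^{-1}$. These identities hold because for a canonical model $\CR$ is multiplicative on the relevant products: one checks this by point evaluation, using that the truncation at $\frac12-3\kappa$ only drops terms whose $\Pi_x$-realisations vanish at $x$ to positive order. Applying $\CR$ to $\Phi(\sigma,\blue g)=0$ therefore yields
\[
G^\Dir*\bigl(\partial_i((\partial_i g^\1)(g^\1)^{-1})-\sigma\partial_1(g^\1A_1(g^\1)^{-1})\bigr)=0 .
\]
Since $G^\Dir*$ is injective on distributions supported in $\Lambda$, with the relevant regularity by Lemma~\ref{lem:Schauder_Dir} (apply $\Delta$), the PDE \eqref{eq:intro_Psi} holds in $\Lambda^\circ$ in the sense of distributions. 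Writing $h=g^\1$, this is an elliptic equation of the form $\Delta h=(\partial_ih)h^{-1}(\partial_ih)+\sigma\partial_1(hA_1h^{-1})h$, with $A_1$ smooth. A standard Schauder bootstrap then applies: if $h\in C^{k+\theta}_{\mathrm{loc}}$ with $k\geq1$, the right-hand side lies in $C^{k-1+\theta}_{\mathrm{loc}}$, so $h\in C^{k+1+\theta}_{\mathrm{loc}}$. Iterating gives smoothness in $\Lambda^\circ$.

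\textbf{Coulomb condition.} This is now a direct computation. With $(\sigma A)^{h}=\sigma hAh^{-1}-(\diff h)h^{-1}$ we have
\[
\diff^*(\sigma A)^h=\sigma\partial_1(hA_1h^{-1})-\partial_i((\partial_ih)h^{-1}),
\]
which is exactly minus the PDE above, hence vanishes on $\Lambda^\circ$.

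\textbf{The case $A_1=0$.} The canonical model has $\Pi\blue\tau=0$ for every $\tau$ containing $A_1$, so $\cG^1(\blue g\blue{A_1}\blue g^{-1})$ reconstructs to $0$. We will show, however, that this modelled distribution is identically zero, not merely zero after reconstruction. Its non-polynomial coefficients are determined algebraically by $\blue g$, and its polynomial part is fixed by $\CR=0$ together with the $\scD^{\gamma}$ structure with $\gamma>1$. If that holds, the right-hand side of the ODE \eqref{eq:g_ODE} vanishes at $\bone$, so by uniqueness in Picard--Lindel\"of we get $\blue g^\sigma\equiv\bone$ and hence $g^\1=1_G$.

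\textbf{Expected obstacle.} The hard step is exactly this identification. The coefficient of $\blue{\cI_1 A_1}$ in $\cG^1(\blue{A_1})$ is $1$, not $0$, so the right-hand side of \eqref{eq:g_ODE} is not literally zero and the argument above fails as stated. The fix I would try is to bypass $\blue g$ and argue at the level of $h=g^\1$ directly. When $A_1=0$, the reconstructed PDE reads $\partial_i((\partial_ih)h^{-1})=0$ with $h|_{\partial\Lambda}=1_G$. Moreover $h$ is close to $1_G$ in $C^{\frac12-\kappa}$ by \eqref{eq:sol_Lip} (compare with $\sigma=0$), so we may write $h=\exp(w)$ with $w$ small and $w|_{\partial\Lambda}=0$. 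An energy argument then shows uniqueness of this harmonic-map-type Dirichlet problem near the identity: test the equation against $w$ and use smallness of $w$ in $C^1$ away from the boundary, handling the boundary layer with weights coming from \eqref{eq:f1_Calpha}. This yields $w=0$, hence $h=1_G$. The delicate part is controlling $h$ near $\partial\Lambda$, where only H\"older control is available.
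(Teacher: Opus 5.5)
Your first two steps (smoothness and the Coulomb condition) follow the paper's argument: reconstruct $\Phi(\sigma,\blue g)=0$, apply $\Delta$, and bootstrap Schauder estimates from the interior $C^{1+}$ regularity; the Coulomb condition is the same equation. The gap is in the case $A_1=0$. You correctly see that the right-hand side of \eqref{eq:g_ODE} does not vanish at $\bone$. However, the energy argument you substitute does not close. Testing $\partial_i((\partial_ih)h^{-1})=0$ against $w=\log h$ needs $\nabla h\in L^2(\Lambda)$ and vanishing boundary terms. The only control near $\partial\Lambda$ is what $\blue g\in\scD^{\frac32-3\kappa,\frac12-\kappa}$ and \eqref{eq:f1_Calpha} give, namely $|\nabla h(x)|\lesssim d(x,\partial\Lambda)^{-\frac12-\kappa}$ and $|w(x)|\lesssim d(x,\partial\Lambda)^{\frac12-\kappa}$. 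With these bounds $|\nabla h|^2$ is not integrable, and the boundary term on $\{d(\cdot,\partial\Lambda)=\delta\}$ is only $O(\delta^{-2\kappa})$, which does not tend to zero. So the ``boundary layer'' you leave open is exactly where the argument breaks, and no boundary-regularity input is supplied to fix it. As written, the last claim is not proved.

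There are two clean repairs.

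The paper stays at the level of modelled distributions. For $A_1=0$ all coefficients $\bfh$ in \eqref{eq:h_canonical}--\eqref{eq:h_canonical_2} vanish. Hence \eqref{eq:expression_g} with $g^\1=1_G$ and $g^{\rmX_i}=0$, that is $1_G\blue\1+\sigma 1_G\blue{\cI_1A_1}+\frac{\sigma^2}{2}1_G(\blue{\cI_1[\cI_1A_1,A_1]}+\blue{(\cI_1A_1)^2})$, lies in $\scG$. The coefficient computations of Lemmas~\ref{lem:expression_g_1}, \ref{lem:expression_g_square} and \ref{lem:expression_g_2} show that it solves \eqref{eq:g_ODE} starting from $\bone$. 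By Picard--Lindel\"of uniqueness it is $\blue g^\sigma$, so $g^\1=1_G$. Your first instinct was thus almost right: the trajectory is nontrivial, but its $\blue\1$-component is constant.

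Alternatively, keep your PDE route but use your computation pointwise instead of integrating it. With $\Psi(w)=(e^{\mathrm{ad}_w}-1)/\mathrm{ad}_w$ one has $(\partial_ih)h^{-1}=\Psi(w)\partial_iw$. Since $\mathrm{ad}_w$ is skew for the $\mathrm{Ad}$-invariant inner product and $\mathrm{ad}_w w=0$, one also has $\langle w,\Psi(w)v\rangle=\langle w,v\rangle$. The equation then gives, in $\Lambda^\circ$,
\[
\tfrac12\Delta|w|^2=\bigl\langle\partial_iw,\tfrac{\sinh\mathrm{ad}_w}{\mathrm{ad}_w}\partial_iw\bigr\rangle\geq0
\quad\text{whenever }\|\mathrm{ad}_w\|<\pi\;.
\]
Since $|w|^2$ is continuous on $\Lambda$ and vanishes on $\partial\Lambda$, the maximum principle gives $w\equiv0$ with no integrability needed at the boundary. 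The required smallness of $\sup|w|$ holds because the ODE solution stays in a small ball around $\bone$ (cf.~\eqref{eq:sol_Lip}), after shrinking $\sigma_0$ if necessary.
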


\begin{proof}
Applying $\Delta$ to the reconstruction of \eqref{eq:Phi_def} and using that $\CR\Phi(\sigma,\blue g)=0$ and $\cR\cG^i = G^\Dir*\partial_i$,
we obtain that $g^\1 = \CR\blue g$ solves the PDE on $\Lambda^\circ$
\begin{equation}\label{eq:g_PDE_explicit}
\sigma \partial_1 (g^\1 (A_1) (g^\1)^{-1})
=
\partial_i((\partial_i g^\1 )(g^{\1})^{-1})
=
\Delta g^\1 (g^\1)^{-1} + \partial_i g^\1 \partial_i (g^\1)^{-1}\;.
\end{equation}
We write this as $\Delta g^\1 = f$, where $f$ is a multilinear function of $g^\1$, $(g^{\1})^{-1}$, $\partial_i g^\1$, and $A_1$.
Since $\sfZ$ is canonical, by the argument at the end of the proof of Proposition \ref{prop:Phi_well_defined},
$g^\1$ has a H\"older continuous first derivative away from the boundary.
It follows from classical Schauder estimates that $g$ is smooth away from the boundary.
The equality $\diff^*(\sigma A)^{g^\1} = 0$ is equivalent to \eqref{eq:g_PDE_explicit}.

For the final claim, if $A_1=0$, then $\blue g^\sigma$ as in \eqref{eq:expression_g} with $g^\1=1_G$ and $g^{\rmX_i}=0$
is in $\scG$ since $\bfh_{\cJ_1\tau}=0$ according to \eqref{eq:h_canonical}.
One can then check directly from the computations in the proofs of Lemmas \ref{lem:expression_g_1}, \ref{lem:expression_g_square}, and \ref{lem:expression_g_2} that $\blue g^\sigma$ solves the ODE \eqref{eq:g_ODE} with initial condition $\bone$.
\end{proof}

\section{Model bounds} \label{s:model_bounds}

The goal of this section is to construct a model $\sfZ=(\Pi,\Gamma)$ (see~Definition~\ref{def:model}) for the regularity structure constructed in  the previous Section~\ref{sec:solution_theory}. As we have explained in the introduction (see Subsection~\ref{sec:proof_idea_construction_model}) the model is constructed from a rough additive function~$\bfA\in\bfOmega_{\alpha\ax}^1$ in a way that it coincides with canonical model in the case $\bfA$ is the canonical lift of a smooth $1$-form $A\in\Omega^1_{\infty\ax}$ (see~Definition~\ref{def:canonical_model}).

    We start with a smooth $1$-form $A\in\Omega^1_{\infty\ax}(\Lambda)$  in the axial gauge and we extend it to an element in $\Omega^1 C^1(\R^2)$ in the axial gauge, denoted by the same letter $A$, using Lemma~\ref{lem:domain_extension_RAF} and with support on $\Lambdaex$. We  denote by 
	\begin{itemize}
		\item $\bfA \in \bfOmega_{\alpha\ax}(\R^2)$ its extended canonical lift to a rough additive function (in the sense of Definition~\ref{def:RAF_ax} and~Lemma~\ref{lem:domain_extension_RAF}) with support in $\Lambdaex$,
		\item $\sfZ^A\label{symb:canonical_model_ZA}$ the canonical model associated to $A$ (in the sense of Definition~\ref{def:canonical_model} and Lemma \ref{lem:canonical}).
	\end{itemize}

	To keep the notation simple, we will simply write~$\triple{\cdot \thinspace ; \thinspace \cdot} :=\triple{\cdot \thinspace ; \thinspace\cdot}_{\gamma}$ for $1<\gamma\leq \frac{3}{2}-3\kappa$.
    The main result of this section is the following:
	
    \begin{theorem} \label{thm:construction_model}
	Let $\alpha = \frac{1}{2} - \kappa$ for $\kappa \in (0,\frac{1}{6})$, i.e. $\alpha\in (\frac 13,\frac 12)$, and consider $A, \bar{A} \in \Omega^1_{\infty\ax}$ with corresponding canonical lifts~$\bfA$ and~$\bar\bfA$.
	Then, the  map $\bfA \mapsto \sfZ^A$ is locally Lipschitz continuous in the sense that 
    \begin{align}\label{eq:thm:model_bound}
       \triple{\sfZ^A;\sfZ^{\bar{A}}} \lesssim \triple{\bfA;\bar\bfA}_{\alpha\ax},
    \end{align}
    where the proportionality constant is locally uniform in the rough additive functions. 
    \end{theorem}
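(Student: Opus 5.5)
The plan is to go through the symbols of $S_\sol$ in order of complexity and write each $\Pi^A_x\blue\tau$ and each coefficient $\bfh_\sigma(x,y)$ of the canonical model as an explicit (multi)linear expression in the line integrals $A(\ell)$ and the iterated integrals $\bA(\ell)$. Every such expression will then be bounded by $\triple{\bfA}_{\alpha\ax}$ at the correct homogeneity. Since the expressions are at most bilinear in $(A,\bA)$, writing $X\bar X - Y\bar Y=(X-Y)\bar X+Y(\bar X-\bar Y)$ turns each bound into the Lipschitz estimate \eqref{eq:thm:model_bound}. By the cocycle and Chen-type identities verified in Lemma~\ref{lem:canonical}, and by Remark~\ref{rem:bfh_explicit}, the $\Gamma$-bounds for the integrated symbols follow from the $\Pi$-bounds. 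For that step I will use the standard Schauder/extension estimate for $K_i$ (as in \cite[Sec.~5]{Hairer14}), since all relevant homogeneities after integration lie in $(0,1)$. So the real work is the $\Pi$-bounds for the non-integrated symbols $A_1$, $\rmD_2A_1$, $A_1\cI_1A_1$, $\rmD_2(A_1\cI_1A_1)$, and for the products.

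\emph{Linear symbols.} The bound $A_1\in C^{\alpha-1}$ is the embedding \eqref{eq:omega_beta_embedding}. For $\rmD_2A_1$ I test $\partial_2A_1$ against $\varphi^\lambda_x$ and write this as an average of differences $A(\ell)-A(\bar\ell)$ over horizontal segments of length $\lambda$ at vertical distance at most $\lambda$. Such segments satisfy $\ell\simhor\bar\ell$ and have $\Area(\ell,\bar\ell)\lesssim\lambda^2$, so $|A|_{\alpha\hor}$ gives $\lambda^{2\alpha-2}$, which is the homogeneity $-1-2\kappa$. By Schauder this yields $\cI_{i2}A_1 = K_i*\partial_2A_1\in C^{2\alpha-1}$ and $\cI_1\cI_{22}A_1\in C^{2\alpha}$. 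For $\cI_1A_1$ I use the decomposition $\Pi_x\blue{\cI_1A_1}(y)=A(\ell^{x;y})+Q^A(\ell^{x;y})$ with $|Q^A|_{1^-}\lesssim |A|_{\alpha\ax}$, which the introduction attributes to Proposition~\ref{prop:regularity_d*KFA} in the current section. So I must prove it here: subtract the line integral from $K_1*A_1(y)-K_1*A_1(x)$, and bound the remainder by splitting $K_1$ dyadically and using the triangle/area control of $A$.

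\emph{Quadratic symbols.} This is the main obstacle. For $A_1\cI_1A_1$, with $z=(x_1,y_2)$, I split
\[
\Pi_x\blue{A_1\cI_1A_1}(y)=A_1(y)A(\ell^{z;y})+A_1(y)\bigl(K_1*A_1(z)-K_1*A_1(x)\bigr)+A_1(y)\,(\text{remainder}).
\]
\begin{itemize}
\item The first term equals $\partial_{y_1}\bA(\ell^{z;y})$, the derivative of the iterated integral along the horizontal segment. Testing against $\varphi^\lambda_x$ and integrating by parts in $y_1$ gives $\lambda^{-1}\cdot\lambda^{2\alpha}$ from $\|\bA\|_{2\alpha\gr}$.
\item The second term is a vertical increment of $K_1*A_1$. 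Since $\partial_2K_1*A_1=K_{12}*A_1\in C^{2\alpha-1}$, this increment is $O(|x-y|^{2\alpha})$, and the Young product with $A_1\in C^{\alpha-1}$ is legitimate because $3\alpha-1>0$ (Lemma~\ref{lem:Young_product}).
\item The remainder is smoother and is handled by another Young product.
\end{itemize}
For $\rmD_2(A_1\cI_1A_1)$ I differentiate the same splitting in $y_2$. The delicate part is $\partial_{y_2}\partial_{y_1}\bA(\ell^{z;y})$. I control it via $\|\bA\|_{2\alpha\hor}$ applied to horizontal segments at heights $y_2$ and $y_2+h$, which differ by $(|\ell|\vee|\bar\ell|)^\alpha\Area^\alpha\lesssim\lambda^{\alpha}(\lambda h)^\alpha$. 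After summation this gives $\lambda^{3\alpha-2}=\lambda^{-\frac12-3\kappa}$. I also need uniformity in $x$ and continuity in $y_2$, and I expect Lemma~\ref{l:raf_axial_to_rp} (rough path estimates between $\simhor$-related segments) to be the key tool here. The remaining products are Young products with positive total regularity: $(\cI_1A_1)^2$, and $(\cI_1A_1)(\cI_{i2}A_1)$ with $\alpha+(2\alpha-1)>0$.

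Finally I will check differences. All estimates above are linear in $A$, linear in $\bA$, or bilinear in $A$, so they transfer to $\sfZ^A-\sfZ^{\bar A}$ with constants depending on $\triple{\bfA}_{\alpha\ax}+\triple{\bar\bfA}_{\alpha\ax}$. The square root in the $\bA$-component of $\triple{\cdot;\cdot}_{\alpha\ax}$ is harmless because we only claim local Lipschitz continuity.
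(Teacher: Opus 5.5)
Your plan follows the paper's architecture:
\begin{itemize}
\item reduce to $A_1$, $\rmD_2A_1$, $A_1\cI_1A_1$, $\rmD_2(A_1\cI_1A_1)$ plus the Young-type products;
\item split at the corner $z=(x_1,y_2)$ and write $A_1(y)A(\ell^{z;y})=\partial_{y_1}\bA(\ell^{z;y})$;
\item use the horizontal norm of $\bA$ for the $\rmD_2$ term.
\end{itemize}
The gap is the remainder $A_1(y)\,Q(\ell^{z;y})$, which cannot be handled by a Young product as in Lemma~\ref{lem:Young_product}. The function $F(y):=Q(\ell^{(x_1,y_2);y})$ is $2\alpha$-H\"older in $y_1$. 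By additivity, however, moving $y_2$ by $h$ changes $F$ by the two vertical sides (of size $h^{2\alpha}$) plus $Q(\partial R)$, where $R$ is a rectangle of size $|x_1-y_1|\times h$. Up to sign and an $O(|R|)$ error, $Q(\partial R)$ equals $\int_R F^A$, which is only controlled by $(|x_1-y_1|h)^{\alpha}$. In the YM case $\int_RF^A$ is Gaussian with variance $|R|$, so this is sharp up to $\kappa$. Uniformly in $\triple{\bfA}_{\alpha\ax}$, $F$ is therefore only $C^\alpha$ in $y_2$, and $\alpha+(\alpha-1)<0$: the remainder is anisotropic, not smoother. You must use the one-dimensional structure along horizontal lines, as the paper does. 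Write $Q(\ell^{z;y})\otimes A_1(y)=\partial_{y_1}\rmI(Q,A)(\ell^{z;y})$ with $\rmI(Q,A)$ as in \eqref{eq:I(QA)}. Since $2\alpha+\alpha>1$, the sewing lemma gives $|\rmI(Q,A)(\ell^{z;y})|\lesssim|A|_{\alpha\ax}^2|x_1-y_1|^{3\alpha}$. Integrating by parts in $y_1$ then yields $\lambda^{3\alpha-1}$, and integrating by parts in $y_1$ and $y_2$ gives $\lambda^{3\alpha-2}$ for $\rmD_2(A_1\cI_1A_1)$.

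Your second term has a related problem. The Young product with $K_1*A_1(x_1,y_2)-K_1*A_1(x)$ is fine once you know that $y_2\mapsto K_1*A_1(x_1,y_2)$ is $2\alpha$-H\"older, but this does not follow from $K_{12}*A_1\in C^{2\alpha-1}$. For example, $f=\sum_n2^{-n\alpha}\cos(2^ny_1)\bigl(1-\cos(2^{(1-\alpha)n}y_2)\bigr)$ satisfies $f\in C^\alpha$ and $\partial_2f\in C^{2\alpha-1}$. Yet its $y_2$-increments at $y_1=0$ are of order $h^{\alpha/(1-\alpha)}\gg h^{2\alpha}$. The correct input is your own decomposition applied to the vertical segment $\ell^{x;z}$: since $A(\ell^{x;z})=0$, the increment equals $Q(\ell^{x;z})=O(|x_2-y_2|^{2\alpha})$. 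So the growth bound on $Q$ from Proposition~\ref{prop:regularity_d*KFA} carries real weight, and your one-line dyadic plan for it must name the right ingredient. Write $Q=\zeta+H$ with $\zeta=\diff^*K*F^A$. A dyadic decomposition that uses only $F^A\in C^{2\alpha-2}$ leaves each scale $2^{-n}<|\ell|$ contributing $|\ell|2^{n(1-2\alpha)}$, which is not summable. You need the anisotropic area bound $|\int_PF^A|=|A(\partial P)|\lesssim|A|_{\alpha\hor}|P|^\alpha$ on thin parallelograms of width $2^{-n}$ along $\ell$, which improves the bulk contribution to $|\ell|^\alpha2^{-n\alpha}$. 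The paper instead obtains the triangle bound from $\diff\zeta=-F^A$. It gets the growth bound on axis-parallel segments by averaging over translates, combining $K*F^A\in C^{2\alpha}$ with the area bound and an absorption step.
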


	\begin{remark}
	By approximation with smooth functions, Theorem 
 \ref{thm:construction_model} implies that, more generally, there is a canonical and continuous map taking any $\bfA\in\bfOmega_{\alpha\ax}^1$ to a model $\sfZ^{\bfA}\label{symb:model_from_RAF}$, which correspond to the 2nd map in \eqref{eq:A_to_Z}.
	\end{remark}

We require several prerequisites in order to prove the theorem, the first of which concerns the regularity of $F^A=-\partial_2 A_2$. 
In the following two lemmas, let $A\in \Omega^1C^1(\R^2)$ denote the extension of $A \in \Omega^1_{\infty\ax}(\Lambda)$ as described above.

    \begin{lemma}\label{lem:regularity_curvature}
        $
        |F^A|_{C^{2(\alpha-1)}(\R^2)}\lesssim |A|_{\alpha\ax;\Lambda}
        $. 
    \end{lemma}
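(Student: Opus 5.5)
To bound $|F^A|_{C^{2\alpha-2}(\R^2)}$, with $F^A=-\partial_2A_1$ (the statement's $\partial_2 A_2$ is presumably a typo, since $A_2\equiv0$), I will test $F^A$ against rescaled bump functions $\phi_x^\lambda$ with $\phi\in\CB^2$ and $\lambda\in(0,1]$. The goal is $|\scal{\partial_2A_1,\phi_x^\lambda}|\lesssim\lambda^{2\alpha-2}|A|_{\alpha\ax}$. Here $A$ is the extension from Lemma~\ref{lem:domain_extension_RAF}, and by \eqref{eq:domain_extension_RAF_bound} I may replace the norm on $\R^2$ by the norm on $\Lambda$.

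The key step is to express the pairing through horizontal line integrals. Write $\phi_x^\lambda=\partial_1\Psi$ plus a remainder, where
\[
\Psi(y_1,y_2)=\int_{-\infty}^{y_1}\bigl(\phi_x^\lambda(s,y_2)-\theta^\lambda(s-x_1)m(y_2)\bigr)\dif s,\qquad m(y_2)=\int\phi_x^\lambda(s,y_2)\dif s,
\]
and $\theta^\lambda$ is a fixed 1D bump of mass one at scale $\lambda$. Then $\Psi$ is supported in a $\lambda$-box, with $|\partial_2\Psi|\lesssim\lambda^{-2}$ and $|\partial_1\partial_2\Psi|\lesssim\lambda^{-3}$. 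The remainder term $\theta^\lambda(y_1-x_1)m(y_2)$ is handled the same way in the $y_1$-variable, averaged.

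Integrating by parts in $y_2$ gives $\scal{\partial_2A_1,\varphi}=-\int A_1\,\partial_2\varphi$. Rather than integrate by parts in $y_1$, I will use the cleaner route of horizontal segments. Set $a(s,y_2)=A(\ell_{(s,y_2)})$, where $\ell_{(s,y_2)}$ is the horizontal segment of length $\lambda$ starting at $(s,y_2)$. Then, for a suitable kernel $\chi$,
\[
\int A_1\,\partial_2\varphi=\lambda^{-1}\int\int a(s,y_2)\,\partial_2\chi(s,y_2)\dif s\dif y_2+\ldots,
\]
by writing $\varphi$ as a horizontal average of indicator kernels, i.e.\ $\varphi=\partial_1\Psi$ with $\Psi(\cdot,y_2)$ a superposition of $\1_{[s,s+\lambda]}$.

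Next I integrate by parts back in $y_2$, or equivalently exploit that $\int\partial_2\chi\,\dif y_2=0$. This replaces $a(s,y_2)$ by the increment $a(s,y_2)-a(s,y_2')$ with $|y_2-y_2'|\lesssim\lambda$. The two horizontal segments satisfy $\ell\simhor\bar\ell$ and enclose area $\lesssim\lambda^2$, so the hor-seminorm gives
\[
|a(s,y_2)-a(s,y_2')|\le|A|_{\alpha\hor}\,\lambda^{2\alpha}.
\]
Counting the measure of the support ($\lambda^2$) and the size of the kernel (its $\partial_2$ costs $\lambda^{-1}$ and the normalisation of $\phi^\lambda$ costs $\lambda^{-2}$, with the horizontal averaging absorbed into the segment length) yields a bound of order $\lambda^{2\alpha}\cdot\lambda^{-2}=\lambda^{2\alpha-2}$, as required. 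Large-scale $\lambda$ is trivial, since only $\lambda\le1$ is needed.

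The main obstacle is the bookkeeping of the decomposition: writing $\phi_x^\lambda$ exactly as a $y_2$-derivative of a horizontal superposition of segment indicators, so that only differences of $A$ on horizontally equivalent segments appear and the growth part $|A|_{\alpha\gr}$ (which gives only $\lambda^{\alpha-1}$ per direction, worse than needed) never enters. I would handle this concretely by writing
\[
\scal{\partial_2A_1,\phi}=\lim_{h\to0}h^{-1}\int\bigl(A_1(y+he_2)-A_1(y)\bigr)\phi(y)\dif y,
\]
and then expressing $A_1$ near $y$ as the $y_1$-derivative of $s\mapsto A(\ell^{(x_1-\lambda,y_2);(s,y_2)})$. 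Integrating by parts in $y_1$ moves $\partial_1$ onto $\phi$, costing $\lambda^{-1}$, and leaves a difference of two horizontally equivalent segments of length at most $2\lambda$ separated vertically by $h$. The area is then $\lesssim\lambda h$, giving the bound $|A|_{\alpha\hor}(\lambda h)^\alpha$. This contribution vanishes as $h\to0$ unless $\alpha=1$, so the difference quotient must instead be taken at scale $h\sim\lambda$, by smoothing in $y_2$ first. I expect this to work out once the area scaling is tracked carefully.
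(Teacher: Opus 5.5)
You have the right mechanism. Only $|A|_{\alpha\hor}$, applied to two vertically displaced horizontal segments with the same horizontal projection and enclosing area $\lesssim\lambda^2$, can give $\lambda^{2\alpha-2}$. The gap is that you never write down an identity that realises this: the route you spell out concretely fails as written, and the repair is left as ``I expect this to work out''.

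\textbf{The difference-quotient route.} After writing $A_1=\partial_1\mathcal A$ and moving $\partial_1$ onto $\phi^\lambda_x$, the best bound for fixed $h$ is
$h^{-1}\|\partial_1\phi^\lambda_x\|_{L^1}\,|A|_{\alpha\hor}(\lambda h)^\alpha\sim|A|_{\alpha\hor}(\lambda h)^{\alpha-1}$.
This diverges as $h\to0$; it does not vanish. The $y_2$-derivative therefore cannot stay on $A$.

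\textbf{The fixed-length-segment route.} The intermediate representation with segments of length $\lambda$ and a ``suitable kernel $\chi$'' is not available as stated. Writing $\partial_2\varphi(\cdot,y_2)=\1_{[0,\lambda]}*\chi(\cdot,y_2)$ with $\chi$ compactly supported amounts to solving $X(y_1)-X(y_1-\lambda)=\partial_2\varphi(y_1,y_2)$ with $X$ compactly supported. That forces the $\lambda$-periodisation $\sum_n\partial_2\varphi(y_1-n\lambda,y_2)$ to vanish, which need not hold. This is what the ``$+\ldots$'' hides, and the $\Psi,\theta^\lambda,m$ decomposition does not remove it.

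\textbf{The missing step.} Use segments anchored at $x_1$ with variable length, and integrate by parts in both variables.
\begin{itemize}
\item Set $\mathcal A(y):=A(\ell^{(x_1,y_2);y})$. Then $A_1=\partial_1\mathcal A$, so $\langle\partial_2A_1,\varphi\rangle=\int\mathcal A\,\partial_{12}\varphi$.
\item Since $\int\partial_{12}\varphi(y_1,y_2)\,\mathrm{d}y_2=0$, you may replace $\mathcal A(y)$ by
$\mathcal A(y)-\mathcal A(y_1,x_2)=A(\ell^{(x_1,y_2);y})-A(\ell^{x;(y_1,x_2)})$.
This is a difference of horizontally equivalent segments bounding the axis-parallel rectangle $R_{x,y}$ with corners $x,y$. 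Equivalently it is $\pm A(\partial R_{x,y})$, since the vertical sides vanish in the axial gauge.
\item It is therefore bounded by $|A|_{\alpha\hor}(|x_1-y_1||x_2-y_2|)^\alpha\le|A|_{\alpha\hor}\lambda^{2\alpha}$, after splitting segments longer than $\frac14$ by additivity.
\item Combined with $\|\partial_{12}\varphi^\lambda_x\|_{L^1}\lesssim\lambda^{-2}$, this gives $\lambda^{2\alpha-2}$.
\end{itemize}
This is exactly the paper's proof. It writes $\int F^A\varphi^\lambda_x=\int\bigl(\int_{R_{x,y}}F^A\bigr)\,\partial_{12}\varphi^\lambda_x(y)\,\mathrm{d}y$ and bounds the inner integral via Stokes' theorem.
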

    
    \begin{proof}
    Let us take a suitable test function $\varphi$ and do integration by parts to get
    \[
    \int_{\R^2}F^A(y)\varphi_x^\lambda(y)\dif y=\lambda^{-2}\int_{\R^2}\int_{[x_1,y_1]}\int_{[x_2,y_2]}F^A(z)\dif z\hat\varphi_x^\lambda(y)\dif y,
    \]
    where $\hat \varphi=\partial_{12}\varphi$. 
    Since by Stokes' theorem
    \[
    \left|\int_U F^A\right|=|A(\partial U)|\lesssim |A|_{\alpha\ax}|U|^\alpha,
    \]
    we obtain that 
    \[
    \int_{\R^2}F^A(y)\varphi_x^\lambda(y)\dif y\lesssim \lambda^{-2}\int_{\R^2}|x_1-y_1|^{\alpha}|x_2-y_2|^\alpha |\hat\varphi_x^\lambda(y)|\dif y\lesssim \lambda^{2(\alpha-1)}\|\hat\varphi\|_{L^\infty}|A|_{\alpha\ax}, 
    \]
    which is the desired result.
    \end{proof}
    
    For the analysis of the remaining terms in the regularity structure, we next relate the convolution with the kernel~$K_1$ to line integration, as outlined in Section~\ref{sec:main_results}.
	This is the content of the following proposition, which also contains a somewhat suprising result on the line-integrability regularity of $\diff^*K*F^A$.
    Recall the notation $|\Cdot|_{\beta\tri}$ from \eqref{eq:def_beta_tri}.  
    \begin{proposition}\label{prop:regularity_d*KFA}
	   For any $x,y\in\R^2$,
	   one has
		\begin{equation} \label{eq:regularity_d*KFA:line_integral}
			K_1*A_1(y)-K_1*A_1(x)=A(\ell^{x;y})+\zeta(\ell^{x;y}) 
		+ H(\ell^{x;y}),
		\end{equation}
		where $H\in \Omega^1 C^\infty_c(\R^2)$ and $\zeta := \diff^*K*F^A$ satisfy
        \begin{equation}\label{eq:regularity_d*KFA:zeta_bound}
        |H|_{C^1(\R^2)}+|\zeta|_{2\alpha\tri;\R^2}+ |\zeta|_{2\alpha\gr;\R^2}\lesssim |A|_{\alpha\ax;\Lambda}.
		\end{equation}
    \end{proposition}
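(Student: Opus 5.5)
The plan is to start from the Hodge-type decomposition of $A$ via the Green's function. Since $A=(A_1,0)$ is $C^1$ with support in $\Lambdaex\subset[-2,2]^2$ and $K=G^\Free$ on $[-4,4]^2$, on $[-2,2]^2$ one has $A=\Delta K*A$. Componentwise this gives
\[
A=\diff(\diff^*K*A)+\diff^*(\diff K*A)=\diff(K_1*A_1)+\diff^*K*F^A ,
\]
up to sign conventions, where we use $\diff A=F^A$ and $\diff K*A=K*\diff A$. Outside $[-2,2]^2$ the identity fails only by a smooth compactly supported error. This error comes from the region where $K\neq G^\Free$, and it is estimated by $|A_1|$ tested against smooth kernels, which is $\lesssim|A|_{\alpha\gr}$ by \eqref{eq:omega_beta_embedding}. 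We absorb it into $H$, and also put a smooth cutoff error there. Integrating the identity along $\ell^{x;y}$ then gives \eqref{eq:regularity_d*KFA:line_integral}, because the line integral of the exact form $\diff(K_1*A_1)$ is $K_1*A_1(y)-K_1*A_1(x)$. This rearranges to the claimed formula once the signs are matched and the non-identity part of $\Delta K$ is moved into $H$. We first do this for smooth $A$, where all the integrals are classical.

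The main work is the bound \eqref{eq:regularity_d*KFA:zeta_bound} on $\zeta=\diff^*K*F^A$, which is roughly $(-K_2*F^A,\,K_1*F^A)$. Lemma~\ref{lem:regularity_curvature} gives $F^A\in C^{2\alpha-2}$, so $\zeta\in C^{2\alpha-1}$. By \eqref{eq:omega_beta_embedding}, membership in $C^{2\alpha-1}$ alone gives neither the $2\alpha\gr$ nor the $2\alpha\tri$ bounds. The triangle seminorm is the easier one. By Stokes, $\zeta(\partial P)=\int_P \diff\zeta$, and $\diff\zeta=\diff\diff^*K*F^A$ equals $F^A-\diff^*\diff K*F^A$. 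The second term vanishes for a $2$-form on $[-2,2]^2$ up to smooth errors, so up to a smooth remainder $\zeta(\partial P)=\int_P F^A=A(\partial P)$. For $A$ in the axial gauge, the rectangle/area bound behind $|A|_{\alpha\hor}$ gives $|A(\partial P)|\lesssim|A|_{\alpha\ax}|P|^{\alpha}$, which is exactly $|P|^{2\alpha/2}$. To see this, decompose the triangle's boundary into pairs of $\simhor$ segments plus vertical pieces, which contribute zero.

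For the growth seminorm we need $|\zeta(\ell)|\lesssim|\ell|^{2\alpha}$. The plan is to use the representation of $F^A$ as a mixed derivative, as in the proof of Lemma~\ref{lem:regularity_curvature}: write $F^A=\partial_{12}\Xi$, where $\Xi(z)=\int_{[0,z_1]\times[0,z_2]}F^A$ satisfies the rectangular increment bound $|\Box\Xi|\lesssim|A|_{\alpha\ax}|\text{rect}|^\alpha$. Then
\[
\zeta(\ell)=\int_\ell \diff^*\partial_{12}K*\Xi ,
\]
and we split the kernel $\partial_{12}\nabla K$ dyadically, $\sum_n 2^{3n}\psi(2^n\cdot)$. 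The mixed second differences of $\Xi$ are of size $\lesssim(2^{-n})^{2\alpha}$ at scale $2^{-n}$. For scales $2^{-n}\gtrsim|\ell|$ we additionally gain from integrating the kernel along a segment of length $|\ell|$ via cancellation: the kernel pieces have zero mean and one more derivative gives a factor $|\ell|2^{n}$. For small scales we use the raw bound times the length. Summing gives $\sum_{2^{-n}\le|\ell|}2^{-2\alpha n}+\sum_{2^{-n}>|\ell|}|\ell|\,2^{n(1-2\alpha)}\lesssim|\ell|^{2\alpha}$, since $2\alpha<1$. I expect this scale summation to be the main obstacle, because the line integral of a $C^{2\alpha-1}$ field is borderline. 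The key point to verify is that the antisymmetric structure of $\diff^*K$ applied to a 2-form gives kernels that, integrated along a segment, only see mixed rectangular increments of $\Xi$. Alternatively, we could bound $\zeta(\ell)$ by closing $\ell$ into a thin triangle with the already-controlled triangle seminorm and an axis-parallel comparison segment, then iterate dyadically as in \cite{CCHS2d}.

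Finally, the bounds are linear in $A$, so they apply to differences as well. The general case follows by approximation with smooth axial forms.
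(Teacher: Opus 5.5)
Your derivation of the identity, with $H=Z*A_1\,\diff x^1$ absorbing $\Delta K-\delta_0$, matches the paper. So does your triangle bound via Stokes and the axial area estimate. The gap is in the growth bound $|\zeta(\ell)|\lesssim|\ell|^{2\alpha}$, which is the real content of the proposition.

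Your per-scale bounds do not support the sum you write. With only square-scale mixed differences of $\Xi$ of size $2^{-2\alpha n}$, the $n$-th kernel piece (of $L^1$-mass $2^n$) is pointwise of size $2^{n(1-2\alpha)}$. So ``raw bound times length'' at small scales gives $|\ell|\,2^{n(1-2\alpha)}$, and $\sum_{2^{-n}\le|\ell|}|\ell|\,2^{n(1-2\alpha)}$ diverges because $2\alpha<1$. The term $2^{-2\alpha n}$ in your sum is what a sub-segment of length $2^{-n}$ contributes, not the whole segment, which contains $|\ell|2^{n}$ such sub-segments.

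In fact no argument using only square-scale information (equivalently $F^A\in C^{2\alpha-2}$) can work. Heuristically, a profile $\eta\approx|x_2|^{2\alpha}$ near a horizontal line has $F=\Delta\eta\in C^{2\alpha-2}$. Yet $\zeta_1=\partial_2\eta$ gives $|\zeta(\ell)|\approx|\ell|h^{2\alpha-1}\gg|\ell|^{2\alpha}$ for horizontal $\ell$ at height $h\ll|\ell|$. What is needed is the anisotropic bound $|\int_R F^A|\lesssim|A|_{\alpha\ax}|R|^\alpha$ on thin rectangles of size $|\ell|\times s$ aligned with $\ell$.

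Your fallback of closing $\ell$ into a thin triangle only reduces general segments to axis-parallel ones. Area bounds alone cannot control growth, since exact forms $\diff f$ contribute nothing to areas, so some control of $\eta=K*F^A$ itself must enter.

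The paper handles axis-parallel $\ell$ by averaging over parallel translates $\ell_s$, $s\in[0,\varepsilon|\ell|]$, and proceeds as follows.
\begin{itemize}
\item Since $\zeta_1=\partial_2\eta$, integrating by parts in $s$ bounds the average of $\zeta(\ell_s)$ by $\varepsilon^{2\alpha-1}|\ell|^{2\alpha}$, using the Schauder bound $\eta\in C^{2\alpha}$.
\item The difference $\zeta(\ell)-\zeta(\ell_s)$ is the boundary of a rectangle of area $|\ell|s$, bounded by $(|\ell|s)^\alpha$, plus two vertical sides of length $s$. These sides are bounded by $Ms^{2\alpha}$, where $M$ is the axis-parallel growth constant being sought.
\item This yields $M\le C(\varepsilon^{2\alpha-1}+\varepsilon^\alpha)|A|_{\alpha\ax}+C\varepsilon^{2\alpha}M$. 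One absorbs the last term for small $\varepsilon$, using that $M<\infty$ a priori for smooth $A$.
\item General segments then follow from the triangle bound.
\end{itemize}

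Your dyadic scheme could also be repaired along these lines. For axis-parallel $\ell$ and $2^{-n}\le|\ell|$, split the $n$-th piece into two parts. The first is a product-form tube term, controlled by rectangular increments of $\Xi$ over $|\ell|\times 2^{-n}$ rectangles; it is of size $(|\ell|2^{-n})^\alpha$. The second consists of two endpoint terms, each of size $2^{-2\alpha n}$. Both sums over $2^{-n}\le|\ell|$ are $\lesssim|\ell|^{2\alpha}$. As written, however, this step is missing.
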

    \begin{proof}
	We first prove~\eqref{eq:regularity_d*KFA:line_integral}. 
	By the fundamental theorem of calculus, we have
    \[
    K_1*A_1(y)-K_1*A_1(x)=\int_{\ell^{x;y}}\diff K_1*A_1
    \]
   where we may write
    \[
    \diff K_1 *A_1= K_{11} *A_1\,\diff x^1+K_{12}*A_1\,\diff x^2 \,. 
    \]
     For the first term, we complete the Laplacian by inserting the remaining derivatives as follows: 
    \[
    K_{11}*A_1=\Delta K *A_1-K_{22}*A_1=A_1+(\Delta K-\delta_0)*A_1-K_{22}*A_1.
    \]
	By our choice of $K$ to equal~$G^\Free$ on $[-4,4]^2$ in Definition \ref{def:model}\ref{item:integration} which implies that $\Delta K=\delta_0+Z$ for some $Z\in C^\infty_c(\R^2)$.
	As a consequence, we find
		\begin{align*}
		\diff K_1*A_1&=(A_1+Z*A_1-K_{22}*A_1)\,\diff x^1+K_{12}*A_1\,\diff x^2\\
		&=A_1 \diff x^1 +K_2 *F^A\,\diff x^1 - K_1* F^A\,\diff x^2 +Z*A_1\,\diff x^1
		\\
        &=A+\diff^*K*F^A +H
		\end{align*}
	where $H=Z*A_1\,\diff x^1$ and used that~$F^A=-\partial_2 A_1$.
	This completes the proof of~\eqref{eq:regularity_d*KFA:line_integral}. Furthermore, since $A_1$ has compact support and $Z\in C^\infty_c(\R^2)$, we also get that $H$ is smooth with compact support and $|H|_{C^1(\R^2)}\lesssim |A|_{\alpha\ax;\Lambda}$ proving the bound regarding $H$ in~\eqref{eq:regularity_d*KFA:zeta_bound}.  

	The remainder of this proof deals with~\eqref{eq:regularity_d*KFA:zeta_bound}. For notational simplicity, we set $\eta:=K*F^A$ so that $\zeta=\diff^*\eta$. 
	By~\eqref{eq:regularity_d*KFA:line_integral}, we know that $\zeta\in \Omega_{\alpha\gr}$ since $K_1*A_1\in C^\alpha$ and $A\in \Omega_{\alpha\gr}$; 
	we want to show an \emph{improvement in regularity}.  

        The part with the triangular norm follows from the following computation. For any polygon $U$
        \[
        \zeta(\partial U)=\int_{\partial U}\zeta=\int_U \diff \zeta,
        \]
        but, by the same argument as above, we know that
        \[
        \diff\zeta=\diff\diff^*K*F^A=-\Delta K*F^A =-F^A \,.
        \]
       Therefore, the area bound 
       \begin{equation}\label{eq:area_bound}
       |\zeta(\partial U)|\lesssim |A|_{\alpha\ax} |U|^\alpha.
	   \end{equation}
	follows by the same argument as in Lemma~\ref{lem:regularity_curvature}.
    It remains to prove the growth part of the estimate.  We shall use the triangular norm estimate just obtained
    together with the Schauder estimate
    \begin{equation}\label{eq:eta_Holder}
        |\eta|_{C^{2\alpha}}
        \lesssim |F^A|_{C^{2\alpha-2}}
        \lesssim |A|_{\alpha\ax}\;,
    \end{equation}
    which follows from Lemma~\ref{lem:regularity_curvature}.
    We begin with axis-parallel line segments.
	Set
    \[
        M =
        \sup_{\substack{\ell\text{ horizontal or vertical}\\
        0<|\ell|\le 1/4}}
        \frac{|\zeta(\ell)|}{|\ell|^{2\alpha}}\;.
    \]
    We shall prove that $M\lesssim |A|_{\alpha\ax}$.
    Let $\ell$ be a horizontal segment from $(a,h)$ to $(b,h)$ with
    $|\ell|=|b-a|\le 1/4$.  Fix $\eps \in (0,1)$ to be chosen later and
    set $\delta=\eps |\ell|$.
	Let $\chi\in C_c^\infty((0,1))$ satisfy
    $\int_0^1\chi=1$, and denote
    $\chi_\delta(s)=\delta^{-1}\chi(s/\delta)$.  Denote
    $\ell_s=\ell+s e_2$, the vertical translate of $\ell$ by $s\geq 0$. Now write
    \begin{equation}\label{eq:zeta_av}
		\zeta(\ell)
        =\int_0^\delta \chi_\delta(s)\zeta(\ell_s)\dif s
        +\int_0^\delta \chi_\delta(s)
        \{\zeta(\ell)-\zeta(\ell_s)\} \dif s \;.
	\end{equation}
For the first term, recall $\zeta=\diff^*\eta$, so
$\zeta_1=\partial_2\eta$, thus by integration by parts in the $s$ variable
    \begin{align*}
        \int_0^\delta \chi_\delta(s)\zeta(\ell_s)\dif s
        &=\int_a^b\int_0^\delta \chi_\delta(s)
            \partial_2\eta(u,h+s)\dif s\dif u  \\
        &=-\int_a^b\int_0^\delta \chi_\delta'(s)
            \{\eta(u,h+s)-\eta(u,h)\} \dif s \dif u \;.
    \end{align*}
    Therefore
    \begin{equation}\label{eq:zeta_av_1}
        \Big|\int_0^\delta \chi_\delta(s)\zeta(\ell_s)\dif s\Big|
        \lesssim |A|_{\alpha\ax} |\ell|
        \int_0^\delta |\chi_\delta'(s)|s^{2\alpha}\dif s  \lesssim \eps^{2\alpha-1}|A|_{\alpha\ax} |\ell|^{2\alpha}
    \end{equation}
	where we used \eqref{eq:eta_Holder} in the first estimate and $\delta=\eps|\ell|$ and $|\chi_\delta'|_\infty\lesssim \delta^{-2}$ in the second estimate.

    For the second term in \eqref{eq:zeta_av}, let $R_s$ be the rectangle with horizontal sides
    $\ell$ and $\ell_s$. Then
    \begin{equation*}
        |\zeta(\ell)-\zeta(\ell_s)|
        \le |\zeta(\partial R_s)| + 2M s^{2\alpha}
        \lesssim |A|_{\alpha\ax}(|\ell|s)^\alpha + Ms^{2\alpha}
    \end{equation*}
	where in the first bound we used the definition of $M$ and that the two vertical sides of $R_s$ have length $s$,
	and in the second bound we used the area bound \eqref{eq:area_bound} and $|R_s| = |\ell|s$. Recalling again $\delta = \eps|\ell|$, it follows that
	\begin{equation}\label{eq:zeta_av_2}
	\int_0^\delta \chi_\delta(s)
        \{\zeta(\ell)-\zeta(\ell_s)\} \dif s \lesssim \eps^{\alpha} |A|_{\alpha\ax}|\ell|^{2\alpha} + \eps^{2\alpha}M|\ell|^{2\alpha}\;.
	\end{equation}
Therefore, by \eqref{eq:zeta_av} and the two estimates \eqref{eq:zeta_av_1}-\eqref{eq:zeta_av_2},
	we obtain
    \[
        |\zeta(\ell)|
        \lesssim
        (\eps^{2\alpha-1}+\eps^\alpha)
        |A|_{\alpha\ax}|\ell|^{2\alpha}
        +\eps^{2\alpha}M|\ell|^{2\alpha}\;.
    \]
    The same argument applies to vertical segments, now averaging in the
    horizontal direction and using $\zeta_2=-\partial_1\eta$. 
	Taking the
    supremum over all horizontal and vertical segments of length at most $1/4$,
    we obtain for $C>0$ not depending on $\eps$ that
    \[
        M
        \leq
        C (\eps^{2\alpha-1}+\eps^\alpha)|A|_{\alpha\ax}
        +C \eps^{2\alpha}M\;.
    \]
    Now choose $\eps>0$ small so that
    $C\eps^{2\alpha}\leq 1/2$. This gives
	$
	M \lesssim |A|_{\alpha\ax}
	$
	for horizontal and vertical segments.
	The bound for general segments follows from this and the area bound \eqref{eq:area_bound}.
    \end{proof}

  	Finally, we have gathered all the required prerequisites to present the proof of \theo{thm:construction_model}.

	\begin{proof}[of~\theo{thm:construction_model}]
	We will only deal with the case~$\bar{A} = 0$ from which the general case follows by straightforward adaptations.
	Recall that we have assumed~$A \in \Omega^1_{\infty\ax}$, i.e. in particular that~$A_1$ is smooth; we will denote the associated canonical model (in the sense of Definition~\ref{def:canonical_model}) by~$\sfZ = (\Pi,\Gamma)$, the \enquote{norm}~$\triple{\sfZ} = \norm[0]{\Pi} + \norm[0]{\Gamma}$ of which has been introduced in Definition~\ref{def:model}.
	Our goal is to show the analytical bounds in~\eqref{eq:model_bounds} with the prefactor~$\triple{\bfA}_{\alpha\ax}$ as claimed in~\eqref{eq:thm:model_bound} where the list of symbols together with their homogeneities was introduced  in Table~\ref{tab:U_F_sets_homogeneities}.

The first observation is that we only need to show the bounds on $\Pi \blue{\tau}$ in~\eqref{eq:model_bounds} for $\blue{\tau} \in T[\tau]$ where  
\begin{equation*}
	\tau \in S_- := \{A_1, \rmD_2 A_1, A_1\cI_1A_1,\rmD_2(A_1\cI_1A_1)\} 
\end{equation*}
which occupies the rest of this proof.
This is a consequence of 
\begin{enumerate}[label=(\roman*)]
	\item the \emph{extension theorem} (see~\cite[Thm. 5.24]{Hairer14}) which enforces the correct bounds on the integrated symbols~$\blue{\cI_i \tau}$ for $\blue{\tau} \in T[\tau]$, $\tau \in S_-$, as well as the $\Gamma$-bounds, and  
	\item the fact that all the product terms are canonically defined in our setting.\footnote{Note, in particular, that we do not need to renormalise any product term.}
\end{enumerate}

Let us highlight that, in order to obtain the full model bounds on all of the symbols in~$S_\sol$, one will sometimes have to invoke the previously mentioned points iteratively, as is for example clear when looking at the symbol~$\cI_1 \cI_{22} A_1$; we refrain from giving any more details.  

The case for the linear terms $A_1$ and $\rmD_2A_1$ in~$S_-$ is straightforward:
While the bound for $A_1$ follows from \cite[Cor.~3.23]{Chevyrev19} and is continuous with respect to $\bfA$, the bound on $\rmD_2A_1$ follows by \lem{lem:regularity_curvature} upon observing that $\Pi\blue{\rmD_2A_1}=\partial_2A_1$.

The quadratic terms in $S_-$ in are more involved: We begin with~$A_1 \cI_1 A_1$.
To simplify the notation, we will write 
\begin{equation} \label{e:tensor_notation_A1I1A1}
	\blue{\cI_1 A_1 \otimes A_1} := \sum_{i,j=1}^{\dim \fg} \blue{\cI_1 A^{e^i}_1} \otimes \blue{A^{e^j}_1} \otimes e_i \otimes e_j \in T[\cI_1A_1] \otimes T[A_1] \otimes \fg^{\otimes 2}
\end{equation}
such that, for~$u, v \in \fg^*$, by a slight abuse of notation we may set 
\begin{equation} \label{e:tensor_notation_A1I1A1_2}
	\scal{\blue{\cI_1 A_1 \otimes A_1}, v \otimes u} 
	:=
	\blue{A_1^{u} \cI_1 A_1^{v}} \in T[A_1 \cI_1 A_1] 
\end{equation}
as well as 
\begin{equation} \label{e:tensor_notation_A1I1A1_Pi}
	\scal{\Pi_x \blue{\cI_1 A_1 \otimes A_1}, v \otimes u} :=\Pi_x \blue{A_1^{u} \cI_1 A_1^{v}} \,.
\end{equation}

    \begin{figure}[h] 
    	\centering
    	%
    		\tikzset{>={Latex[width=2pt,length=2pt]}}
    		\begin{tikzpicture}[scale=3] 
    			%
    			%
    			\draw (0,0) rectangle (1,1);
    			%
    			\draw[-{Latex[length=1.5mm]}, thick] (0,0) -- (1.1,0); 
    			\draw[-{Latex[length=1.5mm]}, thick] (0,0) -- (0,1.1); 
    			\draw (0.3,-0.04) -- (0.3,0.04) node[semithick, below, yshift=-7pt] {$x_1$};
    			\draw (-0.04,0.3) -- (0.04,0.3) node[semithick, left, xshift=-7pt] {$x_2$};
    			\draw (0.8,-0.04) -- (0.8,0.04) node[semithick, below, yshift=-7pt] {$y_1$};
    			\draw (-0.04,0.7) -- (0.04,0.7) node[semithick, left, xshift=-7pt] {$y_2$};
    			%
    			%
    			\coordinate (LD) at (0.3,0.3);       
    			\node[below, xshift=-3pt] at (0.3,0.3) {$x$}; 
    			\coordinate (LU) at (0.3,0.7);     
    			%
    			\coordinate (RD) at (0.8,0.3);    
    			%
    			\coordinate (RU) at (0.8,0.7);       
    			\node[above, xshift=3pt] at (0.8,0.7) {$y$}; 
    			%
    			%
    			%
    			\draw[-{Latex[length=1.5mm]}, semithick] (LD) -- (RU) node[midway, above left, scale=0.7] {}; 
				\draw[-{Latex[length=1.5mm]}, semithick, densely dashed] (LD) -- (LU) node[midway, above left, scale=0.7] {}; %
				\draw[-{Latex[length=1.5mm]}, semithick, densely dashed] (LU) -- (RU) node[midway, above left, scale=0.7] {}; %
				\draw[-{Latex[length=1.5mm]}, semithick, densely dashed] (LD) -- (RD) node[midway, above left, scale=0.7] {}; %
				\draw[-{Latex[length=1.5mm]}, semithick, densely dashed] (RD) -- (RU) node[midway, above left, scale=0.7] {}; %
    			%
    			\foreach \pt in {LD,LU,RD,RU}
    			\fill[black] (\pt) circle (0.3pt);
    		\end{tikzpicture}
    		%
    		\captionsetup{format=plain}
    		\caption{A graphical representation of the line segment~$\ell^{x;y}$ (solid) and the auxiliary lines~$\ell^{x;(x_1,y_2)}$ (left), $\ell^{x;(y_1,x_2)}$ (bottom), $\ell^{(y_1,x_2);y}$ (right), and $\ell^{(x_1,y_2);y}$ (top).}
    		\label{fig:lines_square}
    		%
   		%
   	\end{figure}
    
    Recall the identity~\eqref{eq:regularity_d*KFA:line_integral} wherein $\zeta$ and $H$ are additive functions which are more regular, namely $Q:=\zeta+H\in \Omega^1_{2\alpha}(\R^2)$.
	Then we have
    \begin{equs}[][e:I1A1A1]
    \Pi_x \blue{\cI_1A_1\otimes A_1}(y)
    &=(K_1 * A_1(y)-K_1 * A_1(x_1,y_2))\otimes A_1(y)\\
    &\qquad\quad+(K_1 * A_1(x_1,y_2)-K_1 * A_1(x))\otimes A_1(y)\\
    &= A(\ell^{(x_1,y_2);y})\otimes A_1(y)\!+\! Q(\ell^{(x_1,y_2);y})\otimes A_1(y)\!+\!Q(\ell^{x;(x_1,y_2)})\otimes A_1(y).
    \end{equs}
	where we have used that~$A(\ell^{x;(x_1,y_2)}) = 0$ because~$A$ is in the axial gauge. See Figure~\ref{fig:lines_square} for the auxiliary lines involved in the proof.
    We write these elements in a slightly different way: For instance, by the fundamental theorem of calculus, we find 
    \begin{equation} \label{e:derivative_A_product}
     A(\ell^{(x_1,y_2);y})\otimes A_1(y)
     =\partial_{y_1}\int_{x_1}^{y_1}  A(\ell^{(x_1,y_2);(t,y_2)})\otimes A_1(t,y_2)\dif t=\partial_{y_1}\bA(\ell^{(x_1,y_2);y})
    \end{equation}
	where~$\bA(\ell)$ was introduced in~\eqref{eq:def_bA_one_arg}.
    Given a test function $\varphi$, we apply integration by parts to obtain
    \begin{align*}
    \int_{\R^2}\varphi_x^\lambda(y)A(\ell^{(x_1,y_2);y})\otimes A_1(y)\dif y&=\int_{\R^2}\varphi_x^\lambda(y)\partial_{y_1}\bA(\ell^{(x_1,y_2);y})\dif y\\
    &=-\int_{\R^2}\partial_{y_1}\varphi_x^\lambda(y)\bA(\ell^{(x_1,y_2);y})\dif y.
    \end{align*}
    By the bound $\abs[0]{\bA(\ell^{(x_1,y_2);y})} \leq \|\bA\|_{\alpha\ax}|x_1-y_1|^{2\alpha}$, we obtain 
    \[
    \left|\int_{\R^2}\varphi_x^\lambda(y)A(\ell^{(x_1,y_2);y})\otimes A_1(y)\dif y \right|\lesssim \lambda^{2\alpha-1}\|\bA\|_{\alpha\ax} |\varphi|_{C^1}. 
    \]
    which is the claimed bound for the first term in~\eqref{e:I1A1A1}.
    To analyse the second term, we now define
    \begin{align}\label{eq:I(QA)}
    \rmI(Q,A)(\ell):=\int_0^1 \ell_{Q}(t)\otimes \dif\ell_A(t).
    \end{align}
	where the path~$\ell_{Q}$ as given below eq.~\eqref{eq:line_int_gauge_transformation_smooth} lies in~$C^{2\alpha}$ thanks to Proposition~\ref{prop:regularity_d*KFA}.
    Similarly to~\eqref{e:derivative_A_product}, we then have 
    \begin{align*}
    	Q(\ell^{(x_1,y_2);y})\otimes A_1(y)&=\partial_{y_1}\rmI(Q,A)(\ell^{(x_1,y_2);y}).
    \end{align*}
	Since~$3\alpha > 1$ because~$\alpha > \frac 1 3$, the bound \eqref{eq:regularity_d*KFA:zeta_bound} and the sewing lemma~\cite[Lem~4.2]{FH20} imply that
    \begin{align}\label{eq:bound_I(QA)}
    \left|\rmI(Q,A)(\ell^{(x_1,y_2);y})\right|\lesssim |A|_{\alpha\ax}^2|x_1-y_1|^{3\alpha},
    \end{align}
    which, similarly to before, leads to the following estimate for the second term in~\eqref{e:I1A1A1}:
    \[
    \left|\int_{\R^2}\varphi_x^\lambda(y)  Q(\ell^{(x_1,y_2);y})\otimes A_1(y)\dif y\right|\lesssim \lambda^{3\alpha-1} |A|_{\alpha\ax}^2|\varphi|_{C^1}\lesssim \lambda^{2\alpha-1} |A|_{\alpha\ax}^2|\varphi|_{C^1}.
    \]
    The third term in~\eqref{e:I1A1A1} is actually the easiest. We can write
    \[
    Q(\ell^{x;(x_1,y_2)})\otimes A_1(y)=\partial_{y_1}(Q(\ell^{x;(x_1,y_2)})\otimes A(\ell^{(x_1,y_2);y})).
    \]
    Furthermore
    \[
    |Q(\ell^{x;(x_1,y_2)})\otimes A(\ell^{(x_1,y_2);y})|\lesssim |A|_{\alpha\ax}^2 |y_1-y_2|^{2\alpha}|x_1-y_1|^\alpha.
    \]
    Therefore we get the claim similarly as before. 
    
    Concerning the continuity, one can easily redo the calculation above and see that
    \[
    |(\Pi_x-\bar \Pi_x) \blue{\cI_1A_1\otimes A_1}(\varphi_x^\lambda)|\lesssim \lambda^{2\alpha-1}|\varphi|_{C^1}(\|\bA-\bar\bA\|_{\alpha\ax}+(|A|_{\alpha\ax}+|\bar A|_{\alpha\ax})|A-\bar A|_{\alpha\ax}). 
    \]
    Using $\|\bA-\bar\bA\|_{\alpha\ax}\leq \|\bA-\bar\bA\|_{\alpha\ax}^{1/2}(\|\bA\|^{1/2}+\|\bar\bA\|_{\alpha\ax}^{1/2})$, we obtain
    \[
    |(\Pi_x-\bar \Pi_x) \blue{\cI_1A_1\otimes A_1}(\varphi_x^\lambda)|\lesssim \lambda^{2\alpha-1}|\varphi|_{C^1}(\triple{\bfA}_{\alpha\ax}+\triple{\bar\bfA}_{\alpha\ax})\triple{\bfA;\bar\bfA}_{\alpha\ax}. 
    \]
   
   Finally, let us look at the term $\blue{\rmD_2(\cI_1A_1\otimes A_1)}$ where the analogous notational convention as in~\eqref{e:tensor_notation_A1I1A1}, \eqref{e:tensor_notation_A1I1A1_2}, and \eqref{e:tensor_notation_A1I1A1_Pi} applies.\label{par:bound_D2I1AA} 
	This term is treated very similarly to the previous one.  
    By~\eqref{eq:regularity_d*KFA:line_integral}, we write
    \begin{align*}
    \begin{split}\Pi_x \blue{\rmD_2(\cI_1A_1\otimes A_1)}(y)&=\partial_{y_2}((K_1A_1(y)-K_1A_1(x_1,y_2))\otimes A_1(y))\\
    &\ \ \ +\partial_{y_2}((K_1A_1(x_1,y_2)-K_1 A_1(x))\otimes A_1(y))\\
    &=\partial_{y_2}(A(\ell^{(x_1,y_2);y})\otimes A_1(y))+ \partial_{y_2}(Q(\ell^{(x_1,y_2);y})\otimes A_1(y))\\
    &\ \ \ +\partial_{y_2}(Q(\ell^{x;(x_1,y_2)})\otimes A_1(y)).
    \end{split}
    \end{align*}
    With the notation from the calculations in the previous paragraph, we write
     \begin{align}\label{eq:bound_D2I1AA_2} 
     \begin{split}
    \Pi_x \blue{\rmD_2(\cI_1A_1\otimes A_1)}(y)&=\partial_{y_2}(A(\ell^{(x_1,y_2);y})\otimes A_1(y))+ \partial_{y_2}\partial_{y_1}\rmI(Q,A)(\ell^{(x_1,y_2);y})\\
    &\ \ \ +\partial_{y_2}\partial_{y_1}(Q(\ell^{x;(x_1,y_2)})\otimes A(\ell^{(x_1,y_2);y})).
    \end{split}
      \end{align}
    After testing with~$\varphi^\lambda_x$, in absolute value the last two terms are each bounded by
    \[\lambda^{3\alpha-2}|\varphi|_{C^2}|A|_{\alpha\ax}^2,\]
    as desired. 
    For the first term in~\eqref{eq:bound_D2I1AA_2}, we add a term that is constant in $y_2$ inside the derivative to obtain from~\eqref{e:derivative_A_product} that
    \begin{align*}
    \begin{split}
    \partial_{y_2}(A(\ell^{(x_1,y_2);y})\otimes A_1(y))&=\partial_{y_2}(A(\ell^{(x_1,y_2);y})\otimes A_1(y)-(A(\ell^{x;(y_1,x_2)})\otimes A_1(y_1,x_2)))\\
    &=\partial_{y_2}\partial_{y_1} (\bA(\ell^{(x_1,y_2);y})-\bA(\ell^{x;(y_1,x_2)})).
    \end{split}
    \end{align*}
    By definition of the horizontal norm~\eqref{eq:RAF_hor_norms}, we get
    \begin{align*}
    |\bA(\ell^{(x_1,y_2);y})-\bA(\ell^{x;(y_1,x_2)})|\lesssim \|\bA\|_{\alpha\ax} |x_1-y_1|^{2\alpha}|x_2-y_2|^\alpha. 
    \end{align*}
    After testing with $\varphi_x^\lambda$ we get a contribution of order $\lambda^{3\alpha-2}|\varphi|_{C^2}\|\bA\|_{\alpha\ax}$.
   Hence, we conclude that
    \begin{align*}
    |\Pi_x\blue{\rmD_2(\cI_1A_1\otimes A_1)}(\varphi_x^\lambda)|\lesssim \lambda^{3\alpha-2}|\varphi|_{C^2}(\|\bA\|_{\alpha\ax}+|A|_{\alpha\ax}^2). 
    \end{align*}
    Similar calculations show that
    \begin{align*}
    |(\Pi_x\!-\!\bar \Pi_x) \blue{\rmD_2(\cI_1A_1\otimes A_1)}(\varphi_x^\lambda)|\lesssim \lambda^{3\alpha-2}|\varphi|_{C^2}(\triple{\bfA}_{\alpha\ax}\!+\!\triple{\bar\bfA}_{\alpha\ax})\triple{\bfA;\bar\bfA}_{\alpha\ax}. 
    \end{align*}
	The whole proof is complete.
	\end{proof}

  \section{Patching}\label{sec:patching}
    In this section, we show that a  family of compatible and locally defined connection forms in $\Omega_\beta^1$ can be patched to a global connection form.
	This can be seen as a generalisation of the patching result from \cite{Uhlenbeck82} to distributional connections in the case of a rectangular cover.
    The results of this section are of independent interest and do not rely on those of the other sections.

	We start with the following definition, which is illustrated in Figure~\ref{fig:horizontal_patching}.

	\begin{definition}\label{def:rec_cover}\label{symb:rectangular_cover}
	Given an orthonormal basis $\sce=\{\rme_1,\rme_2\}$ of $\bR^2$, a scaling parameter $\mu=(\mu^1,\mu^2)\in [\frac 34,\infty)^2$, a point $y\in\bR^2$, and $\bfn=(n^1,n^2)\in\bN^2$, we define the rectangles:
	\begin{equation} \label{def:rec_cover:eq}
		U^{\sce,\mu,y}_\bfn
		:=
		\{y+\tfrac 23( n^1\mu^1\rme_1+ n^2\mu^2\rme_2)\} 
		+
		\{s^1\rme_1+s^2\rme_2\colon s^1\in [0,\mu^1], s^2\in [0,\mu^2]\}\,.
	\end{equation}
	For $L=(l^1,l^2)\in \bN^2$, we define the set $\bN_{L}^2:=\bN_{l^1}\times \bN_{l^2}$, where $\bN_{l}=\bN\cap [0,l] = \cbr[0]{0,\ldots,l}$ for $l\in\bN$. We then define the union of these rectangles 
	\[
	\cU^{\sce,\mu,y}_L:=\bigcup_{\bfn\in\bN_{L}^2} U_\bfn^{\sce,\mu,y},
	\] 
	which is a rectangle itself. 
	\end{definition}

\begin{notation}
	When the context is clear, we simply write $U_\bfn$ instead of $U^{\sce,\mu,y}_\bfn$ and similarly, we write $\cU$ instead of $\cU^{\sce,\mu,y}_L$. 
\end{notation}

\begin{figure}[ht]
	\centering
	%
	\tikzset{>={Latex[width=3pt,length=3pt]}}
	\begin{tikzpicture}[scale=0.5] 
		%
		%
		%
		\draw[semithick] (0,0) rectangle (3,6);
		\fill[color=black!15] (2,0) rectangle (3,6);
		\draw[semithick, densely dashed] (1,0) rectangle (2,6);
		%
		%
		\draw[semithick] (6,0) rectangle (9,6);
		\fill[color=black!15] (6,0) rectangle (7,6);
		\draw[semithick, densely dashed] (7,0) rectangle (8,6);
		%
		%
		\draw[very thick] (0,0) rectangle (9,6);
		%
		%
		\draw[semithick] (-6,0) rectangle (3,6);
		%
		%
		\draw[semithick] (6,0) rectangle (15,6);
		%
		%
		\draw[very thick, violet] (0,0) rectangle (3,6);
		%
		%
		\draw[very thick, violet] (6,0) rectangle (9,6);
		%
		\draw[<->, thick] (0,-1) -- (9,-1) node[midway, below, scale=1] {$\mu^1$};
		\draw[<->, thick] (-6.3,0) -- (-6.3,6) node[midway, left, scale=1] {$\mu^2$};
		%
		\node at (1.5,-0.5) {$\textcolor{violet}{U_{\bfn\minush \cap \bfn}}$};
		\node at (7.5,-0.5) {$\textcolor{violet}{V = U_{\bfn \cap \bfn\plush}}$};
		%
		\node at (6.5,3) {$V_1$};
		\node at (8.5,3) {$V_2$};
		%
		\draw [decorate,decoration={brace,amplitude=10pt}]
		(0,6.3) -- (9,6.3)
		node[midway,yshift=18pt]{$U_{\bfn}$};
	\end{tikzpicture}
	\caption{\emph{Horizontal patching.} There are three rectangles,~$U_{\bfn\minush}$, $U_\bfn$, and~$U_{\bfn \cap \bfn\plush}$, together with their respective overlaps in purple. 
	These overlaps are further partitioned into three equal parts to interpolate gauge transformations by means of Lemma~\ref{lem:interpolating_gauge_transformation} applied with~$V$,~$V_1$, and~$V_2$ as given in the figure, see~\eqref{e:ex_V} and~\eqref{e:ex_V12} below.
	By construction, the function~$g_\bfn$ given in~\eqref{e:def_g_bfn} below equals~$1_G$ on the shaded regions and the one enclosed by them.
	}
	\label{fig:horizontal_patching}
\end{figure}

First recall the definition of $\Omega^1_\beta(U)$ for generic rectangle $U\subset \R^2$ from Section~\ref{sec:AF}.
In the following, we will consider additive functions $(B_\bfn)_{\bN_{L}^2}$ defined on these rectangles, where each $B_\bfn \in \Omega^1_\beta(U_\bfn^{\sce,\mu,y})$ for some $\beta \in (\frac{2}{3},1)$. The upcoming \theo{thm:patching} will show how, under a joint gauge equivalence condition, these local additive functions $B_\bfn$ can be patched together into a global additive function $B$ on $\cU^{\sce,\mu,y}_L$. This requires compatibility conditions with the gauge transformations which we introduce in the following definition:

\begin{definition}[Compatible gauge data]
\label{def:compatible_gauge_data}
Let $\beta\in (\frac 23,1]$ and $(U_\bfn^{\sce,\mu,y})_{\bfn\in\bN_{L}^2}$ be a collection of rectangles as in \defref{def:rec_cover}. We say $(B_\bfn,g_{\bfn,\bfm})_{\bfn,\bfm\in\bN_{L}^2}$  is a \emph{compatible gauge datum}\label{symb:compatible_gauge_datum}, where $B_\bfn\in\Omega_\beta^1(U_\bfn)$ and $g_{\bfn,\bfm}\in C^\beta(U_{\bfn}\cap U_\bfm,G)$, if the following conditions hold: 
\begin{itemize}
\item \textbf{(gauge equivalence)} $B_\bfn^{g_{\bfn,\bfm}}=B_\bfm$ on $U_\bfn\cap U_{\bfm}$ ;
\item \textbf{(cocycle condition)} $g_{\bfn,\bfp}^{-1}g_{\bfp,\bfm}^{-1}=g_{\bfn,\bfm}^{-1} \quad \text{ on } \ U_\bfm\cap U_\bfn \cap U_\bfp$.
\end{itemize}
Finally, we say a compatible gauge datum $(B_\bfn,g_{\bfn,\bfm})_{\bfn,\bfm\in\bN_L^2}$ is smooth if all $B_\bfn$ and $g_{\bfn,\bfm}$ are smooth $1$-forms respectively smooth gauge transformations. 
\end{definition}

The patching result is then provided by the following theorem.

\begin{theorem}[Patching]\label{thm:patching} 
   	Fix~$L \in \N$, let $\beta\in (\frac 2 3 ,1]$, and let
    $(U_\bfn^{\sce,\mu,y})_{\bfn\in\bN_{L}^2}$ be a collection of
    rectangles as in \defref{def:rec_cover}. There exists
    $\delta_{\patch} > 0$, depending only on $\beta$ and $G$, with the following property. 
   	For any compatible gauge datum
    $(B_\bfn,g_{\bfn,\bfm})_{\bfn,\bfm\in\bN^2_L}$ such that 
   \begin{align}\label{eq:patch_req}
   \max_{\bfn,\bfm\in\bN_L^2}\{|B_\bfn|_{\beta;U_\bfn}+
   |g_{\bfn,\bfm}-1_G|_{\infty;U_\bfn\cap U_\bfm}\}\leq \delta_\patch,
   \end{align}
   there exist $B\in \Omega^1_\beta(\cU)$ and gauge transformations
   $q_\bfn\in C^\beta(U_\bfn;G)$ such that
   \begin{align}\label{eq:patched_gauge_datum}
    B=B_\bfn^{q_\bfn}\quad\text{on }U_\bfn,
    \qquad
    q_\bfn=q_\bfm g_{\bfn,\bfm}\quad\text{on }U_\bfn\cap U_\bfm .
\end{align}
Moreover,
\begin{align}\label{e:patching_bound}
    |B|_{\beta;\cU}
    +
    \max_{\bfn\in\N_L^2}|q_\bfn-1_G|_{C^\beta(U_\bfn)}
    \lesssim
    \max_{\bfn,\bfm\in\N_L^2}
    \{|B_\bfn|_{\beta;U_\bfn}
    +
    |g_{\bfn,\bfm}-1_G|_{\infty;U_\bfn\cap U_\bfm}\}.
\end{align}
   Furthermore, if $(B_\bfn,g_{\bfn,\bfm})_{\bfn,\bfm\in\bN^2_L}$ is a
   smooth compatible gauge datum, then $B$ and the gauge transformations
$q_\bfn$ are smooth.
    
   Finally, given another compatible gauge datum
   $(\bar B_\bfn,\bar g_{\bfn,\bfm})_{\bfn,\bfm\in\N_L^2}$
   satisfying~\eqref{eq:patch_req}, with corresponding objects
   $\bar B$ and $\bar q_\bfn$, we have
   \begin{align}\label{e:patching_bound_cont} 
       |B-\bar B|_{\beta;\cU}
       +
       \max_{\bfn\in\N_L^2}
       |q_\bfn-\bar q_\bfn|_{C^\beta(U_\bfn)}
       \lesssim
       \max_{\bfn,\bfm\in\N_L^2}
       \{
       |B_\bfn-\bar B_\bfn|_{\beta;U_\bfn}
       +
       |g_{\bfn,\bfm}-\bar g_{\bfn,\bfm}|_{\infty;U_\bfn\cap U_\bfm}
       \},
   \end{align}
   where the proportionality depends only on $\beta$ and $G$. 
\end{theorem}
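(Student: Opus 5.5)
We follow Uhlenbeck's classical patching strategy, adapted to $\Omega^1_\beta$. The argument proceeds in two rounds of one-dimensional patching: first horizontally along each row of rectangles, then vertically along the resulting strips.

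\textbf{Step 1: transition functions are H\"older.} Before patching, we upgrade the $L^\infty$ smallness of $g_{\bfn,\bfm}$ to $C^\beta$ smallness. From $B_\bfm=B_\bfn^{g_{\bfn,\bfm}}$ we get, on $U_\bfn\cap U_\bfm$, $\diff g_{\bfn,\bfm}=g_{\bfn,\bfm}B_\bfn-B_\bfm g_{\bfn,\bfm}$. Read along any segment $\ell$, this is a linear Young ODE driven by the additive functions $B_\bfn,B_\bfm\in\Omega^1_\beta$; since $\beta>\frac12$, Young integration applies. Young ODE estimates then give
\[
|g_{\bfn,\bfm}(y)-g_{\bfn,\bfm}(x)|\lesssim (|B_\bfn|_\beta+|B_\bfm|_\beta)|x-y|^\beta,
\]
so $|g_{\bfn,\bfm}-1_G|_{C^\beta}\lesssim\delta$. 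Here $\delta$ denotes the right-hand side of \eqref{e:patching_bound}. For $\delta_\patch$ small, every $g_{\bfn,\bfm}$ takes values in $O_G$, so $\log g_{\bfn,\bfm}$ is defined and small in $C^\beta$.

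\textbf{Step 2: interpolating gauge transformations (the Lemma~\ref{lem:interpolating_gauge_transformation} referenced in Figure~\ref{fig:horizontal_patching}).} Take two horizontally adjacent rectangles and their overlap $V$, split into thirds $V_1,\cdot,V_2$. Fix a smooth cutoff $\chi$ depending only on $x_1$, equal to $1$ on $V_1$ and $0$ on $V_2$ (or vice versa). Define
\[
h=\exp\bigl(\chi\log g_{\bfn,\bfn\plush}\bigr).
\]
This $h\in C^\beta$ equals $g$ near one side of $V$ and $1_G$ near the other, with $|h-1_G|_{C^\beta}\lesssim|g-1_G|_{C^\beta}$. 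The key estimate is $|B^h|_\beta\lesssim|B|_\beta+|h-1_G|_{C^\beta}$ and its Lipschitz version. This requires the $\Omega^1_\beta$ action of $C^\beta$ gauge transformations with $\beta>\frac23$. We realise it as $B^h(\ell)=\int_\ell\Ad_h\,\diff B-\int_\ell \diff h\,h^{-1}$, as Young integrals. The growth part follows from Young estimates. The triangle part needs $|B^h(\partial P)|\lesssim|P|^{\beta/2}$; I would obtain it via the holonomy formula (Lemma~\ref{lem:holonomy_gauge_tr} in its Young version): $\hol(B^h,\partial P)=h(x)\hol(B,\partial P)h(x)^{-1}$. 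Then compare $B^h(\partial P)$ with $\log\hol(B^h,\partial P)$ up to errors of order $|\partial P|^{2\beta}$ and $|\partial P|^{3\beta}$. The condition $\beta>\frac23$ is exactly what makes these errors controllable by $|P|^{\beta/2}$ after a triangle-subdivision argument as in \cite{CCHS2d}. \emph{I expect this step to be the main obstacle.}

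\textbf{Step 3: horizontal patching.} Within a row, define $g_\bfn$ by \eqref{e:def_g_bfn}: it equals $1_G$ on the middle part of $U_\bfn$ and on the shaded regions of Figure~\ref{fig:horizontal_patching}, and interpolates toward the transitions on each overlap. Set $\tilde B_\bfn=B_\bfn^{g_\bfn}$. By the cocycle condition, $\tilde B_\bfn=\tilde B_{\bfn\plush}$ on the relevant middle third of each overlap. Hence the $\tilde B_\bfn$ glue to a single additive function on the row strip. Additivity lets us cut any segment or triangle at strip boundaries, so the $\Omega^1_\beta$ norm is bounded by a constant (depending only on the fixed overlap geometry, not on $L$, since one cut suffices at scale $\le\frac14$) times the local norms.

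\textbf{Step 4: vertical patching and conclusion.} The new transition functions between row strips are $g_{\bfn,\bfm}$ conjugated by the $g_\bfn$. These still satisfy the cocycle condition and are small in $C^\beta$, so we repeat Steps 2–3 vertically, obtaining $B$ and $q_\bfn$ as products of the interpolating gauges. Then:
\begin{itemize}
\item \eqref{eq:patched_gauge_datum} follows from the construction.
\item \eqref{e:patching_bound} and \eqref{e:patching_bound_cont} follow from the Lipschitz estimates of Steps 1–2, all of which are composed of locally Lipschitz maps.
\item Smoothness is preserved, because every operation used ($\exp$, $\log$, multiplication by a smooth cutoff, gauge action) maps smooth data to smooth data.
\end{itemize}
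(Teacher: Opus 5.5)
Your overall construction is the paper's: interpolate each horizontal transition as $\exp(\psi\log g_{\bfn,\bfn\plush})$ on the outer thirds of the overlap (Lemma~\ref{lem:interpolating_gauge_transformation}), define $g_\bfn$ piecewise, and glue the $B_\bfn^{g_\bfn}$ along rows with Lemma~\ref{lem:global_additive_function}. Then repeat vertically with strip transitions $h_{m,m+1}=g_{\bfn\plusv}g_{\bfn,\bfn\plusv}g_\bfn^{-1}$, and set $q_\bfn=p_mg_\bfn$. The paper does not prove the two analytic inputs: $|g|_{\Hol\beta}\lesssim|B|_\beta+|B^g|_\beta$ (Lemma~\ref{lem:bound_gauge_Holder_seminorm}) and $|B^g|_\beta\lesssim(1+|g|_{\Hol\beta})(|B|_\beta+|g|_{\Hol\beta})$ with its Lipschitz version (Proposition~\ref{prop:gauge_transformation_bound}). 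It takes both from \cite{CCHS2d}. Your Step~1 derivation of the first from the linear Young equation $\diff g=gB_\bfn-B_\bfm g$ along segments is fine.

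Your proposed proof of the second estimate in Step~2 does not close as sketched. Comparing $B^h(\partial P)$ with $\log\hol(B^h,\partial P)=\Ad_{h(x)}\log\hol(B,\partial P)$ gives only an a priori discrepancy of order $|\partial P|^{2\beta}$. For thin triangles (height much smaller than the cube of the diameter) this is not bounded by $|P|^{\beta/2}$. Subdividing makes things worse, since $\sum_i|P_i|^{\beta/2}\geq|P|^{\beta/2}$ and the nonlinear errors accumulate. Showing that the second-order terms cancel for thin triangles requires controlling how iterated integrals of $B^h$ vary between nearby segments, which is the same kind of estimate you are trying to establish.

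The triangle (equivalently, vee) part of $|B^h|_\beta$ is a continuity statement for $\ell\mapsto B^h(\ell)$. It follows from Young integration with H\"older interpolation, as in the proof of Lemma~\ref{lem:integration_af_Young}. Both $\Ad_hB$ and $\diff h\,h^{-1}$ are integrals $\int Y\,\diff\zeta$ of a $C^\beta$ integrand against an additive function with $\Omega^1_\beta$ bounds. For the first, $Y=\Ad_h$ and $\zeta=B$. For the second, $Y$ is right multiplication by $h^{-1}$ and $\zeta$ is $\ell\mapsto h(\ell_1)-h(\ell_0)$, whose loop integrals vanish. The interpolation closes when $\beta+\beta-\frac\beta2>1$, i.e.\ $\beta>\frac23$, and this is where the threshold in the theorem comes from.

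Two smaller points. First, the identity $B_\bfn^{g_\bfn}=B_{\bfn\plush}^{g_{\bfn\plush}}$ holds on the whole overlap $U_\bfn\cap U_{\bfn\plush}$, not only its middle third. It comes from gauge equivalence, not from the cocycle condition: there $g_\bfn=\hat g_{\bfn,\bfn\plush}$ and $g_{\bfn\plush}=\hat g_{\bfn,\bfn\plush}g_{\bfn,\bfn\plush}^{-1}$. Second, in the vertical step the cocycle condition is not needed to give the strip transitions a cocycle property, since strips have no triple overlaps. It is needed to show that $g_{\bfn\plusv}g_{\bfn,\bfn\plusv}g_\bfn^{-1}$ and $g_{\bfn\plushv}g_{\bfn\plush,\bfn\plushv}g_{\bfn\plush}^{-1}$ coincide where both are defined. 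Together with $g_\bfn=g_{\bfn\plush}g_{\bfn,\bfn\plush}$, this makes the strip transition a well-defined function on the overlap of consecutive strips. Note also that these transitions are not conjugations, since the left and right factors differ.
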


We will introduce several lemmas before getting to the proof. 
The first lemma discusses an easy case when we can obtain a global additive function.
\begin{lemma}
\label{lem:global_additive_function}
 Let $\beta\in (\frac 12 ,1]$ and $(U_\bfn^{\sce,\mu,y})_{\bfn\in\bN_{L}^2}$ be a collection of rectangles as in \defref{def:rec_cover}. Consider a sequence of additive functions $(B_\bfn)_{\bfn\in\bN_{L}^2}$ with $B_\bfn\in\Omega_\beta^1(U_\bfn)$  such that $B_\bfn=B_\bfm$ on $U_\bfn\cap U_\bfm$. Then there exists  $B\in \Omega_\beta^1(\cU)$ such that  $B=B_\bfn$ on $U_\bfn$ for any $\bfn\in\bN_{L}^2$. Moreover, 
 \[
 |B|_{\beta;\cU}\lesssim \max_{\bfn\in\bN_L^2}|B_\bfn|_{\beta;U_\bfn},
 \]
 where the proportionality constant is uniform in all parameters. 
 Finally, if $(B_\bfn)_{\bfn\in\bN_L^2}$ consists of smooth additive functions, then $B$ is smooth as well.
\end{lemma}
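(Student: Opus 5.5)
The plan is to define $B$ on line segments in $\cU$ by cutting and gluing: every segment $\ell\subset\cU$ of length at most $\frac14$ is split into finitely many consecutive subsegments, each contained in a single rectangle $U_\bfn$. We then set $B(\ell):=\sum_k B_{\bfn_k}(\ell_k)$. Because $\mu^i\geq\frac34$ and consecutive rectangles overlap by $\frac13\mu^i\geq\frac14$ in each direction, a segment of length $\leq\frac14$ meets at most a bounded number (say $\leq 4$) of rectangles. It can be cut into at most a bounded number $N$ of pieces, each lying in some $U_\bfn$.

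Well-definedness comes first. Suppose two subdivisions $\ell=\sqcup_k\ell_k$ and $\ell=\sqcup_j\ell'_j$ are given, with each piece in some rectangle. Pass to the common refinement. A piece lying in $U_\bfn\cap U_\bfm$ has $B_\bfn=B_\bfm$ on it by hypothesis, and additivity of each $B_\bfn$ lets us merge or split pieces within one rectangle. So both sums agree, and $B$ is additive on $\cU$ by construction.

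Next come the bounds. For the growth part, $|B(\ell)|\leq\sum_k|B_{\bfn_k}|_{\beta\gr}|\ell_k|^\beta\leq N\max_\bfn|B_\bfn|_{\beta}|\ell|^\beta$.

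For the triangle seminorm, take a triangle $P$ of diameter $\leq\frac14$. Since overlaps have width $\geq\frac14$ and $P$ has diameter at most $\frac14$, $P$ lies in a union of at most four rectangles arranged around a common corner region. I would first try to show that $P$ is in fact contained in a single $U_\bfn$ whenever it fits in an overlap strip. In general, subdivide $P$ by the lines $\{x_i=c\}$ bounding the rectangles. This produces a bounded number of convex polygons $P_j$, each inside one $U_\bfn$, each of area $\leq|P|$, and each triangulable into at most a bounded number of triangles $T$ with $|T|\leq|P|$. Then $B(\partial P)=\sum_j B(\partial P_j)$, since interior edges cancel by additivity. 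Each term is bounded by $|B_\bfn|_{\beta\tri}|P|^{\beta/2}$.

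The main obstacle is that $\Omega^1_\beta(\cU)$ is a closure of smooth forms, so we must check that $B$ is such a limit and not merely finite in norm. We handle this by a partition of unity. Take $B_\bfn$ to be limits of smooth $B^k_\bfn$. Fix smooth $\chi_\bfn$ with $\sum\chi_\bfn=1$ on $\cU$, where each $\chi_\bfn$ is supported in $U_\bfn$ but extended as zero outside, and set $B^k:=\sum\chi_\bfn B^k_\bfn$. Then on $U_\bfm$ we have
\[
B^k-B_\bfm=\sum_\bfn\chi_\bfn(B^k_\bfn-B_\bfm)=\sum_\bfn\chi_\bfn(B^k_\bfn-B_\bfn)\quad\text{on }U_\bfn\cap U_\bfm,
\]
using $B_\bfn=B_\bfm$ there. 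Applying the local bound just proved to the glued difference, and using that multiplication by a fixed smooth function is bounded on $\Omega^1_\beta$ (via \eqref{eq:omega_beta_embedding}-type arguments; this is the step where $\beta>\frac12$ is used, to make sense of the Young-type product with $\chi_\bfn$), gives $B^k\to B$ in $|\cdot|_{\beta;\cU}$. Finally, in the smooth case the sum $\sum\chi_\bfn B_\bfn$ equals $B$ exactly and is smooth, so $B$ is smooth.
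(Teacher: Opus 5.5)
Your argument is correct, but it works harder than the paper's. The paper's proof rests on one observation, and it is exactly the containment you said you ``would first try to show'': every segment of length at most $\frac14$, and every triangle of diameter at most $\frac14$, contained in $\cU$ lies entirely in (at least) one rectangle $U_\bfn$.

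This holds in general. The projection of such a set onto $\rme_i$ is an interval of length at most $\frac14\le\frac13\mu^i$. Any such interval inside $\bigcup_n[y_i+\frac23 n\mu^i,\,y_i+\frac23 n\mu^i+\mu^i]$ lies in one of these intervals: take the largest $n$ whose left endpoint does not exceed that of the interval.

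Hence one simply sets $B(\ell):=B_\bfn(\ell)$ for any $\bfn$ with $\ell\subset U_\bfn$. Well-definedness and additivity follow immediately from the overlap hypothesis. The bound $|B|_{\beta;\cU}\le 2\max_\bfn|B_\bfn|_{\beta;U_\bfn}$ then needs no common refinements, grid cutting or triangulation. Your cut-and-glue construction and triangle decomposition are valid, just unnecessary.

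What your proposal buys is the final step. The paper's proof only defines $B$ and bounds its norm; it never checks that $B$ lies in the $|\cdot|_\beta$-closure of smooth forms, which membership in $\Omega^1_\beta(\cU)$ requires. Your partition-of-unity argument fills this in correctly, with some care needed in the details:
\begin{itemize}
\item $\chi_\bfn$ should vanish on a relative neighbourhood of $\cU\setminus U_\bfn$, which is possible because overlaps have width at least $\frac14$. Then $\chi_\bfn B^k_\bfn$ extended by zero is smooth, and $\chi_\bfn B_\bfm=\chi_\bfn B_\bfn$ on $U_\bfm$.
\item The product $\chi_\bfn D$ is well defined for every $\beta>0$, as a Young integral of a smooth integrand against a $C^\beta$ path. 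What actually needs justification is the bound $|\chi D|_\beta\lesssim_\chi|D|_\beta$. This comes from the Young/vee-seminorm estimates of \cite{CCHS2d}, not from \eqref{eq:omega_beta_embedding}.
\end{itemize}
Combined with the paper's containment observation, your convergence step also becomes one line: $|B^k-B|_{\beta;\cU}\le 2\max_\bfm|B^k-B|_{\beta;U_\bfm}$.
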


\begin{proof}
Firstly, since $B_\bfn=B_\bfm$ on $U_\bfn\cap U_\bfm$, we can define $B=B_\bfn$ on $\cX\cap U_\bfn$. This $B$ is well-defined as an additive function as the definition does not depend on $\bfn\in\bN^2_{L}$ that we have chosen. 

For the norm, recall $|\Cdot|_{\beta;\cU}=|\Cdot|_{\beta\gr;\cU}+|\Cdot|_{\beta\text{-tr};\cU}$ in Section~\ref{sec:AF}. By the choice $\mu^i\geq\frac 34$, we know that the overlaps of the rectangles have side lengths bigger or equal to $\frac 13\mu^i\geq \frac 14$. 
In particular, any line segment (resp.~triangle) with length (resp.~diameter) at most $\frac 14$ is contained entirely in precisely one rectangle, see Figure~\ref{fig:horizontal_vertical_overlaps} for an illustration.
Finally, if $(B_\bfn)_{\bfn\in\bN_L^2}$ consists of smooth $1$-forms, it is trivial that~$B$ inherits this property.
\end{proof}

\begin{figure}[ht]
	\centering
	%
	\tikzset{>={Latex[width=3pt,length=3pt]}}
	\begin{tikzpicture}[scale=0.5] 
		\fill[color=blue!20] (6,4) rectangle (15,6);
		\fill[color=red!20] (6,0) rectangle (9,6);
		%
		\fill[color=violet!30] (6,4) rectangle (9,6);
		%
		%
		\draw (0,0) rectangle (3,6);
		%
		%
		\draw (6,0) rectangle (9,6);
		%
		%
		\draw(0,0) rectangle (9,6);
		%
		%
		\draw (6,0) rectangle (15,6);
		%
		%
		\draw[densely dashed] (0,4) rectangle (9,10);
		%
		%
		\draw[densely dashed] (6,4) rectangle (15,10);
		%
		%
		%
		\node at (12,5) {$U_{\bfn\plush\cap\bfn\plushv}$};
		%
		%
		%
		
		\draw[violet, thick] (6,4) rectangle (9,6);
		%
		%
		%
		%
		\draw [decorate,decoration={brace,amplitude=5pt}]
		(-0.3,4) -- (-0.3,10)
		node[midway,xshift=-26pt]{$U_{\bfn\plush}$};
		\node at (7.5,2) {$U_{\bfn \cap \bfn\plush}$};
		\node[draw,circle,fill=black, inner sep=1.5pt] at (0,0) {};
		\node at (-0.2,-0.5) {$y$};
		\draw [decorate,decoration={brace,mirror,amplitude=5pt}]
		(0,-0.3) -- (9,-0.3)
		node[midway,yshift=-18pt]{$U_{\bfn}$};
		\draw [decorate,decoration={brace,mirror,amplitude=5pt}]
		(15.3,0) -- (15.3,6)
		node[midway,xshift=26pt]{$U_{\bfn\plush}$};
		\draw [decorate,decoration={brace,amplitude=5pt}]
		(6,10.3) -- (15,10.3)
		node[midway,yshift=18pt]{$U_{\bfn\plushv}$};
		\draw[-{Latex[length=1.5mm]}, thick] (4.5,7) -- (6.5,5.5) node[below, scale=1] {};
		\node at (3,8.5) {$U_{\bfn \cap \bfn\plush}$};
		\node at (3.5,7.5) {$\cap U_{\bfn\plush\cap\bfn\plushv}$};
		\fill[color=orange!20, fill opacity=0.7] (7.5,5.3) -- (8.5,7) -- (10,5.5) -- cycle;
		\draw[black!20!orange!70, thick] (7.5,5.3) -- (8.5,7) -- (10,5.5) -- cycle;
	\end{tikzpicture}
	\caption{\emph{Horizontal-vertical overlaps between adjacent rectangles.} By choice of our parameters, the purple rectangle has side lengths greater equal than~$1/4$ and so the orange triangle of diameter~$1/4$ or less is contained in precisely one of the four rectangles, namely~$U_{\bfn\plushv}$.}
	\label{fig:horizontal_vertical_overlaps}
\end{figure}

In general, we will not have $B_\bfn=B_\bfm$ on $U_\bfn\cap U_\bfm$; instead, this identity will hold only up to a gauge transformation. 
The next ingredient is a way to interpolate gauge transformations which we will use to restore the previous equality, see~\eqref{e:identity_gauge_trafo} below.  
To that end, recall the notation $O_G$ from Section{sec:notation}.  
With these notations in mind, we introduce the following lemma.
    \begin{lemma}\label{lem:interpolating_gauge_transformation}
        Let $V,V_1,V_2\subset \bR^2$ be compact sets such that $V_1\cap V_2=\varnothing$ and $V_1\cup V_2\subset V$. Let $\beta\in (0,1]$ and $g\in C^\beta(V;O_G)$. Then there exists $\hat g\in C^\beta(V;G)$ such that $\hat{g}|_{V_1}= 1_G$ and $\hat g|_{V_2}=g|_{V_2}$. We also have the bound
        \[
    |\hat g-1_G|_{C^\beta(V)}\lesssim d(V_1,V_2)^{-\beta}|g-1_G|_{C^\beta(V)},
    \]
    where the proportionality constant only depends on $G$. Moreover, if $g\in C^\infty(V;O_G)$, then $\hat g\in C^\infty(V;G)$. 
       Finally, one can choose $\hat g$ continuously in the sense that given another $\bar g$ with corresponding $\hat{\bar g}$, one has 
    \[
    |\hat g-\hat{\bar g}|_{C^\beta(V)}\lesssim d(V_1,V_2)^{-\beta} |g-\bar g|_{C^\beta(V)},
    \]
    where the proportionality constant only depends on $G$. 
    \end{lemma}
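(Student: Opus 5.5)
The natural construction works in logarithmic coordinates. Since $g$ takes values in $O_G$, we set $h:=\log g\in C^\beta(V;O_\fg)$, where $\log$ is the inverse of $\exp$ on $O_G$. We then fix a cutoff $\chi\colon\R^2\to[0,1]$ with $\chi=0$ on $V_1$, $\chi=1$ on $V_2$, and $|\chi|_{C^1}\lesssim d(V_1,V_2)^{-1}$. Concretely, we take $\chi(x)=\psi\big(d(x,V_1)/d(V_1,V_2)\big)$ with a fixed smooth $\psi$ that equals $0$ near $0$ and $1$ on $[1,\infty)$, and mollify at scale $\ll d(V_1,V_2)$ if genuine smoothness is needed. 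The candidate is
\[
\hat g:=\exp(\chi h).
\]
Since $O_\fg$ can be taken star-shaped around $0$ (e.g.\ a ball for the $\Ad$-invariant norm), the path $\chi h$ stays in $O_\fg$, so $\hat g$ is $G$-valued. Moreover $\hat g=1_G$ on $V_1$ and $\hat g=\exp(h)=g$ on $V_2$. If $g$ is smooth, then $\hat g$ is smooth whenever $\chi$ is.

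For the bound, we first note that $\log$ and $\exp$ are smooth with bounded derivatives on (a compact sub-neighbourhood of) $O_G$, resp.\ $O_\fg$. Hence $|h|_{C^\beta}\lesssim|g-1_G|_{C^\beta}$ and $|\exp(f)-1_G|_{C^\beta}\lesssim|f|_{C^\beta}$ for $f$ in the relevant bounded set. The key step is the product estimate
\[
|\chi h|_{C^\beta}\le|\chi|_\infty|h|_{C^\beta}+|\chi|_{\Hol\beta}|h|_\infty .
\]
Here $|\chi|_{\Hol\beta}\le|\chi|_\infty^{1-\beta}|\chi|_{\Hol1}^\beta\lesssim d(V_1,V_2)^{-\beta}$. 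Since $d(V_1,V_2)\lesssim\operatorname{diam}V$, we should restrict to the relevant regime $d(V_1,V_2)\lesssim 1$ (or simply note that the factor $d^{-\beta}\ge c$ in the cases used later). This gives $|\hat g-1_G|_{C^\beta}\lesssim d(V_1,V_2)^{-\beta}|g-1_G|_{C^\beta}$.

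Continuity follows the same pattern, because the construction $g\mapsto\exp(\chi\log g)$ uses the same $\chi$ for $g$ and $\bar g$ and is a composition of a multiplication operator with smooth maps. Thus $|\hat g-\hat{\bar g}|_{C^\beta}\lesssim|\chi(h-\bar h)|_{C^\beta}$, using the mean value theorem for $\exp$ in Hölder norm, with constants depending on the a priori bounds of $h,\bar h$, which are bounded because $O_\fg$ is. Likewise $|h-\bar h|_{C^\beta}\lesssim|g-\bar g|_{C^\beta}$.

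The main obstacle is keeping the nonlinear composition estimates uniform. Lipschitz bounds for superposition operators on $C^\beta$ involve the $C^\beta$-norms of the arguments themselves, so we need the constant to depend only on $G$. We handle this by shrinking $O_G$ to a compact set with controlled geometry, and, if needed, absorbing the resulting factors $(1+|h|_{C^\beta})$ into the smallness regime used in the later patching argument, where $g$ is close to $1_G$.
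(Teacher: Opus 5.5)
Your construction $\hat g=\exp(\chi\log g)$ and your proofs of the first bound and of smoothness coincide with the paper's. It also takes a cutoff $\psi$ with $|\psi|_{C^\beta}\le 2d(V_1,V_2)^{-\beta}$ and applies the product rule to $|\psi\log g|_{\Hol\beta}$.

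Your caveat that one needs $d(V_1,V_2)\lesssim 1$ is correct and is tacit in the paper. Its cutoff bound is impossible for large $d$ since $|\psi|_\infty=1$. Indeed the first bound fails as $d\to\infty$ for constant $g\neq 1_G$. Star-shapedness of $O_{\mathfrak g}$ is not needed for $G$-valuedness, since $\exp(\mathfrak g)\subset G$.

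The continuity estimate is where your write-up does not deliver the statement; the paper's proof does not address it at all. The sentence ``constants depending on the a priori bounds of $h,\bar h$, which are bounded because $O_{\mathfrak g}$ is'' conflates sup and H\"older bounds. The difference estimate reads
\[
|\exp a-\exp b|_{\Hol\beta}\lesssim |a-b|_{\Hol\beta}+\bigl(|a|_{\Hol\beta}+|b|_{\Hol\beta}\bigr)|a-b|_\infty ,\qquad a=\chi h,\ b=\chi\bar h,
\]
and $|\chi h|_{\Hol\beta}$ is not controlled by the range of $h$, so shrinking $O_G$ does not help.

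In fact, for this construction no constant depending only on $G$ can work. Take $G=SU(2)$ as unit quaternions, a small constant imaginary $c$, and a unit imaginary $e\perp c$. Set $g(x)=\cos\theta(x)-\tfrac12 c+\rho(x)e$ with $\rho=(\sin^2\theta-\tfrac14|c|^2)^{1/2}$ and $\theta(x)=\theta_*+\varepsilon\sin(Nx_1)$. Then $\bar g:=g+c$ is again $G$-valued: it has the same real part and an imaginary part of the same length. For $\theta_*,\varepsilon,|c|$ small both take values in $O_G$. Thus $|g-\bar g|_{C^\beta}=|c|$ for every $N$, whereas exactly $\hat g-\hat{\bar g}=-c\,\sin(\chi\theta)/\sin\theta$. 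Since $\partial_\theta\bigl(\sin(s\theta)/\sin\theta\bigr)\approx\frac{s(1-s^2)}{3}\theta\neq 0$ for $s\in(0,1)$, the $\beta$-H\"older seminorm of this difference on the region where $0<\chi<1$ is $\gtrsim |c|\,\theta_*\varepsilon N^\beta\to\infty$ as $N\to\infty$.

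What the construction does give, for $d(V_1,V_2)\lesssim 1$, is the locally Lipschitz bound
\[
|\hat g-\hat{\bar g}|_{C^\beta}\lesssim d(V_1,V_2)^{-\beta}\bigl(1+|g-1_G|_{C^\beta}+|\bar g-1_G|_{C^\beta}\bigr)^2|g-\bar g|_{C^\beta}.
\]
This is what your last paragraph is reaching for. It suffices for Theorem~\ref{thm:patching}, where the transition functions are a priori bounded in $C^\beta$ via Lemma~\ref{lem:bound_gauge_Holder_seminorm}. You should state and prove this version rather than claim a constant depending only on $G$.
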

    
    Examples of regions~$V$, $V_1$, and~$V_2$ are given in Figure~\ref{fig:horizontal_patching} above; they will become relevant in the proof of \theo{thm:patching} below.

    \begin{proof}
    Note that since $V_1,V_2$ are compact and disjoint, we have $d(V_1,V_2)>0$. Then we know that we can find a smooth bump function $\psi\in C^\infty_c(\bR^2;[0,1])$ supported on $V^{+1}$ (recall \eqref{eq:ex_def}), such that $\psi|_{V_1}\equiv 0$  and $\psi|_{V_2}\equiv 1$, with the bound 
    \[
    |\psi|_{C^\beta}\leq 2d(V_1,V_2)^{-\beta}. 
    \]
    Now we define~$\hat g := \exp(\psi\log g)$ and find 
   	\begin{equs}
   		\abs[0]{\hat g-1_G}_{\Hol\beta}
   		& = \abs[0]{\hat g}_{\Hol\beta}
   		\lesssim \abs[0]{\psi \log g}_{\Hol\beta}
   		\lesssim \abs[0]{\psi}_{\Hol\beta} \abs[0]{\log{g}}_{\infty} + \abs[0]{\psi}_\infty \abs[0]{\log g}_{\Hol\beta} \\[0.5em]
   		& = \abs[0]{\psi}_{\Hol\beta} \abs[0]{\log{g} - \log 1_G}_{\infty} + \abs[0]{\psi}_\infty \abs[0]{\log g - \log 1_G}_{\Hol\beta}
   		\lesssim \abs[0]{\psi}_{C^\beta} \abs[0]{g - 1_G}_{C^\beta} 
   	\end{equs}
    which is the claimed bound. 
    The statement about smoothness is clear.
    \end{proof}

    The final ingredients consist of two results which we state without proof as they basically follow the proof of~\cite[Thm.~3.27]{CCHS2d}. 
    The first one is a more precise bound for gauge transformed additive functions in $\Omega^1_\beta$.
    
    \begin{proposition}\label{prop:gauge_transformation_bound}
     Let $\beta\in (\frac 2 3 ,1)$, $B\in\Omega_{\beta}^1(V)$ and $g\in C^\beta(V;G)$. One has
     \[
     |B^g|_{\beta}\lesssim  (1+|g|_{\Hol\beta})(|B|_{\beta}+|g|_{\Hol\beta}). 
     \]
     Furthermore, if we have  another $ \bar B\in\Omega_{\beta}^1$ and $\bar g\in C^\beta(V;G)$, then we get 
     \[
     |B^g-\bar B^{\bar g}|_{\beta}\lesssim (1+|g|_{\Hol\beta}+|\bar g|_{\Hol\beta}+|B|_\beta + |\bar{B}|_\beta)(|B-\bar B|_\beta + |g-\bar g|_{C^\beta} ). 
     \]
    \end{proposition}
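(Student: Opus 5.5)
We prove Proposition~\ref{prop:gauge_transformation_bound} by working segment by segment, as in \cite[Thm.~3.27]{CCHS2d}. For a segment $\ell=(x,v)$ in $V$, write $B^g(\ell)=\int_0^1 \Ad_{\ell_g(t)}\dif \ell_B(t)-\int_0^1\dif\ell_g(t)\,\ell_{g^{-1}}(t)$. Since $\beta>\frac23>\frac12$, both terms are Young integrals: $\ell_B$ is $\beta$-H\"older with $|\ell_B|_{\Hol\beta}\le|\ell|^\beta|B|_{\beta\gr}$, and $\ell_g,\ell_{g^{-1}}$ are $\beta$-H\"older with constant $|\ell|^\beta|g|_{\Hol\beta}$.

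\textbf{Growth bound.} The Young--Lo\`eve estimate gives
\[
\Big|\int_0^1\Ad_{\ell_g}\dif\ell_B-\Ad_{g(x)}B(\ell)\Big|\lesssim |\ell|^{2\beta}|g|_{\Hol\beta}|B|_{\beta\gr}.
\]
Moreover $|\Ad_{g(x)}B(\ell)|\le|B(\ell)|$ by unitarity. The second integral is bounded by $|g(x+v)-g(x)|+|\ell|^{2\beta}|g|_{\Hol\beta}^2$, i.e.\ by $|\ell|^\beta(|g|_{\Hol\beta}+|g|_{\Hol\beta}^2)$ since $|\ell|\le\frac14$. Together these give $|B^g|_{\beta\gr}\lesssim(1+|g|_{\Hol\beta})(|B|_{\beta\gr}+|g|_{\Hol\beta})$.

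\textbf{Triangle bound.} Let $P$ be a triangle with vertex $x_0$ and diameter $r$, so $|P|\lesssim r^2$. We would like to use the holonomy identity $\hol(B^g,\partial P)=g(x_0)\hol(B,\partial P)g(x_0)^{-1}$, but for additive functions in $\Omega^1_\beta$ the bound $|B^g(\partial P)|$ has to be extracted directly. We approximate $B$ and $g$ by smooth objects, which suffices because the estimates are closed. In the smooth case, $\int_{\partial P}B^g$ equals $\int_{\partial P}\Ad_g B-\int_{\partial P}\dif g\,g^{-1}$.

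\begin{itemize}
\item For the first term, subtract the frozen value $\Ad_{g(x_0)}B(\partial P)$, which is bounded by $|B|_{\beta\tri}|P|^{\beta/2}$. The remainder $\int_{\partial P}(\Ad_g-\Ad_{g(x_0)})B$ must be bounded by $|P|^{\beta/2}$ rather than $r^{2\beta}$; since the triangle may be thin, naive Young estimates give only $r^{2\beta}$.
\item This is the main obstacle. Following \cite[Sec.~3]{CCHS2d}, we decompose $P$ dyadically into sub-triangles of comparable shape, so that $\partial P$ telescopes into boundaries of small triangles. Each small piece $\int_{\partial P_k}(\Ad_g-\Ad_{g(x_k)})B$ is controlled by $\mathrm{diam}(P_k)^{\beta}|g|_{\Hol\beta}|B|_{\beta\tri}|P_k|^{\beta/2}$ plus growth terms. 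The condition $\beta>\frac23$ ensures the resulting series $\sum_k 4^k(2^{-k}r)^{\beta}(4^{-k}|P|)^{\beta/2}$-type sums converge; this exponent threshold $3\beta/2>1$ is exactly where $\frac23$ enters.
\item For the Maurer--Cartan term $\int_{\partial P}\dif g\,g^{-1}$, we use the same decomposition, noting that it vanishes for smooth $g$ up to commutator corrections. More precisely, $\dif(\dif g\,g^{-1})=-\dif g\wedge\dif g^{-1}$ formally, and the germ bound uses $|g|_{\Hol\beta}^2$ with area exponent $\beta$.
\end{itemize}

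\textbf{Continuity.} The continuity estimate follows by running the same argument on differences. By bilinearity of the Young integrals, each term splits into a difference in $B$ times bounded $g$-factors, plus a difference in $g$ times bounded $B$-factors. This yields the stated prefactor $(1+|g|_{\Hol\beta}+|\bar g|_{\Hol\beta}+|B|_\beta+|\bar B|_\beta)$.
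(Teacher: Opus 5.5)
Your growth estimate is correct, but the triangle estimate, which is the only nontrivial part, does not work as proposed. Midpoint subdivision produces children of the same shape, so a thin $P$ has equally thin descendants. The per-piece bound you invoke for $\int_{\partial P_k}(\Ad_g-\Ad_{g(x_k)})B$ is therefore the estimate you are trying to prove, at a smaller scale.

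Your series also diverges. It equals $\sum_k 4^k(2^{-k}r)^\beta(4^{-k}|P|)^{\beta/2}=r^\beta|P|^{\beta/2}\sum_k 2^{(2-2\beta)k}$, which diverges for every $\beta<1$, as do the accompanying growth errors $\sum_k 4^k(2^{-k}r)^{2\beta}$. A purely two-dimensional sewing of this type would need $\beta+2\cdot\frac\beta2>2$, and the condition $\frac32\beta>1$ never appears in it.

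More fundamentally, the remainders do not vanish even for smooth data. Stokes gives, at level $k$,
$\sum_j\int_{\partial P_j}(\Ad_g-\Ad_{g(x_j)})B=\int_P\mathrm{d}(\Ad_g)\wedge B+O(2^{-k})$. Frozen-coefficient sums only recover $\int_P\Ad_g\,\mathrm{d}B$, and the cross term $\int_P\mathrm{d}(\Ad_g)\wedge B$, where the real difficulty sits, is never estimated.

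The Maurer--Cartan bullet has the same problem. A germ bound with area exponent $\beta$ is false for thin triangles: on a triangle of length $a$ and height $h$, the natural bound for $\int_{\partial P}\mathrm{d}g\,g^{-1}$ is $|g|_{\Hol\beta}^2(a/h)h^{2\beta}$. This exceeds $(ah)^\beta$, and it is $\lesssim |P|^{\beta/2}$ only because $\frac32\beta>1$.

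The argument the paper defers to (\cite[Thm.~3.27]{CCHS2d}, built on the Young estimates of \cite[Sec.~3.4]{CCHS2d}) is one-dimensional. By $|\Cdot|_\beta\asymp|\Cdot|_{\beta\gr}+|\Cdot|_{\beta\text{-vee}}$ (\cite[Thm.~3.11]{CCHS2d}), it suffices to treat vees $\ell=(x,v)$, $\bar\ell=(x,\bar v)$, for which $|v||v-\bar v|\lesssim\Area(\ell,\bar\ell)$.

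Both terms of $B^g$ are Young integrals $\int_0^1\ell_Y\,\mathrm{d}\ell_C$ of some $Y\in C^\beta$ against some $C\in\Omega^1_\beta$. For the first term take $Y=\Ad_g$, $C=B$. For the Maurer--Cartan term take $C=\mathrm{d}g\colon\ell\mapsto g(x+v)-g(x)$ and $Y$ right multiplication by $g^{-1}$; since $\mathrm{d}g$ is exact, its triangle seminorm vanishes and $|\mathrm{d}g|_\beta\lesssim|g|_{\Hol\beta}$. Then write
\[
\int_0^1\ell_Y\,\mathrm{d}\ell_C-\int_0^1\bar\ell_Y\,\mathrm{d}\bar\ell_C=\int_0^1(\ell_Y-\bar\ell_Y)\,\mathrm{d}\ell_C+\int_0^1\bar\ell_Y\,\mathrm{d}(\ell_C-\bar\ell_C)\;.
\]
Two bounds control the right-hand side:
\begin{itemize}
\item $|\ell_C-\bar\ell_C|_{\Hol{\beta/2}}\lesssim|C|_\beta\Area(\ell,\bar\ell)^{\beta/2}$, the estimate also used in the proof of Lemma~\ref{lem:integration_af_Young};
\item $|\ell_Y-\bar\ell_Y|_{\Hol{\beta/2}}\lesssim|Y|_{C^\beta}(|v||v-\bar v|)^{\beta/2}$, obtained by interpolating $\sup_t|\ell_Y(t)-\bar\ell_Y(t)|\leq|Y|_{C^\beta}|v-\bar v|^\beta$ with the $\beta$-H\"older bound.
\end{itemize}
Each term is then a one-dimensional Young integral pairing exponents $\beta$ and $\frac\beta2$, and this is exactly where $\beta>\frac23$ enters. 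It is the mechanism of Lemma~\ref{lem:integration_af_Young} run with $\alpha=\beta'=\beta$.

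If you prefer to keep a decomposition, cut a thin $P$ into about $a/h$ fat cells of size $h$ rather than self-similar ones. Then sum the frozen terms $\sum_j\Ad_{g(x_j)}B(\partial Q_j)$ by a one-dimensional Young--Lo\`eve estimate along the long side, using $|B(\partial P_I)|\lesssim(|I|h)^{\beta/2}$; this also closes for the same reason. Your growth and continuity arguments go through once the triangle part is handled this way.
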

    The second result is a stability version of the bound $|g|_{\Hol\beta}\lesssim |B|_\beta+|B^g|_\beta$ from \cite[Eq.~(3.24)]{CCHS2d} given in the lemma that follows. 
    \begin{lemma}\label{lem:bound_gauge_Holder_seminorm}
        Let $B,\bar B\in\Omega^1_\beta(V)$, and $g,\bar g\in C^\beta(V;G)$. For any $x\in V$, we have  
        \[
        |g-\bar g|_{\Hol\beta}\leq C(|B-\bar B|_\beta+|B^g-\bar B^{\bar g}|_\beta)+|g(x)-\bar g(x)|,
        \]
        where $C>0$ is locally uniformly in $B,\bar B,B^g$ and $\bar B^{\bar g}$. 
    \end{lemma}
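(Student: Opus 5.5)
The plan is to reduce the estimate to the non-stability bound $|g|_{\Hol\beta}\lesssim |B|_\beta+|B^g|_\beta$ of \cite[Eq.~(3.24)]{CCHS2d}, run along line segments, by differentiating along a linear interpolation. The starting point is the holonomy-type identity valid for additive functions in the Young regime ($\beta>\frac12$, so $\beta+\beta>1$): for a segment $\ell$ from $x$ to $y$,
\[
g(y)=\hol(B^g,\ell)^{-1}\,g(x)\,\hol(B,\ell),
\]
where the holonomies are defined by Young ODEs, $y$ is the endpoint of $\ell$ and $x$ its start. This is the analogue of Lemma~\ref{lem:holonomy_gauge_tr} in the Young setting. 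Since $\hol(B,\ell)=1_G+O(|B|_{\beta}|\ell|^\beta)$ by standard Young ODE estimates, one gets $|g(y)-g(x)|\lesssim(|B|_\beta+|B^g|_\beta)|x-y|^\beta$ for $|x-y|\le\frac14$. Larger distances are handled by chaining at most a bounded number of segments, using $|g|_\infty\le C$ since $G$ is compact.

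For the stability version, I subtract the two identities, for $(B,g)$ and $(\bar B,\bar g)$. Writing $h=\hol(B,\ell)$ and $k=\hol(B^g,\ell)$, I decompose
\[
g(y)-\bar g(y)=k^{-1}(g(x)-\bar g(x))h+(k^{-1}-\bar k^{-1})g(x)h+\bar k^{-1}g(x)(h-\bar h)+\bar k^{-1}(g(x)-\bar g(x))(\bar h-h),
\]
and the same expression with $y$ replaced by $x$ is trivial. The useful form is the increment $(g-\bar g)(y)-(g-\bar g)(x)$. I write $k^{-1}Dh-D=(k^{-1}-1)Dh+D(h-1)$ with $D=(g-\bar g)(x)$. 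These terms are $O(|D|\,|\ell|^\beta)$, with constants depending on $|B|_\beta$ and $|B^g|_\beta$. The remaining terms are bounded by the Young ODE stability estimate
\[
|\hol(B,\ell)-1_G-(\hol(\bar B,\ell)-1_G)|\lesssim|B-\bar B|_\beta|\ell|^\beta,
\]
which is locally uniform, and similarly for $B^g$ versus $\bar B^{\bar g}$. This gives, for $|x-y|\le\frac14$,
\[
|(g-\bar g)(y)-(g-\bar g)(x)|\le C\big(|B-\bar B|_\beta+|B^g-\bar B^{\bar g}|_\beta+|(g-\bar g)(x)|\big)|x-y|^\beta .
\]

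The main obstacle is the term $|(g-\bar g)(x)|$, which appears multiplicatively at every base point, whereas the claim has $|g(x)-\bar g(x)|$ only at one fixed point and with coefficient $1$. I would handle it as follows. First, I integrate the increment estimate along a chain from the fixed point $x_0$ and apply a discrete Gronwall argument over the $O(1)$ steps needed to cover the bounded rectangle $V$. This gives
\[
|g-\bar g|_\infty\lesssim|B-\bar B|_\beta+|B^g-\bar B^{\bar g}|_\beta+|(g-\bar g)(x_0)|.
\]
I then reinsert this sup bound into the increment estimate to control $|g-\bar g|_{\Hol\beta}$ by the same quantity.

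The coefficient $1$ in front of $|g(x)-\bar g(x)|$ in the statement then requires a remark: the point-value term enters multiplied by a locally uniform constant. Either one absorbs that constant, or one reads the lemma as stated, where $C$ multiplies only the first two terms. I would check this by the alternative route of working with $\bar g g^{-1}$. This function satisfies $(B^g)^{\bar g g^{-1}}=\bar B^{\bar g}$ modulo the $\bar B$ versus $B$ discrepancy, and applying the base bound to it gives the Hölder part with no point-value term except through $\bar g g^{-1}-1$. If the constant still cannot be made $1$, I would state the lemma with a constant $C$ in front of $|g(x)-\bar g(x)|$ as well, which suffices for the patching application, where $g(x)=\bar g(x)$ will be normalised.
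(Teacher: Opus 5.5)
Your approach is the right one. The paper gives no proof of this lemma, only a pointer to \cite[Thm.~3.27]{CCHS2d}, and your argument is the natural way to fill it in. Along a segment $\ell$ from $x$ to $y$, $g$ solves the linear Young equation $\mathrm{d}g=g\,\mathrm{d}B-\mathrm{d}B^g\,g$. Its solution is exactly your identity $g(y)=\hol(B^g,\ell)^{-1}g(x)\hol(B,\ell)$, and the lemma is the stability of this equation in the two drivers and the initial value.

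Two points need fixing. First, your four-term decomposition of $g(y)-\bar g(y)$ is wrong: its right-hand side exceeds the left-hand side by $(k^{-1}-\bar k^{-1})(g(x)-\bar g(x))h$. The correct, shorter identity is
\[
(g-\bar g)(y)=k^{-1}Dh+(k^{-1}-\bar k^{-1})\,\bar g(x)h+\bar k^{-1}\bar g(x)(h-\bar h)\;,\qquad D=(g-\bar g)(x)\;.
\]
Since $|k^{-1}-\bar k^{-1}|=|k-\bar k|$ by unitarity, your local bound
\[
|(g-\bar g)(y)-(g-\bar g)(x)|\le C\big(|B-\bar B|_\beta+|B^g-\bar B^{\bar g}|_\beta+|D|\big)|x-y|^\beta\;,\qquad |x-y|\le\tfrac14\;,
\]
then follows as you describe.

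Second, do not leave the coefficient of $|g(x)-\bar g(x)|$ open: with coefficient $1$ the statement is false, whatever $C$ is. Take $G=\mathrm{U}(1)$, $B=\bar B=0$, $\bar g(z)=e^{i\lambda z_1}$, and $g=c\,\bar g$ with a constant $c\neq 1$. Then $B^g=\bar B^{\bar g}=-i\lambda\,\mathrm{d}x^1$, so the right-hand side equals $|c-1|$. On the other hand, $|g-\bar g|_{\Hol\beta}=|c-1|\,|\bar g|_{\Hol\beta}\geq 2(\lambda/\pi)^\beta|c-1|$ once $V$ contains a horizontal segment of length $\pi/\lambda$, and this exceeds $|c-1|$ for $\lambda\geq\pi$. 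So neither the literal reading nor the detour through $\bar g g^{-1}$ can work; the latter only reproduces the same stability question for $B^g$ versus $B^{\bar g}$. What your argument proves, and what is true, is the bound with $C$ in front of all three terms.

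In this one-point form $C$ must also depend on the size of $V$. Take $B=\bar B=0$, $\bar g=1_G$, $g=e^{i\varepsilon z_1}$ on a strip of length $\mathcal L\leq\pi/\varepsilon$, with base point on $\{z_1=0\}$. Then the left-hand side is $\gtrsim\varepsilon\mathcal L^{1-\beta}$, while the right-hand side is $\lesssim C\varepsilon$.

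Your appeal to the patching application is also inaccurate. There the lemma is used with $\bar B=0$ and $\bar g=1_G$, and $g(x)\neq 1_G$ in general. What the hypotheses of Theorem~\ref{thm:patching} actually control is $|g-\bar g|_\infty$. With $|g-\bar g|_\infty$ in place of the point value, your local increment estimate alone gives the bound. For $|x-y|>\frac14$, use $2|g-\bar g|_\infty\leq 2\cdot 4^\beta|g-\bar g|_\infty|x-y|^\beta$. This needs no chaining or Gronwall step, and the constant is independent of $V$. That is the form worth proving, since in the second step of the patching argument $V$ is a strip whose length grows with $l^1$.
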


\begin{proof}[of \theo{thm:patching}]
Let us suppress the notation $\sce,\mu,y$ and simply write $U_\bfn$ as well as $U_{\bfn\cap\bfm}:=U_\bfn\cap U_\bfm$. 
We first patch in the first direction, i.e.\ in the direction of $\rme_1$, and then in the second direction of $\rme_2$. 
	In the following paragraph, we fix the second (i.e. the \enquote{vertical}) level~$m \in \N$ which we will then vary in the second paragraph.
Fix furthermore $\rlog\in (0,1]$ such that the ball $B(1_G,\rlog)$ in $G$ is contained in $O_G$ where we recall the notation of Section \ref{sec:notation}.
    \medskip

    \noindent \textbf{Patching in the first direction.}
   Assume that~\eqref{eq:patch_req} holds with~$\delta_\patch'=\rlog$; we will choose the actual $\delta_\patch$ small enough later.    Let us take any $\bfn=(n^1,n^2) = (n^1,m) \in\bN^2_{L}$.
    We know that by assumption  $B_\bfn^{g_{\bfn,\bfn\plush}}=B_{\bfn\plush}$ on $U_{\bfn\cap \bfn\plush}$.  Furthermore, 
    \begin{align}\label{eq:patching_g_in_log_chart}
    |g_{\bfn,\bfn\plush}-1_G|_{\infty;U_{\bfn\cap\bfn\plush}}\leq \rlog.
    \end{align}
    Note that 
    \[
    U_\bfn=[y_1+\tfrac 23 n^1\mu^1, y_1+\tfrac 23 n^1\mu^1+\mu^1]\times [y_2+\tfrac 23 n^2\mu^2
    ,y_2+\tfrac 23 n^2\mu^2+\mu^2].
    \]
    We apply \lem{lem:interpolating_gauge_transformation}, with 
    \begin{equation} \label{e:ex_V}
	    V=U_{\bfn\cap \bfn\plush}=\{s^1 \rme_1\!+\! s^2 \rme_2 : s^1 \in [\tfrac{2}{3}\mu^1, \mu^1], s^2 \in [0, \mu^2]\} + \{y + \tfrac{2}{3}(n^1\mu^1 \rme_1 + n^2\mu^2 \rme_2)\},
    \end{equation}
    and 
    \begin{equs}[][e:ex_V12]
    V_1&=\{s^1 \rme_1 + s^2 \rme_2 : s^1 \in [\tfrac{6}{9}\mu^1, \tfrac 79\mu^1], s^2 \in [0, \mu^2]\} + \{y + \tfrac{2}{3}(n^1\mu^1 \rme_1 + n^2\mu^2 \rme_2)\}\\
     V_2&=\{s^1 \rme_1 + s^2 \rme_2 : s^1 \in [\tfrac{8}{9}\mu^1, \tfrac 99\mu^1], s^2 \in [0, \mu^2]\} + \{y + \tfrac{2}{3}(n^1\mu^1 \rme_1 + n^2\mu^2 \rme_2)\}.
    \end{equs}
	We may think of~$\rme_1$ as the horizontal direction and $\rme_2$ as the vertical direction and in that case, the regions~$V$, $V_1$, and~$V_2$ are illustrated in Figure~\ref{fig:horizontal_patching}. 
	Note that~$d(V_1,V_2) = (1/9)\mu^1$ in that case.

By~\eqref{eq:patching_g_in_log_chart} and the choice of $\rlog$ (defined in the paragraph before Theorem~\ref{thm:patching}), we have $g_{\bfn,\bfn\plush}\in C^\beta(U_{\bfn\cap\bfn\plush};O_G)$.
Hence from \lem{lem:interpolating_gauge_transformation}, we obtain $\hat g_{\bfn,\bfn\plush}\in C^\beta(U_{\bfn\cap \bfn\plush},G)$ such that $\hat g_{\bfn, \bfn\plush}=1_G$ on $V_1$ and $\hat g_{\bfn, \bfn\plush}= g_{\bfn, \bfn\plush}$ on $V_2$. 
	Moreover, by the same lemma and~Lemma~\ref{lem:bound_gauge_Holder_seminorm}, there exists some $C_0>0$ depending only on $G$ such that
    \begin{align}\label{eq:patching_bound_interpolated}
    \begin{split}
    |\hat g_{\bfn, \bfn\plush}-1_G|_{C^\beta(U_{\bfn\cap\bfn\plush})}&\leq  C_0(\tfrac 19\mu^1)^{-\beta}\max_{\bfn,\bfm\in\N_L^2} \{|B_\bfn |_{\beta;U_\bfn},|g_{\bfn,\bfm}-1_G|_{\infty;U_{\bfn\cap\bfm}}\}\\
    &\leq  C_0 2^{4\beta} \max_{\bfn,\bfm\in\N_L^2} \{|B_\bfn |_{\beta;U_\bfn},|g_{\bfn,\bfm}-1_G|_{\infty;U_{\bfn\cap\bfm}}\},%
    \end{split}
    \end{align}
    where we have used that~$\mu^1 \geq 3/4$ and that~$(12)^\beta \leq 2^{4\beta}$.
    For $\bfn=(n^1,n^2)$ such that $1\leq n^1\leq l^1-1$, we then define $g_\bfn:U_\bfn\to G$, by 
    \begin{equation} \label{e:def_g_bfn}
    	g_\bfn:=
		\begin{cases}
		    \hat g_{\bfn\minush,\bfn}g_{\bfn\minush,\bfn}^{-1} \quad  &\text{    on }\   U_{\bfn\minush\cap \bfn} \\ 
		   1_G \quad  &\text{ on }\  U_\bfn\setminus ( U_{\bfn\minush \cap \bfn}\cup U_{\bfn\cap 	\bfn\plush}) \\ 
		   \hat g_{\bfn, \bfn\plush}  \quad  &\text{ on }\   U_{\bfn\cap \bfn\plush}.
		\end{cases} 
    \end{equation}
We extend this definition at the boundary indices $n^1=0$ and $n^1=l^1$ by the obvious one-sided version of the same definition.
	This choice of~$g_\bfn$ is clear on the first two sets; let us briefly explain the reasoning for~$U_{\bfn\minush\cap \bfn}$. 
	Viewed as the right overlap on~$U_{\bfn\minush}$, we require~$B^{g_\bfn}_{\bfn} = B_{\bfn\minush}^{\hat{g}_{\bfn\minush,\bfn}}$. 
	On the other hand, viewed as the left overlap on~$U_{\bfn}$, we need~$B_{\bfn}^{g_{\bfn}} = (B_{\bfn\minush}^{g_{\bfn\minush,\bfn}})^{g_{\bfn}}$.
	Equating the gauge transformations (which form a~\emph{left} action) in both cases explains our choice on~$U_{\bfn\minush\cap \bfn}$.

On the three different segments~$g_{\bfn}$ is $\beta$-H\"older, and by continuity on the connecting strips, we get that $g_\bfn\in C^\beta$. 
In fact, in the case $(g_{\bfn,\bfm})_{\bfn,\bfm\in \bN^2_L}$ consists of smooth functions, then $g_\bfn$ is also smooth since we then have $\hat g_{\bfn\minush,\bfn}g_{\bfn\minush,\bfn}^{-1} $ (resp.~$\hat g_{\bfn, \bfn\plush}$) are equal to $1_G$ on a $\frac 13$-portion of the rectangle in $U_{\bfn\minush\cap\bfn}$ (resp.~$U_{\bfn\cap\bfn\plush}$) that is bordering $ U_\bfn\setminus ( U_{\bfn\minush \cap \bfn}\cup U_{\bfn\cap \bfn\plush})$.
Those two regions are shaded in~Figure~\ref{fig:horizontal_patching} and is true by construction of~$\hat{g}_{\bfn\minush,\bfn}$ (resp.~$\hat{g}_{\bfn,\bfn\plush}$).

On $U_{\bfn\cap\bfn\plush}$, in conjunction with the gauge equivalence condition in Definition~\ref{def:compatible_gauge_data}, our construction implies that
	\begin{align*}
	B_\bfn^{g_\bfn}
	=
	B_\bfn^{\hat g_{\bfn,\bfn\plush}}
	=
	B_{\bfn\plush}^{\hat g_{\bfn,\bfn\plush}g_{\bfn,\bfn\plush}^{-1}}
	=
	B_{\bfn\plush}^{g_{\bfn\plush}} \,;
	\end{align*}
therefore, by Lemma~\ref{lem:global_additive_function}, $B_\bfn^{g_\bfn}$ defines an additive function on $\check U_{m}:=\bigcup_{n\in\bN_{l^1}}U_{(n,m)}$, which we call $ A_m\in\Omega_\beta^1(\check U_m)$. 
For any $\bfn=(n,m)$, we have
\begin{equation} \label{e:def_A_m}
	A_m=B_\bfn^{g_\bfn} \quad \text{on} \quad U_\bfn,
\end{equation}
proving the first condition in~\eqref{eq:patched_gauge_datum} (at least for this patching step). In the case $(B_{\bfn},g_{\bfn,\bfm})_{\bfn,\bfm\in\bN_L^2}$ consists of smooth $1$-forms and gauge transformations, then $A_m$ is also smooth. 

To verify, the second condition in~\eqref{eq:patched_gauge_datum}, we note that on $U_{\bfn\cap\bfn\plush}$, the definition of $g_\bfn$ gives
$g_\bfn=\hat g_{\bfn,\bfn\plush}$. On the other hand, this same overlap is the left overlap of $U_{\bfn\plush}$, and therefore
\[
    g_{\bfn\plush}
    =
    \hat g_{\bfn,\bfn\plush}g_{\bfn,\bfn\plush}^{-1}.
\]
Hence
\begin{equation}\label{eq:first_direction_gauge_compatibility}
    g_\bfn
    =
    g_{\bfn\plush}g_{\bfn,\bfn\plush}
    \qquad\text{on }U_{\bfn\cap\bfn\plush}.
\end{equation}

Finally, using \eqref{eq:patch_req},~\eqref{eq:patching_bound_interpolated} and~\eqref{e:def_g_bfn}, by Lemma~\ref{lem:global_additive_function},  Proposition~\ref{prop:gauge_transformation_bound}, and Lemma~\ref{lem:bound_gauge_Holder_seminorm} then allows to obtain the bound
\begin{equation} \label{e:bound_A_m}
	\max_{m\in \N_{l^2}} |A_m|_{\beta;\check U_m}+\max_{\bfn\in\N_L^2}
    |g_{\bfn}-1_G|_{C^\beta(U_\bfn)}\lesssim \max_{\bfn,\bfm\in\N_L^2} \{|B_\bfn |_{\beta;U_\bfn},|g_{\bfn,\bfm}-1_G|_{\infty;U_{\bfn\cap \bfm}}\} \,.
\end{equation}
Since the same lemmas have continuity estimates,  given another compatible gauge datum $(\bar B_\bfn,\bar g_{\bfn,\bfm})_{\bfn,\bfm\in\bN_{L}^2}$, we can obtain $\bar A_m$ in analogy to before and the bound
\begin{equation} \label{e:difference_A_bar_A_m}
\max_{m\in\N_{l^2}}	|A_m-\bar A_m|_{\beta;\check U_m}+\max_{\bfn\in\N^2_L}
    |g_{\bfn}-\bar g_{\bfn}|_{C^\beta(U_\bfn)}\lesssim \max_{\bfn,\bfm\in\N_L^2} \{|B_\bfn-\bar B_\bfn |_{\beta;U_\bfn},|g_{\bfn,\bfm}-\bar g_{\bfn,\bfm}|_{\infty;U_{\bfn\cap \bfm}}\} \,.
\end{equation}
The proportionality constant in both~\eqref{e:bound_A_m} and~\eqref{e:difference_A_bar_A_m} depend only on $G$ and $\beta$. 

\medskip 
\noindent \textbf{Patching in the second direction.} Let us now assume that~\eqref{eq:patch_req} holds with $\delta_\patch\in (0,\rlog]$ to be specified later. Our plan is to re-use the previous step and, to this end, we have to construct gauge\footnote{Note that~$\check{U}_m \cap \check{U}_{m+k} \neq \emptyset$ is equivalent to~$k \in \cbr[0]{0,\pm1}$, so it is enough to focus on~$h_{m,m+1}$ for any~$m \in \N_{l^2}$.} transformations~$h_{m,m+1}$ such that~$(A_m,h_{m,m+1})_{m \in \N_{l^2}}$ is a compatible gauge datum which also satisfies~\eqref{eq:patch_req} with $\delta_\patch'=\rlog$. 

At first, note that we have $A_{m+1}=B_{\bfn\plusv}^{g_{\bfn\plusv}}$ on $U_{\bfn\cap\bfn\plusv}$ by construction and
\begin{equ}[e:vertical_patching_1]
A_{m+1}=B_{\bfn\plusv}^{g_{\bfn\plusv}}=(B_\bfn^{g_{\bfn,\bfn\plusv}})^{g_\bfn\plusv}
=(B_\bfn^{g_\bfn})^{g_{\bfn\plusv}g_{\bfn,\bfn\plusv}g_\bfn^{-1}}=A_m^{g_{\bfn\plusv}g_{\bfn,\bfn\plusv}g_\bfn^{-1}}.
\end{equ}
We will argue that  $\tilde g_{\bfn,\bfn\plusv}:=g_{\bfn\plusv}g_{\bfn,\bfn\plusv}g_\bfn^{-1}$ defines a function  on $\check U_m\cap \check U_{m+1}$, which is the sought after $h_{m,m+1}$. 
In order to do so, we need to check the consistency of the corresponding expression on $U_{\bfn\cap\bfn\plusv}\cap U_{\bfn\plush\cap\bfn\plushv}$, see Figure~\ref{fig:horizontal_vertical_overlaps} for an illustration. 
On one hand, by definition of~$g_{\bfn\plusv}$ on $U_{\bfn\plusv\cap\bfn\plushv}$, i.e. on the right overlap of~$U_{\bfn\plusv}$, we have\footnote{Note that the cocycle condition in Definition~\ref{def:compatible_gauge_data} implies~$g_{\bfn,\bfm}^{-1} = g_{\bfm,\bfn}$ for any~$\bfn, \bfm \in \N^2$.} 
\begin{equation} \label{e:identity_gauge_trafo}
	\tilde g_{\bfn,\bfn\plusv}=\hat g_{\bfn\plusv,\bfn\plushv}g^{-1}_{\bfn\plusv,\bfn}\hat g^{-1}_{\bfn,\bfn\plush} \,.
\end{equation}
On the other hand, by definition of~$g_{\bfn\plushv}$ on~$U_{\bfn\plusv\cap\bfn\plushv}$ (i.e. the left overlap of~$U_{\bfn\plushv}$) and by definition of~$g_{\bfn\plush}$ on~$U_{\bfn\cap\bfn\plush}$ (i.e. the left overlap of~$U_{\bfn\plush}$), we have 
\begin{align*}
\tilde g_{\bfn\plush,\bfn\plushv}
&=
g_{\bfn\plushv} g_{\bfn\plush,\bfn\plushv} g_{\bfn\plush}^{-1} \\ 
&=
\hat g_{\bfn\plusv,\bfn\plushv} g_{\bfn\plusv,\bfn\plushv}^{-1}g_{\bfn\plushv,\bfn\plush}^{-1}g^{-1}_{\bfn\plush,\bfn} \hat g^{-1}_{\bfn,\bfn\plush}\\
&=\hat g_{\bfn\plusv,\bfn\plushv} g_{\bfn\plusv,\bfn}^{-1}\hat g^{-1}_{\bfn,\bfn\plush},
\end{align*}
where we have used the cocycle condition from \defref{def:compatible_gauge_data} twice in the last step.  
Therefore, $h_{m,m+1}$ is a well-defined function in $C^\beta(\check U_m\cap \check U_{m+1})$. We also set $h_{m,m}\equiv 1_G$. 

As we have seen in~\eqref{e:vertical_patching_1}, we have
\[
A_m^{h_{m,m+1}}=A_{m+1} \quad \text{on} \quad \check{U}_m \cap \check{U}_{m+1}
\]
and so the gauge equivalence condition of Definition~\ref{def:compatible_gauge_data} is satisfied.
The cocycle condition is trivially satisfied because~$\check{U}_m \cap \check{U}_{m+k} \cap \check{U}_{m+l} = \emptyset$ for~$k \neq l$ and~$k, l \neq 0$.

We have verified the criteria for a compatible gauge datum, so it remains to check the bounds in~\eqref{eq:patch_req} (with $\delta_\patch'=\rlog$). 
Note that~\eqref{e:bound_A_m} implies 
\[
\max_{m\in \N_{l^2}} |A_m|_{\beta;\check U_m}+\max_{m\in \N_{l^2}}|h_{m,m+1}-1_G|_{\infty}\leq C_1\delta_\patch,
\]
for some $C_1>0$ depending only on $G$ and $\beta$. 
We then choose $\delta_\patch:=(C_1+1)^{-1}\rlog$, so that 
\[
\max_{m\in \N_{l^2}} |A_m|_{\beta;\check U_m}+\max_{m\in \N_{l^2}}|h_{m,m+1}-1_G|_{\infty}\leq \rlog,
\]
implying that $(A_m,h_{m,m+1})_{m\in\N_{l^2}}$ satisfies~\eqref{eq:patch_req} with $\delta_\patch'=\rlog$. So we can apply the previous patching step. 

Indeed, it is not difficult to see that the argument for patching in the first direction did not depend on the rectangles, so now we simply take rectangles with respect to $\tilde\sce=\{\tilde\rme_1,\tilde \rme_2\}$, with $\tilde\rme_1=\rme_2$ and $\tilde\rme_2=\rme_1$ and $\tilde l^1=l^2$ and $\tilde l^2=0$, the same $y$, as well as $\tilde\mu^1=\mu^2$ and $\tilde\mu^2=\mu^1+\frac 2 3 l^1\mu^1$. We can define $A_{(m,0)}=A_m$ and $h_{(m,0),(m+1,0)}=h_{m,m+1}$ and the result applies to this new set of additive functions, as we have verified above.
The picture that we are dealing with this time is patching connection $1$-forms on a horizontally long rectangles as seen in Figure~\ref{fig:vertical_patching}. 
%
%
%
\begin{figure}[H]
	\centering
	%
	\tikzset{>={Latex[width=3pt,length=3pt]}}
	\begin{tikzpicture}[scale=0.45] 
		\draw[semithick, fill=red!15, fill opacity=0.75] (0,0) rectangle (26,3); 		
		%
		\foreach \k in {0,...,5}{
			\draw[line width=0.25pt,color=red!40] (4*\k,0) rectangle (4*\k+6,3);
		}
		\foreach \k in {1,...,5}{
			\draw[line width=0.25pt,color=red!40, fill opacity=0.5, fill=red!30] (4*\k,0) rectangle (4*\k+2,3);
		}		
		\foreach \k in {0,...,5}{
			\draw[line width=0.25pt,color=gray!40] (4*\k,2) rectangle (4*\k+6,5);
		}
		\foreach \k in {1,...,5}{
			\draw[line width=0.25pt,color=gray!40, fill opacity=0.5, fill=gray!30] (4*\k,2) rectangle (4*\k+2,5);
		}
		\draw[semithick] (0,0) rectangle (26,3); 		
		\draw[semithick] (0,2) rectangle (26,5); 
		\draw[semithick,fill opacity=0.5, fill=gray!50] (0,2) rectangle (26,3); 
		\draw [decorate,decoration={brace,amplitude=5pt}]
		(-0.2,0) -- (-0.2,3);
		\node[left=-2pt] at (-0.5,1.5) {$\check{U}_{m}$};
		\draw [decorate,decoration={brace,mirror,amplitude=5pt}]
		(26.2,2) -- (26.2,5);
		\node[right=2pt] at (26.2,3.5) {$\check{U}_{m+1}$};
		%
		\node[draw,circle,fill=black, inner sep=1pt] at (0,0) {};
		\node[below] at (0,0) {$y+\frac{2}{3}m\mu^2\rme_2$};
	\end{tikzpicture}
	\caption{\emph{Patching in the second (\enquote{vertical}) direction.} 
		For any~$m \in \N_{l^2}$, the new \enquote{long} rectangle~$\check{U}_m$  arises as the union of the smaller rectangles~$U_{(n,m)}$ for~$n \in \N_{l^1}$. 
		For presentational clarity, their horizontal overlaps on each level~$m$ are shaded in darker red. resp. gray.
		The new one-form~$A_m$ given in~\eqref{e:def_A_m} is now defined on~$\check{U}_m$ and the patching concerns the \enquote{vertical} overlap~$\check{U}_m \cap \check{U}_{m+1}$ shaded in dark gray.
	}
	\label{fig:vertical_patching}
\end{figure}
Now we have obtained the final global additive function
$B\in\Omega_\beta^1(\cU)$ together with gauge transformations
$p_m\in C^\beta(\check U_m;G)$ such that
\[
    B=A_m^{p_m}
    \qquad\text{on }\check U_m,
\]
and
\begin{equation}\label{eq:second_direction_p_compatibility}
    p_m=p_{m+1}h_{m,m+1}
    \qquad\text{on }\check U_m\cap\check U_{m+1}.
\end{equation}

It remains to relate the final output to the original compatible gauge datum. For $\bfn=(n,m)$, define
\[
    q_\bfn:=p_m g_\bfn
    \qquad\text{on }U_\bfn.
\]
Then, by~\eqref{e:def_A_m},
\[
    B=A_m^{p_m}=(B_\bfn^{g_\bfn})^{p_m}=B_\bfn^{p_mg_\bfn}=B_\bfn^{q_\bfn}
    \qquad\text{on }U_\bfn,
\]
proving the first condition in~\eqref{eq:patched_gauge_datum}. We now show the second condition therein. If $\bfm=\bfn\plush$, then~\eqref{eq:first_direction_gauge_compatibility} gives
\[
    q_\bfn
    =
    p_m g_\bfn
    =
    p_m g_{\bfn\plush}g_{\bfn,\bfn\plush}
    =
    q_{\bfn\plush}g_{\bfn,\bfn\plush}.
\]
If $\bfm=\bfn\plusv$, then~\eqref{eq:second_direction_p_compatibility} and the definition of $h_{m,m+1}=g_{\bfn\plusv}g_{\bfn,\bfn\plusv}g_\bfn^{-1}$ give
\[
    q_\bfn
    =
    p_m g_\bfn
    =
    p_{m+1}h_{m,m+1}g_\bfn
    =
    p_{m+1}g_{\bfn\plusv}g_{\bfn,\bfn\plusv}
    =
    q_{\bfn\plusv}g_{\bfn,\bfn\plusv}.
\]
The remaining overlap relations follow from these two cases and the cocycle condition of the original compatible gauge datum. Thus
\[
    q_\bfn=q_\bfm g_{\bfn,\bfm}
    \qquad\text{on }U_{\bfn\cap\bfm}.
\]

Finally, the bounds in~\eqref{e:patching_bound}-\eqref{e:patching_bound_cont}, and smoothness  in the case where the compatible gauge datum is smooth, follow by the previous step as well.  
\end{proof}

\section{Rough Uhlenbeck compactness}\label{ch:RUC}

In this section, we combine all the results that we have proved so far to conclude the proof of rough Uhlenbeck compactness Theorem~\ref{thm:Rough_Uhlenbeck} and its application to the YM measure on $\Lambda$ in Theorem~\ref{thm:RUC_YM}. 

  \subsection{Coulomb gauge}\label{sec:Coulomb_gauge}
The first main step in proving the rough Uhlenbeck compactness is finding a Coulomb gauge in the space $\Omega^1_\beta$ for some $\beta$ sufficiently close to $1$. That is a matter of combining what we know so far from Section~\ref{sec:solution_theory} and Section~\ref{s:model_bounds}.
Recall from Remark \ref{rem:left_action} that $L_g$ denotes left multiplication by $g$.

\begin{theorem}[Coulomb gauge]\label{thm:Coulomb_gauge}
		Let $\alpha\in (\frac13,\frac 12)$ and $\beta\in (0,6\alpha-2)$. 
		Then there exists $\sigma_\coul\in (0,1]$ such that,
		for any $\bfA\in\bfOmega_{\alpha\ax}^1$  satisfying $\triple{\bfA}_{\alpha\ax}\leq \sigma_\coul$, there exists
		a (controlled) gauge transformation~$g: \Lambda \to G$ such that for any rectangle~$\Lambda' \Subset \Lambda^\circ$, the following properties hold:
		\begin{enumerate}[label=(\roman*)]
			\item \label{pt:regularity}\emph{Regularity:} $(g,L_g) \in \mfG_\bfA^{2\alpha}(\Lambda')$
            ,~$B := A^g$ is in the \emph{Coulomb gauge} $\diff^* B=0$,
			and $B\sVert[0]_{\Lambda'} \in \Omega_\beta^1(\Lambda')$.
			\item \label{pt:smoothness}\emph{Smoothness:}  If~$\bfA$ is the canonical lift of a smooth $A\in\Omega^1_{\infty\ax}$, both $B$ and $g$ are smooth on $\Lambda^\circ$.
			Moreover, for~$\bfA = \bf0$, we have~$g \equiv 1_G$ and thus~$B = 0$.
			\item \label{pt:local_lipschitz} \emph{Local Lipschitz continuity:} Given another $\bar\bfA \in\bfOmega_{\alpha\ax}^1$ satisfying $\triple{\bar\bfA}_{\alpha\ax}\leq \sigma_\coul$ with corresponding $\bar g$ and $\bar B$, one has
			\begin{equ}[e:Coulomb_gauge:bound]
			\triple{(g,L_g);(\bar g,L_{\bar g})}_{2\alpha;\Lambda'}+|B-\bar B|_{\beta;\Lambda'}\lesssim  \triple{\bfA;\bar\bfA}_{\alpha\ax}
			\end{equ}
			where the proportionality constant depends only on $\dist(\partial\Lambda,\partial\Lambda')$, $\alpha$, $\beta$ and the Lie group~$G$. 
			In particular, for~$\bar\bfA=\bf0$, we get $\triple{(g,L_g);(1_G,L_{1_G})}_{2\alpha;\Lambda'}
		+
		|B|_{\beta;\Lambda'}
		\lesssim
		\triple{\bfA}_{\alpha\ax}$.
		\end{enumerate}
	\end{theorem}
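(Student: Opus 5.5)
The plan is to run the solution theory of Section~\ref{sec:solution_theory} on the model produced by Section~\ref{s:model_bounds}, and then read off regularity of $g$ and $B=A^g$ from the structure of the solution established in Section~\ref{sec:properties_solution}. I first treat $\bfA$ that are canonical lifts of smooth axial $A$, and pass to general $\bfA\in\bfOmega^1_{\alpha\ax}$ by density using the Lipschitz bound \eqref{e:Coulomb_gauge:bound}.

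\textbf{Step 1: model and solution.} Extend $A$ via Lemma~\ref{lem:domain_extension_RAF} and form the canonical model $\sfZ^A$. The equation is invariant under $(\sigma,A_1)\mapsto(\sigma\lambda,A_1/\lambda)$. So I set $\sigma:=\triple{\bfA}_{\alpha\ax}/\sigma_\coul\cdot\sigma_\sol$ and rescale $A_1$ so that the rescaled RAF has norm of order one. Theorem~\ref{thm:construction_model} then gives $\triple{\sfZ}\le L$ for a fixed $L$. Choosing $\sigma_\coul$ small makes $\sigma\le\sigma_\sol(L)$, and Theorem~\ref{thm:existence_solution} yields $\blue g\in\scG$ with $\Phi(\sigma,\blue g)=0$. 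It is simpler to keep $\sigma=\sigma_\sol$ fixed and to use $\blue{g}$'s Lipschitz dependence \eqref{eq:sol_Lip} in $(\sigma,\sfZ)$. Since $\blue g^0=\bone$, this gives $\|\blue g-\bone\|\lesssim\triple{\bfA}_{\alpha\ax}$.

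Set $g:=g^\1$. By Lemma~\ref{lem:g_smooth}, $g$ is smooth on $\Lambda^\circ$ and $\diff^*A^g=0$, which gives \ref{pt:smoothness}; the case $\bfA=0$ is the last claim of that lemma. The controlled structure follows from \eqref{eq:expression_g}. Using $\Pi_x\blue{\cI_1A_1}(y)=A(\ell^{x;y})+Q(\ell^{x;y})$ from Proposition~\ref{prop:regularity_d*KFA}, we get $g(y)-g(x)=g(x)A(\ell^{x;y})+R^g(x,y)$. Here $R^g$ collects the $Q$-term, the polynomial terms and the terms of homogeneity $1-2\kappa$, all bounded by $|x-y|^{2\alpha}$ on $\Lambda'$ with constants depending on $\dist(\partial\Lambda,\partial\Lambda')$. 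Hence $(g,L_g)\in\fG^{2\alpha}_A(\Lambda')$. Differences are controlled via \eqref{eq:sol_Lip} and Theorem~\ref{thm:construction_model}.

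\textbf{Step 2: regularity of $B$ (main obstacle).} Lemma~\ref{lem:B_expresssion} (with the $\sigma$-rescaling undone) gives
\[
B_i=g\zeta_i g^{-1}-f_ig^{-1},
\]
where $\zeta_1=K_{22}*A_1$ and $\zeta_2=-K_{12}*A_1$. By Proposition~\ref{prop:regularity_d*KFA}, this is exactly $\zeta=\diff^*K*F^A$, up to the smooth $H$; I expect the reconstructed $\cI_{i2}A_1$ to match these. Proposition~\ref{prop:regularity_d*KFA} puts $\zeta\in\Omega^1_{2\alpha}$, and $f_ig^{-1}\in C^{\frac12-3\kappa}(\Lambda')\hookrightarrow\Omega^1_\beta$ for $\beta<1-6\kappa=6\alpha-2$ by \eqref{eq:omega_beta_embedding}.

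The hard part is the conjugated term $g\zeta g^{-1}$ in $\Omega^1_\beta$. I would write line integrals of $\Ad_g\zeta$ as Young integrals along $\ell$: $\ell_g\in C^\alpha$ and $\ell_\zeta\in C^{2\alpha}$ with $3\alpha>1$. For the triangle part, I would use Stokes together with the covariance $\diff(\Ad_g\zeta)=\Ad_g\diff\zeta+[\diff g g^{-1}\wedge\Ad_g\zeta]$, handled by a sewing argument on triangles. This is where the restriction $\beta<6\alpha-2$ should originate; it presumably corresponds to Lemma~\ref{lem:integration_af_Young}, which I would establish as a Young-type product lemma for $C^\alpha$ functions times $\Omega^1_{2\alpha}$ forms, giving $\Omega^1_{\beta}$ for $\beta<3\alpha-(1-\alpha)\cdot$(appropriate loss). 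Bilinearity of that lemma then yields the Lipschitz estimate in \ref{pt:local_lipschitz}, and density extends everything to general $\bfA$.
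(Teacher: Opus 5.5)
Your architecture is the paper's. You rescale to $\hat A=A/\sigma_\sol$, using the homogeneity of $\triple{\Cdot}_{\alpha\ax}$, so that Theorem~\ref{thm:construction_model} gives a model bounded by a fixed $L$, and you solve at $\sigma=\sigma_\sol$ with Theorem~\ref{thm:existence_solution}. Controlledness is read off from $\rmQ_{<2\alpha}\blue g=g^\1\blue\1+\sigma g^\1\blue{\cI_1A_1}$ together with Proposition~\ref{prop:regularity_d*KFA}. Then $B$ is written via Lemma~\ref{lem:B_expresssion}: $f_i(g^\1)^{-1}$ is handled by \eqref{eq:omega_beta_embedding}, and $\Ad_{g^\1}\zeta$ by Lemma~\ref{lem:integration_af_Young} with $\beta'=2\alpha$. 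That lemma's condition $\alpha+\beta'-\frac\beta2>1$ is exactly $\beta<6\alpha-2$. Finally you extend by density. Two points need correcting, however.

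\textbf{The smallness claim.} The claim $\|\blue g-\bone\|\lesssim\triple{\bfA}_{\alpha\ax}$ at fixed $\sigma=\sigma_\sol$ is false. By Lemma~\ref{lem:expression_g_1}, the $\blue{\cI_1A_1}$-coefficient of $\blue g$ is $\sigma_\sol g^\1$, which is of order one whatever $\bfA$ is. Even for the zero model the solution is $\bone+\sigma\blue{\cI_1A_1}+\dots\neq\bone$ (see the proof of Lemma~\ref{lem:g_smooth}); the identity $\blue g^0=\bone$ concerns $\sigma=0$, not $\bfA=0$.

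What you actually need is only $\|\blue g-\bone\|\lesssim 1$ for the remainder bound. The smallness in (iii) then comes from comparing, at the same $\sigma$, with the solution for the zero model, whose reconstruction is $1_G$ by Lemma~\ref{lem:g_smooth}. This is the case $\bar\bfA=0$ of the Lipschitz estimate, which is how the paper argues.

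Your first option, $\sigma\propto\triple{\bfA}_{\alpha\ax}$, would give the smallness but break (iii). The models are then built from $A/\sigma(\bfA)$, and $\bfA\mapsto A/\sigma(\bfA)$ is not Lipschitz near $0$. So \eqref{eq:sol_Lip} no longer yields a bound by $\triple{\bfA;\bar\bfA}_{\alpha\ax}$.

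\textbf{The triangle bound.} Your sketch via Stokes and $\diff(\Ad_g\zeta)=\Ad_g\diff\zeta+[\diff g\,g^{-1}\wedge\Ad_g\zeta]$ does not work as stated. That product has total regularity $(\alpha-1)+(2\alpha-1)=3\alpha-2<0$, so it is ill-defined. A naive sewing on the sides only gives $\mathrm{diam}(P)^{3\alpha}$, which is not an area bound for thin triangles.

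The paper instead proves Lemma~\ref{lem:integration_af_Young} via the vee seminorm of \cite{CCHS2d}, which is equivalent to the triangle one. It interpolates between the $C^\alpha$ increment of $\Ad_g$ across the two segments and the growth and area bounds on $\zeta$. You should simply cite that lemma. Also, $(\zeta_1,\zeta_2)=-\diff^*K*F^A$ exactly: the sign is reversed and no $H$ enters.
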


      \begin{remark}
  It is interesting to determine in what situations there is uniqueness of $g$ under a given set of boundary data
  (the modelled distribution $\blue g$ from Theorem \ref{thm:existence_solution} is \emph{not} unique, see Remark \ref{rem:non_unique_g}).
   This question is connected to the Gribov ambiguity and is closely related to the positivity of the Fadeev--Popov operator, see \cite[Sec. 2]{VZ12}.
\end{remark}

The Coulomb gauge will be in a H\"older-Besov space by the reconstruction theorem, but the previous statement gives the  stronger $\Omega^1_\beta$ regularity which requires the following new result:  
    \begin{lemma}\label{lem:integration_af_Young}
        Let $(E,|\Cdot|)$ be a Banach space,   $\alpha\in (\frac 13,\frac 12)$ and $\beta'\in(\frac 12,1)$ such that $\alpha+\beta'>1$. Let $Y\in \lc^\alpha (\Lambda;L(E))$ and $\zeta\in\Omega_{\beta'}^1(\Lambda;E)$. Define 
        \begin{align}\label{eq:def_Y_zeta}
        B(\ell):=\int_0^1\ell_Y(t)\dif\ell_\zeta(t)\;,
        \end{align}
		where we recall the notation $\ell_\zeta$ from \eqref{eq:ell_A} and $\ell_Y$ following \eqref{eq:line_int_gauge_transformation_smooth}. 
        Then for any $\beta\in (0,1)$ satisfying $\alpha+\beta'-\frac{\beta} 2>1$, there exists $C=C(\alpha,\beta,\beta')>0$ such that
        \[
        |B|_{\beta}\leq C|Y|_{C^\alpha}|\zeta|_{\beta'}.
        \]
        Furthermore, given another $\bar Y\in \lc^\alpha(\Lambda;L(E))$ and $\bar \zeta\in\Omega_{\beta'}^1(\Lambda;E)$, we can define $\bar B$ similarly as in \eqref{eq:def_Y_zeta} with $\bar Y$ and $\bar \zeta$, and the following inequality holds: 
        \[
        |B-\bar B|_{\beta}\leq C(|Y-\bar Y|_{C^\alpha}|\zeta|_{\beta'}+|\bar Y|_{C^\alpha}|\zeta-\bar \zeta|_{\beta'}).
        \]
    \end{lemma}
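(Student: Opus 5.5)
We plan to bound the two seminorms making up $|B|_\beta=|B|_{\beta\gr}+|B|_{\beta\tri}$ separately. Throughout we argue for smooth $Y,\zeta$ and pass to the limit; this is legitimate because $Y\in\lc^\alpha$ and $\zeta\in\Omega^1_{\beta'}$ are closures of smooth objects and all bounds below are bilinear. The definition \eqref{eq:def_Y_zeta} is well posed as a one-dimensional Young integral, since $\ell_Y\in C^\alpha$ and $\ell_\zeta\in C^{\beta'}$ with $\alpha+\beta'>1$.

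\textbf{Growth part.} Fix $\ell\in\cX$ with starting point $x$. Write $B(\ell)=Y(x)\zeta(\ell)+\int_0^1(\ell_Y(t)-\ell_Y(0))\dif\ell_\zeta(t)$. The standard Young estimate bounds the second term by $|Y|_{C^\alpha}|\zeta|_{\beta'}|\ell|^{\alpha+\beta'}$. The first term is at most $|Y|_\infty|\zeta|_{\beta'}|\ell|^{\beta'}$. Since $\beta<2(\alpha+\beta'-1)<\beta'$, both powers of $|\ell|$ dominate $|\ell|^\beta$ for $|\ell|\le 1/4$. This gives $|B|_{\beta\gr}\lesssim|Y|_{C^\alpha}|\zeta|_{\beta'}$.

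\textbf{Triangle part (the main obstacle).} For a triangle $P$ with area $a=|P|$, we must show $|B(\partial P)|\lesssim a^{\beta/2}|Y|_{C^\alpha}|\zeta|_{\beta'}$. This is hard because $P$ may be thin: its diameter $d$ can be large while $a\ll d^2$, so a crude bound of order $d^{\alpha+\beta'}$ is not enough.

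We take $x$ to be a vertex of $P$ and subtract the frozen term: $B(\partial P)=Y(x)\zeta(\partial P)+\int_{\partial P}(Y-Y(x))\dif\zeta$. The first term is bounded by $|Y|_\infty|\zeta|_{\beta'\tri}a^{\beta'/2}$, which is fine.

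For the second term we use a dyadic decomposition, in the same way as in the proof of \cite[Thm.~3.27]{CCHS2d}. We subdivide $P$ into sub-triangles and consider the germ $\Xi(T)=Y(x_T)\zeta(\partial T)$, so that $\int_{\partial P}Y\dif\zeta$ is the limit of $\sum_T\Xi(T)$ over finer and finer subdivisions. When one triangle is subdivided into its four midpoint children, the defect is a sum of terms $(Y(x_T)-Y(x_{T'}))\zeta(\text{edges})$. We bound these using the $\alpha$-H\"older continuity of $Y$ together with $|\zeta|_{\beta'\gr}$ and $|\zeta|_{\beta'\tri}$.

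If the children are produced only by splitting the long side, we obtain sliver triangles of width $w\approx a/d$. For those we plan to use a different splitting. We first cut $P$ into about $d/w$ pieces, each of diameter about $w$. Each piece then contributes $|Y|_\infty a_i^{\beta'/2}+|Y|_{C^\alpha}w^{\alpha+\beta'}$. The interior cuts cancel in pairs up to Young-type errors of size $w^\alpha\cdot w^{\beta'}$ per cut. Summing over the $d/w$ pieces gives roughly $(d/w)\,w^{\alpha+\beta'}=d\,w^{\alpha+\beta'-1}$. Interpolating this with the trivial bound $d^{\alpha+\beta'}$, and using $a\approx dw$, we expect the result to be $\lesssim a^{\beta/2}$ exactly when $\alpha+\beta'-\beta/2>1$. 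This is where the hypothesis is used.

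\textbf{Difference estimate.} The quantity $B$ is bilinear in $(Y,\zeta)$. We therefore write $B-\bar B=B[Y-\bar Y,\zeta]+B[\bar Y,\zeta-\bar\zeta]$ and apply the bound above to each term.
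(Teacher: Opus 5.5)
\textbf{There is a genuine gap in the triangle part.} Your growth bound is fine, and so is reducing the difference estimate to bilinearity. The problem is the sliver triangles.

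When you cut a thin triangle $P$ (longest side $d$, height $w\approx a/d$) into $N\approx d/w$ pieces $T_i$ of diameter $\approx w$, you note that each piece contributes a frozen term $Y(x_i)\zeta(\partial T_i)$ of size $|Y|_\infty a_i^{\beta'/2}$ plus a Young remainder of size $|Y|_{C^\alpha}w^{\alpha+\beta'}$. You then sum only the remainders. The frozen terms sum trivially to $|Y|_\infty\sum_i a_i^{\beta'/2}\approx |Y|_\infty\, d\,w^{\beta'-1}$. This diverges as $w\to0$ for fixed $d$, so it is not $\lesssim (dw)^{\beta/2}$.

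The obvious rescues do not work:
\begin{itemize}
\item Subtracting the global frozen term $Y(x)\zeta(\partial P)$ first leaves cross terms $(Y(x_i)-Y(x))\zeta(\partial T_i)$, which sum to order $d^{1+\alpha}w^{\beta'-1}$.
\item The interior cuts cancel exactly, since $B$ is additive. Rewriting $\sum_i Y(x_i)\zeta(\partial T_i)$ edge by edge therefore just produces a Riemann sum for $B(\partial P)$ itself, which is circular.
\item Summation by parts in the long direction uses increments $\zeta(\partial(T_1\cup\dots\cup T_i))$, which are only $\lesssim (iw^2)^{\beta'/2}$. This is a one-dimensional Young problem of total regularity $\alpha+\beta'/2<1$, and it gives $d^{1+\beta'/2}w^{\alpha+\beta'/2-1}$, again divergent.
\item The midpoint (four-children) sewing you mention first has generation-$n$ defects summing to $\sum_n 4^n2^{-n(\alpha+\beta')}$, which converges only if $\alpha+\beta'>2$.
\end{itemize}
So your heuristic verifies $\alpha+\beta'-\beta/2>1$ only for the part of the integral that was never the difficulty.

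\textbf{The missing idea} is to avoid two-dimensional subdivision altogether. Instead, compare two nearly parallel sides pointwise along a common parametrisation. This is what the paper does.

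By \cite[Thm.~3.11]{CCHS2d} it suffices to control $|B|_{\beta\gr}$ and the vee seminorm. For segments $\ell,\bar\ell$ forming a vee, write
$B(\ell)-B(\bar\ell)=\int_0^1(\ell_Y-\bar\ell_Y)\dif\ell_\zeta+\int_0^1\bar\ell_Y\dif(\ell_\zeta-\bar\ell_\zeta)$.
\begin{itemize}
\item For the first term, interpolate the smallness $|\ell_Y-\bar\ell_Y|_\infty\lesssim|Y|_{C^\alpha}|\ell_1-\bar\ell_1|^{\alpha}$ against the $\alpha$-H\"older bound. This gives a $C^{\alpha(1-\mu)}$ bound that can be Young-integrated against $\ell_\zeta\in C^{\beta'}$.
\item For the second term, interpolate $|\ell_\zeta-\bar\ell_\zeta|_{C^{\beta'/2}}\lesssim \Area(\ell,\bar\ell)^{\beta'/2}|\zeta|_{\beta'}$ against $|\ell_\zeta-\bar\ell_\zeta|_{C^{\beta'}}\lesssim|\ell|^{\beta'}|\zeta|_{\beta'}$. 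The first of these is the key $\Omega^1_{\beta'}$ input, controlling path differences by the enclosed area.
\end{itemize}
A one-dimensional Young integral with exponents $\alpha$ and $\mu'\beta'/2+(1-\mu')\beta'$ then yields a factor $\Area(\ell,\bar\ell)^{\mu'\beta'/2}$. With $\beta=\mu'\beta'$, the Young condition is exactly $\alpha+\beta'-\beta/2>1$.
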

\begin{remark}
This result is beyond the regime treated in \cite[Sec.~3]{CCHS2d} which would be  $\alpha+\frac\beta 2>1$ and $\frac \alpha2+\beta'>1$. The first inequality cannot be satisfied in our setting. 
\end{remark}

\begin{proof}
We use similar ideas that exist in \cite[Sec.~3.4]{CCHS2d}, but we interpolate inequalities  to go beyond the regime covered therein. Recall the notion of the vee seminorm $|\Cdot|_{\beta\text{-vee}}$ from \cite[Def.~3.8]{CCHS2d}. That seminorm is defined by 
\begin{align*}
|B|_{\beta\text{-vee}}=\sup_{\ell,\bar\ell}\frac{|B(\ell)-B(\bar\ell)|}{\Area(\ell,\bar\ell)^{\beta/2}},
\end{align*}
where the supremum is taken over line segments that form a \emph{vee}, visually, and are reasonably close to each other, such that $\Area(\ell,\bar\ell)$ is the unique area \enquote{enclosed} by these line segments. The reason we introduce this notion is that it is easier to bound than $|\Cdot|_{\beta\tri}$ (defined in \eqref{eq:def_beta_tri}) and one has $|\Cdot|_\beta\asymp|\Cdot|_{\beta\gr}+ |\Cdot|_{\beta\text{-vee}}$ by \cite[Thm.~3.11]{CCHS2d}.

 For the growth norm we use the sewing lemma \cite[Lem~4.2]{FH20}, to obtain
\[
\left|\int_0^1\ell_Y(t)\dif\ell_\zeta(t)\right|\leq |Y|_{C^\alpha}|\zeta|_{\beta'} |\ell|^{\beta'}\;,
\]
which implies the desired $\beta\gr$ bound since $\beta<\beta'$ four ur choice of parameters.

Turning to the vee seminorm, we start by writing for $\ell$ and $\bar\ell$ in $\cX$ close to each other
\[
\int^1_0 \ell_Y(t)\dif \ell_{\zeta}(t)-\int^1_0\bar\ell_Y(t)\dif \bar\ell_{\zeta}(t)=\int_0^1 (\ell_Y(t)-\bar\ell_Y(t))\dif\ell_{\zeta}(t)+\int^1_0 \bar\ell_Y(t)\dif \,(\ell_{\zeta}-\bar\ell_{\zeta})(t).
\]
We start with the first term. Since $Y\in C^\alpha$, we get for any $\mu \in [0,1]$ that
\[
|\ell_Y-\bar\ell_Y|_{C^{\alpha (1-\mu)}}\lesssim  |Y|_{C^\alpha} |\ell|^{\alpha(1-\mu)}|\ell_1-\bar\ell_1|^{\mu\alpha}.
\]
Let us take $\mu\in [0,1]$ such that  $\beta'+(1-\mu)\alpha>1$ holds to obtain
\[
\left| \int_0^1 (\ell_Y(t)-\bar\ell_Y(t))\dif\ell_{\zeta}(t)\right|\lesssim |\zeta|_{\beta'}|Y|_{C^\alpha} |\ell|^{\beta'+\alpha(1-\mu)}|\ell_1-\bar\ell_1|^{\mu\alpha}.
\]
For such $\mu$ one has $2\mu\alpha\leq 1<\alpha+\beta'$ and as such  $\beta'+\alpha(1-\mu)\geq \mu\alpha$, which implies
\[
\left| \int_0^1 (\ell_Y(t)-\bar\ell_Y(t))\dif\ell_{\zeta}(t)\right|\lesssim|\zeta|_{\beta'}|Y|_{C^\alpha}\Area(\ell,\bar\ell)^{\mu\alpha}.
\]

Let us now do the second term. For $\mu'\in [0,1]$ we note that by interpolation
\[
|\ell_\zeta-\bar\ell_\zeta|_{C^{\mu' \nfrac{\beta'}{2}+(1-\mu')\beta'}}\lesssim |\ell_\zeta-\bar\ell_\zeta|_{C^{\beta'/2}}^{\mu'}  |\ell_\zeta-\bar\ell_\zeta|_{C^{\beta'}}^{1-\mu' }\lesssim |\ell|^{(1-\mu')\beta'}\Area(\ell,\bar\ell)^{\beta'\mu'/2}|\zeta|_{\beta'}.
\]
We take $\mu'\in [0,1]$ such that $\mu'\frac{\beta'}{2}+(1-\mu')\beta'+\alpha>1$. Then we note that
\begin{align*}
\left|\int^1_0 \bar\ell_Y(t)\dif\,(\ell_\zeta-\bar\ell_\zeta)(t)\right|
&\lesssim|Y|_{C^\alpha}|\zeta|_{\beta'} |\ell|^{2(1-\mu')\beta'}\Area(\ell,\bar\ell)^{\beta'\mu'/2}\\
&\lesssim |Y|_{C^\alpha}|\zeta|_{\beta'}\Area(\ell,\bar\ell)^{\beta'\mu'/2}.
\end{align*}
To make both bounds coincide we take $\mu\alpha=\mu'\frac{\beta'}{2}$ and the condition on $\mu$ and $\mu'$ is $\alpha+\beta'-\mu'\frac{\beta'}{2} >1$. We set $\beta:=\mu'\beta'$ and the claim follows.

The continuity statement can be obtained similarly. From there one can also obtain that $B\in\Omega^1_{\beta}$, defined through the closure of smooth functions under $|\Cdot|_\beta$-norm, as we have assumed that $Y\in\lc^\alpha$ (recall the notation for little H\"older space from Section~\ref{sec:notation}).
\end{proof}
We finally get to the promised result on the existence of Coulomb gauge: 
\begin{proof}[of Theorem~\ref{thm:Coulomb_gauge}]
	Consider first $\hat A\in\Omega^1_{\infty\ax}$ with canonical lift $\hat \bfA\in \bfOmega_{\alpha\ax}^1$ satisfying $\triple{\hat\bfA}_{\alpha\ax}\leq 1$.
By Theorem~\ref{thm:construction_model}, the canonical model $\sfZ$ constructed from $\hat A$
satisfies $\triple{\sfZ}_{3\alpha} \leq L$ for some $L>0$.
Let $\sigma_\sol=\sigma_\sol(L)\in (0,1]$ be as in Theorem~\ref{thm:existence_solution}.
Fix for now $\sigma\in (0,\sigma_\sol]$.
Let $\blue g = \blue g^\sigma\in \scG^\sfZ$ be as in Theorem~\ref{thm:existence_solution}.
Following Lemma \ref{lem:B_expresssion}, we write $B=\cR\rmQ_{<3\alpha}\blue{(\sigma A)^g}$.
By~Lemma~\ref{lem:g_smooth}, $B$ and $g^\1$ are smooth in $\Lambda^\circ$,
thus $B$ agrees the classical smooth group action $A^{g^\1}$, and $\diff^*B\equiv 0$ on $\Lambda^\circ$.
If $\hat A=0$, then furthermore $g=1_B$ and $B=0$ by Lemma~\ref{lem:g_smooth}.

Furthermore, in the notation of~Lemma~\ref{lem:B_expresssion},
\[
B_i=\sigma g^\1\zeta_i (g^\1)^{-1}-f_i (g^\1)^{-1},
\]
where $\zeta=-\diff^*K*F^{\hat A}$.
Denoting $Q=(f_1(g^\1)^{-1},f_2(g^\1)^{-1})$, by Lemma~\ref{lem:B_expresssion} and the first embedding in~\eqref{eq:omega_beta_embedding},
\[
|Q|_{6\alpha-2;\Lambda'}
\lesssim
|Q|_{C^{3\alpha-1}(\Lambda')}
\lesssim
\dist(\d\Lambda,\d\Lambda')^{\alpha-1}\;.
\]
Furthermore, by \prop{prop:regularity_d*KFA}, $\zeta \in \Omega_{2\alpha}^1$ with $|\zeta|_{2\alpha;\Lambda}\lesssim 1$,
while $g\in C^\alpha(\Lambda)$.
Applying \lem{lem:integration_af_Young} with $\beta' = 2\alpha$, and noting that $\alpha + \beta' - \frac\beta2 >1 \Leftrightarrow 6\alpha-2>\beta$,
it follows that
$
g^\1\zeta (g^\1)^{-1} \in \Omega^1_{\beta;\Lambda}
$.
We conclude that $B\restr_{\Lambda'}\in \Omega^1_{\beta}(\Lambda')$.

We next verify that the $g^\1$ is controlled by $\sigma \hat A$.
By the expansion of $\blue g$ from Lemma~\ref{lem:expression_g_1} , we know that 
           	\[
           	\rmQ_{<2\alpha}\blue g=g^\1\blue{\1}+\sigma g^\1 \blue{\cI_1 A_1},
           	\]
           	and by the definition of modelled distributions, it satisfies for all $x,y\in \Lambda'$
           	\[
           |g(y)-g(x)-\sigma g(x) (K_1* \hat A_1(y)-K_1*\hat A_1(x)|\lesssim \|\blue g-\blue{\1_G}\|_{3\alpha,\alpha} |x-y|^{2\alpha}\lesssim |x-y|^{2\alpha}\;,
           	\]
   			with proportionality constant depending on $\dist(\partial\Lambda',\partial\Lambda)$. 
           	Then, by Proposition~\ref{prop:regularity_d*KFA},
           	\[
           	|g(y)-g(x)-\sigma g(x) \hat A(\ell^{x;y})|\lesssim  |x-y|^{2\alpha}+|A|_{\alpha\ax}|x-y|^{2\alpha}\lesssim |x-y|^{2\alpha}\;,
           	\]
           	implying that $(g,L_g)\in\fG^{2\alpha}_{\sigma\hat A}(\Lambda')$.
	
	Consider now $A\in\Omega^1_{\infty\ax}$ with canonical lift $\bfA\in \bfOmega_{\alpha\ax}^1$ satisfying $\triple{\bfA}_{\alpha\ax}\leq \sigma_\sol$.
	Items \ref{pt:regularity}-\ref{pt:smoothness} follow from the above by taking $\hat A = A/\sigma_\sol$, for which we remark that $\hat\bfA = (A/\sigma_\sol,\bA/\sigma_\sol^2)$ satisfies $\triple{\hat\bfA}_{\alpha\ax} = \triple{\bfA}_{\alpha\ax}/\sigma_\sol \leq 1$ by our choice of \emph{homogenous} norm $\triple{\Cdot}_{\alpha\ax}$ in \eqref{eq:alpha_ax_norm}.
           
           The local Lipschitz estimate in \ref{pt:local_lipschitz} follows by repeating the same
           bounds for two smooth $\bfA,\bar\bfA$ with $\triple{\bfA}_{\alpha\ax}\vee\triple{\bar\bfA}_{\alpha\ax} \leq \sigma_\sol$. Indeed, every map used above is locally Lipschitz on
           sets where the model norm is bounded by $L$, and the construction of the
           model is locally Lipschitz in $\hat\bfA$. Since throughout the argument
           $\triple{\hat\bfA}_{\alpha\ax},\triple{\hat{\bar\bfA}}_{\alpha\ax}\leq 1$,
           the constants are uniform in the rough additive functions. Hence, for another smooth  $\bar\bfA$ with
           corresponding objects $\bar g,\bar B$, we obtain
           \[
           \triple{(g,L_g);(\bar g,L_{\bar g})}_{2\alpha;\Lambda'}
           +
           |B-\bar B|_{\beta;\Lambda'}
           \lesssim
           \triple{\bfA;\bar\bfA}_{\alpha\ax}\;.
           \]
           
           	We have now proved the theorem statement for smooth $A,\bar A$ with $\triple{\bfA}_{\alpha\ax}\vee\triple{\bar\bfA}_{\alpha\ax} \leq \sigma_\sol$.
			Setting $\sigma_\coul=\frac 1 2\sigma_\sol$,the general statement follows by continuity and density since any $\bfA\in\Omega^1_{\alpha\ax}$ with $\triple{\bfA}_{\alpha\ax}\leq \sigma_\coul$ can be approximated by smooth $\bfA_n$ with $\triple{\bfA_n}_{\alpha\ax}\leq \sigma_\sol$.
\end{proof}

    \subsection{Proof of rough Uhlenbeck compactness} \label{sec:proof_RUC}
    We have now gathered the tools to prove a refined bounded-set version of Theorem~\ref{thm:Rough_Uhlenbeck}: The existence of a Coulomb gauge for a \enquote{small} connection form~(Theorem~\ref{thm:Coulomb_gauge}) and the patching statement~(Theorem~\ref{thm:patching}).
To that end, we first define for any $R\geq 0$,
\[
\bfOmega^1_{\alpha\ax;\leq R}\label{symb:RAF_bounded_ball}:=\{\bfA\in\bfOmega^1_{\alpha\ax}\colon
\triple{\bfA}_{\alpha\ax}\leq R\}.
\]
With this notation, we state the bounded-set version as follows:

\begin{theorem}[Rough Uhlenbeck compactness on bounded sets]\label{thm:RUC_bounded_sets}
Consider $\alpha\in (\frac {4}{9},\frac 12)$, $\beta\in (0,6\alpha-2)$, $\gamma \in (\frac{2}{3},6\alpha-2)$.
There exists a constant $C>0$ depending only on $\alpha,\beta,\gamma$ and $G$
with the following property: for any radius $R\geq 1$, there exists a
continuous map
\[
\bsg_R\label{symb:gauge_selection_map}:\bfOmega_{\alpha\ax;\leq R}^1\to C(\Lambda;G)
\]
such that, for every $\bfA=(A,\bA)\in\bfOmega_{\alpha\ax;\leq R}^1$,
one has $\bsg_R(\bfA)\in\fG^{\gamma}_A$,
$A^{\bsg_R(\bfA)}\in \Omega^1_\beta$, and
\begin{align*}
|A^{\bsg_R(\bfA)}|_{\beta}
\leq
CR^{\frac {2+\beta}{2\alpha}-1}\triple{\bfA}_{\alpha\ax}\;,
\qquad
\triple{(\bsg_R(\bfA),L_{\bsg_R(\bfA)});(1_G,L_{1_G})}_{\gamma}
\leq
CR^{\frac {1}{\alpha}-1}\triple{\bfA}_{\alpha\ax}\;.
\end{align*}
Moreover, for all
$\bfA,\bar\bfA\in\bfOmega_{\alpha\ax;\leq R}^1$,
we have
\begin{equation}\label{eq:cont_in_A}
\begin{aligned}
|A^{\bsg_R(\bfA)}-\bar A^{\bsg_R(\bar \bfA)}|_\beta
&\leq
CR^{\frac {2+\beta}{2\alpha}-1}\triple{\bfA;\bar\bfA}_{\alpha\ax}
\;,
\\
\triple{(\bsg_R(\bfA),L_{\bsg_R(\bfA)});(\bsg_R(\bar\bfA),L_{\bsg_R(\bar\bfA)})}_{\gamma}
&\leq
CR^{\frac {1}{\alpha}-1}\triple{\bfA;\bar\bfA}_{\alpha\ax}\;.
\end{aligned}
\end{equation}
\end{theorem}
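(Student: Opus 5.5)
The plan is to rescale, apply the Coulomb gauge on small squares, and then patch. Fix $R\ge1$ and a scale $\lambda\in(0,1]$, chosen below as $\lambda\asymp R^{-1/\alpha}$. Cover $\Lambda$ by squares $U_\bfn$ of side $\lambda'$ with $\lambda'\asymp\lambda$, arranged as in Definition~\ref{def:rec_cover} (overlaps of relative size $\frac13$). Each $U_\bfn$ is contained in a slightly larger square $\hat U_\bfn\Subset$ (extended domain via Lemma~\ref{lem:domain_extension_RAF}) of side $\lambda$.

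On $\hat U_\bfn$, pull $\bfA$ back to $\Lambda$ by the dilation $x\mapsto x_\bfn+\lambda x$. The homogeneity of $\triple{\cdot}_{\alpha\ax}$ gives that the rescaled RAF $\bfA_\bfn$ satisfies $\triple{\bfA_\bfn}_{\alpha\ax}\lesssim\lambda^\alpha\triple{\bfA}_{\alpha\ax}\le\lambda^\alpha R$. This holds because line integrals scale like $\lambda^\alpha$ relative to lengths, and areas like $\lambda^{2\alpha}$ with exponent $\alpha$. Choosing $\lambda=c R^{-1/\alpha}$ makes this at most $\sigma_\coul$. Theorem~\ref{thm:Coulomb_gauge} then gives $g_\bfn$ with $B_\bfn=A_\bfn^{g_\bfn}$ satisfying $|B_\bfn|_{\beta;\Lambda'}\lesssim\triple{\bfA_\bfn}_{\alpha\ax}$, where $\Lambda'$ is the preimage of $U_\bfn$. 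The map $\bfA\mapsto(g_\bfn,B_\bfn)$ is Lipschitz by~\eqref{e:Coulomb_gauge:bound}.

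Next I rescale $B_\bfn$ to unit-size squares of side $\ge\frac34$, so that Theorem~\ref{thm:patching} applies in the normalised frame. The whole picture is scaled by $\lambda^{-1}$ so that the $U_\bfn$ have side $\mu\ge\frac34$, with $L\asymp\lambda^{-1}$ squares per direction. The gauge data are the transition functions $g_{\bfn,\bfm}:=g_\bfm g_\bfn^{-1}$ on overlaps, for which the cocycle condition is automatic. Gauge equivalence $B_\bfn^{g_{\bfn,\bfm}}=B_\bfm$ follows from $(\bfA^{g_\bfn})^{g_\bfm g_\bfn^{-1}}=\bfA^{g_\bfm}$ (Remark~\ref{rem:left_action}). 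For this, I need $g_\bfm g_\bfn^{-1}$ to lie in $C^\beta$ with $\beta>\frac23$. I would get it from Lemma~\ref{lem:bound_gauge_Holder_seminorm}-type reasoning: both $B_\bfn,B_\bfm\in\Omega^1_\beta$ and they are gauge equivalent, so the transition function is $C^\beta$ and bounded by $|B_\bfn|_\beta+|B_\bfm|_\beta$ plus its value at one point. That value is close to $1_G$ since $|g_\bfn-1_G|_\infty\lesssim\triple{\bfA_\bfn}$.

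The smallness requirement~\eqref{eq:patch_req} holds after possibly shrinking $c$, since all quantities are $\lesssim\lambda^\alpha R$. Theorem~\ref{thm:patching} then yields a global $B$ and gauges $q_\bfn$, and I set $\bsg_R(\bfA):=q_\bfn g_\bfn$ on $U_\bfn$. This is consistent on overlaps because $q_\bfn=q_\bfm g_{\bfn,\bfm}$.

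Finally I undo the scaling to get the bounds. In the normalised frame $|B|_\beta\lesssim\lambda^\alpha\triple{\bfA}$, and $|\cdot|_\beta$ scales by $\lambda^{-\beta}$ in the growth part, and similarly in the triangle part, going back to unit scale. There is also an extra factor $\lambda^{-1}$ from the $1$-form pullback and the count of squares. The intended outcome is $\lambda^{\alpha-1-\beta/2}\triple{\bfA}=R^{\frac{2+\beta}{2\alpha}-1}\triple{\bfA}$. The analogous count for the controlled norm of $g$ gives $R^{1/\alpha-1}$. Membership $\bsg_R(\bfA)\in\fG^\gamma_A$ comes from Theorem~\ref{thm:Coulomb_gauge}(i) for $g_\bfn$ combined with $q_\bfn\in C^\gamma$ via Remark~\ref{rem:left_action}. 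This needs $\gamma>\frac23$ and $\gamma<6\alpha-2$, and is compatible with $\alpha>\frac49$.

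Continuity and the Lipschitz bounds~\eqref{eq:cont_in_A} follow by chaining the Lipschitz estimates of Theorem~\ref{thm:Coulomb_gauge},~\eqref{e:patching_bound_cont} and Lemma~\ref{lem:bound_gauge_Holder_seminorm} with the same scaling factors. The choice of $\lambda$ depends only on $R$, so the map is a single continuous map on the ball.

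I expect the main obstacle to be the precise bookkeeping of exponents under rescaling of the $\Omega^1_\beta$ and controlled-gauge norms. The aim is to land exactly on $R^{\frac{2+\beta}{2\alpha}-1}$. In particular, the interior restriction $\Lambda'\Subset\Lambda^\circ$ in Theorem~\ref{thm:Coulomb_gauge} must be handled through the enlarged squares $\hat U_\bfn$ and the domain extension, so that the constants remain uniform.
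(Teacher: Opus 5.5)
Your proposal follows the paper's proof essentially step by step: rescale by $\sigma\asymp R^{-1/\alpha}$ so that each rescaled piece satisfies $\triple{\cdot}_{\alpha\ax}\le\sigma^\alpha R$, apply Theorem~\ref{thm:Coulomb_gauge} on each small square keeping only the interior $[\frac18,\frac78]^2$, use the transition functions $h_\bfm h_\bfn^{-1}$ (the cocycle condition is automatic and gauge equivalence follows from Remark~\ref{rem:left_action}), patch with Theorem~\ref{thm:patching}, and take $q_\bfn h_\bfn$ rescaled back as the global gauge. Two pieces of bookkeeping need tidying: the Coulomb and patching steps must be run at the regularity $\gamma\in(\frac23,6\alpha-2)$ (assume $\gamma\ge\beta$ without loss of generality and use $|\cdot|_\beta\le|\cdot|_\gamma$), since $\beta$ itself may lie below $\frac23$; and the exponent comes from the dilation bound of Lemma~\ref{lem:scaling_raf_af_combined}\ref{pt:B_beta}, where undoing the scaling costs $\sigma^{-1}$ on the growth part (splitting segments into $\sim\sigma^{-1}$ pieces) and $\sigma^{-1-\beta/2}$ on the triangle part (triangulating and applying Jensen), not a factor $\sigma^{-\beta}$ times a pullback factor.
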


To prove this result, we  fix $R\geq 1$ and
$\bfA\in\bfOmega_{\alpha\ax;\leq R}^1$. In order to obtain the smallness parameter~$\sigma$ that renders~$\bfA$ \enquote{small}, we need to scale and translate it;
    this is equivalent to scaling and translating the domain~$\Lambda$, as we will see. Let  us introduce for $z\in \R^2$ and $\sigma>0$, the translated and dilated square
    \begin{align}\label{eq:scaled_Lambda}
    \Lambda_z^\sigma:=\{z\}+[0,\sigma]^2. 
    \end{align}
    Recall that we can define for $\alpha\in (\frac 1 3,\frac 1 2)$ the space $\bfOmega^1_{\alpha}(\Lambda^\sigma_z)$ as indicated in Remark~\ref{rem:RAF_arbitrary_axial_domain}. Given $\bfA=(A,\bA)\in\bfOmega_{\alpha\ax}^1(\Lambda^\sigma_z)$, we define $\sfs^\sigma_z\label{symb:sfs}\bfA:=(\sfs^\sigma_z A,\sfs^\sigma_z\bA)\in\bfOmega_{\alpha\ax}^1$ as follows: For any line $\ell=(x,v)\in \cX$, we set
\[
    \sfs^\sigma_z A(\ell):=A(\sigma x+z,\sigma v), \qquad \sfs^\sigma_z \bA(\ell):=\bA(\sigma x+z,\sigma v).
    \]
    Note that the line segments $(\sigma x+z,\sigma v)$ are in $\Lambda^\sigma_z$. 
    This definition also applies to $B\in \Omega^1_\beta(\Lambda_z^\sigma)$ with $\beta\in (0,1]$: 
    For $\ell=(x,v)\in\cX$, we define 
    \[
    \sfs^\sigma_zB(\ell):=B(\sigma x+z,\sigma v). 
    \]    
	%
    We need to compare norms of scaled to those of non-scaled (rough) additive functions. 
    This is done in the lemma to follow. 
 \begin{lemma}\label{lem:scaling_raf_af_combined}
Let  $z\in\R^2$, $\alpha\in (\frac 13,\frac 12)$ and $\beta\in (0,1)$. Then we have the following two inequalities:
\begin{enumerate}[label=(\roman*)]
  \item \label{pt:A_ax} If $\sigma\in (0, 1]$ and $\bfA,\bar\bfA\in\bfOmega^1_{\alpha\ax}(\Lambda_z^\sigma)$, then $\sfs_z^\sigma\bfA,\sfs_z^\sigma\bar\bfA\in\bfOmega^1_{\alpha\ax}(\Lambda)$ and
  \[
    \triple{\sfs_z^\sigma\bfA,\sfs_z^\sigma\bar\bfA}_{\alpha\ax;\Lambda}
    \leq
    \sigma^\alpha
    \triple{\bfA,\bar\bfA}_{\alpha\ax;\Lambda_z^\sigma}.
  \]
  \item \label{pt:B_beta} If $\sigma\in [1,\infty)$ and $B\in\Omega^1_{\beta}(\Lambda_z^\sigma)$, then $\sfs_z^\sigma B\in\Omega^1_{\beta}(\Lambda)$ and
  \[
    \lvert\sfs_z^\sigma B\rvert_{\beta;\Lambda}
    \lesssim
    \sigma^{1+\beta/2}
    \lvert B\rvert_{\beta;\Lambda_z^\sigma}.
  \]
\end{enumerate}
\end{lemma}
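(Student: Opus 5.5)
Both parts come from a direct change of variables in the defining suprema, since $\sfs^\sigma_z$ only reparametrises line segments through the affine map $\phi(x)=\sigma x+z$. The thing to track is how lengths, areas and the admissibility constraint ($|\ell|\le\frac14$, diameter $\le \frac14$) transform. Under $\phi$, lengths scale by $\sigma$ and areas by $\sigma^2$. The relation $\simhor$ is preserved, because $\phi$ maps horizontal projections to horizontal projections. Axial vanishing on non-horizontal segments and Chen/additivity/weak geometricity are also preserved, since these are identities on lines stable under $\phi$. Membership in the closure of smooth forms is preserved because $\sfs^\sigma_z$ maps canonical lifts of smooth axial forms to canonical lifts of the pulled-back smooth forms $\phi^*A$.

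For \ref{pt:A_ax}, with $\sigma\le1$, every admissible segment in $\Lambda$ maps to a segment of length $\sigma|\ell|\le\frac14$ in $\Lambda^\sigma_z$, so it is admissible there. Put $\Delta=A-\bar A$. The growth term gives $|\sfs\Delta(\ell)|\le |\Delta|_{\alpha\gr;\Lambda^\sigma_z}\sigma^\alpha|\ell|^\alpha$. The horizontal term gives $\Area(\phi\ell,\phi\bar\ell)^\alpha=\sigma^{2\alpha}\Area(\ell,\bar\ell)^\alpha\le\sigma^\alpha\Area^\alpha$ because $\sigma\le 1$. For the second level, the growth seminorm scales by $\sigma^{2\alpha}$, and the horizontal seminorm scales by $\sigma^\alpha\cdot\sigma^{2\alpha}=\sigma^{3\alpha}\le\sigma^{2\alpha}$. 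After taking square roots these give factors $\le\sigma^\alpha$. Summing, $\triple{\sfs\bfA;\sfs\bar\bfA}_{\alpha\ax;\Lambda}\le\sigma^\alpha\triple{\bfA;\bar\bfA}_{\alpha\ax;\Lambda^\sigma_z}$, with constant exactly one.

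For \ref{pt:B_beta}, with $\sigma\ge1$, the difficulty is that an admissible segment $\ell$ in $\Lambda$ maps to a segment of length up to $\sigma/4$, which may exceed the admissibility threshold in $\Lambda^\sigma_z$. I would subdivide $\phi\ell$ into $N\lesssim\sigma$ joinable pieces of length $\le\frac14$ and use additivity:
\[
|B(\phi\ell)|\le N|B|_{\beta\gr}(\sigma|\ell|/N)^\beta\lesssim \sigma|\ell|^\beta|B|_{\beta\gr}.
\]
This gives the growth bound with factor $\sigma\le\sigma^{1+\beta/2}$.

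For the triangle seminorm, the image $\phi P$ has area $\sigma^2|P|$ and diameter up to $\sigma/4$. I would triangulate $\phi P$ into $M\lesssim\sigma^2$ triangles $P_j$ of diameter $\le\frac14$. Since the boundary integrals of $B$ over interior edges cancel by additivity, $B(\partial\phi P)=\sum_j B(\partial P_j)$. To make the areas behave, I would use a subdivision with $|P_j|\le |\phi P|/M'$, for example a barycentric-type refinement into congruent similar subtriangles. That gives
\[
|B(\partial\phi P)|\le |B|_{\beta\tri}\sum_j|P_j|^{\beta/2}\le |B|_{\beta\tri}\,M^{1-\beta/2}(\sigma^2|P|)^{\beta/2}.
\]
Similar subdivision into $k^2$ congruent triangles with $k\sim\sigma$ gives $M\sim\sigma^2$, so the right-hand side is $\sigma^{2-\beta}\sigma^{\beta}|P|^{\beta/2}=\sigma^2|P|^{\beta/2}$.

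That exponent is worse than the claimed $\sigma^{1+\beta/2}$, so this is the main obstacle. To improve it, I would first try passing through the vee seminorm, which is comparable by \cite[Thm.~3.11]{CCHS2d}, or treat thin triangles separately. For a triangle with $|P|$ small relative to $\operatorname{diam}(P)^2$, I would subdivide only along the long direction into $\sim\sigma$ thin slivers, each of area $\sigma|P|$. This gives
\[
\sigma(\sigma|P|)^{\beta/2}=\sigma^{1+\beta/2}|P|^{\beta/2}.
\]
For fat triangles I would use the growth bound on the perimeter, which gives $\sigma|\partial P|^\beta$, together with $|P|\gtrsim\operatorname{diam}^2$ to convert this into $\sigma^{1+\beta/2}|P|^{\beta/2}$, and combine the two regimes. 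Membership in the closure follows since $\sfs$ maps smooth forms to smooth forms and is bounded.
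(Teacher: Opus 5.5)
Your part (i) and the growth half of part (ii) are exactly the paper's argument. For the triangle seminorm you also end up with the paper's two tools. One is the growth bound applied to the three sides of $P$. The other is a subdivision of $\phi P$, with $\phi(x)=\sigma x+z$, into $O(\sigma)$ admissible triangles combined with Jensen.

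The gap is in how you decide which tool to use. Write $a\le\frac14$ for the longest side of $P$ and $h=2|P|/a$ for the corresponding height. Your sliver argument needs every sliver to have diameter at most $\frac14$ in $\Lambda^\sigma_z$, but the slivers have height up to $\sigma h$. An aspect-ratio condition ($|P|$ small compared to $\operatorname{diam}(P)^2$, i.e.\ $h\ll a$) gives no control on $\sigma h$ once $\sigma$ is large.

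Concretely, take $a=\frac14$, $h/a$ below whatever threshold you fix, and $\sigma h\gg 1$. This triangle is classified as thin, but its slivers are not admissible. Cutting them further raises the count to order $\sigma^2 ah$, and Jensen then only gives $\sigma^{1+\beta/2}|P|^{\beta/2}$ times $(\sigma a h)^{1-\beta/2}$, which is unbounded. Your fat case, which relies on $|P|\gtrsim\operatorname{diam}(P)^2$, does not cover such triangles either. So as stated, your two regimes do not exhaust all triangles.

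The split has to be scale-dependent, as in the paper: compare $\sigma h$ with $1$.
\begin{itemize}
\item If $\sigma h\ge 1$, the growth bound gives $|B(\partial\phi P)|\lesssim\sigma a^\beta|B|_{\beta;\Lambda^\sigma_z}$. The inequality $\sigma a^\beta\le\sigma^{1+\beta/2}(ah)^{\beta/2}$ is equivalent to $a\le\sigma h$, which holds since $a\le\frac14$. So the growth bound handles every such triangle, thin or fat.
\item If $\sigma h\le 1$, then $\phi P$ lies in a $\sigma a\times\sigma h$ rectangle of height at most $1$. It can therefore be cut into $n\lesssim 1+\sigma a\lesssim\sigma$ triangles of diameter at most $\frac14$. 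Jensen then gives $|B(\partial\phi P)|\le\sigma^\beta n^{1-\beta/2}|P|^{\beta/2}|B|_{\beta;\Lambda^\sigma_z}\lesssim\sigma^{1+\beta/2}|P|^{\beta/2}|B|_{\beta;\Lambda^\sigma_z}$.
\end{itemize}
With this threshold in place of the aspect-ratio one, your argument is complete and coincides with the paper's. The detour through the vee seminorm is not needed.
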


\begin{proof}
The statement in \ref{pt:A_ax} follows easily from the definition of the norms \eqref{eq:RAF_gr_metric} and \eqref{eq:RAF_hor_norms} (remark that \eqref{eq:gr_norm} and \eqref{eq:RAF_gr_metric} are homogeneous and scale exactly as $\sigma^\alpha$, while the two components of\eqref{eq:RAF_hor_norms} scale as $\sigma^{2\alpha}$ and $\sigma^{3\alpha/2}$ respectively, providing even better bounds for these terms)
and from the fact that line segments shrink when applying $\sfs^\sigma_z$ so the length stays at most $\frac 14$.

Let us now focus on \ref{pt:B_beta}, where we have $\sigma\geq 1$ and $B\in\Omega^1_\beta$. We first prove the growth bound. Let $\ell$ be a line segment in $\Lambda$ with $|\ell|\leq \frac 14$. We split $\ell$ into $m$ line segments $\ell^1,\ldots,\ell^m$ of equal length, with $m\leq \lceil\sigma\rceil$, so that each scaled line segment has length at most $\frac 14$. By additivity,
\[
    \sfs_z^\sigma B(\ell)
    =
    \sum_{i=1}^m \sfs_z^\sigma B(\ell^i).
\]
Therefore,
\[
    |\sfs_z^\sigma B(\ell)|
    \leq
    \sum_{i=1}^m |\sfs_z^\sigma B(\ell^i)|  \leq
    \sum_{i=1}^m \sigma^\beta |\ell^i|^\beta |B|_{\beta;\Lambda_z^\sigma} \leq
    \sigma^\beta m^{1-\beta} |\ell|^\beta |B|_{\beta;\Lambda_z^\sigma}
\]
where we used Jensen's inequality in the final bound.
Since $m\leq \lceil\sigma\rceil\leq 2\sigma$, this gives
\[
    |\sfs_z^\sigma B(\ell)|
    \lesssim
    \sigma |\ell|^\beta |B|_{\beta;\Lambda_z^\sigma}.
\]

It remains to prove the triangle seminorm bound. For that, we take a triangle $P$ with diameter at most $\frac 1 4$. 
 Write $|P|=\frac 1 2 ah$ where $a$ is the length of the longest side and $h$ be the corresponding height perpendicularly relative to the longest side.  We know that $0<h\leq a\leq \frac 1 4$. We treat the cases $\sigma h\geq 1$  and $\sigma h\leq 1$ separately. For the case $\sigma h\geq 1$,  we use the growth estimate and get
\[
|\sfs^\sigma_z B(\partial P)| \lesssim \sigma a^\beta |B|_{\beta;\Lambda^\sigma_z} \leq \sigma^{1+\beta/2}  (\sigma h)^{-\beta/2}  (ah)^{\beta/2}  |B|_{\beta;\Lambda^\sigma_z}\lesssim \sigma^{1+\beta/2} |P|^{\beta/2}|B|_{\beta;\Lambda^\sigma_z}. 
\]
For the other  case $\sigma h\leq 1$, we can use a triangulation of $P$ with triangles $(P^i)_{i=1}^n$ for some $n\lesssim 1+\sigma ah$ such that for each $P^i$ the corresponding scaled triangle has diameter at most $\frac 1 4$. Using additivity and cancellations of internal boundaries, we get 
\[
\sfs^\sigma_z B(\partial P)=\sum_{i=1}^n \sfs^\sigma_z B(\partial P^i).
\]
This then yields
\[
    |\sfs_z^\sigma B(\partial P)|
    \leq
    \sum_{i=1}^n |\sfs_z^\sigma B(\partial P^i)|  \leq \sigma^\beta |B|_{\beta;\Lambda_z^\sigma} 
    \sum_{i=1}^n  |P^i|^{\beta/2} \leq \sigma^\beta n^{1-\beta/2} |B|_{\beta;\Lambda_z^\sigma} 
    \Big(\sum_{i=1}^n  |P^i|\Big)^{\beta/2},
\]
where we have used Jensen's inequality to obtain the last inequality. Now we continue with the fact that $n\lesssim 1+\sigma ah\lesssim \sigma $ to obtain 
\[
|\sfs_z^\sigma B(\partial P)|
    \lesssim \sigma^\beta \sigma^{1-\beta /2} |B|_{\beta;\Lambda_z^\sigma}|P|^{\beta/2} \lesssim \sigma^{1+\beta /2} |B|_{\beta;\Lambda_z^\sigma}|P|^{\beta/2},
\]
finishing the proof. 
\end{proof}

%
%
%
%
We are now ready to prove Theorem~\ref{thm:RUC_bounded_sets}. 
 \begin{proof}[of Theorem~\ref{thm:RUC_bounded_sets}]\label{proof:RUC_bounded_sets}
 Without loss of generality, we assume that $\gamma\geq \beta$.
 Fix $R>0$ and let $\bfA\in\bfOmega_{\alpha\ax;\leq R}^1(\Lambda)$. If $\bfA={\bf0}$, then we simply set $g_R(\bf0)=1_G$ and we are done. So let us assume $\bfA$ to be non-zero. 
 We can extend $\bfA$ on $[-1,2]^2$ continuously by~Lemma~\ref{lem:domain_extension_RAF} which we denote by the same letter $\bfA$. 
 
 We now introduce the setting that will allow us to apply Theorem~\ref{thm:Coulomb_gauge}.
 We set 
 \begin{align}\label{eq:choice_sigma_RUC}
\sigma=\sigma_R
:=
\min\left\{
\left(\frac{\sigma_{\coul}}{R}\right)^{1/\alpha},
\left(\frac{\delta_\patch}{2C_\coul R}\right)^{1/\alpha},
1
\right\},
\end{align}
where $C_\coul$ is the implicit constant in \eqref{e:Coulomb_gauge:bound} of Theorem \ref{thm:Coulomb_gauge} for the domain $\Lambda':=[ \frac 18,\frac7 8]^2  \Subset \Lambda^\circ$
and $\delta_\patch$ is the constant in Theorem~\ref{thm:patching}
(with $\beta$ in both theorems taken as $\gamma$ here).
Let 
\[
z_\bfn = -\left(\tfrac{\sigma}{4}, \tfrac{\sigma}{4}\right) + \tfrac{\sigma}{2}(n^1,n^2)
\]
for $\bfn=(n^1,n^2) \in\{ 0, 1, \ldots, l\}^2=\bN^2_L$ where $l=\lceil \frac 3\sigma -\frac 32\rceil$ and $L=(l,l)$ are as in Definition~\ref{def:rec_cover}. 
We recall from~\eqref{eq:scaled_Lambda}
\[
\Lambda_{z_{\bfn}}^\sigma=z_\bfn+[0,\sigma]^2=z_\bfn+(\tfrac \sigma 2,\tfrac\sigma 2)+[-\tfrac \sigma 2,\tfrac \sigma 2]^2.
\]
One can verify that 
\[
\bigcup_{\bfn\in\bN^2_L} \Lambda_{z_\bfn}^\sigma=[-\tfrac\sigma 4,\sigma -\tfrac \sigma 4+\tfrac\sigma 2 l]^2,
\]
which, in turn, implies that
\[
\Lambda
\subset [0,\tfrac 3 2]^2
\subset \bigcup_{\bfn\in\bN^2_L} \Lambda_{z_\bfn}^\sigma\subset [-\tfrac 14,2]^2\subset [-1,2]^2.
\]
See Figure~\ref{fig:RUC_square_cover} for a visual representation of this covering and the setup we have just described.
    \begin{figure}[h]
 	\centering
 	\tikzset{>={Latex[width=1.5pt,length=1.5pt]}}
 	\resizebox{0.5\linewidth}{!}{%
 		\begin{tikzpicture}[x=3cm,y=3cm,line cap=butt,line join=miter]
 			\def\s{0.2}                   
 			\pgfmathsetmacro{\h}{0.5*\s}  
 			
 			\pgfmathtruncatemacro{\Kmax}{ceil((1+0.75*\s)/\h)} 
 			\pgfmathtruncatemacro{\Lmax}{ceil((1+0.75*\s)/\h)} 
 			
 			\begin{scope}
 				\draw[black] (0,0) rectangle (1,1);               
 				
 				\draw[line width=0.25pt] (-\s/4,-\s/4) rectangle ++(\s,\s);
 				\node[tinydot] at (-\s/4,-\s/4){};
 				\node[below left=0.01pt] at (-\s/4,-\s/4){\resizebox{0.023\textwidth}{!}{$z_{(0,0)}$}};
 				\draw[line width=0.25pt, dash pattern=on 1pt off 1pt, fill=gray!30, fill opacity=0.5] (-\s/4+\s/2,-\s/4) rectangle ++(\s,\s);
 				\node[tinydot] at (-\s/4+\s/2,-\s/4){};
 				\node[below=0.01pt] at (-\s/4+1.2*\s/2,-\s/4){\resizebox{0.023\textwidth}{!}{$z_{(1,0)}$}};
 				\draw[line width=0.25pt, fill opacity=0.5] (-\s/4+\s,-\s/4) rectangle ++(\s,\s);
 				\draw[line width=0.25pt, fill opacity=0.5] (-\s/4,-\s/4+\s) rectangle ++(\s,\s);
 				\node[tinydot] at (-\s/4,-\s/4+\s/2){};
 				\node[left=0.01pt] at (-\s/4,-\s/4+\s/2){\resizebox{0.023\textwidth}{!}{$z_{(0,1)}$}};
 				\draw[line width=0.25pt, dash pattern=on 1pt off 1pt, fill=gray!30, fill opacity=0.5] (-\s/4,-\s/4+\s/2) rectangle ++(\s,\s);
 				
 				\draw[line width=0.25pt,pattern=north west lines, pattern color=gray] (-\s/4+\s/2,-\s/4+\s/2) rectangle ++(\s,\s);
 				\node[tinydot] at (-\s/4+\s/2,-\s/4+\s/2){};
 				\draw [-{Latex[length=1mm]}] (-\s/4+4*\s/2,-\s/4+3*\s/2) to [out=180,in=90] (-\s/4+\s/2,-\s/4+1.1*\s/2);
 				\node[above right=-4pt] at (-\s/4+4*\s/2,-\s/4+2.6*\s/2){\resizebox{0.023\textwidth}{!}{$z_{(1,1)}$}};
 				%
 				
 				\node[tinydot] at (-\s/4+5*\s/2,-\s/4+5*\s/2){};
 				\node[below=0.01pt] at (-\s/4+5*\s/2,-\s/4+5*\s/2){\resizebox{0.01\textwidth}{!}{$z_\bfn$}};
 				\node at (-\s/4+6*\s/2,-\s/4+6*\s/2){\resizebox{0.02\textwidth}{!}{$\Lambda_{z_{\bfn}}^\sigma$}};
 				\draw[line width=0.25pt] (-\s/4+5*\s/2,-\s/4+5*\s/2) rectangle ++(\s,\s);
 				\draw[line width=0.1pt, <->] (-\s/4+5*\s/2,-\s/4+7.2*\s/2) -- (-\s/4+7*\s/2,-\s/4+7.2*\s/2);
 				\node[above=-3pt] at (-\s/4+6*\s/2,-\s/4+7.2*\s/2){\resizebox{0.008\textwidth}{!}{$\sigma$}};
 				\draw[line width=0.1pt, <->] (-\s/4+7.2*\s/2,-\s/4+5*\s/2) -- (-\s/4+7.2*\s/2,-\s/4+7*\s/2);
 				\node[right=-3pt] at (-\s/4+7.2*\s/2,-\s/4+6*\s/2){\resizebox{0.008\textwidth}{!}{$\sigma$}};
 				%
 				
 				\draw[line width=0.25pt] (-\s/4,-\s/4+10*\s/2) rectangle ++(\s,\s);
 				\node[tinydot] at (-\s/4,-\s/4+10*\s/2){};
 				\node[left=0.01pt] at ((-\s/4,-\s/4+10*\s/2){\resizebox{0.023\textwidth}{!}{$z_{(0,l)}$}};
 				
 				\draw[line width=0.25pt, dash pattern=on 1pt off 1pt, fill=gray!30, fill opacity=0.5] (-\s/4+\s/2,-\s/4+10*\s/2) rectangle ++(\s,\s);
 				
 				\draw[line width=0.25pt] (-\s/4+\s,-\s/4+10*\s/2) rectangle ++(\s,\s);

 				\draw[line width=0.25pt, dash pattern=on 1pt off 1pt, fill=gray!30, fill opacity=0.5] (-\s/4,-\s/4+9*\s/2) rectangle ++(\s,\s);
 				\node[tinydot] at (-\s/4,-\s/4+9*\s/2){};
 				\node[left=0.01pt] at ((-\s/4,-\s/4+9*\s/2){\resizebox{0.035\textwidth}{!}{$z_{(0,l-1)}$}};
 				
 				\draw[line width=0.25pt] (-\s/4,-\s/4+8*\s/2) rectangle ++(\s,\s);
 				
 				\draw[line width=0.25pt] (-\s/4+10*\s/2,-\s/4) rectangle ++(\s,\s);
 				\node[tinydot] at (-\s/4+10*\s/2,-\s/4){};
 				\node[below=0.01pt] at ((-\s/4+1.05*10*\s/2,-\s/4){\resizebox{0.023\textwidth}{!}{$z_{(l,0)}$}};
 				
 				\draw[line width=0.25pt, dash pattern=on 1pt off 1pt, fill=gray!30, fill opacity=0.5] (-\s/4+10*\s/2,-\s/4+\s/2) rectangle ++(\s,\s);
 				%
 				
 				\draw[line width=0.25pt] (-\s/4+10*\s/2,-\s/4+\s) rectangle ++(\s,\s);

 				\draw[line width=0.25pt, dash pattern=on 1pt off 1pt, fill=gray!30, fill opacity=0.5] (-\s/4+9*\s/2,-\s/4) rectangle ++(\s,\s);
 				\node[tinydot] at (-\s/4+9*\s/2,-\s/4){};
 				\node[below=0.01pt] at ((-\s/4+0.95*9*\s/2,-\s/4){\resizebox{0.035\textwidth}{!}{$z_{(l-1,0)}$}};
 				
 				\draw[line width=0.25pt] (-\s/4+8*\s/2,-\s/4) rectangle ++(\s,\s);
 				
 				\draw[line width=0.25pt] (-\s/4+10*\s/2,-\s/4+10*\s/2) rectangle ++(\s,\s);
 				\node[tinydot] at (-\s/4+10*\s/2,-\s/4+10*\s/2){};
 				\node[below left=0.01pt] at ((-\s/4+10*\s/2,-\s/4+10*\s/2){\resizebox{0.023\textwidth}{!}{$z_{(l,l)}$}};
 				
 				\draw[line width=0.25pt, dash pattern=on 1pt off 1pt, fill=gray!30, fill opacity=0.5] (-\s/4+9*\s/2,-\s/4+10*\s/2) rectangle ++(\s,\s);
 				
 				\draw[line width=0.25pt] (-\s/4+8*\s/2,-\s/4+10*\s/2) rectangle ++(\s,\s);
 				
 				\draw[line width=0.25pt, dash pattern=on 1pt off 1pt, fill=gray!30, fill opacity=0.5] (-\s/4+10*\s/2,-\s/4+9*\s/2) rectangle ++(\s,\s);
 				
 				\draw[line width=0.25pt] (-\s/4+10*\s/2,-\s/4+8*\s/2) rectangle ++(\s,\s);

 				\node[above] at (0.5,0){$\ldots$};
 				
 				\node[below] at (0.5,1){$\ldots$};
 				
 				\node[right] at (0,0.5){\rotatebox[origin=c]{90}{$\ldots$}};
 				
 				\node[left] at (1,0.5){\rotatebox[origin=c]{90}{$\ldots$}};
 				
 				\node[below] at (0.35,0.45){\rotatebox[origin=c]{45}{$\ldots$}};
 				
 				\node[below] at (0.75,0.85){\rotatebox[origin=c]{45}{$\ldots$}};
 				
 						%
 			\end{scope}
 		\end{tikzpicture}%
 	}
  	\caption{The large square~$\Lambda$ gets covered in squares~$\Lambda_{z_\bfn}^\sigma$ of type~$[0,\sigma]^2$, anchored in~$z_\bfn$ for~$\bfn \in \N_L^2$.}
 	\label{fig:RUC_square_cover}
 \end{figure}
 Let $z=z_\bfn$ for some $\bfn\in\bN^2_L$.
 On each little square~$\Lambda_{z_\bfn}^\sigma$ we want to re-scale the rough additive function~$\bfA$ in such a way that we work on a square of size $\Lambda$ again. 
 To this end, we define 
 $\bfA_{z,\sigma}:=\sfs_z^\sigma(\bfA|_{\Lambda_z^\sigma})$ on $\Lambda$
and apply \lem{lem:scaling_raf_af_combined} to get 
 \[
 \triple{\bfA_{z,\sigma}}_{\alpha\ax}
 \leq \sigma^{\alpha} \triple{\bfA}_{\alpha\ax}
 \leq \sigma^{\alpha} R
 \leq \sigma_\coul,
 \]
 where the last  inequality holds by our choice of $\sigma$ in
 \eqref{eq:choice_sigma_RUC}. 
As a result, the assumptions of Theorem~\ref{thm:Coulomb_gauge} are satisfied and we obtain a connection $B_{z,\sigma}$ in the Coulomb gauge together with a gauge transformation~$g_{z,\sigma}$ that  satisfies  the following bounds on $\Lambda'=[ \frac 18,\frac7 8]^2  \Subset \Lambda^\circ$: 
\begin{equation} \label{e:bound_coulomb}
	|B_{z,\sigma}|_{\gamma;\Lambda'}+|g_{z,\sigma}-1_G|_{\infty;\Lambda'}\leq C_\coul\sigma^\alpha\triple{\bfA}_{\alpha\ax}\leq C_\coul \sigma^\alpha R. 
\end{equation}

We now want to apply the patching result, Theorem~\ref{thm:patching}.
However, recall that $z \in \cbr[0]{z_\bfn}_{\bfn \in \N_L^2}$ and, by the previous procedure, we obtain a collection of $1$-forms $B_{z_\bfn,\sigma}$ in the Coulomb gauge and gauge transformations $g_{z_\bfn,\sigma}$ which are defined \emph{on the same square}~$\Lambda'$. 
Therefore, we need to suitably translate~$B_{\bfn}$ as in~\eqref{e:translation_B_bfn} to be defined on rectangles~$U_\bfn$ of the same size as $\Lambda'$ which then together cover a large square. 
In the setting of Definition~\ref{def:rec_cover}, we set $\sce:=\{(1,0),(0,1)\}$, $\mu=(\mu^1,\mu^2)=(\frac 34,\frac 34)$, and $y = (-\tfrac 18,-\tfrac 18)$ and consider the set~\footnote{This is consistent with Definition~\ref{def:rec_cover} since~$2/3 \cdot \mu^i = 2/3 \cdot 3/4 = 1/2.$} 
    \[
    U_\bfn = U_\bfn^{\sce,\mu,y}=
    -(\tfrac 18,\tfrac 18)+\tfrac 12 \bfn+[0,\tfrac 34]^2 \,.
    \]
   As above, we take $L=(l,l)$ for  $l=\lceil \frac 3\sigma-\frac 32\rceil$ to obtain the \enquote{large} rectangle
   \[
   \cU=\bigcup_{\bfn\in\bN^2_L}U_{\bfn}^{\sce,\mu,y} \,. 
   \]
As mentioned above, $B_{z_\bfn,\sigma}$ is defined on $\Lambda'=[\frac 18,\frac78]^2$ but its translated version
    \begin{equation} \label{e:translation_B_bfn}
    	B_\bfn := \sfs^1_{-\sigma^{-1}z_\bfn}B_{z_\bfn,\sigma} \,.
    \end{equation}
is defined on~$U_\bfn$.

	We now need to check the that the conditions of Theorem~\ref{thm:patching} are satisfied.
	We begin by building gauge transformations~$(g_{\bfm,\bfn})_{\bfn,\bfm \in \N^2_L}$ from~$(g_{z_\bfn,\sigma})_{\bfn \in \N^2_L}$ such that~$(B_\bfn,g_{\bfn,\bfm})_{\bfn,\bfm \in \N^2_L}$ is a compatible gauge datum in the sense of Definition~\ref{def:compatible_gauge_data}.
    To this end, define
\[
h_\bfn(x):=g_{z_\bfn,\sigma}(x-\sigma^{-1}z_\bfn), \qquad x\in U_\bfn,
\]
and $g_{\bfn,\bfm}:U_\bfm\cap U_\bfm\to G$ by
\[
g_{\bfn,\bfm}:=h_\bfm h_\bfn^{-1}.
\]
From this definition the cocycle condition from~Definition~\ref{def:compatible_gauge_data} follows immediately. The only thing remaining is the gauge equivalence condition $B_\bfn^{g_{\bfn,\bfm}}=B_{\bfm}$ on $U_\bfn\cap U_\bfm$. %
We first claim that
\begin{align}\label{eq:B_n_identity_RUC_proof}
B_\bfn=(\sfs^\sigma_0\bfA)^{h_\bfn}
\qquad\text{on }U_\bfn.
\end{align}
Indeed, recalling that
\[
B_\bfn=\sfs^1_{-\sigma^{-1}z_\bfn}B_{z_\bfn,\sigma},
\qquad
B_{z_\bfn,\sigma}=\bfA_{z_\bfn,\sigma}^{\,g_{z_\bfn,\sigma}},
\]
we obtain by compatibility of translations with gauge transformations that
\[
B_\bfn
=\sfs^1_{-\sigma^{-1}z_\bfn}\bigl(\bfA_{z_\bfn,\sigma}^{\,g_{z_\bfn,\sigma}}\bigr)
=\bigl(\sfs^1_{-\sigma^{-1}z_\bfn}\bfA_{z_\bfn,\sigma}\bigr)^{h_\bfn}.
\]
It therefore remains to identify the translated background. For any horizontal line segment
$\ell=(x,v)\in\cX_{\rm h}$, we have
\begin{align*}
\bigl(\sfs^1_{-\sigma^{-1}z_\bfn}\bfA_{z_\bfn,\sigma}\bigr)(\ell)
&=\bfA_{z_\bfn,\sigma}(x-\sigma^{-1}z_\bfn,v) \\
&=\bfA\bigl(\sigma(x-\sigma^{-1}z_\bfn)+z_\bfn,\sigma v\bigr) \\
&=\bfA(\sigma x,\sigma v)
=(\sfs^\sigma_0\bfA)(\ell).
\end{align*}
Hence
\[
\sfs^1_{-\sigma^{-1}z_\bfn}\bfA_{z_\bfn,\sigma}=\sfs^\sigma_0\bfA,
\]
proving~\eqref{eq:B_n_identity_RUC_proof}.
Therefore,
\[
B_\bfn^{g_{\bfn,\bfm}}
=\bigl((\sfs^\sigma_0\bfA)^{h_\bfn}\bigr)^{h_\bfm h_\bfn^{-1}}
=(\sfs^\sigma_0\bfA)^{h_\bfm}
=B_\bfm.
\]
This implies that $g_{\bfn,\bfm} \in C^{\gamma}$ and also finishes the verification that $(B_\bfn,g_{\bfn,\bfm})_{\bfn,\bfm\in\N_L^2}$ is a compatible gauge datum.

    The second requirement for Theorem~\ref{thm:patching} to apply is~\eqref{eq:patch_req}. 
    To verify it, we use the bound in~\eqref{e:bound_coulomb} to see that
    \begin{equation*}
    |g_{\bfm,\bfn}(x)-1_G|_{\infty;U_{\bfn}\cap U_\bfm}
    \leq 
    |g_{z_\bfn,\sigma}-1_G|_{\infty;\Lambda'}+|g_{z_\bfm,\sigma}-1_G|_{\infty;\Lambda'}
    \leq 
    2C_\coul \sigma^\alpha R
    \end{equation*}
   and
    \[
    |B_\bfn|_{\gamma;U_\bfn}
    \leq |B_{z_\bfn,\sigma}|_{\gamma;\Lambda'}
    \leq C_\coul \sigma^\alpha R \,.
    \]
	The RHS on the previous inequalities are less than $\delta_\patch$ because
    \[
    2C_\coul \sigma^{\alpha} R\leq \delta_\patch \iff \sigma\leq  \left(\frac{\delta_\patch}{2C_\coul R}\right)^{1/\alpha}
    \]
    and the latter holds because we chose $\sigma$ as in \eqref{eq:choice_sigma_RUC}. 

   	By Theorem~\ref{thm:patching}, which applies because $\gamma>\frac23$ by assumption, we now obtain $\hat B$ defined on the \enquote{large} square~$\cU_L$ and it satisfies 
    \[
    |B|_{\beta;\cU_L}\leq |\hat B|_{\gamma;\cU_L}\lesssim \sigma^\alpha \triple{\bfA}_{\alpha\ax},
    \]
    where we have used that $\beta\leq \gamma$. 
    So now we need to scale it down again and then restrict to the domain~$\Lambda$;
   	accordingly, we set~$B := (\sfs^{\sigma^{-1}}_0 \hat B)\sVert[0]_\Lambda$. 
   	By \lem{lem:scaling_raf_af_combined}, we have
    \[
    |B|_{\beta;\Lambda}\lesssim \sigma^{-1-\beta/2} \sigma^\alpha \triple{\bfA}_{\alpha\ax} = \sigma^{-(\frac{2+\beta} 2 -\alpha)} \triple{\bfA}_{\alpha\ax} \,. 
    \]
    Furthermore, it is not difficult to see that by the choice of $\sigma$ in~\eqref{eq:choice_sigma_RUC}, we have 
    \[
    \sigma^{-1}\lesssim R^{1/\alpha},
    \]
    which then implies
    \[
     |B|_{\beta;\Lambda}
     \lesssim
     R^{\frac {2+\beta}{2\alpha}-1}\triple{\bfA}_{\alpha\ax} \,.
    \]
    
Finally, we verify that $B$ is indeed obtained by gauge transforming $\bfA$. 
By Theorem~\ref{thm:patching},
there are gauge transformations $q_\bfn\in C^{\gamma}(U_\bfn;G)$ such that
\[
    \hat B=B_\bfn^{q_\bfn}
    \qquad\text{on }U_\bfn,
\]
and
\[
    q_\bfn=q_\bfm g_{\bfn,\bfm}
    \qquad\text{on }U_\bfn\cap U_\bfm .
\]
We combine this with~\eqref{eq:B_n_identity_RUC_proof} to obtain
\begin{align}\label{eq:local_gauge_transformed_RUC_proof}
    \hat B
    =B_\bfn^{q_\bfn}= ((\sfs^\sigma_0\bfA)^{h_\bfn})^ {q_\bfn}=
    (\sfs^\sigma_0\bfA)^{q_\bfn h_\bfn}
    \qquad\text{on }U_\bfn ,
\end{align}
where the last equality follows by~Remark~\ref{rem:left_action}.

The functions $q_\bfn h_\bfn$ agree on overlaps. Indeed, using
$g_{\bfn,\bfm}=h_\bfm h_\bfn^{-1}$, we have
\[
    q_\bfn h_\bfn
    =
    q_\bfm g_{\bfn,\bfm}h_\bfn
    =
    q_\bfm h_\bfm  \qquad \text{ on } U_\bfn\cap U_\bfm.
\]
In particular $q_\bfn h_\bfn$ constitutes a function $k\in C^\alpha(\cU;G)$ with the defining property $k=q_\bfn h_\bfn$ on $U_\bfn$. We now define the gauge transformation on the original square by
\[
    \bsg_R(\bfA)(x):=k(\sigma^{-1}x),
    \qquad x\in\Lambda .
\]
We verify that $(\bsg_R(\bfA),L_{\bsg_R(\bfA)})$ is a controlled gauge transformation.

First, note that, since $h_\bfn$ is controlled by $\sfs^\sigma_0\bfA$, we have for any $\ell^{x;y}\in U_\bfn$ 
\[
h_\bfn(y)-h_\bfn(x)=h_\bfn(x)(\sfs^\sigma_0 A)(\ell^{x;y})+R^{h_\bfn}(x,y).
\]
By the estimates in Theorem~\ref{thm:Coulomb_gauge} together with Lemma~\ref{lem:scaling_raf_af_combined}, we know that 
\[
\triple{(h_\bfn,L_{h_\bfn});(1_G,L_{1_G})}_{2\alpha;U_\bfn}\lesssim \sigma^\alpha\triple{\bfA}_{\alpha\ax}.
\]
Furthermore, from the estimates in Theorem~\ref{thm:patching}, we have
\[
\max_{n\in\bN^2_L} |q_\bfn-1_G|_{C^{\gamma}(U_\bfn)}\lesssim \sigma^\alpha \triple{\bfA}_{\alpha\ax}.
\]
Since the $L^\infty$-norm does not matter for the scaling, we find 
\[
|\bsg_R(\bfA)-1_G|_{\infty;\Lambda}=|k-1_G|_{\infty;\sigma^{-1}\Lambda}\lesssim \sigma^\alpha\triple{\bfA}_{\alpha\ax}. 
\]
To compute the H\"older seminorm, we distinguish cases. If $x,y\in\Lambda$ satisfy $|x-y|>\frac 1 4\sigma$, we use the supremum norm to bound 
\[
|\bsg_R(\bfA)(y)-\bsg_R(\bfA)(x)|\lesssim \sigma^\alpha \triple{\bfA}_{\alpha\ax}\lesssim |x-y|^\alpha \triple{\bfA}_{\alpha\ax}. 
\]
If $|x-y|\leq \frac 1 4\sigma$, 
then this implies $\sigma^{-1}x$ and $\sigma^{-1}y$ are in one $U_\bfn$, so that  we can use the H\"older seminorms of $q_\bfn$ and $h_\bfn$ to get
\[
|\bsg_R(\bfA)(y)-\bsg_R(\bfA)(x)|\lesssim \sigma^\alpha \triple{\bfA}_{\alpha\ax}|\sigma^{-1}(x-y)|^\alpha\lesssim \sigma^{\alpha-1} \triple{\bfA}_{\alpha\ax}|x-y|^\alpha. 
\]
We have now obtained 
\[
|\bsg_R(\bfA)-1_G|_{C^{\gamma/2}(\Lambda)}
\lesssim
|\bsg_R(\bfA)-1_G|_{C^{\alpha}(\Lambda)}
\lesssim \sigma^{\alpha-1}\triple{\bfA}_{\alpha\ax}. 
\]

For the remainder, we let $x,y\in\Lambda$, and define 
\[
 R^{\bsg_R(\bfA)}(x,y)
    :=
    \bsg_R(\bfA)(y)-\bsg_R(\bfA)(x)
    -
    \bsg_R(\bfA)(x)A(\ell^{x;y}).
\]
Let us first consider the case $|x-y|>\frac1 4\sigma$. Then we know from the computation above that 
\[
|R^{\bsg_R(\bfA)}(x,y)|\lesssim  \triple{\bfA}_{\alpha\ax} |x-y|^\alpha\lesssim \sigma^{\alpha-\gamma}  \triple{\bfA}_{\alpha\ax} |x-y|^{\gamma}. 
\]
In the other case, when  $|x-y|\leq \frac 1 4\sigma$, we have that
the segment $\ell^{\sigma^{-1} x;\sigma^{-1} y}$ is contained in one of the rectangles
$U_\bfn$. Then, since $k=q_\bfn h_\bfn$ on $U_\bfn$, we have using the controlledness of $h_\bfn$ from Theorem~\ref{thm:Coulomb_gauge}
\begin{align*}
\bsg_R(\bfA)(y)-\bsg_R(\bfA)(x)
&=
q_\bfn(\sigma^{-1} y)h_\bfn(\sigma^{-1} y)
-
q_\bfn(\sigma^{-1} x)h_\bfn(\sigma^{-1} x)
\\
&=
q_\bfn(\sigma^{-1} x)
\bigl(h_\bfn(\sigma^{-1} y)-h_\bfn(\sigma^{-1} x)\bigr)
+
\bigl(q_\bfn(\sigma^{-1} y)-q_\bfn(\sigma^{-1} x)\bigr)
h_\bfn(\sigma^{-1} y)\\
&=q_\bfn(\sigma^{-1}x)h_\bfn(\sigma^{-1}x) (\sfs^\sigma_0 A)(\ell^{\sigma^{-1}x;\sigma^{-1}y})+q_\bfn(\sigma^{-1} x)R^{h_\bfn}(\sigma^{-1} x,\sigma^{-1} y)\\
&\qquad 
    + 
    \bigl(q_\bfn(\sigma^{-1} y)-q_\bfn(\sigma^{-1} x)\bigr)
    h_\bfn(\sigma^{-1} y).
\end{align*}
Since $(\sfs^\sigma_0 A)(\ell^{\sigma^{-1}x;\sigma^{-1}y})=A(\ell^{x;y})$, we find that 
\[
    R^{\bsg_R(\bfA)}(x,y)
    =
    q_\bfn(\sigma^{-1} x)R^{h_\bfn}(\sigma^{-1} x,\sigma^{-1} y)
    +
    \bigl(q_\bfn(\sigma^{-1} y)-q_\bfn(\sigma^{-1} x)\bigr)
    h_\bfn(\sigma^{-1} y).
\]
Therefore 
\begin{align*}
|R^{\bsg_R(\bfA)}(x,y)|&\lesssim \sigma^\alpha \triple{\bfA}_{\alpha\ax}|\sigma^{-1}(x-y)|^{2\alpha}+\sigma^\alpha\triple{\bfA}_{\alpha\ax}|\sigma^{-1}(x-y)|^{\gamma}\\
&\lesssim  \sigma^\alpha(\sigma^{-2\alpha}+\sigma^{-\gamma})\triple{\bfA}_{\alpha\ax}|x-y|^{\gamma}\\
&\lesssim \sigma^{-\alpha}\triple{\bfA}_{\alpha\ax}|x-y|^{\gamma}.  
\end{align*}
We conclude that $(\bsg_R(\bfA),L_{\bsg_R(\bfA)})\in\fG^{\gamma}_A$ and
\[
\triple{(\bsg_R(\bfA),L_{\bsg_R(\bfA)});(1_G,L_{1_G})}_{\gamma}\lesssim (\sigma^{-\alpha}\vee \sigma^{\alpha-1})\triple{\bfA}_{\alpha\ax}\lesssim R^{\frac 1\alpha-1}\triple{\bfA}_{\alpha\ax}\;.
\]
Moreover, by~\eqref{eq:local_gauge_transformed_RUC_proof}, we get
\[
A^{\bsg_R(\bfA)}
=
A^{ k(\sigma^{-1}\Cdot)}
=
\left(
\sfs^{\sigma^{-1}}_0
\bigl((\sfs^\sigma_0\bfA)^k\bigr)
\right)\sVert_\Lambda
=
\left(
\sfs^{\sigma^{-1}}_0 \hat B
\right)\sVert_\Lambda
=
B .
\]

Finally, to prove \eqref{eq:cont_in_A}, we follow the exact same steps and use the continuity estimates from Theorems \ref{thm:patching} and \ref{thm:Coulomb_gauge}.
    \end{proof}

\subsection{Gauge-fixed Yang--Mills measure}\label{sec:proof_RUC_YM}

In this section, we prove that one can gauge-fix the YM measure as claimed in Theorem~\ref{thm:RUC_YM}. The proof is given at the end of this section.

Consider a mollifier $\rho$, a white noise $\xi$ defined on a probability space $(\Sigma,\cF,\bP)$, and $\xi^\eps = \xi*\rho^\eps\label{symb:mollifier_white_noise}$ as in Theorem \ref{thm:RUC_YM}.
Since~$\rho$ and thus $\rho^\varepsilon$ are radial, we have that
\begin{equation} \label{e:radial_mollifier}
	\xi^\varepsilon(f) =  (\xi*\rho^\varepsilon)(f)=\xi(\rho^\varepsilon * f), \quad f\in C^\infty_c(\bR^2) \,.
\end{equation} 
Recall Definition~\ref{def:non_vert_lines} in which the space~$\cX_{\rmh}$ of non-vertical (or~\enquote{horizontal}) line segments has been introduced alongside the projection~$\rmP_{\cX_{\rmh\ax}}$.
For the construction of the random fields in the axial gauge~(see~Definition~\ref{def:enhanced_AF_axial}), it is useful to  define $\cX_\rmh^\to$ (resp $\cX_\rmh^{\ot}$)\label{symb:oriented_lines} as all lines in $\cX_\rmh$ whose orientation is pointing right (resp.~left). From now onwards, we mostly focus on line segments pointing right. 
For any line $\ell\in\cX_\rmh^\to$ that has parametrisation $\gamma:[a,b]\to \Lambda$ with $\gamma(t)=(t,m+ct)$, we define 
\label{symb:U_ell}\begin{equation} \label{e:U_ell}
	U_\ell:=\{(x_1,x_2)\in \bR^2\colon a\leq x_1\leq b, 0\leq x_2\leq m+cx_1\}\,,
\end{equation}
see Figure~\ref{subfig:U_ell} on page~\pageref{subfig:U_ell} for a visual representation.
Note that $U_\ell$ is the trapezium on the horizontal axis of $\Lambda$ with parallel vertical sides and the top side being $\ell$.
The area of $U_\ell$ is
\begin{equation*}
|U_\ell|=\Area(\ell,\rmP_{\cX_{\rmh\ax}}(\ell))=\int_a^b m+ct\dif t=(b-a)(m+\tfrac{1}{2}c(b+a)).
\end{equation*}
We want to define a random field in $\Caxial$. To that end, we start by defining 
\begin{equation} \label{e:def_A_cax}
	A(\ell) := \xi(\1_{U_\ell}), \quad \ell\in\cX_\rmh^\to \,.
\end{equation}
Note that reversing any $\ell\in \cX^{\ot}_\rmh$ corresponds to a unique line $\ell^\to\in \cX^{\to}_\rmh$ and we define $A(\ell):=-A(\ell^\to)$ for $\ell\in\cX_\rmh^{\ot}$. Finally, we set $A(\ell)=0$ for all $\ell\in\cX\setminus\cX_\rmh$.

%

We think of $A$ as an integrated $1$-form in the axial gauge for which the curvature $-\partial_2 A_1$ is a white noise.
This corresponds to Driver's construction of YM measure on $\Lambda$ \cite{Driver89}.\footnote{Even though Driver in \cite{Driver89} worked on $\bR^2$, the YM measure is analogous on $\Lambda$ as both manifolds are contractible.}
Normally, one would need to consider the equivalence classes (via gauge transformations), but if one restricts to the  axial gauge representative, then the law of $A$ can be genuinely understood as the law of YM measure in the sense of holonomies. To clarify, for any  for any $\ell=(x,v)\in \cX_\rmh$, the process $(\ell_A(t))_{t\in [0,1]}$ is a martingale with respect to its own filtration. Therefore, one can solve the SDE
 \[
 \diff y(t)=y(t)\circ\dif\ell_A(t),\quad y(0)=1_G,
 \]
and find that $\hol(A,\ell):=y(1)\in G$ is the parallel transport.
For any vertical line $\ell$, we set $\hol(A,\ell)=1_G$.
Then for any piecewise affine curve $\gamma$, one can take the ordered products of the holonomies on each affine line segment, and call it $\hol(A,\gamma)$. The law of these holonomies would also correspond to the construction given in \cite{Levy03}.

For $\beta\in (\frac 12,1)$, $\hol(B,\gamma)$ is well-defined for every piecewise affine curve $\gamma$ and $B\in\Omega^1_\beta$ (or even $\Omega^1_{\beta\gr}$) as a Young ODE \cite[Sec.~3.5]{CCHS2d}.
By \cite[Prop.~3.35]{CCHS2d}, these holonomies, modulo the central action of $G$, characterise the gauge orbit of $B$.
We now make precise what we mean by a $\Omega^1_\beta$-valued random variable being a representative of the YM measure.

\begin{definition}\label{def:YM_measure_precise}
Let $\beta\in (\frac 1 2,1]$ and $B\in \Omega^1_\beta$ be a random field on a probability space  $(\tilde\Sigma,\tilde\cF,\tilde\bP)$. We say that $B$ is a representative of $\mu_{\YM}$ on $\Lambda = [0,1]^2$, if 
\[
\tilde\E[f(\hol(B,\gamma_1),...,\hol(B,\gamma_n))]=\E[f(\hol(A,\gamma_1),...,\hol(A,\gamma_n))],
\]
for all $n\in\bN$, piecewise affine loops $\gamma_1,...,\gamma_n$ in $\Lambda$, and smooth  functions $f:G^n\to \bC$ such that $f$ is invariant under the action of $\Ad_G$, i.e.
\[
f(\Ad_hg_1,...,\Ad_hg_n)=f(g_1,...,g_n). 
\]
\end{definition}
We continue to construct the rough additive function $\bA$.
Since, for any $\ell\in\cX_\rmh^\to$, the process $(\ell_A(t))_{t\in [0,1]}$ is a martingale with respect to its own filtration, we can define the iterated integral 
\begin{equation} \label{e:def_A_stratonovich}
	\bA(\ell):=\int_0^1 \ell_A(t)\otimes \circ \dif \ell_A(t), \quad  \ell\in\cX_\rmh^\to
\end{equation}
in the Stratonovich sense. 
For $\ell\in\cX^\ot$, by means of Chen's identity we define  
\[
\bA(\ell):=A(\ell)\otimes A(\ell)-\bA(\ell^\to),
\]
where $\ell^\to$ is the reversed line segment of $\ell$. 
From here it is not difficult to see that $\bfA=(A,\bA)\in\Caxial$. 

Now let $A^\varepsilon$ be defined in terms of $\xi^\varepsilon$ as in Theorem \ref{thm:RUC_YM}.
Define $\bA^\varepsilon$ precisely as $\bA$, but with $A^\varepsilon$ in place of~$A$ and where the iterated integral in~\eqref{e:def_A_stratonovich} is now understood in Riemann--Stieltjes sense, i.e. without recourse to stochastic integration. We use the convention $A^0=A$ and $\bA^0=\bA$ and consequently $\bfA^0=\bfA\label{symb:mollified_RAF}$. 

Our goal is to derive the moment and stability bounds on~$\bfA$ resp. on~$\bfA - \bfA^\eps$ that are required to apply the Kolmogorov theorem on~$\Caxial$, see Theorem~\ref{thm:Kolmogorov}.
For this purpose, we will need the following auxiliary bounds on mollified indicator functions of polytopes.

\begin{lemma}\label{lem:indicator_bound}
Let $U\subset \mathbb R^d$ be a bounded polytope. Let $\eta^\varepsilon(x)=\varepsilon^{-d}\eta(x/\varepsilon)$, 
where $\eta\in C_c^\infty(\mathbb R^d)$ is non-negative, supported in $B(0,1)$, and satisfies
$\int \eta=1$. Then, there exists $C=C(\eta)>0$, such that for every $\kappa\in [0,\frac 1 2]$ and $\varepsilon,\delta\geq 0$,
\[
\|(\eta^\varepsilon-\eta^\delta)*\1_U\|_{L^2}
\leq C
|\varepsilon-\delta|^\kappa
|\partial U|^\kappa
|U|^{\frac12-\kappa}.
\]
with the convention $\eta^0*\1_U=\1_U$.  

\end{lemma}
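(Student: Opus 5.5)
The plan is to interpolate between two elementary bounds: a trivial $L^2$ bound and an $L^1$/boundary bound. Write $f=(\eta^\varepsilon-\eta^\delta)*\1_U$.

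\emph{Trivial bound.} By Young's convolution inequality with $\|\eta^\varepsilon\|_{L^1}=1$ (and the convention $\eta^0*\1_U=\1_U$),
\[
\|f\|_{L^2}\le 2\|\1_U\|_{L^2}=2|U|^{1/2}.
\]
This is exactly the claimed estimate with $\kappa=0$.

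\emph{Boundary bound.} We show $\|f\|_{L^1}\lesssim|\varepsilon-\delta|\,|\partial U|$. Assume without loss of generality that $\delta<\varepsilon$, and write $\eta^\varepsilon-\eta^\delta=\int_\delta^\varepsilon\partial_s\eta^s\,\dif s$. The identity
\[
\partial_s\eta^s(x)=-s^{-1}\operatorname{div}\bigl(x\,\eta^s(x)\bigr)=-\operatorname{div}\bigl(\psi^s\bigr)(x),\qquad \psi(x):=x\,\eta(x),
\]
holds, where $\psi^s(x)=s^{-d}\psi(x/s)$ is vector-valued and satisfies $\|\psi^s\|_{L^1}\le\|\psi\|_{L^1}\lesssim1$. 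It follows that
\[
\partial_s\eta^s*\1_U=-\psi^s*\nabla\1_U=\psi^s*(\nu\,\mathcal H^{d-1}|_{\partial U}),
\]
since $\nabla\1_U$ is (minus) the outward normal times surface measure on $\partial U$; this uses that $U$ is a polytope, hence Lipschitz with finite perimeter. Therefore
\[
\|\partial_s\eta^s*\1_U\|_{L^1}\le\|\psi\|_{L^1}|\partial U|.
\]
Integrating over $s\in[\delta,\varepsilon]$ gives $\|f\|_{L^1}\le C|\varepsilon-\delta||\partial U|$. The case $\delta=0$ is included: the integral converges at $s=0$ and $\eta^s*\1_U\to\1_U$ in $L^1$.

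\emph{Interpolation.} Since $0\le\eta^\cdot*\1_U\le1$, we have $\|f\|_{L^\infty}\le2$. Hence
\[
\|f\|_{L^2}^2\le\|f\|_{L^\infty}\|f\|_{L^1}\lesssim|\varepsilon-\delta||\partial U|,
\]
which gives the case $\kappa=\tfrac12$. For intermediate $\kappa\in[0,\tfrac12]$, take the geometric interpolation of the two $L^2$ bounds:
\[
\|f\|_{L^2}=\|f\|_{L^2}^{1-2\kappa}\|f\|_{L^2}^{2\kappa}\lesssim|U|^{\frac12-\kappa}\bigl(|\varepsilon-\delta||\partial U|\bigr)^{\kappa},
\]
which is the claim, with constant depending only on $\eta$.

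The only nontrivial step is the boundary bound, namely justifying the distributional derivative of $\1_U$ as a surface measure and the scaling identity for $\partial_s\eta^s$. If the divergence form is awkward, an alternative is to observe that $f$ vanishes outside the $\varepsilon$-neighbourhood of $\partial U$, whose volume is $\lesssim\varepsilon|\partial U|$ for a polytope. That route only directly yields $\varepsilon$ rather than $|\varepsilon-\delta|$ unless $\delta=0$, so the divergence identity is the preferred approach.
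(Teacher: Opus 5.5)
Your proof is correct. It differs from the paper's only in how you obtain the boundary estimate (the case $\kappa=\frac12$); the trivial bound $2|U|^{1/2}$ and the final geometric interpolation are the same.

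The paper writes $(\eta^\varepsilon-\eta^\delta)*\1_U(x)=\int\eta(z)\bigl(\1_U(x-\varepsilon z)-\1_U(x-\delta z)\bigr)\,\dif z$. It applies Jensen to move the squared $L^2$ norm inside the $z$-integral, then quotes the translation estimate $\|\1_U(\cdot-h)-\1_U\|_{L^2}^2\le |h|\,|\partial U|$ with $h=(\varepsilon-\delta)z$. This lands directly in $L^2$, because for a difference of indicators the squared $L^2$ norm equals the $L^1$ norm, so no separate $L^\infty$ step is needed.

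You instead differentiate in the scale parameter. You use $\partial_s\eta^s=-\operatorname{div}\psi^s$ with $\psi(x)=x\eta(x)$, and identify $\nabla\1_U$ with $-\nu\,\mathcal H^{d-1}|_{\partial U}$ via Gauss--Green. This gives an $L^1$ bound, which you upgrade to $L^2$ using $\|f\|_{L^\infty}\le 2$ (in fact $\le 1$).

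Both arguments rest on the same fact, $|D\1_U|(\R^d)=|\partial U|$; the $L^1$ translation estimate for BV functions is itself proved from it. They even give essentially the same constant, $\int|z|\eta(z)\,\dif z$.

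The paper's version is shorter because it uses the translation estimate as a black box. Yours is more self-contained and shows exactly where the finite-perimeter (polytope) assumption enters. The cost is justifying the scale derivative as an $L^1$-valued integral and the limit $\delta\downarrow 0$, both of which you handle correctly.
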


\begin{proof}
Using the scaling of the mollifier, we write
\[
\eta^\varepsilon * \1_U(x)
=
\int_{\mathbb R^d} \eta(z)\1_U(x-\varepsilon z)\dif z,
\]
so that 
\[
(\eta^\varepsilon-\eta^\delta)*\1_U(x)
=
\int_{\mathbb R^d}
\eta(z)
\big(
\1_U(x-\varepsilon z)-\1_U(x-\delta z)
\big)\dif z.
\]
Since $\eta(z)\dif z$ is a probability measure, Jensen's inequality gives
\begin{align*}
\|(\eta^\varepsilon-\eta^\delta)*\1_U\|_{L^2}^2
&\leq
\int_{\mathbb R^d}\eta(z)
\int_{\mathbb R^d}
\big|
\1_U(x-\varepsilon z)-\1_U(x-\delta z)
\big|^2
\dif x\dif z \\
&\leq \int_{\mathbb R^d}\eta(z)
\|\1_U(\Cdot-\varepsilon z)-\1_U(\Cdot-\delta z)\|_{L^2}^2
\dif z.
\end{align*}
Now we use the standard translation estimate for sets of finite perimeter:
\[
\|\1_U(\Cdot-h)-\1_U\|_{L^2}^2
\leq |h||\partial U|.
\]
Applying this with $h=(\varepsilon-\delta)z$, we obtain
\[
\|(\eta^\varepsilon-\eta^\delta)*\1_U\|_{L^2}^2
\leq
|\varepsilon-\delta||\partial U|
\int_{\mathbb R^d}|z|\eta(z)\,dz.
\]
Since $\eta$ is compactly supported, the last integral is finite, from where we get
\begin{align}\label{eq:interpolation_bound_indicator}
\|(\eta^\varepsilon-\eta^\delta)*\1_U\|_{L^2}
\lesssim |\partial U|^{1/2}|\varepsilon-\delta|^{1/2},
\end{align}
where the proportionality constant depends on $\eta$. 

On the other hand, by Young's inequality,
\[
\|(\eta^\varepsilon-\eta^\delta)*\1_U\|_{L^2}
\leq
\|\eta^\varepsilon*\1_U\|_{L^2}
+
\|\eta^\delta*\1_U\|_{L^2}
\leq
2|U|^{1/2}.
\]
Interpolating this bound with~\eqref{eq:interpolation_bound_indicator} one gets, for
$\kappa\in[0,\frac12]$,
\[
\|(\eta^\varepsilon-\eta^\delta)*\1_U\|_{L^2}
\leq
C(\eta)
|\varepsilon-\delta|^\kappa
|\partial U|^\kappa
|U|^{\frac12-\kappa},
\]
as we were after. 
\end{proof}

We will also need the following definition, see Remark~\ref{r:intersection_decomposition} below for an explanation.
\begin{definition}[Intersection decomposition] \label{d:intersection_decomposition}
	Let~$\ell, \bar{\ell} \in \cX^{\to}_\rmh$ such that~$\ell \simhor \bar{\ell}$ with respective parametrisations 
	\begin{equation} \label{e:parametrisation_gammas}
		\gamma, \bar{\gamma}: [a,b]\to \Lambda, 
		\quad 
		\gamma(t)=(t,m+ct), \quad \bar{\gamma}(t)=(t,\bar{m}+\bar{c}t)
	\end{equation}
	and assume that~$\ell$ and~$\bar{\ell}$ \emph{intersect} within~$[a,b] \times [0,1]$.
	In this case, denote by
	\begin{equation*}
		z^{\ell,\bar{\ell}} = (t^{\ell,\bar{\ell}}_\times, m+c t_\times^{\ell,\bar{\ell}}), 
		\quad 
		t_\times^{\ell,\bar{\ell}} = \frac{m-\bar{m}}{\bar{c}-c}
	\end{equation*}
	their unique \emph{intersection point}, see Figure~\ref{fig:intersection_decomposition}.
	Then, we can uniquely decompose~$\ell = \ell_1 \sqcup \ell_2$ and~$\bar{\ell} = \bar{\ell}_1 \sqcup \bar{\ell}_2$ with respective parametrisations given by 
	\begin{equs}
		\gamma_1 = \gamma\sVert[0]_{[a,t^{\ell,\bar{\ell}}_\times]}, \quad \bar{\gamma}_1 = \bar{\gamma}\sVert[0]_{[a,t^{\ell,\bar{\ell}}_\times]}, \quad \gamma_2 = \gamma\sVert[0]_{[t^{\ell,\bar{\ell}}_\times,b]}, \quad \bar{\gamma}_2 = \bar{\gamma}\sVert[0]_{[t^{\ell,\bar{\ell}}_\times,b]}, 
	\end{equs}
	In the case where~$\ell$ and~$\bar{\ell}$ do not intersect in~$[a,b] \times [0,1]$, we set~$\ell_1 := \ell$ resp.~$\bar\ell_1 := \bar\ell$ and~$\ell_2 := (\ell_2^{f},0)$ resp. ~$\bar\ell_2 := (\bar\ell_2^{f},0)$ where~$\ell_2^f$ resp.~$\bar\ell_2^f$ denotes the final point of~$\ell$ resp.~$\bar\ell$.

	The pair~$\del[1]{(\ell_1,\ell_2), (\bar{\ell}_1,\bar{\ell}_2)}$ is called the \emph{intersection decomposition} of~$\ell$ and~$\bar{\ell}$.

	\begin{figure}[H]
		\centering
			\tikzset{>={Latex[width=3pt,length=3pt]}}
			\begin{tikzpicture}[scale=3.3] 
				%
				%
				\draw[-{Latex[length=1.5mm]}, thick] (0,0) -- (1.1,0); 
				\draw[-{Latex[length=1.5mm]}, thick] (0,0) -- (0,1.1); 
				%
				\coordinate (A) at (0.2,0);       
				\coordinate (bottomleft) at (0.1,0);       
				\coordinate (B) at (0.8,0);       
				\coordinate (bottomright) at (0.9,0);  
				\coordinate (C') at (0.2,0.455);      
				\coordinate (lstart) at (0.1,0.25);     
				\coordinate (lend) at (0.9,0.5);    
				\coordinate (lbarstart) at (0.1,0.7);
				\coordinate (lbarend) at (0.9,0.3);
				\coordinate (F') at (0.8,0.7);
				\coordinate (topleft) at (0.1,1);
				\coordinate (topright) at (0.9,1);
				\path[name path=up1] (lstart) -- (0.2,1);
				\path[name path=up2] (B) -- (0.8,1);
				\path[name path=ell] (lstart) -- (lend);
				\path[name path=ellbar] (lbarstart) -- (lbarend);
				\path[name intersections={of=up1 and ellbar, by=I1}];
				\path[name intersections={of=up2 and ell, by=I2}];
				\path[name intersections={of=ell and ellbar, by=Imid}];
				\path let \p1 = (Imid) in coordinate (Imidprojdown) at (\x1,0);
				\path let \p1 = (Imid) in coordinate (Imidprojup) at (\x1,1);
				%
				\draw[densely dashed, semithick] (bottomleft) -- (topleft);
				\draw[densely dashed, semithick] (bottomright) -- (topright);
				\draw[densely dashed, semithick] (Imidprojdown) -- (Imidprojup);
				\node[below] at (bottomleft) {\scalebox{1}{$a$}};
				\node[below] at (bottomright) {\scalebox{1}{$b$}};
				%
				%
				%
				\draw[thick,
				red,
				double=red!10,          
				double distance=0.5pt,     
				line cap=round,           
				opacity=0.3
				] (lbarstart) -- (Imid) node[midway, above, scale=1, color=red, opacity=1.0] {$\bar{\ell}_1$}; 
				%
				%
				\draw[thick,
				red,
				double=red!10,          
				double distance=0.5pt,     
				line cap=round,           
				opacity=0.3
				] (lstart) -- (Imid) node[midway, below, scale=1, color=red, opacity=1.0] {$\ell_1$};
				%
				%
				%
				%
				%
				\draw[thick,
				blue,
				double=blue!10,          
				double distance=0.5pt,     
				line cap=round,           
				opacity=0.3
				] (Imid) -- (lend) node[midway, above, scale=1, color=blue, opacity=1.0] {$\ell_2$};
				%
				%
				\draw[thick,
				blue,
				double=blue!10,          
				double distance=0.5pt,     
				line cap=round,           
				opacity=0.3
				] (Imid) -- (lbarend) node[midway, below, scale=1, color=blue, opacity=1.0] {$\bar{\ell}_2$};
				%
				\draw[-{Latex[length=1.5mm]}, thick] (lstart) -- (lend) node[midway, above left, scale=1] {};
				\draw[-{Latex[length=1.5mm]}, thick] (lbarstart) -- (lbarend) node[midway, below left, scale=1] {};
				\node[right=-1pt] at (lend){\scalebox{1}{$\ell$}};
				\node[right=-1pt] at (lbarend){\scalebox{1}{$\bar{\ell}$}};
				%
				%
				%
				\fill[blue!70, opacity = 0.2] (lend) -- (Imid) -- (lbarend) -- cycle;
				%
				%
				\fill[red!70, opacity = 0.2] (lbarstart) -- (lstart) -- (Imid) -- cycle;
				%
				%
				\draw (0,0) rectangle (1,1);
				\draw[thick, black] (A) -- (B) node[midway, below, scale=1] {\phantom{$\mathring{\ell}$}};
				\foreach \pt in {bottomleft,bottomright,topleft, topright, lstart,lend,lbarstart,lbarend,Imidprojdown,Imidprojup}
				\fill[black] (\pt) circle (0.3pt);
				\node[tinydot] at (Imid){};
				\draw[-{Latex[length=1.5mm]}, thick] ($(Imid) + (-0.2,0.4)$) to [out=0,in=90] (Imid); 
				\node[left] at ($(Imid) + (-0.2,0.4)$) {\scalebox{1}{$z^{\ell,\bar{\ell}}$}};
				%
				%
				%
			\end{tikzpicture}
				%
			%
		%
		\vspace{-3mm}
		\captionsetup{format=plain}
		\caption{Two line segments~$\ell$ and~$\bar{\ell}$ with their intersection point~$z^{\ell,\bar{\ell}}$ and their intersection decomposition~$((\ell_1,\ell_2),(\bar\ell_1,\bar\ell_2))$.
			Shaded in red (resp. blue) is the area enclosed by~$\ell_1$ and~$\bar\ell_1$ (resp.~$\ell_2$ and~$\bar\ell_2$). }
		\label{fig:intersection_decomposition}
	\end{figure}
\end{definition}

\begin{remark} \label{r:intersection_decomposition}
	Let us briefly comment on the reason for introducing the intersection decomposition.
	In general, the identity~$\abs[0]{A(\ell) - A(\bar{\ell})} = \abs[0]{\xi(\1_{U_{\ell,\bar\ell}})}$ for $U_{\ell,\bar\ell}=U_{\ell}\triangle  U_{\bar\ell}$ and $\triangle$ the symmetric difference is only true if~$\ell$ and~$\bar\ell$ are parallel lines; otherwise, there is a sign change at the intersection point.
	In contrast, it is true that $\abs[0]{A(\ell) - A(\bar\ell)} =
	\abs[0]{\xi(\1_{U_{\ell_1,\bar\ell_1}}) - \xi(\1_{U_{\ell_2,\bar\ell_2}})}$ and this is precisely what we require in~\eqref{e:wn_intersection} below. 
\end{remark}

Our first moment bound result concerns $A$ and $A^\varepsilon$.
\begin{lemma} \label{lem:moment_bound_first_level}
Then there exists $C>0$ such that for any $p\in [2,\infty)$ and $\ell\simhor\bar\ell$, it holds that
\begin{equation*}
\E[|A(\ell)-A(\bar\ell)|^p]^{1/p}
\leq C\sqrt p\,\Area(\ell,\bar\ell)^{1/2}.
\end{equation*}
Moreover, for any $\varepsilon,\bar\varepsilon\in [0,1]$ and $\kappa\in [0,\frac12]$, we have
\begin{align*}
\E\big[
\big|
A^\varepsilon(\ell)-A^\varepsilon(\bar\ell)
-
A^{\bar\varepsilon}(\ell)+A^{\bar\varepsilon}(\bar\ell)
\big|^p
\big]^{1/p}\leq
C\sqrt p\,
|\varepsilon-\bar\varepsilon|^\kappa
\Area(\ell,\bar\ell)^{\frac12-\kappa}.
\end{align*}
\end{lemma}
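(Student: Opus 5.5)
Both claims follow from two facts: $A(\ell)-A(\bar\ell)$ is a linear functional of the white noise, hence Gaussian, and for Gaussians every $L^p$ norm is at most $C\sqrt p$ times the $L^2$ norm. The plan is therefore to write the difference as $\xi(f)$ for an explicit $f\in L^2$ and bound $\|f\|_{L^2}$.

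Take $\ell\simhor\bar\ell$ in $\cX^\to_\rmh$ (the left-pointing case follows by reversal and a sign change), with intersection decomposition $((\ell_1,\ell_2),(\bar\ell_1,\bar\ell_2))$ as in Definition~\ref{d:intersection_decomposition}. By additivity, $A(\ell)=A(\ell_1)+A(\ell_2)$, and similarly for $\bar\ell$. On each piece $i$ the two segments do not cross, so $U_{\ell_i}$ and $U_{\bar\ell_i}$ are nested: one trapezium contains the other. Hence
\[
A(\ell_i)-A(\bar\ell_i)=\pm\,\xi(\1_{U_{\ell_i,\bar\ell_i}}),\qquad U_{\ell_i,\bar\ell_i}:=U_{\ell_i}\triangle U_{\bar\ell_i},
\]
where $U_{\ell_i,\bar\ell_i}$ is the triangle or trapezium enclosed by $\ell_i$ and $\bar\ell_i$. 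This is the identity recorded in Remark~\ref{r:intersection_decomposition}:
\[
A(\ell)-A(\bar\ell)=\xi(f),\qquad f=\pm\1_{U_{\ell_1,\bar\ell_1}}\pm\1_{U_{\ell_2,\bar\ell_2}}.
\]
The two regions are disjoint, so $\|f\|_{L^2}^2=|U_{\ell_1,\bar\ell_1}|+|U_{\ell_2,\bar\ell_2}|=\Area(\ell,\bar\ell)$. Since $\xi(f)\sim\mathcal N(0,\|f\|_{L^2}^2)$ (componentwise in $\mfg$), Gaussian hypercontractivity gives $\E[|\xi(f)|^p]^{1/p}\le C\sqrt p\,\|f\|_{L^2}$. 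This proves the first bound.

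For the mollified statement, use \eqref{e:radial_mollifier}: $\xi^\eps(g)=\xi(\rho^\eps*g)$. The main thing to check, and the main obstacle, is that $A^\eps(\ell)=\xi^\eps(\1_{U_\ell})$. This holds because $A^\eps_1$ vanishes on $\{x_2=0\}$, $A^\eps_2=0$ and $\partial_2A^\eps_1=\xi^\eps$: the line integral along $\ell$ of $A^\eps$ equals the line integral around $\partial U_\ell$ (the bottom side contributes zero and the vertical sides contribute zero), which by Stokes is $\pm\int_{U_\ell}\partial_2A^\eps_1=\pm\xi^\eps(\1_{U_\ell})$. The sign must be chosen consistently with \eqref{e:def_A_cax}, taking the convention for $\eps=0$ as the definition.

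The same decomposition then gives
\[
A^\eps(\ell)-A^\eps(\bar\ell)-A^{\bar\eps}(\ell)+A^{\bar\eps}(\bar\ell)=\xi\bigl((\rho^\eps-\rho^{\bar\eps})*f\bigr),
\]
and its $L^2$ norm is at most $\sum_i\|(\rho^\eps-\rho^{\bar\eps})*\1_{U_{\ell_i,\bar\ell_i}}\|_{L^2}$. Lemma~\ref{lem:indicator_bound} bounds each term by $C|\eps-\bar\eps|^\kappa|\partial U_i|^\kappa|U_i|^{1/2-\kappa}$. Since all segments have length at most $\frac14$ and lie in a bounded region, $|\partial U_i|\lesssim1$, and $|U_i|\le\Area(\ell,\bar\ell)$. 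Combining this with the Gaussian moment bound gives the second estimate.
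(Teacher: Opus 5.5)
Your proof is correct and follows essentially the same route as the paper. Both arguments use the intersection decomposition to write the difference as $\xi$ applied to a signed sum of indicators of the two enclosed regions. You get the $L^2$ norm exactly equal to $\Area(\ell,\bar\ell)^{1/2}$ by disjointness, where the paper settles for the cruder bound $2\Area(\ell,\bar\ell)^{1/2}$. Both then apply Lemma~\ref{lem:indicator_bound} to the mollified differences and conclude with Gaussian hypercontractivity.

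You also explicitly check that $A^\eps(\ell)=\xi^\eps(\1_{U_\ell})$, a step the paper leaves implicit. This is immediate from Fubini, $\int_a^b\int_0^{m+ct}\xi^\eps(t,s)\,\dif s\,\dif t$, and it carries the same sign as the definition at $\eps=0$.
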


\begin{proof}
Let~$\ell, \bar{\ell} \in \cX_\rmh^{\to}$ with~$\ell\simhor\bar\ell$ and corresponding intersection decomposition
$((\ell_1,\ell_2),(\bar\ell_1,\bar\ell_2))$, see Definition~\ref{d:intersection_decomposition}.
By additivity of~$A$, we have
\begin{equation} \label{e:wn_intersection}
\abs[0]{A(\ell)-A(\bar\ell)}
=
\abs[0]{
\xi(\1_{U_{\ell_1,\bar\ell_1}})
-
\xi(\1_{U_{\ell_2,\bar\ell_2}})
},
\end{equation}
where $U_{\ell_i,\bar\ell_i}=U_{\ell_i}\triangle U_{\bar\ell_i}$ and $\triangle$ denotes the symmetric difference.
Then, by definition of~$\xi$ and the elementary observation that
$\abs[0]{U_{\ell_i,\bar\ell_i}}\leq \abs[0]{U_{\ell,\bar\ell}}$, we have
\begin{equs}
\E[|A(\ell)-A(\bar\ell)|^2]
=
\norm[0]{\1_{U_{\ell_1,\bar\ell_1}}-\1_{U_{\ell_2,\bar\ell_2}}}_{L^2}^2
\leq
\del[1]{
\abs[0]{U_{\ell_1,\bar\ell_1}}^{1/2}
+
\abs[0]{U_{\ell_2,\bar\ell_2}}^{1/2}
}^2
\leq
4\abs[0]{U_{\ell,\bar\ell}}.
\end{equs}
Since $\abs[0]{U_{\ell,\bar\ell}}=\Area(\ell,\bar\ell)$, the first bound follows by Gaussian hypercontractivity.

For the second estimate, we let $\1_U^\varepsilon:=\rho^\varepsilon*\1_U$ for any set $U\subseteq\Lambda$.
Using~\eqref{e:radial_mollifier}, the identity~\eqref{e:wn_intersection} also holds with $A^\varepsilon$ in place of $A$ and $\1^\varepsilon$ in place of $\1$.
As a consequence, we find
 \begin{equs}
	    \E[|A^\varepsilon(\ell)-A^\varepsilon(\bar\ell)-(A^{\bar\epsilon}(\ell)-A^{\bar\eps}(\bar\ell))|^2]
	   	& \leq
	    \sum_{i=1}^2 \E[|\xi(\1^\varepsilon_{U_{\ell_i,\bar\ell_i}} - \1^{\bar\eps}_{U_{\ell_i,\bar\ell_i}})|^2] \\
	    & = 
	    \sum_{i=1}^2 \|\1^\varepsilon_{U_{\ell_i,\bar\ell_i}} - \1^{\bar\eps}_{U_{\ell_i,\bar\ell_i}}\|^2_{L^2}  \,.
    \end{equs}
By Lemma~\ref{lem:indicator_bound}, for every $\kappa\in[0,\frac12]$,
\[
\norm[0]{
\1^\varepsilon_{U_{\ell_i,\bar\ell_i}}
-
\1^{\bar\varepsilon}_{U_{\ell_i,\bar\ell_i}}
}_{L^2}
\lesssim 
|\varepsilon-\bar\varepsilon|^\kappa
|\partial U_{\ell_i,\bar\ell_i}|^\kappa
|U_{\ell_i,\bar\ell_i}|^{\frac12-\kappa}.
\]
Since $|\partial U_{\ell_i,\bar\ell_i}|\lesssim 1$ uniformly and
$|U_{\ell_i,\bar\ell_i}|\leq \Area(\ell,\bar\ell)$, we obtain
\[
\E\big[
\big|
A^\varepsilon(\ell)-A^\varepsilon(\bar\ell)
-
A^{\bar\varepsilon}(\ell)+A^{\bar\varepsilon}(\bar\ell)
\big|^2
\big]^{1/2}
\lesssim 
|\varepsilon-\bar\varepsilon|^\kappa
\Area(\ell,\bar\ell)^{\frac12-\kappa}.
\]
The desired $L^p$ bound follows again by Gaussian hypercontractivity.
\end{proof}

We now prove the moment bounds for the iterated integral $\bA$ and, to this end, write it in terms of its Wiener chaos decomposition. 
We will first derive the expressions for the mollified $\bA^\varepsilon$ which then easily generalise to the unmollified case~$\eps = 0$.
Let $\ell$ with parametrisations as in~\eqref{e:parametrisation_gammas}.
%
By~\eqref{e:RAF_gr_norm_2} as well as the definition of~$A_1$ in~\eqref{e:def_A_cax}, we have 
	\begin{equs}
	\bA^\varepsilon(\ell)
	& =\int^b_a \int^t_a A_1^\varepsilon(s,m+cs)\otimes A_1^\varepsilon(t, m+ ct)\dif s\dif t\\
	& =\int^b_a \int^{y_1}_a  \int_0^{m+cx_1}  \int_0^{ m+ cy_1}\xi^\varepsilon(x)\otimes \xi^\varepsilon(y)\dif y_2\dif x_2\dif x_1\dif y_1  \\
	& = 
	\int_{\R^2}  \int_{\R^2} \xi^\varepsilon(x)\otimes \xi^\varepsilon(y) k_{\ell}(x,y) \dif x \dif y
\end{equs}
where the function~$k_{\ell}$ and its symmetrisation~$\tilde k_{\ell}$ are given by
\[
k_{\ell}(x,y) := \1_{0\leq x_1\leq y_1\leq b}\1_{U_{\ell}}(x)\1_{U_{\ell}}(y), 
\quad 
\tilde k_{\ell}(x,y)=\frac 1 2 (k_{\ell}(x,y)+k_{\ell}(y,x)) \,
\]
Further, we let $k^\varepsilon_{\ell}:=(\rho^\varepsilon\otimes\rho^\varepsilon)*k_{\ell}$ where the convolution is to be understood coordinate-wise with symmetrisated counterpart~$\tilde k^\varepsilon_{\ell}$.
Finally, we denote by~$\bfI_2$ the second Wiener--It\^{o} isometry with values in~$\fg^{\otimes 2}$ and let $\Cas\label{symb:Cas}\in \fg^{\otimes 2}$ be the \emph{Casimir element}, i.e.~$\Cas =\sum_{i=1}^{\dim\fg} e_i\otimes e_i$ for an orthonormal basis~$(e_i)_{i=1}^{\dim\fg}$ of the Lie algebra. 

Then, it is immediate to see that~$\bA^\eps$ has Wiener chaos decomposition given by
\begin{equation} \label{e:WIC_decomp_bA_eps}
	\bA^\varepsilon(\ell)=\bfI_2(\tilde k^\varepsilon_{\ell})+\Cas\int_{\bR^2} k^\varepsilon_{\ell}(x,x) \dif x
\end{equation}
and, by definition of~$\bA$ in~\eqref{e:def_A_stratonovich}, one can obtain the following decomposition:
\begin{equation} \label{e:WIC_decomp_bA}
	\bA(\ell)
	=
	\bfI_2(\tilde k_{\ell})+\Cas\frac 1 2\int_{\bR^2}k_{\ell}(x,x)\dif x\;.
\end{equation}
The additional factor~\enquote{$\frac{1}{2}$} appears because we have defined~$\bA$ as a Stratonovich integral.
In the proof of the following lemma, e.g.~around~\eqref{eq:D_ell_delta},  we see that it arises as limiting expression of~\eqref{e:WIC_decomp_bA_eps} in the spirit of Wong--Zakai, that is 
	\begin{equation*}
		\lim_{\eps \downarrow 0} \int_{\bR^2} k^\varepsilon_{\ell}(x,x) \dif x 
		= 
		\frac{1}{2}\int_{\bR^2} k_{\ell}(x,x) \dif x \,.
	\end{equation*}
However, the purpose of the lemma is to show moment estimates for $\bA$, $\bA^\eps$ and differences thereof. 
%
%
\begin{lemma}\label{lem:moment_bound_second_level}
There exists $C>$ such that, for any $p\in [2,\infty)$ and $\ell\simhor\bar\ell$, it holds that
\begin{align}\label{lem:moment_bound_second_level:e1}
\E[|\bA(\ell)-\bA(\bar\ell)|^p]^{1/p}
&\leq C  p (|\ell|\vee|\bar\ell|)^{\frac 12}\Area(\ell,\bar\ell)^{\frac 12}.
\end{align}
Moreover, for any $\varepsilon,\bar\varepsilon\in [0,1]$ and $\kappa\in [0,\frac12]$, we have
\begin{align}\label{lem:moment_bound_second_level:e2}
\E[
|(\bA^\varepsilon(\ell)-\bA^\varepsilon(\bar\ell))
-(\bA^{\bar\varepsilon}(\ell)-\bA^{\bar\varepsilon}(\bar\ell))|^p
]^{1/p}
&\leq Cp |\varepsilon-\bar\varepsilon|^\kappa
(|\ell|\vee|\bar\ell|)^{\frac 12-\kappa}
\Area(\ell,\bar\ell)^{\frac 12-\kappa}.
\end{align}
\end{lemma}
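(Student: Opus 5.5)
The plan is to use the Wiener chaos decompositions \eqref{e:WIC_decomp_bA_eps}--\eqref{e:WIC_decomp_bA}, handling the second-chaos part and the deterministic (Casimir) part separately, and to upgrade from $L^2$ to $L^p$ by Gaussian hypercontractivity on the second chaos. This upgrade produces the factor $p$ in \eqref{lem:moment_bound_second_level:e1}--\eqref{lem:moment_bound_second_level:e2}. We work with $\ell,\bar\ell\in\cX^\to_\rmh$ and $\ell\simhor\bar\ell$, parametrised over a common interval $[a,b]$ as in \eqref{e:parametrisation_gammas}. Left-pointing segments are then reduced to this case via $\bA(\ell)=A(\ell)\otimes A(\ell)-\bA(\ell^\to)$ and Lemma~\ref{lem:moment_bound_first_level}; the cross terms are bounded using $|A(\ell)|\lesssim\sqrt p\,|\ell|^{1/2}$ in $L^p$.

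\textbf{Second chaos.} By the Itô isometry,
\[
\E|\bfI_2(\tilde k_\ell-\tilde k_{\bar\ell})|^2\lesssim\|k_\ell-k_{\bar\ell}\|_{L^2(\R^4)}^2 .
\]
Write
\[
k_\ell(x,y)=\1_{x_1\le y_1}\,\1_{U_\ell}(x)\1_{U_\ell}(y).
\]
Then
\[
k_\ell-k_{\bar\ell}=\1_{x_1\le y_1}\bigl[(\1_{U_\ell}-\1_{U_{\bar\ell}})(x)\1_{U_\ell}(y)+\1_{U_{\bar\ell}}(x)(\1_{U_\ell}-\1_{U_{\bar\ell}})(y)\bigr],
\]
so its $L^2$ norm is at most
\[
\|\1_{U_\ell\triangle U_{\bar\ell}}\|_{L^2}\bigl(|U_\ell|^{1/2}+|U_{\bar\ell}|^{1/2}\bigr).
\]
Since $U_\ell\subset[a,b]\times[0,1]$, we have $|U_\ell|\le b-a\le|\ell|$. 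Moreover $|U_\ell\triangle U_{\bar\ell}|=\Area(\ell,\bar\ell)$. This gives the bound $(|\ell|\vee|\bar\ell|)^{1/2}\Area(\ell,\bar\ell)^{1/2}$.

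For the mollified differences we use $k^\eps_\ell=(\rho^\eps\otimes\rho^\eps)*k_\ell$ and split the tensor-product difference
\[
\rho^\eps\otimes\rho^\eps-\rho^{\bar\eps}\otimes\rho^{\bar\eps}=(\rho^\eps-\rho^{\bar\eps})\otimes\rho^\eps+\rho^{\bar\eps}\otimes(\rho^\eps-\rho^{\bar\eps}).
\]
The indicator $\1_{x_1\le y_1}$ does not factorise. We therefore handle it by writing $k_\ell$ as an integral over $t$ of the products $\1_{U_\ell\cap\{x_1\le t\}}(x)\,\1_{U_\ell\cap\{y_1\in dt\}}(y)$, or more simply by applying Lemma~\ref{lem:indicator_bound} in $d=4$ to the polytope
\[
\{(x,y):x\in U_\ell\triangle U_{\bar\ell},\ y\in U_\ell,\ x_1\le y_1\}
\]
and its companions. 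Each such polytope has volume $\lesssim\Area\cdot|\ell|$ and bounded perimeter, so Lemma~\ref{lem:indicator_bound} yields the factor $|\eps-\bar\eps|^\kappa(|\ell|\Area)^{1/2-\kappa}$.

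\textbf{Deterministic part.} Set
\[
D_\ell^\eps:=\int k^\eps_\ell(x,x)\,dx
\]
for $\eps>0$ and $D_\ell^0:=\tfrac12\int k_\ell(x,x)\,dx$, and write the Casimir term as $\Cas\,D_\ell^\eps$. Since $k_\ell$ is the indicator of a set, we have
\[
\int k^\eps_\ell(x,x)\,dx=\iint k_\ell(x-u,x-v)\rho^\eps(u)\rho^\eps(v)\,du\,dv\,dx .
\]
Substituting $x\mapsto x$, $w=u-v$, this equals
\[
\int\!\Bigl(\int \1_{U_\ell}(x)\1_{U_\ell}(x+w)\1_{x_1\le x_1+w_1}\,dx\Bigr)(\rho^\eps*\rho^\eps)(w)\,dw ,
\]
up to the shift by $v$. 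Because $\rho$ is radial, $\rho^\eps*\rho^\eps$ is symmetric, so the weight $\1_{w_1\ge0}$ contributes $\tfrac12$. Hence
\[
D^\eps_\ell-D^0_\ell=\tfrac12\int\bigl(|U_\ell\cap(U_\ell-w)|-|U_\ell|\bigr)(\rho^\eps*\rho^\eps)(w)\,dw .
\]
The same formula holds for $D^\eps_\ell-D^\eps_{\bar\ell}$. We estimate it by translation estimates for the symmetric difference $U_\ell\triangle U_{\bar\ell}$, as in the proof of Lemma~\ref{lem:indicator_bound}:
\[
|D^\eps_\ell-D^\eps_{\bar\ell}|\lesssim|U_\ell\triangle U_{\bar\ell}|\le\Area\le(|\ell|\vee|\bar\ell|)^{1/2}\Area^{1/2}.
\]
Here we use that $\Area\lesssim|\ell|\vee|\bar\ell|$ since both segments lie over $[a,b]$. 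The $\eps$-differences are bounded by
\[
\min\bigl(|\eps-\bar\eps|\,|\partial(U_\ell\triangle U_{\bar\ell})|,\ \Area\bigr),
\]
and interpolating these two bounds gives the claimed $\kappa$-power.

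I expect the main obstacle to be this deterministic Wong--Zakai term. We need its $\ell$-difference to be controlled by the area rather than merely by the perimeter, which requires care at the intersection point of non-parallel segments. Following Remark~\ref{r:intersection_decomposition}, I would first split $\ell,\bar\ell$ via the intersection decomposition of Definition~\ref{d:intersection_decomposition}, and then apply Chen's identity to reduce to pairs $(\ell_i,\bar\ell_i)$ whose region between them does not cross.
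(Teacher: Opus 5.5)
Your proposal is correct and follows the paper's proof essentially step by step. Both arguments use the chaos decomposition $\bA^\delta(\ell)=\bfI_2(\tilde k^\delta_\ell)+\Cas\, D^\delta_\ell$. The second chaos is handled by the Itô isometry plus hypercontractivity, with Lemma~\ref{lem:indicator_bound} applied in $\R^4$ to a signed sum of polytopes of volume $\lesssim(|\ell|\vee|\bar\ell|)\Area(\ell,\bar\ell)$. The Casimir term is handled by the change of variables, the translation estimate giving $|\eps-\bar\eps|$, the bound by $|U_\ell\triangle U_{\bar\ell}|$, and interpolation using $\Area(\ell,\bar\ell)\le|\ell|\vee|\bar\ell|$.

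Your symmetrised identity $D^\eps_\ell=\tfrac12\int|U_\ell\cap(U_\ell-w)|\,(\rho^\eps*\rho^\eps)(w)\dif w$ is a slightly cleaner form of the paper's formula. Also, every bound you use involves only $|U_\ell\triangle U_{\bar\ell}|=\Area(\ell,\bar\ell)$, after splitting $\1_{U_\ell}-\1_{U_{\bar\ell}}$ by sign into two polygons. So the intersection decomposition you flag as the main obstacle is not actually needed.
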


\begin{proof}
We only focus on the bound in~\eqref{lem:moment_bound_second_level:e2} as the one in~\eqref{lem:moment_bound_second_level:e1} is easier to derive.
For the sake of clarity, we will only deal with the case where~$\ell$ and~$\bar\ell$ do \emph{not intersect} in~$[a,b] \times [0,1]$; the general case follows by using the intersection decomposition of~$\ell$ and~$\bar\ell$ (see Definition~\ref{d:intersection_decomposition}) in the same way as in the proof of Lemma~\ref{lem:moment_bound_first_level}. 

For $\delta\in[0,1]$, we use the notation
\[
D^\delta_{\ell}
:=
\begin{cases}
\displaystyle
\int_{\bR^2} k^\delta_{\ell}(x,x)\dif x,
& \delta>0,
\\[1.2em]
\displaystyle
\frac12\int_{\bR^2} k_{\ell}(x,x)\dif x,
& \delta=0.
\end{cases}
\]
With $k^0_{\ell}:=k_{\ell}$ and $\tilde k^0_{\ell}:=\tilde k_{\ell}$, the decompositions~\eqref{e:WIC_decomp_bA_eps} and~\eqref{e:WIC_decomp_bA} can be written as
\[
\bA^\delta(\ell)
=
\bfI_2(\tilde k^\delta_{\ell})
+
\Cas D^\delta_{\ell},
\qquad
\delta\in[0,1].
\]
Therefore,
\begin{align*}
\bA^\varepsilon(\ell)-\bA^\varepsilon(\bar\ell)
-
\bA^{\bar\varepsilon}(\ell)+\bA^{\bar\varepsilon}(\bar\ell)
 =
\bfI_2\del[1]{
\tilde k_{\ell}^\varepsilon
-
\tilde k_{\bar\ell}^\varepsilon
-
\tilde k_{\ell}^{\bar\varepsilon}
+
\tilde k_{\bar\ell}^{\bar\varepsilon}
}
+
\Cas\del[1]{
D^\varepsilon_{\ell}
-
D^\varepsilon_{\bar\ell}
-
D^{\bar\varepsilon}_{\ell}
+
D^{\bar\varepsilon}_{\bar\ell}
}.
\end{align*}
We bound the part in the second chaos separately from the one in the zero-th chaos. 
By It\^o isometry, we find
\begin{align*}
\E[
|\bfI_2(\tilde k_{\ell}^\varepsilon-\tilde k_{\bar\ell}^\varepsilon)
-\bfI_2(\tilde k_{\ell}^{\bar\varepsilon}-\tilde k_{\bar\ell}^{\bar\varepsilon})|^2
]^{1/2}
\lesssim
\|k_{\ell}^\varepsilon
-
k_{\bar\ell}^\varepsilon
-
k_{\ell}^{\bar\varepsilon}
+
k_{\bar\ell}^{\bar\varepsilon}\|_{L^2}.
\end{align*}
We write
\[
k_{\ell}(x,y)-k_{\bar\ell}(x,y)
=
\1_{a\leq x_1\leq y_1\leq b}
\del[1]{
\1_{U_\ell}(x)\1_{U_\ell}(y)
-
\1_{U_{\bar\ell}}(x)\1_{U_{\bar\ell}}(y)
}.
\]
The right hand side is a finite signed sum of indicator functions of polytopes
$V_j\subset \bR^2\times\bR^2$ satisfying
\[
    |V_j|
    \leq
    C (|\ell|\vee|\bar\ell|)\Area(\ell,\bar\ell),
    \qquad
    |\partial V_j|\leq C.
\]
Indeed, this follows from
\[
\1_{U_\ell}(x)\1_{U_\ell}(y)
-
\1_{U_{\bar\ell}}(x)\1_{U_{\bar\ell}}(y)
=
\del[1]{\1_{U_\ell}(x)-\1_{U_{\bar\ell}}(x)}\1_{U_\ell}(y)
+
\1_{U_{\bar\ell}}(x)\del[1]{\1_{U_\ell}(y)-\1_{U_{\bar\ell}}(y)},
\]
together with
\[
    |U_\ell\triangle U_{\bar\ell}|=\Area(\ell,\bar\ell),
    \qquad
    |U_\ell|+|U_{\bar\ell}|\leq C |\ell|\vee|\bar\ell|.
\]
Applying Lemma~\ref{lem:indicator_bound} to each of the finitely many polytopes $V_j$ gives
\begin{align*}
\|k_{\ell}^\varepsilon
-
k_{\bar\ell}^\varepsilon
-
k_{\ell}^{\bar\varepsilon}
+
k_{\bar\ell}^{\bar\varepsilon}\|_{L^2}
&\lesssim 
|\varepsilon-\bar\varepsilon|^\kappa
\del[1]{
(|\ell|\vee|\bar\ell|)\Area(\ell,\bar\ell)
}^{\frac 12-\kappa}
\\
&\lesssim |\varepsilon-\bar\varepsilon|^\kappa
(|\ell|\vee|\bar\ell|)^{\frac 12-\kappa}
\Area(\ell,\bar\ell)^{\frac 12-\kappa}.
\end{align*}
After applying Gaussian hypercontractivity, this yields the desired bound for the term in the second chaos.

The term in the zero-th chaos is deterministic and reads
\[
 D_{\ell,\bar\ell}^{\varepsilon,\bar\varepsilon}
:=
D^\varepsilon_{\ell}
-
D^\varepsilon_{\bar\ell}
-
D^{\bar\varepsilon}_{\ell}
+
D^{\bar\varepsilon}_{\bar\ell}.
\]
We first prove an $\eps$-dependent bound. 
For $\delta>0$, by Fubini and the change of variables $z=y-\delta r$, we have
\begin{align}\label{eq:D_ell_delta}
D^\delta_{\ell}
=
\int_{\{r_1\geq0\}}
\eta(r)
\int_{\bR^2}
\1_{U_{\ell}}(y-\delta r)
\1_{U_{\ell}}(y)
\dif y\dif r.
\end{align}
Indeed, after this change of variables, the condition $z_1\leq y_1$ becomes $r_1\geq0$. The remaining endpoint conditions are already enforced by the two indicators. For $\delta=0$, the same identity holds by the definition of $D^0_\ell$ and the radiality of $\eta$, since
\[
    k_\ell(y,y)=\1_{U_\ell}(y),
    \qquad
    \int_{\{r_1\geq0\}}\eta(r)\dif r=\frac12.
\]
Consequently, for all $\varepsilon,\bar\varepsilon\in[0,1]$,
\begin{align*}
|D^\varepsilon_{\ell}-D^{\bar\varepsilon}_{\ell}|
&\leq
\int_{\{r_1\geq0\}}
\eta(r)
\int_{\bR^2}
\abs[0]{
\1_{U_{\ell}}(y-\varepsilon r)
-
\1_{U_{\ell}}(y-\bar\varepsilon r)
}
\dif y
\dif r
\\
&=
\int_{\{r_1\geq0\}}
\eta(r)
\norm[0]{
\1_{U_{\ell}}(\Cdot-\varepsilon r)
-
\1_{U_{\ell}}(\Cdot-\bar\varepsilon r)
}_{L^1}
\dif r .
\end{align*}
By the standard translation estimate for sets of finite perimeter,
\[
\norm[0]{
\1_{U_{\ell}}(\Cdot-\varepsilon r)
-
\1_{U_{\ell}}(\Cdot-\bar\varepsilon r)
}_{L^1}
\leq
|\varepsilon-\bar\varepsilon|\,|r|\,|\partial U_{\ell}|,
\]
together with $|\partial U_{\ell}|\lesssim 1$, we obtain
\[
|D^\varepsilon_{\ell}-D^{\bar\varepsilon}_{\ell}|
\lesssim |\varepsilon-\bar\varepsilon|.
\]
The same estimate holds with $\bar\ell$ in place of $\ell$, and hence
\begin{equation} \label{eq:zeroth_chaos_two_scale_bound}
| D_{\ell,\bar\ell}^{\varepsilon,\bar\varepsilon}|
\lesssim |\varepsilon-\bar\varepsilon|.
\end{equation}

On the other hand, for every $\delta\in[0,1]$,
\begin{align*}
|D^\delta_{\ell}-D^\delta_{\bar\ell}|
&\leq
\int_{\{r_1\geq0\}}
\eta(r)
\int_{\bR^2}
\abs[0]{
\1_{U_{\ell}}(y-\delta r)
\1_{U_{\ell}}(y)
-
\1_{U_{\bar\ell}}(y-\delta r)
\1_{U_{\bar\ell}}(y)
}
\dif y\dif r
\\
&\leq
\int_{\{r_1\geq0\}}
\eta(r)
\int_{\bR^2}
\abs[0]{
\1_{U_{\ell}}(y-\delta r)
-
\1_{U_{\bar\ell}}(y-\delta r)
}
\dif y\dif r
\\
&\qquad
+
\int_{\{r_1\geq0\}}
\eta(r)
\int_{\bR^2}
\abs[0]{
\1_{U_{\ell}}(y)
-
\1_{U_{\bar\ell}}(y)
}
\dif y\dif r
\\
&\lesssim |U_\ell\triangle U_{\bar\ell}|\\
&\lesssim \Area(\ell,\bar\ell).
\end{align*}
Thus
\[
| D_{\ell,\bar\ell}^{\varepsilon,\bar\varepsilon}|
\lesssim \Area(\ell,\bar\ell).
\]
Moreover, by Fubini,
\[
    |D^\delta_{\ell}|
    \lesssim  |U_\ell|
    \lesssim |\ell|,
    \qquad
    |D^\delta_{\bar\ell}|
    \lesssim |U_{\bar\ell}|
    \lesssim |\bar\ell|,
\]
and therefore
\[
| D_{\ell,\bar\ell}^{\varepsilon,\bar\varepsilon}|
\lesssim |\ell|\vee|\bar\ell|.
\]
Combining the previous two trivial bounds yields
\begin{equation} \label{eq:zeroth_chaos_trivial_bound}
| D_{\ell,\bar\ell}^{\varepsilon,\bar\varepsilon}|
\lesssim \del[1]{(|\ell|\vee|\bar\ell|)\wedge \Area(\ell,\bar\ell)} .
\end{equation}
Interpolating~\eqref{eq:zeroth_chaos_two_scale_bound} with~\eqref{eq:zeroth_chaos_trivial_bound}, we get, for every $\kappa\in[0,\frac12]$,
\[
| D_{\ell,\bar\ell}^{\varepsilon,\bar\varepsilon}|
\leq
C
|\varepsilon-\bar\varepsilon|^\kappa
\del[1]{L_{\ell,\bar\ell}\wedge \Area(\ell,\bar\ell)}^{1-\kappa}.
\]
Since $|\ell|,|\bar\ell|,\Area(\ell,\bar\ell)\leq 1$ and $\kappa\in[0,\frac12]$, we conclude 
\[
| D_{\ell,\bar\ell}^{\varepsilon,\bar\varepsilon}|
\leq
C
|\varepsilon-\bar\varepsilon|^\kappa
(|\ell|\vee|\bar\ell|)^{\frac12-\kappa}
\Area(\ell,\bar\ell)^{\frac12-\kappa}.
\]
This proves the desired estimate for the zero-th chaos. Combining this with the second chaos estimate completes the proof of~\eqref{lem:moment_bound_second_level:e2}.
\end{proof}

\begin{proof}[of Theorem~\ref{thm:RUC_YM}]
Lemmas~\ref{lem:moment_bound_first_level} and
\ref{lem:moment_bound_second_level}
imply that the hypothesis \eqref{eq:Kolmogorov_moment_bounds} of the Kolmogorov Theorem~\ref{thm:Kolmogorov} is satisfied for $(A^\eps,\bA^\eps)$ and for $(A^{\bar\eps},\bA^{\bar\eps})$ with $\alpha=1/2$ and every $p\geq 2$ with constant $L= C^p p^{p/2}$.
Moreover the differences $\delta A = A^\eps - A^{\bar\eps},\bA^\eps - \bA^{\bar\eps}$ satisfy the same hypothesis \eqref{eq:Kolmogorov_moment_bounds} with $\alpha=1/2-\kappa,\kappa \in [0,1/2]$ and every $p\geq 2$ with constant $C^p p^{p/2} |\eps-\bar\eps|^{\kappa p/2}$,
uniformly in $\eps,\bar\eps\in[0,1]$.
In both statements, the constant $C$ is independent of $p,\kappa,\eps,\bar\eps$.

Therefore Theorem~\ref{thm:Kolmogorov} implies that (after possibly replacing the fields by modifications), 
uniformly in $\eps,\bar\eps\in[0,1]$, for all $\kappa\in[0,1/2)$, $0<\alpha < 1/2-\kappa$, and $p\geq 1$,
\begin{align}
    \E\big[
        \triple{\bfA^\varepsilon}_{\alpha\ax}^p
    \big]^{1/p}
    &\lesssim_\alpha\sqrt p
\label{eq:RUCYM_Kolmogorov}
	\\
\E\big[
        \triple{\bfA^\varepsilon;\bfA^{\bar\varepsilon}}_{\alpha\ax}^p
    \big]^{1/p}
    &\lesssim_{\alpha,\kappa}\sqrt p\,
    |\varepsilon-\bar\varepsilon|^{\kappa/2}\;.
	\label{eq:RUCYM_Kolmogorov_difference}
\end{align}
Now viewing $\eps\mapsto \bfA^\eps$ as a stochastic process indexed by $\eps\in[0,1]$ with values in the Polish space $(\bfOmega^1_{\alpha\ax},\triple{\Cdot}_{\alpha\ax})$,
the classical Kolmogorov continuity theorem (see \cite[Thm.~A.10]{FV10} and remark that the constant $C$ therein can be made independent of $p$ once $p$ is sufficiently large,
or just use \cite[Thm.~A.19]{FV10})
implies that, for $\kappa \in [0,1/4)$, $0<\alpha < 1/2-2\kappa$, and $p\geq 1$,
\begin{equation}\label{eq:A_eps_Holder}
\E\Big[ \Big|
		\sup_{0\leq\bar\eps<\eps\leq 1}
		\frac{\triple{\bfA^\varepsilon;\bfA^{\bar\varepsilon}}_{\alpha\ax}}{|\eps-\bar\eps|^{\kappa}}
		\Big|^p
	\Big]^{1/p}
	\lesssim_{\alpha,\kappa}\sqrt p
	\;.
\end{equation}

Now fix $\beta\in(0,1)$ and choose $\kappa\in (0,1/4)$ sufficiently small such that Theorem \ref{thm:RUC_bounded_sets} applies with $\alpha = 1/2-3\kappa$ and output regularity $\beta$.
Set
\[
    Y:=
    \sup_{\eps\in[0,1]}
    \triple{\bfA^\eps}_{\alpha\ax}\;.
\]
Then~\eqref{eq:RUCYM_Kolmogorov} and \eqref{eq:A_eps_Holder} imply that, for all $p\geq 1$,
\begin{equation}\label{eq:RUCYM_Y_moment}
    \E[Y^p]^{1/p}
    \lesssim_{\alpha,\kappa}\sqrt p\;.
\end{equation}

We now perform the gauge fixing simultaneously on the whole family
$(\bfA^\eps)_{\eps\in[0,1]}$. Define the measurable partition
\[
    \Sigma_1:=\{Y\leq 1\},
    \qquad
    \Sigma_n:=\{n-1<Y\leq n\},
    \qquad n\geq2.
\]
For each $n\geq1$, let
$
    \bsg_n:\bfOmega^1_{\alpha\ax;\leq n}\to C(\Lambda;G)
$
be the continuous map from
Theorem~\ref{thm:RUC_bounded_sets}, with output regularity $\beta \in (0,1)$. On
$\Sigma_n$, we define
\[
    g^\eps:=\bsg_n(\bfA^\eps)\;,
    \qquad
    B^\eps:=(A^\eps)^{g^\eps}\;,
    \qquad
    \eps\in[0,1]\;.
\]
This defines measurable random variables
$B^\eps$ and $B$ with values in $\Omega^1_\beta$. Since $B=A^g$ is
gauge equivalent to the complete axial representative of the YM measure,
$B$ is a gauge-fixed representative of the YM measure in the sense of~Definition~\ref{def:YM_measure_precise} by~Lemma~\ref{lem:holonomy_gauge_tr}.

By Theorem~\ref{thm:RUC_bounded_sets}, on $\Sigma_n$ we have
\[
   \sup_{\eps\in[0,1]} |B^\eps|_{\beta}
    \lesssim_{\alpha,\beta} n^{\frac {2+\beta}{2\alpha}-1}
	\sup_{\eps\in[0,1]}
    \triple{\bfA^\eps}_{\alpha\ax}
\]
and the right-hand side is bounded by $C(\alpha)(Y+1)^{\frac {2+\beta}{2\alpha}}$.
By~\eqref{eq:RUCYM_Y_moment}, it follows that for all $p\geq 1$,
\begin{equation*}
\E\Big[
    \sup_{\eps\in[0,1]}
    |B^\eps|_{\beta}^p
\Big]^{1/p}
\lesssim_{\alpha,\beta}
\E\big[(Y+1)^{\frac{(2+\beta)p}{2\alpha}}\big]^{1/p}
\lesssim_\alpha p^{\frac {2+\beta}{4\alpha}} 
\;
\end{equation*}
Taking $\alpha$ sufficiently close to $1/2$ implies \eqref{eq:B_moments}.
It remains to prove convergence of the mollified gauge-fixed fields. By the stability estimate \eqref{eq:cont_in_A} of Theorem~\ref{thm:RUC_bounded_sets}, on $\Sigma_n$ we have
\[
    |B^\eps-B^{\bar\eps}|_{\beta}
    \lesssim_{\alpha,\beta}
    n^{\frac {2+\beta}{2\alpha}-1}
    \triple{\bfA^\eps;\bfA^{\bar\eps}}_{\alpha\ax}.
\]
where the right-hand side is bounded by $C(\alpha)(Y+1)^{\frac {2+\beta}{2\alpha}-1}\triple{\bfA^\eps;\bfA^{\bar\eps}}_{\alpha\ax}$.
By Cauchy--Schwarz, \eqref{eq:A_eps_Holder}, and
\eqref{eq:RUCYM_Y_moment}, we obtain for all $p\geq 1$
\begin{align*}
\E\Big[\Big|
 \sup_{0\leq\bar\eps<\eps\leq 1} \frac{|B^\eps-B^{\bar\eps}|_{\beta}}{|\eps-\bar\eps|^\kappa}\Big|^p
\Big]^{1/p}
&\lesssim_{\alpha,\beta}
\|(1+Y)^{\frac {2+\beta}{2\alpha}-1}\|_{L^{2p}(\P)}
\E\Big[\Big|
 \sup_{0\leq\bar\eps<\eps\leq 1} \frac{\triple{\bfA^\eps;\bfA^{\bar\eps}}_{\alpha\ax}}{|\eps-\bar\eps|^\kappa}\Big|^{2p}
\Big]^{\frac{1}{2p}}                         \notag\\
&\lesssim_{\alpha}
p^{\frac{2+\beta}{4\alpha}}\;.
\end{align*}
Taking again $\alpha$ sufficiently close to $1/2$ implies \eqref{eq:B_eps_moments}.
\end{proof}

\appendix

\section{Abstract integration kernel}\label{app:integration_kernel}

The main results of this appendix are Theorems \ref{thm:singular_kernel_composed_der} and \ref{thm:integration_boundary_derivation}, which are used in the proof of Proposition \ref{prop:integration} on the integration of modelled distributions.
In preparation for this, we give a self-contained construction of the Dirichlet Green function $G^\Dir$ on $\Lambda$ which allows us to use results from \cite{Hairer14}.
Our setting is inspired by \cite{GH19}, however it slightly differs since the elliptic Green function on $\R^2$ does not decay at infinity, unlike the heat kernel considered therein.

    \subsection{Dirichlet boundary condition via odd reflections}\label{sec:odd_reflections}
This section sets up the method of images for the Dirichlet problem on the square by means of \emph{odd reflections} across the lattice lines.
To that end, denote by $\diag_2\{-1,1\}$ the set of $2\times 2$ diagonal matrices with diagonal entries $-1$ or $1$.
For $b\in\diag_2\{-1,1\}$ and $k\in\bZ^2$ we set
\begin{align}\label{eq:def_q}
q_{b,k}:\bR^2\to\bR^2,\qquad q_{b,k}(x):=bx+2k .
\end{align}
These are isometries, generated by reflections in the coordinate directions and translations by vectors in $2\bZ^2$.
We define
\[
\cQ=\{q_{b,k}: b\in\diag_2\{-1,1\},\ k\in\bZ^2\}
\]
and the sign
\[
a_{q_{b,k}}=\det b\in\{-1,1\}.
\]

\begin{definition}[Odd reflection symmetry]
Let $E$ be a vector space. We say that a function $f:\bR^2\to E$ is \emph{odd-reflected}  if
\begin{equation}\label{eq:odd_reflection_symmetry}
f(q(x)) = a_q f(x),
\qquad \text{for all } q\in\cQ,\ x\in\bR^2 .
\end{equation}
\end{definition}

Taking $q_{b,k}$ in \eqref{eq:odd_reflection_symmetry}  with $b=\id$ 
shows immediately that an odd-reflected function is $2$-periodic:
\[
f(x+2k)=f(x),\qquad k\in\bZ^2.
\]
On the other hand, the reflections $q_{b,k}$ with $\det b=-1$ are precisely the reflections across the lines of the lattice induced by $\bZ^2$.
In particular, any odd-reflected $f$ is odd about these lines and is thus determined by its restriction to the fundamental domain $[0,1)^2$.
We also record the following immediate consequence.

\begin{lemma}[Zero Dirichlet on $\partial\Lambda$]\label{lem:odd_reflection_vanishes}
Any odd-reflected function $f:\bR^2\to E$ vanishes on the boundary $\partial\Lambda$, i.e.~$f$ satisfies the zero Dirichlet boundary condition. 
\end{lemma}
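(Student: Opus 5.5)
The plan is to show that each boundary point of $\Lambda$ is fixed by some reflection $q\in\cQ$ with $a_q=-1$. The defining identity \eqref{eq:odd_reflection_symmetry} then forces $f(x)=-f(x)$ at that point, hence $f(x)=0$. Since $E$ is a vector space (over $\R$ or $\C$), $f(x)=-f(x)$ gives $2f(x)=0$, and therefore $f(x)=0$.

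We split $\partial\Lambda$ into its four sides. For $x$ with $x_1=0$, take $b=\diag(-1,1)$ and $k=0$. Then $q_{b,0}(x)=(-x_1,x_2)=x$ and $a_q=\det b=-1$. For $x$ with $x_1=1$, take the same $b$ with $k=(1,0)$. Then $q_{b,k}(x)=(-x_1+2,x_2)=(1,x_2)=x$, again with sign $-1$. The sides $x_2=0$ and $x_2=1$ are handled identically using $b=\diag(1,-1)$ with $k=0$ and $k=(0,1)$ respectively. In each case \eqref{eq:odd_reflection_symmetry} gives $f(x)=f(q(x))=-f(x)$, so $f$ vanishes on all of $\partial\Lambda$.

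There is no real obstacle here. The only point to check is that the chosen $q$ genuinely fixes the boundary point, and each such $q$ is a reflection across one of the lines $x_i\in\Z$. Corners are covered automatically because they lie on two sides.
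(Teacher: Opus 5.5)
Your proof is correct and is the argument the paper has in mind. The paper gives no separate proof: it calls the lemma an immediate consequence of the observation that the maps $q_{b,k}$ with $\det b=-1$ are reflections across the lattice lines $x_i\in\mathbb{Z}$, so $f$ is odd about each such line and vanishes on it. You make this explicit by exhibiting, for each side of $\partial\Lambda$, a reflection that fixes the side and has $a_q=-1$.
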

For a compactly supported function on $\bR^2$, one can build an odd-reflected extension as follows.

\begin{definition}[Odd-reflected extension]
Let $f\colon \R^2\to E$ have compact support. Define the function $\CO (f)\colon\R^2\to E$ by
\begin{equation}\label{eq:odd_extension_def}
\CO (f)(x)
=
\sum_{q\in\cQ} a_q f(q(x))
=
\sum_{k\in\bZ^2}\ \sum_{b\in\diag_2\{-1,1\}} (\det b)\, f(q_{b,k}(x)).
\end{equation}
\end{definition}

Observe that, 
since $f$ has compact support,
there exists $M>0$, depending only of $f$,
such that for any $x\in\R^2$, there are at most $M$
pairs $(b,k)$ for which $f(q_{b,k}(x))\neq 0$.
In particular, $\CO(f)$ is well-defined.
Moreover, $\CO (f)$ is odd-reflected in the sense of \eqref{eq:odd_reflection_symmetry},
which follows by a change of variables in the summation index, using that
$q_{b_1,k_1}\circ q_{b_2,k_2}=q_{b_1b_2,\ k_1+b_1k_2}$, $\det(b_1b_2)=\det b_1\,\det b_2$,
and $\det b_i \in\{-1,1\}$.

\subsection{Green function decomposition and further properties}
Let $G^\Free(x) = \frac{1}{2\pi}\log|x|$ be the free space Green function of the Laplacian $\Delta$ on $\R^2$.
It is natural to enforce zero Dirichlet boundary conditions by considering
\[
x\mapsto \sum_{q\in\cQ} a_q \int_{\R^2} G^\Free(q(x)-y)f(y)\dif y\;,
\]
which formally corresponds to the representation $G^{\Dir}(x,y)\,``="\sum_{q\in\cQ} a_q\, G^\Free(q(x)-y)$,
but this is not well-defined since the sum is not convergent.

We use the decomposition $G^\Free=K+R$ from \cite[Lem~5.5, Rem~5.6]{Hairer14} as introduced already in~Definition~\ref{def:model} where $K=G^\Free$ on $[-4,4]^2$ and $K$ is supported in $[-5,5]^2$.
We define
\[
K^{\Dir}(x,y)=\sum_{q\in\cQ} a_q K(q(x)-y).
\]
For $x,y\in\Lambdaex$ the sum restricts to a finite set $\cQ'\subset\cQ$ due to the compact support of $K$. Concretely, we set
\begin{align}\label{eq:_def_Q'_dir}
\cQ'=\{q\in\cQ\colon |q(x)-y|\leq 5 \text{ for all } x,y\in\Lambdaex\},
\end{align}
so that for $x,y\in\Lambdaex$ we have
\begin{equation}\label{eq:KD_Q_finite}
K^{\Dir}(x,y)=\sum_{q\in\cQ'} a_q K(q(x)-y).
\end{equation}
We then define $R^\Dir(x,y)$ for $x\in\Lambda$ and $y\in\R^2$ by
the Dirichlet problem
\begin{align}\label{eq:def_R^D}
\Delta_x R^{\Dir}(x,y)=\sum_{q\in\cQ'} a_q\Delta_x R(q(x)-y)
\;,
\qquad
R^\Dir(\cdot,y)\restr_{\partial\Lambda}=0
\;.
\end{align}
Note that $\Delta_x R(q(x)-y) = (\Delta R)(q(x)-y)$
and since $R=G^\Free$ outside of $[-5,5]^2$ and $\Delta G^\Free = 0$ outside the origin, we have that $\Delta R$ is a smooth function with compact support.
Hence, $R^\Dir$ is well-defined
and $R^\Dir(x,y)=0$ whenever $x\in\Lambda$ and $|y|$ is sufficiently large.

We then define for $x\in\Lambda$ and $y\in\R^2$
\begin{align}\label{eq:G^D_decomp}
G^{\Dir}(x,y)=K^{\Dir}(x,y)+R^{\Dir}(x,y)
\;.
\end{align}

\begin{lemma}\label{lem:regularity_KDir}
Let $\beta>-2$ and $f\in C^\beta(\R^2)$
with support in $\Lambdaex$.
Then, for $x\in\Lambda$,
\[
K^\Dir*f(x):=\av{K^\Dir(x,\Cdot),f}
\]
is well-defined and one has 
\begin{equation}\label{eq:KD_Schauder}
|K^\Dir *f|_{C^{\beta+2}(\Lambda)}\lesssim |f|_{C^{\beta}(\R^2)}
\;. 
\end{equation}
Moreover, $K^\Dir*f|_{\partial\Lambda}=0$, i.e.~it satisfies the Dirichlet boundary condition. 
\end{lemma}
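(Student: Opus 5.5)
The plan is to reduce everything to the finite sum \eqref{eq:KD_Q_finite} and to the standard Schauder estimate for the truncated kernel $K$ from \cite[Lem.~5.5, Sec.~5]{Hairer14}.

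For $x\in\Lambda$, the function $y\mapsto K^\Dir(x,y)$ is tested only against $f$, which is supported in $\Lambdaex$. By \eqref{eq:KD_Q_finite}, on this support $K^\Dir(x,\Cdot)=\sum_{q\in\cQ'}a_qK(q(x)-\Cdot)$ with $\cQ'$ finite. Hence
\[
K^\Dir*f(x)=\sum_{q\in\cQ'}a_q\,(K*f)(q(x)),
\]
and each term is well defined once $K*f$ is. Since $\beta>-2$ and $K$ is a compactly supported truncation of the Green function that is $2$-regularising, \cite[Lem.~5.19]{Hairer14} (equivalently, classical Schauder theory for the kernel $K$) gives $K*f\in C^{\beta+2}(\R^2)$ with $|K*f|_{C^{\beta+2}}\lesssim|f|_{C^\beta}$. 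If $\beta+2$ is an integer, one may lose an arbitrarily small amount of regularity or interpret the norm appropriately; I will not dwell on this. Each map $q\in\cQ'$ is an affine isometry, so $|(K*f)\circ q|_{C^{\beta+2}(\Lambda)}\le|K*f|_{C^{\beta+2}(\R^2)}$. Summing over the finitely many $q\in\cQ'$, whose cardinality is a universal constant, gives \eqref{eq:KD_Schauder}. One subtlety needs checking: the equality \eqref{eq:KD_Q_finite} holds for $x\in\Lambda\subset\Lambdaex$, and the regularity is measured on $\Lambda$ through the restriction norm. Since the right-hand side above is already a $C^{\beta+2}$ function on all of $\R^2$, it supplies an admissible extension, and no issue arises.

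For the boundary condition, I would use odd reflections. Pick $x\in\partial\Lambda$, say $x_1=0$, and let $r=q_{\diag(-1,1),0}$, which fixes $x$ and has $a_r=-1$. The full sum $\sum_{q\in\cQ}a_qK(q(x)-y)$ is locally finite because $K$ has compact support. Reindexing by $q\mapsto q\circ r$ (a bijection of $\cQ$ with $a_{q\circ r}=-a_q$) shows the sum equals its own negative at $x$, so it vanishes identically in $y$. This is exactly the argument behind Lemma~\ref{lem:odd_reflection_vanishes}, now applied to the function $x\mapsto\sum_q a_qK(q(x)-y)$ for each fixed $y$. Since this full sum agrees with the $\cQ'$-sum for $y\in\Lambdaex$, we get $K^\Dir(x,\Cdot)=0$ on $\supp f$, and hence $K^\Dir*f(x)=0$. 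The other boundary sides are handled the same way, using the reflections $x_i\mapsto 2-x_i$ given by $q_{b,k}$ with the appropriate $k$.

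The main obstacle I expect is bookkeeping rather than analysis. One must confirm that $\cQ'$ as defined in \eqref{eq:_def_Q'_dir} captures every $q$ with $K(q(x)-y)\neq0$ for $x,y\in\Lambdaex$, including boundary cases where $|q(x)-y|$ is close to $5$. If needed, I would enlarge $\cQ'$ to all $q$ with $|q(x)-y|\le 5\sqrt2$ for some such pair. This is still finite and does not affect the argument. The analytic input is entirely the free-space Schauder estimate for $K$.
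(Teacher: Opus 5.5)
Your proposal is correct and follows essentially the same route as the paper. Both write $K^\Dir*f=\sum_{q\in\cQ'}a_q(K*f)\circ q$ and invoke the free-space Schauder bound for the truncated kernel $K$; the paper does this via \cite[Thm.~5.12]{Hairer14} for smooth $f$ followed by density, while you do it directly for $f\in C^\beta$. Both then obtain the Dirichlet condition by odd reflection, which the paper phrases as $K^\Dir*f=\CO(K*f)$ together with Lemma~\ref{lem:odd_reflection_vanishes}.

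The paper's way of phrasing this amounts to running your reindexing $q\mapsto q\circ r$ on the function $K*f$ rather than on the kernel at each fixed $y$. That is slightly cleaner, because it spares you from arguing that a singular kernel vanishing on $\supp f$ pairs to zero with a distribution.

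Your caveat about \eqref{eq:_def_Q'_dir} is justified. With the literal ``for all'' quantifier, the translation $q_{\id,(1,0)}$ is excluded: take $x=(2,1)$ and $y=(-1,0)$ in $\Lambdaex$, which give $|q(x)-y|=\sqrt{26}>5$. Yet it contributes $K((2,0))\neq0$ at $x=y=(\frac12,\frac12)$. So your enlargement of $\cQ'$ is the right repair.
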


\begin{proof}
Consider first $f\in C^\infty_c(\R^2)$.
Then $K*f$ has compact support and,
as a simple special case of \cite[Thm.~5.12]{Hairer14},
\[
|K*f|_{C^{\beta+2}(\R^2)}\lesssim |f|_{C^{\beta}(\R^2)}
\;.
\]
The bound \eqref{eq:KD_Schauder} readily follows
by the fact that the sum in \eqref{eq:KD_Q_finite} is finite.
Moreover, $K^\Dir*f$ is the odd-reflected extension of $K*f$, i.e. 
$K^\Dir *f=\CO(K*f)$,
and thus $\CO(K*f)$ vanishes on $\partial\Lambda$ by Lemma~\ref{lem:odd_reflection_vanishes}.
The result for non-smooth $f$ follows by density and continuity.
\end{proof}

\begin{lemma}\label{lem:remainder_Green_regularity_Dir}
Let $\beta\in (0,2)$. Then the map $x\mapsto R^\Dir(x,\Cdot)$ is in $C^\beta(\Lambda;C^\infty_c(\R^2))$ and vanishes on $\partial\Lambda$.
In particular, for any distribution $\zeta\in \cD'(\R^2)$,
the convolution
\[
R^{\Dir}*\zeta(x)=\av{R^{\Dir}(x,\Cdot),\zeta}
\]
is well-defined and defines a function in $C^\beta(\Lambda)$ that vanishes on $\partial\Lambda$.
\end{lemma}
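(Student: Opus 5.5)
The plan is to exploit the fact that $R^\Dir(\cdot,y)$ is the solution of a Dirichlet problem with smooth, compactly supported data depending smoothly on $y$, and to derive the regularity in $x$ from classical elliptic estimates on the square.

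\emph{Step 1: the right-hand side.} Set
\[
F(x,y):=\sum_{q\in\cQ'} a_q (\Delta R)(q(x)-y).
\]
Since $\Delta R\in C^\infty_c(\R^2)$ and $\cQ'$ is finite, $F$ is jointly smooth. Moreover, $F(x,\cdot)$ is supported in a fixed compact set $\mathfrak K$ uniformly over $x\in\Lambda$. Consequently, for every multi-index $k$, the map $x\mapsto \partial_y^k F(x,\cdot)$ is smooth from $\Lambda$ into $C^m$ for every $m$, and its values are supported in $\mathfrak K$.

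\emph{Step 2: solve in $x$ with $y$ as a parameter.} Let $\mathcal S$ denote the solution operator of $\Delta u=h$ on $\Lambda$ with $u|_{\partial\Lambda}=0$. Then $R^\Dir(\cdot,y)=\mathcal S F(\cdot,y)$ by \eqref{eq:def_R^D}. This operator is linear and commutes with differentiation in the parameter $y$, so $\partial_y^k R^\Dir(\cdot,y)=\mathcal S\,\partial_y^kF(\cdot,y)$.

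The key estimate needed is that $\mathcal S\colon L^\infty(\Lambda)\to C^\beta(\Lambda)$ is bounded for every $\beta<2$, with the output vanishing on $\partial\Lambda$. I would justify this using the square's geometry via odd reflection as in Section~\ref{sec:odd_reflections}. Take the odd-reflected extension of $h\mathbf 1_{\Lambda}$, which is $2$-periodic. Solve the periodic Poisson problem on $\R^2/2\Z^2$; this is solvable because odd-reflected data have zero mean. Then Schauder and Calder\'on--Zygmund give $W^{2,p}$ regularity for all $p<\infty$, hence $C^{\beta}$ regularity for all $\beta<2$. By uniqueness and oddness, the solution restricted to $\Lambda$ is $\mathcal S h$, and it vanishes on $\partial\Lambda$ by Lemma~\ref{lem:odd_reflection_vanishes}.

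\emph{Step 3: assemble the function-space statement.} Apply Step 2 with $h=\partial_y^k F(\cdot,y)$ for each $k$. The map $y\mapsto \partial_y^kF(\cdot,y)\in L^\infty(\Lambda)$ is continuous, with Lipschitz dependence on $y$, uniformly bounded, and zero for $y\notin \mathfrak K'$ for some compact $\mathfrak K'$. This yields
\[
\sup_{y}|\partial_y^k R^\Dir(\cdot,y)|_{C^\beta(\Lambda)}<\infty .
\]
The same argument applied to difference quotients in $y$ gives continuity. Taken together, this is equivalent to $x\mapsto R^\Dir(x,\cdot)\in C^\beta(\Lambda;C^m_c)$ for every $m$.

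A subtlety is that the order of the function spaces is swapped: we need Hölder continuity in $x$ with values in a Fréchet space of functions of $y$. I would handle this by bounding
\[
\|\partial_y^k(R^\Dir(x,\cdot)-R^\Dir(x',\cdot))\|_\infty \le |x-x'|^\beta\sup_y|\partial_y^kR^\Dir(\cdot,y)|_{C^\beta}.
\]
The support statement holds because $F(x,y)=0$ for $|y|$ large, uniformly in $x\in\Lambda$, so $R^\Dir(x,y)=0$ there.

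\emph{Step 4: pairing with a distribution.} Any $\zeta\in\cD'(\R^2)$ has finite order $m$ on the compact set $\mathfrak K'$. Hence $|\langle\phi,\zeta\rangle|\lesssim |\phi|_{C^m}$ for $\phi$ supported in $\mathfrak K'$. Therefore
\[
|R^\Dir*\zeta(x)-R^\Dir*\zeta(x')|\lesssim |x-x'|^\beta,
\]
and $R^\Dir*\zeta(x)=0$ for $x\in\partial\Lambda$ because $R^\Dir(x,\cdot)\equiv0$ there.

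The main obstacle is Step 2, namely the boundary regularity of $\mathcal S$ up to the corners of the square. The odd-reflection argument should handle it, but the reflected data $h$ is only $L^\infty$ and jumps across the lattice lines. This is why I expect only $\beta<2$ rather than $\beta=2$, matching the hypothesis $\beta\in(0,2)$.
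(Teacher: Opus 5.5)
Your overall strategy coincides with the paper's. You solve the Dirichlet problem in $x$ with $y$ as a parameter, use that the solution operator is bounded into $W^{2,p}(\Lambda)\hookrightarrow C^\beta(\Lambda)$ for all $\beta<2$, and push smoothness and compact support in $y$ through this bounded linear map. The paper simply invokes Calder\'on--Zygmund estimates on $\Lambda$; their validity up to the corners of the square does not follow from the usual $C^{1,1}$-domain theory. Your odd-reflection argument on $\R^2/2\Z^2$ gives a self-contained proof of that estimate: uniqueness of the mean-zero periodic solution forces it to be odd-reflected, hence zero on $\partial\Lambda$. You also make explicit the exchange of the order of the function spaces. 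The paper leaves this implicit when it passes from $C^\infty_c(\R^2;C^\beta(\Lambda))$ to the claimed statement.

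That exchange, however, is wrong as written for $\beta\in(1,2)$. This is exactly the range used later, with $\beta=\gamma+1$ in Theorem~\ref{thm:integration_boundary_derivation}. For $\beta>1$ the bound $\|\partial_y^k(R^\Dir(x,\cdot)-R^\Dir(x',\cdot))\|_\infty\le|x-x'|^\beta\sup_y|\partial_y^kR^\Dir(\cdot,y)|_{C^\beta}$ fails. The conclusion $|R^\Dir*\zeta(x)-R^\Dir*\zeta(x')|\lesssim|x-x'|^\beta$ of Step~4 would force $R^\Dir*\zeta$ to be constant, hence zero.

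You need to transfer first derivatives instead. Since $u\mapsto\nabla u(x)$ is continuous on $C^\beta(\Lambda)$ for $\beta>1$, your uniform bound $\sup_y|\partial_y^kR^\Dir(\cdot,y)|_{C^\beta}<\infty$ shows that $\nabla_x\partial_y^kR^\Dir$ exists and is $(\beta-1)$-H\"older in $x$, uniformly in $y$. By convexity of $\Lambda$, it also gives $\|\partial_y^k(R^\Dir(x+h,\cdot)-R^\Dir(x,\cdot)-h\cdot\nabla_xR^\Dir(x,\cdot))\|_\infty\lesssim|h|^\beta$. Hence $x\mapsto R^\Dir(x,\cdot)$ is differentiable into each $C^m$ with $(\beta-1)$-H\"older derivative. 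Your finite-order bound $|\langle\phi,\zeta\rangle|\lesssim|\phi|_{C^m}$ then gives $\nabla(R^\Dir*\zeta)(x)=\langle\nabla_xR^\Dir(x,\cdot),\zeta\rangle$ with the required $(\beta-1)$-H\"older bound. With this repair the proof is complete.
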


\begin{proof}
Consider first $f\in C^\infty(\Lambda\times \R^2)$ with bounded support and, for fixed $y\in \R^2$, the equation
\begin{equation}\label{eq:u_f}
\Delta u(\Cdot,y)=f(\Cdot,y)\ \  \ \text{ on }\Lambda^\circ\;,
\qquad
u(\Cdot,y) \restr_{\partial\Lambda}=0
\;.
\end{equation}
We claim that the map $x\mapsto u(x,\Cdot)$ is in
$C^\beta(\Lambda;C^\infty_c(\R^2))$ for any $\beta\in (0,2)$ and vanishes on $\partial\Lambda$.
Indeed, the function $y\mapsto f(\Cdot,y)$ is in $C^\infty_c(\R^2,L^p(\Lambda))$ for any $p\geq1$.
By Calderon--Zygmund estimates,
there exits a unique solution $u(\Cdot,y) \in W^{2,p}(\Lambda)$ to \eqref{eq:u_f}.
Taking $p$ sufficiently large, we have $W^{2,p}(\Lambda)\hookrightarrow C^\beta(\Lambda)$ by Sobolev embedding.
The map $L^p(\Lambda)\ni f(\Cdot,y) \mapsto u(\Cdot,y)\in C^\beta(\Lambda)$ is then a continuous linear function, and thus $y\mapsto u(\Cdot,y)$ is in $C^\infty_c(\R^2;C^\beta(\Lambda))$, as claimed.
The proof of the lemma now follows from taking $f(x,y)=\sum_{q\in\cQ'} a_q\Delta_x R(q(x)-y)$.
\end{proof}

Recalling $G^\Dir=K^\Dir+R^\Dir$ from \eqref{eq:G^D_decomp}, a consequence of Lemmas \ref{lem:regularity_KDir} and \ref{lem:remainder_Green_regularity_Dir} is that
\[
G^\Dir * f(x) = \av{G^\Dir(x,\Cdot),f}
\]
is well-defined for $x\in\Lambda$ and $f\in C^\beta(\R^2)$ with support in $\Lambdaex$
(in fact, for any compactly supported $f$),
where $\beta\in (-2,0)$.
The following result shows that $G^\Dir$ is indeed the Green function for the Laplacian with zero Dirichlet boundary conditons.

\begin{lemma}\label{lem:Schauder_Dir}
Let $\beta\in (-2,0)$ and assume $f\in C^\beta(\R^2)$ with support in $\Lambda$. Then $u:=G^\Dir*f$ is the unique weak solution to 
    \begin{align*}
        \begin{cases}
        \begin{aligned}
            \Delta u &= f  \quad &&\text{on } \Lambda^\circ\;, \\
            u &= 0 \quad &&\text{on } \partial\Lambda\;.
        \end{aligned}
    \end{cases}
    \end{align*}
Moreover, $G^\Dir*f\in C^{\beta+2}(\Lambda)$ and $
|G^\Dir*f|_{C^{\beta+2}(\Lambda)}\lesssim |f|_{C^{\beta}(\R^2)}$.
\end{lemma}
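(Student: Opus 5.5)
The plan is to show that $u:=G^\Dir*f=K^\Dir*f+R^\Dir*f$ solves the Dirichlet problem, and then to appeal to uniqueness of weak solutions. The regularity bound and the boundary condition come directly from the two preceding lemmas. By Lemma~\ref{lem:regularity_KDir}, $K^\Dir*f\in C^{\beta+2}(\Lambda)$ with $|K^\Dir*f|_{C^{\beta+2}(\Lambda)}\lesssim|f|_{C^\beta}$, and it vanishes on $\partial\Lambda$. Since $\beta+2\in(0,2)$, Lemma~\ref{lem:remainder_Green_regularity_Dir} applied with exponent $\beta+2$ gives that $R^\Dir*f\in C^{\beta+2}(\Lambda)$ vanishes on $\partial\Lambda$. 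The bound $|R^\Dir*f|_{C^{\beta+2}}\lesssim|f|_{C^\beta}$ should follow by pairing: $x\mapsto R^\Dir(x,\cdot)$ is $C^{\beta+2}$ into $C^\infty_c$, so its derivatives in $y$ are uniformly bounded on a fixed compact set containing $\Lambda$. Testing a distribution supported in $\Lambda$ against such functions is controlled by $|f|_{C^\beta}$, using a smooth cutoff equal to $1$ near $\Lambda$.

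Next I verify that $\Delta u=f$ weakly on $\Lambda^\circ$. By density, and because both sides depend continuously on $f$ in a slightly weaker norm $C^{\beta'}$ with $\beta'<\beta$, it suffices to treat $f\in C^\infty_c(\Lambda^\circ)$. For $x\in\Lambda^\circ$ and $y\in\Lambda$ I compute
\[
\Delta_x K^\Dir(x,y)=\sum_{q\in\cQ'}a_q(\Delta K)(q(x)-y)=\sum_{q\in\cQ'}a_q\bigl(\delta_0-\Delta R\bigr)(q(x)-y),
\]
using $K=G^\Free-R$ and $\Delta G^\Free=\delta_0$. The $q$ are isometries, so $\Delta_x[K(q(x)-y)]=(\Delta K)(q(x)-y)$. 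For $x\in\Lambda^\circ$ and $y\in\Lambda$, the condition $q(x)=y$ forces $q=\id$, since every other element of $\cQ$ maps $\Lambda^\circ$ off $\Lambda^\circ$, by the reflection-lattice structure. The delta terms therefore contribute exactly $f(x)$. The $-\Delta R$ terms are cancelled by $\Delta_xR^\Dir$ through its definition~\eqref{eq:def_R^D}. Hence $\Delta u=f$ on $\Lambda^\circ$, first in the sense of distributions and then weakly.

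For uniqueness, if $w$ is a weak solution with $\Delta w=0$ on $\Lambda^\circ$ and $w=0$ on $\partial\Lambda$, then $w$ is harmonic in the interior (Weyl's lemma) and continuous up to the boundary, because solutions are sought in $C^{\beta+2}$. The maximum principle then gives $w=0$. If weak solutions are instead understood in $H^1_0$, Lax--Milgram gives uniqueness, and the $C^{\beta+2}$ solution with zero trace lies in this class when $\beta+2>\tfrac12$; for smaller $\beta$ I would rely on the continuous/maximum-principle formulation.

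I expect the main obstacle to be the delta computation: making rigorous that only $q=\id$ produces a singularity, and that the exchange of $\Delta_x$ with the finite sum over $\cQ'$ is legitimate near $\partial\Lambda$. In particular, reflected images can approach $y$ when $x$ is near the boundary. This is harmless for the equation on the open set $\Lambda^\circ$, since test functions are compactly supported there, but it needs to be stated carefully.
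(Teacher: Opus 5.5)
Your proposal is correct and follows essentially the paper's route. You get the regularity and boundary values from Lemmas~\ref{lem:regularity_KDir} and~\ref{lem:remainder_Green_regularity_Dir}, reduce to smooth $f$, use $\Delta K=\delta_0-\Delta R$ (the paper phrases this as $\Delta(G^\Free*f)=f$ with $G^\Free=K+R$), observe that only $q=\id$ contributes because of the support of $f$, and cancel the $R$-terms using the defining equation of $R^\Dir$. The only difference is that you verify $\Delta u=f$ in $\cD'(\Lambda^\circ)$, whereas the paper uses the gradient pairing $\langle\nabla u,\nabla\varphi\rangle=-\langle f,\varphi\rangle$.

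Your Weyl-lemma/maximum-principle uniqueness argument, which the paper does not supply, works for all $\beta\in(-2,0)$ in the class of functions continuous up to $\partial\Lambda$. Your side remark that $C^{\beta+2}$ functions lie in $H^1_0$ once $\beta+2>\tfrac12$ is wrong, since $C^s\not\subset H^1$ for $s<1$, but you do not need it.
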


\begin{proof}
The regularity and boundary conditions follow from Lemma~\ref{lem:regularity_KDir} and Lemma~\ref{lem:remainder_Green_regularity_Dir}.
To show that $u$ solves the equation, it suffices to consider $f\in C^\infty_c(\R^2)$ with support in $\Lambda$ as the general case follows by approximation.
While the statement follows from classical ideas, we provide a proof for completeness.
Let $\varphi\in C^1(\Lambda^\circ)\cap C(\Lambda)$ with $\varphi|_{\partial\Lambda}=0$.
To show that $u$ is a weak solution, we are required to prove that $\av{\nabla (G^{\Dir}*f),\nabla \varphi}=-\av{f,\varphi}$.
For any $q\in\cQ$,
\[
\av{\nabla ((G^\Free*f)\circ q),\nabla \varphi}=-\av{\Delta ((G^\Free*f)\circ q),\varphi}\;.
\]
Using $\Delta((G^\Free*f)\circ q)=(\Delta(G^\Free*f))\circ q=f\circ q$ and summing over $q\in\cQ'$ with weights $a_q$ yields
\begin{align}\label{eq:proof_G^D_decomp_1}
\sum_{q\in\cQ'}a_q\,\av{\nabla ((G^\Free*f)\circ q),\nabla \varphi}
=-\sum_{q\in\cQ'}a_q\,\av{f\circ q,\varphi}
\;.
\end{align}
Since $f$ is supported in $\Lambda$, the Lebesgue measure of $\supp(f\circ q)\cap \supp(\varphi)$ is zero unless $q(x)=x$, 
in which case $a_q=1$; hence the right-hand side equals $-\av{f,\varphi}$.

Splitting $G^\Free=K+R$ in \eqref{eq:proof_G^D_decomp_1} and using the definition of $K^{\Dir}$ gives
\begin{align}\label{eq:proof_G^D_decomp_1b}
\av{\nabla K^{\Dir}*f,\nabla \varphi}
+\sum_{q\in\cQ'}a_q\,\av{\nabla ((R*f)\circ q),\nabla \varphi}
=-\av{f,\varphi}.
\end{align}

On the other hand, the defining equation \eqref{eq:def_R^D} for $R^{\Dir}$ implies
\[
\Delta (R^{\Dir}*f)=\sum_{q\in\cQ'}a_q\,\Delta ((R*f)\circ q)
\qquad\text{on }\Lambda^\circ.
\]
Integration by parts yields
\begin{align}\label{eq:proof_G^D_decomp_2}
\av{\nabla R^{\Dir}*f,\nabla \varphi}
&=-\av{\Delta(R^{\Dir}*f),\varphi}
=-\sum_{q\in\cQ'}a_q\,\av{\Delta((R*f)\circ q),\varphi}\\
&=\sum_{q\in\cQ'}a_q\,\av{\nabla ((R*f)\circ q),\nabla \varphi}.\notag
\end{align}
Combining \eqref{eq:proof_G^D_decomp_1b} and \eqref{eq:proof_G^D_decomp_2} yields
\[
\av{\nabla (K^{\Dir}*f),\nabla \varphi}+\av{\nabla (R^{\Dir}*f),\nabla \varphi}=-\av{f,\varphi},
\]
i.e.\ $\av{\nabla (G^{\Dir}*f),\nabla \varphi}=-\av{f,\varphi}$ as required.
\end{proof}

The following is an immediate corollary of Lemma \ref{lem:Schauder_Dir}.

\begin{corollary}\label{cor:GD_inverse}
Let $\beta > 0$ and $u\in C^\beta(\R^2)$ with support on $\Lambda$ and $u\restr_{\partial \Lambda}=0$.
Then $u = G^\Dir * \Delta u$.
\end{corollary}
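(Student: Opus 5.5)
The plan is to derive the corollary directly from the uniqueness part of Lemma~\ref{lem:Schauder_Dir}, applied with $f := \Delta u$.

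\textbf{Step 1: setting up $f$.} Since $u\in C^\beta(\R^2)$ has support in $\Lambda$, the distribution $\Delta u$ lies in $C^{\beta-2}(\R^2)$ and is also supported in $\Lambda$. If $\beta-2\in(-2,0)$, that is $\beta\in(0,2)$, we may take $f=\Delta u$ in Lemma~\ref{lem:Schauder_Dir}. If $\beta\ge 2$, we use $C^\beta\hookrightarrow C^{\beta'}$ for any $\beta'\in(0,2)$ and work with $\beta'$ in place of $\beta$.

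\textbf{Step 2: identifying $u$ as a weak solution.} Lemma~\ref{lem:Schauder_Dir} gives that $w:=G^\Dir*\Delta u$ is the unique weak solution of $\Delta w=\Delta u$ on $\Lambda^\circ$ with $w=0$ on $\partial\Lambda$. It therefore suffices to check that $u|_\Lambda$ is also such a weak solution. The boundary condition is assumed. For the equation, the paper's weak formulation tests against $\varphi\in C^1(\Lambda^\circ)\cap C(\Lambda)$ vanishing on $\partial\Lambda$. Since $u$ may be only Hölder, $\langle\nabla u,\nabla\varphi\rangle$ need not make sense classically.

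I would handle this by approximation. Take mollifications $u_\delta=\rho^\delta*u$. These are smooth and satisfy $\Delta u_\delta=\rho^\delta*\Delta u\to\Delta u$ in $C^{\beta-2-}$. By continuity of $f\mapsto G^\Dir*f$ from Lemma~\ref{lem:Schauder_Dir}, $G^\Dir*\Delta u_\delta\to G^\Dir*\Delta u$.

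The catch is that $u_\delta$ no longer vanishes on $\partial\Lambda$, and it has support only in $\Lambda^{+\delta}$. To deal with this, write $u_\delta=v_\delta+h_\delta$ on $\Lambda$, where $v_\delta=G^\Dir*\Delta u_\delta$ (using $\supp \Delta u_\delta\subset\Lambda^{+\delta}$ and restricting to $\Lambda$) and $h_\delta$ is harmonic in $\Lambda^\circ$ with boundary values $u_\delta|_{\partial\Lambda}$. For smooth $u_\delta$, classical Green's representation gives exactly this decomposition. Then:
\begin{itemize}
\item $u_\delta|_{\partial\Lambda}\to u|_{\partial\Lambda}=0$ uniformly, because $u$ is continuous;
\item by the maximum principle, $h_\delta\to0$ uniformly on $\Lambda$;
\item $u_\delta\to u$ uniformly.
\end{itemize}
Passing to the limit gives $u=G^\Dir*\Delta u$ on $\Lambda$.

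\textbf{Main obstacle.} The delicate point is that $\Delta u_\delta$ is supported in $\Lambda^{+\delta}$ rather than $\Lambda$, whereas Lemma~\ref{lem:Schauder_Dir} was stated for $f$ supported in $\Lambda$. I would handle this through the weak formulation of Lemma~\ref{lem:Schauder_Dir} with smooth $f$. One option is to restrict $f$ to $\Lambda$; since $G^\Dir(x,\cdot)$ is continuous, the mass outside $\Lambda$ contributes a vanishing error as $\delta\to0$. Alternatively, and more cleanly, one can directly check classical Green's identity for smooth $u_\delta$ on $\Lambda$ using the explicit image construction behind $G^\Dir$.
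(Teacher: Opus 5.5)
Your Step~2 rests on a representation that is false. The image construction gives, for $f$ supported in $\Lambdaex$ and $x\in\Lambda^\circ$,
\[
\Delta(G^\Dir*f)(x)=\sum_{q\in\cQ}a_q\,f(q(x))=\mathcal{O}(f)(x).
\]
Only when $\supp f\subset\Lambda$ does this reduce to $f$ on $\Lambda^\circ$; that is exactly where the support assumption enters the proof of Lemma~\ref{lem:Schauder_Dir}. Your source $\Delta u_\delta$ also lives in the collar $\Lambda^{+\delta}\setminus\Lambda$, and that part is folded back into $\Lambda$ with a sign. Hence $v_\delta=G^\Dir*\Delta u_\delta$ satisfies $\Delta v_\delta\neq\Delta u_\delta$ within distance $\delta$ of $\partial\Lambda$. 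So $u_\delta-v_\delta$ is not harmonic, and the decomposition $u_\delta=v_\delta+h_\delta$ fails.

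Your first remedy does not fix this in the relevant range. Since $u=0$ off $\Lambda$ and $u\in C^\beta$, on the collar you have $|u_\delta|\lesssim\delta^\beta$ and $|\Delta u_\delta|\lesssim\delta^{\beta-2}$. The mass of $\Delta u_\delta$ outside $\Lambda$ is therefore of order $\delta^{\beta-1}$, which does not tend to zero for $\beta\le 1$. That is precisely the regime in which the corollary is used in Lemma~\ref{lem:D2Phi_at_id}. Mere continuity of $G^\Dir(x,\cdot)$ is then insufficient. You would need $G^\Dir(x,\cdot)$ to vanish on $\partial\Lambda$ at a Lipschitz rate, which the paper does not establish. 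Your second remedy is not spelled out, and a classical Green identity on $\Lambda$ runs into the same collar term.

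The convergence $G^\Dir*\Delta u_\delta\to G^\Dir*\Delta u$ itself is fine, via Lemmas~\ref{lem:regularity_KDir} and~\ref{lem:remainder_Green_regularity_Dir}, which allow support in $\Lambdaex$. What you need to pair with it is the identity $G^\Dir*\Delta u_\delta=\mathcal{O}(u_\delta)$ on $\Lambda$. Both sides vanish on $\partial\Lambda$ (for $\mathcal{O}(u_\delta)$ this is Lemma~\ref{lem:odd_reflection_vanishes}), and both have Laplacian $\mathcal{O}(\Delta u_\delta)$ on $\Lambda^\circ$. Moreover $\mathcal{O}(u_\delta)\to\mathcal{O}(u)=u$ uniformly on $\Lambda$, because $u$ vanishes on $\partial\Lambda$ and outside $\Lambda$. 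With this in place of $v_\delta+h_\delta$, your limit argument closes. Alternatively, approximate $u$ by smooth functions compactly supported in $\Lambda^\circ$ (dilate towards the centre, then mollify), to which Lemma~\ref{lem:Schauder_Dir} applies verbatim.

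No approximation is actually needed, which is why the paper calls the statement immediate. Set $w:=u-G^\Dir*\Delta u$. It is continuous on $\Lambda$ and vanishes on $\partial\Lambda$. By Lemma~\ref{lem:Schauder_Dir} with $f=\Delta u$, which is supported in $\Lambda$, it satisfies $\Delta w=0$ in $\mathcal{D}'(\Lambda^\circ)$. Weyl's lemma and the maximum principle then give $w\equiv0$. Uniqueness in this class of continuous, distributionally harmonic functions is what removes your concern that $\nabla u\notin L^2$.
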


\subsection{Abstract integration operators}\label{sec:integration_operator}

Throughout this section we work with the Dirichlet Green function $G^{\Dir}$ constructed above. We now write
\begin{equation}\label{eq:GD_decomp}
G^{\Dir}= K+Z^\Dir+R^\Dir,
\end{equation}
where $K$ is as earlier and $Z^\Dir=K^\Dir-K$.

\subsubsection{Integration against the singular kernel}\label{sec:singular_kernel}
 
Throughout this section we fix $i=1,2$ and a model as in Definition \ref{def:model}.
Recall from Section \ref{sec:modelled_distributions} that modelled distributions are functions $\blue f\colon \Lambda^\circ\to T_\sol$.
We assume in this section that all modelled distributions  take values in the sector $T[\cF_i \cup\{\1\}]$;
recall that the abstract integration operator $\CI_i$ is defined on $T[\cF_i \cup\{\1\}]$ by Definition \ref{def:integration} and by setting $\cI_i\blue\1 = 0$ as in Proposition \ref{prop:integration}.
We now recall the following fundamental result on the lift of the $1$-regularising integration operator $f\mapsto \partial_i K*f$ to modelled distributions from~\cite[Sec.~5-6]{Hairer14}.
\begin{theorem}\label{thm:singular_kernel_composed_der}
Let $-1<\alpha\wedge\eta\leq \gamma$ and $\gamma>0$. There exists a bounded linear map $\cK_{\gamma}^{i}\colon \scD^{\gamma,\eta}_\alpha\to \scD^{\gamma+1,\eta\wedge\alpha+1}$
of the form $\cK^i_\gamma\blue f (x)= \cI_i \blue f(x) + \blue\cQ(x)$, where $\blue\cQ$ is polynomial-valued, and such that
\[
\cR \cK_{\gamma}^{i}\blue f (x) = (K*\partial_i\cR \blue f)(x)
\;,
\]
for all $x\in\Lambda^\circ$,
where $\CR\colon  \scD^{\gamma,\eta}_\alpha \to C^{\eta\wedge\alpha}(\R^2)$ is the reconstruction operator from \eqref{eq:CR_defining}.
Moreover, the map $\cK_\gamma^i$ is locally Lipschitz continuous with respect to the models and modelled distributions.  
\end{theorem}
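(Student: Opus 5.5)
The plan is to reduce Theorem~\ref{thm:singular_kernel_composed_der} to the standard integration theorem of \cite[Thm.~5.12, Prop.~6.16]{Hairer14} applied to the kernel $K_i=\partial_i K$, rather than to $K$ composed with a derivative. Since $K$ is a truncation of $G^\Free$ as in \cite[Lem.~5.5]{Hairer14}, it admits the decomposition $K=\sum_{n\ge0}K^{(n)}$ into smooth pieces supported at scale $2^{-n}$, with $|\partial^k K^{(n)}|\lesssim 2^{n|k|}$ (the logarithm only affects $n=0$ and is harmless after differentiation). Consequently $K_i=\sum_n \partial_iK^{(n)}$ is a kernel that is $1$-regularising in the sense of \cite[Ass.~5.1]{Hairer14} with $\beta=1$. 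Polynomial annihilation \cite[Ass.~5.4]{Hairer14} can be arranged up to the finite order needed here, because we only need polynomials of degree $<\gamma+1$ and are free to modify each $K^{(n)}$ by a smooth term, absorbed into the remainder, without changing $K$ on $[-4,4]^2$. Finally, Definition~\ref{def:model}\ref{item:integration} says exactly that the model realises $K_i$ for $\cI_i$ on $T[\cF_i]$; for $\blue\1$ we set $\cI_i\blue\1=0$, and the corresponding contribution is entirely polynomial.

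Next I define, following \cite[Eq.~(5.15)]{Hairer14},
\begin{equation*}
\cK^i_\gamma\blue f(x)=\cI_i\blue f(x)+\cJ^i(x)\blue f(x)+(\cN^i_\gamma\blue f)(x),
\end{equation*}
where $\cJ^i(x)\blue\tau=\sum_{|k|<|\tau|+1}\frac{\blue{\rmX^k}}{k!}(\partial^kK_i*\Pi_x\blue\tau)(x)$ and $\cN^i_\gamma\blue f(x)=\sum_{|k|<\gamma+1}\frac{\blue{\rmX^k}}{k!}\,\partial^kK_i*(\cR\blue f-\Pi_x\blue f(x))(x)$. The last two terms are polynomial-valued, which gives the claimed form. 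The identity $\cR\cK^i_\gamma\blue f=K_i*\cR\blue f=K*\partial_i\cR\blue f$ follows from \cite[Thm.~5.12]{Hairer14}. Then $\partial_i$ can be moved onto $\cR\blue f$ because $\cR\blue f$ is a compactly supported distribution (we extend $\blue f$ by zero outside $\Lambda$).

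Boundedness $\scD^{\gamma,\eta}_\alpha\to\scD^{\gamma+1,(\eta\wedge\alpha)+1}$ and local Lipschitz dependence on $(\sfZ,\blue f)$ would follow from the singular version \cite[Prop.~6.16]{Hairer14}, with the boundary $\partial\Lambda$ playing the role of the singular set $P$ of codimension $\mathfrak m=1$. The hypotheses there are $\eta\wedge\alpha>-\mathfrak m=-1$, which is assumed, and $\gamma+1\notin\N$, $\eta\wedge\alpha+1\notin\N$. If the latter integrality conditions fail, I would reduce $\gamma$ or $\eta$ by an arbitrarily small amount and use monotonicity of the norms.

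The main obstacle I expect is checking that our non-standard structure still fits Hairer's framework. Our homogeneities are non-standard, and $\cI_i$ acts on $T[\cF_i]$, which contains $\cI_{22}A_1$ and $A_1\cI_1A_1$. So I must verify that $\cI_i$ together with $\Gamma$ from Table~\ref{tab:structure_group} satisfies $\Gamma\cI_i\blue\tau-\cI_i\Gamma\blue\tau\in T[\{\1,\rmX\}]$, as required in \cite[Def.~5.7]{Hairer14}. This is immediate from the table, since every correction term $\bfh$ multiplies only $\blue\1$. The Lipschitz estimate in the model then follows verbatim from the proof of \cite[Thm.~5.12]{Hairer14}, because the $\bfh$ coefficients are given by \eqref{eq:h_canonical}, which holds for general models by Remark~\ref{rem:bfh_explicit}.
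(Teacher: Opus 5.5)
Your proposal is correct and is essentially the paper's argument. Both reduce to Hairer's singular integration result (Prop.~6.16 in \cite{Hairer14}) for the $1$-regularising kernel $K_i=\partial_i K$, applied to $f$ extended by zero outside $\Lambda$, and then use translation invariance to write $K_i*\cR f=K*\partial_i\cR f$. Your extra hypothesis checks go beyond what the paper writes, with two minor corrections: the kernel realised by the model is fixed in Definition~\ref{def:model}, so polynomial annihilation must come from choosing $K$ as in Hairer's Rem.~5.6 at the outset rather than from modifying $K$ afterwards, and it is the commutator $\Gamma\cI_i-\cI_i\Gamma$, not each individual correction in Table~\ref{tab:structure_group}, that lands in $T[\1]$.
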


\begin{proof}
Observe that $\partial_iK*\cR \blue f = K*\partial_i\cR \blue f$ by translation-invariance of $K$.
Moreover $\partial_i\cR \blue f\in C^{\beta}(\R^2)$ with $\beta=\alpha\wedge\eta-1>-2$,
thus $K*\partial_i\cR \blue f$ is $(\beta+2)$-H\"older continuous and $(K*\partial_i\cR \blue f)(x)$ makes sense for all $x\in\R^2$.
The existence of $\CK^i_\gamma$ with the claimed properties now follows from~\cite[Prop.~6.16]{Hairer14}
applied to the modelled distribution $\blue f$ extended to $\R^2\setminus\Lambda$ by zero.
\end{proof}
\subsubsection{Integration against kernels with singularity at the boundary}\label{sec:remainder_kernel}
Recall that $Z^\Dir=K^\Dir-K$ which we then can write
\[
Z^\Dir (x,y)=\sum_{q\in\cQ'\setminus\{\id\}} K(q(x)-y),
\]
where $\cQ'\subset\cQ$ is the finite set from \eqref{eq:_def_Q'_dir}.
For fixed $x\in\Lambda^\circ$, $Z^\Dir(x,\Cdot)$ and $R^\Dir(x,\Cdot)$ are smooth and compactly supported functions.
For $\gamma\in(0,1)$ and $x\in\Lambda^\circ$, we then define $\cZ^{i}_{\gamma}$ on $\cD'(\R^2)$ by
\begin{equation}\label{eq:def_Zi_gamma}
\cZ^{i}_{\gamma}\zeta(x)
=\rmR_\Lambda(x)\sum_{|k|<\gamma+1}\frac{\blue{\rmX^k}}{k!}\,\partial_x^k\av{\partial_i\zeta,Z^\Dir(x,\Cdot)+R^\Dir(x,\Cdot)},
\end{equation}
where $\partial_x^k=(\partial_{x_1})^{k_1}(\partial_{x_2})^{k_2}$.
We have the following result regarding the regularity of this operator.

\begin{theorem}\label{thm:integration_boundary_derivation}
Let $\alpha < 0$ be non-integer. Then $\cZ^i_{\gamma}$ is a continuous linear map from $C^\alpha(\R^2)$  to $\scD^{\gamma+1,\alpha+1}$, where
$\alpha\leq \gamma < 1$.
Moreover, for $x\in\Lambda^\circ$,
\[
\cR\cZ^{i}_{\gamma}\zeta(x) = (Z^\Dir+R^\Dir)*\partial_i\zeta(x).
\]
\end{theorem}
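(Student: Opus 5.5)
The plan is to reduce everything to a single scalar function and its Taylor lift. Set
\[
h(x):=\av{\partial_i\zeta,Z^\Dir(x,\Cdot)+R^\Dir(x,\Cdot)} = -\av{\zeta,\partial_{y_i}Z^\Dir(x,\Cdot)+\partial_{y_i}R^\Dir(x,\Cdot)}\;,\qquad x\in\Lambda^\circ,
\]
so that $\cZ^i_\gamma\zeta$ is exactly the truncated Taylor jet $\sum_{|k|<\gamma+1}\frac{\blue{\rmX^k}}{k!}\partial^k h(x)$. Since $\gamma+1<2$, only $|k|\le 1$ enters. The reconstruction identity is then immediate. By the polynomial axioms of Definition~\ref{def:model}, $\Pi_x$ of the jet at $x$ evaluated at $x$ is $h(x)$, and a polynomial-valued modelled distribution reconstructs to its $\blue\1$-coefficient. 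This gives $\cR\cZ^i_\gamma\zeta(x)=h(x)=(Z^\Dir+R^\Dir)*\partial_i\zeta(x)$. Linearity of $\zeta\mapsto\cZ^i_\gamma\zeta$ is clear, so continuity reduces to the bound $\|\cZ^i_\gamma\zeta\|_{\gamma+1,\alpha+1}\lesssim|\zeta|_{C^\alpha}$.

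The analytic core is to establish, for $|k|\le 2$ and $x\in\Lambda^\circ$,
\begin{equation*}
|\partial^k h(x)|\lesssim |\zeta|_{C^\alpha}\bigl(1+|x|_{\partial\Lambda}^{\alpha+1-|k|}\bigr)\;.
\end{equation*}
I treat the two kernels separately.

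For $R^\Dir$, Lemma~\ref{lem:remainder_Green_regularity_Dir} gives $x\mapsto R^\Dir(x,\Cdot)\in C^{\beta}(\Lambda;C^\infty_c(\R^2))$ for any $\beta<2$. Pairing with the fixed distribution $\zeta$, the $R^\Dir$-contribution to $h$ is then in $C^{\gamma+1}(\Lambda)$ uniformly. Its polynomial lift is trivially in $\scD^{\gamma+1,\alpha+1}$, since $\alpha+1\le\gamma+1$ means no blow-up is needed.

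For $Z^\Dir(x,y)=\sum_{q\in\cQ'\setminus\{\id\}}a_qK(q(x)-y)$, I use that in the application $\zeta=\cR\blue f$ with $\blue f$ extended by zero, so $\supp\zeta\subset\Lambda$; I would either build this into the statement or note it. The key geometric fact is that for $q\neq\id$ and $y\in\Lambda$ one has $|q(x)-y|\ge\dist(x,\partial\Lambda)$, because $q(x)$ is the mirror image of $x$ through at least one lattice line bounding $\Lambda$.

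Next I decompose $K=\sum_{n\ge0}K_n$ as in \cite[Lem.~5.5]{Hairer14}, with $K_n$ supported in $B(0,2^{-n})$ and $|\partial^jK_n|\lesssim 2^{n|j|}$. Only the scales $2^{-n}\gtrsim|x|_{\partial\Lambda}$ contribute. Testing $\zeta\in C^\alpha$ against $\partial_x^k\partial_{y_i}K_n(q(x)-\Cdot)$ gives a contribution of order $2^{n(|k|-1-\alpha)}$. Summing over $2^{-n}\gtrsim|x|_{\partial\Lambda}$ yields the claimed bound, using $\alpha$ non-integer and $\alpha>-1$ so that the $k=0$ sum is bounded.

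It remains to verify the $\scD^{\gamma+1,\alpha+1}$ norm. The pointwise part follows from the derivative bounds for $|k|\le1$. For $(x,y)\in\Lambda_{\partial\Lambda}$, Taylor's formula along the segment (which stays at distance $\asymp|x,y|_{\partial\Lambda}$ from the boundary) bounds the level-$\ell$ component of $\cZ\zeta(x)-\Gamma_{xy}\cZ\zeta(y)$ by $|x-y|^{2-\ell}\sup|\partial^2h|\lesssim|x-y|^{2-\ell}|x,y|_{\partial\Lambda}^{\alpha-1}$. Using $|x-y|\le|x,y|_{\partial\Lambda}$ and $2>\gamma+1$, this is at most $|x-y|^{\gamma+1-\ell}|x,y|_{\partial\Lambda}^{\alpha+1-(\gamma+1)}$, which is the required weight.

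The main obstacle is the kernel estimate near the corners of $\Lambda$. There several reflections $q$ bring $q(x)$ close to $\Lambda$ simultaneously, and one must check that the lower bound $|q(x)-y|\ge\dist(x,\partial\Lambda)$ still holds for every $q\in\cQ'\setminus\{\id\}$ and that only finitely many $q$ contribute. I would verify this coordinate-wise: for $q\neq\id$, either some coordinate of $q(x)$ is reflected across a line $\{x_j\in\Z\}$ bounding $\Lambda$, or $q(x)$ is translated by a nonzero vector in $2\Z^2$; in both cases $q(x)$ lies outside $\Lambda^\circ$ at distance at least the corresponding $\dist(x_j,\Z)\ge\dist(x,\partial\Lambda)$.
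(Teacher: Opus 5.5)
Your argument is correct and follows the paper's route. You use the same splitting into the $Z^\Dir$ and $R^\Dir$ contributions, and you handle the $R^\Dir$ part exactly as the paper does, through Lemma~\ref{lem:remainder_Green_regularity_Dir}. For the $Z^\Dir$ part you prove by hand, via the dyadic decomposition of $K$ and the bound $|q(x)-y|\geq \dist(x,\partial\Lambda)$ for $y\in\Lambda$, $q\neq\id$, the estimate that the paper simply imports from \cite[Lem.~4.16]{GH19}.

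Your remark that one must assume $\supp\zeta\subset\Lambda$ is right, and the paper uses it implicitly too. It is what makes $Z^\Dir(x,\Cdot)$ smooth on the support of $\zeta$, and it holds for $\zeta=\cR\blue f$ in Proposition~\ref{prop:integration}. Without it, $x\mapsto Z^\Dir*\partial_i\zeta(x)$ need not even be $C^1$ in $\Lambda^\circ$.
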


\begin{proof}
We decompose $\cZ^i_\gamma=\tilde\cZ^i_\gamma+\hat\cZ^i_\gamma$ according to $Z^\Dir$ and $R^\Dir$:
\[
\tilde\cZ^i_\gamma\zeta(x)
:=\sum_{|k|<\gamma+1}\frac{\blue{\rmX^k}}{k!}\,\partial_x^k\av{\partial_i\zeta,Z^\Dir(x,\Cdot)},
\qquad
\hat\cZ^i_\gamma\zeta(x)
:=\sum_{|k|<\gamma+1}\frac{\blue{\rmX^k}}{k!}\,\partial_x^k\av{\partial_i\zeta,R^\Dir(x,\Cdot)}.
\]
For $\tilde\cZ^i_\gamma$, we use the decomposition of $Z^\Dir$ to conclude that it satisfies the assumptions of \cite[Lem.~4.16]{GH19}, yielding the stated mapping properties and the reconstruction identity for the $Z^\Dir$-part.

For $\hat\cZ^i_\gamma$, we use that $R^\Dir(\Cdot,\Cdot)$ is genuinely smooth in the second variable and
$C^{\gamma+1}$ in the first one by Lemma~\ref{lem:remainder_Green_regularity_Dir}. Since we truncate at order $\gamma+1<2$, the Taylor-type expression in
\eqref{eq:def_Zi_gamma} is well-defined and yields a modelled distribution in $\scD^{\gamma+1,\gamma+1}$
which is continous with respect to $\zeta$.
This also gives
$\cR\hat\cZ^i_\gamma\zeta(x)=\av{\partial_i\zeta,R^\Dir(x,\Cdot)}=R^\Dir*\partial_i\zeta(x)$.

Combining the two parts yields $\cR\cZ^i_\gamma\zeta(x)=(Z^\Dir+R^\Dir)*\partial_i\zeta(x)$ for $x\in\Lambda^\circ$.
\end{proof}

    \section{Symbolic index}

    In this appendix, we compile frequently utilised symbols from the article along with their corresponding meanings and the page where they initially appear. 
    
\begin{center}
\renewcommand{\arraystretch}{1.1}
\begin{longtable}{lll}
\toprule
Symbol & Meaning & Page\\
\midrule
\endfirsthead
\toprule
Symbol & Meaning & Page\\
\midrule
\endhead
\bottomrule
\endfoot
\bottomrule
\endlastfoot


$\triple{\Cdot}_{\alpha\ax}$ 
& Rough additive function metric in axial gauge 
& \pageref{eq:alpha_ax_norm} \\

$\triple{\Cdot}_{\alpha\gr}$ 
& Rough additive function growth metric 
& \pageref{eq:RAF_gr_metric} \\

$|\Cdot|_{\beta\gr},\,|\Cdot|_\beta$
& Growth (resp.~full) additive function norms
& \pageref{symb:additive_norms} \\

$\|\Cdot\|_{\gamma,\eta}$
& Singular modelled distribution norm
& \pageref{symb:modelled_distribution_norms} \\


$\triple{\Cdot}_{\gamma;\fK}$
& Model norm on the compact set $\fK$
& \pageref{symb:model_norms} \\

$1_G$
& Identity element of $G$
& \pageref{symb:identity_G} \\

$\bone$
& Identity element of $\cG$, i.e.\
  $\blue\1\otimes 1_G\in T[\1]\otimes\rmM_\C(N)$
& \pageref{symb:identity_RS} \\

$\circ_\gamma$
& Truncated product on $T_\sol\otimes\rmM_\bC(N)$
& \pageref{symb:circ_gamma} \\

$\star$
& Product on the solution model space $T_\sol$
& \pageref{symb:star_product} \\

$\star_\gamma$
& Truncated product on $T_\sol$
& \pageref{symb:star_gamma} \\

$\simhor$
& Horizontal equivalence relation on $\cX_{\rmh}$
& \pageref{def:non_vert_lines} \\



$\Axial,\Caxial$
& Enhanced additive function space in axial
  (resp.~complete axial) gauge
& \pageref{def:axial_gauge} \\

$\fA_{\alpha\ax}$
& Space of rough additive functions in axial gauge with metric
  $|\Cdot|_{\alpha\ax}$
& \pageref{d:RAF} \\

$\fA_{\alpha\gr}$
& Space of rough additive functions with growth metric
  $|\Cdot|_{\alpha\gr}$
& \pageref{d:RAF} \\

$\Area(\ell,\bar\ell)$
& Euclidean area enclosed between line segments
& \pageref{symb:Area} \\






$\cD_i^\gamma$
& Truncated derivative operator on modelled distributions
& \pageref{symb:modelled_derivative} \\

$\rmD_i$
& Abstract derivation maps on $T[\cU_\alg]$
& \pageref{symb:abstract_derivatives} \\

$\scD^{\gamma,\eta}$
& Space of modelled distributions of regularity $(\gamma,\eta)$
& \pageref{symb:scD} \\






$f^\tau$
& Coefficient of the basis symbol $\blue\tau$ in a modelled distribution
  $\blue f$
& \pageref{not:f} \\


$G$
& Compact connected Lie group
& \pageref{symb:G_Lie_group} \\

$G^\Dir$
& Dirichlet Green function on $\Lambda$
& \pageref{symb:Green_Dir} \\

$G^\Free$
& $G^\Free(x)=\frac{1}{2\pi}\log|x|$, the Green function of $\Delta$
  on $\R^2$
& \pageref{symb:Green} \\


$\cG^i_\gamma$
& Abstract Dirichlet integration operator on modelled distributions
& \pageref{symb:Dirichlet_integration_operator} \\

$\scG$
& Lie group-valued modelled distributions with boundary
 $1_G$
& \pageref{symb:modelled_gauge_spaces} \\

$\fG^{2\alpha}_A$
& Controlled gauge transformations over $\bfA = (A,\bA) \in \fA_{\alpha\gr}$
& \pageref{def:gauge_transformation} \\


$\fg$
& Lie algebra $T_{1_G}G$ of $G$
& \pageref{symb:Lie_algebra} \\





$\cI$, $\cI_i$, $\cI_{ij}$
& Abstract integration operators in the regularity structure
& \pageref{not:unary_ops_symbols} \\



$K$, $K_i$, $K_{ij}$
& Truncation of the Green function $G^\Free$  and its derivatives
& \pageref{symb:K} \\


$L(V;W)$, $L(V)$
& Space of linear operators $V\to W$ (resp.\ $V\to V$)
& \pageref{symb:LVW} \\

$L_g$
& Left multiplication by $g$
& \pageref{symb:left_multiplication} \\




$\ell^{x;y}$
& Straight line from $x$ to $y$
& \pageref{symb:ell_xy} \\

$\ell_B, \ell_\bfA$
& Path (resp. rough path)
induced by $B\in \Omega^1_{\beta\gr}$
(resp.  $\bfA\in\bfOmega_{\alpha\gr}^1$)
& \pageref{l:raf_to_rp} \\



$\bfOmega_{\alpha\ax}^1$
& Closure of smooth $1$-forms  in the axial gauge in $\Axial$
& \pageref{def:RAF_ax} \\

$\bfOmega_{\alpha\gr}^1$
& Closure of smooth $1$-forms  in
  $\fA_{\alpha\gr}$
& \pageref{d:RAF} \\

$\Omega^1_\beta$
& Completion of smooth $1$-forms   under
  $|\Cdot|_\beta$
& \pageref{symb:Omega_beta} \\

$\Omega^1_{\beta\gr}$
& Completion of smooth $1$-forms  under $|\Cdot|_{\beta\gr}$
& \pageref{symb:Omega_gr} \\

$\Omega^1_{\infty\ax}$
& Smooth $1$-forms  in the axial gauge
  $(A_2\equiv0)$
& \pageref{def:axial_gauge} \\


$\rmP_{\cX_{\rmh\ax}}(\ell)$
& Horizontal projection of $\ell$ onto the $x_1$-axis
& \pageref{symb:horizontal_projection} \\


$\rmQ_\gamma, \rmQ_{<\gamma}$
& Projection onto homogeneity exactly (resp strictly lower than) $\gamma$ in $T_\sol$
& \pageref{symb:Q_gamma} \\



$\cR$
& Reconstruction map associated to a model
& \pageref{symb:cR} \\





$|\tau|$
& Homogeneity of the formal symbol $\tau$
& \pageref{symb:homogeneity} \\

$T[\tau]$, $T[S]$
& Model space associated with a symbol $\tau$ or a collection of symbols $S$
& \pageref{sec:construction_subspace} \\

$\cT_\sol$
& Solution regularity structure $(T_\sol,G_\sol)$
& \pageref{symb:solution_regular_structure} \\


$U^\circ$
& Interior of $U$
& \pageref{symb:interior} \\

$U^{+\varepsilon}$
& $\varepsilon$-fattening of $U$
& \pageref{symb:fattening} \\

$U\Subset V$
& Compact inclusion $\overline U\subset V^\circ$
& \pageref{symb:Subset} \\





$\cX$
& Set of line segments $(x,v)$ in $\Lambda$ with
  $|v|\leq\frac14$
& \pageref{symb:cX} \\

$\cX_{\rmh}$
& Set of non-vertical line segments
& \pageref{def:non_vert_lines} \\




$\sfZ=(\Pi,\Gamma)$
& Model, with model maps $\Pi_x$ and recentering maps $\Gamma_{xy}$
& \pageref{symb:model} \\

\end{longtable}
\end{center}

\endappendix

\subsection*{Acknowledgements}
I.C. acknowledges support from the European Research Council (ERC) via the Starting Grant SQGT 101116964.
T.K. gratefully acknowledges funding through a UKRI Horizon Europe Guarantee MSCA Postdoctoral Fellowship (UKRI, SPDEQFT, grant reference EP/Y028090/1). Views and opinions expressed are however those of the authors only and do not necessarily reflect those of UKRI. 
In particular, UKRI cannot be held responsible for them. 
He also thanks Giuseppe Cannizzaro for the support during his postdoctoral position at the University of Warwick, when parts of this work were completed.
A.M. gratefully acknowledges the support of the Edinburgh Doctoral College Scholarship (EDCS), under which this work was carried out as part of his PhD studies. Moreover, he thanks the University of Vienna and TU Berlin for hosting him during the 2024/2025 academic year, where part of this work was carried out. Finally, he thanks Sorbonne University for hosting his visit with Thierry L\'evy and Elias Nohra, during which parts of this work were discussed.

\medskip

\noindent
For the purpose of open access, the authors have applied a CC BY public copyright licence to any author accepted manuscript arising from this submission.

     \bibliographystyle{Martin}
        
    \bibliography{refs}

\end{document}